\documentclass[12pt]{amsart}
\usepackage{graphicx}
\usepackage{amsmath}
\usepackage{amsfonts}
\usepackage{amssymb}
\usepackage{mathrsfs}
\usepackage{mathtools}
\mathtoolsset{showonlyrefs}
\usepackage{setspace}
\usepackage{datetime}
\usepackage{color,enumitem,graphicx}
\usepackage[colorlinks=true,urlcolor=blue,
citecolor=red,linkcolor=blue,linktocpage,pdfpagelabels,
bookmarksnumbered,bookmarksopen]{hyperref}
\usepackage{geometry}
\allowdisplaybreaks[4]

\numberwithin{equation}{section}
\newcommand{\R}{\mathbb{R}}
\usepackage{array}
\usepackage[expansion=false]{microtype}

\theoremstyle{plain}
\newtheorem{theorem}{Theorem}[section]
\newtheorem{lemma}[theorem]{Lemma}
\newtheorem{proposition}[theorem]{Proposition}
\newtheorem{corollary}[theorem]{Corollary}

\theoremstyle{remark}
\newtheorem{remark}[theorem]{Remark}

\newcommand{\Id}{\mathrm{Id}}
\newcommand{\tr}{\operatorname{tr}}
\newcommand{\supp}{\operatorname{supp}}
\newcommand{\dd}{\,\mathrm d}

\title[Optimal Rigidity Results for the \(k\)-Hessian Equation]{Optimal Rigidity Results for the
  \(k\)-Hessian Equation of Lane--Emden Type}
\hypersetup{pdftitle={Optimal Rigidity Results for the
  k-Hessian Equation of Lane--Emden Type},
  pdfauthor={Wei Dai; Jingze Fu; Changfeng Gui; Guolin Qin}}
\author{Wei Dai, Jingze Fu, Changfeng Gui, Guolin Qin}
\date{}

\address{School of Mathematical Sciences, Beihang University (BUAA), Beijing 100191, P. R. China, and Key Laboratory of Mathematics, Informatics and Behavioral Semantics, Ministry of Education, Beijing 100191, P. R. China}
\email{weidai@buaa.edu.cn}

\address{School of Mathematical Sciences, Beihang University (BUAA), Beijing 100191, P. R. China}
\email{fujingze@buaa.edu.cn}

\address{Department of Mathematics, University of Macau, Macau SAR, P. R. China}
\email{changfenggui@um.edu.mo}

\address{State Key Laboratory of Mathematical Sciences, Academy of
Mathematics and Systems Science, Chinese Academy of Sciences, Beijing
100190, P.R. China;
and University of Chinese Academy of Sciences, Beijing 100049,
P.R. China}
\email{qinguolin18@mails.ucas.ac.cn}

\thanks{Wei Dai is supported by the NNSF of China (No. 12571113 \& No. 12222102), the National Science and Technology Major Project (2022ZD0116401) and the Fundamental Research Funds for the Central Universities.
Changfeng Gui is supported by NSFC Key Program (Grant No.12531010), University of Macau research grants CPG2024-00016-
FST, CPG2025-00032-FST, CPG2026-00027-FST, SRG2023-00011-FST, MYRGGRG2023-00139-FST-UMDF, UMDF
Professorial Fellowship of Mathematics, Macao SAR FDCT 0003/2023/RIA1 and Macao SAR FDCT 0024/2023/RIB1.
G. Qin is supported by National Key R\&D Program of China (Grant 2025YFA1018400) and NNSF of China (Grant 12471190).}

\begin{document}

\begin{abstract}

 In this paper, we establish optimal Liouville theorems and classification
results for the \(k\)-Hessian Lane--Emden equation \[
  \sigma_k(-D^2u)=u^p\quad\text{in }\R^n,\qquad
  -D^2u\in\overline{\Gamma_k},\qquad u\geq 0, \] where \(2\leq k<\frac{n}{2}\)
and $p>0$. Let $p_- = \frac{nk}{n-2k}$ and the critical Hessian--Sobolev
exponent $p_* = \frac{(n+2)k}{n-2k}$. Phuc and Verbitsky proved nonexistence of
positive solutions for \(k<p\leq p_-\), while Ou subsequently covered the cases
\(p\in(0,k]\). We close this gap and prove the optimal Liouville
theorem for any \(p_-<p<p_*\): any nonnegative \(C^2\) entire solution must be
identically zero. We also prove the optimal Liouville theorem for nonnegative
locally bounded Hessian-measure weak solutions. This identifies the critical
exponent \(p_*\) as the sharp Liouville threshold, since radial positive
solutions exist for \(p\geq p_*\).

For the critical case \(p=p_*\), we prove the optimal classification results for all \(n>2k\): every nontrivial
nonnegative \(C^2\) entire solution is a Hessian--Sobolev bubble, without any additional assumption.  For the limiting case \(n=2k\),
we classify finite-mass solutions to the $\frac{n}{2}$-Hessian Liouville equation under a proper asymptotic
condition $u(x)\rightarrow-\infty$ as $|x|\rightarrow\infty$. In particular, we provide the fully nonlinear
counterparts of the classical Liouville and classification theorems of Gidas--Spruck, Gidas--Ni--Nirenberg, and Caffarelli--Gidas--Spruck.

\end{abstract}

\maketitle

\medskip
\noindent\textit{2020 Mathematics Subject Classification.}
35J60, 35B53, 35B33.

\smallskip
\noindent\textit{Keywords.}
\(k\)-Hessian equations, Liouville theorems, Classification results, Critical exponent, Rigidity.

\section{Introduction}

\subsection{Background and setting of the problem}

In this paper, we study nonnegative entire admissible solutions of
\begin{equation}\label{eq:1.1}
    u\geq 0,\qquad
    -D^2u\in\overline{\Gamma_k},\qquad
    \sigma_k(-D^2u)=u^p
    \quad\text{in }\R^n,
\end{equation}
where \(2\leq k<\frac{n}{2}\), $0<p\leq p_*$, and $p_*=\frac{k(n+2)}{n-2k}$ is the Hessian--Sobolev critical exponent. At the limiting dimension \(n=2k\), we also study the scale-invariant $\frac{n}{2}$-Hessian Liouville equation
\begin{equation}\label{eq:1.2}
 -D^2u\in\Gamma_k,\qquad \sigma_k(-D^2u)=e^u
 \quad\text{in }\R^{n}.
\end{equation}
For a real symmetric matrix \(A\), we define
\begin{equation}
    \Gamma_k=\{A:\sigma_j(A)>0,\ 1\leq j\leq k\},
\end{equation}
where \(\sigma_j(A)\) denotes the
\(j\)-th elementary symmetric function of the eigenvalues of \(A\) (set
\(\sigma_0=1\)). $\Gamma_k$ is the G\aa rding cone associated with the hyperbolic polynomial
\(\sigma_k\), see \cite{Garding1959}.  Its closure is
\(\overline{\Gamma_k} =\{A:\sigma_j(A)\geq0,\ 1\leq j\leq k\}\). If \(u\not\equiv0\),
Lemma~\ref{Lem:2.2} shows that \(u>0\) and
\(-D^2u\in\Gamma_k\).  The cone condition
\(-D^2u\in\Gamma_k\) is the ellipticity
hypothesis: it makes \(-u\) \(k\)-convex and the Newton tensors through
order \(k-1\) positive definite.

For the critical case $p=p_*$, the connection between \eqref{eq:1.1} and the Hessian--Sobolev inequality is direct.  For a
smooth nonnegative admissible \(u\) with sufficient decay, put
\(\mathcal E_k[u]=\int_{\R^n}u\,\sigma_k(-D^2u)\dd x\).
The critical inequality, with optimal constant \(C_{n,k}\), has the
homogeneous form
\begin{equation}\label{eq:1.3}
 \left(\int_{\R^n}u^{p_*+1}\dd x\right)^{\frac{k+1}{p_*+1}}
 \leq C_{n,k}\mathcal E_k[u].
\end{equation}
For \(\phi\in C_c^\infty(\R^n)\) and \(|t|\) small enough that
\(u+t\phi\) remains admissible, the divergence-free identity for
\(T_{k-1}(-D^2u)\) gives
\[
 \left.\frac{\dd}{\dd t}\right|_{t=0}
 \mathcal E_k[u+t\phi]
 =(k+1)\int_{\R^n}\phi\,\sigma_k(-D^2u)\dd x.
\]
Consequently, the Euler--Lagrange equation for the quotient associated
with \eqref{eq:1.3}, after normalization of the
Lagrange multiplier, is exactly \(\sigma_k(-D^2u)=u^{p_*}\).
Chou and Wang~\cite{ChouWang2001} show that the bubbles in \eqref{eq:1.7} attain equality in \eqref{eq:1.3}.

The $\sigma_k$-equation of type \eqref{eq:1.1} also arises in several distinct geometric problems.  For a graph, prescribing
\(\sigma_j(\kappa_1,\ldots,\kappa_n)\), where the \(\kappa_i\) are the
principal curvatures, gives a Weingarten curvature equation whose
coefficients depend on both \(D^2v\) and \(\nabla v\)
\cite{CNSWeingarten1988,Reilly1973,Wang2009}.  In convex geometry, if
\(h\) is the support function of a smooth strictly convex hypersurface
in \(\R^{n+1}\), the eigenvalues of
\(\nabla_{\mathbb S^n}^2h+h\,g_{\mathbb S^n}\)
are its principal radii of curvature.  The
Christoffel--Minkowski problem therefore leads to spherical Hessian
equations of the form
\(\sigma_j\bigl(\nabla_{\mathbb S^n}^2h +h\,g_{\mathbb S^n}\bigr)=f\)
\cite{GuanMa2003,GuanMaZhou2006}. There is a second, equally important application in conformal
geometry.  If
\(A_g=\frac1{n-2}\left(\operatorname{Ric}_g -\frac{R_g}{2(n-1)}g\right)\)
is the Schouten tensor and \(\widehat g=e^{-2w}g\), then
\[
 A_{\widehat g}
 =A_g+\nabla_g^2w+\dd w\otimes\dd w
   -\frac12|\nabla_gw|_g^2g.
\]
Prescribing \(\sigma_k(\widehat g^{-1}A_{\widehat g})\), and in
particular requiring it to be constant, gives the
\(\sigma_k\)-Yamabe equation.  This extends the classical Yamabe
problem, has important consequences for curvature and topology, and
has its own Liouville and singularity theory
\cite{CGY2002,HanLiTeixeira2010,Li2006,
LiLi2003,LiLi2005,ShengTrudingerWang2007,Viaclovsky2000}.

When \(k=1\), equation \eqref{eq:1.1} becomes the usual second order Lane--Emden equation
\begin{equation}\label{eq:1.4}
    -\Delta u=u^p\qquad\text{in }\R^n.
\end{equation}
The semilinear Lane--Emden equation \eqref{eq:1.4} models many phenomena in mathematical physics and astrophysics. Consider a Newtonian self-gravitating fluid of density \(\rho\),
pressure \(P\), and gravitational potential \(\Phi\), in hydrostatic
equilibrium:
\(\nabla P=-\rho\nabla\Phi, \qquad \Delta\Phi=4\pi G\rho\).
For the polytropic equation of state
\(P=K\rho^{1+1/p}\), the specific enthalpy is
\(h=(p+1)K\rho^{1/p}\).  Since
\(\nabla(h+\Phi)=0\), it follows that
\(-\Delta h =4\pi G\bigl((p+1)K\bigr)^{-p}h^p\).
For the second order Lane-Emden equation, the distinction between the subcritical,
critical and supercritical regimes is classical.  Gidas and
Spruck~\cite{GS1981} proved nonexistence of positive solutions for
\(1\leq p<\frac{n+2}{n-2}\); Caffarelli, Gidas and
Spruck~\cite{CGS1989} classified all the positive solutions for $p=\frac{n+2}{n-2}$; and the
critical bubbles are the extremal functions in the sharp Sobolev
inequality of Aubin and Talenti~\cite{Aubin1976,Talenti1976}.  Serrin
and Zou~\cite{SZ2002} developed the corresponding Cauchy--Liouville
theory. For more literature on semilinear and quasilinear Lane-Emden type elliptic equations, refer to \cite{CDL,CDQ0,CDQ1,CFR,DDGL2026,DDGL,DGHP,DLL,DQ0,DQ1,DQ2,DMMS,GG,GM,Ou,PQS,Sciunzi,SunWang,Vetois,WX} and the references therein. The \(k\)-Hessian equation \eqref{eq:1.1} is the fully nonlinear counterpart for the second order Lane-Emden equations. For the \(k\)-Hessian equation, the scaling exponent was known from the Hessian--Sobolev inequality and the radial
equation. The remaining problem was to determine whether this exponent is also the Liouville threshold without radial symmetry or assumptions at infinity.

\medskip

In this paper, we aim to prove the optimal Liouville theorems and classification results for the \(k\)-Hessian Lane--Emden equation \eqref{eq:1.1} in subcritical and critical cases, by {overcoming} the full nonlinearity of the $k$-Hessian equation, the lack of Kelvin type transforms and conformal invariance.

Phuc and Verbitsky~\cite{PV2008} proved nonexistence of positive solutions to \eqref{eq:1.1} for \(k<p\leq p_-\) by making use of advances in
potential theory and PDE due to Kilpel\"ainen and Mal\'y \cite{KM}, Trudinger and Wang \cite{TW1997,TW1999,TW2002}, and Labutin \cite{Labutin2002}. Subsequently, Ou \cite{Ou2010} covered the range
\(0<p\leq k\) by direct integration by parts, carefully chosen test
functions, and approximation.   Wang and
Lei \cite{WL2019} treated the remaining subcritical range for radial solutions. Consequently, the gap interval
\begin{equation}\label{eq:1.5}
    \frac{nk}{n-2k}=:p_-<p<p_*:=
    \frac{k(n+2)}{n-2k}
\end{equation}
remained open for nearly twenty years. Very recently, Chen, Hu and Wang \cite{CHW2025} proved nonexistence results of positive solutions of inequalities $\sigma_k\left(-\left\{\partial_j(|\nabla u|^{q-2}\partial_{i} u)\right\}\right)\geq u^{p}$. When $q=2$, their nonexistence result still {requires} $p\leq p_-$. For the critical equation \eqref{eq:1.1}, no classification of all nonnegative admissible entire solutions without symmetry or asymptotic assumptions was available, to the best of our knowledge. Fang, Ma and Wei~\cite{FMW2023} explicitly stated the larger range
\(p_-<p\leq p_*\) as open.

\begin{table}[h]
\centering
\includegraphics[width=\textwidth]{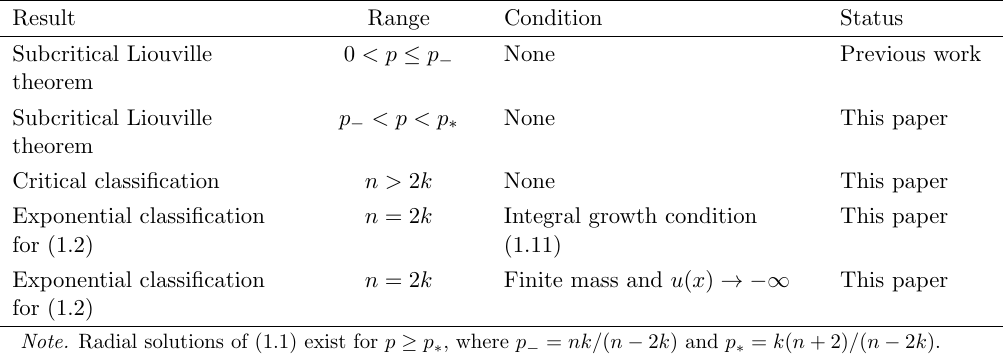}
\caption{Summary of the Liouville and classification results for (\ref{eq:1.1}) and (\ref{eq:1.2}), and comparison with known results.}
\label{Tab:1}
\end{table}

Theorems~\ref{Thm:1.1} and~\ref{Thm:1.2} close the gap in
\eqref{eq:1.5} for both \(C^2\) and locally bounded Hessian-measure solutions.
Corollary~\ref{Cor:1.4}, together with the radial existence result,
identifies \(p_*\) as the sharp Liouville threshold.
Theorem~\ref{Thm:1.6} proves the unconditional classification for
\(p=p_*\) in the full range \(n>2k\).  Theorems~\ref{Thm:1.7}
and~\ref{Thm:1.8} give the corresponding classification results for
the exponential equation when \(n=2k\).

For literature on \(k\)-Hessian theory related to our paper, refer to e.g. \cite{BaoLiLi2014,CNS1985,ChouWang2001,Guan1994,ITW2004,Labutin2002,MaQiu2019,MaZhang2022,PV2006,PV2008,PV2009,Reilly1973,Trudinger1995,
TrudingerIso1997,TrudingerWeak1997,TW1997,TW1999,TW2002,TWweak2002,TWPoincare1998,Tso1989,Tso1990,Verbitsky2015,Wang1994,Wang2009} and the references therein. For $k$-Hessian equation and its applications in geometric analysis and differential geometry, see e.g. \cite{CGM2007,GuanMa2003,GuanMaZhou2006,GuanRenWang2015} etc. For conformally invariant fully nonlinear equations, cf. \cite{CGY2003,CLL,LiLi2003,LiLi2005,Li2006}.

\subsection{Main results}
Our first result establishes the unconditional Liouville theorem in the full gap interval $(p_-,p_*)$ in \eqref{eq:1.5}.
\begin{theorem}\label{Thm:1.1}
Let \(2\leq k<\frac n2\) and
\(\frac{nk}{n-2k}<p<\frac{k(n+2)}{n-2k}\).
If \(u\in C^2(\R^n)\) is a nonnegative solution to \eqref{eq:1.1}, then $u\equiv0$ in $\R^{n}$.
\end{theorem}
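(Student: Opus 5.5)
The plan is to adapt the Gidas--Spruck / Serrin--Zou integral-identity method to the fully nonlinear setting. Work with \(w=u^{-\beta}\) for a suitable \(\beta>0\), or equivalently with a combination of \(u\), \(\nabla u\) and \(D^2u\). The divergence structure of the Newton tensor, \(\partial_i T_{k-1}^{ij}(-D^2u)=0\), will replace the Laplacian integrations by parts. By Lemma~\ref{Lem:2.2} we may assume \(u>0\) and \(-D^2u\in\Gamma_k\). Then \(T_{k-1}(-D^2u)\) is positive definite, and Newton--Maclaurin type inequalities such as \(\sigma_{k-1}\sigma_{k+1}\le c\,\sigma_k^2\) and the concavity of \(\sigma_k^{1/k}\) become available.

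\textbf{Step 1: a weighted differential identity.} For \(E_{ij}\) the traceless-type tensor built from \(D^2 w\) (or from \(u^{-\gamma}D^2u\) plus a \(\nabla u\otimes\nabla u\) correction), compute the divergence of a vector field
\[
V^i=u^{a}\,T^{ij}_{k-1}(-D^2u)\,\partial_j\Big(\tfrac{|\nabla u|^2}{u}\Big)+\dots,
\]
as in Serrin--Zou and the \(\sigma_k\)-Yamabe Obata-type arguments. The goal is a pointwise inequality
\[
\operatorname{div}V\ \ge\ c_1\,u^{a}\,Q(u)+c_2\,u^{a-2}\,\langle T_{k-1}\nabla u,\nabla u\rangle\frac{|\nabla u|^2}{u^{\,0}}\cdots ,
\]
where \(Q\ge0\) measures the deviation of \(-D^2u\) from a multiple of the identity plus a rank-one term adapted to the bubble. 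The parameters \(a\) and \(\beta\) are tuned so that all coefficients are positive exactly when \(p<p_*\). The subcriticality \(p<p_*\) should appear as strict positivity of a discriminant, the analogue of \(\frac{n+2}{n-2}-p>0\) in Gidas--Spruck.

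\textbf{Step 2: cutoff and a priori bounds.} Multiply by \(\eta^m\) with \(\eta\) a cutoff on \(B_{2R}\) and integrate. The boundary terms are controlled by Young's inequality in terms of the good terms plus quantities like \(R^{-2}\!\int_{B_{2R}} u^{a}\sigma_{k-1}|\nabla u|^2/u\). To close the estimate we need integral bounds such as
\[
\int_{B_R}u^{p+a}\le CR^{\,n-\frac{2k(p+a)}{p-k}},
\]
obtained by testing the equation against \(u^{a}\eta^m\) and using the divergence-free Newton tensor repeatedly to lower the order of \(\sigma_j\) (Ou's iteration). Since \(p>p_->k\), the exponent in \(R\) is favorable. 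Sending \(R\to\infty\) forces \(Q\equiv0\) and the gradient term to vanish, so \(\nabla u\equiv0\). Then \(u\) is constant, \(\sigma_k(0)=0=u^p\), and \(u\equiv0\), a contradiction.

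\textbf{Main obstacle.} I expect the hard part to be Step 1: producing a genuinely nonnegative \(Q\) for the fully nonlinear operator in the whole range \((p_-,p_*)\). For \(k=1\) this is the classical vector-field identity. For \(k\ge2\), the cross terms \(T_{k-1}^{ij}u_{il}u_{jl}\) must be compared with \(\sigma_1\sigma_k-(k+1)\sigma_{k+1}\) via Newton inequalities, and losses there could shrink the admissible range below \(p_*\). I would first try the choice \(w=u^{-(p-k)/(2k)}\), which is exactly scale-invariant. I would then use the matrix inequality \(\tr(T_{k-1}(A)A^2)\ge \frac{\sigma_1\sigma_k}{n}\cdot\frac{(n-k)\cdots}{\cdots}\), sharp on multiples of the identity, and check that the resulting quadratic form in \((|\nabla u|^2/u,\ \sigma_k^{1/k})\) is positive definite precisely when \(p<p_*\).
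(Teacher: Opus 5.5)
Your architecture (a Newton-tensor vector field whose divergence is coercive exactly when $p<p_*$, integrated against cutoffs) is the right one. However, the proposal leaves the decisive step undone, and the tool you propose for it cannot work. Write $A=-D^2u$ and $L_k(A)=\frac{n-k}{n}\sigma_k(A)\Id-T_k(A)$. Any such divergence, including that of your $V$ (via $\nabla|\nabla u|^2=-2A\nabla u$ and $T_{k-1}(A)A=L_k(A)+\frac kn\sigma_k(A)\Id$), contains a mixed term of the form $u^{M-1}\nabla u^TL_k(A)\nabla u$. This term has no sign, because $\tr L_k(A)=0$. No scalar inequality can absorb it: $T_{k-1}^{ij}u_{il}u_{jl}=\tr(T_{k-1}(A)A^2)$ is \emph{identically} $\sigma_1\sigma_k-(k+1)\sigma_{k+1}$, so there is nothing to compare, and the lower bound you propose for it is just $\tr(L_k(A)A)\geq0$. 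The missing ingredient is the directional estimate
\[
L_k(A)^2\preceq\frac{n-k}{n}\,\tr\bigl(L_k(A)A\bigr)\,T_{k-1}(A)
\]
(Lemma~\ref{Lem:2.4}). For $k=1$ it reduces to $|E\xi|^2\leq\frac{n-1}{n}|E|^2|\xi|^2$ for trace-free $E$, and it is proved by expanding in each eigendirection and applying Newton--Maclaurin to $\lambda(A)|i$. With it, take $\mathcal J=-u^ML_k(A)\nabla u-\tau u^{M-1}|\nabla u|^2T_{k-1}(A)\nabla u$ with $M=-2k\tau/(n-k)$. Then $\operatorname{div}\mathcal J$ is at least a positive definite form in $\sqrt{u^M\tr(L_k(A)A)}$ and $u^{M/2-1}|\nabla u|(\nabla u^TT_{k-1}(A)\nabla u)^{1/2}$, plus $\bigl(\frac{k(n+2)}{n}\tau-\frac{n-k}{n}p\bigr)u^{M+p-1}|\nabla u|^2$. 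The resulting requirement $\frac{(n-k)p}{k(n+2)}<\tau<\frac{n-k}{n-2k}$ is satisfiable iff $p<p_*$, and the same estimate controls the flux $\mathcal J\cdot\nabla\chi$. A quadratic form in $(|\nabla u|^2/u,\sigma_k^{1/k})$ cannot see these anisotropic quantities. Also, the scale-invariant $w=u^{-(p-k)/(2k)}$ is the wrong normalization for $k\geq2$: at $p=p_*$ it is $u^{-(k+1)/(n-2k)}$, whereas $u^{-2k/(n-2k)}$ is the function that is quadratic on bubbles.

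Step 2 also fails as stated. Ou's iteration (Lemma~\ref{Lem:3.1}(i)) controls the flux $R^{-2}\int\sigma_{k-1}(A)|\nabla u|^2u^{M}\chi^{\theta}\dd x$ only when $\delta=-M-1>0$. The bound $\int_{B_R}u^{p-\delta}\dd x\leq CR^{n-2k(p-\delta)/(p-k)}$ it yields has a \emph{positive} exponent whenever $p\geq p_-$; this is precisely the obstruction at $p_-$ in the earlier work, so $p>k$ does not make it favorable. Worse, as $p\uparrow p_*$ the coercivity constraints force $(\tau,M)\to\bigl(\frac{n-k}{n-2k},-\frac{2k}{n-2k}\bigr)$. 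Hence for $n>4k$ and $p$ near $p_*$ you must take $M>-1$, i.e.\ $\delta<0$. Then the coefficients $b_s$ change sign, and the iteration only gives $\sum_s\Lambda_s\leq C\int u^{p-\delta}\chi^\theta\dd x+CR^{-2k}\int u^{k-\delta}\chi^{\theta-2k}\dd x$, with $\int u^{p-\delta}\chi^\theta\dd x$ uncontrolled.

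The idea you are missing is the paper's bootstrap (Proposition~\ref{Prop:3.3}).
\begin{enumerate}[label=\textup{(\arabic*)}]
\item Test $-\Delta u\geq c\,u^{p/k}$ against $u^{M+p}\chi^\ell$. This dominates $\int u^P\chi^\ell\dd x$, with $P=M+p+\frac pk$, by the coercive term $\int u^{M+p-1}|\nabla u|^2\chi^\ell\dd x$ plus $R^{-2}\int u^{M+p+1}\chi^{\ell-2}\dd x$.
\item Bound the coercive term by $CR^{-2}\Lambda_1$ through $\operatorname{div}\mathcal J$, and bound $\Lambda_1$ by Lemma~\ref{Lem:3.1} of either sign.
\item The troublesome $\int u^{p-\delta}=\int u^{M+p+1}$ now carries a factor $R^{-2}$ and a power below $P$. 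Young's inequality therefore absorbs everything and gives $\int_{B_R}u^P\dd x\leq CR^{n-2kP/(p-k)}\to0$, contradicting $u>0$.
\end{enumerate}
Finally, when $M+k+1<0$ (which happens only for $n=2k+1$), a negative power of $u$ cannot be absorbed by Young. In that case you need the lower bound $u\geq c|x|^{-(n-2k)/k}$ of Lemma~\ref{Lem:3.4}.
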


\medskip

We next define all the notions used in the weak formulation.  Let
\(\Omega\subset\R^n\) be open.  An upper-semicontinuous function
\(v:\Omega\to[-\infty,\infty)\), not identically \(-\infty\) on any
connected component, is called \(k\)-convex in \(\Omega\) if, whenever
\(\phi\in C^2(\Omega)\) touches \(v\) from above at \(x\in\Omega\),
\(D^2\phi(x)\in\overline{\Gamma_k}\).
Here ``touches from above'' is understood in the standard viscosity
test-function sense~\cite{CIL1992}; the equation itself is imposed below
as an equality of Hessian measures.
We denote this class by \(\Phi^k(\Omega)\).  For
\(v\in\Phi^k(\Omega)\), the Trudinger--Wang \(k\)-Hessian measure is
denoted by \(\mu_k[v]\)~\cite{TW1997,TW1999,TW2002}.  It is the Borel
measure which agrees with the classical density for smooth functions,
\[
    \mu_k[v]=\sigma_k(D^2v)\,\dd x
    \qquad\text{when }v\in C^2(\Omega),
\]
and is weakly continuous under local convergence of \(k\)-convex
functions: if \(v_j,v\in\Phi^k(\Omega)\) and \(v_j\to v\) locally in
measure, then \(\mu_k[v_j]\rightharpoonup\mu_k[v]\) weakly on compact
subsets~\cite{TW1999}, which means that
\[
    \int_\Omega \psi\,\dd\mu_k[v_j]
    \longrightarrow
    \int_\Omega \psi\,\dd\mu_k[v]
    \qquad\text{for every }\psi\in C_c(\Omega).
\]
Thus
\(\mu_k[-u]=u^p\,\dd x\) means equality of Borel measures, and \(\dd x\)
denotes Lebesgue measure.  The notation
\(L^\infty_{\mathrm{loc}}(\Omega)\) means essential boundedness on every
relatively compact subset of \(\Omega\). The assertion
\(-u\in\Phi^k(\Omega)\) is understood for the corresponding
upper-semicontinuous representative.

By solving local Dirichlet problems with smooth right-hand sides, we
extend Theorem~\ref{Thm:1.1} to locally bounded Hessian-measure solutions.

\begin{theorem}\label{Thm:1.2}
Let \(2\leq k<\frac n2\) and
\(\frac{nk}{n-2k}<p<\frac{k(n+2)}{n-2k}\).
If $u$ is a nonnegative solution to
\begin{equation}\label{eq:1.6}
  u\in L^\infty_{\mathrm{loc}}(\R^n),\qquad
    -u\in\Phi^k(\R^n),\qquad
    \mu_k[-u]=u^p\,\dd x
    \quad\text{in }\R^n,
\end{equation}
then $u\equiv0$ in $\R^{n}$.
\end{theorem}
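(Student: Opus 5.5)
The plan is to reduce Theorem~\ref{Thm:1.2} to the $C^2$ case, or more precisely to the integral estimates used in the proof of Theorem~\ref{Thm:1.1}, through smooth approximation by local Dirichlet problems.

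Assume $u\not\equiv0$. Since $-u\in\Phi^k(\R^n)$ and $u\in L^\infty_{\mathrm{loc}}$, the function $-u$ is a bounded $k$-convex function on every ball $B_R$, and $\mu_k[-u]=u^p\,\dd x$ has locally bounded density. First, mollify the right-hand side: set $f_\varepsilon=(u^p)*\rho_\varepsilon+\varepsilon$, which is smooth, positive, and converges to $u^p$ in $L^q(B_R)$ for all $q<\infty$. Also mollify the boundary data, $g_\varepsilon=(-u)*\rho_\varepsilon$. By standard $k$-convexity theory, $g_\varepsilon$ is smooth and $k$-convex, and it decreases to $-u$.

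Second, on a uniformly $(k-1)$-convex domain (a ball $B_R$), apply the Caffarelli--Nirenberg--Spruck solvability theory to obtain a $k$-convex solution $v_\varepsilon\in C^\infty(B_R)\cap C^{0}(\overline{B_R})$ of
\[
\sigma_k(D^2v_\varepsilon)=f_\varepsilon \ \text{in } B_R,\qquad v_\varepsilon=g_\varepsilon \ \text{on } \partial B_R .
\]
By the comparison principle for Hessian measures (Trudinger--Wang) and the weak continuity of $\mu_k$, we get $v_\varepsilon\to -u$ locally in measure, and in fact uniformly in $L^1_{\mathrm{loc}}$. The key uniform estimate is the Trudinger--Wang $L^\infty$ bound, which controls $v_\varepsilon$ by $\|f_\varepsilon\|_{L^{q}}$ for $q>n/(2k)$ together with the boundary data. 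This gives a uniform local bound for $w_\varepsilon:=-v_\varepsilon$.

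Third, rerun the proof of Theorem~\ref{Thm:1.1}, which I expect to be an integral identity or estimate argument of Serrin--Zou / Obata type using the divergence-free Newton tensor $T_{k-1}$ and cutoff functions. Apply it to $w_\varepsilon$, which satisfies $\sigma_k(-D^2w_\varepsilon)=f_\varepsilon$ where $f_\varepsilon\approx w^p$. Every term in those estimates involves at most $w$, $\nabla w$, and integrated quantities of the form $\int \eta\, w^a\sigma_j(-D^2w)$. For $j\le k$ these are controlled by Hessian-measure weak continuity, and for the gradient terms one uses the local gradient/H\"older estimates of Trudinger--Wang for $k$-convex functions with $k>n/2$ failing. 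Here one should instead use the integration-by-parts forms that avoid pointwise gradients. Passing to the limit $\varepsilon\to0$ and then $R\to\infty$ should yield the same contradiction inequality, for example $\int_{B_{R}} u^{p+1}\lesssim R^{n-\frac{2(p+1)k}{p-k}\cdot(\ldots)}\to0$ with subcriticality, giving $u\equiv0$.

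The main obstacle is the error coming from $f_\varepsilon\neq w_\varepsilon^p$. The identities in Theorem~\ref{Thm:1.1} use the equation nonlinearly, for instance by differentiating $u^p$ or weighting by powers of $u$. I will try to carry the discrepancy $f_\varepsilon-w_\varepsilon^p$ as an error term $\int\eta\,|f_\varepsilon-w_\varepsilon^p|\,w_\varepsilon^a|\nabla w_\varepsilon|^b$. This term tends to zero using the uniform local bounds, the $L^1_{\mathrm{loc}}$ gradient bounds for $k$-convex functions (valid since $\nabla w$ is in $L^q_{\mathrm{loc}}$ for $q<nk/(n-k)$), and $f_\varepsilon\to u^p$ together with $w_\varepsilon\to u$ in $L^q_{\mathrm{loc}}$. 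If the $C^2$ proof genuinely needs third derivatives through a $P$-function, I would instead choose $f_\varepsilon=$ the nonlinearity itself. That is, I would solve the Dirichlet problem $\sigma_k(-D^2w)=w^p$ with smooth boundary data via a monotone iteration between sub- and supersolutions built from $u$, and obtain a smooth solution to which the interior computation applies directly.
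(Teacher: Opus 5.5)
Your scaffolding is the same as the paper's Lemma~\ref{Lem:4.2}: local Dirichlet problems on balls with smooth data, the Trudinger--Wang comparison principle to get $v_\varepsilon\to-u$, then rerunning the $C^2$ estimates. The gap is the step that actually constitutes the proof, namely controlling the equation error inside the $C^2$ argument, and the error you propose to carry has the wrong form.

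\textbf{The differentiated error term.} The $C^2$ proof (Proposition~\ref{Prop:3.3}) rests on the coercive divergence of $\mathcal J=-u^ML_k(A)\nabla u-\tau u^{M-1}|\nabla u|^2T_{k-1}(A)\nabla u$, with $M<0$. This uses $\operatorname{div}L_k(A)=\frac{n-k}{n}\nabla\sigma_k(A)$, so the equation is differentiated. For approximants with $\sigma_k(A_j)=u_j^p+r_j$ this produces the term $-\frac{n-k}{n}u_j^M\nabla r_j\cdot\nabla u_j$. It is not of the form $|r_j|\,u_j^a|\nabla u_j|^b$, and $\nabla r_j$ need not tend to zero, so $r_j\to0$ in $L^q$ does not kill it.

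\textbf{How the paper handles it.} The paper integrates this term by parts onto the measure $\nu_j=\operatorname{div}(-u_j^M\nabla u_j)=u_j^M\sigma_1(A_j)-Mu_j^{M-1}|\nabla u_j|^2$. This measure is nonnegative because $M<0$. Its mass on compacts is uniformly bounded because $\nu_j=-\frac1{M+1}\Delta(u_j^{M+1})$ can be tested against a fixed cutoff. The error, and also $u_j^{M-1}r_j|\nabla u_j|^2$, is then bounded by $\|r_j\|_{L^\infty}$ times bounded quantities. This is why the paper needs \emph{uniform} convergence $r_j\to0$, which it gets from the H\"older continuity of $u$ (Wang's regularity theorem for bounded density) together with $u>0$.

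With your particular choice $f_\varepsilon=(u^p)*\rho_\varepsilon+\varepsilon$, one could instead try to show $\nabla r_\varepsilon\to0$ in $L^q_{\mathrm{loc}}$. That would need strong $L^q_{\mathrm{loc}}$ convergence of $\nabla w_\varepsilon$ for $k$-convex functions, which you neither state nor prove.

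\textbf{Gradient--Hessian products.} Quantities such as $\Lambda_s=\int\sigma_{k-s}(A)|\nabla u|^{2s}u^{-\delta-s}\chi^\theta\dd x$ mix gradients with Hessian quantities. Weak continuity of $\mu_k$ does not pass them to the limit. The paper never has to: at the level of $u_j$ it derives \eqref{eq:4.15} and \eqref{eq:4.17}, which contain only powers of $u_j$. It can do this because the Newton recursion uses the equation only at its first step, so only $\int r_ju_j^{-\delta}\chi^\theta\dd x=o(1)$ enters. It then lets $j\to\infty$ by uniform convergence.

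\textbf{The fallback.} Solving $\sigma_k(-D^2w)=w^p$ exactly on balls by monotone iteration does not repair this. Since $w\mapsto w^p$ is increasing, there is no comparison principle, and the Dirichlet problem can have several positive solutions. Moreover, there is no ordered sub/supersolution pair sandwiching $u$: $\sigma_k(-D^2(\lambda u))=\lambda^ku^p$ and $p>k$, so $\lambda u$ is a supersolution only for $\lambda<1$ and a subsolution only for $\lambda>1$. Solutions obtained by iteration therefore have no reason to converge to $u$, and their cutoff estimates give no information on $\int_{B_1}u^P\dd x$.

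\textbf{A smaller omission.} When $n=2k+1$ the argument also needs the lower bound $u\geq c|x|^{-(n-2k)/k}$. In the weak setting this must be re-proved through the Hessian-measure comparison principle, as in Lemma~\ref{Lem:4.4}.
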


\begin{remark}
    The local boundedness assumption in Theorem~\ref{Thm:1.2} cannot be
    removed without another local control. Indeed,
    \cite[Remark~2.9]{PV2008} gives a singular admissible solution of
    \eqref{eq:1.1}.
\end{remark}

{
For a nontrivial weak solution in the class above, local boundedness makes
the density \(u^p\) locally bounded.  The Hessian-measure regularity theorem
\cite[Theorem~9.2 and Corollary~9.1]{Wang2009} therefore provides a locally
H\"older continuous representative.  Since \(-u\) is \(k\)-convex, it is
subharmonic; hence \(u\) is superharmonic, and the strong minimum principle
gives \(u>0\).  For \(0<p\leq p_-=nk/(n-2k)\), the equality
\(\mu_k[-u]=u^p\,\dd x\) implies the Hessian-measure inequality used in
\cite[Theorem~1.4]{Ou2010}, whose notation
\(\Phi_k=\{u:-u\in\Phi^k(\R^n)\}\) is exactly the present solution class.
This proves nonexistence on the entire previously known range; on the subrange
\(k<p\leq p_-\), the same conclusion is also
\cite[Corollary~2.8]{PV2008}.  Theorem~\ref{Thm:1.2} covers
\(p_-<p<p_*\).  For the classical assertion, Lemma~\ref{Lem:2.2} directly
gives positivity and strict admissibility for every nontrivial solution;
one may then use \cite[Theorem~1.1]{Ou2010} on \(0<p\leq p_-\) and
Theorem~\ref{Thm:1.1} on \(p_-<p<p_*\).  Consequently:}
\begin{corollary}\label{Cor:1.4}
Let \(2\leq k<n/2\), and let \(u\) be a nonnegative solution of
\(-u\in\Phi^k(\R^n)\) and
\(\mu_k[-u]=u^p\,\dd x\) in \(\R^n\). Then \(u\equiv0\) in either of
the following cases.
\begin{enumerate}[label=\rm(\roman*)]
 \item \(0<p\leq p_-\).
 \item \(p_-<p<p_*\) and \(u\in L^\infty_{\mathrm{loc}}(\R^n)\).
\end{enumerate}
In particular, every nonnegative \(C^2\) solution of \eqref{eq:1.1}
with \(0<p<p_*\) vanishes identically.
\end{corollary}

\begin{remark}
The exponent in Corollary~\ref{Cor:1.4} is sharp: Wang
and Lei~\cite{WL2019} give positive entire radial
admissible solutions for \(p>p_*\). Theorems~\ref{Thm:1.1} and
\ref{Thm:1.2} and Corollary~\ref{Cor:1.4} close the gap
\(p\in(p_-,p_*)\) and prove the optimal Liouville theorems for
\eqref{eq:1.1} and \eqref{eq:1.6}.
\end{remark}

\medskip

Next, we consider the critical case \(p=p_*\). The radial classification result in \cite{WL2019} shows that the explicit
Hessian--Sobolev extremals
\begin{equation}\label{eq:1.7}
    u(x)
    =
    \bigl(a_0+b_0|x-x_0|^2\bigr)^{-\frac{n-2k}{2k}}
\end{equation}
are the unique radial solutions.  In the literature cited here, we are not
aware of a classification of nonnegative classical entire admissible
solutions of \eqref{eq:1.8} for \(k\geq2\) and \(n>2k\) without symmetry or
an additional condition at infinity.

Our second main result establishes the unconditional classification for
\eqref{eq:1.1} with \(p=p_*\) in the full range \(n>2k\).
\begin{theorem}\label{Thm:1.6}
Let \(k\geq2\) and \(n>2k\).
Suppose \(u\in C^2(\R^n)\) satisfies
\begin{equation}\label{eq:1.8}
    u\geq0,\qquad
    -D^2u\in\overline{\Gamma_k},\qquad
    \sigma_k(-D^2u)=u^{p_*}
    \quad\text{in }\R^n.
\end{equation}
Then, either $u\equiv0$, or \(u\) is the Hessian--Sobolev bubble \eqref{eq:1.7} for some
\(x_0\in\R^n\) and \(a_0,b_0>0\) such that
\begin{equation}\label{eq:1.9}
    \binom nk a_0
    \left(\frac{n-2k}{k}b_0\right)^k=1.
\end{equation}
Conversely, every function in \eqref{eq:1.7} satisfying
\eqref{eq:1.9} solves
\eqref{eq:1.8}.
\end{theorem}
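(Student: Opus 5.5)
The converse direction is a direct computation: for \(u=(a_0+b_0r^2)^{-\beta}\) with \(\beta=\frac{n-2k}{2k}\), the matrix \(-D^2u\) has eigenvalue
\[
\lambda_1=2\beta b_0 (a_0+b_0r^2)^{-\beta-1}
\]
with multiplicity \(n-1\), and radial eigenvalue
\[
\lambda_r=2\beta b_0(a_0+b_0r^2)^{-\beta-2}\bigl(a_0-(2\beta+1)b_0r^2\bigr).
\]
Expanding \(\sigma_k\) for this spectrum collapses the \(r^2\) terms and gives
\[
\sigma_k=\binom nk a_0(2\beta b_0)^k(a_0+b_0r^2)^{-k(\beta+1)-1}.
\]
Since \(k(\beta+1)+1=\beta p_*\), this equals \(u^{p_*}\) exactly when \eqref{eq:1.9} holds. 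Membership in \(\Gamma_k\) follows from the same formulas for \(\sigma_j\), \(j\le k\).

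For the forward direction, assume \(u\not\equiv0\). By Lemma~\ref{Lem:2.2}, \(u>0\) and \(-D^2u\in\Gamma_k\), so the Newton tensor \(T_{k-1}(-D^2u)\) is positive definite and divergence free. The plan is an integral-identity (Obata / Serrin--Zou type) argument, adapted to \(\sigma_k\) as in the approach the paper takes for Theorem~\ref{Thm:1.1}.

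\textbf{Step 1: change of variables.} Set \(u=w^{-\beta}\), mirroring the bubble profile so that bubbles correspond to quadratic polynomials \(w=a_0+b_0|x-x_0|^2\). Work with the traceless tensor
\[
E_{ij}=w_{ij}-\frac{\Delta w}{n}\delta_{ij}
\]
(modified by the gradient terms appearing in \(D^2u\)) together with a Newton-tensor weighted vector field
\[
Z_i=w^{a}\,T^{ij}_{k-1}\,\partial_j(\cdot),
\]
with the power \(a\) chosen so that the divergence identity yields
\[
\operatorname{div}Z= w^{a'}\,\langle T_{k-1},E\rangle\text{-type nonnegative quadratic terms}+(\text{terms vanishing at } p=p_*).
\]
The key algebraic input is a Newton--Maclaurin type inequality. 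In its equality case it forces all eigenvalues of the relevant matrix \(-D^2u+c\,\nabla u\otimes\nabla u/u\) to be equal, i.e.\ \(D^2w\) is a multiple of the identity.

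\textbf{Step 2: integrate against a cutoff.} Multiply the identity by \(\eta_R^m\) and integrate. This gives
\[
\int \eta_R^m\,\mathcal Q\le C\int |\nabla\eta_R|\,|Z|\,\eta_R^{m-1},
\]
where \(\mathcal Q\ge0\) is the quadratic form.

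\textbf{Step 3 (main obstacle): a priori decay with no assumptions at infinity.} To make the right-hand side small we need integral bounds such as
\[
\int_{B_R}u^{p_*+1}<\infty,\qquad \int_{B_{2R}\setminus B_R}u^{p}\lesssim R^{\,n-\frac{2kp}{p-k}},
\]
plus control of \(|\nabla u|/u\). My first attempt would be:
\begin{itemize}
\item use the Hessian-measure Wolff-potential estimates (Labutin, Phuc--Verbitsky) to get the lower bound \(u\ge c|x|^{2k-n\over k}\) and the estimate \(\int_{B_R}u^p\lesssim R^{n-2k}\sup_{B_{2R}}u^k\);
\item couple these with a Harnack inequality for \(k\)-Hessian equations and the local gradient estimate \(|\nabla \log u|\le C/R\) (Chou--Wang type, available since the right side is \(u^p\)).
\end{itemize}
Together these should bound the boundary term by \(CR^{-\delta}\left(\int \eta_R^m\mathcal Q\right)^{1/2}\) via Cauchy--Schwarz. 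At \(p=p_*\) the scaling is exactly critical, so \(\delta=0\) and the bound is only uniform. To close, I would show that \(\int\mathcal Q<\infty\) and that the tail \(\int_{B_{2R}\setminus B_R}\mathcal Q\to0\). Finally the term-by-term absorption gives \(\mathcal Q\equiv0\).

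\textbf{Step 4: conclude.} Since \(\mathcal Q\equiv0\), the equality case gives \(D^2w=\lambda(x)\Id\). For \(n\ge2\) this forces \(\lambda\) to be constant, so \(w\) is a quadratic polynomial \(b_0|x-x_0|^2+a_0\). Positivity of \(u\) on all of \(\R^n\) gives \(a_0,b_0>0\), and substituting into the equation yields \eqref{eq:1.9}.
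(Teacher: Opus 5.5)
Your converse computation and the last step (from \(D^2w=\lambda(x)\Id\) to a quadratic \(w\)) are correct. Your architecture is also the one the paper uses: a vector field with nonnegative divergence, whose equality case makes \(D^2(u^{-2k/(n-2k)})\) scalar, tested against cutoffs. The genuine gap is Step~3, where essentially all of the difficulty lies, and as proposed it does not close. After Cauchy--Schwarz you must make
\[
 R^{-2}\int_{B_{2R}}\sigma_{k-1}(-D^2u)\,|\nabla u|^2\,u^{-\frac{2k}{n-2k}}\dd x
\]
tend to zero, or at least stay bounded uniformly in \(R\), knowing nothing about \(u\) at infinity. Your closing step (\(\int\mathcal Q<\infty\), tail \(\to0\)) presupposes exactly this uniform bound. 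The tools you list do not supply it:
\begin{itemize}
\item The Harnack and gradient estimates \(|\nabla\log u|\le C/R\) on \(B_R\) hold only under the scale-invariant bound \(R^{2k}\sup_{B_{2R}}u^{p_*-k}\le N\), i.e.\ \(\sup_{B_{2R}}u\lesssim R^{-(n-2k)/(k+1)}\). At the critical exponent no such bound holds universally, since bubbles concentrate at arbitrary centers and scales. Proving it for a given entire solution is exactly the problem; at this stage not even boundedness of \(u\) is known.
\item The Wolff-potential estimate helps only in its pointwise form \(\mu_k[-u](B_r(x_0))\le Cu(x_0)^kr^{n-2k}\). It yields \(\int u^{p_*}<\infty\) only once one knows \(\inf_{B_{2R_j}}u\lesssim R_j^{-(n-2k)/k}\) along some sequence. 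In your \(\sup_{B_{2R}}u\) form it needs an upper bound you do not have.
\end{itemize}
Your diagnosis of the loss is also off: whether the cutoff term decays is governed by the sign of \(\delta=\frac{4k-n}{n-2k}\), not by criticality alone.

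Step~1 is also only schematic, and the sign is not automatic for \(k\ge2\). With \(A=-D^2u\) and \(L_k(A)=\frac{n-k}{n}\sigma_k(A)\Id-T_k(A)\), the \(T_{k-1}\)-term \(-u^{-\frac{n}{n-2k}}|\nabla u|^2T_{k-1}(A)\nabla u\) alone has divergence containing the indefinite term \(2u^{-\frac{n}{n-2k}}\nabla u^TL_k(A)\nabla u\). The paper uses the full field
\[
 \mathcal J=-u^{-\frac{2k}{n-2k}}L_k(A)\nabla u-\tfrac{n-k}{n-2k}u^{-\frac{n}{n-2k}}|\nabla u|^2T_{k-1}(A)\nabla u,
\]
whose first term supplies \(\tr(L_k(A)A)\). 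It combines this with the new matrix inequality \(L_k(A)^2\preceq\frac{n-k}{n}\tr(L_k(A)A)\,T_{k-1}(A)\) of Lemma~\ref{Lem:2.4}, both for \(\operatorname{div}\mathcal J\ge0\) and for the flux bound \(|\mathcal J\cdot Y|^2\le\frac{n-k}{n}u^{-\frac{2k}{n-2k}}|\nabla u|^2\operatorname{div}\mathcal J\,Y^TT_{k-1}(A)Y\) on which your Cauchy--Schwarz step rests.

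For Step~3 the paper gets the missing control from the equation itself, differently in three ranges:
\begin{itemize}
\item For \(2k<n<4k\), the Newton-tensor iteration of Lemma~\ref{Lem:3.1} with \(\delta>0\) bounds the displayed quantity by \(CR^{\mu-2}\) with \(\mu<2\), using Lemma~\ref{Lem:3.4} when \(n=2k+1\).
\item At \(n=4k\), \(\delta=0\) and the iteration degenerates. The paper replaces \(\mathcal J\) by \(h^{-\varepsilon}\mathcal J\), with \(h\) defined implicitly from \(u,\nabla u\), and proves \(|\mathcal J\cdot\nabla\log h|\le C_k\operatorname{div}\mathcal J\) (Proposition~\ref{Prop:5.5}, Lemma~\ref{Lem:5.6}).
\item For \(n>4k\), \(\delta<0\) leaves \(\int u^{p_*-\delta}\chi^\theta\) uncontrolled. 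The paper first proves \(u\in L^\infty\) and \(\int u^{p_*}<\infty\) by a full blow-up analysis: Lemmas~\ref{Lem:6.3}--\ref{Lem:6.5} and Proposition~\ref{Prop:6.6}, the B\^ocher expansion Theorem~\ref{Thm:6.7}, the Pohozaev-stability and isolated-pole argument of Lemma~\ref{Lem:6.9} and Proposition~\ref{Prop:6.10}, and Propositions~\ref{Prop:6.13}--\ref{Prop:6.15}. Only then does a logarithmic cutoff give \(\operatorname{div}\mathcal J\equiv0\).
\end{itemize}
None of this is in your proposal, so the forward direction is not yet proved.
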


\medskip

The last main result concerns the $\frac{n}{2}$-Hessian Liouville equation \eqref{eq:1.2} in the limiting dimension \(n=2k\). In particular, when $k=1$, Chen and
Li~\cite{ChenLi1991} proved that every $C^{2}$ solution $u$ to the 2D Liouville equation
\(-\Delta u=e^u\quad\text{in }\R^2\)
with finite mass $\int_{\R^2}e^u\dd x<\infty$ is a translated and dilated logarithmic bubble. This theorem is the two-dimensional exponential counterpart of the
critical Caffarelli--Gidas--Spruck classification theorem in \cite{CGS1989}. Gui and Moradifam introduced the sphere covering inequality and applied it to investigate Moser--Trudinger inequalities, Liouville type equations and mean-field equations, see \cite{GM,GHM}. {There are} substantial classification results for Liouville type equations with exponential nonlinearities in the semilinear and quasilinear settings, see \cite{CDQ1,ChenLi1991,DDGL,DGHP,DQ1,DQ2,Esposito2018,GM,GHM,Lin,Martinazzi,WX} and the references therein.
For classification of solutions to semilinear equations on Heisenberg group or CR manifolds via Jerison-Lee identities
and invariant tensor techniques, see \cite{MO,MOW} and the references therein.

We prove the following classification result for \eqref{eq:1.2} under {an} integral growth condition instead of the finite mass condition.
\begin{theorem}\label{Thm:1.7}
Let \(k\geq2\), $n=2k$, and let \(u\in C^2(\R^{n})\) satisfy
\begin{equation}\label{eq:1.10}
    -D^2u\in\Gamma_k,
    \qquad
    \sigma_k(-D^2u)=e^u
    \quad\text{in }\R^{n}.
\end{equation}
Suppose that, for some \(C>0\) and some sequence \(R_j\to\infty\),
\begin{equation}\label{eq:1.11}
    \int_{B_{R_j}}e^{-\frac{u}{k+1}}\dd x
    \leq CR_j^{n+2}.
\end{equation}
Then
\begin{equation}\label{eq:1.12}
    u(x)=
    \log\!\left(
       2^k(k+1)^k\binom{2k}{k}\lambda^k
    \right)
    -(k+1)\log\bigl(1+\lambda|x-x_0|^2\bigr)
\end{equation}
for some \(x_0\in\R^{n}\) and \(\lambda>0\). Conversely, every function of the form \eqref{eq:1.12} is an
admissible solution of \eqref{eq:1.10}.  Moreover,
\begin{equation}\label{eq:1.13}
    \int_{\R^{n}}e^u\dd x
    =
    \frac{2^k(k+1)^k\binom{2k}{k}\pi^k}{k!}.
\end{equation}
\end{theorem}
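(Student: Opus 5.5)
The plan is to treat Theorem~\ref{Thm:1.7} by the Obata/Jerison--Lee route, as a fully nonlinear version of the Serrin--Zou/Gidas--Spruck integral identity method. The substitution \(v=e^{-u/(k+1)}\) linearizes the expected bubble \eqref{eq:1.12} into a quadratic polynomial \(v=c(1+\lambda|x-x_0|^2)\). This also explains the weight in \eqref{eq:1.11}: it is exactly \(\int_{B_{R_j}}v\dd x\lesssim R_j^{n+2}\), the growth of a quadratic. The converse and the mass identity \eqref{eq:1.13} are direct computations. For the converse, \(-D^2u\) for \eqref{eq:1.12} has a radial eigenvalue and \(n-1\) tangential eigenvalues, and \(\sigma_k\) of it can be evaluated explicitly. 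For \eqref{eq:1.13}, one integrates \(e^u=C\lambda^k(1+\lambda|x|^2)^{-(k+1)}\) in \(\R^{2k}\) using the Beta function.

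For the rigidity, I would write the equation for \(v\). Since \(u=-(k+1)\log v\), we have
\[
-D^2u=\frac{k+1}{v}\Bigl(D^2v-\frac{\nabla v\otimes\nabla v}{v}\Bigr).
\]
The equation then becomes \(\sigma_k\bigl(D^2v-v^{-1}\nabla v\otimes\nabla v\bigr)=(k+1)^{-k}v^{-1}\), using \(n=2k\). Let \(W=D^2v-v^{-1}\nabla v\otimes\nabla v\in\Gamma_k\), and let \(E=W-\frac{\tr W}{n}\Id\) be its traceless part, or more precisely the appropriate \(\sigma_k\)-adapted deviation built from the Newton tensor \(T_{k-1}(W)\). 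The goal is to find a vector field \(Z\), cubic in derivatives of \(v\) and built from \(T_{k-1}(W)\), \(E\), \(\nabla v\) and powers of \(v\), with
\[
\operatorname{div}Z\ge c\,v^{\alpha}\,\langle T_{k-1}(W)E,E\rangle\ge0 .
\]
Equality should force \(E\equiv0\). The divergence-free property of \(T_{k-1}\) (used already in the introduction) and the Newton--Maclaurin inequalities on \(\Gamma_k\) are the tools. The computation parallels the identity used for the critical Theorem~\ref{Thm:1.6}, where the analogous substitution is \(v=u^{-2k/(n-2k)}\). I would reuse that algebraic structure in the limit \(n\to2k\), where the power substitution degenerates to the logarithm.

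Next I would multiply by a cutoff \(\eta^m\) supported in \(B_{2R_j}\) and integrate. The boundary term \(\int Z\cdot\nabla\eta^m\) is then controlled by Young's inequality. Part of it is absorbed into the good term \(\int\eta^m v^\alpha\langle T_{k-1}E,E\rangle\), and the rest reduces to \(R_j^{-\beta}\int_{B_{2R_j}}v\,\dd x\), possibly with a few derivatives removed by further integration by parts. Choosing \(\alpha\), \(\beta\) and \(m\) so that the remainder scales like \(R_j^{-(n+2)}\int_{B_{2R_j}} v\), hypothesis \eqref{eq:1.11} makes it bounded. A second iteration, namely Hölder against the good term on the annulus, then makes it tend to zero. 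This gives \(E\equiv0\).

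From \(E\equiv0\), \(W=\phi\,\Id\) for a function \(\phi\), and the standard argument applies. Differentiating \(D^2v-v^{-1}\nabla v\otimes\nabla v=\phi\Id\) and commuting derivatives shows that \(v\) is a quadratic polynomial with positive leading coefficient \(a|x-x_0|^2+b\). The equation for \(v\) then fixes the constants, giving \eqref{eq:1.12}.

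The main obstacle is finding the vector field \(Z\) with a signed divergence for general \(k\). The fully nonlinear structure means the cross terms involve \(\nabla T_{k-1}(W)\) contracted with \(E\), and these must be shown to combine into a nonnegative quantity on \(\Gamma_k\). A secondary difficulty is verifying that the cutoff error terms are controlled by \(\int v\) alone, and not by higher powers of \(v\) or \(|\nabla v|\). If needed, I would first establish a gradient bound of the form \(|\nabla v|^2\lesssim v\,\tr W+v^2/R^2\) locally, and use it to convert these error terms into powers of \(\int v\).
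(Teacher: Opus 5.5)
Your substitution is the right one: with $w:=-u/(k+1)$ and $A:=D^2w\in\Gamma_k$, your $v$ is $e^{w}$ and your $W$ is $e^{w}A$. However, the step you call the main obstacle is the entire content of the proof, and the proposal does not supply it. The paper's field is
\[
\widetilde{\mathcal J}=e^{w}\Bigl(L_k(A)\nabla w+\tfrac12|\nabla w|^2\,T_{k-1}(A)\nabla w\Bigr),\qquad L_k(A)=\tfrac12\sigma_k(A)\Id-T_k(A).
\]
Because $n=2k$, one has $\operatorname{div}L_k(A)=\frac12\nabla\sigma_k(A)$ and $T_{k-1}(A)A=L_k(A)+\frac12\sigma_k(A)\Id$. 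With $\sigma_k(A)=(k+1)^{-k}e^{-(k+1)w}$, the $\sigma_k|\nabla w|^2$ terms then cancel exactly, and
\[
\operatorname{div}\widetilde{\mathcal J}=e^{w}\Bigl[\tr\bigl(L_k(A)A\bigr)+2\nabla w^TL_k(A)\nabla w+\tfrac12|\nabla w|^2\,\nabla w^TT_{k-1}(A)\nabla w\Bigr].
\]
The middle term has no sign. The trace-level Newton--Maclaurin inequality only gives $\tr(L_k(A)A)\ge0$. What is needed is the matrix inequality $L_k(A)^2\preceq\frac12\tr(L_k(A)A)\,T_{k-1}(A)$ of Lemma~\ref{Lem:2.4}, proved by applying Newton--Maclaurin to the deleted tuples $\lambda(A)|i$. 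That inequality yields $\operatorname{div}\widetilde{\mathcal J}\ge0$. It also yields the flux bound $|\widetilde{\mathcal J}\cdot Y|^2\le\frac12e^{w}|\nabla w|^2\operatorname{div}\widetilde{\mathcal J}\,Y^TT_{k-1}(A)Y$, which turns the cutoff error into $\int e^{w}\sigma_{k-1}(A)|\nabla w|^2|\nabla\eta|^2$.

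Reducing this error to $R^{-2k}\int e^{w}$ requires the Newton-tensor integration-by-parts iteration of Lemma~\ref{Lem:7.3}, not a pointwise gradient bound. With that reduction, the error on each annulus is $O(1)$ by \eqref{eq:1.11}, which is exactly borderline. Your two-step H\"older argument is then a valid substitute for the paper's averaging over $m$ disjoint annuli: first $\operatorname{div}\widetilde{\mathcal J}\in L^1$, then smallness on escaping annuli. Take cutoffs supported in $B_{R_j}$, not $B_{2R_j}$, since \eqref{eq:1.11} only controls $B_{R_j}$.

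Second, your rigidity target is wrong, so even a correct $Z$ could not close the argument as you wrote it. The traceless part $E$ of $W$ does not vanish on the bubbles. For $v=c(1+\lambda|x|^2)$,
\[
W=2c\lambda\,\Id-\frac{4c\lambda^2\,x\otimes x}{1+\lambda|x|^2},
\]
whose radial eigenvalue $2c\lambda\frac{1-\lambda|x|^2}{1+\lambda|x|^2}$ differs from the tangential one $2c\lambda$. The bubbles satisfy \eqref{eq:1.11}. Hence no $Z$ can have both the lower bound $\operatorname{div}Z\ge c\,v^\alpha\langle T_{k-1}(W)E,E\rangle$ and the flux control you need, or your argument would exclude the bubbles.

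Likewise, $W=\phi\Id$ does not make $v$ quadratic. Since $W=v\,D^2\log v$, it makes $\log v$ quadratic, which is incompatible with the equation. The quantity that must vanish is the traceless Hessian of $v$ itself: $W+v^{-1}\nabla v\otimes\nabla v$ must be scalar.

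In the paper this is not read off from a coercive quadratic form. Instead, $\operatorname{div}\widetilde{\mathcal J}\equiv0$ forces $\widetilde{\mathcal J}=0$ and equality in Lemma~\ref{Lem:2.4} in every eigendirection with $\partial_iw\neq0$. The equality characterization of Lemma~\ref{Lem:2.5} then shows three things: $\nabla w$ is an eigenvector of $A$, the remaining eigenvalues coincide, and $|\nabla w|^2=\lambda_j-\lambda_i$. Together these are exactly $D^2(e^{w})=\frac{\Delta e^{w}}{2k}\Id$. This equality analysis is a second missing ingredient.
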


{No} finiteness assumption on \(\int_{\R^{2k}}e^u\dd x\) is made in Theorem~\ref{Thm:1.7}. The quantitative identity for the total mass in \eqref{eq:1.13} is a conclusion.

\medskip

Under a more natural finite-mass condition with a proper asymptotic assumption \(u(x)\to-\infty\) as \(|x|\to\infty\), we can prove the following classification result.
\begin{theorem}\label{Thm:1.8}
Let \(k\geq2\), \(n=2k\), and let \(u\in C^2(\R^n)\) satisfy
\eqref{eq:1.10}. Suppose that
\begin{equation}\label{eq:1.14}
    \int_{\R^n}e^u\dd x<\infty,
    \qquad
    \lim_{|x|\to\infty}u(x)=-\infty.
\end{equation}
Then \(u\) has the form \eqref{eq:1.12} for some \(x_0\in\R^n\) and
\(\lambda>0\), and \eqref{eq:1.13} holds.
\end{theorem}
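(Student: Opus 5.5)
The natural plan is to reduce Theorem~\ref{Thm:1.8} to Theorem~\ref{Thm:1.7}. It suffices to verify the integral growth condition \eqref{eq:1.11} for some sequence \(R_j\to\infty\); Theorem~\ref{Thm:1.7} then yields the form \eqref{eq:1.12} and the mass identity \eqref{eq:1.13}. Condition \eqref{eq:1.11} needs \(u\) to go to \(-\infty\) no faster than \((k+1)(2+o(1))\log|x|\). Indeed, if \(u(x)\geq -2(k+1)\log|x|-C\) for \(|x|\) large, then \(e^{-u/(k+1)}\leq C|x|^2\), and integrating over \(B_R\) gives \(CR^{n+2}\). So the whole task is a \emph{sharp logarithmic lower bound} on \(u\) at infinity.

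First, I would use that \(-D^2u\in\Gamma_k\) with \(k\geq1\) implies \(-\Delta u>0\), so \(u\) is superharmonic. Together with \(u\to-\infty\), this gives \(u\) bounded above by \(\max u\), which is attained. Since \(e^u\in L^1\), I would write \(f=e^u\) and compare \(u\) with the logarithmic potential in dimension \(n=2k\). The right comparison is not with \(-\Delta\) but with the \(k\)-Hessian operator, through radial barriers. Set \(v(x)=-\alpha\log|x|\). Its eigenvalues of \(-D^2v\) are \(\alpha/|x|^2\) (multiplicity \(n-1\)) and \(-\alpha/|x|^2\) (radial). Hence
\[
\sigma_k(-D^2v)=\alpha^k|x|^{-2k}\Bigl(\tbinom{n-1}{k}-\tbinom{n-1}{k-1}\Bigr),
\]
which is \(\geq 0\) and positive when \(n>2k\), but vanishes when \(n=2k\). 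So \(-\alpha\log|x|\) is \(k\)-Hessian harmonic in the critical dimension, and \(\log|x|\) is the fundamental-solution profile.

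The key step is a comparison principle on an exterior domain \(\{|x|>r_0\}\). Given \(\varepsilon>0\), take \(w=-\beta\log|x|-C\) and compare \(-u\) and \(-w\) (both \(k\)-convex) on annuli \(r_0<|x|<R\). We use \(\mu_k[-u]\geq 0=\mu_k[-w]\) and let \(R\to\infty\), exploiting that \(u\to-\infty\) slowly versus \(w\). The aim is to show that \(u\geq w\) with \(\beta\) equal to the asymptotic ``mass slope'' \(\beta_\infty\). Here \(\beta_\infty\) is defined by the \(k\)-Hessian flux \(\lim_{r\to\infty}\) of the appropriate boundary integral of \(T_{k-1}(-D^2u)\nabla u\cdot\nu\) over \(\partial B_r\). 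By the divergence identity, this flux equals \(k\int_{B_r}e^u\), so finite mass makes it converge.

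Then I would show \(\beta_\infty\leq 2(k+1)\), using finiteness of \(\int e^u\). If \(u\leq -\beta\log|x|\) with \(\beta\) large, integrability holds automatically, so the upper limit must come from a Pohozaev-type identity instead. Multiply the equation by \(x\cdot\nabla u\) and use the divergence-free structure of \(T_{k-1}\) to get a quantitative relation between the total mass and the flux, of the form \(\beta_\infty=2(k+1)\). The boundary terms vanish thanks to the asymptotics \(u=-\beta_\infty\log|x|+O(1)\), which I would establish first via a Brezis--Merle/Trudinger--Wang-type potential estimate for Hessian measures in dimension \(2k\). I expect this asymptotic step to be the main obstacle: it requires Wolff-potential bounds (Labutin, Trudinger--Wang) to control \(u+\beta_\infty\log|x|\) from below, together with enough gradient decay \(|\nabla u|\lesssim|x|^{-1}\) to kill the Pohozaev boundary terms. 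With \(u\geq-(2(k+1)+\varepsilon)\log|x|-C\), the integral is bounded by \(CR^{n+2+\varepsilon}\). So one needs exactly \(\varepsilon=0\), which the \(O(1)\) asymptotics supply. Once \eqref{eq:1.11} holds, Theorem~\ref{Thm:1.7} finishes the proof.
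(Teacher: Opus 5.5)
Your reduction is the right one and matches the paper: it suffices to verify \eqref{eq:1.11} and invoke Theorem~\ref{Thm:1.7}. But the step that carries all the content is not supplied, and the mechanisms you point to cannot supply it. That step is a growth bound with the sharp slope \(2(k+1)\) for \(-u\) and no \(\varepsilon\)-loss.

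\emph{The exterior comparison runs the wrong way.} Since \(\mu_k[-u]\geq0=\mu_k[\beta\log|x|+C]\), Lemma~\ref{Lem:4.1} gives \(u\geq-\beta\log|x|-C\) on \(B_R\setminus B_{r_0}\) only if that inequality already holds on \(\partial B_R\), which is precisely the unknown. In Lemma~\ref{Lem:3.4} the barrier vanishes on the outer sphere and \(u>0\) closes the argument. In dimension \(2k\) the radial \(k\)-harmonic functions \(a\log|x|+b\) have no finite limit at infinity, so there is no free outer boundary condition.

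\emph{Wolff-potential bounds are not sharp enough.} The estimates of Labutin and Trudinger--Wang have non-matching multiplicative constants in the upper and lower bounds, plus a boundary term. At best they give \(u\geq-C\log|x|-C\) with a non-sharp \(C\), never \(u=-\beta_\infty\log|x|+O(1)\). There is also no linear representation formula to fall back on as in the case \(k=1\).

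\emph{The Pohozaev step needs more than gradient decay.} Even granting such asymptotics, the boundary integrand on \(\partial B_r\) (see \eqref{eq:2.34}) contains \(T_k(-D^2u)[\nu,\nu]\) and \(T_{k-1}(-D^2u)[\nabla u,\nu]\), which involve second derivatives of \(u\). So \(|\nabla u|\lesssim|x|^{-1}\) alone does not let you pass to the limit.

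The paper never proves pointwise asymptotics. Set \(v=-u/(k+1)\).
\begin{itemize}
\item Theorem~\ref{Thm:8.1} converts finite \(k\)-Hessian mass into \(\Delta_nv\geq0\) with finite total \(n\)-Laplacian mass. This uses the level-set integrals \(P_j\), the Wu--Yi Michael--Simon inequality, and the ODE iteration of Lemma~\ref{Lem:8.8}.
\item Kilpel\"ainen--Zhong then gives \(\sup_{B_R}v\leq C\log R\).
\item Convexity of the spherical means of \(V(\tau,\theta)=v(e^\tau\theta)\) produces an asymptotic slope \(\beta\). This comes from Corollary~\ref{Cor:2.8}, which has no first-order term when \(n=2k\).
\item Log-convexity of the spherical mean of \(e^V\) then gives \(\int_{B_R}e^v\dd x\leq CR^{n+\beta}\) with no \(\varepsilon\)-loss.
\item To show \(\beta=2\), the paper uses only a sequential blow-down \(v(R_ix)-c_i\to\beta\log|x|\) in \(C^1_{\mathrm{loc}}(\R^n\setminus\{0\})\).
\item It then applies the Pohozaev identity on a bounded component of a sublevel set \(\{v_i<s\}\) (Lemma~\ref{Lem:8.4}). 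There the Newton-tensor boundary terms reduce to a positive measure of known total mass plus bulk integrals, so \(C^1\) convergence suffices.
\end{itemize}
These are the ingredients your sketch is missing.
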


\subsection{The crucial difficulties, novelty and key ingredients in our proof}

The substantial difficulties to establish {Liouville theorems and classification results} in the range $p\in (p_-,p_*]$ are highlighted by the fact that both $p_-$ and $p_*$ can be regarded as critical {exponents} (when $k=1$, $p_-=\frac{n}{n-2}$ is the Serrin critical exponent, while $p_*$ is the well-known Sobolev critical exponent). Furthermore, since {the} $k$-Hessian equation \eqref{eq:1.1} is fully nonlinear, we have the following crucial difficulties:
\begin{itemize}
  \item Full nonlinearity of the \(k\)-Hessian equation \eqref{eq:1.1}:
  the standard moving-plane and moving-sphere arguments do not directly
  preserve both the equation and the G\aa rding cone. The vector field built
  only from \(T_{k-1}(A)\) does not provide the coercive term needed below.
  \item Absence of Kelvin type transforms: Kelvin type transforms preserve neither \(\sigma_k(-D^2u)=u^p\) nor the admissibility condition \(-D^2u\in\Gamma_k\), so we cannot gain any decay at infinity.
  \item Lack of conformal invariance: We cannot directly apply the moving-sphere
  arguments for conformally invariant fully nonlinear equations in
  \cite{CGY2003,CLL,LiLi2003,LiLi2005,Li2006}.
  \item The exponent $p_-$ is the threshold for previous arguments: {the Liouville results in \cite{PV2008,Ou2010} and the recent nonexistence result of Chen, Hu and Wang \cite{CHW2025} require $p\leq p_-$, while neither Liouville results for \(p\in(p_-,p_*)\) nor classification without symmetry assumptions for \(p=p_*\) were known.} Fang, Ma and Wei~\cite{FMW2023} explicitly stated the range \(p_-<p\leq p_*\) as open.
\end{itemize}

\medskip

In order to overcome the above crucial difficulties, we introduce the ``Crucial vector field $\mathcal{J}$+Coercivity of $\operatorname{div}\mathcal{J}$+Quantitative Newton-Maclaurin inequality for matrix+cutoff integral estimates/divergence free of $\mathcal{J}$ $\Rightarrow$ {Rigidity results}" framework, which consists {of} the following novelty and key ingredients:

\medskip

\noindent\textbf{Crucial vector field $\mathcal{J}$ and coercivity of $\operatorname{div}\mathcal{J}$:} In {the} subcritical case $p<p_*$, we need {a} decay estimate on {the} cutoff integral $\int_{\R^n}u^P\chi^\ell\dd x$ so as to derive the Liouville theorem. By Lemma \ref{Lem:3.1}, if we use the cutoff integral estimate \eqref{eq:3.4} for $\delta>0$ directly, then the decay estimate for the term $R^{-2k}
    \int_{\R^n} u^{k-\delta}\chi^{\theta-2k}\dd x$ on the right-hand side requires $p\leq p_-$. In order to break the restriction $p\leq p_-$, we prove a new cutoff integral estimate \eqref{eq:3.5} for $\delta<0$. Unfortunately, the cutoff integral term $\int_{\R^n} u^{p-\delta}\chi^\theta\dd x$ appears on the right-hand side of \eqref{eq:3.5} and hence loses control. We need to introduce some new ideas and methods. First, by testing the equation with {a} suitable function and integrating by parts, we control the cutoff integral $\int_{\R^n}u^P\chi^\ell\dd x$ by $\int_{\R^n}u^{M+p-1}|\nabla u|^2\chi^\ell\dd x$. Then, our goal is to estimate $\int_{\R^n}u^{M+p-1}|\nabla u|^2\chi^\ell\dd x$. To this end, we will introduce a crucial vector field $\mathcal{J}$ and show the coercivity of $\operatorname{div}\mathcal{J}$, namely,
    \begin{equation}\label{eq:1.15}
      \operatorname{div}\mathcal{J}\geq Cu^{M+p-1}|\nabla u|^2.
    \end{equation}
    The term \(-{\tau}u^{M-1}|\nabla u|^2T_{k-1}(A)\nabla u\) appears in the vector field. Its divergence
    contains precisely $u^{M+p-1}|\nabla u|^2$, but it also contains \(\nabla u^TL_k(A)\nabla u\). The divergence of \(-u^ML_k(A)\nabla u\) in \eqref{eq:3.38} supplies \(\tr(L_k(A)A)\). We will define
    $$\mathcal{J}:=-\tau u^{M-1}|\nabla u|^2T_{k-1}(A)\nabla u- u^ML_k(A)\nabla u,$$
    then $\operatorname{div}\mathcal{J}$ can control $u^{M+p-1}|\nabla u|^2$ pointwise, provided that \(\tr(L_k(A)A)\) {can} control \(\nabla u^TL_k(A)\nabla u\). As to the critical case $p=p_*$, inspired by {the} subcritical case, we also introduce the vector field $\mathcal{J}$. We will find that the constant $C=0$ in \eqref{eq:1.15} for the critical case, so we need to show that \(\operatorname{div}\mathcal{J}\geq0\).

\medskip

\noindent\textbf{Quantitative Newton-Maclaurin inequality for matrix:}
In order to control \(\nabla u^TL_k(A)\nabla u\) by \(\tr(L_k(A)A)\), we will prove the quantitative Newton-Maclaurin inequality for matrix (see \eqref{eq:2.12}), which is crucial in our proof. For \(A\in\Gamma_k\), define \(L_k(A)=\frac{n-k}{n}\sigma_k(A)\Id-T_k(A)\).
The classical scalar Newton--Maclaurin inequality (see Lemma \ref{Lem:2.1}) implies
\begin{equation}
    \tr(L_k(A)A)=\frac{n-k}{n}\sigma_1(A)\sigma_k(A)
    -(k+1)\sigma_{k+1}(A)\geq0.
\end{equation}
We will prove the following key matrix inequality (see \eqref{eq:2.12}) that will be used frequently throughout our paper:
\begin{equation}\label{eq:1.16}
    L_k(A)^2
    \preceq
    \frac{n-k}{n}
    \left[
       \frac{n-k}{n}\sigma_1(A)\sigma_k(A)
       -(k+1)\sigma_{k+1}(A)
    \right]T_{k-1}(A).
\end{equation}
Since \(T_{k-1}(A)>0\), inequality~\eqref{eq:1.16} is equivalent to the directional lower bound
\[
 \tr(L_k(A)A)=\frac{n-k}{n}\sigma_1(A)\sigma_k(A)
 -(k+1)\sigma_{k+1}(A)
 \geq
 \frac{n}{n-k}
 \frac{|L_k(A)\xi|^2}
      {\xi^TT_{k-1}(A)\xi}
 \qquad(\xi\ne0).
\]
Thus inequality~\eqref{eq:1.16} can be regarded as {a} quantitative Newton-Maclaurin inequality for matrix. By using the key matrix inequality \eqref{eq:1.16}, we can control \(\nabla u^TL_k(A)\nabla u\) by \(\tr(L_k(A)A)\) in the proof of Proposition \ref{Prop:3.3}, and hence obtain $\operatorname{div}\mathcal{J}\geq Cu^{M+p-1}|\nabla u|^2$. By using the key matrix inequality \eqref{eq:1.16} again to estimate the integral $\int_{\R^n}\operatorname{div}\mathcal{J}\chi^\ell\dd x$, we can derive \(\int_{\R^n}u^{M+p-1}|\nabla u|^2\chi^\ell\dd x\leq CR^{-2}\Lambda_1\). Consequently, we can apply Lemma \ref{Lem:3.1} even if $\delta<0$ to obtain \[\int_{\R^n}u^{M+p-1}|\nabla u|^2\chi^\ell\dd x
    \leq
    CR^{-2}
    \int_{\R^n}
    u^{M+p+1}\chi^{\ell-2}\dd x
    +
    CR^{-2k-2}
    \int_{\R^n}
    u^{M+k+1}\chi^{\ell-2k-2}\dd x.\]
Therefore, we can obtain the upper bound estimate for $\int_{\R^n}u^P\chi^\ell\dd x$ in Proposition \ref{Prop:3.3}. As to the critical case $p=p_*$, by using the key matrix inequality \eqref{eq:1.16}, we can also show that $\operatorname{div}\mathcal{J}\geq0$.

\medskip

\noindent\textbf{Cutoff integral estimates/divergence free of $\mathcal{J}$ $\Rightarrow$ {Liouville theorems/classification results}:}
In {the} subcritical case $p<p_*$, by the cutoff integral estimate for $\int_{\R^n}u^P\chi^\ell\dd x$ in Proposition \ref{Prop:3.3}, we can show that
\[\int_{B_R}u^P\dd x\leq C R^{-\theta}\rightarrow0, \qquad \text{as} \,\, R\rightarrow+\infty.\]
This leads to $u\equiv0$, and hence {the} Liouville theorem holds. As to the critical case $p=p_*$, we first prove that $\operatorname{div}\mathcal{J}\equiv0$.
To this end, by using the key matrix inequality \eqref{eq:1.16}, we prove that
\begin{equation*}
    |\mathcal J\cdot Y|^2
    \leq
    \frac{n-k}{n}\,
    u^{-2k/(n-2k)}|\nabla u|^2
    \operatorname{div}\mathcal J\,
    Y^TT_{k-1}(A)Y,
\end{equation*}
and
\begin{equation}
    \int_{\R^n}\eta^{\ell+2}
       \operatorname{div}\mathcal J\dd x
    \leq
    C
    \int_{\R^n}
    \eta^\ell|\nabla\eta|^2
    \sigma_{k-1}(A)|\nabla u|^2u^{-2k/(n-2k)}\dd x.
\end{equation}
By Lemma \ref{Lem:3.1} (when $n=4k$, $\delta=0$, Lemma \ref{Lem:3.1} cannot be applied, we will discuss it later), we get
\(\int_{\R^n} \sigma_{k-1}(A)|\nabla u|^2 u^{-\delta-1}\chi^{4k}\dd x \leq CR^{\mu}\),
Since $\mu<2$, it follows that
\[\int_{B_R}\operatorname{div}\mathcal J\dd x\leq C R^{-\theta}\rightarrow0, \qquad \text{as} \,\, R\rightarrow+\infty,\]
and hence $\operatorname{div}\mathcal{J}\equiv0$. Then, we prove in Lemma \ref{Lem:2.5} the characterization of equality for the Quantitative Newton-Maclaurin inequality for matrix \eqref{eq:1.16}. Then, by using the key matrix inequality \eqref{eq:1.16} and Lemma \ref{Lem:2.5}, we derive that \[\operatorname{div}\mathcal{J}\equiv0 \qquad \Rightarrow \qquad D^2\left(u^{-2k/(n-2k)}\right)=\lambda \Id,\]
which leads to the classification results.

\medskip

\noindent\textbf{The special dimension $n=4k$ for the critical case $p=p_*$:}
At \(n=4k\), \eqref{eq:5.1} implies \(M=-1\), so
\(\delta=-M-1=0\). Hence Lemma \ref{Lem:3.1} is not available for us to show $\operatorname{div}\mathcal{J}\equiv0$. In order to overcome such difficulty, our key idea is to replace the vector field $\mathcal J$ by $h^{-\varepsilon} \mathcal J$  with the weighted function $h$ properly chosen depending on $u$. Then we have
\begin{equation}
    \operatorname{div}(h^{-\varepsilon}\mathcal J)
    =h^{-\varepsilon}
    \bigl(\operatorname{div}\mathcal J
       -\varepsilon\mathcal J\cdot\nabla\log h\bigr).
\end{equation}
In order to control $h^{-\varepsilon}\operatorname{div}\mathcal J$ by $ \operatorname{div}(h^{-\varepsilon}\mathcal J)$, we need to show the following crucial estimate (see Proposition \ref{Prop:5.5}):
\begin{equation}
    |\mathcal J\cdot\nabla\log h|\leq C_k\operatorname{div}\mathcal J
    \qquad\text{in }\R^n.
\end{equation}
Therefore, we derive
\begin{equation}
    \operatorname{div}(h^{-\varepsilon}\mathcal J)
    \geq\frac12h^{-\varepsilon}\operatorname{div}\mathcal J,
\end{equation}
and hence
\begin{equation}\begin{split}
    \int\chi^{8k+2}h^{-\varepsilon}
       \operatorname{div}\mathcal J\dd x
   \leq CR^{-2}\int\chi^{8k}\sigma_{k-1}(A)|\nabla u|^2
       u^{-1-\varepsilon/k}\dd x.
\end{split}
\end{equation}
Then, by using Lemma \ref{Lem:3.1}, we get
\(\int\chi^{8k}\sigma_{k-1}(A)|\nabla u|^2 u^{-1-\varepsilon/k}\dd x\leq CR^{\mu}\),
Since $\mu<2$, it follows that
\[\int_{B_R}h^{-\varepsilon}\operatorname{div}\mathcal J\dd x\leq C R^{-\theta}\rightarrow0, \qquad \text{as} \,\, R\rightarrow+\infty,\]
and hence $\operatorname{div}\mathcal{J}\equiv0$.

\medskip

\noindent\textbf{The case \(n>4k\) for the critical case $p=p_*$:}
We first apply this framework to prove the classification result under the integral growth condition
\eqref{eq:6.3}. It remains to prove
\eqref{eq:6.3} directly from the equation. To this end, we will introduce some new tools and methods. Lemmas~\ref{Lem:6.3}, \ref{Lem:6.4}, and
\ref{Lem:6.5}
provide the new \(k\)-Hessian compactness and Newton tensor argument. The construction of the $P$-function is crucial, which is inspired by \eqref{eq:5.18}. We also developed a B\^ocher expansion for solutions with single atomic Hessian measure in
Theorem~\ref{Thm:6.7}, and the stability estimate for the Pohozaev
integral in Lemma~\ref{Lem:6.9}. Theorem~\ref{Thm:6.7}, Lemma~\ref{Lem:6.9}, and
Proposition~\ref{Prop:6.10} provide the new isolated-pole
argument. The \(k\)-Hessian compactness, Newton tensor argument, B\^ocher expansion, stability estimate for the Pohozaev
integral and isolated-pole
argument are new tools introduced by us. We also use the ideas of point selection and
the choice of first-contact radius from \cite{Zhang} in Propositions \ref{Prop:6.6}, \ref{Prop:6.13} and \ref{Prop:6.14}.

\noindent\textbf{The case \(n=2k\) for the $\frac{n}{2}$-Hessian Liouville equation \eqref{eq:1.2}:} In Section 7, we apply this framework to prove the classification result under the integral growth condition
\eqref{eq:1.11} and hence derive the classification result in Theorem \ref{Thm:1.7}. In Section 8, our goal is to prove \eqref{eq:1.11} directly from the equation under a more natural finite-mass condition with a proper asymptotic assumption $\lim\limits_{|x|\rightarrow\infty}u(x)=-\infty$. To this end, take \(v=-u/(k+1)\). The proper asymptotic assumption gives \(v(x)\to+\infty\) as $|x|\rightarrow+\infty$, so every sublevel set of \(v\) is bounded. The
finite-mass assumption implies $\int_{\R^n}\sigma_k(D^2v)\dd x<\infty$. The main difficulty is that $\int_{\R^n}\sigma_k(D^2v)\dd x<\infty$ does not directly imply the
integral growth condition \eqref{eq:1.11}. In order to overcome this difficulty, our key idea is to use the level set analysis, connect the
\(\frac n2\)-Hessian operator with the \(n\)-Laplacian through the regular level sets of \(v\) and derive a global estimate for the
\(n\)-Laplacian \(\Delta_n v=\operatorname{div}(|\nabla v|^{n-2}\nabla v)\) from the finite total \(k\)-Hessian mass. For a regular value \(t\), set \(\Sigma_t=\{v=t\}\) and
\begin{equation}
 P_j(t)=\int_{\Sigma_t}|\nabla v|^{n-2j+1}
 \sigma_{j-1}\bigl(D^2v|_{T\Sigma_t}\bigr)\dd S,
 \qquad 1\leq j\leq k.
\end{equation}
By combining the Newton-tensor identities of
Reilly~\cite[Section~2]{Reilly1973}, the inequalities in
Wang~\cite[\S2.5]{Wang2009}, the level-set inequality of Wu and
Yi~\cite[Corollary~1.3]{WuYi}, and iterating from \(j=k\) to \(j=1\), we derive
\begin{equation*}
 \sup_{t\ {\rm regular}}P_j(t)
 \leq
 C_{k,j}\left(
 \int_{\R^n}\sigma_k(D^2v)\dd x
 \right)^{\frac{n-j}{k}},
 \qquad 1\leq j\leq k.
\end{equation*}
Since $P_1(t)=\int_{\{v<t\}}\Delta_n v\dd x$, we obtain
\begin{equation*}
 \Delta_n v\geq0,\qquad
 \int_{\R^n}\Delta_n v\dd x
 \leq
 C_k\left(
 \int_{\R^n}\sigma_k(D^2v)\dd x
 \right)^{2-\frac1k}.
\end{equation*}
This estimate provides an interesting and surprising connection between the \(\frac{n}{2}\)-Hessian operator and the
\(n\)-Laplacian (see Theorem \ref{Thm:8.1}).

The logarithmic estimate of Kilpel\"ainen and
Zhong~\cite[Theorem~1.2]{KilpelainenZhong} provides the upper bound on the growth
of \(v\).  After rescaling, Theorems~\ref{Thm:8.2}
and~\ref{Thm:8.3} yield that, for some \(\beta>0\),
\begin{equation*}
 v(R_i x)-c_i\longrightarrow\beta\log|x|
 \quad\hbox{in }C^1_{\mathrm{loc}}(\R^n\setminus\{0\}),
\end{equation*}
together with
\begin{equation*}
 \int_{\R^n}\Delta_n v\dd x
 =|\mathbb S^{n-1}|\beta^{n-1},
 \qquad
 \int_{\R^n}\sigma_k(D^2v)\dd x
 =\frac{|\mathbb S^{n-1}|}{k}
 \binom{n-1}{k-1}\beta^k.
\end{equation*}
Finally, applying \eqref{eq:8.14}--\eqref{eq:8.15} on the bounded
connected components of the rescaled sublevel sets gives
\(\beta\int_{\R^n}\sigma_k(D^2v)\dd x
=2\int_{\R^n}\sigma_k(D^2v)\dd x\).
Hence \(\beta=2\). Then, \eqref{eq:8.7} implies that
\(\int_{B_R}e^{-u/(k+1)}\dd x
=\int_{B_R}e^v\dd x\leq CR^{n+2}\),
which is \eqref{eq:1.11}. By Theorem~\ref{Thm:1.7}, we derive the
finite-mass classification in Theorem \ref{Thm:1.8}.

\subsection{Notation}

We write \(B_R(x)\) for the Euclidean ball of radius \(R\) centered at
\(x\), and \(B_R=B_R(0)\).  {We write
\(\partial_j:=\partial_{x_j}\).}  The identity matrix is \(\Id\), and
\(\tr B=\sum_iB_{ii}\) denotes the matrix trace.  For a symmetric
matrix \(B\), its bilinear form is written \(X^TBY\).  The relation
\(B\preceq C\) means that \(C-B\) is positive semidefinite.
In the differential arguments, \(A\) denotes an arbitrary
matrix in \(\Gamma_k\).  The notation \(\Phi^k(\Omega)\) and
\(\mu_k[v]\) refers to the Trudinger--Wang class of \(k\)-convex
functions and its Hessian measure.

\medskip

The rest of the paper is organized as follows.  In
Section~\ref{Sec:2}, we prove
\eqref{eq:1.16} and Lemma~\ref{Lem:2.5}.  Proposition~\ref{Prop:2.7}
is used in Section~\ref{Sec:8}, and the Pohozaev identity is used in
Sections~\ref{Sec:6} and~\ref{Sec:8}.  In
Section~\ref{Sec:3}, Lemma~\ref{Lem:3.1} and
Proposition~\ref{Prop:3.3} lead to
Theorem~\ref{Thm:1.1}. Section~\ref{Sec:4} proves
Theorem~\ref{Thm:1.2}.  Section~\ref{Sec:5} proves
the divergence-rigidity part of Theorem~\ref{Thm:1.6}.
Section~\ref{Sec:6}
first proves the classification under \eqref{eq:6.3}, and then verifies
\eqref{eq:6.3} and completes the proof of Theorem~\ref{Thm:1.6}.  Section
\ref{Sec:7} proves
Theorem~\ref{Thm:1.7}.
{Section~\ref{Sec:8} proves Theorem~\ref{Thm:1.8}.}

\begin{remark}
Every nonzero classical solution considered below is strictly admissible.
Thus \(\sigma_k^{1/k}\) is locally uniformly elliptic on its Hessian
range, and the interior Evans--Krylov and Schauder estimates give
\(u\in C^\infty_{\mathrm{loc}}(\R^n)\); see
\cite{Evans1982,Krylov1982,CaffarelliCabre}.  Hence the assumption
\(u\in C^2\) is sufficient for all differential calculations below.
\end{remark}

\section{Newton tensor inequalities and integral identities}
\label{Sec:2}
In this section, we recall the Newton-tensor identities, the Newton-Maclaurin inequalities and prove a crucial matrix inequality \eqref{eq:2.12} used in
\eqref{eq:2.20}, \eqref{eq:3.42} and \eqref{eq:5.6}, and other Newton tensor inequalities.
We also prove Proposition~\ref{Prop:2.7}, used in Section~\ref{Sec:8},
and the Pohozaev identity used in Sections~\ref{Sec:6}
and~\ref{Sec:8}.

\subsection{Newton tensors and a quantitative Newton--Maclaurin inequality for matrix}

We first recall the Newton tensor identities and the Newton--Maclaurin inequalities. We then prove a crucial quantitative Newton--Maclaurin {inequality} in Lemma \ref{Lem:2.4}, which indicates that the multiple of Newton tensor and the Newton deficit {control} the square of a trace-free tensor. Its equality case will be used to prove the classification result in the critical case.

For a symmetric matrix \(A\) and \(0\leq j\leq n-1\), the
\(j\)-th Newton tensor is given by
\begin{equation}\label{eq:2.1}
    T_j(A)
    :=
    \frac{\partial\sigma_{j+1}(A)}{\partial A}
    =
    \sigma_j(A)\Id-\sigma_{j-1}(A)A+\cdots+(-1)^jA^j.
\end{equation}
The matrices \(A,T_0(A),\dots,T_{n-1}(A)\) commute, and
\begin{align}
    T_j(A)
    &=
    \sigma_j(A)\Id-T_{j-1}(A)A,
    \qquad j\geq1,
    \label{eq:2.2}\\
    \tr T_j(A)
    &=
    (n-j)\sigma_j(A),
    \label{eq:2.3}\\
    \tr\bigl(T_j(A)A\bigr)
    &=
    (j+1)\sigma_{j+1}(A).
    \label{eq:2.4}
\end{align}
{
For every \(w\in C^3(\Omega)\), the Newton tensors of its Hessian satisfy
\begin{equation}\label{eq:2.5}
    \partial_i\bigl(T_j(D^2w)\bigr)_{i\ell}=0,
    \qquad 0\leq j\leq n-1.
\end{equation}
This follows from the antisymmetry of the generalized Kronecker symbol;
see, for example, \cite[Section~2]{Reilly1973}.
}
Equations \eqref{eq:2.1}--\eqref{eq:2.4}
are the standard Newton transformation identities; see
\cite[Section~2]{Reilly1973}. If \(\lambda(A)=(\lambda_1,\dots,\lambda_n)\), we write
\(\lambda(A)|i\) for the \((n-1)\)-tuple obtained by deleting
\(\lambda_i\). We regard $\lambda(A)$ and $\lambda(A)|i$ as diagonal matrices. In an eigenbasis of \(A\),
\begin{equation}\label{eq:2.6}
    \bigl(T_j(A)\bigr)_{ii}=\sigma_j(\lambda(A)|i).
\end{equation}

\begin{lemma}
\label{Lem:2.1}
For every \(\mu=(\mu_1,\dots,\mu_n)\in\R^n\) and
\(1\leq j\leq n-1\),
\begin{equation}\label{eq:2.7}
    \sigma_j(\mu)^2
    \geq
    \frac{(j+1)(n-j+1)}{j(n-j)}
    \sigma_{j-1}(\mu)\sigma_{j+1}(\mu).
\end{equation}
Equivalently,
\begin{equation}
    \left(\frac{\sigma_j(\mu)}{\binom nj}\right)^2
    \geq
    \frac{\sigma_{j-1}(\mu)}{\binom n{j-1}}
    \frac{\sigma_{j+1}(\mu)}{\binom n{j+1}}.
\end{equation}
If \(\mu\in\Gamma_m\) for some \(1\leq m\leq n\), then
\begin{equation}\label{eq:2.8}
    \frac{\sigma_1(\mu)}n
    \geq
    \left(\frac{\sigma_2(\mu)}{\binom n2}\right)^{1/2}
    \geq\cdots\geq
    \left(\frac{\sigma_m(\mu)}{\binom nm}\right)^{1/m}.
\end{equation}
\end{lemma}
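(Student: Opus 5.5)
The plan follows the classical route to Newton's inequality for arbitrary real tuples, then deduces the Maclaurin chain \eqref{eq:2.8} by telescoping.

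\emph{Normalized form.} Write \(E_j=\sigma_j(\mu)/\binom nj\). Using \(\binom nj^2/(\binom n{j-1}\binom n{j+1})=\frac{(j+1)(n-j+1)}{j(n-j)}\), inequality \eqref{eq:2.7} is the same as \(E_j^2\geq E_{j-1}E_{j+1}\). This form is what I will prove.

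\emph{Proof of \(E_j^2\geq E_{j-1}E_{j+1}\) for all real \(\mu\).} Consider the polynomial
\[
P(t)=\prod_{i=1}^n(t+\mu_i)=\sum_{m=0}^n\binom nm E_m t^{n-m},
\]
which has only real roots. By Rolle's theorem, every derivative of \(P\) has only real roots (counted with multiplicity). The same holds under the reversal \(t\mapsto t^{d}Q(1/t)\), since this only inverts nonzero roots and adjusts zero roots.

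\begin{enumerate}
\item Differentiate \(P\) exactly \(n-j-1\) times. This leaves a degree-\((j+1)\) polynomial whose coefficients are, up to a common positive factor, \(\binom{j+1}{m}E_m\) for \(m\leq j+1\).
\item Reverse this polynomial and differentiate \(j-1\) times. The result is a real-rooted quadratic proportional to
\[
E_{j-1}t^2+2E_jt+E_{j+1}.
\]
\item A real-rooted quadratic has nonnegative discriminant, which gives \(E_j^2\geq E_{j-1}E_{j+1}\).
\end{enumerate}

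\textbf{Main obstacle: the degenerate cases.} When a leading coefficient vanishes, the degree drops, so the real-rootedness statement must be read as ``all roots real'' for possibly lower degree. If \(E_{j-1}=0\), the claimed inequality is simply \(E_j^2\geq0\), which is trivial. If a coefficient vanishes at an intermediate step, I would avoid the bookkeeping by perturbing \(\mu\) slightly so that all \(\mu_i\) are distinct and nonzero, proving the inequality there, and passing to the limit by continuity.

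\emph{Proof of \eqref{eq:2.8}.} Now assume \(\mu\in\Gamma_m\), so \(E_1,\dots,E_m>0\), and \(E_0=1\). For \(1\leq j\leq m-1\), Newton's inequality gives \(E_j^2\geq E_{j-1}E_{j+1}\). Multiply these inequalities raised to the power \(j\), for \(j=1,\dots,r-1\). Telescoping yields \(E_{r-1}^{r}\geq E_r^{r-1}\) for \(2\leq r\leq m\), all quantities being positive. Taking roots gives
\[
E_{r-1}^{1/(r-1)}\geq E_r^{1/r},
\]
which is exactly the chain \eqref{eq:2.8}. Positivity of each \(E_r\) is needed here to take roots and to preserve the direction of the inequalities when multiplying.
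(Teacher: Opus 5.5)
Your proposal is correct and is the classical argument the paper simply cites without writing out: Newton's coefficient inequality obtained from Rolle's theorem applied to \(\prod_i(t+\mu_i)\), followed by the standard telescoping to the Maclaurin chain on \(\Gamma_m\), which uses the positivity of \(E_1,\dots,E_m\). One small slip: after reversing and differentiating \(j-1\) times, the quadratic is proportional to \(E_{j+1}t^2+2E_jt+E_{j-1}\), so the degenerate case to dismiss is \(E_{j+1}=0\) rather than \(E_{j-1}=0\); this is harmless, because the discriminant is unchanged and Rolle's theorem with multiplicities makes the perturbation step unnecessary.
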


The first assertion is the Newton's coefficient inequality applied to the polynomial \(\prod_i(t+\mu_i)\); see
\cite[pp.~905--909]{Whiteley1969}.  The inequalities \eqref{eq:2.8} are the Newton--Maclaurin inequalities on
\(\Gamma_m\); see \cite[\S2.5(v)--(vi)]{Wang2009}, based on G\aa rding's hyperbolicity-cone theory~\cite{Garding1959}.

\begin{lemma}
\label{Lem:2.2}
Let \(2\leq k<n/2\) and \(p>0\), and suppose that \(u\) is a
nonnegative entire admissible solution to
\begin{equation}
    -D^2u\in\overline{\Gamma_k},\qquad
    \sigma_k(-D^2u)=u^p
    \quad\text{in }\R^n.
\end{equation}
If $u\geq0$ but $u\not\equiv0$, then $-D^2u\in\Gamma_k$ and $u>0$ in $\R^{n}$.
\end{lemma}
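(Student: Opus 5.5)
Since $-D^2u\in\overline{\Gamma_k}\subset\overline{\Gamma_1}$, we have $-\Delta u=\sigma_1(-D^2u)\ge 0$ in $\R^n$. So $u$ is a nonnegative $C^2$ superharmonic function. The plan is to obtain $u>0$ first, from the strong minimum principle, and then to deduce strict admissibility from positivity of the right-hand side by a continuity argument on the eigenvalue vector.

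\textbf{Step 1 (positivity).} Suppose $u(x_0)=0$ at some point. Since $u\ge0$ is superharmonic and attains its minimum $0$ at the interior point $x_0$, the strong minimum principle for superharmonic functions forces $u\equiv0$ on the connected set $\R^n$. This contradicts $u\not\equiv0$. Hence $u>0$ everywhere, and consequently $\sigma_k(-D^2u)=u^p>0$ in $\R^n$.

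\textbf{Step 2 (strict admissibility).} Fix $x$ and let $\lambda=\lambda(-D^2u(x))\in\overline{\Gamma_k}$. We know $\sigma_j(\lambda)\ge0$ for $1\le j\le k$ and $\sigma_k(\lambda)>0$. We must show that $\sigma_j(\lambda)>0$ for every $j<k$. By G\aa rding's theory, $\overline{\Gamma_k}$ is the closure of the connected component of $\{\sigma_k>0\}$ containing $(1,\dots,1)$. The set $\{\sigma_k>0\}$ is open, so a small ball $U$ around $\lambda$ lies in it. Since $\lambda\in\overline{\Gamma_k}$, this ball meets $\Gamma_k$. But $U$ is connected and contained in $\{\sigma_k>0\}$, so it lies in a single component of that set, namely the one containing $\Gamma_k$. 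Therefore $\lambda\in\Gamma_k$.

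An alternative route for Step 2 is to invoke the Newton--Maclaurin inequality \eqref{eq:2.8} on $\overline{\Gamma_k}$, extended by continuity from $\Gamma_k$:
\[
\left(\sigma_j(\lambda)/\tbinom nj\right)^{1/j}\ \ge\ \left(\sigma_k(\lambda)/\tbinom nk\right)^{1/k}>0\qquad\text{for } j\le k.
\]
This gives $\sigma_j(\lambda)>0$ directly, hence $-D^2u(x)\in\Gamma_k$.

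\textbf{Main obstacle.} The only delicate point is justifying that $\overline{\Gamma_k}\cap\{\sigma_k>0\}=\Gamma_k$. I would handle it by the continuity extension of \eqref{eq:2.8}: approximate $\lambda$ by $\lambda+\varepsilon(1,\dots,1)\in\Gamma_k$, which holds because $\Gamma_k$ is a convex cone containing the positive cone, and let $\varepsilon\to0$. The regularity needed is only $C^2$, since the minimum principle for superharmonic $C^2$ functions applies directly.
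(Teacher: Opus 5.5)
Your proposal is correct and follows essentially the paper's proof: superharmonicity of $u$ plus the strong minimum principle gives $u>0$, and the Newton--Maclaurin inequalities extended to $\overline{\Gamma_k}$ by continuity give $\sigma_j(-D^2u)\ge \binom nj\bigl(u^p/\binom nk\bigr)^{j/k}>0$ for $j<k$. Your connected-component argument from G\aa rding theory is an equally valid alternative for the second step.
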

\begin{proof}
Since
\(-D^2u\in\overline{\Gamma_k}\), we have
\(-\Delta u=\sigma_1(-D^2u)\geq0\). Hence \(u\) is superharmonic, and
the strong minimum principle gives \(u>0\) in \(\R^n\). Moreover, the
Newton--Maclaurin inequalities in Lemma \ref{Lem:2.1}, extended to
\(\overline{\Gamma_k}\) by continuity, imply that, for \(1\leq j<k\),
\[
 \left(\frac{\sigma_j(-D^2u)}{\binom nj}\right)^{1/j}
 \geq
 \left(\frac{\sigma_k(-D^2u)}{\binom nk}\right)^{1/k}
 =
 \left(\frac{u^p}{\binom nk}\right)^{1/k}>0.
\]
Thus \(-D^2u\in\Gamma_k\) everywhere in $\R^{n}$.
\end{proof}

From Lemma \ref{Lem:2.2},  we know that, if $u$ is a solution to \eqref{eq:1.1}, then $-D^2u\in\Gamma_k$ and $u>0$ in $\R^{n}$, and hence all the strict ellipticity properties used in our paper are valid.

\begin{corollary}
\label{Cor:2.3}
Let \(A\in\Gamma_k\).  In an eigenbasis, for every \(i\),
\[
    \lambda(A)|i\in\Gamma_{k-1},
    \qquad
    \sigma_j(\lambda(A)|i)>0\quad(0\leq j\leq k-1),
\]
and hence
\(0\prec T_j(A)\qquad(0\leq j\leq k-1)\).
\end{corollary}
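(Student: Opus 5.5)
The plan is to reduce everything to a statement about the deleted tuples $\lambda(A)|i$, using the eigenbasis description \eqref{eq:2.6} of the Newton tensors. Fix $A\in\Gamma_k$ and an orthonormal eigenbasis with eigenvalues $\lambda=(\lambda_1,\dots,\lambda_n)$. The matrices $T_j(A)$ are diagonal in this basis with entries $\sigma_j(\lambda|i)$. Positive definiteness $0\prec T_j(A)$ for $0\leq j\leq k-1$ is therefore equivalent to $\sigma_j(\lambda|i)>0$ for all $i$ and all such $j$. So it suffices to prove $\lambda|i\in\Gamma_{k-1}$; the case $j=0$ is trivial because $\sigma_0=1$.

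The key identity I would use is the splitting
\[
\sigma_m(\lambda)=\sigma_m(\lambda|i)+\lambda_i\sigma_{m-1}(\lambda|i),
\]
together with its consequence $\sigma_m(\lambda|i)=\partial\sigma_{m+1}/\partial\lambda_i$. The cleanest route is G\aa rding's theory, already invoked for Lemma~\ref{Lem:2.1}. The cone $\Gamma_{m+1}$ is the component of $\{\sigma_{m+1}>0\}$ containing $(1,\dots,1)$, and $\sigma_{m+1}$ is hyperbolic with respect to $e=(1,\dots,1)$. A standard fact of that theory is that the directional derivative $D_\xi\sigma_{m+1}$ is positive on $\Gamma_{m+1}$ for every $\xi\in\overline{\Gamma_{m+1}}\setminus\{0\}$. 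Since $\Gamma_{m+1}\subset\Gamma_1$, it contains the positive orthant, so $\xi=e_i$ is admissible. This gives $\sigma_m(\lambda|i)>0$ for $\lambda\in\Gamma_{m+1}$. Applying it with $m+1=1,\dots,k$, using $\Gamma_k\subset\Gamma_{m+1}$, yields $\sigma_m(\lambda|i)>0$ for $0\leq m\leq k-1$, that is, $\lambda|i\in\Gamma_{k-1}$ as an element of $\R^{n-1}$.

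If a self-contained argument is preferred, I would instead run induction on $k$ with the splitting identity. Assume $\lambda\in\Gamma_k$ and, inductively, $\sigma_{m}(\lambda|i)>0$ for $m\leq k-2$. If $\lambda_i\le 0$, then $\sigma_{k-1}(\lambda|i)=\sigma_{k-1}(\lambda)-\lambda_i\sigma_{k-2}(\lambda|i)\ge\sigma_{k-1}(\lambda)>0$. If $\lambda_i>0$ the sign is not immediate, and this case is the main obstacle. There I would use the Newton inequality \eqref{eq:2.7} applied to $\lambda|i$, or equivalently a continuity argument: deform $\lambda_i\to+\infty$ along the ray $\lambda+te_i$, which stays in $\Gamma_k$ because the cone is convex and contains $e_i$ in its closure. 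Along this ray $\sigma_k(\lambda+te_i)=\sigma_k(\lambda)+t\sigma_{k-1}(\lambda|i)$ must remain positive for all $t>0$, which forces $\sigma_{k-1}(\lambda|i)\ge0$. Strictness then follows from the linearity in $t$ together with the Newton--Maclaurin chain \eqref{eq:2.8} on the closure: if $\sigma_{k-1}(\lambda|i)=0$ while $\sigma_{k-2}(\lambda|i)>0$, one perturbs $\lambda_i$ slightly downward inside the open cone and obtains a contradiction.
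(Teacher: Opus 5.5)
Your main argument is correct. The paper gives no proof of its own here; it simply cites the deletion property and ellipticity of $\sigma_k$ from Wang's notes and Caffarelli--Nirenberg--Spruck. Your route is the standard proof behind that citation: use \eqref{eq:2.6} to reduce to $\sigma_m(\lambda|i)=\partial_{\lambda_i}\sigma_{m+1}(\lambda)>0$ on $\Gamma_{m+1}\supseteq\Gamma_k$, then get this positivity from G\aa rding's theory. It makes explicit what the paper outsources.

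Two statements in it need repair.

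\textbf{The inclusion.} $e_i\in\overline{\Gamma_{m+1}}$ holds because the positive orthant $\Gamma_n$ is contained in every $\Gamma_{m+1}$. It does not follow from $\Gamma_{m+1}\subset\Gamma_1$, which points the wrong way.

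\textbf{The ``standard fact''.} As stated it is false. For $m=0$ and $\xi=e_1-e_2\in\overline{\Gamma_1}\setminus\{0\}$ one has $D_\xi\sigma_1=0$. What G\aa rding's theory actually gives, for $x\in\Gamma$, is $D_\xi p(x)=p(x)\sum_j\mu_j(\xi)$. Here $p(\xi+sx)=p(x)\prod_j(s+\mu_j(\xi))$, and $\mu_j(\xi)\geq0$ when $\xi\in\overline\Gamma$. This is strictly positive unless every $\mu_j(\xi)=0$, which happens exactly when also $-\xi\in\overline\Gamma$. Since $\sigma_1(-e_i)<0$, this never occurs for $\xi=e_i$, so your application survives.

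\textbf{The fallback induction.} The strictness step does not work as written. If $\sigma_{k-1}(\lambda|i)=0$, then neither $\sigma_{k-1}(\lambda|i)$ nor $\sigma_k(\lambda)=\lambda_i\sigma_{k-1}(\lambda|i)+\sigma_k(\lambda|i)$ depends on $\lambda_i$, so perturbing $\lambda_i$ produces no contradiction. Instead, note that $\sigma_k(\lambda|i)=\sigma_k(\lambda)>0$. Then apply \eqref{eq:2.7} to $\lambda|i$ in dimension $n-1$ with index $k-1$, which is admissible since $2\leq k<n/2$. This gives $0=\sigma_{k-1}(\lambda|i)^2\geq c\,\sigma_{k-2}(\lambda|i)\,\sigma_k(\lambda|i)>0$ with $c>0$, a contradiction. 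Also, your ray step $\lambda+te_i\in\Gamma_k$ uses convexity of $\Gamma_k$, itself a G\aa rding-theory fact, so this route is not really more self-contained than your first one.
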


Corollary~\ref{Cor:2.3} is the deletion property and ellipticity
of the \(k\)-Hessian operator; see \cite[(2.4)--(2.7)]{Wang2009} and
\cite[Section~2]{CNS1985}.

For \(A\in\Gamma_k\), we define the trace free tensor
\begin{equation}\label{eq:2.9}
    L_k(A):=\frac{n-k}{n}\sigma_k(A)\Id-T_k(A).
\end{equation}
By \eqref{eq:2.3}, we have \(\tr L_k(A)=0\), and
\begin{equation}\label{eq:2.10}
\begin{split}
    \tr\bigl(L_k(A)A\bigr)
    =
    \frac{n-k}{n}\sigma_1(A)\sigma_k(A)
      -(k+1)\sigma_{k+1}(A)
    \geq0.
\end{split}
\end{equation}
The inequality in \eqref{eq:2.10} follows from the
Newton--Maclaurin inequalities in \eqref{eq:2.8}. From
\eqref{eq:2.2}, we have
\begin{equation}\label{eq:2.11}
    T_{k-1}(A)A
    =
    L_k(A)+\frac{k}{n}\sigma_k(A)\Id.
\end{equation}

We can prove the following crucial quantitative Newton--Maclaurin inequality in Lemma \ref{Lem:2.4}, which indicates that the multiple of Newton tensor and the Newton deficit {control} the square of a trace-free tensor. Its equality case will be used to prove the classification result in the critical case.
\begin{lemma}[Quantitative Newton--Maclaurin inequality for matrix]
\label{Lem:2.4}
For every \(A\in\Gamma_k\),
\begin{equation}\label{eq:2.12}
    L_k(A)^2
    \preceq
    \frac{n-k}{n}\,
    \tr\bigl(L_k(A)A\bigr)T_{k-1}(A).
\end{equation}
\end{lemma}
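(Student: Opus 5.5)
Since $A$, $T_{k-1}(A)$, $T_k(A)$ and hence $L_k(A)$ commute, I will work in an eigenbasis of $A$. There all four matrices are diagonal, and \eqref{eq:2.12} reduces to the scalar inequalities
\[
\bigl(L_k(A)\bigr)_{ii}^2\leq \frac{n-k}{n}\,D\,\sigma_{k-1}(\lambda(A)|i),\qquad D:=\tr\bigl(L_k(A)A\bigr),
\]
for each $i$, using \eqref{eq:2.6}. Fix $i$ and set $x=\lambda_i$, $s=\sigma_1(\lambda|i)$, $a=\sigma_{k-1}(\lambda|i)$, $b=\sigma_k(\lambda|i)$, $c=\sigma_{k+1}(\lambda|i)$. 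By Corollary~\ref{Cor:2.3}, $a>0$.

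\textbf{Step 1: express everything in $x$.} The expansions $\sigma_j(\lambda)=\sigma_j(\lambda|i)+x\,\sigma_{j-1}(\lambda|i)$ give
\[
(L_k)_{ii}=\tfrac{n-k}{n}xa-\tfrac kn b,\qquad
D=\tfrac{n-k}{n}(s+x)(b+xa)-(k+1)(c+xb).
\]
Put $F(x):=\frac{n-k}{n}aD-(L_k)_{ii}^2$. The $x^2$ coefficients cancel, since both equal $\bigl(\tfrac{n-k}{n}\bigr)^2a^2$. So $F$ is affine in $x$, and a short computation gives
\[
F'(x)=\frac{n-k}{n^2}\,a\bigl[(n-k)sa-k(n-1)b\bigr].
\]

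\textbf{Step 2: the slope is nonnegative.} If $b\le0$, this is clear because $s>0$ and $a>0$. If $b>0$, then $\lambda|i\in\Gamma_{k-1}$ together with $\sigma_k(\lambda|i)>0$ gives $\lambda|i\in\Gamma_k\subset\R^{n-1}$. Applying the Newton--Maclaurin inequalities \eqref{eq:2.8} in $n-1$ variables then yields
\[
\frac{s}{n-1}\cdot\frac{a}{\binom{n-1}{k-1}}\geq\frac{b}{\binom{n-1}{k}},
\]
which is exactly $(n-k)sa\geq k(n-1)b$.

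\textbf{Step 3: the boundary value.} The condition $\sigma_k(A)=b+xa>0$ means $x>-b/a$. Since $F$ is nondecreasing, it suffices to show $F(-b/a)\ge0$. At $x=-b/a$ we have $(L_k)_{ii}=-b$, $\sigma_k(\lambda)=0$ and $\sigma_{k+1}(\lambda)=c-b^2/a$, so $D=(k+1)(b^2/a-c)$. Hence
\[
F(-b/a)=\frac{(n-k)(k+1)}{n}(b^2-ac)-b^2 .
\]
This is nonnegative if and only if
\[
b^2\geq\frac{(k+1)(n-k)}{k(n-k-1)}\,ac .
\]
That is precisely Newton's inequality \eqref{eq:2.7} applied to $\lambda|i\in\R^{n-1}$ with $j=k$, which holds for arbitrary real vectors and is applicable since $k\le n-2$.

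\textbf{Conclusion and main obstacle.} Therefore $F(x)\ge0$ for every admissible $x$, which proves \eqref{eq:2.12}. The step I expect to need the most care is Step 3. It must use only $\sigma_k(A)>0$ rather than the full cone condition, and this is exactly why the affine structure of $F$ matters. The sign of the slope in Step 2 also requires the case split on the sign of $b$.
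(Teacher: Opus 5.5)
Your proof is correct and is essentially the paper's. Writing your affine function as $F(x)=F(-b/a)+\frac{F'}{a}\,\sigma_k(A)$ gives exactly the identity \eqref{eq:2.13}, and your Steps~2 and~3 are the paper's two checks: Step~2 is \eqref{eq:2.15}, with the same case split on the sign of $\sigma_k(\lambda(A)|i)$, and Step~3 is \eqref{eq:2.16}. The only difference is presentational: you obtain the decomposition from the cancellation of the $\lambda_i^2$ terms and by evaluating at the root of $\sigma_k(A)$, while the paper expands it directly.
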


\begin{proof}
Diagonalize \(A\) and fix an index \(i\).  Since all matrices in
\eqref{eq:2.12} are polynomials in \(A\), it is enough to prove
the corresponding inequality in the \(i\)-th eigendirection.
Expanding the elementary symmetric functions with respect to the
\(i\)-th eigenvalue gives the following identity
\begin{align}
&\frac{n-k}{n}\,
 \tr\bigl(L_k(A)A\bigr)\sigma_{k-1}(\lambda(A)|i)
 -\left(
     \frac{n-k}{n}\sigma_k(A)-\sigma_k(\lambda(A)|i)
  \right)^2
\notag\\
={}&
 \frac{(n-k)\sigma_k(A)}{n^2}
 \Bigl(
   (n-k)\sigma_1(\lambda(A)|i)\sigma_{k-1}(\lambda(A)|i)
   -k(n-1)\sigma_k(\lambda(A)|i)
 \Bigr)
\notag\\
&+\frac1n
 \Bigl(
   k(n-k-1)\sigma_k(\lambda(A)|i)^2
   -(n-k)(k+1)
     \sigma_{k+1}(\lambda(A)|i)\sigma_{k-1}(\lambda(A)|i)
 \Bigr).
\label{eq:2.13}
\end{align}

Indeed, using the following basic identities
\[
\begin{aligned}
 \sigma_1(A)
 &=\lambda_i+\sigma_1(\lambda(A)|i),\\
 \sigma_k(A)
 &=\lambda_i\sigma_{k-1}(\lambda(A)|i)
   +\sigma_k(\lambda(A)|i),\\
 \sigma_{k+1}(A)
 &=\lambda_i\sigma_k(\lambda(A)|i)
   +\sigma_{k+1}(\lambda(A)|i),
\end{aligned}
\]
we get that the left-hand side of \eqref{eq:2.13} equals
\begin{align*}
&\frac{(n-k)^2}{n^2}\sigma_k(A)
 \Bigl[
 \sigma_1(\lambda(A)|i)\sigma_{k-1}(\lambda(A)|i)
 -\sigma_k(\lambda(A)|i)
 \Bigr]
\\
&-\frac{(n-k)(k+1)}n
 \Bigl[\sigma_k(A)-\sigma_k(\lambda(A)|i)\Bigr]
 \sigma_k(\lambda(A)|i)
\\
&-\frac{(n-k)(k+1)}n
 \sigma_{k+1}(\lambda(A)|i)
 \sigma_{k-1}(\lambda(A)|i)
\\
&+\frac{2(n-k)}n\sigma_k(A)\sigma_k(\lambda(A)|i)
 -\sigma_k(\lambda(A)|i)^2
\\
={}&\frac{(n-k)\sigma_k(A)}{n^2}
 \Bigl(
 (n-k)\sigma_1(\lambda(A)|i)\sigma_{k-1}(\lambda(A)|i)
 -k(n-1)\sigma_k(\lambda(A)|i)
 \Bigr)
\\
&+\frac1n
 \Bigl(
 k(n-k-1)\sigma_k(\lambda(A)|i)^2
 -(n-k)(k+1)\sigma_{k+1}(\lambda(A)|i)
 \sigma_{k-1}(\lambda(A)|i)
 \Bigr).
\end{align*}
This proves \eqref{eq:2.13}.

Both parentheses on the right of \eqref{eq:2.13} are
nonnegative.  Once this is proved, \(\sigma_k(A)>0\) and
\eqref{eq:2.13} give the \(i\)-th diagonal inequality in
\eqref{eq:2.12}. Since \(i\) is arbitrary, the lemma follows.

We now prove this nonnegativity.  If
\(\sigma_k(\lambda(A)|i)\leq0\), the first parenthesis in
\eqref{eq:2.13} is positive by
Corollary~\ref{Cor:2.3}.  If
\(\sigma_k(\lambda(A)|i)>0\), then \(\lambda(A)|i\in\Gamma_k\). By applying \eqref{eq:2.8} in $n-1$-dimension with $\mu=\lambda(A)|i$ and indices \(1,k-1,k\), we have the following two inequalities:
\begin{equation}\label{eq:2.14}
   \frac{\sigma_1(\lambda(A)|i)}{n-1}
    \geq
    \left(\frac{\sigma_k(\lambda(A)|i)}{\binom {n-1} k}\right)^{\frac{1}{k}}, \qquad \frac{\sigma_{k-1}(\lambda(A)|i)}{\binom {n-1} {k-1}}
    \geq
    \left(\frac{\sigma_k(\lambda(A)|i)}{\binom {n-1} k}\right)^{\frac{k-1}{k}}.
\end{equation}
Thus it follows that
\begin{equation}\label{eq:2.15}
    (n-k)\sigma_1(\lambda(A)|i)\sigma_{k-1}(\lambda(A)|i)
    \geq k(n-1)\sigma_k(\lambda(A)|i).
\end{equation}
By applying \eqref{eq:2.7} in dimension \(n-1\) with
\(j=k\), we deduce that the second parenthesis is nonnegative, that is,
\begin{equation}\label{eq:2.16}
    k(n-k-1)\sigma_k(\lambda(A)|i)^2
    \geq
    (n-k)(k+1)\sigma_{k-1}(\lambda(A)|i)
                         \sigma_{k+1}(\lambda(A)|i).
\end{equation}
Equations \eqref{eq:2.15} and \eqref{eq:2.16} prove
the required nonnegativity in \eqref{eq:2.13}. This finishes our proof of Lemma \ref{Lem:2.4}.
\end{proof}

We next characterize the equality case of \eqref{eq:2.12}, which will be used to prove the classification result in the critical case.
\begin{lemma}\label{Lem:2.5}
If equality in \eqref{eq:2.12} holds in the \(i\)-th
eigen-direction, namely,
\begin{equation}\label{eq:2.17}
\left(
    \frac{n-k}{n}\sigma_k(A)-\sigma_k(\lambda(A)|i)
\right)^2
=
\frac{n-k}{n}\,
\tr\bigl(L_k(A)A\bigr)\sigma_{k-1}(\lambda(A)|i),
\end{equation}
then all entries of \(\lambda(A)|i\) are equal.  In particular,
\begin{equation}\label{eq:2.18}
    \tr\bigl(L_k(A)A\bigr)=0
    \quad\Longrightarrow\quad
    A=
    \left(\frac{\sigma_k(A)}{\binom nk}\right)^{1/k}\Id.
\end{equation}
\end{lemma}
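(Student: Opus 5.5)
The plan is to use identity \eqref{eq:2.13} from the proof of Lemma~\ref{Lem:2.4}. Equality \eqref{eq:2.17} in the \(i\)-th direction says exactly that the left-hand side of \eqref{eq:2.13} vanishes. The right-hand side is the sum of two nonnegative terms: \(\frac{(n-k)\sigma_k(A)}{n^2}\) times the first parenthesis, and \(\frac1n\) times the second. Since \(\sigma_k(A)>0\), both parentheses must vanish. Write \(\mu=\lambda(A)|i\in\R^{n-1}\), which lies in \(\Gamma_{k-1}\) by Corollary~\ref{Cor:2.3}. Vanishing of the first parenthesis gives
\[
(n-k)\sigma_1(\mu)\sigma_{k-1}(\mu)=k(n-1)\sigma_k(\mu).
\]
If \(\sigma_k(\mu)\leq0\), the left side is strictly positive (as in the proof of Lemma~\ref{Lem:2.4}), which is a contradiction. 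Hence \(\sigma_k(\mu)>0\) and \(\mu\in\Gamma_k\).

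Next I will trace equality through \eqref{eq:2.14}. Set \(s=(\sigma_k(\mu)/\binom{n-1}k)^{1/k}>0\). Then \(\sigma_1(\mu)\geq(n-1)s\) and \(\sigma_{k-1}(\mu)\geq\binom{n-1}{k-1}s^{k-1}\), and their product gives exactly \eqref{eq:2.15}, using \((n-k)\binom{n-1}{k-1}=k\binom{n-1}{k}\). Equality in \eqref{eq:2.15} together with positivity of all factors forces \(\sigma_1(\mu)/(n-1)=(\sigma_k(\mu)/\binom{n-1}k)^{1/k}\), i.e.\ equality in the Maclaurin chain \eqref{eq:2.8} between indices \(1\) and \(k\) for \(\mu\in\Gamma_k\).

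The key step, and the main obstacle, is the equality case of Maclaurin: equality between \(\sigma_1/(n-1)\) and \((\sigma_k/\binom{n-1}k)^{1/k}\) on \(\Gamma_k\) with \(k\geq2\) forces all entries equal. I will argue it directly. Equality in the chain forces equality in \(\sigma_1/(n-1)\geq(\sigma_2/\binom{n-1}2)^{1/2}\), since every link of the chain is an inequality. That link is equivalent to
\[
(n-2)\sigma_1(\mu)^2-2(n-1)\sigma_2(\mu)=\sum_{a<b}(\mu_a-\mu_b)^2\geq0,
\]
and equality therefore gives all \(\mu_a\) equal. (Alternatively, one could invoke the standard equality statement in \cite[\S2.5]{Wang2009} or use the Newton inequality \eqref{eq:2.7}, but the sum-of-squares route is self-contained.) This proves the first assertion. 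The second parenthesis, coming from \eqref{eq:2.16}, is not needed.

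For \eqref{eq:2.18}: if \(\tr(L_k(A)A)=0\), then by \eqref{eq:2.12} we have \(L_k(A)^2\preceq0\). Since \(L_k(A)\) is symmetric, this gives \(L_k(A)=0\), and therefore equality \eqref{eq:2.17} holds in every eigendirection \(i\) (both sides are \(0\)). So for each \(i\), the eigenvalues other than \(\lambda_i\) are all equal. Since \(n\geq3\), any two indices \(a,b\) can be compared through a third index \(i\notin\{a,b\}\), so all eigenvalues coincide and \(A=c\,\Id\). Then \(\sigma_k(A)=\binom nk c^k\) with \(c>0\) (because \(A\in\Gamma_k\)), which gives \(c=(\sigma_k(A)/\binom nk)^{1/k}\).
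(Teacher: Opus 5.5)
Your proof is correct and follows the paper's own argument. Equality \eqref{eq:2.17} forces both parentheses in \eqref{eq:2.13} to vanish, and the case \(\sigma_k(\lambda(A)|i)\leq0\) is excluded. Equality is then forced in the Maclaurin bounds \eqref{eq:2.14}, which gives equality in the \(\sigma_1\)--\(\sigma_2\) link, and its sum-of-squares identity makes all entries of \(\lambda(A)|i\) equal; for \eqref{eq:2.18} one uses \(L_k(A)=0\) and compares deleted tuples. Your explicit extraction of equality from the product of the two lower bounds and your third-index comparison simply spell out steps that the paper states more briefly.
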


\begin{proof}
Equality in \eqref{eq:2.17} makes the left-hand side
of \eqref{eq:2.13} zero.  Both parentheses on the right-hand side of \eqref{eq:2.13}
are nonnegative, so both of them vanish.  The first one cannot vanish if
\(\sigma_k(\lambda(A)|i)\leq0\); hence
\(\lambda(A)|i\in\Gamma_k\), and equality must hold in the two
Newton--Maclaurin inequalities in \eqref{eq:2.14} used to prove \eqref{eq:2.15}.
Consequently, all the inequalities in \eqref{eq:2.8} in Lemma \ref{Lem:2.1} become equalities, and we have
\begin{equation}
   \frac{\sigma_1(\lambda(A)|i)}{n-1}=\left(\frac{\sigma_2(\lambda(A)|i)}{\binom {n-1} 2}\right)^{\frac{1}{2}}=\cdots=\left(\frac{\sigma_{k-1}(\lambda(A)|i)}{\binom {n-1} {k-1}}\right)^{\frac{1}{k-1}}
    =
    \left(\frac{\sigma_k(\lambda(A)|i)}{\binom {n-1} k}\right)^{\frac{1}{k}},
\end{equation}
in particular,
\begin{equation}\label{eq:2.19}
    \left(
       \frac{\sigma_1(\lambda(A)|i)}{n-1}
    \right)^2
    =
    \frac{\sigma_2(\lambda(A)|i)}{\binom{n-1}{2}}.
\end{equation}
The elementary identity
\begin{equation}
\left(
   \frac{\sigma_1(\lambda(A)|i)}{n-1}
\right)^2
-
\frac{\sigma_2(\lambda(A)|i)}{\binom{n-1}{2}}
=
\frac{1}{(n-1)^2(n-2)}
\sum_{\substack{a<b\\a,b\ne i}}(\lambda_a-\lambda_b)^2
\end{equation}
and \eqref{eq:2.19} show that all entries of the
deleted tuple \(\lambda(A)|i\) are equal.

If \(\tr(L_k(A)A)=0\), then \eqref{eq:2.12} implies
\(L_k(A)=0\).  Thus equality \eqref{eq:2.17} holds for any $1\leq i\leq n$, and hence all entries of the
deleted tuple \(\lambda(A)|i\) are equal for any $1\leq i\leq n$. Applying this with two different choices of \(i\) shows that all eigenvalues of \(A\) coincide.  Since
\(A\in\Gamma_k\), the common
eigenvalue is positive and {equals} $\left(\frac{\sigma_k(A)}{\binom nk}\right)^{1/k}$. Thus \eqref{eq:2.18} holds. This finishes our proof of Lemma \ref{Lem:2.5}.
\end{proof}

We will use the following inequality to estimate the mixed terms in
the divergence calculations.
\begin{corollary}
\label{Cor:2.6}
For every \(A\in\Gamma_k\) and \(X,Y\in\R^n\),
\begin{equation}\label{eq:2.20}
    \bigl|(L_k(A)X)^T Y\bigr|^2
    \leq
    \frac{n-k}{n}\,
    \tr\bigl(L_k(A)A\bigr)|X|^2
    Y^TT_{k-1}(A)Y.
\end{equation}
\end{corollary}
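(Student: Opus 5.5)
Corollary~\ref{Cor:2.6} follows from Lemma~\ref{Lem:2.4} by a Cauchy--Schwarz argument adapted to the positive definite quadratic form of \(T_{k-1}(A)\). By Corollary~\ref{Cor:2.3}, \(T_{k-1}(A)\succ0\), so \(T_{k-1}(A)^{1/2}\) and \(T_{k-1}(A)^{-1/2}\) are well defined. Since \(L_k(A)\) and \(T_{k-1}(A)\) are both polynomials in \(A\), they commute and are simultaneously diagonalizable. Set \(D:=\frac{n-k}{n}\tr(L_k(A)A)\geq0\) by \eqref{eq:2.10}.

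First, since \(L_k(A)\) is symmetric, write
\[
(L_k(A)X)^TY=X^TL_k(A)Y=\bigl(T_{k-1}(A)^{-1/2}L_k(A)X\bigr)^T\bigl(T_{k-1}(A)^{1/2}Y\bigr).
\]
The Euclidean Cauchy--Schwarz inequality then gives
\[
|(L_k(A)X)^TY|^2\leq X^T L_k(A)T_{k-1}(A)^{-1}L_k(A)X\cdot Y^TT_{k-1}(A)Y.
\]
This uses commutativity to identify \(\bigl(T_{k-1}(A)^{-1/2}L_k(A)\bigr)^T\bigl(T_{k-1}(A)^{-1/2}L_k(A)\bigr)\) with \(L_k(A)^2T_{k-1}(A)^{-1}\).

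Second, Lemma~\ref{Lem:2.4} states \(L_k(A)^2\preceq D\,T_{k-1}(A)\). Conjugating by \(T_{k-1}(A)^{-1/2}\) preserves the order \(\preceq\). Using commutativity again, this gives \(L_k(A)T_{k-1}(A)^{-1}L_k(A)\preceq D\,\Id\). Equivalently, in a common eigenbasis, each eigenvalue \(\ell_i^2/t_i\) is at most \(D\), where \(t_i=\sigma_{k-1}(\lambda(A)|i)>0\). Hence \(X^TL_k(A)T_{k-1}(A)^{-1}L_k(A)X\leq D|X|^2\), and inserting this into the first step yields \eqref{eq:2.20}.

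No step is a genuine obstacle. The only point requiring care is justifying the conjugation and the identification of the quadratic forms, and both rest on commutativity. Working directly in the common eigenbasis makes this transparent: \((L_kX)^TY=\sum_i\ell_iX_iY_i\), and
\[
\Bigl(\sum_i \ell_iX_iY_i\Bigr)^2\leq\sum_i\frac{\ell_i^2}{t_i}X_i^2\cdot\sum_i t_iY_i^2\leq D|X|^2\,Y^TT_{k-1}(A)Y.
\]
The last inequality uses \(\ell_i^2\leq Dt_i\), which is the diagonal form of \eqref{eq:2.12}.
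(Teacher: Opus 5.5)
Your proof is correct, and it uses the same two ingredients as the paper: a Cauchy--Schwarz inequality and Lemma~\ref{Lem:2.4}. The difference is where you put the weight.

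The paper uses the unweighted Cauchy--Schwarz inequality on $X$ and $L_k(A)Y$, which gives $|X^TL_k(A)Y|^2\leq |X|^2\,Y^TL_k(A)^2Y$. It then evaluates \eqref{eq:2.12} directly in the direction $Y$. This route needs neither $T_{k-1}(A)^{-1/2}$ nor commutativity.

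Your weighted version instead passes through $L_k(A)T_{k-1}(A)^{-1}L_k(A)\preceq \frac{n-k}{n}\tr(L_k(A)A)\,\Id$. This costs the invertibility from Corollary~\ref{Cor:2.3}, but it is exactly the form the paper later needs in the proof of Proposition~\ref{Prop:5.1}, so it is not wasted.

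Commutativity is not actually essential to your argument either. The bound $L_k(A)^2\preceq D\,T_{k-1}(A)$ says that $\|L_k(A)T_{k-1}(A)^{-1/2}\|\leq\sqrt D$. That operator is the transpose of $T_{k-1}(A)^{-1/2}L_k(A)$, so the two have the same norm.
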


\begin{proof}
Since \(L_k(A)\) is symmetric, one has
\[
    \bigl|(L_k(A)X)^TY\bigr|^2
    =\bigl|X^TL_k(A)Y\bigr|^2
    \leq |X|^2Y^TL_k(A)^2Y.
\]
Now applying \eqref{eq:2.12} gives the desired inequality.
\end{proof}

\subsection{A Newton tensor inequality and the Pohozaev identity}
The Newton--Maclaurin inequalities also yield the following family of directional
inequalities for all Hessian orders \(1\leq j\leq k\).

\begin{proposition}\label{Prop:2.7}
Let {\(1\leq k<n\) and} \(A\in\Gamma_k\). For \(1\leq j\leq k\), set
\begin{equation}\label{eq:2.21}
 m_j=\frac{n(k-j)}{n-k}.
\end{equation}
For every unit vector \(\nu\),  \(B=A|_{\nu^\perp}\) denotes the
restriction of the symmetric bilinear form associated with \(A\) to
\(\nu^\perp\).  Then there holds
\begin{equation}\label{eq:2.22}
 (j+m_j)\sigma_j(A)-m_j\sigma_j(B)\geq0.
\end{equation}

Suppose that \(v\in C^3(\Omega)\) satisfies \(D^2v\in\Gamma_k\).  Then we have
\begin{equation}\label{eq:2.23}
 \operatorname{div}\left(
 |\nabla v|^{m_j}T_{j-1}(D^2v)\nabla v\right)
 =|\nabla v|^{m_j}
 \left[(j+m_j)\sigma_j(D^2v)-m_j\sigma_j(D^2v|_{(\nabla v)^\perp})\right]\geq0,
\end{equation}
 whenever
\(\nabla v\ne0\).  At critical points of \(v\),
\eqref{eq:2.23} is understood by continuous extension.  If \(j<k\),
then on every regular
level \(\Sigma_t=\{v=t\}\) one has
\begin{equation}\label{eq:2.24}
 \left|
 T_{j-1}\bigl(D^2v|_{(\nabla v)^\perp}\bigr)
 \nabla_{\Sigma_t}|\nabla v|
 \right|^2
 \leq
 \frac{\sigma_j\bigl(D^2v|_{(\nabla v)^\perp}\bigr)}{m_j}
 \left[
 (j+m_j)\sigma_j(D^2v)
 -m_j\sigma_j\bigl(D^2v|_{(\nabla v)^\perp}\bigr)
 \right].
\end{equation}
If \(v\) is smooth and its sublevel sets are compact,  define, for
every regular value \(t\),
\begin{equation}\label{eq:2.25}
 P_j(t)=\int_{\Sigma_t}|\nabla v|^{m_j+1}
 \sigma_{j-1}\bigl(D^2v|_{(\nabla v)^\perp}\bigr)\dd S.
\end{equation}
Then \(P_j\) admits a nondecreasing locally absolutely continuous
extension to all \(t\). Moreover, for
almost every \(t\),
\begin{equation}\label{eq:2.26}
 P_j'(t)=\int_{\Sigma_t}|\nabla v|^{m_j-1}
 \left[
 (j+m_j)\sigma_j(D^2v)
 -m_j\sigma_j\bigl(D^2v|_{(\nabla v)^\perp}\bigr)
 \right]\dd S.
\end{equation}
For \(j=k\),  one has
\begin{equation}\label{eq:2.27}
 P_k(t)=k\int_{\{v<t\}}\sigma_k(D^2v)\dd x.
\end{equation}
\end{proposition}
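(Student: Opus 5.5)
We prove the four assertions of Proposition~\ref{Prop:2.7} in turn: the algebraic inequality \eqref{eq:2.22}, the divergence identity \eqref{eq:2.23}, the tangential estimate \eqref{eq:2.24}, and the coarea statements \eqref{eq:2.25}--\eqref{eq:2.27}.

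\textbf{Step 1: the inequality \eqref{eq:2.22}.} Rotate so that \(\nu=e_n\). Then \(B\) is the upper-left \((n-1)\times(n-1)\) block of \(A\). The expansion of \(\sigma_j\) along the last row and column gives
\[
\sigma_j(A)=a_{nn}\sigma_{j-1}(B)+\sigma_j(B)-\sum_{\alpha,\beta<n}a_{n\alpha}a_{n\beta}\,T_{j-2}(B)_{\alpha\beta},
\]
and moreover \(\nu^TT_{j-1}(A)\nu=\sigma_{j-1}(B)\). A cleaner route avoids the off-diagonal terms by averaging. Averaging \(\sigma_j(A|_{\nu^\perp})\) over \(\nu\in\mathbb S^{n-1}\) gives \(\frac{n-j}{n}\sigma_j(A)\), so the inequality is sharp on average. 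Pointwise, however, we need
\[
\sigma_j(B)\leq \frac{j+m_j}{m_j}\sigma_j(A).
\]
\begin{itemize}
\item If \(\sigma_j(B)\leq 0\), this is trivial, because \(\sigma_j(A)>0\).
\item If \(\sigma_j(B)>0\), we use concavity of \(\sigma_k^{1/k}\) and \(\sigma_j^{1/j}\) on the G\aa rding cone. The natural tool is the Schur-type inequality \(\sigma_j(B)\,\binom{n}{j}\big/\binom{n-1}{j}\leq \dots\), combined with Newton--Maclaurin between \(j\) and \(k\) for \(A\in\Gamma_k\).
\end{itemize}
The constant \(m_j=n(k-j)/(n-k)\) is what makes the case \(j=k\) degenerate (\(m_k=0\)), where \eqref{eq:2.22} reduces to \(k\sigma_k(A)\geq0\). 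I expect the intended proof is the one cited from \cite[\S2.5]{Wang2009}: for \(A\in\Gamma_k\),
\[
\frac{\sigma_j(B)}{\sigma_j(A)}\leq \frac{j+m_j}{m_j}
\]
follows from the deletion-type inequality
\[
\sigma_j(A)\geq c\,\sigma_j(B)
\]
with optimal \(c\). That optimal \(c\) is computed on the extremal configuration \(A=\operatorname{diag}(-t,1,\dots,1)\) at the boundary of \(\Gamma_k\), which yields exactly \(m_j/(j+m_j)\). This computation of the sharp constant is the main obstacle. I would reduce it, via rotation invariance and the concavity of \(\lambda\mapsto\sigma_j(A)/\sigma_{j-1}(A)\), to diagonal \(A\) with \(\nu\) an eigenvector, and then to the one-parameter family above.

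\textbf{Step 2: the divergence identity \eqref{eq:2.23}.} Using \eqref{eq:2.5} we get
\[
\operatorname{div}\bigl(|\nabla v|^{m}T_{j-1}\nabla v\bigr)=|\nabla v|^m\tr\bigl(T_{j-1}D^2v\bigr)+m|\nabla v|^{m-2}\,\nabla v^T D^2v\,T_{j-1}\nabla v .
\]
By \eqref{eq:2.4}, the first term equals \(j|\nabla v|^m\sigma_j\). For the second, set \(\nu=\nabla v/|\nabla v|\). From \eqref{eq:2.2},
\[
\nu^T T_{j-1}D^2v\,\nu=\sigma_j(D^2v)-\nu^TT_j\nu=\sigma_j(D^2v)-\sigma_j(D^2v|_{\nu^\perp}),
\]
using the restriction formula \(\nu^TT_j(A)\nu=\sigma_j(A|_{\nu^\perp})\). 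Adding the two terms gives the identity, and nonnegativity follows from Step 1.

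\textbf{Step 3: the tangential estimate \eqref{eq:2.24}.} On a regular level set, write \(D^2v\) in a frame \((\nu,e_\alpha)\) with \(h=D^2v|_{\nu^\perp}\), \(a=v_{\nu\nu}\), and \(b_\alpha=v_{\nu\alpha}=\partial_\alpha|\nabla v|\). The bordered-determinant expansion gives
\[
\sigma_j(D^2v)=a\sigma_{j-1}(h)+\sigma_j(h)-b^TT_{j-2}(h)b .
\]
Hence the bracket in \eqref{eq:2.24} is a quadratic expression in \((a,b)\). Since Step 1 holds for every symmetric matrix in \(\Gamma_k\), we may perturb \(a\) and \(b\) while staying in the cone. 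Optimizing the Step 1 inequality over the quadratic in \(b\), in the spirit of the Cauchy--Schwarz argument of Corollary~\ref{Cor:2.6}, yields the bound \(|T_{j-1}(h)b|^2\leq \frac{\sigma_j(h)}{m_j}[\cdots]\). This uses \(T_{j-1}(h)^2\preceq c\,\sigma_j(h)T_{j-2}(h)\)-type inequalities for \(h\in\Gamma_{j}\), in the style of Lemma~\ref{Lem:2.4}.

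\textbf{Step 4: the coarea statements \eqref{eq:2.25}--\eqref{eq:2.27}.} The divergence theorem on \(\{v<t\}\) gives
\[
P_j(t)=\int_{\{v<t\}}\operatorname{div}\bigl(|\nabla v|^{m_j}T_{j-1}\nabla v\bigr)\dd x ,
\]
since \(\nabla v\cdot \mathbf n=|\nabla v|\) on \(\Sigma_t\) and \(\nu^TT_{j-1}\nu=\sigma_{j-1}(h)\). This right-hand side is defined for every \(t\), and it is nondecreasing and locally absolutely continuous in \(t\) by Step 2. The coarea formula then gives \eqref{eq:2.26}, and taking \(j=k\), \(m_k=0\), in \eqref{eq:2.23} gives \eqref{eq:2.27}.
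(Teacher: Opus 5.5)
Your Steps 2 and 4 follow the paper's argument: the divergence-free Newton tensors, the identity \(\nu^TT_j(A)\nu=\sigma_j(A|_{\nu^\perp})\), then the divergence theorem and the coarea formula. The gap is Step 1, which carries essentially all the content of \eqref{eq:2.22} and is not proved. You correctly guess the extremal configuration \(\operatorname{diag}(-\tfrac{n-k}{k},1,\dots,1)\), but nothing you write shows it is extremal. The tools you name (concavity of \(\sigma_j/\sigma_{j-1}\), a Schur-type bound, rotation invariance) do not produce the inequality. Three things are missing.
\begin{enumerate}
\item The reduction to eigendirections comes from linearity, not rotation invariance. In an eigenbasis, \(\sigma_j(B)=\nu^TT_j(A)\nu=\sum_i\nu_i^2\sigma_j(\lambda|i)\), so \eqref{eq:2.22} is a convex combination of the cases \(\nu=e_i\).
\item For \(\nu=e_i\) and \(j<k\), the expansion \(\sigma_j(\lambda)=\lambda_i\sigma_{j-1}(\lambda|i)+\sigma_j(\lambda|i)\) turns the left side into \((j+m_j)\lambda_i\sigma_{j-1}(\lambda|i)+j\sigma_j(\lambda|i)\). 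This is positive when \(\lambda_i\geq0\), by Corollary~\ref{Cor:2.3}.
\item If \(\lambda_i<0\), then \(\sigma_k(\lambda)=\lambda_i\sigma_{k-1}(\lambda|i)+\sigma_k(\lambda|i)>0\) forces \(\lambda|i\in\Gamma_k\) and \(-\lambda_i<\sigma_k(\lambda|i)/\sigma_{k-1}(\lambda|i)\). Telescoping Newton's inequality \eqref{eq:2.7} in dimension \(n-1\) from order \(j\) to order \(k\) gives
\[
\frac{\sigma_k(\lambda|i)}{\sigma_{k-1}(\lambda|i)}\leq\frac{j(n-k)}{k(n-j)}\,\frac{\sigma_j(\lambda|i)}{\sigma_{j-1}(\lambda|i)}.
\]
Since \(j+m_j=k(n-j)/(n-k)\), this is exactly the needed bound \((j+m_j)(-\lambda_i)\sigma_{j-1}(\lambda|i)<j\sigma_j(\lambda|i)\).
\end{enumerate}
It is this Newton chain, not concavity, that produces the sharp constant \(m_j/(j+m_j)\).

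Step 3 has a second gap. Inequality \eqref{eq:2.24} does not follow from \eqref{eq:2.22} at the single direction \(\nu=\nabla v/|\nabla v|\); it needs \eqref{eq:2.22} in every direction. That gives \(T_j(A)\preceq\frac{j+m_j}{m_j}\sigma_j(A)\Id\), and since \(T_j(A)\succ0\) for \(j<k\), spectrally \(T_j(A)^2\preceq\frac{j+m_j}{m_j}\sigma_j(A)T_j(A)\). Evaluate this at \(\nu\) and use the orthogonal splitting \(T_j(A)\nu=\sigma_j(B)\nu-T_{j-1}(B)\nabla_{\Sigma_t}|\nabla v|\). You get \(\sigma_j(B)^2+\bigl|T_{j-1}(B)\nabla_{\Sigma_t}|\nabla v|\bigr|^2\leq\frac{j+m_j}{m_j}\sigma_j(A)\sigma_j(B)\), which is \eqref{eq:2.24}.

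The auxiliary bound you invoke, \(T_{j-1}(h)^2\preceq c\,\sigma_j(h)T_{j-2}(h)\), cannot hold with a constant independent of \(h\). For instance, when \(j=k-1\geq2\) the tangential Hessian only lies in \(\Gamma_{k-1}=\Gamma_j\) and can approach \(\partial\Gamma_j\). There \(\sigma_j(h)\to0\) while \(T_{j-1}(h)\not\to0\).

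Two smaller omissions. In Step 2, the continuous extension across \(\{\nabla v=0\}\) uses \(m_j>0\) for \(j<k\). In Step 4, the coarea formula needs \(\{\nabla v=0\}\) to be Lebesgue-null, which follows from Stampacchia applied to each \(\partial_iv\) together with \(\sigma_1(D^2v)>0\). This matters for \(j=k\), where the integrand \(k\sigma_k(D^2v)\) does not vanish on the critical set.
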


\begin{proof}
Without loss of generality, we diagonalize \(A\), write its eigenvalues as
\(\lambda_1,\ldots,\lambda_n\), and first take
\(\nu\) to be the \(i\)-th eigenvector.  For \(j<k\), we use \( \sigma_j(\lambda)
 =\lambda_i\sigma_{j-1}(\lambda|i)+\sigma_j(\lambda|i)\) to get
\begin{equation}\label{eq:2.28}
 (j+m_j)\sigma_j(\lambda)-m_j\sigma_j(\lambda|i)
 =(j+m_j)\lambda_i\sigma_{j-1}(\lambda|i)
   +j\sigma_j(\lambda|i).
\end{equation}

If \(\lambda_i\geq0\), Corollary~\ref{Cor:2.3} implies that the
right-hand side of \eqref{eq:2.28} is nonnegative.  Suppose now that
\(\lambda_i<0\).  By Corollary~\ref{Cor:2.3},
\(\lambda|i\in\Gamma_{k-1}\).  Since
\(
 \sigma_k(\lambda)
 =\lambda_i\sigma_{k-1}(\lambda|i)
 +\sigma_k(\lambda|i)>0,
\)
it follows that \(\sigma_k(\lambda|i)>0\), and hence
\(\lambda|i\in\Gamma_k\).  The preceding identity, together with   \eqref{eq:2.7} in dimension \(n-1\), yields
\(
 -\lambda_i
 <
 \frac{\sigma_k(\lambda|i)}{\sigma_{k-1}(\lambda|i)}
 \leq
 \frac{j(n-k)}{k(n-j)}
 \frac{\sigma_j(\lambda|i)}{\sigma_{j-1}(\lambda|i)}.
\)
Notice that
\(
 j+m_j=\frac{k(n-j)}{n-k}.
\)
This inequality and \eqref{eq:2.28} prove \eqref{eq:2.22} in every
eigendirection when \(j<k\).  When \(j=k\), \eqref{eq:2.21} implies
\(m_k=0\), so \eqref{eq:2.22} follows directly from
\(\sigma_k(A)>0\). Finally, write \(\nu=\sum_{i=1}^n\nu_i e_i\) in an eigenbasis of
\(A\).  Since
\(
 \sigma_j(B)=T_j(A)[\nu,\nu]
 =\sum_{i=1}^n\nu_i^2\sigma_j(\lambda|i),
\)
multiplying the eigendirection inequalities by \(\nu_i^2\) and
summing over \(i\) proves \eqref{eq:2.22} for every unit vector
\(\nu\).

Let \(A=D^2v\).  On \(\{|\nabla v|>0\}\), we derive from equations
\eqref{eq:2.2}, \eqref{eq:2.4}, and \eqref{eq:2.5} that
\begin{equation}\label{eq:2.29}
\begin{aligned}
 \operatorname{div}\left(|\nabla v|^{m_j}T_{j-1}(A)\nabla v\right)
 &=|\nabla v|^{m_j}\left[j\sigma_j(A)+m_j
 AT_{j-1}(A)\left[\frac{\nabla v}{|\nabla v|},
 \frac{\nabla v}{|\nabla v|}\right]\right]\\
 &=|\nabla v|^{m_j}\left[(j+m_j)\sigma_j(A)-m_j\sigma_j(D^2v|_{(\nabla v)^\perp})\right].
\end{aligned}
\end{equation}
For \(j<k\), we have
\[|\nabla v|^{m_j}\left[(j+m_j)\sigma_j(A)-m_j\sigma_j(D^2v|_{(\nabla v)^\perp})\right]=(j+m_j)|\nabla v|^{m_j}\sigma_j(A)
-m_j|\nabla v|^{m_j-2}T_j(A)[\nabla v,\nabla v].\]  Since \(\left|
 |\nabla v|^{m_j-2}T_j(A)[\nabla v,\nabla v]
 \right|
 \leq C|\nabla v|^{m_j}\) and \(m_j>0\), the right-hand side of \eqref{eq:2.29}
extends continuously by zero across \(\{\nabla v=0\}\).  Thus
\eqref{eq:2.23} follows from \eqref{eq:2.22} and \eqref{eq:2.29}.
For \(j=k\), its right-hand side is \(k\sigma_k(D^2v)\).

For \(j<k\), Corollary~\ref{Cor:2.3} and \eqref{eq:2.22} imply
\(T_j(A)^2\preceq\frac{j+m_j}{m_j}\sigma_j(A)T_j(A)\).
On a regular level \(\Sigma_t\), set
\(\nu=\nabla v/|\nabla v|\), so that
\(\nu^\perp=(\nabla v)^\perp=T\Sigma_t\).
By \eqref{eq:2.2},
\(T_j(A)\nu=\sigma_j(D^2v|_{(\nabla v)^\perp})\nu
-T_{j-1}(D^2v|_{(\nabla v)^\perp})
\nabla_{\Sigma_t}|\nabla v|\).
The two terms are orthogonal, and
\(T_j(A)[\nu,\nu]=\sigma_j(D^2v|_{(\nabla v)^\perp})\).
Evaluating the matrix inequality \(T_j(A)^2\preceq\frac{j+m_j}{m_j}\sigma_j(A)T_j(A)\) on \(\nu\) now proves
\eqref{eq:2.24}.

To apply the coarea formula below, we first show that
\(\{\nabla v=0\}\) has Lebesgue measure zero. Indeed,  Stampacchia's
theorem applied to each \(C^1\) function \(\partial_i v\) gives
{\(\nabla(\partial_i v)=0\)} almost everywhere on
\(\{\partial_i v=0\}\).  Thus \(D^2v=0\) almost everywhere on
\(\{\nabla v=0\}\), which contradicts
\(\sigma_1(D^2v)>0\) on any subset of positive measure.

On every interval \(I\) such that \(\{v<t\}\Subset\Omega\) for
\(t\in I\), extend \(P_j\) to all \(t\in I\) by setting
\(P_j(t)=\int_{\{v<t\}}\operatorname{div}
\bigl(|\nabla v|^{m_j}T_{j-1}(D^2v)\nabla v\bigr)\dd x\).
If \(t\) is a regular value, then
\(\nu=\nabla v/|\nabla v|\) on \(\Sigma_t\).  The divergence theorem
and \eqref{eq:2.6} show that this definition agrees with
\eqref{eq:2.25}.  If \(t_1<t_2\) belong to \(I\), then
\eqref{eq:2.23}, the coarea formula, and the nullity of the critical
set imply
\begin{equation}\label{eq:2.30}
\begin{aligned}
 P_j(t_2)-P_j(t_1)
 =\int_{t_1}^{t_2}\int_{\{v=s\}\cap\{|\nabla v|>0\}}
 |\nabla v|^{m_j-1}
 \Bigl[
 &(j+m_j)\sigma_j(D^2v)\\
 &-m_j\sigma_j\bigl(D^2v|_{(\nabla v)^\perp}\bigr)
 \Bigr]\dd S\dd s.
\end{aligned}
\end{equation}
The integrand is nonnegative by \eqref{eq:2.22}.  Hence \(P_j\) is
nondecreasing and locally absolutely continuous, and
\eqref{eq:2.26} follows by differentiating \eqref{eq:2.30}.
For \(j=k\), one has \(m_k=0\), so \eqref{eq:2.23} reduces to
\(\operatorname{div}(T_{k-1}(D^2v)\nabla v)
=k\sigma_k(D^2v)\), which proves \eqref{eq:2.27}.
\end{proof}

\begin{corollary}\label{Cor:2.8}
Let \(u\in C^3(\Omega)\) satisfy \(-D^2u\in\Gamma_k\), and set
\(p_0=2+\frac{n(k-1)}{n-k}\).  Then
\(-\Delta_{p_0}u
=-\operatorname{div}(|\nabla u|^{p_0-2}\nabla u)\geq0\).
Moreover, let \(v\in C^2(\Omega)\) satisfy \(D^2v\in\Gamma_k\), and
set \(V(\tau,\theta)=v(e^\tau\theta)\) whenever
\(e^\tau\theta\in\Omega\).  Then
\begin{equation}\label{eq:2.31}
 V_{\tau\tau}+\frac{n-2k}{k}V_\tau
 +\frac{n-k}{k(n-1)}\Delta_{\mathbb S^{n-1}}V\geq0.
\end{equation}
\end{corollary}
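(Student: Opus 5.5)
The plan is to obtain both assertions from Proposition~\ref{Prop:2.7} in the case \(j=1\). There \(m_1=\frac{n(k-1)}{n-k}\), so \(p_0=2+m_1\), and \(T_0=\Id\).

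\emph{First assertion.} Set \(v=-u\in C^3(\Omega)\), so that \(D^2v\in\Gamma_k\). For \(j=1\), \eqref{eq:2.23} becomes
\[
\operatorname{div}\bigl(|\nabla v|^{m_1}\nabla v\bigr)\geq0 .
\]
This holds on \(\{\nabla v\ne0\}\) and, by the continuous extension in Proposition~\ref{Prop:2.7}, across critical points. The left-hand side is \(\Delta_{p_0}v\). Since \(|\nabla v|^{p_0-2}\nabla v=-|\nabla u|^{p_0-2}\nabla u\), we get \(\Delta_{p_0}v=-\Delta_{p_0}u\), and hence \(-\Delta_{p_0}u\geq0\).

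\emph{Second assertion.} This is a pointwise computation, so \(C^2\) regularity suffices. We use only the algebraic inequality \eqref{eq:2.22}, which needs no third derivatives. Write \(r=e^\tau\) and \(v_r=\partial_rv\). Then
\[
V_\tau=rv_r,\qquad V_{\tau\tau}=r^2v_{rr}+rv_r .
\]
The polar form of the Laplacian gives
\[
r^{-2}\Delta_{\mathbb S^{n-1}}V=\Delta v-v_{rr}-\frac{n-1}{r}v_r .
\]
Put \(c=\frac{n-k}{k(n-1)}\). Dividing the left side of \eqref{eq:2.31} by \(r^2\) and substituting, the coefficient of \(v_r/r\) is
\[
1+\frac{n-2k}{k}-c(n-1)=\frac{n-k}{k}-\frac{n-k}{k}=0 .
\]
What remains is
\[
(1-c)v_{rr}+c\,\Delta v=c\Bigl(\sigma_1(D^2v)+\tfrac{1-c}{c}\,D^2v[\nu,\nu]\Bigr),\qquad \nu=\frac{x}{|x|}.
\]
A direct check gives
\[
\frac{1-c}{c}=\frac{k(n-1)-(n-k)}{n-k}=\frac{n(k-1)}{n-k}=m_1 .
\]
Now apply \eqref{eq:2.22} with \(j=1\), \(A=D^2v(x)\) and \(B=A|_{\nu^\perp}\). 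Since \(\sigma_1(B)=\sigma_1(A)-A[\nu,\nu]\), it reads
\[
(1+m_1)\sigma_1(A)-m_1\bigl(\sigma_1(A)-A[\nu,\nu]\bigr)=\sigma_1(A)+m_1A[\nu,\nu]\geq0 .
\]
As \(c>0\) and \(r>0\), this proves \eqref{eq:2.31}.

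The only delicate step is the bookkeeping of constants: the \(v_r/r\) terms must cancel exactly, and the surviving ratio must equal \(m_1\). Everything else follows from Proposition~\ref{Prop:2.7}.
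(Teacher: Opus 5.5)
Your proposal is correct and follows the paper's proof. The first assertion is the case \(j=1\) of \eqref{eq:2.23} applied to \(v=-u\), and the second is \eqref{eq:2.22} with \(j=1\), \(\nu=\theta\), rewritten in logarithmic polar coordinates. The only difference is direction of substitution: you express \eqref{eq:2.31} in terms of \(v\) and check the constants, whereas the paper inserts \(\Delta v=e^{-2\tau}(V_{\tau\tau}+(n-2)V_\tau+\Delta_{\mathbb S^{n-1}}V)\) and \(D^2v[\theta,\theta]=e^{-2\tau}(V_{\tau\tau}-V_\tau)\) into \(\Delta v+m_1D^2v[\theta,\theta]\geq0\).
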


\begin{proof}
Equation \eqref{eq:2.23} with \(j=1\), \(v=-u\), and
\(m_1=p_0-2\) implies \(-\Delta_{p_0}u\geq0\).
For the second assertion, \eqref{eq:2.22} with \(j=1\) and
\(\nu=\theta\) yields
\(\Delta v+\frac{n(k-1)}{n-k}D^2v[\theta,\theta]\geq0\).
The polar-coordinate identities
\(\Delta v=e^{-2\tau}
(V_{\tau\tau}+(n-2)V_\tau+\Delta_{\mathbb S^{n-1}}V)\) and
\(D^2v[\theta,\theta]
=e^{-2\tau}(V_{\tau\tau}-V_\tau)\)
reduce this inequality to \eqref{eq:2.31}.
\end{proof}

We establish the following Pohozaev identity for later use, which is of independent interest.

\begin{lemma}\label{Lem:2.9}
Let \(D\) be a smooth bounded domain, and let \(U\) be smooth and
admissible in a neighborhood of \(\overline D\).  Set \(A=-D^2U\)
and define
\begin{equation}\label{eq:2.32}
 \mathcal Q_U=\frac1{k+1}
 \left[
 UT_k(A)x+(U-x\cdot\nabla U)T_{k-1}(A)\nabla U
 \right].
\end{equation}
Then we have
\begin{equation}\label{eq:2.33}
 \operatorname{div}\mathcal Q_U
 =\sigma_k(A)\left(
 x\cdot\nabla U+\frac{n-2k}{k+1}U
 \right),
 \quad
 \int_{\partial D}\mathcal Q_U\cdot\nu\dd S
 =\int_D\sigma_k(A)\left(
 x\cdot\nabla U+\frac{n-2k}{k+1}U
 \right)\dd x.
\end{equation}

Assume \(\overline{B_R}\subset D\).  For \(0<r<R\), define
\begin{equation}\label{eq:2.34}
 \mathscr P_U(r)=\frac1{k+1}\int_{\partial B_r}
 \left[
 rUT_k(A)[\nu,\nu]
 +(U-rU_\nu)T_{k-1}(A)[\nabla U,\nu]
 \right]\dd S.
\end{equation}

If \(\sigma_k(A)=G'(U)\) for some \(G\in C^1\), then
\begin{equation}\label{eq:2.35}
 \int_{\partial D}\left(\mathcal Q_U-G(U)x\right)\cdot\nu\dd S
 =
 \int_D\left[
 \frac{n-2k}{k+1}UG'(U)-nG(U)
 \right]\dd x
\end{equation}
and
\begin{equation}\label{eq:2.36}
 \mathscr P_U(r)-r\int_{\partial B_r}G(U)\dd S
 =
 \int_{B_r}\left[
 \frac{n-2k}{k+1}UG'(U)-nG(U)
 \right]\dd x.
\end{equation}
In particular, if \(U>0\) and
\(G(U)=\varepsilon U^{p_*+1}/(p_*+1)\), then
\begin{equation}\label{eq:2.37}
 \mathscr P_U(r)=\frac{\varepsilon r}{p_*+1}
 \int_{\partial B_r}U^{p_*+1}\dd S.
\end{equation}
Finally, if
\(U_R(x)=R^{(n-2k)/k}U(Rx)\), then
\begin{equation}\label{eq:2.38}
 \mathscr P_{U_R}(r)
 =R^{(n-2k)/k}\mathscr P_U(Rr).
\end{equation}
\end{lemma}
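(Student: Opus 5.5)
The plan is a direct computation: compute $\operatorname{div}\mathcal Q_U$ pointwise, then derive every other assertion from the divergence theorem and scaling. The only structural input is the divergence-free property \eqref{eq:2.5}. Since $T_j(-D^2U)=(-1)^jT_j(D^2U)$, \eqref{eq:2.5} gives $\partial_i(T_j(A))_{i\ell}=0$ for $A=-D^2U$ and $j=k-1,k$. I will use it together with \eqref{eq:2.2}--\eqref{eq:2.4} and the symmetry of $A$ and of the $T_j(A)$.

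\emph{Step 1: the divergence identity.} The first term gives
\[
\operatorname{div}\bigl(UT_k(A)x\bigr)=T_k(A)[\nabla U,x]+U\tr T_k(A)=T_k(A)[\nabla U,x]+(n-k)U\sigma_k(A),
\]
using \eqref{eq:2.3}. For the second term, $\nabla(U-x\cdot\nabla U)=-D^2U\,x=Ax$, and
\[
\operatorname{div}\bigl(T_{k-1}(A)\nabla U\bigr)=\tr\bigl(T_{k-1}(A)D^2U\bigr)=-k\sigma_k(A)
\]
by \eqref{eq:2.4}. Hence
\[
\operatorname{div}\bigl((U-x\cdot\nabla U)T_{k-1}(A)\nabla U\bigr)=x^TAT_{k-1}(A)\nabla U-k\sigma_k(A)\,(U-x\cdot\nabla U).
\]
By \eqref{eq:2.2}, $AT_{k-1}(A)=\sigma_k(A)\Id-T_k(A)$, so $x^TAT_{k-1}(A)\nabla U=\sigma_k(A)\,x\cdot\nabla U-T_k(A)[x,\nabla U]$. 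The two $T_k$ cross terms cancel, and what remains is $\sigma_k(A)\bigl((n-2k)U+(k+1)x\cdot\nabla U\bigr)$. Dividing by $k+1$ gives the pointwise identity in \eqref{eq:2.33}, and the integral form follows from the divergence theorem on $D$.

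\emph{Step 2: the $G$-version and the ball version.} Since $\operatorname{div}(G(U)x)=G'(U)\,x\cdot\nabla U+nG(U)$ and $\sigma_k(A)=G'(U)$, the $x\cdot\nabla U$ terms cancel in $\operatorname{div}(\mathcal Q_U-G(U)x)$. Integrating over $D$ gives \eqref{eq:2.35}. For \eqref{eq:2.36}, apply the same argument on $B_r$ with $\nu=x/r$. There $UT_k(A)x\cdot\nu=rUT_k(A)[\nu,\nu]$ and $x\cdot\nabla U=rU_\nu$, so $\int_{\partial B_r}\mathcal Q_U\cdot\nu\dd S=\mathscr P_U(r)$, while $G(U)x\cdot\nu=rG(U)$.

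\emph{Step 3: the critical case.} For \eqref{eq:2.37}, note that
\[
p_*+1=\frac{k(n+2)+n-2k}{n-2k}=\frac{n(k+1)}{n-2k}.
\]
With $G=\varepsilon U^{p_*+1}/(p_*+1)$ one has $UG'(U)=(p_*+1)G(U)$, so the integrand $\frac{n-2k}{k+1}(p_*+1)G-nG$ vanishes identically and \eqref{eq:2.37} follows from \eqref{eq:2.36}.

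\emph{Step 4: scaling.} Set $\alpha=(n-2k)/k$. Then $A_R(x)=R^{\alpha+2}A(Rx)$ and $\nabla U_R(x)=R^{\alpha+1}\nabla U(Rx)$. By homogeneity, $T_k(A_R)=R^{k(\alpha+2)}T_k(A)(Rx)$ and $T_{k-1}(A_R)=R^{(k-1)(\alpha+2)}T_{k-1}(A)(Rx)$. Moreover $rU_R=R^{\alpha}(Rr)U(Rx)$ and $rU_{R,\nu}=R^{\alpha}(Rr)U_\nu(Rx)$.

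Both integrands in $\mathscr P_{U_R}(r)$ then carry the factor $R^{\alpha+k(\alpha+2)-1}$ relative to the corresponding integrand of $\mathscr P_U$ at radius $Rr$; for the second term this uses $R^{\alpha+(k-1)(\alpha+2)+\alpha+1}=R^{\alpha+k(\alpha+2)-1}$. The change of variables $y=Rx$ on the sphere contributes $R^{-(n-1)}$. Since $k\alpha+2k=n$, the total exponent is $\alpha+n-1-(n-1)=\alpha$, which is \eqref{eq:2.38}.

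The main obstacle is bookkeeping in Step 1: getting the signs from $A=-D^2U$ in $\nabla(x\cdot\nabla U)$ and in $\operatorname{div}(T_{k-1}(A)\nabla U)$ right, so that the $T_k$ cross terms cancel exactly. I would check this first for $k=1$, where the identity should reduce to the classical Pohozaev identity for $-\Delta U$.
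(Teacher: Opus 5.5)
Your proof is correct and follows the paper's own argument. It uses the same product-rule computation of $\operatorname{div}\mathcal Q_U$ via the divergence-free Newton tensors and $AT_{k-1}(A)=\sigma_k(A)\Id-T_k(A)$, then the divergence theorem, then the vanishing of $\frac{n-2k}{k+1}UG'(U)-nG(U)$ at $p_*$, and finally homogeneity plus the substitution $y=Rx$ for the scaling.

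There is one small slip in Step 4: $rU_R(x)=R^{\alpha-1}(Rr)U(Rx)$, not $R^{\alpha}(Rr)U(Rx)$. The common factor $R^{\alpha+k(\alpha+2)-1}$ you then state for both integrands is nevertheless the right one, so the conclusion stands.
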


\begin{proof}
Equations \eqref{eq:2.2}--\eqref{eq:2.5}, applied to the Hessian
of \(-U\), imply
\(\operatorname{div}T_{k-1}(A)=\operatorname{div}T_k(A)=0\),
\(T_k(A)=\sigma_k(A)\Id-T_{k-1}(A)A\),
\(\tr\bigl(T_{k-1}(A)A\bigr)=k\sigma_k(A)\), and
\(\tr T_k(A)=(n-k)\sigma_k(A)\).
Since \(D^2U=-A\), the product rule applied to the two terms in
\eqref{eq:2.32} yields
\begin{equation}\label{eq:2.39}
\begin{aligned}
 \operatorname{div}\left(UT_k(A)x\right)
 &=x\cdot T_k(A)\nabla U+(n-k)\sigma_k(A)U,\\
 \operatorname{div}\left((U-x\cdot\nabla U)
 T_{k-1}(A)\nabla U\right)
 &=x\cdot AT_{k-1}(A)\nabla U-k\sigma_k(A)(U-x\cdot\nabla U).
\end{aligned}
\end{equation}
Adding the two identities in \eqref{eq:2.39} and using
\(T_k(A)+AT_{k-1}(A)=\sigma_k(A)\Id\) proves the pointwise identity
in \eqref{eq:2.33}.  The integral identity follows from the
divergence theorem.

If \(\sigma_k(A)=G'(U)\), then
\(G'(U)x\cdot\nabla U=x\cdot\nabla G(U)\).  Integration by parts
yields
\[
 \int_DG'(U)x\cdot\nabla U\dd x
 =
 \int_{\partial D}G(U)x\cdot\nu\dd S
 -n\int_DG(U)\dd x,
\]
which proves \eqref{eq:2.35}.  Taking \(D=B_r\) and using
\(x=r\nu\) proves \eqref{eq:2.36}.

For \(G(U)=\varepsilon U^{p_*+1}/(p_*+1)\), the integrand on the
right-hand side of \eqref{eq:2.36} vanishes by
the definition of \(p_*\).  This proves \eqref{eq:2.37}.

Finally,
\(-D^2U_R(x)=R^{n/k}A(Rx)\).  The homogeneity
\(T_j(cA)=c^jT_j(A)\) and the change of variables \(y=Rx\) in
\eqref{eq:2.34} prove \eqref{eq:2.38}.
\end{proof}

\begin{corollary}\label{Cor:2.10}
Let \(v\) be smooth with \(D^2v\in\Gamma_k\), and let \(D\) be a
bounded component of \(\{v<t\}\) such that \(\nabla v\neq0\) on \(\partial D\).
Suppose that \(\sigma_k(D^2v)=g(v)\) near \(\overline D\) with
\(g=G'\).  Define
\(\dd\omega=k^{-1}|\nabla v|\sigma_{k-1}(D^2v|_{T\partial D})\dd S\).
Then we have
\begin{equation}\label{eq:2.40}
 \omega(\partial D)=\int_Dg(v)\dd x
\end{equation}
and
\begin{equation}\label{eq:2.41}
 \int_{\partial D}(x\cdot\nabla v)\dd\omega
 =\frac{(k+1)n}{k}\left[G(t)|D|-\int_DG(v)\dd x\right]
 +\frac{n-2k}{k}\int_D(v-t)g(v)\dd x.
\end{equation}
\end{corollary}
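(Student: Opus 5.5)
The plan is to derive both identities from divergence identities already established: \eqref{eq:2.40} from the divergence-free property of the Newton tensor, and \eqref{eq:2.41} from the Pohozaev identity of Lemma~\ref{Lem:2.9}, applied to the shifted function \(U=t-v\).

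\emph{Geometry of \(\partial D\).} Since \(\nabla v\neq0\) on \(\partial D\), the implicit function theorem shows that \(\partial D\) is a smooth compact hypersurface contained in \(\{v=t\}\). Because \(D\) is a component of \(\{v<t\}\), the outward unit normal is \(\nu=\nabla v/|\nabla v|\). In an orthonormal frame adapted to \(\nu\), identity \eqref{eq:2.6} (equivalently, the formula \(\sigma_j(B)=T_j(A)[\nu,\nu]\) used in the proof of Proposition~\ref{Prop:2.7}) gives
\[
T_{k-1}(D^2v)[\nu,\nu]=\sigma_{k-1}\bigl(D^2v|_{T\partial D}\bigr),
\qquad\text{hence}\qquad
T_{k-1}(D^2v)\nabla v\cdot\nu=k\,\frac{\dd\omega}{\dd S}.
\]

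\emph{Proof of \eqref{eq:2.40}.} By \eqref{eq:2.4} and \eqref{eq:2.5},
\[
\operatorname{div}\bigl(T_{k-1}(D^2v)\nabla v\bigr)=\tr\bigl(T_{k-1}(D^2v)D^2v\bigr)=k\sigma_k(D^2v)=k\,g(v).
\]
This is the case \(j=k\), \(m_k=0\), of \eqref{eq:2.23}. Applying the divergence theorem on \(D\) and using the boundary formula above gives \(k\,\omega(\partial D)=k\int_Dg(v)\dd x\).

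\emph{Proof of \eqref{eq:2.41}.} Set \(U=t-v\). Then \(A=-D^2U=D^2v\in\Gamma_k\), so \(U\) is admissible near \(\overline D\), and \(U=0\) on \(\partial D\). Moreover \(\sigma_k(A)=g(t-U)=\widetilde G'(U)\) with \(\widetilde G(s):=-G(t-s)\). We apply \eqref{eq:2.35}.

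On \(\partial D\) the term \(UT_k(A)x\) in \(\mathcal Q_U\) vanishes, and \(\nabla U=-\nabla v\). Therefore
\[
\mathcal Q_U\cdot\nu=-\frac{1}{k+1}(x\cdot\nabla v)\,T_{k-1}(A)\nabla v\cdot\nu=-\frac{k}{k+1}(x\cdot\nabla v)\frac{\dd\omega}{\dd S}.
\]
For the remaining boundary term, \(-\widetilde G(U)=G(t)\) is constant on \(\partial D\), so
\[
\int_{\partial D}G(t)\,x\cdot\nu\dd S=nG(t)|D|.
\]
On the interior side, \(UG'\) becomes \((t-v)g(v)\) and \(-n\widetilde G(U)=nG(v)\). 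Substituting into \eqref{eq:2.35} yields
\[
-\frac{k}{k+1}\int_{\partial D}(x\cdot\nabla v)\dd\omega+nG(t)|D|
=\frac{n-2k}{k+1}\int_D(t-v)g(v)\dd x+n\int_DG(v)\dd x.
\]
Multiplying by \(-(k+1)/k\) gives exactly \eqref{eq:2.41}.

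The only point needing care is justifying the hypotheses of Lemma~\ref{Lem:2.9}: smoothness of \(\partial D\), which comes from the regular-value assumption, and admissibility of \(U\) in a neighborhood of \(\overline D\), which comes from \(D^2v\in\Gamma_k\) on an open set containing \(\overline D\). The remaining work is sign bookkeeping in the change of variables \(U=t-v\). I do not expect a genuine obstacle.
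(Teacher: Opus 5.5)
Your proof is correct and follows the same route as the paper: \eqref{eq:2.40} comes from the divergence identity $\operatorname{div}\bigl(T_{k-1}(D^2v)\nabla v\bigr)=k\sigma_k(D^2v)$ (the $j=k$ case behind \eqref{eq:2.27}), and \eqref{eq:2.41} comes from the Pohozaev identity \eqref{eq:2.35}. The one difference is that you apply \eqref{eq:2.35} to the shifted function $U=t-v$ instead of the paper's $U=-v$; since your $U$ vanishes on $\partial D$, the two extra boundary integrals that the paper evaluates in \eqref{eq:2.42} via \eqref{eq:2.3}--\eqref{eq:2.5} simply drop out.
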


\begin{proof}
Equation~\eqref{eq:2.27}, applied to the component \(D\), proves
\eqref{eq:2.40}.  Set \(U=-v\).  Then \(A=-D^2U=D^2v\) and
\(
 \sigma_k(A)=g(v)=g(-U)
 =\frac{d}{dU}\bigl[-G(-U)\bigr],
\)
so \eqref{eq:2.35} applies with \(G(U)\) there replaced by
\(-G(-U)\).  Moreover, \(v=t\) and
\(\nu=\nabla v/|\nabla v|\) on \(\partial D\).

Equations \eqref{eq:2.3}, \eqref{eq:2.4}, and
\eqref{eq:2.5}, together with the divergence theorem, show that the
two boundary integrals below equal
\((n-k)\int_Dg(v)\dd x\) and \(k\int_Dg(v)\dd x\), respectively.
Hence, we conclude
\begin{equation}\label{eq:2.42}
 t\int_{\partial D}T_k(D^2v)x\cdot\nu\dd S
 -t\int_{\partial D}T_{k-1}(D^2v)\nabla v\cdot\nu\dd S
 =t(n-2k)\int_Dg(v)\dd x.
\end{equation}
Moreover,
\(T_{k-1}(D^2v)[\nabla v,\nu]
=|\nabla v|\sigma_{k-1}(D^2v|_{T\partial D})\), so the term
containing \(x\cdot\nabla v\) in \eqref{eq:2.32} is exactly
\(-k\int_{\partial D}(x\cdot\nabla v)\dd\omega\) after multiplication
by \(k+1\).  Substituting this identity and
\eqref{eq:2.42} into \eqref{eq:2.35}, and
using
\(\int_{\partial D}G(t)x\cdot\nu\dd S=nG(t)|D|\), we obtain
\eqref{eq:2.41}.
\end{proof}

\section{Optimal Liouville rigidity}\label{Sec:3}

In this section, we carry out our proof of Theorem \ref{Thm:1.1}, by proving and applying the weighted cutoff integral estimate in Lemma \ref{Lem:3.1}, the crucial cutoff integral estimate in Proposition \ref{Prop:3.3} and the lower bound estimate in Lemma \ref{Lem:3.4}. From the proof of the crucial cutoff integral estimate in Proposition \ref{Prop:3.3}, we found that the vector-valued function
\begin{equation}\label{eq:3.1}
  \mathcal{J}:=-u^ML_k(A)\nabla u-\tau u^{M-1}|\nabla u|^2T_{k-1}(A)\nabla u
\end{equation}
is very important in our proofs of the optimal subcritical Liouville theorems (Theorem \ref{Thm:1.1} and \ref{Thm:1.2}). We can use its divergence to control $u^{M+p-1}|\nabla u|^2$ in the proof of Proposition \ref{Prop:3.3}. Inspired by this observation, it will also play a key role in our subsequent proofs of the critical classification theorems ({Theorems \ref{Thm:1.6} and \ref{Thm:6.2}}) in Sections 5 and 6. At the critical exponent, the nonnegative divergence in \eqref{eq:5.3} vanishes after the cutoff argument; its equality case implies that $D^2(u^{-2k/(n-2k)})$ is a scalar matrix.

\subsection{A weighted cutoff integral estimate}

In order to prove the crucial cutoff integral estimate in Proposition \ref{Prop:3.3}, we will first prove the weighted cutoff integral inequalities
\eqref{eq:3.4} and \eqref{eq:3.5} used in both subcritical and critical cutoff arguments, since the weight \(\delta\) in the cutoff inequalities
\eqref{eq:3.4} and \eqref{eq:3.5} may change signs due to the parameter choices \eqref{eq:3.27}--\eqref{eq:3.28} if we take $\delta=-M-1$ later. The case $\delta>0$ goes back to Chang, Gursky and Yang \cite{CGY2003} and Gonz\'{a}lez \cite{Gonzalez2005}, and was adapted to the pure Hessian inequality by Ou \cite{Ou2010}. The proof is included in full because the case \(-1<\delta<0\) cannot be deduced from \eqref{eq:3.4} for $\delta>0$, because the coefficients \(b_s\) in \eqref{eq:3.17} change signs.

Recall that \(A=-D^2u\in\Gamma_k\) and
\(\sigma_k(A)=u^p\). Let \(\chi=\chi_R\) be a smooth cutoff function such that
\begin{equation}\label{eq:3.2}
    0\leq\chi\leq1,\qquad
    \chi=1\ \text{on }B_R,\qquad
    \operatorname{supp}\chi\subset B_{2R},\qquad
    |\nabla\chi|\leq \frac{C}{R}.
\end{equation}
Fix an integer \(\theta>2k\). We have the following weighted cutoff integral inequalities.
\begin{lemma}\label{Lem:3.1}
Let \(\delta>-1\), \(\delta\neq0\).  For \(1\leq s\leq k\), define
\begin{equation}\label{eq:3.3}
    \Lambda_s
    :=
    \int_{\R^n}
    \sigma_{k-s}(A)|\nabla u|^{2s}
    u^{-\delta-s}\chi^\theta\dd x.
\end{equation}
Then the following conclusions hold.

\begin{enumerate}[label=\textup{(\roman*)}]
\item If \(\delta>0\), then
\begin{equation}\label{eq:3.4}
    \int_{\R^n} u^{p-\delta}\chi^\theta\dd x
    +
    \sum_{s=1}^k \Lambda_s
    \leq
    C R^{-2k}
    \int_{\R^n} u^{k-\delta}\chi^{\theta-2k}\dd x.
\end{equation}

\item If \(-1<\delta<0\), then
\begin{equation}\label{eq:3.5}
    \sum_{s=1}^k \Lambda_s
    \leq
    C\int_{\R^n} u^{p-\delta}\chi^\theta\dd x
    +
    C R^{-2k}
    \int_{\R^n} u^{k-\delta}\chi^{\theta-2k}\dd x.
\end{equation}
\end{enumerate}
The constant \(C\) may depend on \(n,k,\delta,\theta\), but is
independent of \(R\).
\end{lemma}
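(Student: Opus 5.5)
We follow the scheme of Chang--Gursky--Yang and Ou: test the equation against weighted powers of \(u\) and use the divergence-free structure \eqref{eq:2.5} of the Newton tensors to integrate by parts \(k\) times. This trades one factor of \(\sigma_{k-s+1}(A)\) for one factor of \(|\nabla u|^2u^{-1}\) at each step. The basic identity comes from \eqref{eq:2.4} and \eqref{eq:2.5}. For \(0\leq s\leq k-1\) we have \((k-s)\sigma_{k-s}(A)=\tr(T_{k-s-1}(A)A)=-\partial_i\bigl(T_{k-s-1}(A)_{ij}\partial_ju\bigr)\). Moreover, \(T_{k-s-1}(A)[\nabla u,\nabla u]=\sigma_{k-s-1}(A)|\nabla u|^2-(\text{correction})\), and by \eqref{eq:2.2} the correction can again be written in terms of lower Newton tensors evaluated on \(\nabla u\).

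Start with \(\int u^{p-\delta}\chi^\theta=\int\sigma_k(A)u^{-\delta}\chi^\theta\). Integrating by parts once gives
\[
k\int\sigma_k(A)u^{-\delta}\chi^\theta=-\delta\int u^{-\delta-1}T_{k-1}(A)[\nabla u,\nabla u]\chi^\theta+\theta\int u^{-\delta}\chi^{\theta-1}T_{k-1}(A)[\nabla u,\nabla\chi].
\]
Iterating, define more generally
\[
I_s=\int u^{-\delta-s}|\nabla u|^{2(s-1)}T_{k-s}(A)[\nabla u,\nabla u]\chi^\theta .
\]
Expanding \(T_{k-s}\) via \eqref{eq:2.2} and integrating by parts the \(\tr(T_{k-s-1}A)\)-type terms, we obtain a linear recursion among \(\int\sigma_k u^{-\delta}\chi^\theta\) and \(\Lambda_1,\dots,\Lambda_k\). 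The recursion has explicit coefficients (the \(b_s\) referred to in \eqref{eq:3.17}), which are polynomials in \(\delta\), plus error terms containing \(\nabla\chi\). The outcome we aim for is an identity of the form
\[
\int u^{p-\delta}\chi^\theta=\sum_{s=1}^k b_s(\delta)\Lambda_s+\mathcal E,
\]
where, to leading order, \(b_s(\delta)\) is a positive multiple of \(-\delta\) times positive combinatorial factors, and \(\mathcal E\) collects the cutoff terms. In the cutoff terms we use \(T_j(A)\succ0\) (Corollary~\ref{Cor:2.3}), Cauchy--Schwarz, and \(|T_j(A)|\leq C\sigma_j(A)\) on \(\Gamma_{j+1}\). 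With these, each cutoff term is bounded by \(\varepsilon\Lambda_{s}+\varepsilon\Lambda_{s+1}+C_\varepsilon R^{-2}\)-weighted terms with lower powers of \(\chi\). Iterating Young's inequality down to \(s=k\), where \(\sigma_0=1\), we reach the final error \(CR^{-2k}\int u^{k-\delta}\chi^{\theta-2k}\). This works because \(|\nabla u|^{2k}u^{-\delta-k}\) is paired with \(R^{-2k}u^{k-\delta}\) by Young with exponents \((k+1)/k\) and \(k+1\) applied to \(|\nabla u|^{2k-1}|\nabla\chi|\)-type terms.

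Case (i), \(\delta>0\): every \(b_s\) is negative, so moving the \(\Lambda_s\) terms to the left gives the nonnegative combination \(\int u^{p-\delta}\chi^\theta+c\sum\Lambda_s\) on the left, and absorbing the \(\varepsilon\Lambda_s\) errors yields \eqref{eq:3.4}. Case (ii), \(-1<\delta<0\): the \(\int u^{p-\delta}\chi^\theta\) term now has the wrong sign to be kept on the left, so we put it on the right, which is permitted by \eqref{eq:3.5}. We must then show that \(\sum\Lambda_s\) is controlled by the quadratic-form part of the recursion, even though the naive coefficients \(b_s\) change sign.

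The main obstacle is this sign issue in (ii). Our first attempt is to avoid eliminating the recursion fully and instead use a weighted sum of the \(k\) intermediate identities, with weights chosen so that each \(\Lambda_s\) appears with a positive coefficient. The \(s\)-th identity has the \(\Lambda_s\) coefficient \(\propto(s-1-\delta)\)-type factors, all positive when \(\delta>-1\) and \(s\geq1\), since \(\delta+1>0\). This positivity is what uses \(\delta>-1\). Failing a clean choice, we would bound the mixed terms \(T_{k-s}(A)[\nabla u,\nabla u]\) between \(c\,\sigma_{k-s}|\nabla u|^2\) and \(C\sigma_{k-s}|\nabla u|^2\), the lower bound via Corollary~\ref{Cor:2.3} and Newton--Maclaurin, and then close by induction on \(s\), absorbing each \(\Lambda_{s+1}\) into the next step.
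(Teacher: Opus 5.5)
\textbf{Gap in case (ii).} Your overall scheme is the paper's. You integrate by parts through the divergence-free tensors $T_{k-s}(A)$ to get a recursion for your $I_s$ (the paper's $M_s$), telescope it, and push the cutoff errors down to $R^{-2k}\int u^{k-\delta}\chi^{\theta-2k}$. Case (i) is fine as sketched. The gap is case (ii): the obstacle you single out does not exist, and the fallback you offer for it would fail.

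Telescoping $M_s=\frac{k+s}{2s}\Lambda_s+\frac{\delta+s}{2s}M_{s+1}-\frac{\theta}{2s}E_{s+1}$ gives
\[
k\int\sigma_k(A)u^{-\delta}\chi^\theta=-\sum_s b_s\Lambda_s+\sum_s c_sE_s,\qquad b_s=\frac{k+s}{2^s s!}\,\delta(\delta+1)\cdots(\delta+s-1).
\]
For $-1<\delta<0$ the single factor $\delta$ is negative and every $\delta+j$ with $j\geq1$ is positive. So all $b_s<0$ at once; this is where $\delta>-1$ is used. The coefficients change sign only between case (i) and case (ii), never within a case. You then simply rearrange to $\sum_s|b_s|\Lambda_s=k\int u^{p-\delta}\chi^\theta-\sum_s c_sE_s$ and absorb the $\varepsilon\Lambda_s$ parts of the error terms, which gives (3.5). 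Your own claim that $b_s$ is ``a positive multiple of $-\delta$'' (exact for all $\delta>-1$, not just to leading order) already yields this. The later discussion of weights with ``$(s-1-\delta)$-type factors'' is unnecessary, and those factors are misidentified: the relevant ones are $\delta+s$.

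\textbf{The fallback fails.} On $\Gamma_k$ there is no bound $T_{k-s}(A)[\xi,\xi]\geq c\,\sigma_{k-s}(A)|\xi|^2$ for $1\leq s\leq k-1$; Corollary~2.3 gives only strict positivity. Take $A=\mathrm{diag}(L,\epsilon,\dots,\epsilon)$ and $\xi=e_1$. The ratio is $\binom{n-1}{k-s}\epsilon^{k-s}/\sigma_{k-s}(A)$, which tends to $0$ as $L/\epsilon\to\infty$. Only the upper bound $T_{k-s}(A)\preceq(n-k+s)\sigma_{k-s}(A)\Id$ holds, and that is all the proof needs.

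\textbf{Error terms.} Young's inequality alone gives only $|E_s|\leq\varepsilon\Lambda_s+C_\varepsilon J_s$ with $J_s=R^{-2s}\int\sigma_{k-s}(A)u^{s-\delta}\chi^{\theta-2s}$, and $J_s$ still carries $\sigma_{k-s}(A)$. To reach $J_k$ you need a second integration by parts at each level. Write $(k-s)\sigma_{k-s}(A)=-\partial_i\bigl(T_{k-s-1}(A)_{ij}\partial_ju\bigr)$ and test against $u^{s-\delta}\chi^{\theta-2s}$. After Young this gives $J_s\leq\varepsilon\Lambda_{s+1}+C_\varepsilon J_{s+1}$; the $(s-\delta)$ gradient term is dropped when $s\leq\delta$, since it is then nonpositive. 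Also, the Young exponents pairing $R^{-1}\sigma_{k-s}(A)|\nabla u|^{2s-1}u^{-\delta-s+1}\chi^{\theta-1}$ with the integrands of $\Lambda_s$ and $J_s$ are $\frac{2s}{2s-1}$ and $2s$, not $\frac{k+1}{k}$ and $k+1$.
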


\begin{proof}
For \(1\leq s\leq k\), set
\begin{align}
    M_s
    &:=
    \int_{\R^n}
    (T_{k-s}(A))_{ij}{\partial_i u}\,{\partial_j u} |\nabla u|^{2(s-1)}
    u^{-\delta-s}\chi^\theta\dd x,
    \\
    E_s
    &:=
    \int_{\R^n}
    (T_{k-s}(A))_{ij}{\partial_i u}\,{\partial_j\chi} |\nabla u|^{2(s-1)}
    u^{-\delta-s+1}\chi^{\theta-1}\dd x.
    \label{eq:3.6}
\end{align}
Since
\(T_{k-s}(A) = \sigma_{k-s}(A)\Id + D^2u\,T_{k-s-1}(A)\),
the symmetry of the Newton tensors gives, for \(1\leq s\leq k-1\),
\begin{align}
    M_s
    ={}&
    \Lambda_s
    +
    \int_{\R^n}
    {\partial_i u}(T_{k-s-1}(A))_{i\ell}
    {\partial_\ell\partial_j u}\,{\partial_j u}
    |\nabla u|^{2(s-1)}u^{-\delta-s}\chi^\theta\dd x
    \notag\\
    ={}&
    \Lambda_s
    +
    \frac1{2s}
    \int_{\R^n}
    {\partial_i u}(T_{k-s-1}(A))_{i\ell}
    \partial_\ell(|\nabla u|^{2s})
    u^{-\delta-s}\chi^\theta\dd x.
    \label{eq:3.7}
\end{align}
The tensor \(T_{k-s-1}(A)\) is symmetric and divergence-free in the
distributional sense.  Moreover,
\begin{equation}\label{eq:3.8}
   (T_{k-s-1}(A))_{i\ell}{\partial_i\partial_\ell u}
    =
    -\tr\bigl(T_{k-s-1}(A)A\bigr)
    =
    -(k-s)\sigma_{k-s}(A).
\end{equation}
Since
\(|\nabla u|^{2s}{\partial_i u}
u^{-\delta-s}\chi^\theta\in C_c^1(\R^n)\), it may
be used, by smooth approximation, to test
\(\partial_\ell(T_{k-s-1}(A))_{i\ell}=0\).  Hence
\begin{align*}
    &\frac1{2s}
    \int_{\R^n}
    {\partial_i u}(T_{k-s-1}(A))_{i\ell}
    \partial_\ell(|\nabla u|^{2s})u^{-\delta-s}\chi^\theta\dd x\\
    &\quad=
    -\frac1{2s}
    \int_{\R^n} |\nabla u|^{2s}
    (T_{k-s-1}(A))_{i\ell}{\partial_i\partial_\ell u}
    u^{-\delta-s}\chi^\theta\dd x\\
    &\qquad
    -\frac1{2s}
    \int_{\R^n} |\nabla u|^{2s}
    {\partial_i u}(T_{k-s-1}(A))_{i\ell}
    \partial_\ell\bigl(u^{-\delta-s}\chi^\theta\bigr)\dd x.
\end{align*}
There is no boundary term because the test function is compactly
supported.  The derivative of the weight is
\begin{equation}\label{eq:3.9}
    \partial_\ell\bigl(u^{-\delta-s}\chi^\theta\bigr)
    =
    -(\delta+s)u^{-\delta-s-1}{\partial_\ell u}\chi^\theta
    +
    \theta u^{-\delta-s}\chi^{\theta-1}{\partial_\ell\chi}.
\end{equation}
Substituting \eqref{eq:3.9} and \eqref{eq:3.8} gives
\begin{equation}\label{eq:3.10}
    \frac1{2s}\int_{\R^n}
    {\partial_i u}(T_{k-s-1}(A))_{i\ell}
    \partial_\ell(|\nabla u|^{2s})u^{-\delta-s}\chi^\theta\dd x
    =\frac{k-s}{2s}\Lambda_s
    +\frac{\delta+s}{2s}M_{s+1}
    -\frac{\theta}{2s}E_{s+1}.
\end{equation}
Combining this with \eqref{eq:3.7} implies
\begin{equation}\label{eq:3.11}
    M_s
    =
    \frac{k+s}{2s}\Lambda_s
    +
    \frac{\delta+s}{2s}M_{s+1}
    -
    \frac{\theta}{2s}E_{s+1}.
\end{equation}

Define the coefficients
\begin{align}
    m_s
    &:=
    \frac{\delta(\delta+1)\cdots(\delta+s-1)}{2^{s-1}(s-1)!},
    \\
    b_s
    &:=
    \frac{k+s}{2^s s!}\delta(\delta+1)\cdots(\delta+s-1),
    \\
    c_1&:=\theta,\qquad
    c_s
    :=
    \frac{\theta\delta(\delta+1)\cdots(\delta+s-2)}
    {2^{s-1}(s-1)!},
    \quad 2\leq s\leq k.
\end{align}
Thus
\begin{equation}\label{eq:3.12}
    b_s=\frac{k+s}{2s}m_s,\qquad
    m_{s+1}=\frac{\delta+s}{2s}m_s,\qquad
    c_{s+1}=\frac{\theta}{2s}m_s.
\end{equation}
Multiplying \eqref{eq:3.11} by \(m_s\) and using
\eqref{eq:3.12} yields
\begin{equation}\label{eq:3.13}
    m_sM_s
    =
    m_{s+1}M_{s+1}
    +
    b_s\Lambda_s
    -
    c_{s+1}E_{s+1},
    \qquad 1\leq s\leq k-1.
\end{equation}

Using \(\tr\bigl(T_{k-1}(A)A\bigr)=k\sigma_k(A)\), the divergence-free property of
\(T_{k-1}(A)\), and \(A=-D^2u\), we obtain
\begin{equation}\label{eq:3.14}
    k\int_{\R^n}\sigma_k(A)u^{-\delta}\chi^\theta\dd x
    =
    -\int_{\R^n}
    (T_{k-1}(A))_{ij}{\partial_i\partial_j u}
    u^{-\delta}\chi^\theta\dd x
    =-\delta M_1+\theta E_1
    =-m_1M_1+c_1E_1.
\end{equation}
Since \(T_0(A)=\Id\) and \(\sigma_0=1\), we have
\begin{equation}\label{eq:3.15}
\begin{aligned}
    M_k=\Lambda_k, \qquad
    b_k
    =
    \frac{2k}{2^k k!}\delta(\delta+1)\cdots(\delta+k-1)
    =
    m_k.
\end{aligned}
\end{equation}
Summing \eqref{eq:3.13} over
\(s=1,\ldots,k-1\) and using
\eqref{eq:3.15} gives
\begin{equation}\label{eq:3.16}
    m_1M_1
    =
    \sum_{s=1}^{k}b_s\Lambda_s
    -
    \sum_{s=2}^{k}c_sE_s.
\end{equation}
Inserting \eqref{eq:3.16} into
\eqref{eq:3.14}, and recalling that \(c_1=\theta\), we obtain
\begin{equation}\label{eq:3.17}
    k\int_{\R^n}\sigma_k(A)u^{-\delta}\chi^\theta\dd x
    =
    -\sum_{s=1}^k b_s\Lambda_s
    +
    \sum_{s=1}^k c_sE_s.
\end{equation}

Because \(A\in\Gamma_k\), each \(T_{k-s}(A)\) in
\eqref{eq:3.6} is nonnegative definite.  Its operator norm is
bounded by its trace:
\[
    \lVert T_{k-s}(A)\rVert
    \leq
    \tr T_{k-s}(A)
    =
    (n-k+s)\sigma_{k-s}(A).
\]
Consequently,
\begin{align}
    |E_s|
    \leq{}&
    C R^{-1}
    \int_{\R^n}
    \sigma_{k-s}(A)
    |\nabla u|^{2s-1}
    u^{-\delta-s+1}
    \chi^{\theta-1}\dd x.
    \label{eq:3.18}
\end{align}
By applying Young's inequality with the following precise factorization of the integrand in
\eqref{eq:3.18}:
\begin{align*}
    R^{-1}\sigma_{k-s}(A)|\nabla u|^{2s-1}u^{-\delta-s+1}\chi^{\theta-1}
    =\left(\sigma_{k-s}(A)|\nabla u|^{2s}u^{-\delta-s}\chi^\theta\right)^{\frac{2s-1}{2s}}
    \left(R^{-2s}\sigma_{k-s}(A)u^{s-\delta}\chi^{\theta-2s}\right)^{\frac1{2s}},
\end{align*}
we derive that, for every \(\varepsilon>0\),
\begin{equation}\label{eq:3.19}
    |E_s|
    \leq
    \varepsilon \Lambda_s
    +
    C_\varepsilon J_s,
\end{equation}
where
\begin{equation}
    J_s
    :=
    R^{-2s}
    \int_{\R^n}
    \sigma_{k-s}(A)u^{s-\delta}
    \chi^{\theta-2s}\dd x,
    \qquad 1\leq s\leq k.
\end{equation}
In particular,
\begin{equation}
    J_k
    =
    R^{-2k}
    \int_{\R^n}
    u^{k-\delta}\chi^{\theta-2k}\dd x.
\end{equation}

It remains to reduce every \(J_s\) ($s<k$) to \(J_k\).  Since
\[
    (k-s)\sigma_{k-s}(A)
    =
    \tr\bigl(T_{k-s-1}(A)A\bigr)
    =
    -(T_{k-s-1}(A))_{ij}{\partial_i\partial_j u},
\]
testing it with the compactly supported \(C^1\) functions
\(u^{s-\delta}\chi^{\theta-2s}\) yields
\begin{equation*}
    (k-s)J_s
    =
    -R^{-2s}\int_{\R^n}
    (T_{k-s-1}(A))_{ij}{\partial_i\partial_j u}
    u^{s-\delta}\chi^{\theta-2s}\dd x
    =R^{-2s}\int_{\R^n}
    (T_{k-s-1}(A))_{ij}{\partial_i u}
    \partial_j\bigl(u^{s-\delta}\chi^{\theta-2s}\bigr)\dd x.
\end{equation*}
Moreover, noting that
\begin{equation}
    \partial_j\bigl(u^{s-\delta}\chi^{\theta-2s}\bigr)
    =
    (s-\delta)u^{s-\delta-1}{\partial_j u}\chi^{\theta-2s}
    +
    (\theta-2s)u^{s-\delta}
    \chi^{\theta-2s-1}{\partial_j\chi},
\end{equation}
we get
\begin{align}
    (k-s)J_s
    ={}&
    (s-\delta)R^{-2s}
    \int_{\R^n}
    (T_{k-s-1}(A))_{ij}{\partial_i u}\,{\partial_j u}
    u^{s-\delta-1}
    \chi^{\theta-2s}\dd x
    \notag\\
    &+
    (\theta-2s)R^{-2s}
    \int_{\R^n}
    (T_{k-s-1}(A))_{ij}{\partial_i u}\,{\partial_j\chi}
    u^{s-\delta}
    \chi^{\theta-2s-1}\dd x.
    \label{eq:3.20}
\end{align}
If \(s-\delta\leq0\), the first term on the right-hand side of
\eqref{eq:3.20} is nonpositive and may be omitted in an upper
bound.  If \(s-\delta>0\), positivity and
the trace bound for \(T_{k-s-1}(A)\) give
\[
    (s-\delta)R^{-2s}
    \int_{\R^n}
    (T_{k-s-1}(A))_{ij}{\partial_i u}\,{\partial_j u}
    u^{s-\delta-1}\chi^{\theta-2s}\dd x
    \leq
    C R^{-2s}
    \int_{\R^n}
    \sigma_{k-s-1}(A)|\nabla u|^2
    u^{s-\delta-1}\chi^{\theta-2s}\dd x.
\]
By using the following factorization
\begin{align*}
    &R^{-2s}\sigma_{k-s-1}(A)|\nabla u|^2
    u^{s-\delta-1}\chi^{\theta-2s}\\
    &\quad=
    \left(
        \sigma_{k-s-1}(A)|\nabla u|^{2s+2}
        u^{-\delta-s-1}\chi^\theta
    \right)^{\frac1{s+1}}
    \left(
        R^{-2(s+1)}\sigma_{k-s-1}(A)
        u^{s+1-\delta}\chi^{\theta-2s-2}
    \right)^{\frac{s}{s+1}},
\end{align*}
it follows from Young's inequality that
\begin{equation}\label{eq:3.21}
    (s-\delta)R^{-2s}
    \int_{\R^n}
    (T_{k-s-1}(A))_{ij}{\partial_i u}\,{\partial_j u}
    u^{s-\delta-1}\chi^{\theta-2s}\dd x
    \leq
    \varepsilon \Lambda_{s+1}
    +
    C_\varepsilon J_{s+1}.
\end{equation}

For the second term in \eqref{eq:3.20}, the trace bound and
\eqref{eq:3.2} imply
\[
    \left|
    (\theta-2s)R^{-2s}
    \int_{\R^n}
    (T_{k-s-1}(A))_{ij}{\partial_i u}\,{\partial_j\chi}
    u^{s-\delta}\chi^{\theta-2s-1}\dd x
    \right|
    \leq
    C R^{-2s-1}
    \int_{\R^n}
    \sigma_{k-s-1}(A)|\nabla u|
    u^{s-\delta}\chi^{\theta-2s-1}\dd x.
\]
Applying Young's inequality with the following factorization
\begin{align*}
    &R^{-2s-1}\sigma_{k-s-1}(A)|\nabla u|
    u^{s-\delta}\chi^{\theta-2s-1}\\
    &\quad=
    \left(
        \sigma_{k-s-1}(A)|\nabla u|^{2s+2}
        u^{-\delta-s-1}\chi^\theta
    \right)^{\frac1{2(s+1)}}
    \left(
        R^{-2(s+1)}\sigma_{k-s-1}(A)
        u^{s+1-\delta}\chi^{\theta-2s-2}
    \right)^{\frac{2s+1}{2(s+1)}},
\end{align*}
we derive
\begin{equation}\label{eq:3.22}
    \left|
    (\theta-2s)R^{-2s}
    \int_{\R^n}
    (T_{k-s-1}(A))_{ij}{\partial_i u}\,{\partial_j\chi}
    u^{s-\delta}\chi^{\theta-2s-1}\dd x
    \right|
    \leq
    \varepsilon \Lambda_{s+1}
    +
    C_\varepsilon J_{s+1}.
\end{equation}
Together with \eqref{eq:3.20},
\eqref{eq:3.21}, and \eqref{eq:3.22}, we get,
after changing \(\varepsilon\),
\begin{equation}\label{eq:3.23}
    J_s
    \leq
    \varepsilon \Lambda_{s+1}
    +
    C_\varepsilon J_{s+1},
    \qquad 1\leq s\leq k-1.
\end{equation}

Iterating \eqref{eq:3.23} from \(s=k-1\) down to \(s=1\),
and choosing the Young parameters recursively, proves
\begin{equation}\label{eq:3.24}
    J_s
    \leq
    \varepsilon\sum_{j=s+1}^k \Lambda_j
    +
    C_\varepsilon J_k,
    \qquad 1\leq s\leq k-1
\end{equation}
for every prescribed \(\varepsilon>0\).  Combining \eqref{eq:3.19} with \eqref{eq:3.24} yields
\begin{equation}\label{eq:3.25}
    |E_s|
    \leq
    \varepsilon\sum_{j=s}^k \Lambda_j
    +
    C_\varepsilon R^{-2k}
    \int_{\R^n}
    u^{k-\delta}\chi^{\theta-2k}\dd x.
\end{equation}
Applying \eqref{eq:3.25} to each \(E_s\) with its small parameter $\varepsilon$ replaced by $\frac{\varepsilon}{\sum_{j=1}^k|c_j|}$, multiplying by \(|c_s|\) and summing, we get
\begin{equation}\label{eq:3.26}
    \left|
    \sum_{s=1}^k c_sE_s
    \right|
    \leq
    \varepsilon\sum_{s=1}^k \Lambda_s
    +
    C_\varepsilon R^{-2k}
    \int_{\R^n}
    u^{k-\delta}\chi^{\theta-2k}\dd x.
\end{equation}

If \(\delta>0\), then
\(b_s>0,\qquad 1\leq s\leq k\).
Using \(\sigma_k(A)=u^p\), identity~\eqref{eq:3.17} becomes
\(k\int_{\R^n} u^{p-\delta}\chi^\theta\dd x + \sum_{s=1}^k b_s\Lambda_s = \sum_{s=1}^k c_sE_s\).
Applying \eqref{eq:3.26} and choosing
\(0<\varepsilon< \frac12\min_{1\leq s\leq k}b_s\),
we obtain
\[
    k\int_{\R^n} u^{p-\delta}\chi^\theta\dd x
    +\left(\min_{1\leq s\leq k}b_s-\varepsilon\right)
      \sum_{s=1}^k\Lambda_s
    \leq
    C_\varepsilon R^{-2k}
    \int_{\R^n}
    u^{k-\delta}\chi^{\theta-2k}\dd x.
\]
Since
\(\min_{1\leq s\leq k}b_s-\varepsilon \geq\frac12\min_{1\leq s\leq k}b_s>0\),
we have derived
\eqref{eq:3.4}.

If \(-1<\delta<0\), then
\[
    \delta<0,\qquad
    \delta+j>0\quad\text{for every integer }j\geq1.
\]
Hence
\(b_s<0,\qquad 1\leq s\leq k\).
Identity~\eqref{eq:3.17} must now be rearranged as
\(\sum_{s=1}^k |b_s|\Lambda_s = k\int_{\R^n} u^{p-\delta}\chi^\theta\dd x - \sum_{s=1}^k c_sE_s\).
Using \eqref{eq:3.26} and choosing
\(0<\varepsilon< \frac12\min_{1\leq s\leq k}|b_s|\)
gives
\[
    \left(\min_{1\leq s\leq k}|b_s|-\varepsilon\right)
    \sum_{s=1}^k\Lambda_s
    \leq
    k\int_{\R^n} u^{p-\delta}\chi^\theta\dd x
    +
    C_\varepsilon R^{-2k}
    \int_{\R^n}
    u^{k-\delta}\chi^{\theta-2k}\dd x.
\]
The coefficient on the left is at least
\(\frac12\min\limits_{1\leq s\leq k}|b_s|>0\). Thus we have derived \eqref{eq:3.5}.
\end{proof}

\begin{remark}
For \(-1<\delta<0\), \(\int_{\R^n} u^{p-\delta}\chi^\theta\dd x\) remains on
the right-hand side of \eqref{eq:3.5}.  The rearrangement uses
the equality \(\sigma_k(A)=u^p\) and does not follow from a one-sided
Hessian inequality.
\end{remark}
\subsection{\texorpdfstring{Proof of Theorem~\ref{Thm:1.1}}
{Proof of the subcritical theorem}}
\label{Subsec:3.2}

Throughout this subsection, we assume the hypotheses of
Theorem~\ref{Thm:1.1}.  Choose
\begin{equation}\label{eq:3.27}
    \frac{(n-k)p}{k(n+2)}
    <\tau<
    \frac{n-k}{n-2k},
    \qquad
    \tau\neq\frac{n-k}{2k},
\end{equation}
and set
\begin{equation}\label{eq:3.28}
    M=-\frac{2k\tau}{n-k}.
\end{equation}
The interval for $\tau$ is nonempty because \(p<p_*\).  We exclude the single
value for which \(M=-1\), as required in
Lemma~\ref{Lem:3.1} with the choice $\delta=-M-1$.  Let \(\chi=\chi_R\) be the cutoff function in
\eqref{eq:3.2}.  Constants may depend on the fixed parameters and
the cutoff profile, but not on \(R\).

\begin{proposition}
\label{Prop:3.3}
Let \(\ell\in\mathbb N\) satisfy \(\ell>2k+2\). Set $P:=M+p+\frac{p}{k}$. Then
\begin{equation}\label{eq:3.29}
    M+p>0,\qquad
    P>M+p+1>0,
\end{equation}
and
\begin{align}
    \int_{\R^n}u^P\chi^\ell\dd x
    \leq{}&
    CR^{-2}
    \int_{\R^n}u^{M+p+1}\chi^{\ell-2}\dd x+
    CR^{-2k-2}
    \int_{\R^n}u^{M+k+1}
    \chi^{\ell-2k-2}\dd x.
    \label{eq:3.30}
\end{align}
\end{proposition}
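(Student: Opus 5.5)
The plan is to reduce \eqref{eq:3.30} to a bound on the gradient term
\[
G:=\int_{\R^n}u^{M+p-1}|\nabla u|^2\chi^\ell\dd x,
\]
and then to estimate \(G\) through the divergence of the vector field \(\mathcal J\) in \eqref{eq:3.1}, with \(\delta=-M-1\).

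\textbf{Step 1: the inequalities \eqref{eq:3.29}.} From \eqref{eq:3.27}--\eqref{eq:3.28} we have \(M\in\bigl(-\frac{2k}{n-2k},0\bigr)\). Since \(p>p_-=\frac{nk}{n-2k}\), this gives \(M+p>\frac{(n-2)k}{n-2k}>0\). Also \(P-(M+p+1)=\frac pk-1>0\), because \(p>k\). The choice \(\delta:=-M-1\) then satisfies \(\delta>-1\), and \(\delta\neq0\) because \(\tau\ne\frac{n-k}{2k}\).

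\textbf{Step 2: reduction to \(G\).} By Lemma~\ref{Lem:2.1}, \(-\Delta u=\sigma_1(A)\ge c\,\sigma_k(A)^{1/k}=c\,u^{p/k}\). Hence \(u^P\chi^\ell\le Cu^{M+p}(-\Delta u)\chi^\ell\). Integrating by parts and using \(M+p>0\),
\[
\int u^{M+p}(-\Delta u)\chi^\ell=(M+p)\,G+\ell\int u^{M+p}\chi^{\ell-1}\nabla u\cdot\nabla\chi .
\]
By Young's inequality, the last term is at most \(\varepsilon G+CR^{-2}\int u^{M+p+1}\chi^{\ell-2}\). It therefore suffices to bound \(G\) by the right-hand side of \eqref{eq:3.30}.

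\textbf{Step 3: pointwise coercivity of \(\operatorname{div}\mathcal J\).} We use \(\partial_iT_k(A)_{ij}=\partial_iT_{k-1}(A)_{ij}=0\), \(\nabla\sigma_k(A)=pu^{p-1}\nabla u\), \(\tr(T_{k-1}(A)A)=k\sigma_k(A)\) and \eqref{eq:2.11}. A direct computation, with \(\xi=\nabla u\), gives
\begin{align*}
\operatorname{div}\mathcal J={}&u^M\tr(L_k(A)A)+\Bigl(\tfrac{k(n+2)}{n}\tau-\tfrac{n-k}{n}p\Bigr)u^{M+p-1}|\xi|^2\\
&+(2\tau-M)u^{M-1}\xi^TL_k(A)\xi+\tau(1-M)u^{M-2}|\xi|^2\,\xi^TT_{k-1}(A)\xi .
\end{align*}
The coefficient of \(u^{M+p-1}|\xi|^2\) is positive by the lower bound on \(\tau\) in \eqref{eq:3.27}. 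For the cross term we apply Corollary~\ref{Cor:2.6} with \(X=Y=\xi\), followed by AM--GM. This absorbs the cross term into the first and last terms provided \((2\tau-M)^2\frac{n-k}{n}<4\tau(1-M)\). With \(M=-\frac{2k\tau}{n-k}\), this condition is exactly \(\tau(n-2k)<n-k\), which is the upper bound in \eqref{eq:3.27}. Keeping a fraction of each positive term, we obtain
\[
\operatorname{div}\mathcal J\ge c\bigl[u^{M+p-1}|\xi|^2+u^M\tr(L_k(A)A)+u^{M-2}|\xi|^2\xi^TT_{k-1}(A)\xi\bigr].
\]

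\textbf{Step 4: cutoff integration.} We integrate \(\chi^\ell\operatorname{div}\mathcal J=\operatorname{div}(\chi^\ell\mathcal J)-\ell\chi^{\ell-1}\mathcal J\cdot\nabla\chi\). The two pieces of \(\mathcal J\cdot\nabla\chi\) are handled separately.

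For the \(L_k\) piece, Corollary~\ref{Cor:2.6} with \(Y=\nabla\chi\) gives a bound by \(u^M\sqrt{\tr(L_kA)}\,|\xi|\sqrt{T_{k-1}[\nabla\chi,\nabla\chi]}\).

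For the \(T_{k-1}\) piece, Cauchy--Schwarz for the positive form \(T_{k-1}(A)\) (Corollary~\ref{Cor:2.3}) gives a bound by \(u^{M-1}|\xi|^2\sqrt{T_{k-1}[\xi,\xi]}\sqrt{T_{k-1}[\nabla\chi,\nabla\chi]}\).

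By Young's inequality, each piece splits into a small multiple of the coercive terms from Step 3 plus \(CR^{-2}\sigma_{k-1}(A)|\xi|^2u^M\chi^{\ell-2}\); here we use \(T_{k-1}\le(n-k+1)\sigma_{k-1}\Id\). This yields
\[
G\le CR^{-2}\Lambda_1,\qquad \delta=-M-1,\ \theta=\ell-2>2k .
\]
Lemma~\ref{Lem:3.1} applies for either sign of \(\delta\), and in the worse case \eqref{eq:3.5} it gives
\[
\Lambda_1\le C\int u^{p-\delta}\chi^{\ell-2}+CR^{-2k}\int u^{k-\delta}\chi^{\ell-2k-2}.
\]
Since \(p-\delta=M+p+1\) and \(k-\delta=M+k+1\), combining this with Step 2 gives \eqref{eq:3.30}.

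\textbf{Main obstacle.} I expect the difficulty to lie in Step 3: verifying the exact divergence identity and confirming that the AM--GM absorption of the indefinite \(L_k\) cross term works precisely in the range \eqref{eq:3.27}. Step 4 is more routine, but it requires care to ensure that every error term reduces to \(\Lambda_1\) and nothing beyond it.
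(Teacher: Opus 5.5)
Your proposal is correct and follows the paper's proof essentially step for step. The shared chain is: the Newton--Maclaurin reduction to \(\int u^{M+p-1}|\nabla u|^2\chi^\ell\dd x\); the divergence identity \eqref{eq:3.39}, made coercive via \eqref{eq:2.20} in exactly the range \eqref{eq:3.27}; the cutoff estimate bounding that gradient integral by \(CR^{-2}\Lambda_1\); and Lemma~\ref{Lem:3.1} with \(\delta=-M-1\), \(\theta=\ell-2\). The only cosmetic difference is that you derive \(P>M+p+1\) from \(p>k\), whereas the paper uses the slightly sharper bound \(P>M+p+\frac{n}{n-2k}\).
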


\begin{proof}
The strict upper bound for \(\tau\) gives
\(M=-\frac{2k\tau}{n-k} > -\frac{2k}{n-2k}\).
Together with \(p>\frac{nk}{n-2k}\), this yields the quantitative
lower bound
\begin{equation}
    M+p
    >
    \frac{nk-2k}{n-2k}
    =
    \frac{k(n-2)}{n-2k}
    >0, \quad\,\, P>M+p+\frac{n}{n-2k}>M+p+1.
\end{equation}
This proves \eqref{eq:3.29}. The Newton--Maclaurin inequality and the equation give
\begin{equation}\label{eq:3.31}
    -\Delta u
    =
    \sigma_1(A)
    \geq
    n\binom{n}{k}^{-1/k}\sigma_k(A)^{1/k}
    =
    n\binom{n}{k}^{-1/k}u^{p/k}.
\end{equation}
Multiply \eqref{eq:3.31} by
\(u^{M+p}\chi^\ell\) and integrate by parts. Since the test function
has compact support, we have
\begin{align}
    &n\binom{n}{k}^{-1/k}\int_{\R^n}
    u^{M+p+p/k}\chi^\ell\dd x
    \leq
    \int_{\R^n}
    (-\Delta u)u^{M+p}\chi^\ell\dd x
    \notag\\
    &\qquad =
    (M+p)\int_{\R^n}
    u^{M+p-1}|\nabla u|^2\chi^\ell\dd x
    +
    \ell\int_{\R^n}
    u^{M+p}\chi^{\ell-1}
    \nabla u\cdot\nabla\chi\dd x.
    \label{eq:3.32}
\end{align}
By {the Cauchy--Schwarz} inequality, for every \(\varepsilon>0\), we have
\begin{align}
    \ell\chi^{\ell-1}u^{M+p}
    |\nabla u||\nabla\chi|
    &=
    \ell
    \left(
        \chi^{\ell/2}u^{(M+p-1)/2}|\nabla u|
    \right)
    \left(
        \chi^{(\ell-2)/2}u^{(M+p+1)/2}
        |\nabla\chi|
    \right)
    \notag\\
    &\leq
    \varepsilon\chi^\ell u^{M+p-1}|\nabla u|^2
    +
    C_\varepsilon
    \chi^{\ell-2}u^{M+p+1}|\nabla\chi|^2.
    \label{eq:3.33}
\end{align}
Using \(|\nabla\chi|\leq C/R\), recalling
\(M+p+p/k=P\), then
\eqref{eq:3.32} and \eqref{eq:3.33} imply
\begin{equation}\label{eq:3.34}
    \int_{\R^n}u^P\chi^\ell\dd x
    \leq
    C\int_{\R^n}u^{M+p-1}|\nabla u|^2\chi^\ell\dd x
    +
    CR^{-2}
    \int_{\R^n}
    u^{M+p+1}\chi^{\ell-2}\dd x.
\end{equation}

Next, we aim to estimate the first integral on the right-hand side of \eqref{eq:3.34}:
\begin{equation}\label{eq:3.35}
    \int_{\R^n}u^{M+p-1}|\nabla u|^2\chi^\ell\dd x.
\end{equation}
To this end, we will introduce the key function
\begin{equation}\label{eq:3.36}
  \mathcal{J}:=-u^ML_k(A)\nabla u-\tau u^{M-1}|\nabla u|^2T_{k-1}(A)\nabla u,
\end{equation}
whose divergence can control $u^{M+p-1}|\nabla u|^2$. Note that the tensor
\(T_{k-1}(A)\) is divergence-free, and
\(\tr\bigl(T_{k-1}(A)A\bigr)=ku^p\).
Therefore,
\(\operatorname{div}\bigl(-T_{k-1}(A)\nabla u\bigr)=ku^p\).
Moreover, \eqref{eq:2.11} and
\(\nabla|\nabla u|^2=-2A\nabla u\) give
\[
    \nabla|\nabla u|^2\cdot
       \bigl(-T_{k-1}(A)\nabla u\bigr)
    =2\nabla u^TL_k(A)\nabla u
      +\frac{2k}{n}u^p|\nabla u|^2.
\]
Consequently, we have
\begin{align}
 &\quad \operatorname{div}\bigl(
   -u^{M-1}|\nabla u|^2T_{k-1}(A)\nabla u\bigr)
 \notag\\
 &=
 2u^{M-1}\nabla u^TL_k(A)\nabla u
 +(1-M)u^{M-2}|\nabla u|^2
       \nabla u^TT_{k-1}(A)\nabla u
 \notag\\
 &\qquad\quad+
 \frac{k(n+2)}n u^{M+p-1}|\nabla u|^2.
 \label{eq:3.37}
\end{align}
The last term in \eqref{eq:3.37} is precisely the function we want to control. However, the mixed terms in \eqref{eq:3.37} have no sign.  To control the mixed terms, we use a second divergence that produces the factor
\(\tr(L_k(A)A)\) appearing on the right-hand side of \eqref{eq:2.12}. From
\(\operatorname{div}L_k(A)=\frac{n-k}{n}\nabla(u^p)\), we get
\begin{align}
 \operatorname{div}\bigl(-u^ML_k(A)\nabla u\bigr)
 ={}u^M\tr\bigl(L_k(A)A\bigr)
 -Mu^{M-1}\nabla u^TL_k(A)\nabla u
 -\frac{n-k}{n}p\,
     u^{M+p-1}|\nabla u|^2.
 \label{eq:3.38}
\end{align}
Now, multiplying \eqref{eq:3.37} by \(\tau\) and adding
\eqref{eq:3.38}, we derive
\begin{align}
 &\quad \operatorname{div}\bigl(-u^ML_k(A)\nabla u
 -\tau u^{M-1}|\nabla u|^2T_{k-1}(A)\nabla u\bigr)
 \notag\\
 &=
 u^M\tr\bigl(L_k(A)A\bigr)
 +(2\tau-M)u^{M-1}\nabla u^TL_k(A)\nabla u
 \notag\\
 &\qquad+
 \tau(1-M)u^{M-2}|\nabla u|^2
       \nabla u^TT_{k-1}(A)\nabla u
 \notag\\
 &\qquad+
 \left(\frac{k(n+2)}n\tau-\frac{n-k}{n}p\right)
      u^{M+p-1}|\nabla u|^2.
 \label{eq:3.39}
\end{align}
The parameter choice implies
\begin{align}
 \frac{k(n+2)}n\tau-\frac{n-k}{n}p&>0,
 \label{eq:3.40}\\
 4\tau(1-M)-\frac{n-k}{n}(2\tau-M)^2
 &=4\tau\left(1-\frac{n-2k}{n-k}\tau\right)>0.
 \label{eq:3.41}
\end{align}
On the other hand, \eqref{eq:2.20} gives
\begin{equation}\label{eq:3.42}
 \bigl|\nabla u^TL_k(A)\nabla u\bigr|
 \leq
 \left(
  \frac{n-k}{n}\tr\bigl(L_k(A)A\bigr)|\nabla u|^2
  \nabla u^TT_{k-1}(A)\nabla u
 \right)^{1/2}.
\end{equation}
Thus the first three terms on the right of \eqref{eq:3.39} form a
positive definite quadratic expression in the corresponding square
roots. Consequently, by using {the Cauchy--Schwarz} inequality, we deduce from \eqref{eq:3.39} that, for some \(\kappa>0\),
\begin{align}\label{eq:3.43}
 &\quad \operatorname{div}\bigl(-u^ML_k(A)\nabla u
 -\tau u^{M-1}|\nabla u|^2T_{k-1}(A)\nabla u\bigr)\\
 &\geq \left(\frac{k(n+2)}n\tau-\frac{n-k}{n}p\right)
      u^{M+p-1}|\nabla u|^2 \nonumber\\
 &\quad +\kappa\left(u^M\tr\bigl(L_k(A)A\bigr)
 +u^{M-2}|\nabla u|^2\nabla u^TT_{k-1}(A)\nabla u\right). \nonumber
\end{align}

We now multiply \eqref{eq:3.43} by \(\chi^\ell\) and integrate by parts, and get
\begin{align}
 &\quad -\int_{\R^{n}}\ell\chi^{\ell-1}\bigl(-u^ML_k(A)\nabla u
 -\tau u^{M-1}|\nabla u|^2T_{k-1}(A)\nabla u\bigr)\cdot\nabla\chi\dd x \\
 &\geq \left(\frac{k(n+2)}n\tau-\frac{n-k}{n}p\right)
      \int_{\R^{n}}u^{M+p-1}|\nabla u|^2\chi^\ell\dd x \nonumber\\
 &\quad +\kappa\int_{\R^{n}}\left(u^M\tr\bigl(L_k(A)A\bigr)
 +u^{M-2}|\nabla u|^2\nabla u^TT_{k-1}(A)\nabla u\right)\chi^\ell\dd x. \nonumber
\end{align}
By \eqref{eq:2.12}, we have the following inequalities:
\begin{align}
 |L_k(A)\nabla u\cdot\nabla\chi|
 &\leq
 \left(
  \frac{n-k}{n}\tr\bigl(L_k(A)A\bigr)|\nabla u|^2
  \nabla\chi^TT_{k-1}(A)\nabla\chi
 \right)^{1/2},
 \label{eq:3.44}\\
 |T_{k-1}(A)\nabla u\cdot\nabla\chi|
 &\leq
 \left(
  \nabla u^TT_{k-1}(A)\nabla u\,
  \nabla\chi^TT_{k-1}(A)\nabla\chi
 \right)^{1/2}.
 \notag
\end{align}
Therefore, using Young's inequality, dropping the other nonnegative
terms and using
\(\tr T_{k-1}(A)=(n-k+1)\sigma_{k-1}(A)\), we obtain
\begin{align}
 \int_{\R^n}u^{M+p-1}|\nabla u|^2\chi^\ell\dd x
 &\leq
 C\int_{\R^n}\chi^{\ell-2}u^M|\nabla u|^2
       \nabla\chi^TT_{k-1}(A)\nabla\chi\dd x
 \notag\\
 &\leq
 CR^{-2}\int_{\R^n}
       \sigma_{k-1}(A)|\nabla u|^2u^M\chi^{\ell-2}\dd x.
 \label{eq:3.45}
\end{align}

Take \(\delta=-M-1\). The parameter choice gives \(M<0\), hence \(\delta>-1\), and
the exclusion in \eqref{eq:3.27} ensures that \(M\neq-1\), hence
\(\delta\neq0\).  In Lemma~\ref{Lem:3.1}, take
\(\theta=\ell-2>2k\).  Then the integral on the right-hand side of
\eqref{eq:3.45}
equals \(\Lambda_{1}\) in \eqref{eq:3.3}, because $-\delta-1=M$ and $\theta=\ell-2$, more precisely, \eqref{eq:3.45} bounds
\eqref{eq:3.35} by \(CR^{-2}\Lambda_{1}\). Applying \eqref{eq:3.4} when \(\delta>0\) and
\eqref{eq:3.5} when \(-1<\delta<0\), and noting that
\(p-\delta=M+p+1\) and \(k-\delta=M+k+1\), we obtain,
\begin{equation}\label{eq:3.46}
    \int_{\R^n}u^{M+p-1}|\nabla u|^2\chi^\ell\dd x
    \leq CR^{-2}\int_{\R^n}u^{M+p+1}\chi^{\ell-2}\dd x
    +CR^{-2k-2}\int_{\R^n}u^{M+k+1}\chi^{\ell-2k-2}\dd x.
\end{equation}

Combining \eqref{eq:3.34} with \eqref{eq:3.46} proves
\eqref{eq:3.30}. This completes our proof of Proposition \ref{Prop:3.3}.
\end{proof}

We will need the following lower bound estimate.
\begin{lemma}
\label{Lem:3.4}
Suppose that \(u>0\),
\(-D^2u\in\Gamma_k\), and
\(\sigma_k(-D^2u)>0\) in \(\R^n\). Let
\begin{equation}
    \rho=\frac{n-2k}{k}>0.
\end{equation}
Then, for every fixed \(R_0>0\),
\begin{equation}\label{eq:3.47}
    u(x)
    \geq
    m_0R_0^\rho |x|^{-\rho}, \quad \,\, \forall \,\, |x|\geq R_0,
    \qquad
    m_0:=\min_{|x|=R_0}u(x)>0.
\end{equation}
\end{lemma}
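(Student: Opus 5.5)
The plan is a comparison argument on the exterior domain $\{|x|>R_0\}$, using the fundamental solution of the $k$-Hessian operator as the barrier. Set $\Phi(x)=|x|^{-\rho}$ with $\rho=\frac{n-2k}{k}$. The first step is to note that $-\Phi$ is $k$-admissible with $\sigma_k(-D^2(-\Phi))=\sigma_k(D^2\Phi)$. The eigenvalues of $-D^2\Phi$ are $\rho|x|^{-\rho-2}$ (multiplicity $n-1$, tangential) and $-\rho(\rho+1)|x|^{-\rho-2}$ (radial). Hence
\[
\sigma_k(-D^2\Phi)=\rho^k|x|^{-k(\rho+2)}\left[\binom{n-1}{k}-(\rho+1)\binom{n-1}{k-1}\right]=0,
\]
because $(\rho+1)=\frac{n-k}{k}$ and $\binom{n-1}{k}=\frac{n-k}{k}\binom{n-1}{k-1}$. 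Similarly $\sigma_j(-D^2\Phi)>0$ for $j<k$. Thus $-D^2\Phi\in\overline{\Gamma_k}$ and $\sigma_k(-D^2\Phi)=0$, so $\Phi$ is a $k$-Hessian harmonic function.

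The second step is the comparison. Let $w=m_0R_0^\rho\Phi$. Then $w=m_0\le u$ on $\partial B_{R_0}$, and $w\to0$ at infinity while $u>0$. Fix $\varepsilon>0$ and compare $u$ with $w_\varepsilon=w-\varepsilon$ on the annulus $B_{R}\setminus \overline{B_{R_0}}$. Here $w_\varepsilon<u$ on $\partial B_{R_0}$, and $w_\varepsilon<0<u$ on $\partial B_R$ once $R$ is large (depending on $\varepsilon$). Suppose $w_\varepsilon-u$ has a positive interior maximum at $x_0$. Then $D^2u(x_0)\ge D^2w_\varepsilon(x_0)$, i.e. $-D^2u(x_0)\preceq -D^2w(x_0)$. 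Since $-D^2u(x_0)\in\Gamma_k$ and $\sigma_k$ is monotone on $\overline{\Gamma_k}$ (the G\aa rding cone property: $B\succeq A\in\Gamma_k$ implies $B\in\Gamma_k$ and $\sigma_k(B)\ge\sigma_k(A)$), we get $0=\sigma_k(-D^2w(x_0))\ge\sigma_k(-D^2u(x_0))>0$. This is a contradiction. Therefore $u\ge w-\varepsilon$ on the annulus. Letting $R\to\infty$ and then $\varepsilon\to0$ gives \eqref{eq:3.47}.

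The only delicate step is the monotonicity used at the touching point. This needs $-D^2w+(-D^2u-(-D^2w))$ with a positive semidefinite increment, and the standard fact that $\sigma_k$ is increasing along positive semidefinite directions on $\Gamma_k$. That fact follows from Corollary~\ref{Cor:2.3}, since $T_{k-1}\succ0$ and $\partial\sigma_k/\partial A=T_{k-1}(A)$, by integrating along the segment from $-D^2u(x_0)$ to $-D^2w(x_0)$. The segment stays in $\Gamma_k$ because $\Gamma_k$ is a convex cone invariant under adding positive semidefinite matrices. Strict positivity of $\sigma_k(-D^2u)$ at $x_0$ is exactly the hypothesis, so the argument closes without further regularity issues.
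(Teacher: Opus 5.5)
Your proposal is correct and follows essentially the same route as the paper. Both use comparison on an annulus with the radial $k$-Hessian fundamental solution $|x|^{-\rho}$ (for which $\sigma_k=0$ and $\sigma_j>0$ for $j<k$), reach a contradiction at an interior touching point because $\sigma_k$ is nondecreasing along positive semidefinite increments on $\Gamma_k$, and then pass to a limit. The only cosmetic difference is the barrier: you shift it down by $\varepsilon$ and let $R\to\infty$, $\varepsilon\to0$, whereas the paper uses $h_S=m_0\frac{r^{-\rho}-S^{-\rho}}{R_0^{-\rho}-S^{-\rho}}$, which vanishes on $\partial B_S$, and lets $S\to\infty$.
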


\begin{proof}
Fix \(S>R_0\), and define
\begin{equation}\label{eq:3.48}
    h_S(r)
    =
    m_0
    \frac{r^{-\rho}-S^{-\rho}}
         {R_0^{-\rho}-S^{-\rho}},
    \qquad
    R_0\leq r\leq S.
\end{equation}
Set
\[
    w=-u,\qquad v_S=-h_S,\qquad
    C_S=\frac{m_0}{R_0^{-\rho}-S^{-\rho}}>0.
\]
Consequently, one has
\[
    v_S'(r)=C_S\rho r^{-\rho-1},
    \qquad
    v_S''(r)=-C_S\rho(\rho+1)r^{-\rho-2}.
\]
For radial function $v_S$, the Hessian has radial eigenvalue \(v_S''\)
and tangential eigenvalue \(v_S'/r\), with tangential multiplicity
\(n-1\).  Thus the radial eigenvalue of \(D^2v_S\) and its
\(n-1\) equal tangential eigenvalues are
\begin{equation}
    \lambda_{\mathrm{rad}}
    =
    -C_S\rho(\rho+1)r^{-\rho-2},
    \qquad
    \lambda_{\mathrm{tan}}
    =
    C_S\rho r^{-\rho-2}.
\end{equation}
Writing \(\lambda=\lambda_{\mathrm{tan}}>0\), we therefore have
\(\lambda_{\mathrm{rad}}=-(\rho+1)\lambda\).  For
\(1\leq j\leq k\),
\begin{align}
    \sigma_j(D^2v_S)
    &=
    \binom{n-1}{j}\lambda^j
    -
    (\rho+1)\binom{n-1}{j-1}\lambda^j
    \notag\\
    &=
    \binom{n-1}{j-1}\lambda^j
    \left(
        \frac{n-j}{j}-(\rho+1)
    \right)
    \notag\\
    &=
    \binom{n-1}{j-1}\lambda^j
    \frac{n(k-j)}{jk}.
    \label{eq:3.49}
\end{align}
Thus
\[
    \sigma_j(D^2v_S)>0, \quad\,\, \forall \,\, 1\leq j<k,
    \qquad
    \sigma_k(D^2v_S)=0.
\]
Consequently, \(D^2v_S\in\partial\Gamma_k\), while
\[
    D^2w=-D^2u\in\Gamma_k,
    \qquad
    \sigma_k(D^2w)>0=\sigma_k(D^2v_S).
\]

On \(|x|=R_0\),
\(w=-u\leq-m_0=v_S\),
and on \(|x|=S\),
\(w=-u<0=v_S\).
If \(w-v_S\) had a positive interior maximum point $x_0\in B_{S}\setminus \overline{B_{R_{0}}}$, then
\(D^2w(x_{0})\preceq D^2v_S(x_{0})\).
Let
\[
    H=D^2v_S(x_{0})-D^2w(x_{0})\succeq0,
    \qquad
    A_t=D^2w+tH,\quad 0\leq t\leq1.
\]
As long as \(A_t\in\Gamma_k\), Corollary~\ref{Cor:2.3} gives
\begin{equation}\label{eq:3.50}
    \frac{\dd}{\dd t}\sigma_j(A_t)
    =
    \tr\bigl(T_{j-1}(A_t)H\bigr)
    \geq0,
    \qquad 1\leq j\leq k,
\end{equation}
because \(H\succeq0\).
{Indeed, if \(t_0\leq1\) satisfies \(A_{t_0}\in\partial\Gamma_k\) and $A_{t}\in \Gamma_k$ for $t\in[0,t_0)$, then \eqref{eq:3.50} on \([0,t_0)\) and continuity would {imply}
\(\sigma_j(A_{t_0})\geq\sigma_j(A_0)>0\) for every \(1\leq j\leq k\),
contradicting \(A_{t_0}\in\partial\Gamma_k\).}
Thus \(A_t\in\Gamma_k\) for any $0\leq t\leq1$.  Integrating \eqref{eq:3.50} with
\(j=k\) yields
\(\sigma_k(D^2v_S)\geq\sigma_k(D^2w)\), and therefore
that
\(0=\sigma_k(D^2v_S) \geq \sigma_k(D^2w)>0\),
a contradiction.  Hence
\(w\leq v_S \quad\text{in }B_S\setminus\overline{B_{R_0}}\),
or equivalently, \(u\geq h_S\).  Letting \(S\to\infty\) in
\eqref{eq:3.48} yields
\(u(x) \geq m_0\frac{|x|^{-\rho}}{R_0^{-\rho}} = m_0R_0^\rho|x|^{-\rho}\),
which is \eqref{eq:3.47}. This finishes our proof of Lemma \ref{Lem:3.4}.
\end{proof}

\begin{proof}[Proof of Theorem~\ref{Thm:1.1}]
Assume for contradiction that $u\geq0$ but \(u\not\equiv0\). Lemma~\ref{Lem:2.2} then implies that $u>0$ in $\R^n$.  Fix
\(\tau\) and \(M\) as in \eqref{eq:3.27}--\eqref{eq:3.28}, and recall that $P=M+p+\frac{p}{k}$.
Proposition~\ref{Prop:3.3} gives
\begin{align}
    \int_{\R^n}u^P\chi^\ell\dd x
    \leq{}&
    CR^{-2}
    \int_{\R^n}u^{M+p+1}\chi^{\ell-2}\dd x
    +
    CR^{-2k-2}
    \int_{\R^n}u^{M+k+1}\chi^{\ell-2k-2}\dd x.
    \label{eq:3.51}
\end{align}

Next, we will divide the proof into three cases.

\medskip

\noindent\emph{Case 1: \(M+k+1>0\).}
Then \(0<M+k+1<P\). Choose an integer
\(\ell\) large enough that
\begin{equation}\label{eq:3.52}
    \ell>2k+2,\qquad
    \ell\geq\frac{2P}{P-(M+p+1)},\qquad
    \ell\geq\frac{(2k+2)P}{P-(M+k+1)}.
\end{equation}

We shall use the following elementary form of Young's inequality
twice.  If \(0<\gamma<P\) and
\(m\geq\ell\gamma/P\), then
\begin{equation}\label{eq:3.53}
\begin{aligned}
    R^{-a}\chi^m u^\gamma
    \leq{}&
    \varepsilon\chi^\ell u^P
    +
    C_\varepsilon
    R^{-aP/(P-\gamma)}
    \chi^{(mP-\ell\gamma)/(P-\gamma)}.
\end{aligned}
\end{equation}

For the first scalar term in
\eqref{eq:3.51}, apply
\eqref{eq:3.53} with \(a=2\), \(\gamma=M+p+1\), and \(m=\ell-2\).
Thus
\begin{align}
    R^{-2}\chi^{\ell-2}u^{M+p+1}
    \leq{}
    \varepsilon\chi^\ell u^P
    +
    C_\varepsilon
    R^{-\frac{2P}{P-(M+p+1)}}
    \chi^{\frac{(\ell-2)P-\ell(M+p+1)}{P-(M+p+1)}}.
    \label{eq:3.54}
\end{align}
For the second scalar term, take
\[
    a=2k+2,\qquad
    \gamma=M+k+1,\qquad
    m=\ell-2k-2.
\]
Hence
\begin{align}
    R^{-2k-2}\chi^{\ell-2k-2}u^{M+k+1}
    \leq{}&
    \varepsilon\chi^\ell u^P
    +
    C_\varepsilon
    R^{-\frac{(2k+2)P}{P-(M+k+1)}}
    \chi^{\frac{(\ell-2k-2)P-\ell(M+k+1)}{P-(M+k+1)}}.
    \label{eq:3.55}
\end{align}
Both residual cutoff exponents are nonnegative. Moreover, we have
\begin{equation}\label{eq:3.56}
    \frac{2P}{P-(M+p+1)}
    =
    \frac{(2k+2)P}{P-(M+k+1)}
    =
    \frac{2kP}{p-k}.
\end{equation}
Integrate \eqref{eq:3.54} and
\eqref{eq:3.55} over \(\supp\chi\subset B_{2R}\).
Since \(0\leq\chi\leq1\), the residual cutoff powers are bounded by
\(1\), and therefore, we get
\begin{equation}\label{eq:3.57}
    \int_{\R^n}u^P\chi^\ell\dd x
    \leq
    C\varepsilon\int_{\R^n}u^P\chi^\ell\dd x
    +
    C_\varepsilon R^{-\frac{2kP}{p-k}}|B_{2R}|.
\end{equation}
Choose \(\varepsilon\) so that \(C\varepsilon<1/2\) and absorb the
first term on the right of \eqref{eq:3.57}.  Since
\(\chi=1\) on \(B_R\),
\begin{equation}
    \int_{B_R}u^P\dd x
    \leq
    CR^{n-\frac{2kP}{p-k}}.
\end{equation}
The upper bound for \(\tau\) gives $P>
 p\left(1+\frac1k\right)-\frac{2k}{n-2k}$, and hence $n-\frac{2kP}{p-k}<0$. Therefore, the
right-hand side tends to zero, as $R\rightarrow+\infty$.

\medskip
\noindent\emph{Case 2: \(M+k+1=0\).}
Choose an integer \(\ell\) so that the first two conditions in
\eqref{eq:3.52} hold. We treat the first term in \eqref{eq:3.51} exactly as
in \eqref{eq:3.54}. The second term in
\eqref{eq:3.51} now satisfies
\begin{equation}\label{eq:3.58}
    R^{-2k-2}
    \int_{\R^n}u^{M+k+1}\chi^{\ell-2k-2}\dd x\leq R^{-2k-2}
    \int_{\R^n}\chi^{\ell-2k-2}\dd x
    \leq
    CR^{n-2k-2}.
\end{equation}
Since \(M+k+1=0\), one has
\[
    P=\frac{(k+1)(p-k)}{k},
    \qquad
    \frac{2kP}{p-k}=2k+2.
\]
Consequently
\(n-2k-2=n-\frac{2kP}{p-k}<0\).
Combining \eqref{eq:3.58} with estimate on the first term in \eqref{eq:3.51} via \eqref{eq:3.54}, we finally get
\begin{equation}
    \int_{\R^n}u^P\chi^\ell\dd x
    \leq
    C\varepsilon\int_{\R^n}u^P\chi^\ell\dd x
    +
    C_\varepsilon R^{n-\frac{2kP}{p-k}},
\end{equation}
and hence
\begin{equation}
    \int_{B_R}u^P\dd x
    \leq
    CR^{n-\frac{2kP}{p-k}}\rightarrow0,  \qquad \text{as} \,\, R\rightarrow+\infty.
\end{equation}

\medskip
\noindent\emph{Case 3: \(M+k+1<0\).}
We first show that this case can occur only in the special
dimension \(n=2k+1\). Indeed, if \(n-2k\geq2\), then
\(M>-\frac{2k}{n-2k}\geq-k\),
and the first inequality is strict. Hence $M+k+1>1$,
which contradicts \(M+k+1<0\).  Thus we must have \(n-2k=1\), as claimed.

In this dimension, \(\rho=(n-2k)/k=1/k\).
Fix \(R_0>0\).  Lemma~\ref{Lem:3.4} gives
\begin{equation}\label{eq:3.59}
    u(x)\geq c_0|x|^{-\frac{1}{k}},
    \qquad \forall \,\, |x|\geq R_0
\end{equation}
for a constant \(c_0>0\).  Because \(M+k+1<0\), raising
\eqref{eq:3.59} to the power \(M+k+1\)
reverses the inequality:
\[
    u(x)^{M+k+1}
    \leq
    C|x|^{-\frac{M+k+1}{k}}
    \qquad \forall \,\, |x|\geq R_0.
\]
The integral of \(u^{M+k+1}\) over the fixed compact ball
\(\overline B_{R_0}\) is finite, since \(u\) is continuous and has
a positive minimum there.  It follows that, for \(R\geq R_0\),
\begin{align}
    R^{-2k-2}
    \int_{\R^n}
    u^{M+k+1}\chi^{\ell-2k-2}\dd x
    &\leq
    R^{-2k-2}
    \int_{B_{R_0}}u^{M+k+1}\dd x
    +
    CR^{-2k-2}
    \int_{B_{2R}\setminus B_{R_0}}
    |x|^{-\frac{M+k+1}{k}}\dd x
    \notag\\
    &\leq
    C_{R_0}R^{-2k-2}
    +
    CR^{-2k-2+n-\frac{M+k+1}{k}}
    \notag\\
    &=
    C_{R_0}R^{-2k-2}
    +
    CR^{-1-\frac{M+k+1}{k}}.
    \label{eq:3.60}
\end{align}
The upper parameter bound is now
\(\tau<\frac{n-k}{n-2k}=k+1\).
Since \(M=-2k\tau/(n-k)\) and \(n-k=k+1\), it follows that
\(M>-2k, \qquad M+k+1>1-k\).
Therefore, one has
\begin{equation}\label{eq:3.61}
    -1-\frac{M+k+1}{k}
    <
    -\frac1k
    <0.
\end{equation}
Choose an integer \(\ell\) so that the first two conditions in
\eqref{eq:3.52} hold and apply
\eqref{eq:3.54} to deal with the first term on the right-hand side of \eqref{eq:3.51}. By \eqref{eq:3.60} and \eqref{eq:3.61}, we deduce that
\begin{equation}
    \int_{B_R}u^P\dd x
    \leq
    CR^{n-\frac{2kP}{p-k}}
    +
    C_{R_0}R^{-2k-2}
    +
    CR^{-1-\frac{M+k+1}{k}}
    \longrightarrow0, \qquad \text{as} \,\, R\rightarrow+\infty.
\end{equation}

In all the above three cases, we have
\begin{equation}
    0<
    \int_{B_1}u^P\dd x
    \leq\int_{B_R}u^P\dd x\longrightarrow0, \qquad \text{as} \,\, R\rightarrow+\infty,
\end{equation}
which is absurd. As a consequence, we have concluded our proof of Theorem \ref{Thm:1.1}.
\end{proof}

\section{Optimal Liouville rigidity for \texorpdfstring{\(k\)}{k}-Hessian measure solutions}
\label{Sec:4}

In this section, we will prove Theorem~\ref{Thm:1.2}, and hence extend the optimal Liouville theorem in Theorem \ref{Thm:1.1} from $C^2$-solutions to $L^{\infty}_{\mathrm{loc}}$-weak solutions in the sense of $k$-Hessian measure. A mollification of a $k$-Hessian measure weak solution does not satisfy the equation, so the classical proof cannot be passed to the limit in this way.  We instead solve local Dirichlet problems with smooth nonlinear terms $f_j$. The difference between the nonlinear terms $f_{j}$ and the power of the approximate solution $u_j$ converges uniformly to zero.

\subsection{Local approximation}

Let \(u\not\equiv0\) satisfy the hypotheses of
Theorem~\ref{Thm:1.2} and set \(v:=-u\).  For every
\(R>0\),
\(u^p\in L^\infty(B_{4R})\subset L^q(B_{4R}), \qquad q>\frac{n}{2k}\).
By the strong minimum principle and \cite[Theorem~9.2 and Corollary~9.1]{Wang2009}, we have
\begin{equation}
    \mu_k[v]=u^p\,\dd x
    \Longrightarrow v\in C^\alpha_{\mathrm{loc}}(\R^n),
    \quad
    v\in\Phi^k(\R^n)
    \Longrightarrow \Delta v\geq0
    \Longrightarrow \Delta u\leq0
    \Longrightarrow u>0
    \quad\text{in }\R^n.
    \label{eq:4.1}
\end{equation}
We use this locally H\"older representative below. It agrees almost
everywhere with the representative in \eqref{eq:1.6}, so the equality
of Hessian measures is unchanged.

\begin{lemma}[Theorem~3.1 in \cite{TW1997}]
\label{Lem:4.1}
Let \(\Omega\subset\R^n\) be bounded and
\(v,w\in\Phi^k(\Omega)\cap C(\overline\Omega)\).  If
\[
    \mu_k[v]\geq\mu_k[w]\quad\text{in }\Omega,
    \qquad
    v\leq w\quad\text{on }\partial\Omega,
\]
then \(v\leq w\) in \(\Omega\).
\end{lemma}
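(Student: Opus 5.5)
The plan is the Trudinger--Wang argument: reduce to the smooth case by approximation, then use weak continuity of Hessian measures to compare \(v\) and \(w\) with strict inequalities. Replacing \(w\) by \(w+\varepsilon\) and letting \(\varepsilon\to0\) at the end, we may assume \(v<w\) on \(\partial\Omega\), hence \(v\le w-\varepsilon\) near \(\partial\Omega\) by continuity.

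Suppose, for contradiction, that \(\{v>w\}\) is nonempty. Perturb \(v\) to
\[
 v_\eta=v+\eta(|x|^2-d^2),
\]
where \(d=\operatorname{diam}\Omega\) and we translate so that \(0\in\Omega\). For \(\eta\) small we still have \(v_\eta<w\) near \(\partial\Omega\) and \(\{v_\eta>w\}\ne\emptyset\). Put \(D=\{v_\eta>w\}\Subset\Omega\), which is open since \(v_\eta-w\) is continuous. Adding \(\eta|x|^2\) raises the measure. In the smooth case this follows from the monotonicity along segments already used in \eqref{eq:3.50}, giving
\[
 \sigma_k(D^2v+2\eta\Id)\ge\sigma_k(D^2v)+(2\eta)^k\binom nk .
\]
For general \(v\) it passes to the limit through weak continuity. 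Hence \(\mu_k[v_\eta]\ge\mu_k[w]+c_\eta\dd x\) with \(c_\eta>0\).

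The core step is the comparison principle for \(v_\eta\) and \(w\) on \(D\), where \(v_\eta=w\) on \(\partial D\). I would mollify: \(v^\epsilon,w^\epsilon\) are smooth and \(k\)-convex, and they converge locally uniformly because both functions are continuous. On smooth functions the classical comparison argument works. At an interior maximum of \(v^\epsilon-w^\epsilon\) (after subtracting a small constant so the inequality is strict near \(\partial D\)) the Hessians are ordered. The derivative argument of Lemma \ref{Lem:3.4}, namely \eqref{eq:3.50}, then gives a pointwise reverse inequality of the \(\sigma_k\)'s. That is not yet a contradiction, because \(\sigma_k(D^2v^\epsilon)\ge\sigma_k(D^2w^\epsilon)\) holds only in the limit.

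To bypass this I would use the integral (measure) form instead. Let \(D_\epsilon=\{v^\epsilon-w^\epsilon>\epsilon'\}\) with \(\epsilon'\) chosen so that \(D_\epsilon\Subset D\). Apply the Trudinger--Wang integration-by-parts comparison: for smooth \(k\)-convex \(f\le g\) in a domain with \(f=g\) on the boundary, \(\int\sigma_k(D^2f)\le\int\sigma_k(D^2g)\). This holds because \(\sigma_k(D^2f)=\frac1k\operatorname{div}(T_{k-1}(D^2f)\nabla f)\) and the boundary flux is monotone in the normal derivative, using the ellipticity of Corollary \ref{Cor:2.3}. Applying it to \(w^\epsilon+\epsilon'\) and \(v^\epsilon\) on \(D_\epsilon\) gives
\[
 \int_{D_\epsilon}\sigma_k(D^2v^\epsilon)\le\int_{D_\epsilon}\sigma_k(D^2w^\epsilon).
\]
Letting \(\epsilon\to0\) with the weak continuity of \(\mu_k\) (choosing \(\epsilon'\) so that the level sets carry no mass) yields \(\mu_k[v_\eta](D')\le\mu_k[w](D')\) for some nonempty open \(D'\). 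This contradicts \(\mu_k[v_\eta]\ge\mu_k[w]+c_\eta\dd x\).

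The main obstacle is the boundary-flux monotonicity and the passage to the limit on the sets \(D_\epsilon\), whose boundaries may be irregular. I would take \(\epsilon'\) to be a regular value by Sard's theorem, and control the mass on the boundary by testing against cutoffs, as in \cite{TW1997}. Since the statement is cited as Theorem 3.1 of \cite{TW1997}, in the paper it suffices to invoke that result.
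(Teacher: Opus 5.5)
Your overall route is the standard Aleksandrov-type argument behind the Trudinger--Wang comparison principle: perturb by $\eta(|x|^2-d^2)$ to get a strict gap $\mu_k[v_\eta]\ge\mu_k[w]+c_\eta\dd x$, mollify, compare total Hessian masses on superlevel sets of $v^\epsilon-w^\epsilon$, and pass to the limit by weak continuity. The paper itself gives no proof and only quotes Theorem~3.1 of Trudinger--Wang.

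There is, however, a concrete error in the auxiliary inequality on which everything rests: it is stated backwards. Let $f,g$ be smooth and $k$-convex on $U$ with $f\le g$ in $U$ and $f=g$ on $\partial U$. Then the \emph{lower} function carries \emph{more} mass, $\int_U\sigma_k(D^2g)\dd x\le\int_U\sigma_k(D^2f)\dd x$. For $k=n$, take $f=|x|^2-1$ and $g=0$ on $B_1$. Your version, applied with $f=w^\epsilon+\epsilon'$ and $g=v^\epsilon$ on $D_\epsilon$, would give $\int_{D_\epsilon}\sigma_k(D^2w^\epsilon)\le\int_{D_\epsilon}\sigma_k(D^2v^\epsilon)$, which yields no contradiction. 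The display you actually write is the correct one, so it is the lemma that must be fixed.

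The justification also needs repair. ``The boundary flux is monotone in the normal derivative'' is not a pointwise fact, because $T_{k-1}(D^2f)[\nabla f,\nu]$ also involves tangential derivatives of $\partial_\nu f$. The clean argument is the homotopy identity, the same one as \eqref{eq:6.118}. With $h=g-f$,
\[
 \sigma_k(D^2g)-\sigma_k(D^2f)=\operatorname{div}\Bigl(\int_0^1T_{k-1}(D^2f+tD^2h)\dd t\,\nabla h\Bigr).
\]
On a regular level $\partial U=\{h=0\}$ with $h>0$ inside, one has $\nabla h=-|\nabla h|\nu$. Moreover $T_{k-1}(D^2f+tD^2h)\succeq0$, because $\overline{\Gamma_k}$ is convex. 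Hence the flux equals $-\int_{\partial U}|\nabla h|\int_0^1T_{k-1}[\nu,\nu]\dd t\dd S\le0$.

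Two smaller points remain. First, the gain $\sigma_k(A+2\eta\Id)\ge\sigma_k(A)+\binom nk(2\eta)^k$ does not follow from the monotonicity in \eqref{eq:3.50} alone. Integrate $\frac{\dd}{\dd t}\sigma_j(A+t\Id)=(n-j+1)\sigma_{j-1}(A+t\Id)$ inductively in $j$, or use superadditivity from the mixed Hessian expansion.

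Second, make the limit step explicit.
\begin{itemize}
\item For fixed $\delta>0$ and small $\epsilon$, uniform convergence gives $\{v_\eta-w>\epsilon'+\delta\}\subset D_\epsilon\subset\{v_\eta-w\ge\epsilon'-\delta\}$.
\item Use lower semicontinuity of weak limits on open sets for $\mu_k[v^\epsilon]$ and upper semicontinuity on compact sets for $\mu_k[w^\epsilon]$.
\item Let $\delta\to0$ and use the strict gap. This gives $c_\eta|\{v_\eta-w>\epsilon'\}|\le\mu_k[w](\{v_\eta-w=\epsilon'\})$.
\item The right-hand side vanishes for all but countably many $\epsilon'$. Choose such an $\epsilon'\in(0,\sup(v_\eta-w))$ that is also a regular value of every $v^{\epsilon_j}-w^{\epsilon_j}$; these values form a full-measure set by Sard.
\end{itemize}
This gives the contradiction. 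With these corrections your argument is complete.
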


\begin{lemma}
\label{Lem:4.2}
For every \(R>0\), there are \(\{u_j\}\subset C^3(\overline {B_{3R}})\) satisfying
\begin{equation*}
    u_j>0,\qquad A_j:=-D^2u_j\in\Gamma_k,\qquad
    \sigma_k(A_j)=f_j>0
    \quad\text{in }B_{3R},
\end{equation*}
and
\begin{equation}  \label{eq:4.2}
    u_j\longrightarrow u,\qquad
    r_j:=f_j-u_j^p\longrightarrow0
\end{equation}
uniformly on \(\overline {B_{3R}}\).
\end{lemma}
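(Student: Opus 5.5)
We approximate $u$ on $B_{3R}$ by smooth solutions of Dirichlet problems with smooth positive right-hand sides, and then identify the limit through the comparison principle of Lemma~\ref{Lem:4.1}.

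\textbf{Step 1: approximating data.} By \eqref{eq:4.1}, $u$ is locally H\"older continuous and positive. Hence on $\overline{B_{4R}}$ we have $0<m\leq u\leq M'$.

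\emph{Right-hand sides.} Mollify $u^p$ (extended from $B_{4R}$) to get smooth $g_j$ with $g_j\to u^p$ uniformly on $\overline{B_{3R}}$. Each $g_j$ is bounded below by roughly $m^p>0$.

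\emph{Boundary data.} Mollify $u$ to get smooth $\varphi_j\to u$ uniformly on $\partial B_{3R}$.

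\emph{Solve.} By the Caffarelli--Nirenberg--Spruck theory \cite{CNS1985} for the $k$-Hessian Dirichlet problem on the uniformly convex ball, with smooth positive right-hand side and smooth boundary data, there is a unique $w_j\in C^\infty(\overline{B_{3R}})$ solving
\[
D^2w_j\in\Gamma_k,\qquad \sigma_k(D^2w_j)=g_j \ \text{in } B_{3R},\qquad w_j=-\varphi_j \ \text{on } \partial B_{3R}.
\]
Set $u_j=-w_j$ and $f_j=g_j$.

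\textbf{Step 2: uniform convergence $u_j\to u$.} We bracket $v=-u$ by explicit perturbations of $w_j$. Let $\varepsilon_j:=\|g_j-u^p\|_{L^\infty}+\|\varphi_j-u\|_{L^\infty(\partial B_{3R})}\to0$, and use the function $\eta_j(x)=\varepsilon_j^{1/k}\bigl(|x|^2-9R^2\bigr)$.

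\emph{Lower barrier.} Adding the convex function $\eta_j$ to $w_j-\varepsilon_j$ increases the Hessian measure. Concavity of $\sigma_k^{1/k}$ on $\Gamma_k$ gives
\[
\sigma_k(D^2w_j+c\,\Id)^{1/k}\geq\sigma_k(D^2w_j)^{1/k}+c\binom nk^{1/k},
\]
so the added term dominates the discrepancy between $g_j$ and $u^p$, with a constant depending only on $m,M'$. Lemma~\ref{Lem:4.1} applies on $B_{3R}$, since both functions are continuous and $k$-convex. It yields $w_j-\varepsilon_j+C\eta_j\leq v$.

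\emph{Upper barrier.} Symmetrically, apply the same argument to $v+C\eta_j-\varepsilon_j$ compared against $w_j$. This gives $v-C\varepsilon_j'\leq w_j$.

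\emph{Conclusion of Step 2.} The two bounds give $\|w_j-v\|_{L^\infty(B_{3R})}\leq C\varepsilon_j^{1/k}\to0$. Consequently $u_j>0$ for $j$ large, and
\[
r_j=g_j-u_j^p=(g_j-u^p)+(u^p-u_j^p)\to0
\]
uniformly, since $t\mapsto t^p$ is uniformly continuous on $[m/2,2M']$.

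\textbf{Main obstacle.} The delicate point is the comparison step. We need Lemma~\ref{Lem:4.1} for a Hessian measure $u^p\,dx$ that is only absolutely continuous, compared against a smooth perturbation. Its hypotheses ($k$-convexity and continuity up to the boundary) hold here because $v$ is H\"older continuous. What must be checked is the measure inequality $\mu_k[w_j-\varepsilon_j+C\eta_j]\geq u^p\,dx$. This is a pointwise density inequality, because the left-hand side is smooth. The superadditivity of $\sigma_k^{1/k}$ under adding $c\,\Id$ then makes it a straightforward calculation. Finally, the regularity $C^3(\overline{B_{3R}})$ comes from the global smooth solvability in \cite{CNS1985}, using that $g_j$ is smooth and positive and $\partial B_{3R}$ is uniformly convex.
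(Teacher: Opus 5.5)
Your Step 1 and your lower barrier are sound. There $w_j-\varepsilon_j+C\eta_j$ is smooth, so its Hessian measure is the density $\sigma_k(D^2w_j+2C\varepsilon_j^{1/k}\Id)\,dx$. Superadditivity of $\sigma_k^{1/k}$ together with $g_j\ge u^p-\varepsilon_j$ then gives exactly what Lemma~\ref{Lem:4.1} needs.

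The gap is the upper barrier. There you compare $v+C\eta_j-\varepsilon_j$ with $w_j$, so Lemma~\ref{Lem:4.1} requires $\mu_k[v+C\eta_j]\ge g_j\,dx$. But $v=-u$ is only H\"older continuous, and $\mu_k[v]=u^p\,dx$ is known only as a Trudinger--Wang Hessian measure. ``The same argument'' is a pointwise inequality for $\sigma_k$ of a classical Hessian, which $v$ does not have. Your discussion of the main obstacle only checks the comparison whose left-hand side is smooth, and here it is not. Without this inequality you get no lower bound on $w_j$, hence no uniform convergence.

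The step can be repaired in either of two ways.
\begin{itemize}
\item \emph{Approximation.} Mollifications $v_\delta$ of $v$ are smooth and $k$-convex. For $A\in\overline{\Gamma_k}$ and $t\ge0$,
\[
\sigma_k(A+t\Id)=\sum_{i=0}^k\binom{n-i}{k-i}t^{k-i}\sigma_i(A)\ge\sigma_k(A)+\binom nk t^k .
\]
Letting $\delta\to0$ and using weak continuity of Hessian measures \cite{TW1999} gives $\mu_k[v+C\eta_j]\ge\bigl(u^p+\binom nk(2C)^k\varepsilon_j\bigr)\,dx\ge g_j\,dx$ once $\binom nk(2C)^k\ge1$. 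The additive form matters: it is linear in $\sigma_k(D^2v_\delta)$ and so survives the weak limit, whereas the $\sigma_k^{1/k}$ form does not pass to the limit directly.
\item \emph{One-sided data.} Choose $g_j\le u^p$, e.g.\ a mollification minus its error. Then $\mu_k[v-\varepsilon_j]=u^p\,dx\ge\mu_k[w_j]$ directly, and only the smooth barrier is ever needed.
\end{itemize}

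The paper sidesteps the issue entirely. It compares the approximate solution with $cv+d$ for $c=(\sup f_j/u^p)^{1/k}$ and for $c=(\inf f_j/u^p)^{1/k}$. This uses only the homogeneity $\mu_k[cv+d]=c^k u^p\,dx$ and $\min_{\overline{B_{3R}}}u^p>0$, which force $c\to1$ and the boundary constants $d\to0$. No additive perturbation of the nonsmooth function, and no approximation or mixed-measure argument, is required.
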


\begin{proof}
{
By \eqref{eq:4.1}, we have
\(0<\min_{\overline {B_{3R}}}u^p \leq\max_{\overline {B_{3R}}}u^p<\infty\).
Choose \(\{f_j\}\subset C^\infty(\overline {B_{3R}})\) and
\(\{\varphi_j\}\subset C^\infty(\partial B_{3R})\) so that
\begin{equation*}
    \|f_j-u^p\|_{L^\infty(B_{3R})}\longrightarrow0,\qquad
    \|\varphi_j-v\|_{L^\infty(\partial B_{3R})}\longrightarrow0.
\end{equation*}
Extend \(\varphi_j\) smoothly to \(\overline B_{3R}\). Adding a
sufficiently large multiple of \(|x|^2-9R^2\) gives a strict admissible
subsolution with boundary value \(\varphi_j\). Since \(B_{3R}\) is
uniformly \((k-1)\)-convex, \cite[Theorem~3.4]{Wang2009} {yields} a
solution \(v_j\) of
\begin{equation}
    v_j\in C^3(\overline {B_{3R}}),\quad
    D^2v_j\in\Gamma_k,\quad
    \sigma_k(D^2v_j)=f_j\quad\text{in }B_{3R},\quad
    v_j=\varphi_j\quad\text{on }\partial B_{3R}.
\end{equation}
Applying Lemma~\ref{Lem:4.1} directly to constant multiples and constant
translates of \(v\) yields
\begin{align*}
 &\left(\sup_{\overline {B_{3R}}}\frac{f_j}{u^p}\right)^{1/k}v
 +\inf_{\partial B_{3R}}\left[
 \varphi_j-
 \left(\sup_{\overline {B_{3R}}}\frac{f_j}{u^p}\right)^{1/k}v\right]
 \leq v_j\\
 &\hspace{18mm}\leq
 \left(\inf_{\overline {B_{3R}}}\frac{f_j}{u^p}\right)^{1/k}v
 +\sup_{\partial B_{3R}}\left[
 \varphi_j-
 \left(\inf_{\overline {B_{3R}}}\frac{f_j}{u^p}\right)^{1/k}v\right]
 \quad\text{in }\overline {B_{3R}}.
\end{align*}
The two multiplicative factors tend to \(1\), and the two boundary
corrections tend to \(0\).  Hence
\(\|v_j-v\|_{L^\infty(B_{3R})}\longrightarrow0\).
Let \(u_j=-v_j\).
Therefore, \eqref{eq:4.1} implies that for large \(j\),
\[
    \min_{\overline {B_{3R}}}u_j
    \geq\frac12\min_{\overline {B_{3R}}}u>0,
    \qquad
    \|f_j-u_j^p\|_{L^\infty(B_{3R})}\longrightarrow0.
\]
This finishes our proof of Lemma \ref{Lem:4.2}.
}
\end{proof}

\subsection{The estimate for the approximating solutions}

Fix \(M,\tau\) by \eqref{eq:3.27}--\eqref{eq:3.28}, and recall that $P=M+p+\frac{p}{k}$.

\begin{proposition}
\label{Prop:4.3}
Let \(\chi=\chi_R\) be given by \eqref{eq:3.2} and let
\(\ell\in\mathbb N\), \(\ell>2k+2\).  Then
\begin{align}
    \int_{\R^n}u^P\chi^\ell\dd x
    \leq{}
    CR^{-2}\int_{\R^n}u^{M+p+1}\chi^{\ell-2}\dd x
    +CR^{-2k-2}\int_{\R^n}
       u^{M+k+1}\chi^{\ell-2k-2}\dd x,
    \label{eq:4.3}
\end{align}
where \(C\) is independent of \(R\).
\end{proposition}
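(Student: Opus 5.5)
The plan is to prove \eqref{eq:4.3} for the approximating solutions \(u_j\) of Lemma~\ref{Lem:4.2} on \(B_{3R}\), and then pass to the limit \(j\to\infty\). The cutoff \(\chi=\chi_R\) is supported in \(B_{2R}\Subset B_{3R}\). All integrations by parts in Lemma~\ref{Lem:3.1} and Proposition~\ref{Prop:3.3} use only compactly supported test functions built from \(\chi\). They therefore remain valid for \(u_j\in C^3(\overline{B_{3R}})\) with \(A_j=-D^2u_j\in\Gamma_k\) and \(u_j>0\). The only change is that the equation now reads \(\sigma_k(A_j)=u_j^p+r_j\), with \(r_j\to0\) uniformly.

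\textbf{Step 1: redo the proof of Proposition~\ref{Prop:3.3} for \(u_j\), tracking the error \(r_j\).} In \eqref{eq:3.31}, Newton--Maclaurin gives \(-\Delta u_j\ge c\,f_j^{1/k}\ge c\,u_j^{p/k}-o(1)\), since \(u_j\) is bounded below and \(f_j^{1/k}-u_j^{p/k}\to0\) uniformly. Testing with \(u_j^{M+p}\chi^\ell\) then reproduces \eqref{eq:3.34} up to an additive error \(o(1)\int u_j^{M+p}\chi^\ell\).

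In the divergence identity \eqref{eq:3.39}, \(u^p\) is replaced by \(f_j\). The identity \(\operatorname{div}L_k(A_j)=\frac{n-k}{n}\nabla f_j\) now produces the term \(-\frac{n-k}{n}u_j^M\nabla f_j\cdot\nabla u_j\). I will not expand \(\nabla f_j\), which is not uniformly controlled. Instead, I integrate it by parts against \(\chi^\ell\), moving the derivative onto \(u_j^M\nabla u_j\chi^\ell\). Since \(\Delta u_j\) is bounded in terms of \(\sigma_1(A_j)\), this yields quantities like \(\int f_j(|\Delta u_j|u_j^M+u_j^{M-1}|\nabla u_j|^2)\chi^\ell\) plus cutoff terms. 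Writing \(f_j=u_j^p+r_j\), the \(u_j^p\) part reproduces exactly the term \(-\frac{n-k}{n}p\,u^{M+p-1}|\nabla u|^2\) of \eqref{eq:3.38}. The \(r_j\) part is \(o(1)\) times integrals of \(u_j\)-derivatives over \(B_{2R}\).

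The same bookkeeping applies to Lemma~\ref{Lem:3.1}. Identity \eqref{eq:3.17} holds with \(\sigma_k(A_j)=u_j^p+r_j\), giving the same inequalities \eqref{eq:3.4}/\eqref{eq:3.5} up to an error \(\int |r_j|u_j^{-\delta}\chi^\theta=o(1)\). The pointwise coercivity \eqref{eq:3.43} uses only Lemma~\ref{Lem:2.4}, which holds for any \(A_j\in\Gamma_k\). The conclusion is
\[
\int u_j^P\chi^\ell\le CR^{-2}\int u_j^{M+p+1}\chi^{\ell-2}+CR^{-2k-2}\int u_j^{M+k+1}\chi^{\ell-2k-2}+\epsilon_j,
\]
with \(C\) independent of \(j\) and \(R\).

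\textbf{Step 2: show \(\epsilon_j\to0\) for fixed \(R\).} This is the main obstacle. The errors involve \(\int_{B_{2R}}|\nabla u_j|^2\) and \(\int_{B_{2R}}|\Delta u_j|\) weighted by powers of \(u_j\). The powers are harmless because \(u_j\) is uniformly bounded above and below on \(\overline{B_{3R}}\). So I need uniform bounds on these derivative integrals on \(B_{2R}\).

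For \(\int_{B_{2R}}\Delta u_j\), I test \(-\Delta u_j\ge0\) against a cutoff. For \(\int_{B_{2R}}|\nabla u_j|^2\), I test the superharmonic function \(u_j\) with \(u_j\eta^2\). Both bounds depend only on \(\sup_{B_{3R}}u_j\), which is uniformly bounded. More generally, any interior integral \(\int \sigma_{k-s}(A_j)|\nabla u_j|^{2s}\eta\) is bounded by iterating the same divergence-free integration by parts used for \(J_s\) in Lemma~\ref{Lem:3.1}. Multiplying these uniform bounds by \(\|r_j\|_\infty\to0\) gives \(\epsilon_j\to0\).

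\textbf{Step 3: pass to the limit.} The remaining terms in the inequality are integrals of continuous powers of \(u_j\) against \(\chi\). Since \(u_j\to u\) uniformly on \(\overline{B_{3R}}\) and \(u\) is bounded below there, every power, including negative ones, converges uniformly. Letting \(j\to\infty\) gives \eqref{eq:4.3}. The constant \(C\) is the constant from Step 1 and is independent of \(R\).
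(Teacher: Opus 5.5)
Your proposal is correct and follows essentially the paper's route. You approximate by the $C^3$ solutions $u_j$ of Lemma~\ref{Lem:4.2}, rerun the $\mathcal J$-divergence and Newton-recursion arguments with $\sigma_k(A_j)=u_j^p+r_j$, and integrate the $\nabla r_j$ term by parts. You then kill the $\|r_j\|_\infty$-errors using uniform local bounds on $\int|\nabla u_j|^2$ and $\int(-\Delta u_j)$, before letting $j\to\infty$ in integrals of pure powers of $u_j$. The only differences are cosmetic: the paper gets those local bounds from the measures $\nu_j=-\frac1{M+1}\Delta(u_j^{M+1})$ rather than by testing the superharmonic $u_j$ directly, and it uses $f_j\geq\frac12u_j^p$ in the Newton--Maclaurin step instead of carrying an additive $o(1)$ error.
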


\begin{proof}
Fix \(R\), take the approximate sequence in
Lemma~\ref{Lem:4.2}, and write
\(A_j=-D^2u_j, \qquad \sigma_k(A_{j})=f_j=u_j^p+r_j\),
and similar to \eqref{eq:3.1} and \eqref{eq:3.36}, let us define the key functions
\[
    \mathcal J_j
    =
    -u_j^ML_k(A_{j})\nabla u_j
    -\tau u_j^{M-1}|\nabla u_j|^2T_{k-1}(A_{j})\nabla u_j.
\]
Then we have
\begin{align}
 \operatorname{div}\mathcal J_j
 ={}&
    u_j^M\tr\bigl(L_k(A_{j})A_{j}\bigr)
    +(2\tau-M)u_j^{M-1}\nabla u_j^TL_k(A_{j})\nabla u_j
    \notag\\
 &+\tau(1-M)u_j^{M-2}|\nabla u_j|^2
      \nabla u_j^TT_{k-1}(A_{j})\nabla u_j
    \notag\\
 &-\frac{n-k}{n}u_j^M\nabla\sigma_k(A_{j})\cdot\nabla u_j
   +\frac{k(n+2)}n\tau u_j^{M-1}
      \sigma_k(A_{j})|\nabla u_j|^2.
\end{align}
Hence
\begin{align}
 \operatorname{div}\mathcal J_j
 ={}&
    u_j^M\tr\bigl(L_k(A_{j})A_{j}\bigr)
    +(2\tau-M)u_j^{M-1}\nabla u_j^TL_k(A_{j})\nabla u_j
    \notag\\
 &+\tau(1-M)u_j^{M-2}|\nabla u_j|^2
      \nabla u_j^TT_{k-1}(A_{j})\nabla u_j
    \notag\\
 &+\left(\frac{k(n+2)}n\tau-\frac{n-k}{n}p\right)
      u_j^{M+p-1}|\nabla u_j|^2
    \notag\\
 &-\frac{n-k}{n}u_j^M\nabla r_j\cdot\nabla u_j
   +\frac{k(n+2)}n\tau u_j^{M-1}r_j|\nabla u_j|^2.
\end{align}
Arguing as in \eqref{eq:3.40},
\eqref{eq:3.41}, and \eqref{eq:3.42}, we obtain constants
\(\kappa_0,\kappa_1>0\) independent of \(j,R\) such that
\begin{align}
 \operatorname{div}\mathcal J_j
 \geq{}&
 \kappa_0\left(
      u_j^M\tr\bigl(L_k(A_{j})A_{j}\bigr)
      +u_j^{M-2}|\nabla u_j|^2
       \nabla u_j^TT_{k-1}(A_{j})\nabla u_j\right)
 \notag\\
 &+\kappa_1u_j^{M+p-1}|\nabla u_j|^2
 -\frac{n-k}{n}u_j^M\nabla r_j\cdot\nabla u_j
 +\frac{k(n+2)}n\tau u_j^{M-1}r_j|\nabla u_j|^2.
 \label{eq:4.4}
\end{align}

By \eqref{eq:4.2}, for every fixed \(a\in\R\), one has
\begin{equation}
    u_j^a\longrightarrow u^a
    \quad\text{uniformly on }\overline B_{3R}.
    \label{eq:4.5}
\end{equation}
Since \(M<0\) and \(M\neq-1\), we have
\begin{align}
    \dd\nu_j
    &:=
    \left(u_j^M\sigma_1(A_j)
    -M u_j^{M-1}|\nabla u_j|^2\right)\dd x
    \geq(-M)u_j^{M-1}|\nabla u_j|^2\,\dd x,
    \label{eq:4.6}\\
    \nu_j
    &=\operatorname{div}(-u_j^M\nabla u_j)
      =-\frac1{M+1}\Delta(u_j^{M+1})
      \quad\text{in }\mathcal D'(B_{3R}),
    \notag\\
    \nu_j&\longrightarrow
      -\frac1{M+1}\Delta(u^{M+1})
      \quad\text{in }\mathcal D'(B_{3R}), \qquad \text{as} \,\, j\rightarrow+\infty.
\end{align}
If \(K\Subset B_{3R}\) and
\(\eta\in C_c^\infty(B_{3R})\), \(\eta\geq0\) and
\(\eta\geq1\) on \(K\), then
\begin{equation}
    \nu_j(K)
    \leq\langle\nu_j,\eta\rangle
    =-\frac1{M+1}\int_{{B_{3R}}} u_j^{M+1}\Delta\eta\dd x
    \longrightarrow
    -\frac1{M+1}\int_{\R^n} u^{M+1}\Delta\eta\dd x.
\end{equation}
For every \(\zeta\in C_c^\infty(B_{2R})\), integration by parts and
\eqref{eq:4.6} give
\begin{align}
 -\int_{{B_{3R}}}\zeta u_j^M\nabla r_j\cdot\nabla u_j\dd x
 =-\int_{{B_{3R}}} r_j\zeta\,\dd\nu_j
   +\int_{{B_{3R}}} r_ju_j^M\nabla u_j\cdot\nabla\zeta\dd x,
 \end{align}
 and hence
 \begin{align}\label{eq:4.7}
 \left|-\int_{{B_{3R}}}\zeta u_j^M\nabla r_j\cdot\nabla u_j\dd x\right|
 &\leq\|r_j\|_\infty\Bigg[
 \|\zeta\|_\infty\nu_j(\operatorname{supp}\zeta)\nonumber
 \\
 &+\frac{\nu_j(\operatorname{supp}\nabla\zeta)^{1/2}}{\sqrt{-M}}
 \left(\int_{{B_{3R}}} u_j^{M+1}|\nabla\zeta|^2\dd x\right)^{1/2}
 \Bigg]=o_j(1),
 \end{align}
 and
 \begin{align}
 \left|\int_{{B_{3R}}}\zeta u_j^{M-1}r_j|\nabla u_j|^2\dd x\right|
 \leq
 \frac{\|r_j\|_\infty\|\zeta\|_\infty}{-M}
 \nu_j(\operatorname{supp}\zeta)=o_j(1).
 \label{eq:4.8}
\end{align}

Now, test \eqref{eq:4.4} with \(\chi^\ell\). {Using the estimate \eqref{eq:3.44} with $u=u_j$ and $A=A_j$, and arguing as in the derivation of \eqref{eq:3.45},} we can deduce from \eqref{eq:4.4}, \eqref{eq:4.7} and \eqref{eq:4.8} that
\begin{equation}
    \int_{{B_{3R}}} u_j^{M+p-1}|\nabla u_j|^2\chi^\ell\dd x
    \leq
    CR^{-2}\int_{{B_{3R}}}\sigma_{k-1}(A_j)|\nabla u_j|^2
       u_j^M\chi^{\ell-2}\dd x+o_j(1).
    \label{eq:4.9}
\end{equation}
Set
\[
    \delta=-M-1>-1,
    \qquad \delta\neq0,
    \qquad \theta=\ell-2>2k.
\]
Using the notation \eqref{eq:3.3}--\eqref{eq:3.6}, we can derive the following identity {in a similar way to} \eqref{eq:3.17}:
\begin{align}
    k\int_{{B_{3R}}} f_ju_j^{-\delta}\chi^\theta\dd x
    =-\sum_{s=1}^kb_s\Lambda_{s,j}+\sum_{s=1}^kc_sE_{s,j},
    \label{eq:4.10}
\end{align}
where $\Lambda_{s,j}$ and $E_{s,j}$ {are defined analogously to} $\Lambda_s$ and $E_s$ by replacing $u$ and $A$ with $u_j$ and $A_j$. Moreover, we have
\begin{align}
    \left|\int_{{B_{3R}}} r_ju_j^{-\delta}\chi^\theta\dd x\right|
    \leq C_R\|r_j\|_{L^\infty(B_{3R})}=o_j(1).
    \label{eq:4.11}
\end{align}
To record the approximation error in the Newton recursion, define
\(M_{s,j}\) by \eqref{eq:3.6} with \(u,A\) replaced by
\(u_j,A_j\).  Equations \eqref{eq:3.8}--\eqref{eq:3.10} use only
\(A_j=-D^2u_j\) and \(\partial_\ell T_{k-s-1}(A_j)_{i\ell}=0\).
They give, for \(1\leq s\leq k-1\),
\begin{equation}\label{eq:4.12}
 M_{s,j}=\frac{k+s}{2s}\Lambda_{s,j}
 +\frac{\delta+s}{2s}M_{s+1,j}
 -\frac{\theta}{2s}E_{s+1,j}.
\end{equation}
The only occurrence of the equation is the first step
\begin{equation}\label{eq:4.13}
 k\int_{{B_{3R}}}f_ju_j^{-\delta}\chi^\theta\dd x
 =-\delta M_{1,j}+\theta E_{1,j}.
\end{equation}
Multiplying \eqref{eq:4.12} by the coefficients in
\eqref{eq:3.12}, summing in \(s\), and inserting the result in
\eqref{eq:4.13} gives \eqref{eq:4.10}.  Thus the recursion contains
only \(\int r_ju_j^{-\delta}\chi^\theta\), which is \(o_j(1)\) by
\eqref{eq:4.11}.  No derivative of \(r_j\) occurs in
\eqref{eq:4.12}--\eqref{eq:4.13}.  The estimates
\eqref{eq:3.18}--{\eqref{eq:3.26}}, applied to \eqref{eq:4.10}, now give
\begin{align}\label{eq:4.14}
    \Lambda_{1,j}&=\int_{{B_{3R}}}\sigma_{k-1}(A_j)|\nabla u_j|^2
       u_j^M\chi^{\ell-2}\dd x \\
    &\leq
    C\int_{{B_{3R}}} u_j^{M+p+1}\chi^{\ell-2}\dd x
    +CR^{-2k}\int_{{B_{3R}}} u_j^{M+k+1}\chi^{\ell-2k-2}\dd x
    +o_j(1). \nonumber
\end{align}
For \(\delta>0\), the first term on the right is simply added to
\eqref{eq:3.4}.  Combining
\eqref{eq:4.9} and
\eqref{eq:4.14} yields
\begin{equation}\label{eq:4.15}
    \int_{{B_{3R}}} u_j^{M+p-1}|\nabla u_j|^2\chi^\ell\dd x
    \leq CR^{-2}\int_{{B_{3R}}} u_j^{M+p+1}\chi^{\ell-2}\dd x
    +CR^{-2k-2}\int_{{B_{3R}}} u_j^{M+k+1}\chi^{\ell-2k-2}\dd x
    +o_j(1).
\end{equation}

Uniform convergence in \eqref{eq:4.2} gives
\(f_j\geq\frac12u_j^p\) on \(\operatorname{supp}\chi\) for all
large \(j\).  Hence Newton--Maclaurin and
\eqref{eq:3.32}--\eqref{eq:3.34} {yield}
\begin{align}\label{eq:4.16}
    -\Delta u_j
    =\sigma_1(A_j)\geq c_{n,k}f_j^{1/k}
      \geq c'_{n,k}u_j^{p/k}
      \quad\text{on }\operatorname{supp}\chi.
\end{align}
Multiplying \eqref{eq:4.16} by \(u_j^{M+p}\chi^\ell\) and integrating by parts, we derive
\begin{align*}
    c'_{n,k}\int u_j^{M+p+\frac{p}{k}}\chi^\ell\dd x
    &\leq
    \int(-\Delta u_j)u_j^{M+p}\chi^\ell\dd x\\
    &=
    (M+p)\int u_j^{M+p-1}|\nabla u_j|^2\chi^\ell\dd x
    +
    \ell\int u_j^{M+p}\chi^{\ell-1}
    \nabla u_j\cdot \nabla\chi\dd x.
\end{align*}
By Young's inequality and \(|\nabla\chi|\leq C/R\), we get
\begin{align*}
    \left|\ell\int u_j^{M+p}\chi^{\ell-1}
    \nabla u_j\cdot \nabla\chi\dd x\right|
    &\leq
    C\int u_j^{M+p-1}|\nabla u_j|^2\chi^\ell\dd x+
    CR^{-2}\int u_j^{M+p+1}\chi^{\ell-2}\dd x.
\end{align*}
Combining the above two inequalities, we obtain
\begin{equation}
  \int_{{B_{3R}}} u_j^P\chi^\ell\dd x
    \leq C\int_{{B_{3R}}} u_j^{M+p-1}|\nabla u_j|^2\chi^\ell\dd x
      +CR^{-2}\int_{{B_{3R}}} u_j^{M+p+1}\chi^{\ell-2}\dd x.
    \label{eq:4.17}
\end{equation}
Equations \eqref{eq:4.15} and \eqref{eq:4.17} contain only powers of
\(u_j\). No gradient or Newton-tensor product is passed to the limit.
Equation \eqref{eq:4.5} therefore gives \eqref{eq:4.3} term by term. The
constants which survive after \(j\to\infty\) are those in the
corresponding estimates of Section~\ref{Sec:3} and are
independent of \(R\). Constants depending on the fixed \(R\) exist only
in \(o_j(1)\). This concludes our proof of Proposition \ref{Prop:4.3}.
\end{proof}

\subsection{Proof of Theorem~\ref{Thm:1.2}}

\begin{lemma}
\label{Lem:4.4}
For every \(R_0>0\), we have
\begin{equation}
    u(x)\geq
    \left(\min_{|y|=R_0}u(y)\right)
    R_0^{\frac{n-2k}{k}}|x|^{-\frac{n-2k}{k}}
    \qquad(|x|\geq R_0).
    \label{eq:4.18}
\end{equation}
\end{lemma}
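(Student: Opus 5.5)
We plan to carry the barrier argument of Lemma~\ref{Lem:3.4} over to the Hessian-measure setting, with the classical pointwise comparison replaced by the Trudinger--Wang comparison principle, Lemma~\ref{Lem:4.1}. Throughout, \(u\) denotes the locally H\"older, strictly positive representative from \eqref{eq:4.1}. Put \(v=-u\in\Phi^k(\R^n)\cap C(\R^n)\), \(m_0=\min_{|y|=R_0}u(y)>0\), and \(\rho=(n-2k)/k\).

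\textbf{Barrier and boundary comparison.} For \(S>R_0\) we take the radial barrier \(h_S\) of \eqref{eq:3.48} and set \(v_S=-h_S\) on the annulus \(\Omega_S=B_S\setminus\overline{B_{R_0}}\). By \eqref{eq:3.49}, \(v_S\) is smooth and \(D^2v_S\in\overline{\Gamma_k}\), so \(v_S\in\Phi^k(\Omega_S)\cap C(\overline{\Omega_S})\). Moreover \(\mu_k[v_S]=\sigma_k(D^2v_S)\dd x=0\). The boundary values are ordered as required:
\begin{itemize}
\item on \(\partial B_{R_0}\) we have \(v=-u\leq -m_0=v_S\);
\item on \(\partial B_S\) we have \(v=-u<0=v_S\).
\end{itemize}
The measure inequality also holds, since \(\mu_k[v]=u^p\dd x\geq0=\mu_k[v_S]\).

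\textbf{Comparison and limit.} Lemma~\ref{Lem:4.1}, applied with the roles \(v\leq w\) given to \(v\) and \(v_S\), yields \(v\leq v_S\) in \(\Omega_S\), that is, \(u\geq h_S\) there. Letting \(S\to\infty\) gives \(h_S(r)\to m_0R_0^{\rho}r^{-\rho}\) pointwise, which is exactly \eqref{eq:4.18}.

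\textbf{Obstacle and fallback.} The only delicate point is the precise hypotheses of Lemma~\ref{Lem:4.1}: continuity up to \(\partial\Omega_S\), which holds by \eqref{eq:4.1}, and membership of \(v_S\) in \(\Phi^k\). For the latter, \(D^2v_S\in\overline{\Gamma_k}\) pointwise, and a \(C^2\) function touching \(v_S\) from above has Hessian \(\succeq D^2v_S\), hence in \(\overline{\Gamma_k}\). If one prefers to avoid the degenerate barrier, we would instead use \(v_S+\epsilon(|x|^2-S^2)\), which has a strictly positive Hessian measure. The cleaner route, though, is to choose \(\epsilon>0\) small and compare \(v\) with the barrier built from \(m_0-\epsilon\), which gives strict boundary inequalities, and then let \(\epsilon\to0\).

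If one also wants to avoid quoting Lemma~\ref{Lem:4.1} for the degenerate measure, there is an alternative route through the approximations of Lemma~\ref{Lem:4.2}. On \(B_{3R}\) with \(3R>S\), the functions \(u_j\) are classical with \(\sigma_k(-D^2u_j)=f_j>0\). The proof of Lemma~\ref{Lem:3.4} applies verbatim on \(\Omega_S\) with \(m_{0,j}=\min_{|y|=R_0}u_j\), giving \(u_j\geq h_S^{(j)}\). Uniform convergence \(u_j\to u\) then passes this bound to the limit, and \(S\to\infty\) concludes as before.
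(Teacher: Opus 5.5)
Your proof is correct and is the paper's argument: you use the radial barrier $h_S$ from \eqref{eq:3.48} with $\mu_k[-h_S]=0$ by \eqref{eq:3.49}, order the boundary values on $\partial B_{R_0}$ and $\partial B_S$, note $\mu_k[-u]=u^p\dd x\geq0$, apply Lemma~\ref{Lem:4.1}, and let $S\to\infty$. Your fallback variants are unnecessary, since Lemma~\ref{Lem:4.1} as stated requires neither strict boundary inequalities nor a nondegenerate barrier measure.
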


\begin{proof}
Set
\[
    m_0=\min_{|y|=R_0}u(y)>0
    \qquad\mathrm{and}\qquad
    \rho=\frac{n-2k}{k}.
\]
For \(S>R_0\), let \(h_S\) be the function in \eqref{eq:3.48}.  By
\eqref{eq:3.49}, we have
\[
    -h_S\in\Phi^k(B_S\setminus\overline B_{R_0}),
    \qquad\mathrm{and}\qquad
    \mu_k[-h_S]=0.
\]
Moreover,
\[
    h_S(R_0)=m_0\leq u,
    \qquad
    h_S(S)=0\leq u,
    \qquad
    \mu_k[-u]=u^p\,\dd x\geq0=\mu_k[-h_S].
\]
Lemma~\ref{Lem:4.1} implies
\(u\geq h_S\) on \(B_S\setminus\overline B_{R_0}\).  Letting \(S\to\infty\) we obtain
\eqref{eq:4.18}.
\end{proof}

\begin{proof}[Proof of Theorem~\ref{Thm:1.2}]
Assume \(u\not\equiv0\). Then \eqref{eq:4.1} implies that $u>0$ in $\R^{n}$. Choose \(M,\tau\) as
\eqref{eq:3.27}--\eqref{eq:3.28}, and recall that $P=M+p+\frac{p}{k}$. By \eqref{eq:3.29}, we have
\begin{equation}
    M+p+1>0,
    \quad
    P-(M+p+1)=\frac{p-k}{k},
    \quad
    P-(M+k+1)=\frac{(k+1)(p-k)}{k},
    \quad
    n-\frac{2kP}{p-k}<0.
\end{equation}

We will discuss three different cases.

\smallskip

If $M+k+1>0$, applying \eqref{eq:3.54}--\eqref{eq:3.56} to
\eqref{eq:4.3}, one can obtain
\begin{equation}
    \int_{B_R}u^P\dd x
    \leq CR^{\,n-\frac{2kP}{p-k}}\longrightarrow0.
    \label{eq:4.19}
\end{equation}

If \(M+k+1=0\), choose \(\ell\) to satisfy the first two
inequalities in \eqref{eq:3.52}.  Then,
\eqref{eq:3.54},
\eqref{eq:3.58} together with
\(\frac{2kP}{p-k}=2k+2\)
imply \eqref{eq:4.19}.

If \(M+k+1<0\), choose \(\ell\) satisfying the first two
inequalities in \eqref{eq:3.52}.  Then
\[
    n-2k\geq2 \quad
    \Longrightarrow
    \quad M>-\frac{2k}{n-2k}\geq-k \quad
    \Longrightarrow \quad
    M+k+1>1,
\]
which implies \(n=2k+1\).  By
\eqref{eq:4.1} and
\eqref{eq:4.18}, for every fixed \(R_0>0\), we have
\[
    \int_{B_{R_0}}u^{M+k+1}\dd x<\infty,
    \qquad
    u(x)^{M+k+1}\leq C|x|^{-\frac{M+k+1}{k}},
    \quad  \forall \,\, |x|\geq R_0.
\]
Therefore, for \(R\geq R_0\), we have
\begin{equation}\label{eq:4.20}
    R^{-2k-2}\int u^{M+k+1}\chi^{\ell-2k-2}\dd x
    \leq C_{R_0}R^{-2k-2}
      +CR^{-2k-2+n-\frac{M+k+1}{k}}
    =C_{R_0}R^{-2k-2}+CR^{-1-\frac{M+k+1}{k}}.
\end{equation}
Since \(n=2k+1\), one has
\[
    \tau<k+1,
    \qquad
    M=-\frac{2k\tau}{k+1}>-2k,
    \qquad
    M+k+1>1-k,
    \qquad
    -1-\frac{M+k+1}{k}<-\frac1k<0.
\]
Applying \eqref{eq:3.54} to the first term on the right-hand side of \eqref{eq:4.3} and using
\eqref{eq:4.20}, we obtain
\[
    \int_{B_R}u^P\dd x
    \leq
    CR^{\,n-\frac{2kP}{p-k}}
    +C_{R_0}R^{-2k-2}
    +CR^{-1-\frac{M+k+1}{k}}
    \longrightarrow0,
    \qquad\,\,\text{as} \,\, R\rightarrow+\infty.
\]
Thus, in all the above three cases, we have
\(0<\int_{B_1}u^P\dd x \leq\int_{B_R}u^P\dd x \longrightarrow0\),
which is absurd. This concludes our proof of Theorem \ref{Thm:1.2}.
\end{proof}

\section{Unconditional classification results for
\texorpdfstring{\(2k<n\leq4k\)}{2k<n<=4k}}
\label{Sec:5}
In this section, we establish the unconditional classification results in Theorem~\ref{Thm:1.6} for $2k<n\leq 4k$.  Wang and
Lei~\cite{WL2019} classified the radial solutions, while Fang, Ma and
Wei~\cite[Section~1]{FMW2023} listed the  classification problem without assuming radial symmetry as an open   question. For all $n>2k$ with $k\geq2$, Theorem~\ref{Thm:1.6} solves this
problem completely for all $C^2$ positive entire  solutions of \eqref{eq:1.8}.

{If \(u\equiv0\), the conclusion is immediate. Henceforth, assume that \(u\not\equiv0\). Lemma~\ref{Lem:2.2} then gives \(u>0\) and \(-D^2u\in\Gamma_k\).}

The proof in Section~\ref{Sec:3} cannot be applied
directly when \(p=p_*\), since the two strict inequalities \eqref{eq:3.40} and
\eqref{eq:3.41} essentially used in
Section~\ref{Sec:3} become equalities now.  Inspired by the argument in Section~\ref{Sec:3}, we first
construct a vector field $\mathcal J$ with nonnegative divergence and prove
\eqref{eq:5.5}.  When \(2k<n<4k\), the negative power
corresponding to \(M<-1\) makes the cutoff terms tend to zero.  At
\(n=4k\), this power disappears, so we introduce a positive function
\(h\) related to the $k$-Hessian equation and consider the weighted vector field \(h^{-\varepsilon}\mathcal J\).  In both cases, the nonnegativity of \(\operatorname{div}\mathcal J\)
and the cutoff estimates force
\(\operatorname{div}\mathcal J\equiv0\).  The equality cases in
\eqref{eq:2.20} and \eqref{eq:2.12} then imply
\(D^2\!\left(u^{-2k/(n-2k)}\right)=\lambda(x)\Id\).
Differentiating this identity shows that \(\lambda\) is constant, and
hence gives the standard bubble form of \(u\).

\subsection{The   estimates for divergence}
\label{Subsec:5.1}

In the critical case \(p=p_*\), the two endpoints in
\eqref{eq:3.27} coincide, which forces
\begin{equation}\label{eq:5.1}
    \tau=\frac{n-k}{n-2k}\quad \text{and}\ M=-\frac{2k}{n-2k}.
\end{equation}
Then \(M<-1\) for \(2k<n<4k\) and \(M=-1\) for \(n=4k\).
Throughout this section, we fix
\(\delta=-M-1=\frac{4k-n}{n-2k}\).
Inspired by the identity \eqref{eq:3.39}, we define the following vector field
\begin{equation}\label{eq:5.2}
    \mathcal J
    :=
    -u^{-2k/(n-2k)}L_k(A)\nabla u
    -\frac{n-k}{n-2k}
       u^{-n/(n-2k)}|\nabla u|^2T_{k-1}(A)\nabla u.
\end{equation}

Substituting \eqref{eq:3.39}, we find
\begin{align}
    \operatorname{div}\mathcal J
    =u^{-2k/(n-2k)}\Bigl[&
    \tr\bigl(L_k(A)A\bigr)
    +\frac{2n}{n-2k}u^{-1}
       \nabla u^TL_k(A)\nabla u
    \notag\\
    &+\frac{n(n-k)}{(n-2k)^2}u^{-2}|\nabla u|^2
       \nabla u^TT_{k-1}(A)\nabla u
    \Bigr].
    \label{eq:5.3}
\end{align}
Taking \(X=Y=\nabla u\) in \eqref{eq:2.20}, we obtain
\[
    \nabla u^TL_k(A)\nabla u
    \geq
    -\left(
       \frac{n-k}{n}\,
       \tr\bigl(L_k(A)A\bigr)|\nabla u|^2
       \nabla u^TT_{k-1}(A)\nabla u
     \right)^{1/2}.
\]
Using this inequality in \eqref{eq:5.3}, we obtain
\begin{equation}\label{eq:5.4}
     \operatorname{div}\mathcal J
 \geq u^{-2k/(n-2k)}\left(
   \sqrt{\tr\bigl(L_k(A)A\bigr)}
    -\frac{\sqrt{n(n-k)}}{n-2k}\,
       u^{-1}|\nabla u|
       \bigl(\nabla u^TT_{k-1}(A)\nabla u\bigr)^{1/2}
\right)^2\geq0.
\end{equation}

\begin{proposition}
\label{Prop:5.1}
For every \(Y\in\R^n\),
\begin{equation}\label{eq:5.5}
    |\mathcal J\cdot Y|^2
    \leq
    \frac{n-k}{n}\,
    u^{-2k/(n-2k)}|\nabla u|^2
    \operatorname{div}\mathcal J\,
    Y^TT_{k-1}(A)Y.
\end{equation}
\end{proposition}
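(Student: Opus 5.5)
Abbreviate $q:=u^{-2k/(n-2k)}$, $a:=\tr(L_k(A)A)\ge0$, $b:=u^{-1}|\nabla u|\,(\nabla u^TT_{k-1}(A)\nabla u)^{1/2}\ge0$, $c:=\frac{\sqrt{n(n-k)}}{n-2k}$. The plan is to write $\mathcal J\cdot Y$ as a sum of two terms, one involving $L_k$ and one involving $T_{k-1}$. Each term is bounded by Cauchy--Schwarz against the single quantity $(Y^TT_{k-1}(A)Y)^{1/2}$. The resulting coefficient is then recognized as the square root of the same perfect square that controls $\operatorname{div}\mathcal J$ in \eqref{eq:5.4}.

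\textbf{Bounding the two terms.} From \eqref{eq:5.2},
\[
\mathcal J\cdot Y=-q\,\nabla u^TL_k(A)Y-\tfrac{n-k}{n-2k}\,q\,u^{-1}|\nabla u|^2\,\nabla u^TT_{k-1}(A)Y .
\]
For the first term, apply Corollary~\ref{Cor:2.6} with $X=\nabla u$:
\[
|\nabla u^TL_k(A)Y|\le\Bigl(\tfrac{n-k}{n}\Bigr)^{1/2}\sqrt a\,|\nabla u|\,(Y^TT_{k-1}Y)^{1/2}.
\]
For the second term, $T_{k-1}(A)\succ0$ by Corollary~\ref{Cor:2.3}, so Cauchy--Schwarz in the $T_{k-1}$ inner product gives
\[
|\nabla u^TT_{k-1}Y|\le(\nabla u^TT_{k-1}\nabla u)^{1/2}(Y^TT_{k-1}Y)^{1/2}.
\]
Adding the two bounds and factoring out $(\frac{n-k}{n})^{1/2}q|\nabla u|(Y^TT_{k-1}Y)^{1/2}$ yields
\[
|\mathcal J\cdot Y|\le\Bigl(\tfrac{n-k}{n}\Bigr)^{1/2}q|\nabla u|\,(Y^TT_{k-1}Y)^{1/2}\bigl(\sqrt a+c\,b\bigr),
\]
using $\frac{n-k}{n-2k}\sqrt{\frac{n}{n-k}}=c$.

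\textbf{The obstacle.} Squaring gives $|\mathcal J\cdot Y|^2\le\frac{n-k}{n}\,q^2|\nabla u|^2\,Y^TT_{k-1}Y\,(\sqrt a+cb)^2$. But \eqref{eq:5.4} only bounds $\operatorname{div}\mathcal J$ from below by $q(\sqrt a-cb)^2$, and $(\sqrt a+cb)^2$ is not dominated by that. This sign mismatch is the main difficulty, and the crude triangle inequality above is too lossy. I will keep the cross term exactly instead. By \eqref{eq:5.3},
\[
\operatorname{div}\mathcal J=q\bigl[a+2c'\,u^{-1}\nabla u^TL\nabla u+c^2b^2\bigr],\qquad c'=\tfrac{n}{n-2k}.
\]
So the goal is to bound $\mathcal J\cdot Y$ by a quadratic form in $\nabla u$ and $Y$ whose Gram determinant is exactly this bracket.

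\textbf{Gram/Schur-complement formulation.} Define the bilinear form $B(Z,Y):=\frac{n-k}{n}Z^TT_{k-1}Y$ on $\R^n$. Lemma~\ref{Lem:2.4} gives $L^2\preceq a\frac{n-k}{n}T_{k-1}$, and I plan to use it in the sharper operator form: $W:=T_{k-1}^{-1/2}LT_{k-1}^{-1/2}$ satisfies $\|W\|^2\le\frac{n-k}{n}a$. Write $\mathcal J=-q\,T_{k-1}^{1/2}V$ with
\[
V=T_{k-1}^{-1/2}L\nabla u+\tfrac{n-k}{n-2k}u^{-1}|\nabla u|^2T_{k-1}^{1/2}\nabla u .
\]
Then $|\mathcal J\cdot Y|^2\le q^2|V|^2\,Y^TT_{k-1}Y$, so it suffices to prove
\[
|V|^2\le\tfrac{n-k}{n}\,q^{-1}|\nabla u|^2\operatorname{div}\mathcal J .
\]
Expanding $|V|^2$ produces three terms: $\nabla u^TL\,T_{k-1}^{-1}L\nabla u\le\frac{n-k}{n}a|\nabla u|^2$ by the operator bound, a cross term $2\frac{n-k}{n-2k}u^{-1}|\nabla u|^2\,\nabla u^TL\nabla u$, and $(\frac{n-k}{n-2k})^2u^{-2}|\nabla u|^4\,\nabla u^TT_{k-1}\nabla u$. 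Multiplying through by $\frac{n}{n-k}$, the cross coefficient becomes $\frac{2n}{n-2k}$ and the last coefficient becomes $\frac{n(n-k)}{(n-2k)^2}$, which match \eqref{eq:5.3} term by term. Hence the inequality reduces exactly to the first-term bound, and this resolves the obstacle.
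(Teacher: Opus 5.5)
Your final Gram/Cauchy--Schwarz argument is the paper's proof: $q^2|V|^2=\mathcal J^TT_{k-1}(A)^{-1}\mathcal J$, your expansion of $|V|^2$ is exactly \eqref{eq:5.6}, and the first term is controlled by $L_k(A)T_{k-1}(A)^{-1}L_k(A)\preceq\frac{n-k}{n}\tr(L_k(A)A)\Id$, which follows from Lemma~\ref{Lem:2.4} by conjugating with $T_{k-1}(A)^{-1/2}$ (all these matrices commute). One correction: the operator whose squared norm is at most $\frac{n-k}{n}\tr(L_k(A)A)$ is $T_{k-1}(A)^{-1/2}L_k(A)$, not $W=T_{k-1}(A)^{-1/2}L_k(A)T_{k-1}(A)^{-1/2}$. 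The bound you state for $W$ is false in general, since under $A\mapsto sA$ its two sides scale like $s^2$ and $s^{k+1}$. However, the inequality you actually invoke, $\nabla u^TL_k(A)T_{k-1}(A)^{-1}L_k(A)\nabla u\le\frac{n-k}{n}\tr(L_k(A)A)|\nabla u|^2$, is the correct one, so the proof stands.
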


\begin{proof}
By Corollary~\ref{Cor:2.3}, we know that \(T_{k-1}(A)>0\).
The Cauchy--Schwarz inequality yields
\[
    |\mathcal J\cdot Y|^2
    \leq
    \bigl(\mathcal J^TT_{k-1}(A)^{-1}\mathcal J\bigr)
    \bigl(Y^TT_{k-1}(A)Y\bigr).
\]

It follows from \eqref{eq:2.12}
that
\[
    L_k(A)T_{k-1}(A)^{-1}L_k(A)
    \preceq
    \frac{n-k}{n}\,
    \tr\bigl(L_k(A)A\bigr)\Id.
\]
Therefore, by direct calculations, we obtain
\begin{equation}\label{eq:5.6}
\begin{aligned}
&\Bigl(
  L_k(A)\nabla u
  +\frac{n-k}{n-2k}u^{-1}|\nabla u|^2T_{k-1}(A)\nabla u
 \Bigr)^T
 T_{k-1}(A)^{-1}
\Bigl(
  L_k(A)\nabla u
  +\frac{n-k}{n-2k}u^{-1}|\nabla u|^2T_{k-1}(A)\nabla u
 \Bigr)
\\
&={}
 \nabla u^TL_k(A)T_{k-1}(A)^{-1}L_k(A)\nabla u
 +\frac{2(n-k)}{n-2k}u^{-1}|\nabla u|^2
    \nabla u^TL_k(A)\nabla u
 \\
&\qquad
 +\frac{(n-k)^2}{(n-2k)^2}u^{-2}|\nabla u|^4
    \nabla u^TT_{k-1}(A)\nabla u
 \\
&\leq
 \frac{n-k}{n}|\nabla u|^2
    \tr\bigl(L_k(A)A\bigr)
 +\frac{2(n-k)}{n-2k}u^{-1}|\nabla u|^2
    \nabla u^TL_k(A)\nabla u
 \\
&\qquad
 +\frac{(n-k)^2}{(n-2k)^2}u^{-2}|\nabla u|^4
    \nabla u^TT_{k-1}(A)\nabla u
 \\
&=
\frac{n-k}{n}|\nabla u|^2
\Bigl[
    \tr\bigl(L_k(A)A\bigr)
    +\frac{2n}{n-2k}u^{-1}\nabla u^TL_k(A)\nabla u
    +\frac{n(n-k)}{(n-2k)^2}u^{-2}|\nabla u|^2
       \nabla u^TT_{k-1}(A)\nabla u
\Bigr].
\end{aligned}
\end{equation}
By using \eqref{eq:5.2},    multiplying \eqref{eq:5.6} by
\(u^{-4k/(n-2k)}\) and using \eqref{eq:5.3}, we get
\[
 \mathcal J^TT_{k-1}(A)^{-1}\mathcal J
 \leq
 \frac{n-k}{n}u^{-2k/(n-2k)}|\nabla u|^2
 \operatorname{div}\mathcal J.
\]
Together with the Cauchy--Schwarz inequality above, this proves
\eqref{eq:5.5}.
\end{proof}

\begin{proposition}
\label{Prop:5.2}
Let \(0\leq\eta\leq1\) be compactly supported and Lipschitz, and let
\(\ell>0\).  Then
\begin{equation}\label{eq:5.7}
    \int_{\R^n}\eta^{\ell+2}
       \operatorname{div}\mathcal J\dd x
    \leq
    C
    \int_{\R^n}
    \eta^\ell|\nabla\eta|^2
    \sigma_{k-1}(A)|\nabla u|^2u^{-2k/(n-2k)}\dd x.
\end{equation}
The constant depends only on \(n,k\), and \(\ell\).
\end{proposition}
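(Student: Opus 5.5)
The plan is to test the nonnegative divergence \(\operatorname{div}\mathcal J\ge0\) from \eqref{eq:5.4} against \(\eta^{\ell+2}\), move the derivative onto the cutoff, and absorb the resulting cross term with Proposition~\ref{Prop:5.1}. Since \(\eta\) is Lipschitz and compactly supported, \(\eta^{\ell+2}\) is Lipschitz with compact support, and \(\mathcal J\) is smooth because \(u\) is smooth and positive. The divergence theorem, justified by mollifying \(\eta\) if needed, gives
\[
\int_{\R^n}\eta^{\ell+2}\operatorname{div}\mathcal J\dd x
=-(\ell+2)\int_{\R^n}\eta^{\ell+1}\,\mathcal J\cdot\nabla\eta\dd x .
\]

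Next apply \eqref{eq:5.5} with \(Y=\nabla\eta\). Write \(D:=\operatorname{div}\mathcal J\ge0\) and \(W:=\frac{n-k}{n}u^{-2k/(n-2k)}|\nabla u|^2\,\nabla\eta^TT_{k-1}(A)\nabla\eta\ge0\). Then \(|\mathcal J\cdot\nabla\eta|\le (DW)^{1/2}\). Splitting \(\eta^{\ell+1}=\eta^{(\ell+2)/2}\eta^{\ell/2}\) and using Cauchy--Schwarz, we get
\[
\int\eta^{\ell+2}D
\le(\ell+2)\Bigl(\int\eta^{\ell+2}D\Bigr)^{1/2}\Bigl(\int\eta^{\ell}W\Bigr)^{1/2}.
\]
The left-hand integral is finite because \(\eta\) has compact support and \(D\) is continuous. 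Dividing by its square root, or noting the case where it vanishes is trivial, yields
\[
\int\eta^{\ell+2}D\le(\ell+2)^2\int\eta^\ell W .
\]

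Finally bound \(W\) using \(T_{k-1}(A)\succ0\) from Corollary~\ref{Cor:2.3}. Its operator norm is at most its trace, and \eqref{eq:2.3} gives \(\tr T_{k-1}(A)=(n-k+1)\sigma_{k-1}(A)\). Hence \(\nabla\eta^TT_{k-1}(A)\nabla\eta\le(n-k+1)\sigma_{k-1}(A)|\nabla\eta|^2\), and \eqref{eq:5.7} follows with \(C=(\ell+2)^2\frac{(n-k)(n-k+1)}{n}\).

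The only delicate point is the use of \eqref{eq:5.5}, specifically that the pointwise Cauchy--Schwarz structure puts the factor \(\operatorname{div}\mathcal J\) linearly on the right so it can be absorbed. Since Proposition~\ref{Prop:5.1} is already established, the remaining argument is routine. The Lipschitz regularity of \(\eta\) causes no trouble, because \(\nabla\eta\) exists almost everywhere and the integration by parts holds for \(W^{1,\infty}\) functions with compact support.
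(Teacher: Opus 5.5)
Your proposal is correct and follows essentially the same route as the paper. You integrate by parts against \(\eta^{\ell+2}\), bound \(|\mathcal J\cdot\nabla\eta|\) by \eqref{eq:5.5}, absorb the \(\operatorname{div}\mathcal J\) term, and finish with the trace bound \(\tr T_{k-1}(A)=(n-k+1)\sigma_{k-1}(A)\). The only differences are cosmetic: you absorb via integral Cauchy--Schwarz where the paper uses pointwise Young's inequality, and you handle Lipschitz \(\eta\) directly where the paper approximates by smooth cutoffs.
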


\begin{proof}
We first assume that \(\eta\) is smooth.  Taking \(Y=\nabla\eta\) in
\eqref{eq:5.5}, we obtain
\[
 |\mathcal J\cdot\nabla\eta|^2
 \leq
 \frac{n-k}{n}u^{-2k/(n-2k)}|\nabla u|^2
 \operatorname{div}\mathcal J\,
 \nabla\eta^TT_{k-1}(A)\nabla\eta.
\]
Using integration by parts and the above inequality, we have
\begin{align*}
      &\int\eta^{\ell+2}\operatorname{div}\mathcal J\dd x
      =-(\ell+2)\int\eta^{\ell+1}\mathcal J\cdot\nabla\eta\dd x\\
      &\leq
    (\ell+2)\sqrt{\frac{n-k}{n}}
    \int
    \left(\eta^{\ell+2}\operatorname{div}\mathcal J\right)^{1/2}
    \left(\eta^\ell u^{-2k/(n-2k)}|\nabla u|^2
       \nabla\eta^TT_{k-1}(A)\nabla\eta\right)^{1/2}\dd x\\
      &\leq
    \frac12\int\eta^{\ell+2}\operatorname{div}\mathcal J\dd x
    +C\int\eta^\ell u^{-2k/(n-2k)}|\nabla u|^2
       \nabla\eta^TT_{k-1}(A)\nabla\eta\dd x.
\end{align*}
Thus, one gets
\begin{align*}
 \int\eta^{\ell+2}\operatorname{div}\mathcal J\dd x
 \leq C\int\eta^\ell u^{-2k/(n-2k)}|\nabla u|^2
       \nabla\eta^TT_{k-1}(A)\nabla\eta\dd x.
\end{align*}
Since \(T_{k-1}(A)>0\), we have
\[
    \nabla\eta^TT_{k-1}(A)\nabla\eta
    \leq
    \tr\bigl(T_{k-1}(A)\bigr)|\nabla\eta|^2
    =(n-k+1)\sigma_{k-1}(A)|\nabla\eta|^2.
\]
This proves \eqref{eq:5.7} for smooth \(\eta\).
The calculation is first justified for compactly supported smooth
functions.  The Lipschitz case follows by compactly supported smooth
approximation converging in \(W^{1,2}\) and uniformly.
\end{proof}

We aim to show that \(\operatorname{div}\mathcal J\equiv0\), which is the key property to show the desired classification result.  We divide the proof into two
cases.
\subsection{\texorpdfstring{The proof of \(\operatorname{div}\mathcal J\equiv0\) for case \(2k<n<4k\)}
{The case 2k<n<4k}}

We first consider the case \(2k<n<4k\), where   the exponent of the last term in \eqref{eq:5.7} satisfies
\begin{equation}\label{eq:5.8}
    2k/(n-2k)=1+\delta, \quad \delta=\frac{4k-n}{n-2k}>0,
\end{equation}
which enables us to use \eqref{eq:3.4}.  For every \(R>0\), we
take the function \(\chi\) in \eqref{eq:3.2}.  For every fixed
\(\ell>0\), Proposition~\ref{Prop:5.2} gives
\begin{equation}\label{eq:5.9}
    \int_{\R^n}\chi^{\ell+2}\operatorname{div}\mathcal J\dd x
    \leq
    CR^{-2}\int_{\R^n}
    \sigma_{k-1}(A)|\nabla u|^2
       u^{-1-\delta}\chi^\ell\dd x.
\end{equation}
Here, we have used \(|\nabla\chi|\leq C/R\).  Since
\(-\delta-1=-2k/(n-2k)\) by \eqref{eq:5.8}, the integral on the
right-hand side of \eqref{eq:5.9} is the
\(\Lambda_1\)-term in \eqref{eq:3.4} when \(\ell=4k\).  We
estimate this integral in the following proposition.
\begin{proposition}
\label{Prop:5.3}
For all sufficiently large \(R\),
\begin{equation}\label{eq:5.10}
\int_{\R^n}
    \sigma_{k-1}(A)|\nabla u|^2
    u^{-\delta-1}\chi^{4k}\dd x
    \leq
    \begin{cases}
        CR^{2-1/k},&n=2k+1,\\[1mm]
        CR^{2k/(k+1)},&2k+2\leq n<4k.
    \end{cases}
\end{equation}
Here \(C\) may depend on \(n,k,u\) and the fixed cutoff profile, but
is independent of \(R\).
\end{proposition}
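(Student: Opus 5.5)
The plan is to apply the weighted cutoff estimate \eqref{eq:3.4} of Lemma~\ref{Lem:3.1} with \(\theta=4k\) and \(\delta=\frac{4k-n}{n-2k}>0\), which is admissible by \eqref{eq:5.8}. The quantity on the left of \eqref{eq:5.10} is exactly \(\Lambda_1\), so \eqref{eq:3.4} gives
\[
 \int_{\R^n}u^{p_*-\delta}\chi^{4k}\dd x+\Lambda_1
 \leq CR^{-2k}\int_{\R^n}u^{k-\delta}\chi^{2k}\dd x .
\]
Everything then reduces to bounding the right-hand side. The sign of \(k-\delta\) decides how.

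First I record the exponents. Writing \(p=p_*\), one has
\[
 p_*-k=\frac{2k(k+1)}{n-2k},\qquad
 p_*-\delta=\frac{kn-2k+n}{n-2k},\qquad
 k-\delta=\frac{k(n-2k)+n-4k}{n-2k}.
\]
If \(n\geq 2k+2\), the numerator of \(k-\delta\) is at least \(2k+2k+2-4k=2\), so \(0<k-\delta<p_*-\delta\). If \(n=2k+1\), then \(\delta=2k-1\) and \(k-\delta=1-k<0\).

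\emph{Case \(2k+2\leq n<4k\).} I would use Young's inequality in the form \eqref{eq:3.53}, with \(P\) replaced by \(p_*-\delta\), \(\gamma=k-\delta\), \(a=2k\), and the cutoff powers \(m=2k\), \(\ell=4k\). The condition \(m\geq \ell\gamma/P\) holds because \(2(k-\delta)\leq p_*-\delta\), i.e.\ \(2k-\delta\leq p_*\), which is clear. This gives
\[
 R^{-2k}u^{k-\delta}\chi^{2k}\leq \varepsilon u^{p_*-\delta}\chi^{4k}
 +C_\varepsilon R^{-\frac{2k(p_*-\delta)}{p_*-k}}\mathbf 1_{B_{2R}} .
\]
Integrating and absorbing the \(\varepsilon\)-term into the left-hand side of \eqref{eq:3.4} yields
\[
 \Lambda_1\leq CR^{\,n-\frac{2k(p_*-\delta)}{p_*-k}} .
\]
Substituting the formulas above, \(\frac{2k(p_*-\delta)}{p_*-k}=\frac{kn-2k+n}{k+1}\), so the exponent is \(n-\frac{kn-2k+n}{k+1}=\frac{2k}{k+1}\). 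This is the second line of \eqref{eq:5.10}.

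\emph{Case \(n=2k+1\).} Young's inequality is no longer useful, since the power \(1-k\) is negative. Instead I would use the lower bound of Lemma~\ref{Lem:3.4} with \(\rho=1/k\): for a fixed \(R_0\), \(u(x)\geq c_0|x|^{-1/k}\) when \(|x|\geq R_0\). Raising to the negative power \(1-k\) reverses the inequality, so \(u^{1-k}\leq C|x|^{(k-1)/k}\) there, exactly as in Case~3 of the proof of Theorem~\ref{Thm:1.1}. On \(B_{R_0}\) the integral is a fixed constant, because \(u\) has a positive minimum there. Hence
\[
 R^{-2k}\int u^{1-k}\chi^{2k}\dd x\leq C_{R_0}R^{-2k}+CR^{-2k+n+\frac{k-1}{k}}
 =C_{R_0}R^{-2k}+CR^{2-\frac1k}.
\]
For large \(R\) this is at most \(CR^{2-1/k}\). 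Dropping the nonnegative term \(\int u^{p_*-\delta}\chi^{4k}\) gives the first line of \eqref{eq:5.10}.

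The main obstacle is the exponent bookkeeping. In the first case one must check that the Young exponents are admissible, in particular \(k-\delta>0\) and the cutoff power condition, and that the final power is exactly \(2k/(k+1)<2\). In the second case one must recognize that \(k-\delta<0\) occurs only for \(n=2k+1\) and switch to the barrier bound of Lemma~\ref{Lem:3.4}. Both resulting powers are below \(2\), which is what is needed when \eqref{eq:5.10} is inserted into \eqref{eq:5.9}.
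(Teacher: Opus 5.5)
Your proposal is correct and follows the paper's proof essentially line by line. Both apply \eqref{eq:3.4} with $\theta=4k$ and $\delta=\frac{4k-n}{n-2k}$, and then bound $R^{-2k}\int u^{k-\delta}\chi^{2k}\dd x$ in one of two ways: by Young absorption into $\int u^{p_*-\delta}\chi^{4k}\dd x$ when $2k+2\leq n<4k$, and by the barrier lower bound $u\geq c|x|^{-1/k}$ of Lemma~\ref{Lem:3.4} when $n=2k+1$, where $k-\delta=1-k<0$. Your exponent bookkeeping checks out, including the cutoff-power condition $p_*\geq 2k-\delta$ and the final exponent $2k/(k+1)$.
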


\begin{proof}
We consider the two cases $n=2k+1$ and $n\geq 2k+2$ separately.

\textit{Case 1. \(n=2k+1\).}
In this case, \(\delta=2k-1\) and \(k-\delta=1-k\).  Using
\eqref{eq:3.47} with \(R_0=1\), we have
\(u(x)\geq C|x|^{-1/k}, \qquad |x|>1\).
Thus,
\(u(x)^{1-k}\leq C|x|^{(k-1)/k}\).
On the unit ball, \(u^{1-k}\) is bounded since \(u>0\) has a positive
minimum there.  Therefore, using \(\supp\chi\subset B_{2R}\), we obtain
\begin{align*}
    R^{-2k}\int u^{1-k}\chi^{2k}\dd x
    &\leq
    CR^{-2k}
    +CR^{-2k}\int_{1}^{2R}r^{n-1+(k-1)/k}\dd r
    \leq CR^{2-1/k},
\end{align*}
Note that all the terms on the left-hand side of
\eqref{eq:3.4} are nonnegative.  Thus the above estimate and
\eqref{eq:3.4} imply
\[
\int_{\R^n}
    \sigma_{k-1}(A)|\nabla u|^2
    u^{-\delta-1}\chi^{4k}\dd x
    =\Lambda_1
    \leq C R^{-2k}\int u^{1-k}\chi^{2k}\dd x
    \leq CR^{2-1/k}.
\]
This proves the first estimate in \eqref{eq:5.10}.

\bigskip
\textit{Case 2. \(2k+2\leq n<4k\).}
Using \eqref{eq:5.8}, we have
\(
    k-\delta
     =\frac{(k+1)n-2k(k+2)}{n-2k}\geq1\).

Young's inequality gives
\begin{align}
    R^{-2k}u^{k-\delta}\chi^{2k}
    \leq{}
    \varepsilon u^{p_*-\delta}\chi^{4k}
     +C_\varepsilon
    R^{-2k(p_*-\delta)/(p_*-k)}
    \chi^{[2k(p_*-\delta)-4k(k-\delta)]/(p_*-k)}.
    \label{eq:5.11}
\end{align}
The exponent of \(\chi\) in the second term is nonnegative.  Since
\(0\leq\chi\leq1\) and \(\supp\chi\subset B_{2R}\), integrating
\eqref{eq:5.11} gives
\begin{align}
    R^{-2k}\int u^{k-\delta}\chi^{2k}\dd x
    \leq{}&
    \varepsilon\int u^{p_*-\delta}\chi^{4k}\dd x
    +C_\varepsilon R^{2k/(k+1)}.
    \label{eq:5.12}
\end{align}
We choose \(\varepsilon\) sufficiently small and absorb
\(\varepsilon\int u^{p_*-\delta}\chi^{4k}\dd x\) in
\eqref{eq:5.12} into the first term on the left of
\eqref{eq:3.4}.  The nonnegativity of the remaining terms then
gives
\[
\int_{\R^n}
    \sigma_{k-1}(A)|\nabla u|^2
    u^{-\delta-1}\chi^{4k}\dd x
    {=\Lambda_1}
    \leq {C}R^{-2k}\int u^{k-\delta}\chi^{2k}\dd x
    \leq CR^{2k/(k+1)}.
\]
This proves the second estimate in
\eqref{eq:5.10}.
\end{proof}

With the help of the estimates in Proposition~\ref{Prop:5.3}, we can now
show that the vector field \(\mathcal J\) is divergence-free.
\begin{corollary}
\label{Cor:5.4}
Under the hypotheses of Theorem~\ref{Thm:1.6}, {assume in addition that \(u\not\equiv0\) and}
\(2k<n<4k\){. Then}
\begin{equation}\label{eq:5.13}
    \operatorname{div}\mathcal J\equiv0.
\end{equation}
\end{corollary}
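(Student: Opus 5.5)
We combine the pointwise nonnegativity \eqref{eq:5.4} with the cutoff inequality \eqref{eq:5.9} (taken with \(\ell=4k\)) and the growth bound \eqref{eq:5.10}. The growth rate in \eqref{eq:5.10} is strictly smaller than \(R^2\), so the cutoff integral of \(\operatorname{div}\mathcal J\) tends to zero as \(R\to\infty\).

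First, since \(u\not\equiv0\), Lemma~\ref{Lem:2.2} gives \(u>0\) and \(A=-D^2u\in\Gamma_k\). Hence \(\mathcal J\) in \eqref{eq:5.2} is well defined and smooth, since \(u\in C^\infty\) by the Evans--Krylov remark. By \eqref{eq:5.4}, \(\operatorname{div}\mathcal J\geq0\) everywhere.

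Next, let \(\chi=\chi_R\) be as in \eqref{eq:3.2}. Apply Proposition~\ref{Prop:5.2} with \(\eta=\chi\) and \(\ell=4k\), which is exactly \eqref{eq:5.9}. The right-hand side is \(CR^{-2}\) times the integral estimated in Proposition~\ref{Prop:5.3}. For \(R\) large this gives
\[
\int_{B_R}\operatorname{div}\mathcal J\dd x\leq\int_{\R^n}\chi^{4k+2}\operatorname{div}\mathcal J\dd x\leq
\begin{cases}CR^{-1/k},& n=2k+1,\\ CR^{2k/(k+1)-2}=CR^{-2/(k+1)},& 2k+2\leq n<4k.\end{cases}
\]
The first inequality uses \(\chi=1\) on \(B_R\) together with \(\operatorname{div}\mathcal J\geq0\). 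In both cases the right-hand side tends to \(0\) as \(R\to\infty\).

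Finally, by monotone convergence \(\int_{\R^n}\operatorname{div}\mathcal J\dd x=0\). Since \(\operatorname{div}\mathcal J\) is continuous and nonnegative, it vanishes identically, which is \eqref{eq:5.13}.

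No step here is a genuine obstacle, because the hard analytic work is already contained in Propositions~\ref{Prop:5.2} and~\ref{Prop:5.3}. The only points to check are that the exponent bookkeeping matches and that Lemma~\ref{Lem:3.1}(i) is legitimately applicable. The match is \(\ell=4k\) in \eqref{eq:5.9}, corresponding to \(\theta=4k>2k\) in Lemma~\ref{Lem:3.1}. Applicability holds because \(\delta=(4k-n)/(n-2k)>0\) precisely when \(n<4k\).
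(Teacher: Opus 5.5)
Your proposal is correct and follows the paper's proof essentially step for step: take $\ell=4k$ in \eqref{eq:5.9}, insert Proposition~\ref{Prop:5.3} to get $\int_{B_R}\operatorname{div}\mathcal J\dd x\leq CR^{-1/k}$ or $CR^{-2/(k+1)}$, and conclude using $\operatorname{div}\mathcal J\geq0$. The only cosmetic difference is that the paper fixes a ball $B_r$ and uses monotonicity in $R>r$, rather than monotone convergence on all of $\R^n$; the two are equivalent.
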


\begin{proof}
Taking \(\ell=4k\) in \eqref{eq:5.9}, using \(\chi=1\) on
\(B_R\), and applying Proposition~\ref{Prop:5.3}, we obtain
\[
    \int_{B_R}\operatorname{div}\mathcal J\dd x
    \leq
    \begin{cases}
        CR^{-1/k},&n=2k+1,\\[1mm]
        CR^{-2/(k+1)},&2k+2\leq n<4k.
    \end{cases}.
\]
Fix arbitrarily \(r>0\).  For every \(R>r\), the nonnegativity in
\eqref{eq:5.4} gives
\(0\leq\int_{B_r}\operatorname{div}\mathcal J\dd x \leq\int_{B_R}\operatorname{div}\mathcal J\dd x\).
Letting \(R\to\infty\) in the preceding estimate, we obtain
\(\int_{B_r}\operatorname{div}\mathcal J\dd x=0\).  Since \(r>0\) is
arbitrary and \(\operatorname{div}\mathcal J\geq0\), we conclude that
\eqref{eq:5.13} holds.
\end{proof}
\subsection{\texorpdfstring{The proof of
\(\operatorname{div}\mathcal J\equiv0\) for \(n=4k\)}
{The proof of div J=0 for n=4k}}
\label{Subsec:5.3}

We next consider the case \(n=4k\), where
\(M=-1\) and \(\delta=0\). Then   the positive iteration used in
the preceding subsection degenerates.  We overcome this new difficulty by introducing a carefully chosen weighted function.  Note that \(p_*=2k+1\), and
\eqref{eq:5.1} gives \(\tau=3/2\).  Therefore, equations
\eqref{eq:5.2}, \eqref{eq:5.3}, and
\eqref{eq:5.4} become
\begin{equation}\label{eq:5.14}
\begin{aligned}
    \mathcal J={}&-u^{-1}L_k(A)\nabla u
    -\frac32u^{-2}|\nabla u|^2T_{k-1}(A)\nabla u,\\
    \operatorname{div}\mathcal J={}&u^{-1}\Bigl[
    \tr\bigl(L_k(A)A\bigr)
    +4u^{-1}\nabla u^TL_k(A)\nabla u
    +3u^{-2}|\nabla u|^2
       \nabla u^TT_{k-1}(A)\nabla u\Bigr]\geq0.
\end{aligned}
\end{equation}
Proposition~\ref{Prop:5.1} also yields
\begin{equation}\label{eq:5.15}
    |\mathcal J\cdot Y|^2
    \leq\frac34u^{-1}|\nabla u|^2
       \operatorname{div}\mathcal J\,
       Y^TT_{k-1}(A)Y.
\end{equation}

Since \(\delta=-M-1=0\), \eqref{eq:3.4} cannot be applied directly
to the following term on the right-hand side of \eqref{eq:5.7}:
\(\sigma_{k-1}(A)|\nabla u|^2u^{-1}\).
The missing estimate would be recovered by introducing the
stronger weight \(u^{-1-\widetilde\delta}\), with some
\(\widetilde\delta>0\), in place of \(u^{-1}\). Our key idea is to replace the vector $\mathcal J$ by $h^{-\varepsilon} \mathcal J$  with the weighted function $h$ properly defined. 
Now we find the weighted function in
\(h^{-\varepsilon}\mathcal J\) as follows.  The product rule gives
\begin{equation}\label{eq:5.16}
    \operatorname{div}(h^{-\varepsilon}\mathcal J)
    =h^{-\varepsilon}
    \bigl(\operatorname{div}\mathcal J
       -\varepsilon\mathcal J\cdot\nabla\log h\bigr).
\end{equation}
It is enough to find a positive function \(h\) such that
\begin{equation}\label{eq:5.17}
    h\geq c_k u^{1/k},
    \qquad
    |\mathcal J\cdot\nabla\log h|
    \leq C_k\operatorname{div}\mathcal J.
\end{equation}
We now define \(h\).  Set \(w=u^{-1}\).  For a standard bubble,
\(w=a_0+b_0|x-x_0|^2, \qquad D^2w=2b_0\Id\).
We want \(h=b_0\) for this function.  Since
\[
    \nabla(4wh-|\nabla w|^2)
    =4w\nabla h-2(D^2w-2h\Id)\nabla w,
\]
we take \(h^{k-1}(4wh-|\nabla w|^2)\) to be constant.  Using
\eqref{eq:1.9}, we define \(h>0\) by
\begin{equation}\label{eq:5.18}
    \binom{4k}{k}2^{k-2}h^{k-1}
    \left(4u^{-1}h-u^{-4}|\nabla u|^2\right)=1,
    \qquad
    4u^{-1}h>u^{-4}|\nabla u|^2.
\end{equation}
For fixed \(u>0\) and \(\nabla u\), the function
\(t\longmapsto t^{k-1}\left(4u^{-1}t-u^{-4}|\nabla u|^2\right)\)
is strictly increasing from \(0\) to \(+\infty\) on
\((|\nabla u|^2/(4u^3),+\infty)\).  Hence
\eqref{eq:5.18} has a unique solution in this interval.
Moreover, the implicit function theorem gives \(h\in C^1\).

Multiplying \eqref{eq:5.18} by \(u^{2k+1}\), we obtain
\begin{equation}\label{eq:5.19}
    \binom{4k}{k}2^{k-2}(u^2h)^{k-1}
    \left(4u^2h-\frac{|\nabla u|^2}{u}\right)
    =u^{2k+1},
    \qquad
    4u^2h>\frac{|\nabla u|^2}{u}.
\end{equation}
Since \(|\nabla u|^2/u\geq0\), we have
\[
    \binom{4k}{k}2^{k-2}(u^2h)^{k-1}
    \left(4u^2h-\frac{|\nabla u|^2}{u}\right)
    \leq\binom{4k}{k}2^k(u^2h)^k.
\]
Thus \eqref{eq:5.19} gives
\begin{equation}\label{eq:5.20}
    h\geq \left(2^k\binom{4k}{k}\right)^{-1/k}u^{1/k}.
\end{equation}

Differentiating \eqref{eq:5.18} and rearranging, we obtain
\[
    \bigl(4ku^{-1}h-(k-1)|\nabla(u^{-1})|^2\bigr)
    \nabla\log h
    =
    2\bigl(D^2(u^{-1})-2h\Id\bigr)\nabla(u^{-1}).
\]
The denominator below is positive, since \eqref{eq:5.19}
gives
\(4ku^2h-(k-1)u^{-1}|\nabla u|^2>4u^2h>0\).
Since
\[
    \nabla(u^{-1})=-u^{-2}\nabla u,
    \qquad
    D^2(u^{-1})=u^{-2}A+2u^{-3}\nabla u\otimes\nabla u,
\]
it follows that
\begin{equation}\label{eq:5.21}
    \sqrt u\,\nabla\log h
    =
    \frac{2\bigl(A+(2u^{-1}|\nabla u|^2-2u^2h)\Id\bigr)
    (-\nabla u/\sqrt u)}
    {4ku^2h-(k-1)u^{-1}|\nabla u|^2}.
\end{equation}
We now prove the second inequality in \eqref{eq:5.17}.
\begin{proposition}\label{Prop:5.5}
There is \(C_k<\infty\), depending only on \(k\), such that
\begin{equation}\label{eq:5.22}
    |\mathcal J\cdot\nabla\log h|\leq C_k\operatorname{div}\mathcal J
    \qquad\text{in }\R^n.
\end{equation}
\end{proposition}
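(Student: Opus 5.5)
We plan to estimate \(\mathcal J\cdot\nabla\log h\) by Cauchy--Schwarz in the \(T_{k-1}(A)\)-metric, using \eqref{eq:5.15} with \(Y=\nabla\log h\):
\[
|\mathcal J\cdot\nabla\log h|^2\leq\frac34\,u^{-1}|\nabla u|^2\operatorname{div}\mathcal J\;\nabla\log h^TT_{k-1}(A)\nabla\log h .
\]
It is therefore enough to prove the pointwise bound
\[
u^{-1}|\nabla u|^2\,\nabla\log h^TT_{k-1}(A)\nabla\log h\leq C_k\operatorname{div}\mathcal J .
\]
Squaring \(|\mathcal J\cdot\nabla\log h|\leq C\sqrt{\operatorname{div}\mathcal J\cdot(\cdots)}\) then gives \eqref{eq:5.22}. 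Note that the left side is homogeneous of the same degree as \(\operatorname{div}\mathcal J\) under the scaling of the equation.

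To bound \(\nabla\log h\), we use \eqref{eq:5.21}. Write \(\xi=-\nabla u/\sqrt u\), \(s=u^{-1}|\nabla u|^2=|\xi|^2\), \(H=u^2h\), and \(B=A+(2s-2H)\Id\). Then \(\sqrt u\,\nabla\log h=2B\xi/(4kH-(k-1)s)\), with denominator at least \(4H\) by \eqref{eq:5.19}. The quantity to control becomes
\[
\frac{4s\,\xi^TB\,T_{k-1}(A)\,B\,\xi}{u^2(4kH-(k-1)s)^2}.
\]
We expand \(BT_{k-1}B=AT_{k-1}A+2(2s-2H)AT_{k-1}+(2s-2H)^2T_{k-1}\) and, via \eqref{eq:2.11}, rewrite \(AT_{k-1}=L_k+\frac kn\sigma_k\Id\). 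For the term \(AT_{k-1}A=L_kA+\frac kn\sigma_kA\), we again use \(L_k^2\preceq\frac{n-k}{n}\tr(L_kA)T_{k-1}\) from \eqref{eq:2.12}, together with \(T_{k-1}A=L_k+\frac kn\sigma_k\), to convert everything into quadratic forms of \(\xi\) in \(L_k\), \(T_{k-1}\), and \(\Id\), with scalar coefficients \(\tr(L_kA)\), \(\sigma_k=u^{2k+1}\), \(s\), and \(H\).

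In the same variables, \(u^{3}\operatorname{div}\mathcal J\) equals \(u^2\tr(L_kA)+4u\,\xi^TL_k\xi+3s\,\xi^TT_{k-1}\xi\) (up to powers of \(u\)), which is the perfect-square-plus-remainder form \eqref{eq:5.4}. The key structural input is the defining relation \eqref{eq:5.19}: \(\binom{4k}{k}2^{k-2}H^{k-1}(4H-s)=u^{2k+1}=\sigma_k(A)\). This ties the scalar \(\sigma_k\) terms to \(H\) and \(s\), so that \(\frac kn\sigma_k\Id-2H\Id\) and similar combinations are of size \(H\) times a factor comparable to the deviation of \(A\) from the model bubble value.

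The main obstacle is that \(\operatorname{div}\mathcal J\) vanishes exactly on bubbles, so the right side must also vanish there. We have to show that \(B\xi\) is controlled by the defect quantities in \(\operatorname{div}\mathcal J\), namely \(\tr(L_kA)\) and the deviation of \(L_k\xi\) from \(-\frac{3}{4}\cdot\frac{s}{u}T_{k-1}\xi\)-type alignment, rather than merely bounded. Our first attempt will be to decompose \(B\xi\) into its \(L_k\)-part and a scalar multiple of \(\xi\). The \(L_k\)-part is controlled by \(\tr(L_kA)\) through \eqref{eq:2.12}. For the scalar part, with coefficient \(\frac kn\sigma_k/\sigma_{k-1}\)-like minus \(2H-2s\), we use Newton--Maclaurin and \eqref{eq:5.19} to show it is dominated by the square in \eqref{eq:5.4}. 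If this direct comparison fails, we fall back on an eigenbasis computation as in Lemma~\ref{Lem:2.4}, reducing to a finite-dimensional inequality homogeneous in \((\lambda,\xi,H)\). Compactness of the normalized parameter set, together with the equality characterization of Lemma~\ref{Lem:2.5} (both sides vanish only when \(A\) is scalar and \(H\) takes its bubble value), would then yield a constant \(C_k\). The remaining work would be to check that no degeneration occurs at the boundary of the parameter set, where \(s/H\to4\) or \(A\to\partial\Gamma_k\).
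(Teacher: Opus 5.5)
Your reduction via \eqref{eq:5.15} with $Y=\nabla\log h$ to the pointwise bound \eqref{eq:5.23} is exactly the paper's first step. However, the core estimate after it is not established, and the route you propose for it fails.

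The underlying problem is your choice of defect quantities: $\tr(L_k(A)A)$ is not one. Normalize $u^2h=1$ and take a bubble point with $\nabla u\ne0$. By Lemma~\ref{Lem:5.9} and \eqref{eq:5.24}, in an eigenbasis with $e_1\parallel\nabla u$ one has $A=\mathrm{diag}(2-2s,2,\dots,2)$ with $0<s<4$. At this point:
\begin{itemize}
\item $\operatorname{div}\mathcal J=0$ and $B\xi=0$;
\item $\tr(L_k(A)A)>0$, because $A$ is not scalar;
\item the $L_k$-part of $B\xi$ is $T_{k-1}(A)^{-1}L_k(A)\xi=\frac34\bigl((2-2s)-2\bigr)\xi=-\frac32 s\,\xi\ne0$;
\item this is cancelled exactly by the remaining part $\bigl(\frac14\sigma_k(A)/(T_{k-1}(A))_{11}+2s-2\bigr)\xi=\frac32 s\,\xi$.
\end{itemize}
So bounding the two pieces separately (the first by $\tr(L_k(A)A)$ via \eqref{eq:2.12}) can never yield a bound by $\operatorname{div}\mathcal J$.

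For the same reason, your description of the common zero set as the place where $A$ is scalar is wrong. Lemma~\ref{Lem:2.5} characterizes $\tr(L_k(A)A)=0$, not $\operatorname{div}\mathcal J=0$. The alignment quantity is also $L_k(A)\xi+\frac32 s\,T_{k-1}(A)\xi$, not the $-\frac34$ version you wrote.

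The grouping that works is \eqref{eq:5.29}. In each eigendirection,
\[
(T_{k-1}(A))_{ii}(A_{ii}+2s-2)=\Bigl[(L_k(A))_{ii}+\tfrac32 s(T_{k-1}(A))_{ii}\Bigr]+\tfrac12(4-s)\Bigl[2^{k-1}\tbinom{4k-1}{k-1}-(T_{k-1}(A))_{ii}\Bigr].
\]
The first bracket is the alignment term squared in \eqref{eq:5.27}. One must then bound $s^2$ times the square of the second term by that square plus the directional Newton deficit.

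The compactness fallback does not close this gap either. Write $F$ and $G$ for the two sides of \eqref{eq:5.25}. Continuity plus coincidence of zero sets gives $F\le C_kG$ only if $F$ vanishes at least to the order of $G$ transversally to the equality set, and compactness cannot detect orders of vanishing. Proving this matching is precisely Lemma~\ref{Lem:5.6}. It needs the quantitative stability \eqref{eq:5.37} of the Newton--Maclaurin ratios $\rho_j$, obtained from real-rootedness of a derivative of $\prod_{i\ge2}(\zeta+\lambda_i)$. It also needs separate arguments when $\sigma_k(\lambda(A)|i)\le0$ or $\sigma_{k+1}(\lambda(A)|i)\le0$.

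In addition, the normalized parameter set is not compact, for two reasons:
\begin{itemize}
\item the constraint set $\{A\in\Gamma_k:\sigma_k(A)=2^k\binom{4k-1}{k-1}(4-s)\}$ is unbounded;
\item as $s\to4$, the equality set $\mathrm{diag}(2-2s,2,\dots,2)$ itself tends to $\partial\Gamma_k$ (this is spatial infinity on a bubble).
\end{itemize}
Uniformity of $C_k$ up to $s=4$ therefore has to come from explicit bookkeeping of the factor $4-s$, as in \eqref{eq:5.32} and \eqref{eq:5.44}. The steps you defer as remaining work are the substance of the proof.
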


{
\begin{proof}
Taking \(Y=\nabla\log h\) in \eqref{eq:5.15}, it is enough to prove
\begin{equation}
 |\nabla u|^2(\nabla\log h)^T T_{k-1}(A)\nabla\log h
 \leq C_k u\operatorname{div}\mathcal J.
\label{eq:5.23}
\end{equation}
Then \eqref{eq:5.15} and \eqref{eq:5.23} give
\(|\mathcal J\cdot\nabla\log h|^2 \leq C_k(\operatorname{div}\mathcal J)^2\).
Since \(\operatorname{div}\mathcal J\geq0\) by \eqref{eq:5.14},
this implies \eqref{eq:5.22}.

After substituting \eqref{eq:5.21} into \eqref{eq:5.23}, both
\eqref{eq:5.19} and \eqref{eq:5.23} depend on \(h\) only through
\(u^2h\).  Fix \(x\in\R^n\).  At this point, replace
\[
 \left(u,A,-\frac{\nabla u}{\sqrt u},u^2h\right)
 \quad\hbox{by}\quad
 \left((u^2h)^{-k/(2k+1)}u,(u^2h)^{-1}A,
 (u^2h)^{-1/2}\frac{-\nabla u}{\sqrt u},1\right).
\]
Then both sides of \eqref{eq:5.19} are multiplied by
\((u^2h)^{-k}\), while both sides of \eqref{eq:5.23} are multiplied by
\((u^2h)^{-k-1}\).  Moreover,
\[
 \sigma_k\bigl((u^2h)^{-1}A\bigr)
 =(u^2h)^{-k}\sigma_k(A)
 =\left((u^2h)^{-k/(2k+1)}u\right)^{2k+1},
\]
so \(\sigma_k(A)=u^{2k+1}\) still holds.  Therefore, we may assume
\(u^2h=1\).
This is only a pointwise algebraic normalization at \(x\), and the
factor \(u^2h\) is not differentiated.  We have used
\[
 L_k\bigl((u^2h)^{-1}A\bigr)
 =(u^2h)^{-k}L_k(A),
 \qquad
 T_{k-1}\bigl((u^2h)^{-1}A\bigr)
 =(u^2h)^{-(k-1)}T_{k-1}(A).
\]

Under this normalization, using
\(\binom{4k}{k}=4\binom{4k-1}{k-1}\),
\eqref{eq:5.19} becomes
\begin{equation}\label{eq:5.24}
 u^{2k+1}
 =2^k\binom{4k-1}{k-1}
 \left(4-\frac{|\nabla u|^2}{u}\right),
 \qquad 0\leq\frac{|\nabla u|^2}{u}<4.
\end{equation}

If \(\nabla u=0\), then \eqref{eq:5.23} follows from
\eqref{eq:5.14}.  Therefore, in what follows, we
assume that \(\nabla u\ne0\) and diagonalize \(A\).  Let
\(e_1,\ldots,e_{4k}\) be an orthonormal eigenbasis and write
\(-\frac{\nabla u}{|\nabla u|}=\sum_{i=1}^{4k}\omega_i e_i, \qquad \sum_{i=1}^{4k}\omega_i^2=1\).
By \eqref{eq:5.14} and \eqref{eq:5.21}, we have
\begin{align*}
&|\nabla u|^2(\nabla\log h)^TT_{k-1}(A)\nabla\log h\\
&\quad=
\frac{4|\nabla u|^4/u^2}
{\left(4k-(k-1)|\nabla u|^2/u\right)^2}
\sum_{i=1}^{4k}\omega_i^2
\bigl(T_{k-1}(A)\bigr)_{ii}
\left(A_{ii}+2\frac{|\nabla u|^2}{u}-2\right)^2,
\end{align*}
and
\begin{align*}
u\operatorname{div}\mathcal J
=\sum_{i=1}^{4k}\omega_i^2\left[
\tr\bigl(L_k(A)A\bigr)
+4\frac{|\nabla u|^2}{u}\bigl(L_k(A)\bigr)_{ii}
+3\frac{|\nabla u|^4}{u^2}
\bigl(T_{k-1}(A)\bigr)_{ii}\right].
\end{align*}
Since \(4k-(k-1)\frac{|\nabla u|^2}{u}
=4+(k-1)\left(4-\frac{|\nabla u|^2}{u}\right)\geq4\),
it is enough to prove, for every \(i\),
\begin{align}
&\frac{|\nabla u|^4}{u^2}
 \bigl(T_{k-1}(A)\bigr)_{ii}
 \left(A_{ii}+2\frac{|\nabla u|^2}{u}-2\right)^2
\notag\\
&\qquad\leq C_k\left[
 \tr\bigl(L_k(A)A\bigr)
 +4\frac{|\nabla u|^2}{u}\bigl(L_k(A)\bigr)_{ii}
 +3\frac{|\nabla u|^4}{u^2}
 \bigl(T_{k-1}(A)\bigr)_{ii}\right].
\label{eq:5.25}
\end{align}

Fix one index and relabel it as \(1\).  By
\eqref{eq:2.6} and \eqref{eq:2.9}, we find
\begin{equation}\label{eq:5.26}
 \bigl(T_{k-1}(A)\bigr)_{11}=\sigma_{k-1}(\lambda(A)|1)>0,
 \qquad
 \bigl(L_k(A)\bigr)_{11}=\frac34\sigma_k(A)-\sigma_k(\lambda(A)|1).
\end{equation}
The corresponding term on the right-hand side of
\eqref{eq:5.25} satisfies
\begin{align}
&\tr\bigl(L_k(A)A\bigr)
 +4\frac{|\nabla u|^2}{u}\bigl(L_k(A)\bigr)_{11}
 +3\frac{|\nabla u|^4}{u^2}
    \bigl(T_{k-1}(A)\bigr)_{11}
\notag\\
&=\frac{4}{3\bigl(T_{k-1}(A)\bigr)_{11}}
 \Biggl\{
 \left[
   \bigl(L_k(A)\bigr)_{11}
   +\frac32\frac{|\nabla u|^2}{u}
      \bigl(T_{k-1}(A)\bigr)_{11}
 \right]^2
\notag\\
&\hspace{35mm}
 +\left[
   \frac34\tr\bigl(L_k(A)A\bigr)
      \bigl(T_{k-1}(A)\bigr)_{11}
   -\bigl(L_k(A)\bigr)_{11}^2
 \right]
 \Biggr\}.
\label{eq:5.27}
\end{align}
By \eqref{eq:2.13} with \(n=4k\) and \(i=1\), the Newton deficit
in \eqref{eq:5.27} has the decomposition
\begin{align}
&\frac34\tr\bigl(L_k(A)A\bigr)
   \sigma_{k-1}(\lambda(A)|1)
 -\bigl(L_k(A)\bigr)_{11}^2
\notag\\
&=\frac{3\sigma_k(A)}{16}
 \Bigl[
   3\sigma_1(\lambda(A)|1)\sigma_{k-1}(\lambda(A)|1)
   -(4k-1)\sigma_k(\lambda(A)|1)
 \Bigr]
\notag\\
&\quad+\frac14
 \Bigl[
   (3k-1)\sigma_k(\lambda(A)|1)^2
   -3(k+1)\sigma_{k+1}(\lambda(A)|1)
      \sigma_{k-1}(\lambda(A)|1)
 \Bigr]
 \geq0.
\label{eq:5.28}
\end{align}
Thus the Newton deficit in \eqref{eq:5.27} is nonnegative.  This gives
one part of the required estimate.  We still
have to control
\(
A_{11}+2|\nabla u|^2/u-2
\)
on the left-hand side of \eqref{eq:5.25}.

Expanding \(\sigma_k(A)\) in the first direction and using
\eqref{eq:5.26}, we obtain
\begin{equation*}
 \bigl(T_{k-1}(A)\bigr)_{11}A_{11}
=\sigma_k(A)-\sigma_k(\lambda(A)|1)
=\bigl(L_k(A)\bigr)_{11}+\frac14\sigma_k(A).
\end{equation*}
Hence
\begin{equation*}
\bigl(T_{k-1}(A)\bigr)_{11}
 \left(A_{11}+2\frac{|\nabla u|^2}{u}-2\right)
=\bigl(L_k(A)\bigr)_{11}+\frac14\sigma_k(A)
 +\left(2\frac{|\nabla u|^2}{u}-2\right)
 \bigl(T_{k-1}(A)\bigr)_{11}.
\end{equation*}
Using \eqref{eq:5.24} and
\(2\frac{|\nabla u|^2}{u}-2
=\frac32\frac{|\nabla u|^2}{u}
-\frac12\left(4-\frac{|\nabla u|^2}{u}\right)\),
we get
\begin{align}
&\bigl(T_{k-1}(A)\bigr)_{11}
 \left(A_{11}+2\frac{|\nabla u|^2}{u}-2\right)
\notag\\
&\quad=\bigl(L_k(A)\bigr)_{11}
 +\frac32\frac{|\nabla u|^2}{u}
 \bigl(T_{k-1}(A)\bigr)_{11}
 +\frac12\left(4-\frac{|\nabla u|^2}{u}\right)
 \left[2^{k-1}\binom{4k-1}{k-1}
 -\bigl(T_{k-1}(A)\bigr)_{11}\right].
\label{eq:5.29}
\end{align}

We control the square of the last term,
\[
 \frac{|\nabla u|^4}{u^2}
 \left(4-\frac{|\nabla u|^2}{u}\right)^2
 \left[2^{k-1}\binom{4k-1}{k-1}
 -\bigl(T_{k-1}(A)\bigr)_{11}\right]^2
\]
by \eqref{eq:5.30} in Lemma~\ref{Lem:5.6} below, whose proof is
postponed because it is the most involved part and would interrupt the
proof of the proposition.
It follows from \eqref{eq:5.29}, \eqref{eq:5.30}, and
\((a+b)^2\leq2a^2+2b^2\) that
\begin{align*}
&\frac{|\nabla u|^4}{u^2}
 \bigl(T_{k-1}(A)\bigr)_{11}
 \left(A_{11}+2\frac{|\nabla u|^2}{u}-2\right)^2\\
&\quad\leq\frac{C_k}{\bigl(T_{k-1}(A)\bigr)_{11}}
\Biggl\{
 \left[\bigl(L_k(A)\bigr)_{11}
 +\frac32\frac{|\nabla u|^2}{u}
 \bigl(T_{k-1}(A)\bigr)_{11}\right]^2\\
&\hspace{25mm}
 +\frac{|\nabla u|^4}{u^2}
 \left(4-\frac{|\nabla u|^2}{u}\right)^2
 \left[2^{k-1}\binom{4k-1}{k-1}
 -\bigl(T_{k-1}(A)\bigr)_{11}\right]^2\Biggr\}\\
&\quad\leq\frac{C_k}{\bigl(T_{k-1}(A)\bigr)_{11}}
\Biggl\{
 \left[\bigl(L_k(A)\bigr)_{11}
 +\frac32\frac{|\nabla u|^2}{u}
 \bigl(T_{k-1}(A)\bigr)_{11}\right]^2\\
&\hspace{42mm}
 +\frac34\tr\bigl(L_k(A)A\bigr)
 \bigl(T_{k-1}(A)\bigr)_{11}
 -\bigl(L_k(A)\bigr)_{11}^2\Biggr\}.
\end{align*}
By \eqref{eq:5.27}, this is \eqref{eq:5.25} for \(i=1\).
The same argument applies to every \(i\).  Multiplying by
\(\omega_i^2\), summing over \(i\), and using
\(\sum_i\omega_i^2=1\), we obtain \eqref{eq:5.23}.  This proves the
proposition.
\end{proof}

\begin{lemma}\label{Lem:5.6}
Under the pointwise normalization \(u^2h=1\), for every
\(1\leq i\leq4k\),
\begin{align}
&\frac{|\nabla u|^4}{u^2}
 \left(4-\frac{|\nabla u|^2}{u}\right)^2
 \left[\bigl(T_{k-1}(A)\bigr)_{ii}
 -2^{k-1}\binom{4k-1}{k-1}\right]^2
\notag\\
&\quad\leq C_k\Biggl\{
 \left[\bigl(L_k(A)\bigr)_{ii}
 +\frac32\frac{|\nabla u|^2}{u}
 \bigl(T_{k-1}(A)\bigr)_{ii}\right]^2
 +\frac34\tr\bigl(L_k(A)A\bigr)
 \bigl(T_{k-1}(A)\bigr)_{ii}
 -\bigl(L_k(A)\bigr)_{ii}^2\Biggr\}.
\label{eq:5.30}
\end{align}
\end{lemma}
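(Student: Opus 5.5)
The plan is to treat \eqref{eq:5.30} as a purely algebraic inequality and prove it in two stages: first the qualitative statement that the right-hand side vanishes only where the left-hand side does, then a quantitative upgrade by compactness. Fix \(i\) (relabel it as \(1\)) and write \(g=|\nabla u|^2/u\in[0,4)\), \(\mu=\lambda(A)|1\in\Gamma_{k-1}\), \(\lambda_1=A_{11}\). Set \(c_k=2^{k-1}\binom{4k-1}{k-1}\) and \(b=\binom{4k-1}{k-1}\). Under the normalization \(u^2h=1\), \eqref{eq:5.24} says
\[
\sigma_k(A)=\lambda_1\sigma_{k-1}(\mu)+\sigma_k(\mu)=2c_k(4-g),
\]
so every quantity in \eqref{eq:5.30} is a polynomial in \((\lambda_1,\mu,g)\) restricted to this constraint set. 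I denote by \(D_1\) the Newton deficit \(\frac34\tr(L_k(A)A)\sigma_{k-1}(\mu)-(L_k(A))_{11}^2\), and by \(\Theta_1=(L_k(A))_{11}+\frac32g\sigma_{k-1}(\mu)\) the first bracket on the right.

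\emph{Step 1: the zero set.} Suppose the right-hand side of \eqref{eq:5.30} vanishes. Then \(D_1=0\), which is exactly equality \eqref{eq:2.17} with \(n=4k\). By Lemma~\ref{Lem:2.5}, all entries of \(\mu\) are equal to some \(m>0\), and both brackets in \eqref{eq:5.28} vanish. Substituting \(\mu=(m,\dots,m)\) gives
\[
\sigma_{k-1}(\mu)=bm^{k-1},\qquad \sigma_k(\mu)=3bm^k,\qquad (L_k(A))_{11}=\tfrac34 bm^{k-1}(\lambda_1-3m).
\]
The condition \(\Theta_1=0\) then fixes \(\lambda_1\) linearly in terms of \(m\) and \(g\). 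Feeding this into the constraint \(\sigma_k(A)=2c_k(4-g)\) yields a one-variable relation between \(m\) and \(g\). I would then verify that this relation forces either \(m^{k-1}=2^{k-1}\), i.e. \((T_{k-1}(A))_{11}=c_k\), or \(g\in\{0,4\}\); in every case the left-hand side of \eqref{eq:5.30} vanishes. I have not checked this bookkeeping: a first pass gave a relation that did not obviously close, so the exact form of the normalization \eqref{eq:5.18}--\eqref{eq:5.19} must be tracked carefully here. This computation is the algebraic heart of the lemma, and it is presumably why the factors \(g^2(4-g)^2\) appear on the left.

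\emph{Step 2: quantitative version.} I would argue by contradiction. Take a sequence \((\lambda_1^{(j)},\mu^{(j)},g_j)\) on the constraint set with
\[
\mathrm{LHS}_j\geq j\cdot\mathrm{RHS}_j .
\]
There are three regimes to handle.
\begin{enumerate}[label=(\alph*)]
\item \emph{Bounded regime.} If the data stay bounded, pass to a limit. A limit point with \(\mathrm{RHS}=0\) must, by Step 1, have \(\mathrm{LHS}=0\). Near such a point I would compare vanishing orders: \(D_1\) controls the squared spread \(\sum_{a<b}(\mu_a-\mu_b)^2\) linearly, via the quantitative Newton--Maclaurin gap (the identity used after \eqref{eq:2.19} together with \eqref{eq:2.14}). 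The linear term \(\Theta_1\) then controls the remaining defect \((T_{k-1})_{11}-c_k\) linearly along the constrained direction. So \(\mathrm{LHS}\), which is quadratic in these defects, is bounded by \(C\cdot\mathrm{RHS}\) near the zero set. This is a Łojasiewicz-type comparison with exponent one, and it needs a nondegeneracy (transversality) check of the Step 1 relation.
\item \emph{Unbounded regime.} Because \(\sigma_k(A)\leq 8c_k\) is fixed while \(\mu\in\Gamma_{k-1}\), unbounded data means some eigenvalues blow up with \(\sigma_k\) bounded. In this regime I would use the homogeneity of each term in \(\mu\). The term \(\frac32 g\sigma_{k-1}(\mu)\) inside \(\Theta_1\), together with \(D_1\geq\frac{3\sigma_k(A)}{16}\bigl(3\sigma_1\sigma_{k-1}-(4k-1)\sigma_k\bigr)(\mu)\), should dominate \(g^2(4-g)^2\sigma_{k-1}(\mu)^2\) up to constants.
\item \emph{Degenerate regime \(g\to0\) or \(g\to4\).} Here the factor \(g^2(4-g)^2\) on the left absorbs the degeneration.
\end{enumerate}

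I expect the main obstacle to be the uniformity in the noncompact directions, where \(\sigma_{k-1}(\mu)\to\infty\) or \(\sigma_{k-1}(\mu)\to0\) with \(\sigma_k(A)\) fixed. The plan there is to normalize \(\mu\) by its largest entry and redo the comparison in the blown-up variables.
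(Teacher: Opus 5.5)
Your Step 1 contains an arithmetic slip, and that slip is why your relation ``did not close.'' With $\mu=(m,\dots,m)$ one has $(L_k(A))_{11}=\frac34\sigma_k(A)-\sigma_k(\mu)=\frac34 bm^{k-1}(\lambda_1-m)$, not $\frac34 bm^{k-1}(\lambda_1-3m)$; compare $\frac{n-k}{n}(\lambda_i-\lambda_j)$ in the proof of Lemma~\ref{Lem:5.9}. With the correct value, $\Theta_1=0$ gives $\lambda_1=m-2g$. The constraint then becomes $4Z^{k/(k-1)}-gZ-(4-g)=0$, where $Z=(m/2)^{k-1}=(T_{k-1}(A))_{11}/c_k$. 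Since $4Z^{k/(k-1)}-gZ-(4-g)=(Z^{1/(k-1)}-1)\bigl[(4-g)\sum_{j=0}^{k-2}Z^{j/(k-1)}+4Z\bigr]$ and the bracket is positive, $Z=1$ for every $g\in[0,4)$. So the zero set is elementary, and the weight $g^2(4-g)^2$ plays no role there; it matters only quantitatively. The same factorization gives $g(4-g)|Z-1|\le 4|4Z^{k/(k-1)}-gZ-(4-g)|$, which the paper uses in Case~3(a).

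The genuine gap is Step 2, which is asserted rather than proved. The constraint set is noncompact and not scale-invariant in $\mu$, because of $\sigma_k(A)=2c_k(4-g)$ and the constant $c_k$ on the left. Several degenerations can occur together:
\begin{itemize}
\item $g\to4$, so $A\to\partial\Gamma_k$, where Lemma~\ref{Lem:2.5} and your Step 1 do not apply;
\item $g\to0$ with $(T_{k-1}(A))_{11}\to\infty$, so the left side need not be small and nothing is ``absorbed'';
\item $\mu$ escaping to infinity or to $\partial\Gamma_{k-1}$.
\end{itemize}
A local exponent-one comparison near the zero set says nothing about sequences escaping in these directions.

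The core difficulty shows in \eqref{eq:5.36}. For $\mu\in\Gamma_{k+1}$, the lower gap $\rho_1-\rho_k$ enters the deficit only with the factor $\sigma_k(A)\propto 4-g$, and the paper discards that term altogether. The only quadratic control left is the top-gap term $(T_{k-1}(A))_{11}^2\rho_k(\rho_k-\rho_{k+1})$. Yet the left side depends on $(T_{k-1}(A))_{11}=c_k\prod_{j<k}(\rho_j/2)$, that is, on all the lower ratios.

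What you are missing is a uniform, scale-invariant estimate showing that the top gap controls all the ratios. This is \eqref{eq:5.37}: $\max_j|\rho_j/\rho_k-1|\le C_k\bigl((\rho_k-\rho_{k+1})/\rho_k\bigr)^{1/2}$. The paper proves it by differentiating $\prod_{i\ge2}(\zeta+\lambda_i)$ $3k-2$ times (Rolle), reversing coefficients, and reducing to $k+1$ positive numbers $\xi_i$, for which the normalized gap is an explicit variance \eqref{eq:5.39}.

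Combined with the exact identity \eqref{eq:5.31} and the crude bound \eqref{eq:5.32}, this lets the paper split by the signs of $\sigma_k(\mu)$ and $\sigma_{k+1}(\mu)$. The cases $\sigma_k(\mu)\le0$ and $\sigma_{k+1}(\mu)\le0$ are then immediate, and only Case~3 needs \eqref{eq:5.37}. Without this estimate, or an equivalent uniform one, your contradiction argument does not produce a constant depending only on $k$.
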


\begin{proof}
If \(\nabla u=0\), then \eqref{eq:5.30} follows directly from
\eqref{eq:5.28}. We assume that \(\nabla u\ne0\). It is enough to
consider \(i=1\). From
\eqref{eq:5.24}, we find
\begin{align}
&\bigl(L_k(A)\bigr)_{11}
 +\frac32\frac{|\nabla u|^2}{u}\bigl(T_{k-1}(A)\bigr)_{11}
\notag\\
&\qquad=\frac32\left[
 2^{k-1}\binom{4k-1}{k-1}
 \left(4-\frac{|\nabla u|^2}{u}\right)
 +\frac{|\nabla u|^2}{u}\bigl(T_{k-1}(A)\bigr)_{11}\right]
 -\sigma_k(\lambda(A)|1).
\label{eq:5.31}
\end{align}
We also have
\begin{align}
&\frac{|\nabla u|^2}{u}
 \left(4-\frac{|\nabla u|^2}{u}\right)
 \left|\bigl(T_{k-1}(A)\bigr)_{11}
 -2^{k-1}\binom{4k-1}{k-1}\right|
\notag\\
&\qquad\leq4\left[
 2^{k-1}\binom{4k-1}{k-1}
 \left(4-\frac{|\nabla u|^2}{u}\right)
 +\frac{|\nabla u|^2}{u}\bigl(T_{k-1}(A)\bigr)_{11}\right].
\label{eq:5.32}
\end{align}
This follows from \(0\leq|\nabla u|^2/u<4\),
\(\bigl(T_{k-1}(A)\bigr)_{11}>0\), and
\begin{align*}
&\frac{|\nabla u|^2}{u}
 \left(4-\frac{|\nabla u|^2}{u}\right)
 \left|\bigl(T_{k-1}(A)\bigr)_{11}
 -2^{k-1}\binom{4k-1}{k-1}\right|\\
&\quad\leq
\frac{|\nabla u|^2}{u}
 \left(4-\frac{|\nabla u|^2}{u}\right)
 \left[\bigl(T_{k-1}(A)\bigr)_{11}
 +2^{k-1}\binom{4k-1}{k-1}\right]\\
&\quad\leq4\left[
 \frac{|\nabla u|^2}{u}\bigl(T_{k-1}(A)\bigr)_{11}
 +2^{k-1}\binom{4k-1}{k-1}
 \left(4-\frac{|\nabla u|^2}{u}\right)\right].
\end{align*}

We divide the proof of \eqref{eq:5.30} into three cases according to
the signs of \(\sigma_k(\lambda(A)|1)\) and
\(\sigma_{k+1}(\lambda(A)|1)\).

\medskip
\noindent\emph{Case 1:
\(\sigma_k(\lambda(A)|1)\leq0\).}
The right-hand side of
\eqref{eq:5.31} is at least
\[
 \frac32\left[
 2^{k-1}\binom{4k-1}{k-1}
 \left(4-\frac{|\nabla u|^2}{u}\right)
 +\frac{|\nabla u|^2}{u}\bigl(T_{k-1}(A)\bigr)_{11}\right].
\]
Thus \eqref{eq:5.32} proves
\eqref{eq:5.30} in this case.

\medskip
\noindent\emph{Case 2:
\(\sigma_k(\lambda(A)|1)>0\) and
\(\sigma_{k+1}(\lambda(A)|1)\leq0\).}
Since \(\sigma_{k+1}(\lambda(A)|1)\leq0\), identity
\eqref{eq:5.28} gives
\[
 \frac34\tr\bigl(L_k(A)A\bigr)\bigl(T_{k-1}(A)\bigr)_{11}
 -\bigl(L_k(A)\bigr)_{11}^2
 \geq\frac{3k-1}{4}\sigma_k(\lambda(A)|1)^2.
\]
Equations \eqref{eq:5.31} and
\eqref{eq:5.32} {yield}
\begin{align*}
&\frac{|\nabla u|^2}{u}
 \left(4-\frac{|\nabla u|^2}{u}\right)
 \left|\bigl(T_{k-1}(A)\bigr)_{11}
 -2^{k-1}\binom{4k-1}{k-1}\right|
\\
&\qquad\leq C_k\left(
 \left|\bigl(L_k(A)\bigr)_{11}
 +\frac32\frac{|\nabla u|^2}{u}
   \bigl(T_{k-1}(A)\bigr)_{11}\right|
 +\sigma_k(\lambda(A)|1)\right).
\end{align*}
Squaring this inequality and using
\[
 \frac34\tr\bigl(L_k(A)A\bigr)\bigl(T_{k-1}(A)\bigr)_{11}
 -\bigl(L_k(A)\bigr)_{11}^2
 \geq\frac{3k-1}{4}\sigma_k(\lambda(A)|1)^2
\]
proves \eqref{eq:5.30} in this case.

\medskip
\noindent\emph{Case 3:
\(\sigma_k(\lambda(A)|1)>0\) and
\(\sigma_{k+1}(\lambda(A)|1)>0\).}
Corollary~\ref{Cor:2.3} and the assumptions of Case 3 give
\(\lambda(A)|1\in\Gamma_{k+1}\).  For
\(1\leq j\leq k+1\), set
\begin{equation}\label{eq:5.33}
 \rho_j
 =\frac{\sigma_j(\lambda(A)|1)/\binom{4k-1}{j}}
        {\sigma_{j-1}(\lambda(A)|1)/\binom{4k-1}{j-1}}
 =\frac{j\,\sigma_j(\lambda(A)|1)}
        {(4k-j)\sigma_{j-1}(\lambda(A)|1)}.
\end{equation}
Equation \eqref{eq:2.7} gives
\begin{equation}\label{eq:5.34}
 \rho_1\geq\rho_2\geq\cdots\geq\rho_{k+1}>0.
\end{equation}
By \eqref{eq:5.33}, we get
\begin{equation}\label{eq:5.35}
 \frac{\bigl(T_{k-1}(A)\bigr)_{11}}
 {2^{k-1}\binom{4k-1}{k-1}}
 =\prod_{j=1}^{k-1}\frac{\rho_j}{2},
 \qquad
 \sigma_k(\lambda(A)|1)
 =3\bigl(T_{k-1}(A)\bigr)_{11}\rho_k.
\end{equation}
Substituting \eqref{eq:5.35} into \eqref{eq:5.28}, we obtain
\begin{align}
&\frac34\tr\bigl(L_k(A)A\bigr)\bigl(T_{k-1}(A)\bigr)_{11}
 -\bigl(L_k(A)\bigr)_{11}^2
\notag\\
&=\frac{9(4k-1)}{16}\sigma_k(A)
 \bigl(T_{k-1}(A)\bigr)_{11}(\rho_1-\rho_k)
 +\frac94(3k-1)\bigl(T_{k-1}(A)\bigr)_{11}^2
 \rho_k(\rho_k-\rho_{k+1}).
\label{eq:5.36}
\end{align}

The two nonnegative quantities
\[
 \sigma_k(A)\bigl(T_{k-1}(A)\bigr)_{11}(\rho_1-\rho_k)
 \quad\text{and}\quad
 \bigl(T_{k-1}(A)\bigr)_{11}^2
 \rho_k(\rho_k-\rho_{k+1})
\]
may both be small.  We use the normalized deficit
\((\rho_k-\rho_{k+1})/\rho_k\) to control all the quotients
\(\rho_j/\rho_k\).

We first claim that
\begin{equation}\label{eq:5.37}
 \max_{1\leq j\leq k+1}
 \left|\frac{\rho_j}{\rho_k}-1\right|
 \leq C_k
 \left(\frac{\rho_k-\rho_{k+1}}{\rho_k}\right)^{1/2}
\end{equation}
whenever
\(\frac{\rho_k-\rho_{k+1}}{\rho_k} \leq\frac{1}{16k(k+1)}\).
To prove \eqref{eq:5.37}, start from the real-rooted polynomial
\(\prod_{i=2}^{4k}(\zeta+\lambda_i)\).  Direct differentiation gives
\[
 \frac{\dd^{\,3k-2}}{\dd\zeta^{\,3k-2}}
 \prod_{i=2}^{4k}(\zeta+\lambda_i)
 =
 \sum_{j=0}^{k+1}
 \frac{(4k-1-j)!}{(k+1-j)!}
 \sigma_j(\lambda(A)|1)\zeta^{k+1-j}.
\]
By Rolle's theorem, this polynomial of degree \(k+1\) has only real
roots.  Its constant term is
\((3k-2)!\sigma_{k+1}(\lambda(A)|1)\ne0\).
Thus none of its roots is zero.  Reversing the coefficients and
normalizing the constant term, we obtain
\begin{align*}
&\frac{(k+1)!}{(4k-1)!}\zeta^{k+1}
 \left.
 \frac{\dd^{\,3k-2}}{\dd t^{\,3k-2}}
 \prod_{i=2}^{4k}(t+\lambda_i)
 \right|_{t=\zeta^{-1}}\\
&\qquad=
 \sum_{j=0}^{k+1}
 \frac{(k+1)!}{(4k-1)!}
 \frac{(4k-1-j)!}{(k+1-j)!}
 \sigma_j(\lambda(A)|1)\zeta^j\\
&\qquad=
 \sum_{j=0}^{k+1}\binom{k+1}{j}
 \frac{\sigma_j(\lambda(A)|1)}
      {\binom{4k-1}{j}}\zeta^j.
\end{align*}
Its roots are the reciprocals of the nonzero roots above and hence
are real.  All its coefficients are positive in Case 3, so it is
positive for \(\zeta\geq0\).  Therefore, its roots can be written as
\(-\xi_1^{-1},\ldots,-\xi_{k+1}^{-1}\), where every \(\xi_i>0\).
Its constant term is one, and consequently
\[
 \sum_{j=0}^{k+1}\binom{k+1}{j}
 \frac{\sigma_j(\lambda(A)|1)}
      {\binom{4k-1}{j}}\zeta^j
 =\prod_{i=1}^{k+1}(1+\xi_i\zeta).
\]
Comparing coefficients gives
\begin{equation}\label{eq:5.38}
 \frac{\sigma_j(\lambda(A)|1)}{\binom{4k-1}{j}}
 =\frac{\sigma_j(\xi_1,\ldots,\xi_{k+1})}{\binom{k+1}{j}},
 \qquad 0\leq j\leq k+1.
\end{equation}
The complementary symmetric-function identities, together with
\eqref{eq:5.33} and
\eqref{eq:5.38}, {yield}
\[
 \rho_k
 =\frac{k}{2}
 \frac{\sum_{i=1}^{k+1}\xi_i^{-1}}
      {\sum_{1\leq i<j\leq k+1}(\xi_i\xi_j)^{-1}},
 \qquad
 \rho_{k+1}
 =\frac{k+1}{\sum_{i=1}^{k+1}\xi_i^{-1}}.
\]
Here we used the complementary identities
\[
\begin{aligned}
 \sigma_k(\xi_1,\ldots,\xi_{k+1})
 &=\sigma_{k+1}(\xi_1,\ldots,\xi_{k+1})
 \sum_{i=1}^{k+1}\xi_i^{-1},\\
 \sigma_{k-1}(\xi_1,\ldots,\xi_{k+1})
 &=\sigma_{k+1}(\xi_1,\ldots,\xi_{k+1})
 \sum_{1\leq i<j\leq k+1}(\xi_i\xi_j)^{-1}.
\end{aligned}
\]
The identity \(2\sum_{1\leq i<j\leq k+1}(\xi_i\xi_j)^{-1}
=\left(\sum_{i=1}^{k+1}\xi_i^{-1}\right)^2
-\sum_{i=1}^{k+1}\xi_i^{-2}\),
now gives
\[
 1-\frac{\rho_{k+1}}{\rho_k}
 =\frac{(k+1)\displaystyle\sum_{i=1}^{k+1}\xi_i^{-2}
 -\left(\displaystyle\sum_{i=1}^{k+1}\xi_i^{-1}\right)^2}
 {k\left(\displaystyle\sum_{i=1}^{k+1}\xi_i^{-1}\right)^2},
\]
and
\[
 (k+1)\sum_{i=1}^{k+1}\xi_i^{-2}
 -\left(\sum_{i=1}^{k+1}\xi_i^{-1}\right)^2
 =(k+1)\sum_{i=1}^{k+1}
 \left(\xi_i^{-1}
 -\frac1{k+1}\sum_{\ell=1}^{k+1}\xi_\ell^{-1}\right)^2.
\]
Therefore, we get
\begin{align}
 \frac{\rho_k-\rho_{k+1}}{\rho_k}
 =\frac{\displaystyle\sum_{i=1}^{k+1}
 \left(\xi_i^{-1}
 -\frac1{k+1}\sum_{\ell=1}^{k+1}\xi_\ell^{-1}\right)^2}
 {\displaystyle k(k+1)
 \left(\frac1{k+1}\sum_{\ell=1}^{k+1}\xi_\ell^{-1}\right)^2}.
\label{eq:5.39}
\end{align}
Multiplying all the \(\xi_i\) by the same positive number multiplies
each quotient of two consecutive terms on the right of
\eqref{eq:5.38} by that number.  Hence it does not change
\(\rho_j/\rho_k\) or
\((\rho_k-\rho_{k+1})/\rho_k\).  We may assume that the mean of the
\(\xi_i^{-1}\) is one.  Then
\eqref{eq:5.39} gives
\[
 \sum_{i=1}^{k+1}(\xi_i^{-1}-1)^2
 =k(k+1)\frac{\rho_k-\rho_{k+1}}{\rho_k}\leq\frac1{16}.
\]
In particular, \(4/5\leq\xi_i\leq4/3\).  The denominators in the
quotients of consecutive elementary symmetric functions are bounded
away from zero on this interval.  By the mean value theorem and
\eqref{eq:5.38},
\[
 \left|\frac{\rho_j}{\rho_k}-1\right|
 \leq C_k\max_{1\leq i\leq k+1}|\xi_i^{-1}-1|
 \leq C_k
 \left(\frac{\rho_k-\rho_{k+1}}{\rho_k}\right)^{1/2},
\]
which proves \eqref{eq:5.37}.

With  \eqref{eq:5.37} in hand, we are able to continue our proof. Set
\begin{equation}\label{eq:5.40}
 Z=\frac{\bigl(T_{k-1}(A)\bigr)_{11}}
 {2^{k-1}\binom{4k-1}{k-1}}.
\end{equation}
By \eqref{eq:5.35}, we have
\(Z=\left(\frac{\rho_k}{2}\right)^{k-1} \prod_{j=1}^{k-1}\frac{\rho_j}{\rho_k}\).
Taking the positive \((k-1)\)-st root gives
\(\frac{2Z^{1/(k-1)}}{\rho_k}
=\allowbreak\left(\prod_{j=1}^{k-1}\frac{\rho_j}{\rho_k} \right)^{1/(k-1)}\).
By \eqref{eq:5.34},
\(1\leq \left(\prod_{j=1}^{k-1}\frac{\rho_j}{\rho_k}\right)^{1/(k-1)}
\allowbreak\leq\max_{1\leq j\leq k-1}\frac{\rho_j}{\rho_k}\).
Therefore, \eqref{eq:5.37} directly gives
\begin{equation}\label{eq:5.41}
 \left|\rho_k-2Z^{1/(k-1)}\right|
 \leq C_k\rho_k
 \left(\frac{\rho_k-\rho_{k+1}}{\rho_k}\right)^{1/2}
\end{equation}
when
\((\rho_k-\rho_{k+1})/\rho_k\leq1/[16k(k+1)]\).

Equations \eqref{eq:5.24} and
\eqref{eq:5.35} also {yield}
\begin{equation}\label{eq:5.42}
\frac{2}
 {3\cdot2^{k-1}\binom{4k-1}{k-1}}
 \left[
 \bigl(L_k(A)\bigr)_{11}
 +\frac32\frac{|\nabla u|^2}{u}
   \bigl(T_{k-1}(A)\bigr)_{11}\right]
=
 \left(4-\frac{|\nabla u|^2}{u}\right)
 +\frac{|\nabla u|^2}{u}Z-2Z\rho_k.
\end{equation}
By \eqref{eq:5.34},
\[
 \frac{9(4k-1)}{16}\sigma_k(A)
 \bigl(T_{k-1}(A)\bigr)_{11}(\rho_1-\rho_k)\geq0.
\]
Hence, by
\eqref{eq:5.36},
\eqref{eq:5.40}, and
\eqref{eq:5.42}, it is enough to prove
\begin{equation}\label{eq:5.43}
\frac{|\nabla u|^4}{u^2}
 \left(4-\frac{|\nabla u|^2}{u}\right)^2(Z-1)^2
 \leq C_k\Biggl\{
 \left[
 \left(4-\frac{|\nabla u|^2}{u}\right)
 +\frac{|\nabla u|^2}{u}Z-2Z\rho_k
 \right]^2
 +Z^2\rho_k(\rho_k-\rho_{k+1})\Biggr\}.
\end{equation}

We divide the proof of \eqref{eq:5.43} into two cases.

\medskip
\noindent\emph{Case 3(a):
\(\displaystyle
 \frac{\rho_k-\rho_{k+1}}{\rho_k}
 \leq\frac{1}{16k(k+1)}\).}
We first write
\begin{align*}
4Z^{k/(k-1)}
 -\frac{|\nabla u|^2}{u}Z
 -\left(4-\frac{|\nabla u|^2}{u}\right)
&=4\left(Z^{k/(k-1)}-1\right)
 -\frac{|\nabla u|^2}{u}(Z-1)\\
&\quad=\left(Z^{1/(k-1)}-1\right)
 \left[
 \left(4-\frac{|\nabla u|^2}{u}\right)
 \sum_{j=0}^{k-2}Z^{j/(k-1)}+4Z\right],
\end{align*}
while
\(Z-1=\left(Z^{1/(k-1)}-1\right) \sum_{j=0}^{k-2}Z^{j/(k-1)}\).
Since \(0<|\nabla u|^2/u<4\), these two identities give
\begin{align}
&\frac{|\nabla u|^2}{u}
 \left(4-\frac{|\nabla u|^2}{u}\right)|Z-1|\leq4\left|
 4Z^{k/(k-1)}
 -\frac{|\nabla u|^2}{u}Z
 -\left(4-\frac{|\nabla u|^2}{u}\right)\right|.
\label{eq:5.44}
\end{align}
Adding and subtracting \(2Z\rho_k\) {yields}
\begin{equation*}
4Z^{k/(k-1)}
 -\frac{|\nabla u|^2}{u}Z
 -\left(4-\frac{|\nabla u|^2}{u}\right)
=-\left[
 \left(4-\frac{|\nabla u|^2}{u}\right)
 +\frac{|\nabla u|^2}{u}Z-2Z\rho_k
 \right]
 +2Z\left(2Z^{1/(k-1)}-\rho_k\right).
\end{equation*}
Applying \eqref{eq:5.41} to
\(2Z\bigl(2Z^{1/(k-1)}-\rho_k\bigr)\), we obtain
\begin{align*}
&\left|
 4Z^{k/(k-1)}
 -\frac{|\nabla u|^2}{u}Z
 -\left(4-\frac{|\nabla u|^2}{u}\right)\right|
\\
&\quad\leq
 \left|
 \left(4-\frac{|\nabla u|^2}{u}\right)
 +\frac{|\nabla u|^2}{u}Z-2Z\rho_k
 \right|
 +C_kZ\rho_k
 \left(\frac{\rho_k-\rho_{k+1}}{\rho_k}\right)^{1/2}.
\end{align*}
Combining this estimate with \eqref{eq:5.44} and Cauchy's inequality,
we obtain
\begin{align*}
&\frac{|\nabla u|^4}{u^2}
 \left(4-\frac{|\nabla u|^2}{u}\right)^2(Z-1)^2
\\
&\quad\leq C_k\Biggl\{
 \left[
 \left(4-\frac{|\nabla u|^2}{u}\right)
 +\frac{|\nabla u|^2}{u}Z-2Z\rho_k
 \right]^2
 +Z^2\rho_k^2
 \frac{\rho_k-\rho_{k+1}}{\rho_k}\Biggr\}
\\
&\quad=C_k\Biggl\{
 \left[
 \left(4-\frac{|\nabla u|^2}{u}\right)
 +\frac{|\nabla u|^2}{u}Z-2Z\rho_k
 \right]^2
 +Z^2\rho_k(\rho_k-\rho_{k+1})\Biggr\}.
\end{align*}
This proves \eqref{eq:5.43} in Case 3(a).

\medskip
\noindent\emph{Case 3(b):
\(\displaystyle
 \frac{\rho_k-\rho_{k+1}}{\rho_k}
 >\frac{1}{16k(k+1)}\).}
As in \eqref{eq:5.32}, one has
\begin{align}
&\frac{|\nabla u|^2}{u}
 \left(4-\frac{|\nabla u|^2}{u}\right)|Z-1|
\leq4\left[
 \left(4-\frac{|\nabla u|^2}{u}\right)
 +\frac{|\nabla u|^2}{u}Z\right].
\label{eq:5.45}
\end{align}
If \(\left|\left(4-\frac{|\nabla u|^2}{u}\right)
+\frac{|\nabla u|^2}{u}Z-2Z\rho_k\right|
\geq\frac12\left[\left(4-\frac{|\nabla u|^2}{u}\right)
+\frac{|\nabla u|^2}{u}Z\right]\), then \eqref{eq:5.45} proves
\eqref{eq:5.43}.  Otherwise,
\(\left(4-\frac{|\nabla u|^2}{u}\right) +\frac{|\nabla u|^2}{u}Z \leq4Z\rho_k\).
It follows from \eqref{eq:5.45} that
\begin{align*}
&\frac{|\nabla u|^4}{u^2}
 \left(4-\frac{|\nabla u|^2}{u}\right)^2(Z-1)^2
 \leq C_k Z^2\rho_k^2
 \leq C_k Z^2\rho_k(\rho_k-\rho_{k+1}).
\end{align*}
The last inequality follows from the assumption of Case 3(b), which
gives
\(\rho_k^2 <16k(k+1)\rho_k(\rho_k-\rho_{k+1})\),
which is the last inequality above.
This proves \eqref{eq:5.43}, and hence
\eqref{eq:5.30} in Case 3(b).

The three cases prove \eqref{eq:5.30} and complete the proof.
\end{proof}
}

We now carry out the weighted integration by parts.  Choose
\(\varepsilon\) sufficiently small such that
\(0<\varepsilon<\min\left\{1,\frac1{2C_k}\right\}\).
It follows from \eqref{eq:5.16} and
Proposition~\ref{Prop:5.5} that
\begin{equation}\label{eq:5.46}
    \operatorname{div}(h^{-\varepsilon}\mathcal J)
    \geq\frac12h^{-\varepsilon}\operatorname{div}\mathcal J.
\end{equation}
Multiplying \eqref{eq:5.46} by
\(\chi^{8k+2}\) and integrating by parts, we obtain
\begin{align*}
    \frac12\int\chi^{8k+2}h^{-\varepsilon}
       \operatorname{div}\mathcal J\dd x
    \leq\int\chi^{8k+2}
       \operatorname{div}(h^{-\varepsilon}\mathcal J)\dd x
    \leq(8k+2)\int\chi^{8k+1}h^{-\varepsilon}
       |\mathcal J\cdot\nabla\chi|\dd x.
\end{align*}
Using \eqref{eq:5.15}, Young's inequality,
\(\tr(T_{k-1}(A))=(3k+1)\sigma_{k-1}(A)\), and
\(|\nabla\chi|\leq C/R\), we obtain
\begin{equation}\label{eq:5.47}\begin{split}
    \int\chi^{8k+2}h^{-\varepsilon}
       \operatorname{div}\mathcal J\dd x
    &\leq CR^{-2}\int\chi^{8k}h^{-\varepsilon}
       \sigma_{k-1}(A)|\nabla u|^2u^{-1}\dd x\\
       &\leq CR^{-2}\int\chi^{8k}\sigma_{k-1}(A)|\nabla u|^2
       u^{-1-\varepsilon/k}\dd x.
\end{split}
\end{equation}
Here the second inequality follows from \eqref{eq:5.20}.
It remains to estimate the right-hand side of
\eqref{eq:5.47}.  We give the estimate for a general
parameter \(\kappa\), and will later take \(\kappa=\varepsilon/k\).
\begin{lemma}\label{Lem:5.7}
Let \(0<\kappa<k\), and let \(\chi\) be as in
\eqref{eq:3.2}.  Then
\begin{equation}\label{eq:5.48}
    \int
    \sigma_{k-1}(A)|\nabla u|^2u^{-1-\kappa}
    \chi^{8k}\dd x
    \leq
    C_\kappa R^{\frac{2k}{k+1}(1+\kappa)}.
\end{equation}
The constant \(C_\kappa\) depends only on \(k,\kappa\) and the fixed
cutoff profile, and is independent of \(R\).
\end{lemma}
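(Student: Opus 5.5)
The plan is to apply Lemma~\ref{Lem:3.1}(i) directly with the positive weight $\delta=\kappa$. This is exactly the point of replacing $u^{-1}$ by the stronger weight $u^{-1-\kappa}$. For $n=4k$ we have $p_*=2k+1$. Take $\delta=\kappa\in(0,k)$, which is admissible since $\delta>0$, and take $\theta=8k>2k$. Then the quantity on the left of \eqref{eq:5.48} is precisely $\Lambda_1$ from \eqref{eq:3.3}, because $-\delta-1=-1-\kappa$ and $\sigma_{k-1}(A)|\nabla u|^2$ is the $s=1$ integrand. Inequality \eqref{eq:3.4} then gives
\[
\int u^{2k+1-\kappa}\chi^{8k}\dd x+\sum_{s=1}^k\Lambda_s\leq CR^{-2k}\int u^{k-\kappa}\chi^{6k}\dd x .
\]
All terms on the left are nonnegative.

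Next I would bound the right-hand side by Young's inequality, in the form \eqref{eq:3.53}, exactly as in \eqref{eq:5.11}--\eqref{eq:5.12}. The exponents are $\gamma=k-\kappa$, which lies in $(0,P)$, and $P=2k+1-\kappa$, so that $P-\gamma=k+1$. The inequality is
\[
R^{-2k}u^{k-\kappa}\chi^{6k}\leq\varepsilon u^{2k+1-\kappa}\chi^{8k}+C_\varepsilon R^{-\frac{2k(2k+1-\kappa)}{k+1}}\chi^{\frac{6kP-8k\gamma}{k+1}}.
\]
The residual cutoff exponent is $\frac{k(4k+6+2\kappa)}{k+1}\geq0$, so since $0\leq\chi\leq1$ that factor is at most $1$.

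I then integrate over $\supp\chi\subset B_{2R}$, where $|B_{2R}|\leq CR^{4k}$. The remainder term is bounded by
\[
C_\varepsilon R^{4k-\frac{2k(2k+1-\kappa)}{k+1}}=C_\varepsilon R^{\frac{2k(1+\kappa)}{k+1}}.
\]
Choosing $\varepsilon$ small, I absorb $\varepsilon\int u^{2k+1-\kappa}\chi^{8k}$ into the first term on the left of \eqref{eq:3.4}. Dropping the remaining nonnegative terms except $\Lambda_1$ then yields \eqref{eq:5.48}, with a constant depending only on $k$, $\kappa$ and the cutoff profile.

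There is no real obstacle here. The only points requiring care are the exponent bookkeeping, namely checking that $0<\gamma<P$ and that the cutoff exponent is nonnegative, and confirming that the hypothesis $\kappa<k$ is exactly what makes $\gamma=k-\kappa>0$.
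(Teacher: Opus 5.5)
Your proposal is correct and follows the paper's proof essentially verbatim. You apply \eqref{eq:3.4} with $p=2k+1$, $\delta=\kappa$, $\theta=8k$, then use Young's inequality on $R^{-2k}u^{k-\kappa}\chi^{6k}$ and absorb into the left-hand side. Your exponents agree with the paper's: your residual cutoff exponent $k(4k+6+2\kappa)/(k+1)$ is the paper's $2k(2k+3+\kappa)/(k+1)$, and the final power is $R^{\frac{2k}{k+1}(1+\kappa)}$.
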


\begin{proof}
We apply \eqref{eq:3.4} with \(p=2k+1\), \(\delta=\kappa\),
and \(\theta=8k\).  Then the left-hand side of
\eqref{eq:5.48} is \(\Lambda_1\), while the terminal term on the
right-hand side of \eqref{eq:3.4} is
\(CR^{-2k}\int u^{k-\kappa}\chi^{6k}\dd x\).
Using Young's inequality, for every \(\eta>0\), we obtain
\[
\begin{aligned}
    CR^{-2k}u^{k-\kappa}\chi^{6k}
    &\leq
    \eta u^{2k+1-\kappa}\chi^{8k}
    +C_{\kappa,\eta}
    R^{-\frac{2k(2k+1-\kappa)}{k+1}}
    \chi^{\frac{2k(2k+3+\kappa)}{k+1}}.
\end{aligned}
\]
Since \(0\leq\chi\leq1\) and
\(\lvert\operatorname{supp}\chi\rvert\leq CR^{4k}\), integrating
this inequality gives
\[
\begin{aligned}
    CR^{-2k}\int u^{k-\kappa}\chi^{6k}\dd x
    \leq
    \eta\int u^{2k+1-\kappa}\chi^{8k}\dd x
    +C_{\kappa,\eta}
    R^{\frac{2k}{k+1}(1+\kappa)}.
\end{aligned}
\]
Choosing \(\eta\) sufficiently small, we absorb the first term into
the left-hand side of \eqref{eq:3.4}.  This proves
\eqref{eq:5.48}.
\end{proof}

We can now complete the weighted integration by parts.
\begin{corollary}\label{Cor:5.8}
Every positive \(C^2\) solution of \eqref{eq:1.8} in
dimension \(n=4k\)
satisfies \(\operatorname{div}\mathcal J\equiv0\).
\end{corollary}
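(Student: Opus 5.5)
The plan is to combine the weighted integration by parts \eqref{eq:5.47} with Lemma~\ref{Lem:5.7}, taking \(\kappa=\varepsilon/k\). Fix \(\varepsilon\) with \(0<\varepsilon<\min\{1,\frac1{2C_k}\}\) as above, where \(C_k\) is the constant of Proposition~\ref{Prop:5.5}. Then \eqref{eq:5.46} holds pointwise. Since \(\varepsilon<1\le k\), we have \(0<\kappa<1<k\), so Lemma~\ref{Lem:5.7} applies. Inserting \eqref{eq:5.48} into \eqref{eq:5.47} gives
\[
\int_{\R^n}\chi^{8k+2}h^{-\varepsilon}\operatorname{div}\mathcal J\dd x
\leq CR^{-2+\frac{2k}{k+1}(1+\varepsilon/k)}
= CR^{-\frac{2-2\varepsilon}{k+1}}.
\]
The exponent is negative because \(\varepsilon<1\). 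In the displayed estimate we use \(\chi=1\) on \(B_R\), and the nonnegativity of \(h^{-\varepsilon}\operatorname{div}\mathcal J\) from \eqref{eq:5.14}.

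Next, fix \(r>0\). For \(R>r\), monotonicity in the domain gives
\[
0\le\int_{B_r}h^{-\varepsilon}\operatorname{div}\mathcal J\dd x\le CR^{-\frac{2-2\varepsilon}{k+1}}.
\]
Letting \(R\to\infty\) yields \(\int_{B_r}h^{-\varepsilon}\operatorname{div}\mathcal J=0\). The function \(h\) is continuous and positive; it is \(C^1\) by the implicit function theorem and bounded below by \eqref{eq:5.20}. Hence \(h^{-\varepsilon}\) is positive and locally bounded, so \(\operatorname{div}\mathcal J=0\) almost everywhere on \(B_r\). By continuity of \(\operatorname{div}\mathcal J\) (since \(u\in C^\infty\)), it vanishes everywhere. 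As \(r\) is arbitrary, \(\operatorname{div}\mathcal J\equiv0\).

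The main point to check carefully is the justification of the integration by parts behind \eqref{eq:5.47}. The vector field \(h^{-\varepsilon}\mathcal J\) is only \(C^1\), because \(h\in C^1\) and \(\mathcal J\) is smooth. Since \(\chi^{8k+2}\) is compactly supported, the divergence theorem still applies with no boundary term. The inequality \(h^{-\varepsilon}\le Cu^{-\varepsilon/k}\) used in the second line of \eqref{eq:5.47} follows from \eqref{eq:5.20}.

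We must also confirm that the constant in Lemma~\ref{Lem:5.7} is independent of \(R\) for this fixed \(\kappa\). This is where the sign \(\delta=\kappa>0\) matters: it makes \eqref{eq:3.4} available, rather than the weaker \eqref{eq:3.5}, and in \eqref{eq:3.4} the term \(\int u^{2k+1-\kappa}\chi^{8k}\) sits on the left-hand side, so it can absorb the Young remainder. Apart from these checks, the argument is a direct repetition of Corollary~\ref{Cor:5.4} with the weight \(h^{-\varepsilon}\) inserted.
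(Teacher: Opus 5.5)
Your proposal is correct and is essentially the paper's proof. You combine \eqref{eq:5.47} with Lemma~\ref{Lem:5.7} at $\kappa=\varepsilon/k$ to get the decay $CR^{-2(1-\varepsilon)/(k+1)}$, then use the nonnegativity of $h^{-\varepsilon}\operatorname{div}\mathcal J$ and monotonicity in the domain; the extra checks you add (the admissible range of $\kappa$, the regularity needed for the divergence theorem, and the role of $\delta=\kappa>0$ in invoking \eqref{eq:3.4}) are accurate but left implicit in the paper.
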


\begin{proof}
Applying \eqref{eq:5.47} and
Lemma~\ref{Lem:5.7} with
\(\kappa=\varepsilon/k\), we obtain
\begin{equation}\label{eq:5.49}
    \int_{B_R}h^{-\varepsilon}\operatorname{div}\mathcal J\dd x
    \leq
    C R^{-2+\frac{2k}{k+1}(1+\varepsilon/k)}
    =
    C R^{-\frac{2(1-\varepsilon)}{k+1}}
    \longrightarrow0.
\end{equation}
Fix \(r>0\).  Since \(h>0\) and
\(\operatorname{div}\mathcal J\geq0\), for every \(R>r\),
\[
 0\leq\int_{B_r}h^{-\varepsilon}\operatorname{div}\mathcal J\dd x
 \leq\int_{B_R}h^{-\varepsilon}\operatorname{div}\mathcal J\dd x.
\]
Letting \(R\to\infty\) in \eqref{eq:5.49} shows that the
first integral is zero.  Since \(r\) is arbitrary,
\(\operatorname{div}\mathcal J\equiv0\).
\end{proof}
\subsection{The proof of Theorem~\ref{Thm:1.6}}
\label{Subsec:5.4}
The preceding two subsections give
\(\operatorname{div}\mathcal J\equiv0\).  We now apply this key information to prove that
\(D^2(u^{-2k/(n-2k)})\) is a positive scalar matrix, and hence establish the classification results in Theorem~\ref{Thm:1.6}.

\begin{lemma}\label{Lem:5.9}
Suppose that, at some point,
\begin{align}
0={}&
    \tr\bigl(L_k(A)A\bigr)
    +\frac{2n}{n-2k}u^{-1}\nabla u^TL_k(A)\nabla u
    +\frac{n(n-k)}{(n-2k)^2}u^{-2}|\nabla u|^2
       \nabla u^TT_{k-1}(A)\nabla u.
\label{eq:5.50}
\end{align}
Then:

\begin{enumerate}[label=\textup{(\roman*)}]
\item If \(\nabla u=0\), then
\(A=\left(\frac{\sigma_k(A)}{\binom nk}\right)^{1/k}\Id\).

\item If \(\nabla u\neq0\), then \(\nabla u\) is an eigenvector of
\(A\).  If \(a\) is the eigenvalue in the \(\nabla u\)-direction and
\(b\) is the common eigenvalue on \((\nabla u)^\perp\), then
\begin{equation}\label{eq:5.51}
    b>0,
    \qquad
    \frac{|\nabla u|^2}{u}=\frac{n-2k}{n}(b-a).
\end{equation}
\end{enumerate}
\end{lemma}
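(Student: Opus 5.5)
The plan is to read the vanishing condition \eqref{eq:5.50} as equality in the completed square \eqref{eq:5.4}, and then as equality in the Cauchy--Schwarz/matrix inequality \eqref{eq:2.20}--\eqref{eq:2.12}. Write $Q$ for the bracket in \eqref{eq:5.3}. Corollary~\ref{Cor:2.6} with $X=Y=\nabla u$ gives
\[
Q\geq\Bigl(\sqrt{\tr(L_k(A)A)}-\tfrac{\sqrt{n(n-k)}}{n-2k}u^{-1}|\nabla u|\bigl(\nabla u^TT_{k-1}(A)\nabla u\bigr)^{1/2}\Bigr)^2 .
\]
Hence $Q=0$ forces two things at once: the square vanishes, and $\nabla u^TL_k(A)\nabla u$ attains the lower bound from \eqref{eq:2.20}.

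For case (i), suppose $\nabla u=0$. Then \eqref{eq:5.50} reduces to $\tr(L_k(A)A)=0$, and \eqref{eq:2.18} of Lemma~\ref{Lem:2.5} gives $A=(\sigma_k(A)/\binom nk)^{1/k}\Id$ directly.

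For case (ii), suppose $\nabla u\neq0$ and set $e=\nabla u/|\nabla u|$. Equality in \eqref{eq:2.20} needs two equalities. The first is Cauchy--Schwarz equality $|e^TL_ke|=|L_ke|$, which means $L_k(A)e$ is parallel to $e$. The second is equality $e^TL_k^2e=\frac{n-k}{n}\tr(L_kA)\,e^TT_{k-1}e$ in \eqref{eq:2.12}. (If $L_k(A)e=0$, then $\tr(L_kA)=0$ by the vanishing square, and we go back to case (i): $A$ is scalar, so $e$ is trivially an eigenvector.)

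When $L_k(A)e\ne0$, the vector $e$ is an eigenvector of $L_k(A)=\frac{n-k}{n}\sigma_k\Id-T_k(A)$. I need to upgrade this to $e$ being an eigenvector of $A$. Here I use the second equality. In an eigenbasis of $A$, write $e=\sum\omega_ie_i$. Since $L_k^2\preceq\frac{n-k}{n}\tr(L_kA)T_{k-1}$ holds diagonally, equality on $e$ forces equality \eqref{eq:2.17} in every direction $i$ with $\omega_i\neq0$. Lemma~\ref{Lem:2.5} then says $\lambda(A)|i$ has all entries equal for each such $i$. If two indices $i\ne j$ have $\omega_i,\omega_j\ne0$, the two deleted tuples overlap (for $n\geq3$) and together force all eigenvalues of $A$ to be equal, so $A$ is scalar and $e$ is an eigenvector anyway. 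Otherwise $e$ is a single eigenvector $e_1$ with eigenvalue $a$, and the remaining eigenvalues equal a common value $b$. In both subcases $A$ has the form $a\,e\otimes e+b(\Id-e\otimes e)$. Since $\lambda(A)|1=(b,\dots,b)\in\Gamma_{k-1}$ by Corollary~\ref{Cor:2.3}, we get $b>0$.

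The remaining step, which I expect to be the main computational obstacle, is deriving the identity for $|\nabla u|^2/u$. Set $s=|\nabla u|^2/u$. Then \eqref{eq:5.50} becomes the scalar equation $0=\tr(L_kA)+\frac{2n}{n-2k}s\,\ell_1+\frac{n(n-k)}{(n-2k)^2}s^2t_1$, where, with $N_j=\binom{n-1}{j}$,
\[
t_1=N_{k-1}b^{k-1},\qquad \sigma_k=aN_{k-1}b^{k-1}+N_kb^k,\qquad \ell_1=\tfrac{n-k}{n}\sigma_k-N_kb^k .
\]
The equality in \eqref{eq:2.12} (the vanishing square) makes this a perfect square in $s$, so it has the double root $s=-\frac{n-2k}{n-k}\,\ell_1/t_1$. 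Substituting, and using $N_k/N_{k-1}=(n-k)/k$,
\[
-\frac{\ell_1}{t_1}=\frac{N_kb-\frac{n-k}{n}(aN_{k-1}+N_kb)}{N_{k-1}}=\frac{n-k}{n}(b-a).
\]
This gives $s=\frac{n-2k}{n}(b-a)$, which is \eqref{eq:5.51}. I will double-check that the perfect-square structure in the scalar equation is exact, rather than only an inequality, by verifying $\tr(L_kA)\,t_1=\frac{n}{n-k}\ell_1^2$ for this eigenvalue configuration. That identity is exactly equality \eqref{eq:2.17}, so it holds.
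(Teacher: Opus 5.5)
Your argument is correct, and its skeleton matches the paper's: equality in \eqref{eq:2.12} along the directions where $\nabla u$ has components, then Lemma~\ref{Lem:2.5}, the overlap argument, and the explicit eigenvalue computation. You reach the two key points differently, though.

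The paper starts from the weighted identity \eqref{eq:5.6}. Vanishing of its right-hand side forces the full vector identity \eqref{eq:5.52}. The paper then needs \eqref{eq:5.53} and a summation argument to show that $(T_{k-1}(A))_{ii}$ equals the Rayleigh quotient for every $i$ with $\partial_iu\neq0$; only then does equality in \eqref{eq:2.12} follow. The value of $|\nabla u|^2/u$ comes out linearly from the component equation \eqref{eq:5.54}, and \eqref{eq:5.52} also rules out a scalar $A$.

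You use only the scalar completed square behind \eqref{eq:5.4}. Its vanishing gives $e^TDe=0$ for the diagonal positive semidefinite matrix $D=\frac{n-k}{n}\tr(L_k(A)A)T_{k-1}(A)-L_k(A)^2$. This yields \eqref{eq:2.17} for every $i$ with $\omega_i\neq0$ at once, bypassing the summation step. You then recover $s$ as the double root of \eqref{eq:5.50}, viewed as a quadratic in $s$, whose discriminant vanishes precisely by \eqref{eq:2.17}. This route is slightly shorter.

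The cost is that you never exclude the scalar configuration. That is harmless: there your formula gives $s=0$, which contradicts $\nabla u\neq0$, so the identity itself forces $b>a$.

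Two minor points. First, the Cauchy--Schwarz equality ($L_k(A)e\parallel e$) is never used. Second, the side case $L_k(A)e=0$ cannot occur. In that case \eqref{eq:5.50} reduces to $\tr(L_k(A)A)+\frac{n(n-k)}{(n-2k)^2}u^{-2}|\nabla u|^2\nabla u^TT_{k-1}(A)\nabla u=0$, and the second term is strictly positive.
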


\begin{proof}
Fix a point \(x\in\R^n\) arbitrarily.  If \(\nabla u(x)=0\), then
\eqref{eq:5.50} gives
\(\tr(L_k(A)A)=0\).  By \eqref{eq:2.18}, we obtain
part~\textup{(i)}.

Assume now that \(\nabla u(x)\neq0\).  By
\eqref{eq:5.50}, the right-hand side of
\eqref{eq:5.6} vanishes.  Since \(T_{k-1}(A)>0\), the
left-hand side vanishes as well, and hence
\begin{equation}\label{eq:5.52}
  L_k(A)\nabla u
  +\frac{n-k}{n-2k}\frac{|\nabla u|^2}{u}
  T_{k-1}(A)\nabla u=0.
\end{equation}
Taking the scalar product with \(\nabla u\) and substituting the result
into \eqref{eq:5.50}, we obtain
\begin{equation}\label{eq:5.53}
    \tr\bigl(L_k(A)A\bigr)
    =
    \frac{n(n-k)}{(n-2k)^2}u^{-2}|\nabla u|^2
       \nabla u^TT_{k-1}(A)\nabla u.
\end{equation}

{At the point under consideration, choose orthonormal
coordinates in which \(A\) is diagonal,
and denote the corresponding eigenvalues by
\(\lambda_1,\ldots,\lambda_n\).}  The matrices \(L_k(A)\) and
\(T_{k-1}(A)\) are diagonal in these coordinates.  Thus, whenever
\({\partial_i u}\neq0\), the \(i\)-th
component of \eqref{eq:5.52} gives
\begin{equation}\label{eq:5.54}
      \bigl(L_k(A)\bigr)_{ii}
      =-\frac{n-k}{n-2k}\frac{|\nabla u|^2}{u}
       \bigl(T_{k-1}(A)\bigr)_{ii}.
\end{equation}
The \(i\)-th diagonal component of \eqref{eq:2.12} is
\[
 \bigl(L_k(A)\bigr)_{ii}^2
 \leq\frac{n-k}{n}\tr\bigl(L_k(A)A\bigr)
       \bigl(T_{k-1}(A)\bigr)_{ii}.
\]
Combining this inequality with \eqref{eq:5.54} and
\eqref{eq:5.53}, we obtain, for
\({\partial_i u}\neq0\),
\begin{equation*}
 \frac{(n-k)^2}{(n-2k)^2}
   \frac{|\nabla u|^4}{u^2}
   \bigl(T_{k-1}(A)\bigr)_{ii}^2
 \leq
 \frac{n-k}{n}
 \frac{n(n-k)}{(n-2k)^2}u^{-2}|\nabla u|^2
 \nabla u^TT_{k-1}(A)\nabla u\,
 \bigl(T_{k-1}(A)\bigr)_{ii}.
\end{equation*}
Since \(\nabla u\neq0\) and \(T_{k-1}(A)>0\), we can divide by the
positive common factors.  Thus
\begin{equation}\label{eq:5.55}
 \bigl(T_{k-1}(A)\bigr)_{ii}
 \leq
 \frac{\nabla u^TT_{k-1}(A)\nabla u}{|\nabla u|^2}
 \qquad\text{whenever }{\partial_i u}\neq0.
\end{equation}
Multiplying \eqref{eq:5.55} by
\({(\partial_i u)^2}\) and summing over all
indices for which \({\partial_i u}\neq0\), we get
\begin{align*}
 \sum_{\{i:\,{\partial_i u}\neq0\}}
 \bigl(T_{k-1}(A)\bigr)_{ii}{(\partial_i u)^2}
 &\leq
 \frac{\nabla u^TT_{k-1}(A)\nabla u}{|\nabla u|^2}
 \sum_{\{i:\,{\partial_i u}\neq0\}}
 {(\partial_i u)^2}
 =\nabla u^TT_{k-1}(A)\nabla u.
\end{align*}
On the other hand, because \(T_{k-1}(A)\) is diagonal in the chosen
basis, one has
\[
 \sum_{\{i:\,{\partial_i u}\neq0\}}
 \bigl(T_{k-1}(A)\bigr)_{ii}{(\partial_i u)^2}
 =\nabla u^TT_{k-1}(A)\nabla u.
\]
Therefore, equality holds in the preceding sum.  Since every
individual inequality is multiplied by the strictly positive number
\({(\partial_i u)^2}\), equality must hold for every
index with \({\partial_i u}\neq0\).  Hence
\begin{equation}\label{eq:5.56}
 \bigl(T_{k-1}(A)\bigr)_{ii}
 =\frac{\nabla u^TT_{k-1}(A)\nabla u}{|\nabla u|^2}.
\end{equation}
Equations \eqref{eq:5.54}, \eqref{eq:5.53}, and
\eqref{eq:5.56} now give
\begin{align*}
 \bigl(L_k(A)\bigr)_{ii}^2
 =\frac{(n-k)^2}{(n-2k)^2}
   \frac{|\nabla u|^4}{u^2}
   \bigl(T_{k-1}(A)\bigr)_{ii}^2 =\frac{n-k}{n}\tr\bigl(L_k(A)A\bigr)
   \bigl(T_{k-1}(A)\bigr)_{ii}.
\end{align*}
Thus equality holds in \eqref{eq:2.12} in every eigendirection for
which \({\partial_i u}\neq0\).

Fix such an index \(i\).  Lemma~\ref{Lem:2.5} shows that
all \(\lambda_j\), \(j\neq i\), are equal.  Moreover,
Corollary~\ref{Cor:2.3} gives \(\lambda(A)|i\in\Gamma_{k-1}\).
Thus, for every \(j\neq i\),
\(0<\sigma_1(\lambda(A)|i) =\sum_{\ell\neq i}\lambda_\ell =(n-1)\lambda_j\),
and hence \(\lambda_j>0\).

There cannot be two nonzero components
\({\partial_i u},{\partial_j u}\).
Indeed, suppose that \({\partial_i u}\neq0\) and
\({\partial_j u}\neq0\) for some \(i\neq j\).  Choose an
index \(\ell\neq i,j\), which is possible because \(n>2k\geq4\).
Applying the preceding conclusion to the \(i\)-th direction gives
\(\lambda_j=\lambda_\ell\), while applying it to the \(j\)-th
direction {yields} \(\lambda_i=\lambda_\ell\).  Hence all eigenvalues of
\(A\) are equal.  Then \(L_k(A)=0\), and
\eqref{eq:5.52} contradicts
\(T_{k-1}(A)>0\) and \(\nabla u\neq0\).  Hence \(\nabla u\) has exactly
one nonzero component and is an eigenvector of \(A\).

Fix \(j\neq i\).  The preceding positivity shows that
\(\lambda_j>0\).  By \eqref{eq:2.6},
\begin{align*}
    \bigl(T_{k-1}(A)\bigr)_{ii}
    &=\binom{n-1}{k-1}\lambda_j^{k-1},\\
    \sigma_k(A)
    &=\lambda_i\binom{n-1}{k-1}\lambda_j^{k-1}
      +\binom{n-1}{k}\lambda_j^k =\bigl(T_{k-1}(A)\bigr)_{ii}
      \left(\lambda_i+\frac{n-k}{k}\lambda_j\right),\\
    \bigl(T_k(A)\bigr)_{ii}
    &=\binom{n-1}{k}\lambda_j^k
     =\bigl(T_{k-1}(A)\bigr)_{ii}
      \frac{n-k}{k}\lambda_j.
\end{align*}
Therefore, by \eqref{eq:2.9},
\begin{align*}
    \frac{\bigl(L_k(A)\bigr)_{ii}}
         {\bigl(T_{k-1}(A)\bigr)_{ii}}
     =\frac{n-k}{n}
      \left(\lambda_i+\frac{n-k}{k}\lambda_j\right)
      -\frac{n-k}{k}\lambda_j =\frac{n-k}{n}(\lambda_i-\lambda_j).
\end{align*}
Substituting this equality into \eqref{eq:5.54}, we obtain
\(\frac{|\nabla u|^2}{u} =\frac{n-2k}{n}(\lambda_j-\lambda_i)\),
which is \eqref{eq:5.51}.
\end{proof}

Lemma~\ref{Lem:5.9} gives the following consequence for the
Hessian of \(u^{-2k/(n-2k)}\).

\begin{proposition}\label{Prop:5.10}
Assume that   \( \operatorname{div}\mathcal J\equiv 0\), and let
\begin{equation}\label{eq:5.57}
    w=u^{-2k/(n-2k)}.
\end{equation}
Then
\begin{equation}\label{eq:5.58}
    D^2w=\lambda(x)\Id
\end{equation}
for some continuous positive function \(\lambda\).
\end{proposition}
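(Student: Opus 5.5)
The plan is a pointwise computation: express \(D^2w\) through \(A\) and \(\nabla u\), then read off the eigenstructure from Lemma~\ref{Lem:5.9}. Since \(\operatorname{div}\mathcal J\equiv0\) and \eqref{eq:5.3} writes \(\operatorname{div}\mathcal J\) as \(u^{-2k/(n-2k)}\) times the bracket in \eqref{eq:5.50}, identity \eqref{eq:5.50} holds at every point. Set \(\beta=\frac{2k}{n-2k}\), so \(w=u^{-\beta}\). Differentiating twice, with \(D^2u=-A\), gives
\[
D^2w=\beta u^{-\beta-1}\Bigl(A+(\beta+1)\frac{\nabla u\otimes\nabla u}{u}\Bigr),\qquad \beta+1=\frac{n}{n-2k}.
\]
It therefore suffices to show that the symmetric matrix \(A+\frac{n}{n-2k}u^{-1}\nabla u\otimes\nabla u\) is a positive multiple of \(\Id\) at each point.

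At a point where \(\nabla u=0\), Lemma~\ref{Lem:5.9}(i) gives \(A=\bigl(\sigma_k(A)/\binom nk\bigr)^{1/k}\Id\). Then \(D^2w=\beta u^{-\beta-1}\bigl(u^{p_*}/\binom nk\bigr)^{1/k}\Id\), which is positive since \(u>0\).

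At a point where \(\nabla u\neq0\), Lemma~\ref{Lem:5.9}(ii) says \(\nabla u\) is an eigenvector of \(A\) with eigenvalue \(a\), and \(A=b\,\Id\) on \((\nabla u)^\perp\) with \(b>0\). The rank-one term \(u^{-1}\nabla u\otimes\nabla u\) vanishes on \((\nabla u)^\perp\), so the bracket acts there as \(b\,\Id\). In the \(\nabla u\)-direction it has eigenvalue \(a+\frac{n}{n-2k}\frac{|\nabla u|^2}{u}\). By \eqref{eq:5.51} this equals \(a+(b-a)=b\). Hence \(D^2w=\beta u^{-\beta-1}b\,\Id\) with \(b>0\).

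Define \(\lambda(x)=\frac1n\Delta w=\frac1n\tr D^2w\). This is continuous because \(u\in C^\infty\) (smoothness is recorded in the remark preceding Section~\ref{Sec:2}), and it is positive by the two cases above. There is no real obstacle: everything rests on Lemma~\ref{Lem:5.9}. The only care needed is to check that the coefficient \(\beta+1=\frac{n}{n-2k}\) produced by the chain rule matches exactly the factor \(\frac{n-2k}{n}\) in \eqref{eq:5.51}, so that the two eigenvalues coincide.
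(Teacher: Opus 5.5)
Your proposal is correct and follows the paper's proof essentially line by line: the formula \eqref{eq:5.59} for \(D^2w\), Lemma~\ref{Lem:5.9}(i) at critical points, and Lemma~\ref{Lem:5.9}(ii) together with \eqref{eq:5.51} to see that the \(\nabla u\)-eigenvalue \(a+\frac{n}{n-2k}\frac{|\nabla u|^2}{u}\) equals \(b>0\). Defining \(\lambda=\frac1n\Delta w\) is just an explicit form of the paper's remark that continuity of \(\lambda\) follows from that of \(D^2w\).
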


\begin{proof}
Differentiating \eqref{eq:5.57} twice gives
\begin{equation}\label{eq:5.59}
    D^2w
    =
    \frac{2k}{n-2k}
    u^{-n/(n-2k)}
    \left(
        A+\frac{n}{n-2k}
        \frac{\nabla u\otimes\nabla u}{u}
    \right).
\end{equation}
If \(\nabla u=0\), Lemma~\ref{Lem:5.9} shows that \(A\) is
scalar, and \eqref{eq:5.59} proves the assertion.  If
\(\nabla u\neq0\), choose \(i\) so that \(\nabla u\) is the
\(\lambda_i\)-eigendirection.  For
any \(j\neq i\), Lemma~\ref{Lem:5.9} and
\eqref{eq:5.51} give
\[
    A\xi=\lambda_j\xi
    \quad\text{for }\xi\perp\nabla u,
    \qquad
    \lambda_i+\frac{n}{n-2k}\frac{|\nabla u|^2}{u}
    =\lambda_j.
\]
Hence
\(A+\frac{n}{n-2k} \frac{\nabla u\otimes\nabla u}{u} =\lambda_j\Id\).
Therefore, \eqref{eq:5.59} gives \eqref{eq:5.58}.  Its scalar
coefficient is positive because
\(\lambda_j>0\) when \(\nabla u\neq0\), while \(A\in\Gamma_k\) when
\(\nabla u=0\).  Its continuity follows from that of \(D^2w\).
\end{proof}

\begin{proof}[Proof of Theorem~\ref{Thm:1.6}]
By Corollaries~\ref{Cor:5.4} and
\ref{Cor:5.8}, we conclude that
\(\operatorname{div}\mathcal J\equiv0 \qquad\text{for }2k<n\leq4k\).
By \eqref{eq:5.3}, \eqref{eq:5.50} holds at
every point.  Proposition~\ref{Prop:5.10} gives
\(D^2w=\lambda(x)\Id\).

Fix \(j\in\{1,\ldots,n\}\) and choose \(i\neq j\).  In the sense of
distributions,
\(\partial_j\lambda ={\partial_j\partial_i^2w} ={\partial_i^2\partial_jw} =0\).
Thus \(\lambda\) is constant.  Writing \(\lambda\equiv2b_0\), where
\(b_0>0\), and integrating \eqref{eq:5.58}, we obtain
\(w(x)=a_0+b_0|x-x_0|^2\).
Here we completed the square.  Since \(w>0\) in \(\R^n\), \(a_0>0\).
The relation \eqref{eq:5.57} now gives \eqref{eq:1.7}.

At \(x=x_0\), equation \eqref{eq:5.59} {yields}
\begin{equation}
    -D^2u(x_0)
    =
    \frac{n-2k}{k}b_0a_0^{-n/(2k)}\Id.
\end{equation}
Then we deduce from the equation  \(\sigma_k(-D^2u)=u^{p_*}\) that
\(
\begin{aligned}
    a_0^{-(n+2)/2}
    &=
    \binom nk
    \left(\frac{n-2k}{k}b_0\right)^k
    a_0^{-n/2},
\end{aligned}\)
 and hence
\(\binom nk a_0 \left(\frac{n-2k}{k}b_0\right)^k=1\).
This is \eqref{eq:1.9}.

Next, we show the bubble is a solution.
A direct calculation gives \(\sigma_j(-D^2u)>0\) for \(1\leq j\leq k\). Moreover,
\eqref{eq:1.9} {yields}
\begin{equation}
    \sigma_k(-D^2u)
    =
    \bigl(a_0+b_0|x-x_0|^2\bigr)^{-(n+2)/2}
    =
    u^{p_*}.
\end{equation}
This completes our proof of the unconditional classification results in Theorem \ref{Thm:1.6} for $2k<n\leq 4k$.
\end{proof}


\section{\texorpdfstring{The unconditional classification results for \(n>4k\)}
{The classification result for n>4k}}
\label{Sec:6}

In this Section, we will prove the unconditional classification results in Theorem~\ref{Thm:1.6} for
\(n>4k\). In Subsection~\ref{Subsec:6.1}, we prove the classification results under
\eqref{eq:6.3}.  In
Subsection~\ref{Subsec:6.2}, we prove that every solution
of \eqref{eq:1.8} satisfies \eqref{eq:6.3} and hence conclude our proof of Theorem~\ref{Thm:1.6}.

\subsection{Classification under an integral growth condition}
\label{Subsec:6.1}

Throughout this subsection, let \(k\geq2\), \(n>4k\), and let \(u\) be a
nonzero solution of \eqref{eq:1.8}.  Lemma~\ref{Lem:2.2} gives
\(u>0\) and \(-D^2u\in\Gamma_k\).

The right-hand side of  \eqref{eq:5.7} can be estimated
by \eqref{eq:3.5} with
\(\delta=-\frac{n-4k}{n-2k}\in (-1,0)\) as follows.   

\begin{proposition}\label{Prop:6.1}
For every \(R\geq1\), there holds
\begin{equation}\label{eq:6.1}
    \int_{B_R}\sigma_{k-1}(-D^2u)|\nabla u|^2
       u^{-\frac{2k}{n-2k}}\dd x
    \leq C\int_{B_{2R}}
       u^{\frac{n(k+1)-2k}{n-2k}}\dd x
       +CR^{2k/(k+1)},
\end{equation}
where \(C\) depends only on \(n\) and \(k\).
\end{proposition}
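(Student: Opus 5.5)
The plan is to apply Lemma~\ref{Lem:3.1}(ii) with $p=p_*$, $\delta=-\frac{n-4k}{n-2k}\in(-1,0)$ and $\theta=4k$, using the cutoff $\chi=\chi_R$ from \eqref{eq:3.2}. With this choice $-\delta-1=-\frac{2k}{n-2k}$, so $\Lambda_1$ in \eqref{eq:3.3} is exactly $\int\sigma_{k-1}(A)|\nabla u|^2u^{-2k/(n-2k)}\chi^{4k}\dd x$. Since $\chi=1$ on $B_R$, this quantity dominates the left-hand side of \eqref{eq:6.1}. Estimate \eqref{eq:3.5} then gives
\[
\Lambda_1\leq C\int u^{p_*-\delta}\chi^{4k}\dd x+CR^{-2k}\int u^{k-\delta}\chi^{2k}\dd x .
\]

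Next I identify the exponents. A direct computation gives
\[
p_*-\delta=\frac{k(n+2)+n-4k}{n-2k}=\frac{n(k+1)-2k}{n-2k},
\]
so the first term is at most $C\int_{B_{2R}}u^{\frac{n(k+1)-2k}{n-2k}}\dd x$, which is the first term in \eqref{eq:6.1}. For the second term, $k-\delta=\frac{k(n-2k)+n-4k}{n-2k}>0$ and $k-\delta<p_*-\delta$.

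I then apply Young's inequality exactly as in \eqref{eq:5.11}. With exponent ratio $\frac{p_*-\delta}{k-\delta}$ and conjugate weight,
\[
R^{-2k}u^{k-\delta}\chi^{2k}\leq u^{p_*-\delta}\chi^{4k}+CR^{-2k\frac{p_*-\delta}{p_*-k}}\chi^{e},
\]
where $e\ge 0$ is the leftover cutoff exponent (I will verify nonnegativity as in Section~\ref{Sec:5}). Since $p_*-k=\frac{2k(k+1)}{n-2k}$, the power of $R$ is $R^{-\frac{n(k+1)-2k}{k+1}}$. Integrating over $B_{2R}$ gives a bound of order $R^{\,n-\frac{n(k+1)-2k}{k+1}}=R^{2k/(k+1)}$, matching \eqref{eq:6.1}. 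The $u^{p_*-\delta}$ part is again bounded by the first term of \eqref{eq:6.1}, so no absorption is needed; this is the reason the statement keeps that integral on the right. For $R\ge1$ the constants depend only on $n,k$ once $\theta=4k$ and the cutoff profile are fixed.

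The main point to check is that Lemma~\ref{Lem:3.1}(ii) really applies, that is, $\delta\in(-1,0)$ with $\delta\neq0$. This holds because $n>4k$ and $\frac{n-4k}{n-2k}<1$, and $\theta=4k>2k$. The remaining work is only the exponent arithmetic: confirming the Young conjugates and that the cutoff exponent $e$ is nonnegative, which follows since $2k(p_*-\delta)-4k(k-\delta)\ge0$ reduces to $p_*+\delta\ge2k$, true because $p_*>2k+1$.
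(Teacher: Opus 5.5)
Your route is the paper's: apply Lemma~\ref{Lem:3.1}(ii) with $\delta=-\frac{n-4k}{n-2k}$, then use Young's inequality on the $R^{-2k}$ term (the paper uses $\theta=2n$ rather than $4k$). Your identities $-\delta-1=-\frac{2k}{n-2k}$, $p_*-\delta=\frac{n(k+1)-2k}{n-2k}$ and $p_*-k=\frac{2k(k+1)}{n-2k}$ are all correct.

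The step that fails is the cutoff bookkeeping you carried over from \eqref{eq:5.11}. Your justification ``$p_*>2k+1$'' is false in this regime. We have $p_*=2k+1$ exactly when $n=4k$, and $p_*=\frac{k(n+2)}{n-2k}$ is strictly decreasing in $n$. Hence $p_*<2k+1$ for every $n>4k$, and in fact $p_*\to k$ as $n\to\infty$. Computing the leftover exponent directly,
\[
e=\frac{(\theta-2k)(p_*-\delta)-\theta(k-\delta)}{p_*-k}=\theta-2k\,\frac{p_*-\delta}{p_*-k}=\theta-n+\frac{2k}{k+1}.
\]
With $\theta=4k$ this gives $e<0$ as soon as $n\geq 4k+2$; for instance $k=2$, $n=10$ gives $e=-\frac23$. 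Then $\chi^e\leq 1$ fails, and $\int_{\{\chi>0\}}\chi^e\dd x$ is not bounded by $C|B_{2R}|$. It need not even be finite: it diverges for the usual radial cutoff, which vanishes to infinite order at the edge of its support. The same split works in Section~\ref{Sec:5} only because there $n<4k$.

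The repair is immediate, and you already noted the reason: nothing needs to be absorbed. First discard the cutoff, using $0\leq\chi\leq 1$ and $\supp\chi\subset B_{2R}$:
\[
R^{-2k}\int u^{k-\delta}\chi^{2k}\dd x\leq R^{-2k}\int_{B_{2R}}u^{k-\delta}\dd x .
\]
Then apply Young's inequality pointwise on $B_{2R}$, which gives $R^{-2k}u^{k-\delta}\leq u^{p_*-\delta}+CR^{-n+\frac{2k}{k+1}}$. Integrating over $B_{2R}$ yields $\int_{B_{2R}}u^{p_*-\delta}\dd x+CR^{2k/(k+1)}$, which is the required bound. This is precisely the paper's argument. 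Alternatively, keep the cutoffs but take $\theta\geq n$ (for example the paper's $\theta=2n$), which makes $e>0$.
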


\begin{proof}
Let \(\chi=\chi_R\) be the cutoff function in
\eqref{eq:3.2}.  Applying
\eqref{eq:3.5} with
\(\delta=-\frac{n-4k}{n-2k}\) and
\(\theta=2n\), we obtain
\begin{align}\label{eq:6.2}
  \int\chi^{2n}\sigma_{k-1}(-D^2u)|\nabla u|^2
       u^{-\frac{2k}{n-2k}}\dd x
   \leq C\int_{B_{2R}}u^{\frac{n(k+1)-2k}{n-2k}}\dd x
       +CR^{-2k}\int_{B_{2R}}u^{k+\frac{n-4k}{n-2k}}\dd x.
\end{align}
Young's inequality 
gives
\(R^{-2k}\int_{B_{2R}}u^{k+\frac{n-4k}{n-2k}}\dd x
\leq C\int_{B_{2R}}u^{\frac{n(k+1)-2k}{n-2k}}\dd x
+CR^{2k/(k+1)}\).  Since \(\chi=1\) on \(B_R\), substituting this
in \eqref{eq:6.2} proves \eqref{eq:6.1}.
\end{proof}


\begin{theorem}\label{Thm:6.2}
Let \(k\geq2\), \(n>4k\), and let \(u\in C^2(\R^n)\) solve
\eqref{eq:1.8}.  Suppose that, for some \(C>0\) and all sufficiently
large \(R\),
\begin{equation}\label{eq:6.3}
 \int_{B_R}u^{\frac{n(k+1)-2k}{n-2k}}\dd x
 \leq CR^2\log R.
\end{equation}
Then either \(u\equiv0\), or \(u\) is the Hessian--Sobolev bubble
\eqref{eq:1.7} for some \(x_0\in\R^n\) and \(a_0,b_0>0\) satisfying
\eqref{eq:1.9}.
\end{theorem}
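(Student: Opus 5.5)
The plan is to reuse the divergence-rigidity scheme of Section~\ref{Sec:5}: prove \(\operatorname{div}\mathcal J\equiv0\) for the field \(\mathcal J\) in \eqref{eq:5.2}, then invoke Proposition~\ref{Prop:5.10} and the end of the proof of Theorem~\ref{Thm:1.6} in Subsection~\ref{Subsec:5.4}. That last part is purely pointwise and dimension-independent. Indeed, Lemma~\ref{Lem:5.9}, Proposition~\ref{Prop:5.10} and the integration of \(D^2w=\lambda\Id\) only use \(\operatorname{div}\mathcal J=0\) and \(n>2k\). So the whole task is the vanishing of \(\operatorname{div}\mathcal J\) when \(n>4k\) under \eqref{eq:6.3}. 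Assume \(u\not\equiv0\), so that \(u>0\) and \(-D^2u\in\Gamma_k\) by Lemma~\ref{Lem:2.2}.

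\emph{Step 1 (cumulative bound).} Set
\[F(r):=\int_{B_r}\sigma_{k-1}(A)|\nabla u|^2u^{-2k/(n-2k)}\dd x.\]
By Proposition~\ref{Prop:6.1}, \eqref{eq:6.3} applied on \(B_{2r}\), and \(2k/(k+1)<2\), we get \(F(r)\le C r^2\log r\) for all large \(r\).

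\emph{Step 2 (Caccioppoli inequality with a radial cutoff).} By Proposition~\ref{Prop:5.2}, \(\operatorname{div}\mathcal J\ge0\) by \eqref{eq:5.4}, and \(0\le\eta\le1\), for a Lipschitz radial \(\eta\) we have
\[\int_{B_S}\operatorname{div}\mathcal J\dd x\le C\int g(|x|)\dd F,\]
where \(g(r)=|\eta'(r)|^2\) and \(\eta\equiv1\) on \(B_S\).

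A standard cutoff with \(|\nabla\eta|\le C/R\) only gives \(O(\log R)\). A single-logarithmic cutoff gives \(O(1)\). So I would use a double-logarithmic cutoff: \(\eta(x)=1\) for \(|x|\le S\) and \(\eta(x)=0\) for \(|x|\ge S^{L}\), with
\[\eta(x)=1-\frac{\log(\log|x|/\log S)}{\log L}\quad\text{in between}.\]
Then \(g(r)=\bigl(r\log r\,\log L\bigr)^{-2}\) on \((S,S^L)\), and \(g\) is decreasing there.

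\emph{Step 3 (Abel summation, the main point to check).} The obstacle is that \eqref{eq:6.3} controls only ball integrals, not annuli. I would handle this by a Stieltjes integration by parts:
\[\int_S^{S^L}g\dd F\le g(S)F(S)+\int_S^{S^L}F(r)\,(-g'(r))\dd r.\]
The boundary term at \(S^L\) has the favourable sign. The jump of \(g\) at \(S\), since \(\eta\) is only Lipschitz, should be treated by splitting the integral at \(S\), where \(g=0\) inside.

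Next, \(-g'(r)\le C\bigl(r^3\log^2r\,\log^2L\bigr)^{-1}\). Combined with \(F(r)\le Cr^2\log r\), this gives an integrand \(\le C\bigl(r\log r\,\log^2L\bigr)^{-1}\), whose integral over \((S,S^L)\) is \(C\log L/\log^2L=C/\log L\). The boundary term is \(g(S)F(S)\le C/(\log S\,\log^2L)\).

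\emph{Step 4 (conclusion).} Combining Steps 2 and 3, \(\int_{B_S}\operatorname{div}\mathcal J\dd x\le C/\log L\) for every \(L\). Letting \(L\to\infty\) with \(S\) fixed and large gives \(\int_{B_S}\operatorname{div}\mathcal J=0\) for every large \(S\). Since \(\operatorname{div}\mathcal J\ge0\), this yields \(\operatorname{div}\mathcal J\equiv0\).

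Then \eqref{eq:5.50} holds everywhere. Proposition~\ref{Prop:5.10} gives \(D^2(u^{-2k/(n-2k)})=\lambda(x)\Id\) with \(\lambda>0\). The argument of Subsection~\ref{Subsec:5.4} shows \(\lambda\) is constant, produces the bubble \eqref{eq:1.7}, and evaluating the equation at \(x_0\) yields \eqref{eq:1.9}.
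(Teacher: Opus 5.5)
Your proposal is correct and is essentially the paper's argument. The paper also combines Proposition~\ref{Prop:6.1}, \eqref{eq:6.3} and Proposition~\ref{Prop:5.2} with a double-logarithmic cutoff, but realizes it discretely as the piecewise-linear $\eta_{J,N}$ with slope $[2^j(1+j)H_{J,N}]^{-1}$ on dyadic annuli, so each annulus integral is bounded directly by the ball integral and no Abel summation is needed; this gives $C/H_{J,N}$ in place of your $C/\log L$.

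One small slip: in your Stieltjes integration by parts the boundary terms are $g(S^L)F(S^L)-g(S)F(S)$, so it is the term at $S$ that has the favourable sign. The term at $S^L$ must be bounded, which is harmless since $g(r)F(r)\le C/(\log r\,\log^2L)$ for all $r\ge S$, and the conclusion is unaffected.
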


\begin{proof}[Proof of Theorem~\ref{Thm:6.2}]
If \(u\equiv0\), the conclusion follows.  Suppose that \(u\not\equiv0\).
By \eqref{eq:6.3} and \eqref{eq:6.1}, for \(R\geq1\),
\begin{equation}\label{eq:6.4}
 \int_{B_R}\sigma_{k-1}(-D^2u)|\nabla u|^2
       u^{-\frac{2k}{n-2k}}\dd x \leq CR^2\log(e+R)+CR^{2k/(k+1)}
 \leq CR^2\log(e+R).
\end{equation}


For two integers \(N\geq J\geq 1\), set
\(H_{J,N}=\sum_{j=J}^N(1+j)^{-1}\).
Define the radial Lipschitz function \(\eta_{J,N}\) by
\[
\eta_{J,N}(x)=
\begin{cases}
1,
    & |x|\leq 2^J,\\[2mm]
\displaystyle
1-\frac1{H_{J,N}}
\left(
\sum_{\ell=J}^{j-1}\frac1{1+\ell}
+\frac{|x|-2^j}{2^j(1+j)}
\right),
    & 2^j<|x|\leq 2^{j+1},\quad J\leq j\leq N,\\[3mm]
0,
    & |x|>2^{N+1},
\end{cases}
\]
where the sum is zero when \(j=J\).  Then \(\eta_{J,N}\) is
Lipschitz, piecewise linear in \(|x|\), \(0\leq\eta_{J,N}\leq1\), and
\(|\nabla\eta_{J,N}|=[2^j(1+j)H_{J,N}]^{-1}\) a.e. in
\(B_{2^{j+1}}\setminus B_{2^j}\).  By \eqref{eq:6.4},
\(\int_{B_{2^{j+1}}}\sigma_{k-1}(-D^2u)|\nabla u|^2
u^{-\frac{2k}{n-2k}}\dd x\leq C4^j(1+j)\).  Hence
\begin{align*}
 \int_{\R^n}|\nabla\eta_{J,N}|^2
  \sigma_{k-1}(-D^2u)|\nabla u|^2
     u^{-\frac{2k}{n-2k}}\dd x
& \leq
\sum_{j=J}^N
\frac1{4^j(1+j)^2H_{J,N}^2}
\int_{B_{2^{j+1}}\setminus B_{2^j}}
\sigma_{k-1}(-D^2u)|\nabla u|^2
   u^{-\frac{2k}{n-2k}}\dd x\\
&\quad\leq
\frac{C}{H_{J,N}^2}
\sum_{j=J}^N\frac1{1+j}
=\frac{C}{H_{J,N}}.
\end{align*}

Fix \(r>0\) and choose \(J\) so that \(B_r\subset B_{2^J}\).
Equations \eqref{eq:5.4} and \eqref{eq:5.7} give
\begin{align*}
 0 {\leq}
   \int_{B_r}\operatorname{div}\mathcal J\dd x
   \leq\int_{\R^n}\eta_{J,N}^3
      \operatorname{div}\mathcal J\dd x {\leq}
 C\int_{\R^n}\eta_{J,N}|\nabla\eta_{J,N}|^2
   \sigma_{k-1}(-D^2u)|\nabla u|^2
      u^{-\frac{2k}{n-2k}}\dd x
 \leq\frac{C}{H_{J,N}}.
\end{align*}
Since \(H_{J,N}\to\infty\), letting \(N\to\infty\) yields
\(\int_{B_r}\operatorname{div}\mathcal J\dd x=0\).
Since \(r>0\) is arbitrary and
\(\operatorname{div}\mathcal J\geq0\) by \eqref{eq:5.4}, we
conclude that \(\operatorname{div}\mathcal J\equiv0\).  Equation
\eqref{eq:5.3} then gives \eqref{eq:5.50}.
Applying Proposition~\ref{Prop:5.10} to
\(w=u^{-2k/(n-2k)}\), we obtain \(D^2w=\lambda\Id\). Since, for
\(i\neq j\),
\(\partial_j\lambda=\partial_j\partial_i^2w
=\partial_i^2\partial_jw=0\), \(\lambda\) is constant. Hence
\(w(x)=a_0+b_0|x-x_0|^2\), where \(a_0,b_0>0\), which gives
\eqref{eq:1.7}. Substituting  \eqref{eq:1.7} into
\eqref{eq:1.8} {yields}
\eqref{eq:1.9}.
\end{proof}

\subsection{Proof of the integral growth condition}
\label{Subsec:6.2}

In this subsection, we show the integral growth condition \eqref{eq:6.3}. We use only
\(b=(n-2k)/k\) and \(\gamma=2k/(n-2k)\).
Thus \(bk=n-2k\), \(bp_*=n+2\), and
\(b+1=(n-k)/k\).

We first prove
\(u\in L^\infty(\R^n)\) and
\(\int_{\R^n}u^{p_*}\dd x<\infty\).  Since
\(\frac{n(k+1)-2k}{n-2k}-p_*=(n-4k)/(n-2k)>0\),
these two conclusions imply \eqref{eq:6.3}. Lemmas~\ref{Lem:6.3}, \ref{Lem:6.4}, and
\ref{Lem:6.5}
provide the new \(k\)-Hessian compactness and Newton tensor argument. The construction of the $P$-function is crucial, which is inspired by \eqref{eq:5.18}. We also developed a B\^ocher expansion for solutions with single atomic Hessian measure in
Theorem~\ref{Thm:6.7}, and the stability estimate for the Pohozaev
integral in Lemma~\ref{Lem:6.9}.  Theorem~\ref{Thm:6.7},
Lemma~\ref{Lem:6.9}, and
Proposition~\ref{Prop:6.10} provide the new isolated-pole
argument used below. The \(k\)-Hessian compactness, Newton tensor argument, B\^ocher expansion, stability estimate for the Pohozaev
integral and isolated-pole
argument are new tools introduced by us. We also use the ideas of point selection and
the choice of first-contact radius from \cite{Zhang} in Propositions \ref{Prop:6.6}, \ref{Prop:6.13} and \ref{Prop:6.14}.

Let \(Z\geq0\) satisfy \(-Z\in\Phi^k(B_{3R}(x_0))\) and
\(\mu_k[-Z]=\mu\). The lower estimate in
\cite[Theorem~7.2]{PV2008} gives
\(Z(x_0)\geq c\int_0^{R/8}
\left(\frac{\mu(B_t(x_0))}{t^{n-2k}}\right)^{1/k}\frac{\dd t}{t}\).
Restricting this integral to \(r<t<2r\), where \(0<r<R/16\), yields
\begin{equation}\label{eq:6.5}
\mu(B_r(x_0))\leq CZ(x_0)^kr^{n-2k},\qquad 0<r<R/16.
\end{equation}
If \(Z>0\) is \(C^3\) and
\(\sigma_k(-D^2Z)=cZ^p\) in \(B_{2r}\), \(p>k\),
\(0\leq r^{2k}cZ^{p-k}\leq N\), and \(\sup\limits_{B_{2r}}Z\leq K\), then
\begin{equation}\label{eq:6.6}
\sup_{B_r}Z\leq C\inf_{B_r}Z,\qquad
|\nabla\log Z|\leq C/r\quad\hbox{in }B_r
\end{equation}
by \cite[Theorems~3.1 and~3.2]{MohammedPorru2026}.
{Indeed, with
\(S=\sup_{B_{2r}(x_0)}Z\) and \(V(y)=-S^{-1}Z(x_0+ry)\), one has
\(V<0\), \(D^2V\in\Gamma_k\), \(|V|\leq1\), and
\(\sigma_k(D^2V)=\lambda|V|^p\) on \(B_2\), where
\(\lambda=r^{2k}cS^{p-k}\leq N\). Thus \(g(t)=\lambda t^p\) and
\(h=0\) satisfy \textup{(p-g)}--\textup{(p-h)} with \(C_0=pN\).
The cited local estimates on \(B_1(y)\), followed by a finite Harnack
chain, prove \eqref{eq:6.6}.} We use the following consequence of the calculation in
\cite[Theorem~4.1, equations~(4.4)--(4.14)]{ChouWang2001}.
Let \(D\) be bounded and smooth, let \(I\Subset(-\infty,0)\), and let
\(q,\widehat q\in C^\infty(D)\cap C(\overline D)\) satisfy
\(D^2q,D^2\widehat q\in\Gamma_k\). Suppose that
\begin{equation}\label{eq:6.7}
 \sigma_k(D^2q)=\Psi(q),\qquad \widehat q>q\quad\hbox{in }D,
 \qquad \widehat q=q\quad\hbox{on }\partial D,
\end{equation}
where \(q(D)\subset I\), \(\Psi\in C^2(I)\), and
\(\Psi\geq\psi_0>0\). If
\(\|\nabla q\|_{L^\infty(D)}+\|\nabla\widehat q\|_{L^\infty(D)}\leq M\), then
\begin{equation}\label{eq:6.8}
 \sup_D(\widehat q-q)^4|D^2q|
 \leq C(n,k,M,\psi_0,\|\Psi\|_{C^2(I)},I).
\end{equation}
In order to prove \eqref{eq:6.8}, put
\(F=\sigma_k^{1/k}\), \(f=\Psi^{1/k}\), and
\(\rho=\widehat q-q\). Concavity and homogeneity of \(F\), followed by
the first and second derivatives of \eqref{eq:6.7}, give
\begin{align}
 F^{ij}\rho_{ij}&\geq-f(q), \qquad
 F^{ij}q_{ij\ell}=f'(q)q_\ell,\label{eq:6.9}\\
 F^{ij}q_{ij\ell\ell}
 &=f'(q)q_{\ell\ell}+f''(q)q_\ell^2
 -F^{ij,rs}q_{ij\ell}q_{rs\ell}\geq f'(q)q_{\ell\ell}+f''(q)q_\ell^2.
 \label{eq:6.10}
\end{align}
Here \(F\) and its derivatives are evaluated at \(D^2q\).
For a positive regular value \(\varepsilon\) of \(\rho\), let \(G\)
be a connected component of \(\{\rho>\varepsilon\}\) and put
\(\rho_\varepsilon=\rho-\varepsilon\). The function
\(\widehat q-\varepsilon\) is admissible, is larger than \(q\) in
\(G\), and equals \(q\) on \(\partial G\). The quantity used in
\cite[equation~(4.4)]{ChouWang2001} is
\begin{equation}\label{eq:6.11}
 \rho_\varepsilon^4
 \phi\left(\frac{|\nabla q|^2}{2}\right)q_{\xi\xi},
\end{equation}
where \(\phi\) is chosen in \cite{ChouWang2001} and \(\xi\) is a unit vector. At a
positive maximum of \eqref{eq:6.11}, choose coordinates for which
\(D^2q\) is diagonal and \(q_{11}\) is its largest eigenvalue. If
\(q_{11}\leq1\), the cone condition gives \(|D^2q|\leq C(n,k)\).
If \(q_{11}>1\), equations \eqref{eq:6.9} and
\eqref{eq:6.10} show that the only terms in
\cite[equations~(4.5)--(4.10)]{ChouWang2001} changed by the
nonconstant right-hand side are
\begin{equation}\label{eq:6.12}
 \frac{\phi'}{\phi}q_\ell F^{ij}q_{ij\ell}
 =\frac{\phi'}{\phi}f'(q)|\nabla q|^2,\qquad
 \frac{F^{ij}q_{ij11}}{q_{11}}
 \geq f'(q)+f''(q)\frac{q_1^2}{q_{11}}.
\end{equation}
Moreover,
\begin{equation}\label{eq:6.13}
 \|f\|_{C^2(I)}
 \leq C(\psi_0,\|\Psi\|_{C^2(I)},I).
\end{equation}
Since \(q_{11}>1\) and \(|\nabla q|\leq M\), equations
\eqref{eq:6.12}--\eqref{eq:6.13} give
\begin{equation}\label{eq:6.14}
 |f(q)|
 +\left|\frac{\phi'}\phi f'(q)|\nabla q|^2\right|
 +|f'(q)|
 +\left|f''(q)\frac{q_1^2}{q_{11}}\right|\leq C.
\end{equation}
All other terms are the same as
\cite[equations~(4.11)--(4.14)]{ChouWang2001}.  Equations \eqref{eq:6.12} and
\eqref{eq:6.14}, substituted in
\cite[equation~(4.14)]{ChouWang2001}, imply
\(\sup_G\rho_\varepsilon^4|D^2q|
\leq C(n,k,M,\psi_0,\|\Psi\|_{C^2(I)},I)\).
The constant is independent of \(G\) and \(\varepsilon\). Letting \(\varepsilon\rightarrow0\) proves
\eqref{eq:6.8}.

\begin{lemma}\label{Lem:6.3}
Let \(p>k\), let \(R_j\to\infty\), and let \(Z_j\) be smooth
solutions in \(B_{R_j}\) such that
\begin{equation}\label{eq:6.15}
 0<Z_j\leq C_0,\quad |\nabla Z_j|\leq C_1,\quad Z_j(0)\geq c_0>0,
 \qquad -D^2Z_j\in\Gamma_k,\quad
 \sigma_k(-D^2Z_j)=Z_j^p.
\end{equation}
Then
\begin{equation}\label{eq:6.16}
 |D^2Z_j(0)|\leq C(n,k,p,C_0,C_1,c_0)
\end{equation}
for all sufficiently large \(j\).
\end{lemma}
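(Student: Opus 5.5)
The plan is to derive \eqref{eq:6.16} from the Pogorelov-type estimate \eqref{eq:6.8}. I apply it to $q=-Z_j$, which is admissible with $D^2q\in\Gamma_k$ and solves $\sigma_k(D^2q)=\Psi(q)$ for $\Psi(s)=(-s)^p$. Everything then rests on finding a comparison function $\widehat q$ and a domain $D\ni 0$ with uniform control. I have not settled that construction; it is the step I expect to be the obstacle.

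\emph{Step 1: uniform localisation.} Since $Z_j(0)\geq c_0$ and $|\nabla Z_j|\leq C_1$, on $B_{r_0}$ with $r_0=c_0/(2C_1)$ we have
\[
c_0/2\leq Z_j\leq C_0 .
\]
Hence on $B_{r_0}$ the values of $q$ lie in $I=[-C_0,-c_0/4]\Subset(-\infty,0)$. On $I$ one has $\Psi\geq\psi_0=(c_0/4)^p>0$ and $\|\Psi\|_{C^2(I)}\leq C(p,C_0,c_0)$, and $|\nabla q|\leq C_1$. The Harnack and gradient bounds \eqref{eq:6.6} are consistent with this but not needed beyond it. So every structural constant in \eqref{eq:6.8} is controlled by $(n,k,p,C_0,C_1,c_0)$, uniformly in $j$, once $R_j>r_0$.

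\emph{Step 2: comparison function and domain (main obstacle).} I need a smooth $\widehat q$ with $D^2\widehat q\in\Gamma_k$, $|\nabla\widehat q|\leq M$, and
\[
\widehat q(0)-q(0)\geq\tau>0 .
\]
The domain will be $D$, the component of $\{q<\widehat q\}$ containing $0$, and it must satisfy $D\Subset B_{r_0}$. The difficulty is that $\widehat q$ must lie below $q$ on $\partial B_{r_0}$ but above it at $0$. The maximum principle for $k$-convex functions rules out any $x$-independent profile that is largest at the centre, and $q$ itself need not be convex, so affine sections need not be bounded.

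What I would try first is a $q$-dependent competitor of rescaled/translated type,
\[
\widehat q(x)=\lambda\,q(\mu x+x_1)+\beta+\varepsilon|x|^2 ,
\]
which stays in $\Gamma_k$ for $\lambda,\mu>0$. The parameters $\lambda,\mu,x_1,\beta,\varepsilon$ would be chosen using the bounds $0<Z_j\leq C_0$ and $|\nabla Z_j|\leq C_1$, with the aim of making the two boundary inequalities hold quantitatively. A second option is the strictly $k$-admissible radial profiles of the type used in Lemma~\ref{Lem:3.4} (exponent strictly below $\rho$), combined with $q$. I do not yet have a choice of parameters that works.

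\emph{Step 3: conclusion, or fallback.} If Step 2 succeeds, \eqref{eq:6.8} gives
\[
\tau^4|D^2q(0)|\leq C ,
\]
which is \eqref{eq:6.16}. If no such barrier can be built, I would argue by contradiction instead. Suppose $|D^2Z_j(0)|\to\infty$. Arzel\`a--Ascoli gives $Z_j\to Z$ in $C^{\alpha}_{\mathrm{loc}}(\R^n)$. The weak continuity of Hessian measures then gives $\mu_k[-Z]=Z^p\,\dd x$ with $0<c\leq Z\leq C_0$ near $0$. Wang's regularity theorem \cite[Theorem~9.2]{Wang2009}, together with interior $C^{1,1}$ estimates for the strictly positive smooth right-hand side, would then bound $D^2Z_j$ near $0$ uniformly. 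Making that last step effective is the same Pogorelov issue as in Step 2, so I expect the barrier construction to be the crux in either route.
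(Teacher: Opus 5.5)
Your proposal does not yet prove the lemma. The comparison function of Step~2 is the entire content of the argument, and it is left unconstructed. The fallback in Step~3 rests on uniform interior $C^{1,1}$ estimates that are not available: \cite[Theorem~9.2]{Wang2009} gives only H\"older continuity.

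More importantly, Step~2 is framed the wrong way. In Step~1 you use $R_j\to\infty$ only to guarantee $R_j>r_0$, and then you try to build $\widehat q$ and $D\Subset B_{r_0}$ from the bounds on $B_{r_0}$ alone. But \eqref{eq:6.8} itself shows that every admissible barrier with bounded gradient satisfies $\widehat q(0)-q(0)\le (C/|D^2q(0)|)^{1/4}$. So a uniform lower bound $\tau$ on the gap is as strong as the conclusion, and it must come from some independent input. A construction that uses only $0<Z_j\le C_0$ and $|\nabla Z_j|\le C_1$ on a fixed ball would amount to an interior Hessian estimate from $C^1$ data. No such estimate is available for $\sigma_k$-equations; for $k\geq3$ it fails for general smooth positive right-hand sides (Pogorelov--Urbas type singular solutions). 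The global hypothesis, that these bounds hold on balls of radius $R_j\to\infty$, therefore has to enter the construction essentially.

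The missing idea is to build the barrier globally, on the huge ball $B_{r_j}$ with $r_j=R_j/2$, and to obtain the uniform gap from a Liouville argument.
\begin{enumerate}
\item Set $q_j=-Z_j$ and solve $\sigma_k(D^2H_j)=\varepsilon_j$ in $B_{r_j}$, $H_j=q_j$ on $\partial B_{r_j}$, with $0<\varepsilon_j<\frac12\inf_{B_{r_j}}Z_j^p$. Comparison (Lemma~\ref{Lem:4.1}) and subharmonicity give $-C_0\le q_j<H_j\le0$.
\item The gradient estimate for the normalized equation \cite[Theorem~2.4]{Hu2026} gives $|\nabla H_j|\le CC_0/r_j$ on $B_{r_j/2}$. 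Suppose $H_j(0)\le -c_2<0$ along a subsequence. Then $Z_j>-H_j\ge c_2/2$ on every fixed ball, so a limit $Z$ would satisfy $c_2/2\le Z\le C_0$ on $\R^n$ and $-\Delta Z\ge cZ^{p/k}\ge c'>0$. This is impossible, because the spherical means of $Z$ eventually become negative. Hence $H_j(0)\to0$, and $H_j(0)-q_j(0)\ge c_0/2$ for large $j$.
\item Compare $H_j$ with the harmonic extension of $q_j|_{\partial B_{r_j}}$; the Poisson kernel and $|\nabla q_j|\le C_1$ give $H_j-q_j\le C\operatorname{dist}(x,\partial B_{r_j})^\alpha$. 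So $\{H_j-q_j>c_0/8\}$ stays a fixed distance from $\partial B_{r_j}$, and there the Chou--Wang interior gradient estimate \cite[Theorem~3.2]{ChouWang2001} bounds $\nabla H_j$ uniformly.
\item Take a regular value $\eta_j\in(c_0/8,c_0/4)$ and let $G_j$ be the component of $\{H_j-q_j>\eta_j\}$ containing $0$. Since $Z_j\ge H_j-q_j>\eta_j$ on $G_j$, the range of $q_j$ on $G_j$ lies in $[-C_0,-c_0/8]$. Hence \eqref{eq:6.8} applies with $\widehat q=H_j-\eta_j$ and $D=G_j$, and yields $(c_0/4)^4|D^2Z_j(0)|\le C$.
\end{enumerate}
Note that $G_j$ is not contained in any fixed ball. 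The uniformity of the constants in \eqref{eq:6.8} comes from the inequality $Z_j\ge H_j-q_j$, not from localization.
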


\begin{proof}
Put \(q_j=-Z_j\) and \(r_j=R_j/2\).  Choose
\(0<\varepsilon_j<\min\{r_j^{-4k},
\frac12\inf_{B_{r_j}}Z_j^p\}\).

Since \(\sigma_k(D^2q_j)=Z_j^p>2\varepsilon_j\), the function
\(q_j\) is a strict admissible subsolution on \(B_{R_j}\).  The smooth
Dirichlet theorem \cite{CNS1985,TW1999} gives a smooth admissible
solution \(H_j\) satisfying
\(\sigma_k(D^2H_j)=\varepsilon_j\) in \(B_{r_j}\) and
\(H_j=q_j\) on \(\partial B_{r_j}\).

Lemma~\ref{Lem:4.1} and subharmonicity {yield}
\begin{equation}\label{eq:6.17}
 -C_0\leq q_j<H_j\leq0\quad\hbox{in }B_{r_j}.
\end{equation}
We will show that
\begin{equation}\label{eq:6.18}
 H_j(0)\longrightarrow0.
\end{equation}
The normalization in \cite[Theorem~2.4]{Hu2026} satisfies
\begin{equation}
 \sigma_k\!\left(D^2(\varepsilon_j^{-1/k}H_j)\right)=1,\qquad
 \operatorname{osc}_{B_{r_j}}(\varepsilon_j^{-1/k}H_j)
 =\varepsilon_j^{-1/k}\operatorname{osc}_{B_{r_j}}H_j .
\end{equation}
{Apply
\cite[Theorem~2.4]{Hu2026} to
\(\varepsilon_j^{-1/k}(H_j-\sup_{B_{r_j}}H_j)\), whose Hessian is
unchanged and whose \(L^\infty\)-norm is
\(\varepsilon_j^{-1/k}\operatorname{osc}_{B_{r_j}}H_j\). With
\eqref{eq:6.17}, this yields}
\begin{equation}\label{eq:6.19}
 \varepsilon_j^{-1/k}\|\nabla H_j\|_{L^\infty(B_{r_j/2})}
 \leq \frac{C(n,k)}{r_j}
 \operatorname{osc}_{B_{r_j}}(\varepsilon_j^{-1/k}H_j)
 \leq\frac{C(n,k)C_0\varepsilon_j^{-1/k}}{r_j}.
\end{equation}
If \eqref{eq:6.18} failed, then \(H_j(0)\leq-c_2<0\) along a subsequence.
By \eqref{eq:6.17}--\eqref{eq:6.19}, \(Z_j\geq c_2/2\) on every fixed ball.  By \eqref{eq:6.15}, after selecting a diagonal subsequence, \(Z_j\to Z\) locally uniformly in
\(\R^n\), with \(c_2/2\leq Z\leq C_0\).

Maclaurin's inequality applied to \(-D^2Z_j\) gives
\begin{equation}\label{eq:6.20}
 -\Delta Z_j\geq
 n\binom nk^{-1/k}Z_j^{p/k}.
\end{equation}
Passing to the limit in \eqref{eq:6.20}, the spherical mean of \(Z\) satisfies
\((r^{n-1}\overline Z'(r))'\leq-cr^{n-1}\).  Hence
\(\overline Z(r)\leq\overline Z(0)-cr^2/(2n)<0\) for large \(r\),
contrary to \(Z\geq c_2/2\).  Thus \eqref{eq:6.18} holds.

By \eqref{eq:6.15} and \eqref{eq:6.18}, we have
\begin{equation}\label{eq:6.21}
 H_j(0)-q_j(0)\geq c_0/2
\end{equation}
for all large \(j\).  Let \(P_j\) be the harmonic extension in $B_{r_j}$
of \(q_j|_{\partial B_{r_j}}\).  Since \(H_j\) is subharmonic,
\(H_j\leq P_j\).  Fix \(0<\alpha<1\) and write
\(d_j(x)=\operatorname{dist}(x,\partial B_{r_j})\).  Since
\(q_j=-Z_j\), equation~\eqref{eq:6.15} gives
\(|q_j|\leq C_0\) and \(|\nabla q_j|\leq C_1\) on
\(\partial B_{r_j}\).  For \(0<d_j(x)\leq1\), choose
\(\xi_x\in\partial B_{r_j}\) with \(|x-\xi_x|=d_j(x)\).  The Poisson
formula and
\(|q_j(\xi)-q_j(\xi_x)|\leq
\min\{2C_0,C_1|\xi-\xi_x|\}\) yield
\begin{equation}\label{eq:6.22}
 |P_j(x)-q_j(\xi_x)|
 \leq\int_{\partial B_{r_j}}
 \frac{r_j^2-|x|^2}{r_j|\mathbb S^{n-1}||x-\xi|^n}
 \min\{2C_0,C_1|\xi-\xi_x|\}\dd S_\xi
 \leq C(C_0,C_1,\alpha)d_j(x)^\alpha .
\end{equation}
Since \(|q_j(x)-q_j(\xi_x)|\leq C_1d_j(x)\),
\eqref{eq:6.22} implies
\begin{equation}\label{eq:6.23}
 0<H_j(x)-q_j(x)\leq P_j(x)-q_j(x)
 \leq C(C_0,C_1,\alpha)d_j(x)^\alpha,
 \qquad\hbox{if }d_j(x)\leq1.
\end{equation}
For every \(\eta\in[c_0/8,c_0/4]\),
\eqref{eq:6.23} gives
\[
 H_j(x)-q_j(x)>\eta\quad\Longrightarrow\quad
 d_j(x)\geq d_*:=\min\left\{1,
 \left(\frac{c_0}{8C(C_0,C_1,\alpha)}\right)^{1/\alpha}\right\}.
\]
By applying the local gradient estimate \cite[Theorem~3.2]{ChouWang2001} on \(B_{d_*/2}(x)\), we have
\begin{equation}\label{eq:6.24}
 |\nabla H_j|\leq C(n,k,C_0,d_*)\quad
 \hbox{on }\{H_j-q_j>c_0/8\}.
\end{equation}
Choose a regular value
\(\eta_j\in(c_0/8,c_0/4)\), and let \(G_j\) be the connected
component of \(\{H_j-q_j>\eta_j\}\) containing the origin.  It is a
smooth domain compactly contained in \(B_{r_j}\).  By \eqref{eq:6.17},
\(Z_j\geq H_j-q_j>\eta_j\geq c_0/8\) in \(G_j\).

The function \(H_j-\eta_j\) is admissible and satisfies
\begin{equation}\label{eq:6.25}
 H_j-\eta_j>q_j\quad\hbox{in }G_j,\qquad
 H_j-\eta_j=q_j\quad\hbox{on }\partial G_j.
\end{equation}
Choose a fixed open interval \(I\Subset(-\infty,0)\) containing
\([-C_0,-c_0/8]\).  On \(I\), the function
\(\Psi(z)=(-z)^p\) has a positive lower bound and a uniform
\(C^2\)-norm. On \(G_j\), equations \eqref{eq:6.15} and
\eqref{eq:6.24} give
\(\|\nabla q_j\|_\infty+\|\nabla(H_j-\eta_j)\|_\infty\leq C_1+C\). By \eqref{eq:6.25}, $H_j-\eta_j$ is the strict admissible upper barrier $\widehat{q}$
required in \eqref{eq:6.8}.
Thus \eqref{eq:6.8} implies
\begin{equation}\label{eq:6.26}
 (H_j-q_j-\eta_j)^4|D^2q_j|\leq C
 \quad\hbox{on }G_j.
\end{equation}
It follows from \eqref{eq:6.21} and \(\eta_j<c_0/4\) that
\(H_j(0)-q_j(0)-\eta_j\geq c_0/4\).  Since
\(|D^2q_j|=|D^2Z_j|\), \eqref{eq:6.26} proves \eqref{eq:6.16}.
\end{proof}

Set
\begin{equation}
 h_0:=\frac{\gamma}{2\binom nk^{1/k}},
 \qquad U(y):=(1+h_0|y|^2)^{-1/\gamma}.
 \label{eq:6.27}
\end{equation}
Substitution of \eqref{eq:6.27} into \eqref{eq:1.8} gives
\(-D^2U\in\Gamma_k\),
\(\sigma_k(-D^2U)=U^{p_*}\), and \(U(0)=1\).

We use a \(P\)-function argument together with the identities and the
quantitative Newton--Maclaurin inequality for matrix in Section~\ref{Sec:2}.

Let \(v>0\) be a smooth solution of \eqref{eq:1.8}, and put
\(w=v^{-\gamma}\) and
\(B=D^2w-\frac{n}{2kw}\nabla w\otimes \nabla w\).

Differentiating \(w=v^{-\gamma}\) and using \eqref{eq:1.8} gives
\begin{equation}\label{eq:6.28}
 B\in\Gamma_k,
 \qquad
 \sigma_k(B)=\frac{\gamma^k}{w}.
\end{equation}
Inspired by \eqref{eq:5.18}, we define \(h>0\) by
\begin{equation}\label{eq:6.29}
 \binom nk2^{k-2}h^{k-1}(4wh-|\nabla w|^2)=\gamma^k.
\end{equation}
For \(h>|\nabla w|^2/(4w)\), its \(h\)-derivative is
\(\binom nk2^{k-2}h^{k-2}(4kwh-(k-1)|\nabla w|^2)>0\).
Thus \eqref{eq:6.29} has one positive root, which is smooth by the
implicit function theorem.
Equivalently, for
\(K=k-(k-1)|\nabla w|^2/(4wh)\), one has
\begin{equation}\label{eq:6.30}
 0\leq\frac{|\nabla w|^2}{4wh}<1,\qquad
 h^kw\left(1-\frac{|\nabla w|^2}{4wh}\right)=h_0^k.
\end{equation}
Differentiation of \eqref{eq:6.29} yields
\begin{equation}\label{eq:6.31}
 4whK\,\nabla\log h=2(D^2w-2h\Id)\nabla w.
\end{equation}
The linearized matrix of \eqref{eq:6.28} is
\(\mathscr A=wT_{k-1}(B)>0\).

Let \(\phi>0\), put
\begin{equation}
  P=\log(h\phi(w)),
\end{equation}
and write
\(\eta=-d\log\phi/d\log w\) and
\(\dot\eta=d\eta/d\log w\).  At a point where \(\nabla w\ne0\), choose
\(e_1=\nabla w/|\nabla w|\), write
\(M=B/h=\begin{pmatrix}m_{11}&z^T\\ z&C\end{pmatrix}\),
and put \(N=T_{k-1}(M)\), \(\Sigma=\sigma_k(M)\),
\(T=\sigma_{k-1}(C)\), \(Q=\tr(N M^2)\), and
\(J=k-(k-1)K\); for vectors \(X,Y\),
{write \(N[X,Y]=X^TNY\)}.  Then
\begin{equation}\label{eq:6.32}
 P_i=(\log h)_i-\eta\frac{w_i}{w},\qquad
 P_{ij}=(\log h)_{ij}-\eta\frac{w_{ij}}w
 +(\eta-\dot\eta)\frac{w_iw_j}{w^2}.
\end{equation}
Differentiation of \eqref{eq:6.31} yields
\begin{equation}\label{eq:6.33}
 4whK(\log h)_{ij}
 =2w_{i\ell j}w_\ell
 +2(w_{i\ell}-2h\delta_{i\ell})w_{\ell j}
 -4h_jw_i-4(whK)_j(\log h)_i.
\end{equation}
Differentiation of \eqref{eq:6.28}, followed by contraction with
\(\nabla w\), implies
\begin{equation}\label{eq:6.34}
 N^{ij}w_{ij\ell}w_\ell
 =-\frac{h\Sigma}{w}|\nabla w|^2
 +\frac n{kw}N[D^2w\nabla w,\nabla w]
 -\frac n{2kw^2}|\nabla w|^2N[\nabla w,\nabla w].
\end{equation}
Equation \eqref{eq:6.31} also gives
\begin{equation}
 \nabla\left(\frac{|\nabla w|^2}{4wh}\right)
 =\left(1-\frac{|\nabla w|^2}{4wh}\right)
 \left(\frac{\nabla w}{w}+k\nabla\log h\right),\qquad
 \nabla\log(whK)=\frac{\nabla w}{wK}+\frac JK\nabla\log h.
\end{equation}
{Indeed,
\(D^2w/h=M+2(n/k)|\nabla w|^2e_1\otimes e_1/(4wh)\),
\(N^{11}=T\), \(NM=MN\), and \(\tr(NM)=k\Sigma\), which yield}
\begin{equation}\label{eq:6.35}
 \frac1h \tr(N D^2w)=k\Sigma
 +2\frac nk\frac{|\nabla w|^2}{4wh}T,\qquad
 \frac{N[\nabla w,\nabla w]}{wh}=4\frac{|\nabla w|^2}{4wh}T,
\end{equation}
and
\begin{equation}\label{eq:6.36}
\frac{N[D^2w\nabla w,\nabla w]}{wh^2}
 =4\frac{|\nabla w|^2}{4wh}\,e_1^TNMe_1
 +8\frac nk\left(\frac{|\nabla w|^2}{4wh}\right)^2T.
\end{equation}
Using \eqref{eq:6.32}--\eqref{eq:6.36}, we obtain
\begin{equation}\label{eq:6.37}
\begin{aligned}
 &h^{-k}\left[\mathscr A^{ij}P_{ij}+\mathscr A\left(
 \frac{2+\eta J}{wK}\nabla w+\frac JK\nabla P
 \right)\cdot \nabla P\right]\\
 &=\frac1{2K}Q
 -\frac{2k+4|\nabla w|^2/(4wh)}{2K}\Sigma
 +\frac4K\frac nk\frac{|\nabla w|^2}{4wh}\,e_1^TNMe_1\\
 &\quad+\frac1{2K}\left[
 12\frac{n^2}{k^2}\left(\frac{|\nabla w|^2}{4wh}\right)^2
 -4\frac nk\frac{|\nabla w|^2}{4wh}
 -8\frac nk\left(\frac{|\nabla w|^2}{4wh}\right)^2\right]T\\
 &\quad-\eta\left(k\Sigma+2\frac nk\frac{|\nabla w|^2}{4wh}T\right)
 +4\frac{|\nabla w|^2}{4wh}T\left[
 \eta-\dot\eta-\frac{\eta(2+\eta J)}K
 \right]
 -\frac{\eta J|\nabla w|}{hK}N[\nabla P,e_1].
\end{aligned}
\end{equation}
Choose
\begin{equation}\label{eq:6.38}
 1<s_\infty<\frac nk-1,
 \qquad
 0<\tau<\frac{\frac{n}{k}-2}{2}
 \left(1-\frac{s_\infty}{\frac{n}{k}-1}\right),
\end{equation}
and define
\begin{equation}\label{eq:6.39}
 \phi_m(w)=
 \left(\frac{1+m}{m+w^\tau}\right)^{s_\infty/\tau}.
\end{equation}
\begin{lemma}\label{Lem:6.4}
Let \(k\geq2\) and \(w\geq1\).  There is a number
\(\overline\ell>h_0\) such that the functions in
\eqref{eq:6.39} satisfy
\begin{equation}\label{eq:6.40}
  \mathscr A^{ij}(P_m)_{ij}>0
\end{equation}
at every critical point of \(\{h\phi_m(w)>\overline\ell\}\), where \(P_m=\log(h\phi_m(w))\).  Moreover, for every
\(\ell>h_0\), there are smooth functions
\(0<\phi_{m\ell}\leq1\) such that
\begin{equation}\label{eq:6.41}
 \phi_{m\ell}(t)\longrightarrow1
 \quad\hbox{locally uniformly for }t\geq1,
 \qquad
 \phi_{m\ell}(t)=O_{m\ell}(t^{-s_\infty}),\quad s_\infty>1,
\end{equation}
and every critical point of \(P_{m\ell}=\log(h\phi_{m\ell}(w))\) in
\(\{h\phi_{m\ell}(w)>\ell\}\) has the strict sign
\begin{equation}\label{eq:6.42}
 \mathscr A^{ij}(P_{m\ell})_{ij}>0.
\end{equation}
\end{lemma}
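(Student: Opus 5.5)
This is a maximum-principle computation at critical points of $P_m$, based on identity \eqref{eq:6.37}. At a critical point $\nabla P=0$, so the gradient terms on the left and the last term on the right of \eqref{eq:6.37} vanish. Hence $h^{-k}\mathscr A^{ij}P_{ij}$ equals the remaining algebraic expression in $M=B/h$, in $t:=|\nabla w|^2/(4wh)\in[0,1)$, and in $\eta,\dot\eta$.

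The first step is to eliminate $\nabla h$ using the critical-point relation. From $\nabla P=0$ we get $\nabla\log h=\eta\nabla w/w$. Inserting this into \eqref{eq:6.31} gives $2(D^2w-2h\Id)\nabla w=4hK\eta\nabla w$. So $e_1$ is an eigenvector of $D^2w$, and hence of $M$, which forces $z=0$. The eigenvalue is pinned down: $m_{11}=2+2K\eta-2(n/k)t$. Consequently $N$ and $M$ are simultaneously diagonal, $e_1^TNMe_1=m_{11}T$, and $\Sigma=m_{11}T+\sigma_k(C)$. We also have $Q=\tr(NM^2)=\sigma_1\sigma_k-(k+1)\sigma_{k+1}$ evaluated at $M$, which by Lemma~\ref{Lem:2.1} and \eqref{eq:2.10} is at least $\frac kn\sigma_1(M)\Sigma$.

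The second step uses the normalization \eqref{eq:6.30}, $h^kw(1-t)=h_0^k$, and $\sigma_k(B)=\gamma^k/w$, so that $\Sigma=\gamma^k(1-t)/h_0^k$. The size of $h$ then enters only through $w$. On the region $h\phi_m(w)>\overline\ell$ with $w\ge1$, either $h$ is large (so $w(1-t)$ is small), or $\phi_m$ is small (so $w$ is large). We use the quantitative Newton--Maclaurin inequality of Lemma~\ref{Lem:2.4}, in the diagonal form \eqref{eq:2.13}, to bound $Q-(\text{multiple of }\Sigma)$ from below by a positive definite quadratic in $(m_{11}-c,\ \text{deviation of }C\text{ from scalar})$. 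This is the same mechanism as in Lemma~\ref{Lem:5.6} and \eqref{eq:5.36}.

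For $\phi_m$ we compute $\eta=s_\infty w^\tau/(m+w^\tau)\in(0,s_\infty)$ and $\dot\eta=\tau\eta(1-\eta/s_\infty)\ge0$. The bound $\dot\eta\le\tau\eta$ together with condition \eqref{eq:6.38} is designed so that the coefficient of $tT$, namely $4[\eta-\dot\eta-\eta(2+\eta J)/K]$ combined with the $n/k$ terms, stays positive. Heuristically this is the condition $\eta<\frac nk-1$ with a margin of order $\tau$. The $t^2T$ bracket $12n^2/k^2-8n/k$ is positive because $n>4k$. The remaining terms $-\eta k\Sigma-(2k+4t)\Sigma/(2K)$ are negative multiples of $\Sigma$, and $\Sigma$ is bounded by $\gamma^k/h_0^k$. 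I would show these are absorbed by $Q/(2K)$ once $\sigma_1(M)$ is large. Largeness of $\sigma_1(M)$ follows when $h\phi_m>\overline\ell$ with $\overline\ell$ large: if $M$ stayed bounded, \eqref{eq:6.30} would force $h$ bounded, contradicting the choice of $\overline\ell$. The main obstacle is the degenerate regime $t\to1$ with $C$ nearly scalar, where all terms are small simultaneously. There I would mimic Case 3 of Lemma~\ref{Lem:5.6}: normalize $h=1$ pointwise, split according to whether the normalized Newton deficit $(\rho_k-\rho_{k+1})/\rho_k$ is small, and in the small case expand around the bubble configuration, where equality holds exactly and the strict sign comes from $\eta>0$ through the term $-\dot\eta$ versus $\eta$.

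For the second assertion, given $\ell>h_0$, set $\phi_{m\ell}=\phi_m(w)\,g_\ell(w)$, with $g_\ell\le1$ a smooth cutoff-type correction that equals a constant near $w=1$ and is chosen so that $\ell$ plays the role of $\overline\ell$. Alternatively, rescale $\phi_{m}$ by $c_\ell\in(0,1]$ with $c_\ell\to1$ as $m\to\infty$. Note that $\phi_m\to1$ locally uniformly as $m\to\infty$ and $\phi_m=O(w^{-s_\infty})$, which gives \eqref{eq:6.41}. Since multiplying $\phi$ by a constant does not change $\eta$ or $\dot\eta$, the sign computation above persists, and the threshold analysis yields \eqref{eq:6.42} on $\{h\phi_{m\ell}>\ell\}$ for any $\ell>h_0$. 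This uses the fact that $h>h_0$ already forces $w(1-t)<1$, i.e.\ a strict departure from the bubble value, where the argument is strict.
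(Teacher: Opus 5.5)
Your critical-point reduction is correct and matches the paper. Substituting $\nabla\log h=\eta\nabla w/w$ into \eqref{eq:6.31} gives $z=0$ and $m_{11}=2+2K\eta-2\frac nk t$, where $t=|\nabla w|^2/(4wh)$. The regime analysis after that is off, however.

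\textbf{First assertion.} Since $\phi_m\le1$ for $w\ge1$, the condition $h\phi_m>\overline\ell$ always forces $h>\overline\ell$. Hence $1-t=h_0^k/(h^kw)<(h_0/\overline\ell)^k$, so there is no separate ``$w$ large'' case: you are always near $t=1$. In that regime $\sigma_1(M)$ is \emph{not} large. For instance, at $\eta=0$ the extremal configuration is $M=\operatorname{diag}(2-2\frac nk t,2,\dots,2)$, which has bounded trace. Meanwhile $\Sigma=O(1-t)\to0$, so the bound $Q\ge\frac kn\sigma_1(M)\Sigma$ carries no information, and your absorption step fails.

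The terms that matter are all of order $T$. These are $Q\ge\frac{n}{n-k}b_1^2T$ from \eqref{eq:6.43}, with $b_1=m_{11}<0$; the negative term $\frac4K\frac nk t\,m_{11}T$; and the $t$- and $t^2$-brackets. They cancel exactly on the bubble configuration at $\eta=0$, so no term-by-term sign count can work. The paper proceeds in four steps:
\begin{enumerate}
\item it uses $b_1<0$ to get $(y_2,\dots,y_n)\in\Gamma_k$;
\item Newton--Maclaurin then gives \eqref{eq:6.43} and $q=\Sigma/T\le q_*$;
\item it shows $\mathcal E$ is monotone in $q$;
\item a first-order expansion in $\eta$ at $q=q_*$ yields $\mathcal E\ge c_1\eta$, precisely under \eqref{eq:6.38}; see \eqref{eq:6.49}--\eqref{eq:6.50}.
\end{enumerate}
Your ``mimic Case 3 of Lemma~\ref{Lem:5.6}'' may point in a similar direction, but as written the first assertion is not proved.

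\textbf{Second assertion.} This is the decisive gap. For $\ell$ close to $h_0$, the region $\{h\phi_{m\ell}>\ell\}$ only gives $t>\delta_\ell=1-(h_0/\ell)^k$, which can be close to $0$. The near-$t=1$ analysis says nothing for $t\in(\delta_\ell,1-\delta]$.
\begin{itemize}
\item Multiplying $\phi_m$ by a constant leaves $\eta$ and $\dot\eta$ unchanged, so it cannot repair this.
\item Since $\overline\ell$ does not depend on $m$, you would need $c_\ell\le\ell/\overline\ell<1$, which contradicts $\phi_{m\ell}\to1$.
\item The remark that $h>h_0$ forces $w(1-t)<1$ produces no strict sign.
\end{itemize}

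The missing idea is the unperturbed inequality at $\eta=0$, valid for every $t\in[0,1)$:
\[
2Kh^{-k}\mathscr A^{ij}(\log h)_{ij}=\mathcal R+\tfrac kn\Sigma\mathcal S\ge0 .
\]
It comes from \eqref{eq:2.12} via \eqref{eq:6.61}, together with $T\ge T_0$ and $\tr C\ge2(n-1)$ from \eqref{eq:6.60}. The construction then has two regimes.
\begin{itemize}
\item On $\log w\le T_\ell+d_0$, take $\eta_{m\ell}=\varepsilon_{m\ell}e^{-A\log w}$ with $\varepsilon_{m\ell}$ small. The $O(\eta T)$ perturbation error in \eqref{eq:6.67} is beaten by $-4tT\dot\eta_{m\ell}\ge4A\delta_\ell\eta_{m\ell}T$. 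This is exactly where a strongly negative $\dot\eta$ is needed.
\item On $\log w\ge T_\ell$, one has $1-t<(h_0/\ell)^ke^{-T_\ell}<\delta$. Only there is $\eta$ allowed to grow logistically to $s_\infty$, via \eqref{eq:6.52}--\eqref{eq:6.54}. Then the first-part estimate applies and $\phi_{m\ell}=O(w^{-s_\infty})$.
\end{itemize}
Finally, $\varepsilon_{m\ell}\downarrow0$ gives $\phi_{m\ell}\to1$. Without this two-regime construction, \eqref{eq:6.42} is not established for $\ell<\overline\ell$.
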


\begin{proof}
Let
\(\eta_m=-d\log\phi_m/d\log w\) and
\(\dot\eta_m=d\eta_m/d\log w\). In the direction
\(e_1=\nabla w/|\nabla w|\), the eigenvalue of \(B/h\) is
\(b_1=2-2\frac nk\frac{|\nabla w|^2}{4wh}+2K\eta_m\).

When \(|\nabla w|^2/(4wh)\) is sufficiently close to one, \(b_1<0\).  Writing the
remaining eigenvalues as \(y_2,\ldots,y_n\), put
\(\Sigma=\binom nk2^k(1-|\nabla w|^2/(4wh))\),
\(T=\sigma_{k-1}(y_2,\ldots,y_n)\), and
\(Q=\tr(T_{k-1}(B/h)(B/h)^2)\). Corollary~\ref{Cor:2.3} gives
\(T>0\) and
\(\tr T_{k-1}(B/h)\leq(n-k+1)T\).
We have
\(\sigma_k(y')=(-b_1+\Sigma/T)\sigma_{k-1}(y')\), where
\(y'=(y_2,\ldots,y_n)\).

Since \(M\in\Gamma_k\) and \(b_1<0\),
\((y_2,\ldots,y_n)\in\Gamma_k\). From Lemma~\ref{Lem:2.1}, in
dimension \(n-1\), we deduce that
\(\sigma_1(y')\geq k(n-1)(-b_1+\Sigma/T)/(n-k)\) and
\((k+1)\sigma_{k+1}(y')/\sigma_{k-1}(y')
\leq k(n-k-1)(-b_1+\Sigma/T)^2/(n-k)\).

Since
\(Q/T=(\Sigma/T)(b_1+\sigma_1(y'))
+(k+1)[(-b_1)(-b_1+\Sigma/T)
-\sigma_{k+1}(y')/\sigma_{k-1}(y')]\), we obtain
\begin{equation}\label{eq:6.43}
 \frac QT\geq
 \frac{nb_1^2-k(k+1)b_1\Sigma/T+k^2(\Sigma/T)^2}{n-k}.
\end{equation}
By Lemma~\ref{Lem:2.1}, we also have
\begin{equation}\label{eq:6.44}
 q\binom{n-1}{k-1}
 \left[\frac{k(-b_1+q)}{n-k}\right]^{k-1}\leq\Sigma
\end{equation}
with $q=\frac{\Sigma}{T}$. Here \eqref{eq:6.44} is equivalent to
\(T\geq\binom{n-1}{k-1}
[k(-b_1+\Sigma/T)/(n-k)]^{k-1}\).

Let \(q_*(s,\theta)\), for scalar variables \(s\) near one and
\(0\leq\theta\leq s_\infty\), be the unique nonnegative solution
obtained by replacing the inequality in \eqref{eq:6.44} by equality,
with \(|\nabla w|^2/(4wh)\) and \(\eta_m\) replaced by \(s\) and \(\theta\),
respectively.  Then
\(0\leq\Sigma/T\leq q_*(|\nabla w|^2/(4wh),\eta_m)\), and equality is attained by
\[
 y_2=\cdots=y_n=\frac{k}{n-k}\left[
 -2+2\frac nk\frac{|\nabla w|^2}{4wh}-2K\eta_m
 +q_*\left(\frac{|\nabla w|^2}{4wh},\eta_m\right)\right].
\]

At \(\nabla P_m=0\), equations \eqref{eq:6.43} and
\eqref{eq:6.37} yield
\begin{equation}\label{eq:6.45}
\begin{aligned}
 \frac{h^{-k}}T\mathscr A^{ij}(P_m)_{ij}
 \geq{}&
 \frac{nb_1^2-k(k+1)b_1\Sigma/T+k^2(\Sigma/T)^2}{2K(n-k)}\\
 &-\left(\frac{k+2|\nabla w|^2/(4wh)}{K}+k\eta_m\right)\frac\Sigma T
 +\frac{4(n/k)(|\nabla w|^2/(4wh))b_1}{K}\\
 &+\frac{6(n/k)^2(|\nabla w|^2/(4wh))^2
 -2(n/k)|\nabla w|^2/(4wh)-4(n/k)(|\nabla w|^2/(4wh))^2}{K}\\
 &-2\frac nk\frac{|\nabla w|^2}{4wh}\eta_m
 +4\frac{|\nabla w|^2}{4wh}\eta_m
 -\frac{4(|\nabla w|^2/(4wh))\eta_m(2+\eta_m J)}K
 -4\frac{|\nabla w|^2}{4wh}\dot\eta_m .
\end{aligned}
\end{equation}
The number \(q_*=q_*(s,\theta)\) is determined by
\begin{equation}\label{eq:6.46}
 q_*\binom{n-1}{k-1}
 \left\{\frac{k}{n-k}\left[
 -2+2\frac nk s-2\bigl(k-(k-1)s\bigr)\theta+q_*\right]\right\}^{k-1}
 =\binom nk2^k(1-s).
\end{equation}
For \(|\nabla w|^2/(4wh)\) in a fixed neighborhood of $1$,
\(-b_1\) is bounded below by a positive constant depending only on
\(n,k,s_\infty\).  Differentiating the left-hand side of
\eqref{eq:6.46} with respect to \(q_*\) and applying the
implicit function theorem gives
\begin{equation}\label{eq:6.47}
 0\leq q_*\leq C\left(1-\frac{|\nabla w|^2}{4wh}\right),\qquad
 |\partial_1q_*|+|\partial_2q_*|\leq C
\end{equation}
for \(0\leq\eta_m\leq s_\infty\). Let
\(\mathcal E(|\nabla w|^2/(4wh),\eta_m,q)\) denote the right-hand side of
\eqref{eq:6.45} after
\(\dot\eta_m=\tau\eta_m\left(1-\frac{\eta_m}{s_\infty}\right)\) is substituted and
\(\Sigma/T\) is replaced by the scalar variable \(q\). Its
\(q\)-derivative satisfies
\begin{equation}\label{eq:6.48}
\begin{aligned}
 &\left[
 \frac{-k(k+1)b_1+2k^2q}{2K(n-k)}
 -\left(\frac{k+2|\nabla w|^2/(4wh)}{K}+k\eta_m\right)
 \right]_{|\nabla w|^2/(4wh)=1,q=0}\\
 &\qquad=-1-\eta_m\left(k+\frac{k+1}{\frac{n}{k}-1}\right)\leq-1.
\end{aligned}
\end{equation}
Equations \eqref{eq:6.47} and
\eqref{eq:6.48} show that the right-hand side of
\eqref{eq:6.45} decreases on \(0\leq q\leq q_*\) for \(|\nabla w|^2/(4wh)\) in a fixed neighborhood of $1$.

When \(\eta_m=0\), equation \eqref{eq:6.46} gives
\(q_*(|\nabla w|^2/(4wh),0)=2(n/k)(1-|\nabla w|^2/(4wh))\), and equality in \eqref{eq:6.45} is attained by \(y_2=\cdots=y_n=2\).  Thus \(\mathcal E(|\nabla w|^2/(4wh),\eta_m,q_*(|\nabla w|^2/(4wh),0))=0\). Equation
\eqref{eq:6.47} and
\(\dot\eta_m=\tau\eta_m(1-\eta_m/s_\infty)\) show that \(\mathcal E(|\nabla w|^2/(4wh),\eta_m,q)/\eta_m\), evaluated at \(q=q_*(|\nabla w|^2/(4wh),\eta_m)\), extends continuously to \(\eta_m=0\), uniformly
for \(|\nabla w|^2/(4wh)\) in a fixed neighborhood of $1$.  Before division by \(\eta_m\), we have
\begin{equation}\label{eq:6.49}
 \mathcal E(|\nabla w|^2/(4wh),\eta_m,q)\mid_{|\nabla w|^2/(4wh)=1,q=q_*}=2\eta_m\left(\frac nk-2\right)
 \left(1-\frac{\eta_m}{n/k-1}\right)-4\dot\eta_m.
\end{equation}
Equations \eqref{eq:6.38} and \eqref{eq:6.49} imply that
\[
 \frac{2\eta_m(n/k-2)(1-\eta_m/(n/k-1))-4\dot\eta_m}{\eta_m}
 \geq2\left(\frac nk-2\right)
 \left(1-\frac{s_\infty}{n/k-1}\right)-4\tau>0.
\]
Equations \eqref{eq:6.47},
\eqref{eq:6.48}, and \eqref{eq:6.49} give
\(\delta,c_1>0\), depending only on
\(n,k,s_\infty,\tau\), such that
\begin{equation}\label{eq:6.50}
 \partial_q\mathcal E\left(\frac{|\nabla w|^2}{4wh},\eta_m,q\right)\leq-\frac12,
 \qquad
 \mathcal E\left(\frac{|\nabla w|^2}{4wh},\eta_m,
 q_*\left(\frac{|\nabla w|^2}{4wh},\eta_m\right)\right)\geq c_1\eta_m
\end{equation}
when \(1-\delta<|\nabla w|^2/(4wh)<1\), \(0\leq\eta_m\leq s_\infty\), and
\(0\leq q\leq q_*(|\nabla w|^2/(4wh),\eta_m)\). Since
\(\Sigma/T\leq q_*(|\nabla w|^2/(4wh),\eta_m)\), equations
\eqref{eq:6.45} and \eqref{eq:6.50} {yield}
\begin{equation}
 \frac{h^{-k}}T\mathscr A^{ij}(P_m)_{ij}\geq c_1\eta_m
 \qquad\hbox{if }\frac{|\nabla w|^2}{4wh}>1-\delta.
\end{equation}

Choose \(\overline\ell>h_0\delta^{-1/k}\). On \(\{h\phi_m(w)>\overline\ell\}\), \eqref{eq:6.30} gives
\(|\nabla w|^2/(4wh)>1-\delta\), so \eqref{eq:6.40} holds at every critical point.

\medskip

Next, fix arbitrarily \(\ell>h_0\), and put
\(\delta_\ell=1-(h_0/\ell)^k>0\).

{Decrease the \(\delta\) in
\eqref{eq:6.50} so that the extension of \(\mathcal E/\eta\) at
\(\eta=0\) used in \eqref{eq:6.49} ensures, whenever
\(s>1-\delta\), \(0\leq\eta\leq s_\infty\),
\(0\leq q\leq q_*(s,\eta)\), and
\(\dot\eta\leq\tau\eta(1-\eta/s_\infty)\), the right-hand side of
\eqref{eq:6.45} is at least
\(\eta[(n/k-2)(1-s_\infty/(n/k-1))-2\tau]\).} Then choose \(T_\ell>0\) so that
\((h_0/\ell)^ke^{-T_\ell}<\delta\). In \(\{h\phi_{m\ell}(w)>\ell\}\), \eqref{eq:6.30} gives
\(1-|\nabla w|^2/(4wh)<(h_0/\ell)^k\phi_{m\ell}(w)^k/w\). Thus \(\log w\geq T_\ell\) and
\(h\phi_{m\ell}(w)>\ell\) imply \(|\nabla w|^2/(4wh)>1-\delta\).

For \(0\leq t\leq T_\ell\), choose
\begin{equation}\label{eq:6.51}
 \eta_{m\ell}(t)=\varepsilon_{m\ell}e^{-At},
 \qquad
 A>\frac{C_{\delta_\ell}}{4\delta_\ell},
\end{equation}
where \(\varepsilon_{m\ell}\downarrow0\) as $m\rightarrow+\infty$ and
\(0<\varepsilon_{m\ell}<\min\{s_\infty/2,\varepsilon_0\}\).
Here \(\varepsilon_0=\varepsilon_0(n,k,\delta_\ell)>0\) is chosen so
that
\(k\varepsilon_0/(n/k-1)\leq1-2^{-1/(k-1)}\), and so that the
implicit-function construction in \eqref{eq:6.57}
is uniform for \(|\nabla w|^2/(4wh)\in[\delta_\ell,1]\) and
\(0\leq\eta_{m\ell}\leq\varepsilon_0\).  Fix \(d_0>0\), and choose
\(\chi\in C^\infty([0,\infty))\) with \(0\leq\chi\leq1\),
\(\chi=1\) on \([0,T_\ell+d_0]\), and \(\chi=0\) on
\([T_\ell+2d_0,\infty)\).  Put
\begin{equation}\label{eq:6.52}
 r_{m\ell}(t)=-\frac{A\chi(t)}{1-\varepsilon_{m\ell}e^{-At}/s_\infty}
 +\tau(1-\chi(t))\leq\tau,
\end{equation}
and let \(\eta_{m\ell}\) solve
\begin{equation}\label{eq:6.53}
 \dot\eta_{m\ell}=r_{m\ell}(t)\eta_{m\ell}\left(1-\frac{\eta_{m\ell}}{s_\infty}\right),
 \qquad \eta_{m\ell}(0)=\varepsilon_{m\ell}.
\end{equation}
Uniqueness in \eqref{eq:6.53} and
\eqref{eq:6.52} gives
\begin{equation}\label{eq:6.54}
\begin{aligned}
 &\eta_{m\ell}(t)=\varepsilon_{m\ell}e^{-At}
 \quad(0\leq t\leq T_\ell+d_0),
 \qquad 0<\eta_{m\ell}<s_\infty,\\
 & \dot\eta_{m\ell}\leq\tau\eta_{m\ell}
 (1-\eta_{m\ell}/s_\infty),
 \qquad \eta_{m\ell}(t)\longrightarrow s_\infty.
\end{aligned}
\end{equation}
Define \(\phi_{m\ell}(e^t)=\exp(-\int_0^t\eta_{m\ell}(s)\,ds)\).
Equation \eqref{eq:6.54} implies \(0<\phi_{m\ell}\leq1\),
local uniform convergence \(\phi_{m\ell}\to1\), and
\(\phi_{m\ell}(w)=O_{m\ell}(w^{-s_\infty})\).

Assume
\(0\leq t\leq T_\ell+d_0\).  Then, \eqref{eq:6.54} yields
\(0<\eta_{m\ell}(t)\leq\varepsilon_0\).
Set \(T_0=\binom{n-1}{k-1}2^{k-1}\) and
\(\rho=(T/T_0)^{1/(k-1)}\).

Since \(C\in\Gamma_{k-1}\), Lemma~\ref{Lem:2.1} gives
\begin{equation}\label{eq:6.55}
 \Sigma\leq(2-2(n/k)|\nabla w|^2/(4wh)+2K\eta_{m\ell})T_0\rho^{k-1}
 +\binom{n-1}{k}2^k\rho^k.
\end{equation}

For \(0<\rho<1\), one has
\begin{equation}\label{eq:6.56}
 \Sigma-\left[(2-2(n/k)|\nabla w|^2/(4wh))T_0\rho^{k-1}
 +\binom{n-1}{k}2^k\rho^k\right]
 \geq2T_0\left(\frac nk-1\right)\rho^{k-1}(1-\rho).
\end{equation}
It follows from \eqref{eq:6.55} and
\eqref{eq:6.56} that
\((n/k-1)(1-\rho)\leq K\eta_{m\ell}\).  If \(\rho\geq1\), then
\(T=T_0\rho^{k-1}\geq T_0\).  Hence
the choice of \(\varepsilon_0\) in \eqref{eq:6.51} gives
\begin{equation}
 \rho\geq1-\frac{k\eta_{m\ell}}{n/k-1}\geq2^{-1/(k-1)},\qquad
 T=T_0\rho^{k-1}\geq\frac{T_0}{2}.
\end{equation}

For \(|\nabla w|^2/(4wh)\geq\delta_\ell\), choose \(a>0\) and \(x_0\) by
\begin{equation}\label{eq:6.57}
 a\left(2-2\frac nk\frac{|\nabla w|^2}{4wh}+2K\eta_{m\ell}\right)-2+2\frac nkx_0=0,
 \qquad a^k(1-|\nabla w|^2/(4wh))=1-x_0.
\end{equation}
After \(x_0\) is eliminated, the derivative of the left-hand side of the first equation in
\eqref{eq:6.57} with respect to \(a\), at
\((a,\eta_{m\ell})=(1,0)\), equals
\(-2((n/k)K-1)\neq0\). Hence
\begin{equation}\label{eq:6.58}
 |a-1|+|x_0-|\nabla w|^2/(4wh)|\leq C_{\delta_\ell}\eta_{m\ell}.
\end{equation}
After decreasing \(\varepsilon_0\), equation
\eqref{eq:6.58} implies $\frac12\leq a\leq2$ and $\frac{\delta_\ell}{2}\leq x_0\leq1$.

Put \(M=B/h=\operatorname{diag}(m_{11},C)\),
\(\zeta=\nabla w/\sqrt{wh}=2\sqrt{|\nabla w|^2/(4wh)}\,e_1\), and \(N=T_{k-1}(M)\).
Retain \(\Sigma=\sigma_k(M)\) and \(T=\sigma_{k-1}(C)\). For $\eta_{m\ell}=0$, we have \(D^2w\,\nabla w=2h\nabla w\), and therefore,
\begin{equation}\label{eq:6.59}
 m_{11}=2-2\frac nk\frac{|\nabla w|^2}{4wh},\qquad
 N[e_1,e_1]=T,\qquad
 T_k(M)[e_1,e_1]=\Sigma-m_{11}T,\qquad
 \sigma_1(M)=m_{11}+\tr C.
\end{equation}

By the Newton--Maclaurin inequality in Lemma \ref{Lem:2.1}, we can derive that
\begin{equation}\label{eq:6.60}
 T\geq T_0,\qquad \tr C\geq2(n-1).
\end{equation}
Indeed, if \(\rho=(T/T_0)^{1/(k-1)}<1\), the Newton--Maclaurin inequality in Lemma \ref{Lem:2.1} yields
\[\Sigma\leq(2-2(n/k)|\nabla w|^2/(4wh))T_0\rho^{k-1}
+\binom{n-1}{k}2^k\rho^k.\]  The left-hand side minus the right-hand side in the above inequality is
at least \(2T_0(\frac nk-1)\rho^{k-1}(1-\rho)>0\), a contradiction.
The trace inequality in \eqref{eq:6.60} then follows from the Newton--Maclaurin inequality at orders \(1\) and \(k-1\).

Set \(L_k(M)=\frac{n-k}{n}\Sigma\Id-T_k(M)\) and define
\begin{equation}
\begin{aligned}
 \mathcal R={}&\tr(L_k(M)M)
 +\frac nk\zeta^TL_k(M)\zeta
 +\frac{n(n-k)}{4k^2}|\zeta|^2\zeta^TN\zeta,\\
 \mathcal S={}&\tr C-2(n-1)
 +\frac{n|\zeta|^2}{2kT_0}(T-T_0).
\end{aligned}
\end{equation}
From \eqref{eq:2.12}, we get
\begin{equation}\label{eq:6.61}
 \mathcal R\geq
 \left[
 \sqrt{\tr(L_k(M)M)}
 -\frac{\sqrt{n(n-k)}}{2k}
 \sqrt{|\zeta|^2\zeta^TN\zeta}
 \right]^2\geq0.
\end{equation}
Equation \eqref{eq:6.60} implies \(\mathcal S\geq0\).
For $\eta_{m\ell}=0$, equations \eqref{eq:6.37},
\eqref{eq:6.59}, and
\(\tr(T_{k-1}(M)M^2)
=\tr(L_k(M)M)+(k/n)\sigma_1(M)\Sigma\) imply
\begin{equation}\label{eq:6.62}
\begin{aligned}
 2K h^{-k}\mathscr A^{ij}(\log h)_{ij}
 &=\mathcal R+\frac kn\Sigma\mathcal S \\
 &=Q-\left(2k+4\frac{|\nabla w|^2}{4wh}\right)\Sigma
 +2\frac nk\frac{|\nabla w|^2}{4wh}
 \left(6-2\frac nk\frac{|\nabla w|^2}{4wh}
 -4\frac{|\nabla w|^2}{4wh}\right)T.
 \end{aligned}
\end{equation}
Equations \eqref{eq:6.61}, \eqref{eq:6.60}, and
\eqref{eq:6.62} prove that
\begin{equation}
 2K h^{-k}\mathscr A^{ij}(\log h)_{ij}
 =\mathcal R+\frac{k}{n}\Sigma\mathcal S
 \geq0.
\end{equation}
At a critical point \(\nabla P_{m\ell}=0\), define
\[
 \mathcal H\left(\frac{|\nabla w|^2}{4wh},\eta_{m\ell},M\right)
 :=
 h^{-k}\mathscr A^{ij}
 \bigl[(\log h)_{ij}-\eta_{m\ell}(\log w)_{ij}\bigr].
\]
Equations \eqref{eq:6.37} and
\(T\geq T_0/2\) give
\begin{equation}\label{eq:6.63}
 \mathcal H(|\nabla w|^2/(4wh),\eta_{m\ell},M)\geq\frac{Q}{2K}
 -C_{\delta_\ell}(T+\Sigma),\qquad
 \frac{\Sigma}{T}\leq C_{\delta_\ell}.
\end{equation}
Thus \(\mathcal H(|\nabla w|^2/(4wh),\eta_{m\ell},M)\geq0\) if
\(Q\geq4kC_{\delta_\ell}(T+\Sigma)\). If
\(Q<4kC_{\delta_\ell}(T+\Sigma)\), equations \eqref{eq:6.57} and
\eqref{eq:6.58} give
\begin{equation}\label{eq:6.64}
 |x_0-|\nabla w|^2/(4wh)|+|a-1|+|a^{-1}-1|+|a^{-2}-1|
 \leq C_{\delta_\ell}\eta_{m\ell}.
\end{equation}
The three terms $Q$, $\Sigma$ and $T$ in \eqref{eq:6.37} satisfy
\begin{equation}\label{eq:6.65}
 Q(aM)=a^{k+1}Q,\qquad
 \sigma_k(aM)=a^k\Sigma,\qquad
 \sigma_{k-1}(aC)=a^{k-1}T.
\end{equation}
At a critical point, equation \eqref{eq:6.31} implies
\(e_1^TNMe_1=\left(2-2\frac nk\frac{|\nabla w|^2}{4wh}+2K\eta_{m\ell}\right)T\).
The coefficients of \(Q,\Sigma,T\) in \eqref{eq:6.37}, with
\(\nabla P_{m\ell}=0\) and \(\dot\eta_{m\ell}=0\), are smooth for
\(\delta_\ell\leq |\nabla w|^2/(4wh)\leq1\) and
\(0\leq\eta_{m\ell}\leq\varepsilon_0\). Equations
\eqref{eq:6.64} and
\eqref{eq:6.65} yield
\begin{equation}\label{eq:6.66}
 \left|\mathcal H(|\nabla w|^2/(4wh),\eta_{m\ell},M)
 -a^{-(k+1)}\mathcal H(x_0,0,aM)\right|
 \leq C_{\delta_\ell}\eta_{m\ell}(Q+T+\Sigma)
 \leq C_{\delta_\ell}\eta_{m\ell} T.
\end{equation}
Equation \eqref{eq:6.62} implies
\(\mathcal H(x_0,0,aM)\geq0\).  Hence it follows from
\eqref{eq:6.63} and
\eqref{eq:6.66} that
\begin{equation}\label{eq:6.67}
 h^{-k}\mathscr A^{ij}\bigl[(\log h)_{ij}-\eta_{m\ell}(\log w)_{ij}\bigr]
 \geq-C_{\delta_\ell}\eta_{m\ell} T.
\end{equation}

For \(0\leq t\leq T_\ell+d_0\), equations
\eqref{eq:6.32} and \eqref{eq:6.35} give
\[
h^{-k}\mathscr A^{ij}(P_{m\ell})_{ij}
=h^{-k}\mathscr A^{ij}\bigl[(\log h)_{ij}
-\eta_{m\ell}(\log w)_{ij}\bigr]
-4\frac{|\nabla w|^2}{4wh}T\dot\eta_{m\ell}.
\]
Since \(|\nabla w|^2/(4wh)\geq\delta_\ell\), equations
\eqref{eq:6.54} and \eqref{eq:6.67}
therefore imply
\begin{equation}\label{eq:6.68}
 h^{-k}\mathscr A^{ij}(P_{m\ell})_{ij}
 \geq(4A\delta_\ell-C_{\delta_\ell})\eta_{m\ell}T>0.
\end{equation}
For \(t\geq T_\ell\), the choice of \(T_\ell\) gives
\(|\nabla w|^2/(4wh)>1-\delta\). Equations \eqref{eq:6.45},
\eqref{eq:6.50}, and
\eqref{eq:6.54} imply
\begin{equation}\label{eq:6.69}
 h^{-k}\mathscr A^{ij}(P_{m\ell})_{ij}
 \geq\eta_{m\ell}T\left[
 \left(\frac nk-2\right)
 \left(1-\frac{s_\infty}{n/k-1}\right)-2\tau\right]>0.
\end{equation}
Equation \eqref{eq:6.68} holds for
\(0\leq t\leq T_\ell+d_0\), while
\eqref{eq:6.69} holds for \(t\geq T_\ell\). Hence \eqref{eq:6.42} holds for
every \(t\geq0\), which proves
\eqref{eq:6.41}--\eqref{eq:6.42}.
\end{proof}

\begin{lemma}\label{Lem:6.5}
{
Let \(v_j>0\) be admissible solutions of \eqref{eq:1.8} and be smooth near
points \(y_j\), set \(w_j=v_j^{-\gamma}\), and define \(h_j\) by
\eqref{eq:6.29}. Fix \(\phi=\phi_m\) in \eqref{eq:6.39} and
\(\ell_0=\overline\ell\), or fix \(\phi=\phi_{m\ell}\) from
Lemma~\ref{Lem:6.4} and \(\ell_0=\ell>h_0\). In the first case, assume
\(w_j(y_j)\geq1\). In the second case, take \(A\) and
\(\varepsilon_{m\ell}=\eta_{m\ell}(0)\) from
\eqref{eq:6.51}--\eqref{eq:6.54}, extend to \(t<0\) by
\(\eta_{m\ell}(t)=\varepsilon_{m\ell}e^{-At}\) and
\(\phi_{m\ell}(e^t)=\exp(-\int_0^t\eta_{m\ell}(s)\,ds)\), and assume
\(\log w_j(y_j)=o(1)\) whenever \(w_j(y_j)<1\).
Set \(P_j=\log(h_j\phi(w_j))\). Suppose that, for some
\(\varepsilon_*>0\),
\begin{equation}\label{eq:6.70}
 |\nabla P_j(y_j)|=o(1),\qquad D^2P_j(y_j)\preceq o(1)\Id,
\end{equation}
and
\begin{equation}\label{eq:6.71}
 h_j(y_j)\phi(w_j(y_j))\geq\ell_0+\varepsilon_*.
\end{equation}
Assume also that there exists \(C_0\geq1\), independent of \(j\), such that
\begin{equation}\label{eq:6.72}
 C_0^{-1}\leq w_j(y_j)\leq C_0,\qquad
 C_0^{-1}\leq h_j(y_j)\leq C_0,\qquad
 |D^2v_j(y_j)|\leq C_0.
\end{equation}
Then no such sequence $\{y_j\}$ exists.
}
\end{lemma}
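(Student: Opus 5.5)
We argue by contradiction. Suppose a sequence \(\{y_j\}\) as in the statement exists. The idea is to pass to a limit and obtain a point where the strict sign of Lemma~\ref{Lem:6.4} (namely \eqref{eq:6.40} or \eqref{eq:6.42}) holds, while \eqref{eq:6.70} forces the opposite sign. The obstruction is that \eqref{eq:6.40}/\eqref{eq:6.42} are pointwise statements at exact critical points of \(P\), whereas \eqref{eq:6.70} only gives approximate criticality. We therefore work with the algebraic expression \eqref{eq:6.37}, which is a polynomial/rational function of the local data \((w,\nabla w,D^2w,\nabla P,\eta,\dot\eta)\) at one point, and pass to the limit in this finite-dimensional data rather than in the functions.

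\textbf{Step 1: compactness of the pointwise data.} By \eqref{eq:6.72}, \(v_j(y_j)=w_j(y_j)^{-1/\gamma}\) is bounded above and below, and \(D^2v_j(y_j)\) is bounded. From \eqref{eq:6.29}--\eqref{eq:6.30}, \(|\nabla w_j|^2/(4w_jh_j)<1\), so with \(w_j,h_j\) bounded we get \(|\nabla w_j(y_j)|\) bounded. Hence \(\nabla v_j(y_j)\) and \(D^2w_j(y_j)\) are bounded. Pass to a subsequence along which
\[
w_j(y_j)\to w_\infty,\quad \nabla w_j(y_j)\to\xi,\quad D^2w_j(y_j)\to W,\quad h_j(y_j)\to h_\infty.
\]
Here \(w_\infty,h_\infty\) are positive. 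The limiting \(B_\infty=W-\frac{n}{2kw_\infty}\xi\otimes\xi\) lies in \(\overline{\Gamma_k}\) with \(\sigma_k(B_\infty)=\gamma^k/w_\infty>0\), so \(B_\infty\in\Gamma_k\). This follows as in Lemma~\ref{Lem:2.2} via Newton--Maclaurin. Also \(\eta(\log w_j(y_j))\) and \(\dot\eta\) converge, since \(\phi\) is smooth and fixed. In the second case with \(w_j(y_j)<1\), the extension \(\eta_{m\ell}(t)=\varepsilon_{m\ell}e^{-At}\) together with \(\log w_j(y_j)=o(1)\) keeps \(t\) in a compact set near \(0\), so the estimate \eqref{eq:6.68} still applies in the limit.

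The third derivatives enter \eqref{eq:6.37} only through \(\mathscr A^{ij}P_{ij}\), which we do not need to pass to the limit. The identity \eqref{eq:6.37} itself was derived using \eqref{eq:6.33}--\eqref{eq:6.34}, which eliminate \(N^{ij}w_{ij\ell}w_\ell\) through the differentiated equation. Its right-hand side therefore depends only on second-order data, \(\nabla P\), and \(\eta,\dot\eta\).

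\textbf{Step 2: the limit inequality.} Evaluate \eqref{eq:6.37} at \(y_j\). The left side is
\[
h^{-k}\bigl[\mathscr A^{ij}(P_j)_{ij}+\mathscr A(\cdots+\tfrac JK\nabla P_j)\cdot\nabla P_j\bigr].
\]
Since \(\mathscr A=w_jT_{k-1}(B_j)\succ0\) is bounded and \(D^2P_j(y_j)\preceq o(1)\Id\), we get \(\mathscr A^{ij}(P_j)_{ij}\le o(1)\tr\mathscr A=o(1)\). The gradient terms are \(o(1)\) by \eqref{eq:6.70} and the boundedness of the coefficients; \(K\ge1\) gives the bound on \(J/K\). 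Hence \(\limsup\) of the right-hand side of \eqref{eq:6.37} is \(\le0\), where on the right-hand side the term \(N[\nabla P,e_1]\) also tends to \(0\).

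Passing to the limit, the right-hand side converges to the same expression \(\mathcal F_\infty\) computed from the limiting data with \(\nabla P=0\). This requires care at \(\xi=0\), where \(e_1\) is undefined. The terms involving \(e_1\) carry the factor \(|\nabla w|^2/(4wh)\) or \(|\nabla w|\), and \(e_1^TNMe_1\) is bounded, so they vanish continuously. We conclude \(\mathcal F_\infty\le 0\).

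\textbf{Step 3: contradiction with Lemma~\ref{Lem:6.4}.} By \eqref{eq:6.71}, \(h_\infty\phi(w_\infty)\ge\ell_0+\varepsilon_*>\ell_0\). The limiting data \((w_\infty,\xi,W,h_\infty)\) satisfy the algebraic constraints used in the proof of Lemma~\ref{Lem:6.4}: \eqref{eq:6.28}, \eqref{eq:6.29}, and the critical-point relation \eqref{eq:6.31} combined with \(\nabla P=0\), i.e. \(2(W-2h_\infty\Id)\xi=4w_\infty h_\infty K\eta\,\xi/w_\infty\). That relation is exactly what was used, for instance, to compute \(e_1^TNMe_1\) and \(b_1\).

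The positivity \eqref{eq:6.40}/\eqref{eq:6.42} was proved purely from these pointwise algebraic relations, in the form
\[
\frac{h^{-k}}{T}\mathscr A^{ij}P_{ij}\ge c\,\eta>0
\quad\text{or}\quad
(4A\delta_\ell-C_{\delta_\ell})\eta T>0,
\]
and not from the global function. It therefore gives \(\mathcal F_\infty>0\). One must still check that \(\eta>0\) in the limit (true for \(\phi_m\) since \(w_\infty\ge1\) forces \(\eta_m>0\), and for \(\phi_{m\ell}\) since \(\eta_{m\ell}>0\)) and that \(T\) stays bounded below (by \eqref{eq:6.60} or \(T\ge T_0/2\)). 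This contradicts \(\mathcal F_\infty\le0\).

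\textbf{Main obstacle.} The delicate point is verifying that the proof of Lemma~\ref{Lem:6.4} is genuinely pointwise-algebraic, so that it applies to limit data which need not come from an actual solution. In particular, the limiting relations must place us in the regimes \(|\nabla w|^2/(4wh)>1-\delta\) or \(\ge\delta_\ell\) through \eqref{eq:6.30} and \(h_\infty\phi>\ell_0\). A safer alternative avoids re-checking algebra in the limit: keep \(j\) finite, and perturb the exact-critical-point estimate by \(O(|\nabla P_j(y_j)|)\) error terms, which is legitimate because the lower bound \(c\eta T\) is uniform by \eqref{eq:6.72}. This yields \(c\eta T\le o(1)\), a contradiction for large \(j\). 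I would present this perturbative version.
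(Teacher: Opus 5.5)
Your Steps 1--3 are essentially the paper's proof. You bound the pointwise data using \eqref{eq:6.72} and pass to a limit in which \eqref{eq:6.31} with $\nabla P_j\to0$ gives the exact critical structure ($z=0$, $m_{11}=2-2\frac nk x+2K\eta$). You then apply the pointwise-algebraic positivity of Lemma~\ref{Lem:6.4} at the limit data, using \eqref{eq:6.68} at $t=0$ when $w_j<1$. Finally, continuity of the right-hand side of \eqref{eq:6.37} contradicts $\mathscr A_j^{ab}(P_j)_{ab}\le o(1)$.

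Two small remarks. First, the case $\xi=0$ never arises, since \eqref{eq:6.30} and \eqref{eq:6.71} force $|\nabla w_j|^2/(4w_jh_j)\ge c>0$. Second, you should present this limit version rather than the ``perturbative'' one. Replacing $M_j$ by a matrix that satisfies the exact critical relations destroys the identity $\sigma_k(M)=2^k\binom nk(1-x)$, which is used throughout Lemma~\ref{Lem:6.4}. So uniformity of the bound $c\,\eta T$ alone does not control the error, and making that version rigorous leads back to the same compactness argument.
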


\begin{proof}
{
All quantities below are evaluated at \(y_j\). Set
\(x_j=|\nabla w_j|^2/(4w_jh_j)\). We infer from
\eqref{eq:6.30} and \eqref{eq:6.71} that
\(x_j\geq c>0\) and \(|\nabla w_j|^2=4w_jh_jx_j\geq c>0\), uniformly in \(j\). Since
\(w_j=v_j^{-\gamma}\), \eqref{eq:6.72} and
\(D^2w_j=-\gamma v_j^{-\gamma-1}D^2v_j
+\gamma(\gamma+1)v_j^{-\gamma-2}\nabla v_j\otimes \nabla v_j\)
show that \(D^2w_j\) is bounded. Hence
\begin{equation}\label{eq:6.73}
 M_j:=\frac1{h_j}\left(D^2w_j-
 \frac{n}{2kw_j}\nabla w_j\otimes \nabla w_j\right)
\end{equation}
is bounded.  Moreover, one has
\begin{equation}\label{eq:6.74}
 0<\frac{1}{C_1}\leq \sigma_k(M_j)=\frac{\gamma^k}{w_jh_j^k}\leq C_1.
\end{equation}
By
\eqref{eq:6.73}--\eqref{eq:6.74} and the
Newton--Maclaurin inequalities in Lemma \ref{Lem:2.1}, we have
\begin{equation}\label{eq:6.75}
 \frac{\sigma_s(M_j)}{\binom ns}
 \geq\left(\frac{\sigma_k(M_j)}{\binom nk}\right)^{s/k}
 \geq c>0\qquad(1\leq s<k)
\end{equation}
Thus \(M_j\) stays in a fixed compact subset of \(\Gamma_k\), and
\(T_{k-1}(M_j)\) and its trace are bounded.

Choose \(e_{1,j}=\nabla w_j/|\nabla w_j|\) and write
\(M_j=\left(\begin{smallmatrix}m_{11,j}&z_j^T\\z_j&C_j\end{smallmatrix}\right)\).
Put \(K_j=k-(k-1)x_j\), \(J_j=k-(k-1)K_j\), and
\(\eta_j=-d\log\phi/d\log w\vert_{w=w_j}\).

The first derivative identity \eqref{eq:6.31} gives
\begin{equation}\label{eq:6.76}
 m_{11,j}=2-2\frac nkx_j+2K_j\eta_j
 +\frac{2w_jK_j}{|\nabla w_j|}(P_j)_1,\qquad
 (z_j)_i=\frac{2w_jK_j}{|\nabla w_j|}(P_j)_i
 \quad(2\leq i\leq n).
\end{equation}
By passing to a subsequence, we have
\((M_j,e_{1,j},x_j,K_j,\eta_j,C_j,w_j,h_j)\to
(M,e_1,x,K,\eta,C,w,h)\). Equations \eqref{eq:6.70} and
\eqref{eq:6.76} yield \(z_j\to0\) and
\(m_{11,j}-(2-2\frac nkx_j+2K_j\eta_j)\to0\). Hence
\(M=\operatorname{diag}(2-2\frac nkx+2K\eta,C)\in\Gamma_k\) by
\eqref{eq:6.75}, so \(C\in\Gamma_{k-1}\),
\(\sigma_{k-1}(C)>0\), and
\(h\phi(w)\geq\ell_0+\varepsilon_*\). By selecting a further subsequence, one has, either \(w_j\geq1\), where \eqref{eq:6.40} or \eqref{eq:6.42}
applies, or \(w_j<1\). In the latter case,
\(\log w_j=o(1)<0\), \(\eta_j=\varepsilon_{m\ell}(1+o(1))\),
\(\dot\eta_{m\ell}(\log w_j)=-A\eta_j\), and \eqref{eq:6.30}--
\eqref{eq:6.71} yield \(x_j\geq\delta_\ell+o(1)\). Since \(t_j=\log w_j\to0^{-}\), \(\eta_j\to\varepsilon_{m\ell}\),
\(x\geq\delta_\ell\), and \(\sigma_{k-1}(C)>0\), the right-hand side
of \eqref{eq:6.68} is positive at the limiting point \(t=0\). By continuity
of the left-hand side of \eqref{eq:6.37}, with
\(\mathscr A_j=w_jh_j^{k-1}T_{k-1}(M_j)\), for all large \(j\),
\begin{equation}\label{eq:6.77}
 h_j^{-k}\left[\mathscr A_j^{ab}(P_j)_{ab}+\mathscr A_j\left(
 \frac{2+\eta_jJ_j}{w_jK_j}\nabla w_j+\frac {J_j}{K_j}\nabla P_j
 \right)\cdot \nabla P_j\right]
 \geq c>0.
\end{equation}

On the other hand, \(\mathscr A_j>0\) has bounded trace, and
\eqref{eq:6.30}, \eqref{eq:6.70}, and \eqref{eq:6.72} imply that
\(\mathscr A_j^{ab}(P_j)_{ab}\leq o(1)\tr\mathscr A_j=o(1)\) and
\(\mathscr A_j((2+\eta_jJ_j)\nabla w_j/(w_jK_j)
+J_j\nabla P_j/K_j)\cdot\nabla P_j=o(1)\). Since \(h_j^{-1}\) is
bounded, this contradicts \eqref{eq:6.77}.
}
\end{proof}

\begin{proposition}\label{Prop:6.6}
Let \(\Gamma_j\to\infty\), and let \(v_j\) be smooth positive admissible solutions of
\begin{equation}\label{eq:6.78}
 \sigma_k(-D^2v_j)=v_j^{p_*}\quad\hbox{in }B_{4\Gamma_j}.
\end{equation}
Assume
\begin{equation}\label{eq:6.79}
 v_j(0)=1,\qquad 0<v_j\leq C_0,
\end{equation}
and, whenever \(R_j=o(\Gamma_j)\),
\begin{equation}\label{eq:6.80}
 \sup_{B_{R_j}}v_j\leq1+o(1).
\end{equation}
Then every locally uniform subsequential limit of $v_{j}$ equals
\begin{equation}\label{eq:6.81}
 U(x)=(1+h_0|x|^2)^{-1/\gamma}.
\end{equation}
\end{proposition}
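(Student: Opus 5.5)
The plan is to pass to the limit, obtain an entire solution whose maximum is attained at the origin, and then identify it through the \(P\)-function \(h\) of \eqref{eq:6.29}. For \(w=U^{-\gamma}=1+h_0|x|^2\) one checks directly that \(4wh_0-|\nabla w|^2=4h_0\). Hence \(h\equiv h_0\) solves \eqref{eq:6.29} for \(U\), and the goal is to show \(h\equiv h_0\) forces \(v=U\).

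\emph{Compactness.} By \eqref{eq:6.79} and the Harnack and gradient bounds \eqref{eq:6.6}, applied on unit balls with \(c=1\), \(p=p_*\), \(N=C_0^{p_*-k}\), \(K=C_0\), the functions \(v_j\) have locally uniformly bounded gradients. Lemma~\ref{Lem:6.3} then gives locally uniform Hessian bounds at every point. I would apply it to translates \(v_j(\cdot+y)\), using the local lower bound from Harnack, so that the hypothesis \(Z_j(0)\geq c_0\) holds. Evans--Krylov and Schauder estimates upgrade this to \(C^\infty_{\mathrm{loc}}\) compactness. A subsequential limit \(v\) is therefore a smooth positive admissible entire solution of \eqref{eq:1.8} with \(v(0)=1\). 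Taking \(R_j\to\infty\) slowly in \eqref{eq:6.80} gives \(\sup_{\R^n}v\leq1\). Thus \(w=v^{-\gamma}\geq1\), \(w(0)=1\) and \(\nabla w(0)=0\). From \eqref{eq:6.30} at the origin, \(h(0)^k=h_0^k\), so \(h(0)=h_0\).

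\emph{Upper bound \(h\leq h_0\).} Fix \(\ell>h_0\) and consider \(P_{m\ell}=\log(h\phi_{m\ell}(w))\) from Lemma~\ref{Lem:6.4}, where \(\phi_{m\ell}\leq1\) and \(\phi_{m\ell}=O(w^{-s_\infty})\). Suppose \(\sup h\phi_{m\ell}(w)>\ell\). Because the supremum need not be attained, I would choose almost-maximizing points \(y_j\) with an Omori--Yau/Ekeland-type perturbation, or use the first-contact choice of \cite{Zhang}. These points satisfy \eqref{eq:6.70} and \eqref{eq:6.71}. Recentring and renormalizing the solution at \(y_j\) by the scaling \eqref{eq:2.38}, which leaves \(h\phi\) invariant up to the normalization of \(w\), puts us in the setting of \eqref{eq:6.72}. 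Here Harnack and Lemma~\ref{Lem:6.3} control \(|D^2v_j|\), and \eqref{eq:6.30} together with \eqref{eq:6.71} controls \(h_j\) from both sides. Lemma~\ref{Lem:6.5} then shows that no such sequence exists, so \(h\phi_{m\ell}(w)\leq\ell\). Letting \(m\to\infty\) (so \(\phi_{m\ell}\to1\) locally uniformly) and then \(\ell\downarrow h_0\) gives \(h\leq h_0\) on \(\R^n\).

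\emph{Rigidity.} Now \(\log h\) attains an interior maximum \(\log h_0\) at the origin. By \eqref{eq:6.62} with \(\eta=0\), \(\log h\) is a subsolution: \(\mathscr A^{ij}(\log h)_{ij}+\text{(first-order terms)}\geq0\), with \(\mathscr A>0\). The strong maximum principle gives \(h\equiv h_0\), and then \(\mathcal R\equiv0\) and \(\Sigma\mathcal S\equiv0\). Equality in \eqref{eq:6.61} is the equality case of Lemma~\ref{Lem:2.4}. Arguing as in Lemma~\ref{Lem:5.9} and Proposition~\ref{Prop:5.10}, this forces \(D^2w\) to be a scalar matrix, and \eqref{eq:6.31} with \(\nabla\log h=0\) gives \(D^2w\,\nabla w=2h_0\nabla w\). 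Hence \(D^2w=2h_0\Id\) wherever \(\nabla w\neq0\), which is a dense set. With \(w(0)=1\) and \(\nabla w(0)=0\), this yields \(w=1+h_0|x|^2\), that is, \(v=U\).

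The main obstacle is the second step. Making the almost-maximum sequence satisfy all of \eqref{eq:6.70}--\eqref{eq:6.72} at once, especially the two-sided bounds on \(w_j(y_j)\) and \(h_j(y_j)\) after renormalization, requires care, and this is where the decay \(\phi_{m\ell}=O(w^{-s_\infty})\) and the point-selection ideas of \cite{Zhang} must be combined.
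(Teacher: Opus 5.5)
Your compactness step and your upper bound $h\le h_0$ are sound. Running the $P$-function argument on the entire limit, with near-maximizers of $P_{m\ell}-\epsilon\sqrt{1+|x|^2}$, is a legitimate variant of the paper's cutoff argument with $A_m\psi_j$ on the $v_j$ themselves. You do not need any renormalization, and the scaling \eqref{eq:2.38} would not leave $h$ or $\phi(w)$ invariant anyway, since \eqref{eq:6.29} is not homogeneous. The bounds \eqref{eq:6.72} are immediate:
\begin{itemize}
\item $\phi_{m\ell}\le1$ and $h\phi_{m\ell}(w)>\ell$ give $h>\ell$;
\item $|\nabla\log v|\le C$ gives $h\le Cw$ when $w\ge1$, which with $\phi_{m\ell}(w)\le Cw^{-s_\infty}$ forces $w(y_j)\le C$, hence $h(y_j)\le C$ and $v(y_j)\ge c$;
\item Lemma~\ref{Lem:6.3}, applied to $v(\cdot+y_j)$, then bounds $D^2v(y_j)$.
\end{itemize}

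The genuine gap is the rigidity step. Identity \eqref{eq:6.62} is not a differential inequality for $\log h$ on an open set; it holds only at critical points of $\log h$. Its derivation uses $D^2w\,\nabla w=2h\nabla w$, which by \eqref{eq:6.31} is equivalent to $\nabla\log h=0$. It is also written in the frame $e_1=\nabla w/|\nabla w|$, which is undefined at the origin. That is exactly where your maximum of $h$ sits, since $\nabla w(0)=0$. A non-strict sign of $\mathscr A^{ij}(\log h)_{ij}$ at critical points does not rule out a non-constant function with an interior maximum: $-|x|^4$ has $D^2=0$ at its only critical point. The paper only uses such critical-point information with a \emph{strict} sign (Lemmas~\ref{Lem:6.4}--\ref{Lem:6.5}). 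So $h\equiv h_0$ is not established, and neither are $\mathcal R\equiv0$ and the equality-case analysis that depend on it. To get a true subsolution inequality you would have to control the right side of \eqref{eq:6.37} off the critical set. There, by \eqref{eq:6.76} with $\eta=0$, $M$ deviates from the critical configuration by $\frac{2wK}{|\nabla w|}\nabla\log h$, which degenerates precisely at the origin.

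The repair, as in the paper, is to use $h\le h_0$ as first-order information.
\begin{itemize}
\item By \eqref{eq:6.30}, $|\nabla w|^2=4hw-4h_0^kh^{1-k}$, which is increasing in $h$. So $h\le h_0$ gives $|\nabla w|^2\le4h_0(w-1)$.
\item Hence $\sqrt{w-1+\epsilon}$ is $\sqrt{h_0}$-Lipschitz for every $\epsilon>0$. Since $w(0)=1$, letting $\epsilon\to0$ gives $w\le1+h_0|x|^2$, that is, $v\ge U$ with $v(0)=U(0)$.
\item Set $F=\sigma_k^{1/k}$ and $q=v-U\ge0$. Concavity of $F$ at $-D^2U$ gives $F^{ij}(-D^2U)\,q_{ij}\le U^{p_*/k}-v^{p_*/k}\le0$, with locally uniformly elliptic coefficients by Corollary~\ref{Cor:2.3}.
\item The strong minimum principle and $q(0)=0$ give $q\equiv0$.
\end{itemize}
This replaces your entire third step, including the equality-case analysis.
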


\begin{proof}
Put \(w_j=v_j^{-\gamma}\) and define \(h_j\) by \eqref{eq:6.29}. Choose
\(R_j\to\infty\), \(R_j=o(\Gamma_j)\).  Applying \eqref{eq:6.6} on \(B_1(x)\subset B_{3R_j}\) and \eqref{eq:6.79}, we have
\(|\nabla\log v_j|\leq C\) in \(B_{2R_j}\), and hence
\begin{equation}\label{eq:6.82}
 |\nabla w_j|\leq Cw_j,\qquad h_j\leq Cw_j.
\end{equation}
Moreover, \eqref{eq:6.80} and \eqref{eq:6.30} give
\(w_j\geq1-o(1)\) and \(h_j(0)\geq h_0\).

Fix \(L<\infty\), \(\ell>h_0\), and one function
\(\phi_{m\ell}\). Equation \eqref{eq:6.80} gives
\(w_j\geq1-o(1)\), although \(w_j\) may be slightly smaller than
one. For the corresponding values
\(\log(1-o(1))\leq t=\log w_j<0\), extend \(\phi_{m\ell}\) by setting
\[
 \eta_{m\ell}(t)=\varepsilon_{m\ell}e^{-At},
 \qquad
 \phi_{m\ell}(e^t)=\exp\left(-\int_0^t\eta_{m\ell}(s)\,ds\right).
\]
Then \(\dot\eta_{m\ell}=-A\eta_{m\ell}\), and
\(\eta_{m\ell}=\varepsilon_{m\ell}(1+o(1))\leq\varepsilon_0\) for all large
\(j\).   Let \(\psi_j\) satisfy
\begin{equation}\label{eq:6.83}
 \begin{gathered}
  0\leq\psi_j\leq1,\qquad
  \psi_j=0\text{ on }B_{R_j/2},\qquad
  \psi_j=1\text{ on }B_{R_j}\setminus B_{3R_j/4},\\
 |\nabla\psi_j|\leq CR_j^{-1},\qquad
 |D^2\psi_j|\leq CR_j^{-2}.
 \end{gathered}
\end{equation}
The decay exponent in \eqref{eq:6.41} and \eqref{eq:6.82} implies
\begin{equation}\label{eq:6.84}
 h_j\phi_{m\ell}(w_j)\leq {C}\quad\hbox{in }B_{2R_j}.
\end{equation}
Suppose instead that, for some
\(\varepsilon>0\) and a subsequence,
\begin{equation}\label{eq:6.85}
\sup_{B_L}h_j\phi_{m\ell}(w_j)>\ell+\varepsilon.
\end{equation}
Put {\(P_j=\log(h_j\phi_{m\ell}(w_j))\)}. Choose
{\(A_m>0\)} so
large, using \eqref{eq:6.83} and \eqref{eq:6.84},
such that the maximum of
{\(P_j-A_m\psi_j\)} over \(\overline B_{R_j}\) cannot be attained in
\(B_{R_j}\setminus B_{3R_j/4}\).  Denote a maximum point by
\(y_j\in B_{3R_j/4}\).  At $y_j$, one has \(h_j\phi_{m\ell}(w_j)>\ell+\varepsilon\), and
\begin{equation}\label{eq:6.86}
 |\nabla {P_j}|={O}(R_j^{-1}),\qquad
 D^2{P_j}{\preceq O}(R_j^{-2})\Id.
\end{equation}
If \(w_j\geq1\), then \(\phi_{m\ell}\leq1\); if
\(1-o(1)\leq w_j<1\), the chosen extension
\(\phi_{m\ell}=1+o(1)\).  Hence \(w_j/\phi_{m\ell}(w_j)^k\geq1-o(1)\).

Equations \eqref{eq:6.30} and \eqref{eq:6.85} therefore imply, for all large \(j\),
\begin{equation}\label{eq:6.87}
 h_j^kw_j>\ell^k,\qquad
 {\frac{|\nabla w_j|^2}{4w_jh_j}}>\delta_\ell>0.
\end{equation}
By \eqref{eq:6.41}, \eqref{eq:6.82}, and \eqref{eq:6.85},
\(\frac{1}{C_{0}}\leq w_j,h_j,h_j^{-1}\leq C_0\).
Equation \eqref{eq:6.6} gives \(|\nabla v_j|\leq C\) on
\(B_{2R_j}\), and the upper bound for \(w_j(y_j)\) implies
\(v_j(y_j)\geq {c}>0\).  Since
\(B_{R_j/4}(y_j)\subset B_{R_j}\), by applying Lemma~\ref{Lem:6.3} to
\(v_j(y_j+\cdot)\), we have
\begin{equation}\label{eq:6.88}
 |D^2v_j(y_j)|\leq {C}.
\end{equation}
{Equations \eqref{eq:6.85}--
\eqref{eq:6.86} give \eqref{eq:6.70}--\eqref{eq:6.71}; equations
\eqref{eq:6.41}, \eqref{eq:6.82}, \eqref{eq:6.85}, \eqref{eq:6.87},
and \eqref{eq:6.88} yield \eqref{eq:6.72}; and \eqref{eq:6.80} ensures
\(\log w_j(y_j)=o(1)\) if \(w_j(y_j)<1\). Lemma~\ref{Lem:6.5}, with
the resulting \(P_j\), contradicts \eqref{eq:6.86}.} Thus
\eqref{eq:6.85} is impossible. Consequently,
\(\limsup\limits_j\sup\limits_{B_L}h_j\phi_{m\ell}(w_j)\leq\ell\).

Equation \eqref{eq:6.6} yields \(w_j\leq C_L\) on \(B_L\).  Hence \eqref{eq:6.41},
followed by \(m\to\infty\) and \(\ell\downarrow h_0\), implies
\begin{equation}\label{eq:6.89}
 \limsup_{j\to\infty}\sup_{B_L}h_j\leq h_0.
\end{equation}
By \eqref{eq:6.30},
\(|\nabla w_j|^2=4h_j[w_j-(h_0/h_j)^k]\).  By \eqref{eq:6.80} and \eqref{eq:6.89}, we may
choose numbers \(\varepsilon_j=\varepsilon_j(L)\downarrow0\) such that
\(|\nabla w_j|^2\leq4(h_0+\varepsilon_j)(w_j-1+\varepsilon_j)\) on \(B_L\).
Since \(w_j(0)=1\), integration of
\(\left|\nabla\sqrt{w_j-1+\varepsilon_j}\right|
\leq\sqrt{h_0+\varepsilon_j}\)
along the line segment from the origin to \(x\) gives
\(\sqrt{w_j(x)-1+\varepsilon_j}
\leq\sqrt{\varepsilon_j}+\sqrt{h_0+\varepsilon_j}|x|\).
Letting \(j\to\infty\), we have
\begin{equation}\label{eq:6.90}
 \limsup_{j\to\infty}w_j(x)\leq1+h_0|x|^2.
\end{equation}
If \(v_j\to V\) locally uniformly, Hessian-measure stability (\cite[Lemma~2.2 and Theorem~1.1]{TW1999}) and
\eqref{eq:6.90} imply
\(\sigma_k(-D^2V)=V^{p_*}\), \(V\geq U\), and \(V(0)=U(0)\).
Set \(Z_1=-V\), \(Z_0=-U\), and \(q=Z_0-Z_1\geq0\).  Concavity of
\(\sigma_k^{1/k}\) at \(D^2Z_0\) gives
\begin{equation}\label{eq:6.91}
 a^{ij}q_{ij}+c(x)q\leq0
\end{equation}
in the viscosity sense, where
\(a^{ij}=F^{ij}(D^2Z_0)\), \(F=\sigma_k^{1/k}\), and
\(c=(V^{p_*/k}-U^{p_*/k})/(V-U)>0\), with its continuous extension
where \(V=U\). Corollary~\ref{Cor:2.3} makes \(\{a^{ij}\}\) locally
uniformly elliptic. Since \(q\geq0\), \eqref{eq:6.91} gives
\(a^{ij}q_{ij}\leq0\); the strong minimum principle and \(q(0)=0\)
give \(q\equiv0\).  This proves \eqref{eq:6.81}.
\end{proof}

For \(\kappa>0\), put $\Phi_\kappa(x)=\kappa|x|^{-b}$ and $\psi_\kappa(x)=-\Phi_\kappa(x)$.
The eigenvalues of \(D^2\psi_\kappa\) are
\begin{equation}\label{eq:6.92}
 -(b+1)\rho,\ \rho,\ldots,\rho
 \qquad \text{with}\,\, \rho=\kappa b|x|^{-n/k}.
\end{equation}
Consequently, we have $\mu_k[\psi_\kappa]=M\delta_0$ and $M=\frac{|\mathbb S^{n-1}|}{k}
 \binom{n-1}{k-1}(\kappa b)^k$.

\begin{theorem}\label{Thm:6.7}
Suppose that
\[
 q\in C(B_{r_0}\setminus\{0\})\cap
 \Phi^k(B_{r_0}\setminus\{0\}),
 \qquad
 \mu_k[q]=0\quad\hbox{in }B_{r_0}\setminus\{0\}.
\]
If
\begin{equation}\label{eq:6.93}
 |q-\psi_\kappa|\leq C
 \qquad\hbox{in }B_{r_0}\setminus\{0\},
\end{equation}
then there is a constant \(c\in\R\) such that
\begin{equation}\label{eq:6.94}
 q(r\,\cdot)-\psi_\kappa(r\,\cdot)
 \longrightarrow c
 \quad\hbox{in }C^2(A)
\end{equation}
as \(r\downarrow0\) for every compact annulus
\(A\Subset\R^n\setminus\{0\}\).  In particular,
\begin{equation}\label{eq:6.95}
 q=\psi_\kappa+c+o(1),
 \qquad
 |x||\nabla(q-\psi_\kappa)|
 +|x|^2|D^2(q-\psi_\kappa)|=o(1).
\end{equation}
Moreover, \(q\) is smooth in a sufficiently small punctured ball.
\end{theorem}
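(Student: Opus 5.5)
The plan is a blow-up (rescaling) argument in logarithmic coordinates, followed by a Liouville-type identification of the limit and a monotonicity argument that pins down a single constant. For \(0<r<r_0\) put \(q_r(y)=q(ry)-\psi_\kappa(ry)+\psi_\kappa(y)\); since \(\psi_\kappa\) is homogeneous of degree \(-b\), we have \(\psi_\kappa(ry)=r^{-b}\psi_\kappa(y)\), so I would instead work with the renormalized function \(\widetilde q_r(y)=r^{b}q(ry)\). Then \(\widetilde q_r\) is \(k\)-convex with \(\mu_k[\widetilde q_r]=0\) on \(B_{r_0/r}\setminus\{0\}\) (the Hessian measure scales by \(r^{bk+2k-n}=1\) times Lebesgue rescaling, and the zero measure is preserved), and \eqref{eq:6.93} gives \(|\widetilde q_r-\psi_\kappa|\leq Cr^{b}\to0\). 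This is not enough by itself since we want the \(O(1)\) correction \(c\), not \(o(r^{-b})\). So I would directly study \(d=q-\psi_\kappa\), which is bounded, and exploit that on any annulus \(A\) the functions \(q(r\cdot)\) are solutions of the homogeneous equation \(\sigma_k(D^2\cdot)=0\) whose Hessians are close, after multiplying by \(r^{n/k}\), to the nondegenerate matrix \(D^2\psi_\kappa\) with eigenvalues \eqref{eq:6.92}.

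First I would establish regularity and \(C^2\) smallness. By the comparison principle Lemma~\ref{Lem:4.1} and the bound \eqref{eq:6.93}, \(q\) is squeezed between \(\psi_\kappa\pm C\) on small annuli; rescaling, \(Q_r(y):=r^{b}q(ry)\) converges uniformly on \(A\) to \(\psi_\kappa\), whose Hessian lies in \(\partial\Gamma_k\) but with \(T_{k-1}\) positive definite (the tangential eigenvalues are positive, so \(\lambda|i\in\Gamma_{k-1}\) for each \(i\)). The degenerate equation \(\sigma_k=0\) is still uniformly elliptic near such a matrix in the sense that \(\partial\sigma_k/\partial A=T_{k-1}\succ0\). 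I would therefore appeal to the perturbative regularity theory near a nondegenerate solution (Savin's small-perturbation theorem for fully nonlinear equations elliptic near a given Hessian). Flatness \(|Q_r-\psi_\kappa|\leq Cr^{b}\) then yields \(C^{2,\alpha}\) closeness on \(A\), hence smoothness of \(q\) in a small punctured ball and \(\|Q_r-\psi_\kappa\|_{C^{2,\alpha}(A)}\leq Cr^b\). Translated back, this gives \(|x||\nabla d|+|x|^2|D^2d|\leq C\).

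Second, I would linearize. Since \(\sigma_k(D^2q)=\sigma_k(D^2\psi_\kappa)=0\), the difference \(d\) solves \(a^{ij}(x)d_{ij}=0\) with \(a^{ij}=\int_0^1T_{k-1}(D^2\psi_\kappa+tD^2d)\dd t\). After scaling by \(|x|^{-(k-1)n/k}\), \(a^{ij}\) is uniformly elliptic and converges (as \(|x|\to0\), using the previous step) to the homogeneous operator \(L_0=T_{k-1}(D^2\psi_\kappa)^{ij}\partial_{ij}\). In log-polar coordinates \(t=-\log|x|\), \(L_0\) becomes a constant-coefficient-in-\(t\) operator \(\partial_t^2+\beta\partial_t+\alpha\Delta_{\mathbb S^{n-1}}\) with \(\alpha,\beta>0\) (compare \eqref{eq:2.31}). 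For such an operator, the bounded solutions on a half-cylinder converge as \(t\to\infty\) to a constant: the indicial roots for the spherical modes \(\ell\geq1\) are \(-\beta/2\pm\sqrt{\beta^2/4+\alpha\ell(\ell+n-2)}\), and the positive root is excluded by boundedness. For \(\ell=0\) the roots are \(0\) and \(-\beta\), so the \(\ell=0\) mode gives the constant. For the true operator the coefficients differ from \(L_0\) by \(O(|D^2d|\,|x|^2)\), so I would run a bootstrap: first \(o(1)\) closeness, then a perturbed three-annulus or Agmon--Nirenberg type decay lemma giving \(\operatorname{osc}_{\partial B_{e^{-t}}}d\to0\) together with the existence of \(\lim\) of the spherical mean.

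An alternative for the last step is monotone and avoids the spectral argument. Compactness from the first step gives, along any \(r_i\to0\), \(d(r_i\cdot)\to d_\infty\) in \(C^2_{\rm loc}(\R^n\setminus\{0\})\), where \(d_\infty\) is a bounded solution of \(L_0d_\infty=0\) on \(\R^n\setminus\{0\}\). By the Liouville/indicial analysis above, \(d_\infty\) is a constant. To show that the constant is independent of the subsequence, I would use the maximum principle for \(a^{ij}\partial_{ij}\): \(\max_{\partial B_r}d\) and \(\min_{\partial B_r}d\) are monotone (quasi-monotone) in \(r\) up to errors controlled by the oscillation, which tends to zero. Hence \(\lim_{r\to0}\max_{\partial B_r}d\) exists and equals \(c\), which gives \eqref{eq:6.94} and \eqref{eq:6.95}. \textbf{The main obstacle} is the first step: obtaining \(C^2\) regularity for a merely continuous \(k\)-convex solution of the degenerate equation \(\mu_k=0\) near the boundary matrix \(D^2\psi_\kappa\). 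I would attempt Savin's theorem applied to \(G(M)=\sigma_k(M)\) near \(D^2\psi_\kappa\), verifying that \(G\) is \(C^1\) with \(\partial G=T_{k-1}\succ0\) there and that viscosity and Hessian-measure solutions coincide (Trudinger--Wang).
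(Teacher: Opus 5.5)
Your overall route is the paper's: rescale by $r^{b}$, obtain $C^{2,\alpha}$ flatness from Savin's small-perturbation theorem, linearize with $\int_0^1T_{k-1}(D^2\psi_\kappa+tD^2d)\dd t$, and classify bounded blow-up limits by spherical harmonics. However, the step you flag as the main obstacle does not go through as proposed.

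Savin's theorem requires, besides smoothness and uniform ellipticity near the reference Hessian, that the operator be degenerate elliptic on \emph{all} symmetric matrices (his hypothesis H1). This is because the solution is used as a viscosity solution against arbitrary test functions. The operator $G=\sigma_k$ violates this: $\sigma_k(\operatorname{diag}(t,-1,1,\ldots,1,0,\ldots,0))=-t$ (with $k-2$ entries equal to $1$) decreases as $t$ increases. Correspondingly, a continuous $k$-convex $q$ with $\mu_k[q]=0$ is in general not a viscosity supersolution of $\sigma_k(D^2q)=0$ in the unrestricted sense. For even $k$ already $\psi_\kappa$ fails: $\psi_\kappa-s|x-x_0|^2$ touches it from below at $x_0$, while $\sigma_k(D^2\psi_\kappa(x_0)-2s\Id)>0$ for large $s$. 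The Trudinger--Wang equivalence you cite only involves admissible lower test functions, which is a different viscosity notion from the one in Savin's theorem. So checking that $\sigma_k$ is $C^1$ with $D\sigma_k=T_{k-1}\succ0$ near $D^2\psi_\kappa$, and invoking Trudinger--Wang, does not verify the hypotheses.

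The missing idea is to replace $\sigma_k$ by a globally monotone operator with the same solution set. The paper uses the least G\aa rding root $\mathscr F(A)=\sup\{t: A-t\Id\in\overline{\Gamma_k}\}$. It is concave, $1$-Lipschitz, nondecreasing under addition of positive semidefinite matrices, and satisfies $\{\mathscr F\geq0\}=\overline{\Gamma_k}$. One then proves two facts.
\begin{itemize}
\item $q\in C\cap\Phi^k$ with $\mu_k[q]=0$ if and only if $\mathscr F(D^2q)=0$ in the standard viscosity sense. Upper tests are handled by the cone characterization of $k$-convexity. A lower test with $\mathscr F(D^2\varphi(x_0))>0$ is admissible near $x_0$, so Trudinger--Wang gives $\sigma_k(D^2\varphi(x_0))\leq0$, a contradiction.
\item $\mathscr F$ is smooth and uniformly elliptic near $\{D^2\psi_\kappa(y):1/4\leq|y|\leq4\}$. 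This holds because $0$ is a simple root of $t\mapsto\sigma_k(D^2\psi_\kappa-t\Id)$, its derivative being $-\tr T_{k-1}(D^2\psi_\kappa)<0$.
\end{itemize}

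With this substitution the rest of your plan works, and your way of fixing the constant is a legitimate alternative to the paper's. The paper uses concavity of $\mathscr F$ to get $\tr(T_{k-1}(D^2\psi_\kappa)D^2d)\geq0$, and then $m''-bm'\geq0$ for the spherical mean in $t=-\log|x|$. In your approach, once the linearized operator is elliptic on a punctured ball, the weak maximum principle on annuli gives $\max_{\partial B_r}d\leq\max\{\max_{\partial B_{r_1}}d,\max_{\partial B_{r_2}}d\}$ for $r_1<r<r_2$. So this function is monotone near $0$ with no error terms, and likewise for the minimum.

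A minor slip: with $t=-\log|x|$ the frozen operator is, up to a positive factor, $\partial_t^2-b\partial_t+\eta_0\Delta_{\mathbb S^{n-1}}$, not $\partial_t^2+\beta\partial_t+\alpha\Delta_{\mathbb S^{n-1}}$. The $\ell=0$ roots are therefore $0$ and $b$, the latter being the excluded $|x|^{-b}$ mode; the conclusion is unaffected.
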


\begin{proof}
For a symmetric matrix \(A\), define its least principal G{\aa}rding
root by
\begin{equation}
 \mathscr F(A)=\sup\{t\in\R\mid
 A-t{\Id}\in\overline{\Gamma_k}\}.
\end{equation}
G{\aa}rding theory implies
\begin{equation}\label{eq:6.96}
 \begin{gathered}
 \mathscr F(A)\geq0
 \quad\Longleftrightarrow\quad
 A\in\overline{\Gamma_k},
 \qquad
 \mathscr F(A+s{\Id})=\mathscr F(A)+s,\\
 \mathscr F(\lambda A)=\lambda\mathscr F(A)
 \quad(\lambda>0),
 \qquad
 N\geq0\quad\Longrightarrow\quad
 \mathscr F(A+N)\geq\mathscr F(A).
 \end{gathered}
\end{equation}
The function \(\mathscr F\) is concave and
\begin{equation}\label{eq:6.97}
 |\mathscr F(A)-\mathscr F(B)|
 \leq\|A-B\|.
\end{equation}
Equations \eqref{eq:6.96} and
\eqref{eq:6.97} follow from
\cite[Corollary~2.13 and Theorem~2.15]{HarveyLawson}.

For the increasing operator \(\mathscr F\), we say $q$ is the viscosity solution to $\mathscr F(D^2q)=0$ in the sense that
\begin{equation}
 \begin{aligned}
 q-\varphi\text{ has a local maximum (minimum) at }x_0
 &\quad\Longrightarrow\quad \mathscr F(D^2\varphi(x_0))\geq(\leq)0.
 \end{aligned}
\end{equation}
We can show the following equivalence
\begin{equation}\label{eq:6.98}
 q\in C(\Omega)\cap\Phi^k(\Omega),\quad
 \mu_k[q]=0
 \quad\Longleftrightarrow\quad
 \mathscr F(D^2q)=0.
\end{equation}

Indeed, a test function touching $q$
from above has Hessian in \(\overline{\Gamma_k}\) by the cone
characterization of \(k\) convexity
\cite[pp.~582 and 583]{TW1999}.  Hence the upper test inequality for
\(\mathscr F=0\) holds. Let a \(C^2\) function \(\varphi\) touch \(q\) from below at
\(x_0\).  If
\(\mathscr F(D^2\varphi(x_0))>0\), then
\(D^2\varphi(x_0)\in\Gamma_k\).  By continuity there is a ball
\(B_\rho(x_0)\Subset\Omega\) on which
\(D^2\varphi\in\Gamma_k\).  Restrict both \(q\) and the equation
\(\mu_k[q]=0\) to this ball.  The function \(\varphi\) is now an
admissible lower test in the Trudinger--Wang viscosity definition.
The Hessian measure and viscosity equivalence
\cite[Remark after Theorem~2.7, p.~586]{TW1999} gives
\(\sigma_k(D^2\varphi(x_0))\leq0\).  This contradicts
\(D^2\varphi(x_0)\in\Gamma_k\).  Therefore, the lower test
inequality \(\mathscr F(D^2\varphi(x_0))\leq0\) holds.

Conversely, suppose that \(\mathscr F(D^2q)=0\) in the standard
viscosity sense.  Every upper test has Hessian in
\(\overline{\Gamma_k}\), so the cone characterization
\cite[pp.~582 and 583]{TW1999} gives \(q\in\Phi^k(\Omega)\).
If an admissible \(C^2\) function \(\varphi\)
touches from below at \(x_0\), then
\(D^2\varphi(x_0)\in\overline{\Gamma_k}\), so
\(\mathscr F(D^2\varphi(x_0))\geq0\).  The viscosity supersolution
inequality gives the reverse inequality.  Hence
\(\mathscr F(D^2\varphi(x_0))=0\), and therefore
\(\sigma_k(D^2\varphi(x_0))=0\).  After restricting to a ball
compactly contained in \(\Omega\), the Hessian-measure and viscosity
equivalence in \cite[Remark after Theorem~2.7, p.~586]{TW1999} gives
\(\mu_k[q]=0\).  This
proves \eqref{eq:6.98}.

For \(1\leq j<k\), equation \eqref{eq:6.92} gives
\begin{equation}
 \sigma_j(D^2\psi_\kappa)
 =\binom{n-1}{j}\rho^j
 \frac{n(k-j)}{k(n-j)}>0,
 \qquad
 \sigma_k(D^2\psi_\kappa)=0.
\end{equation}
The radial eigenvalue of
\(T_{k-1}(D^2\psi_\kappa)\) is
\(\binom{n-1}{k-1}\rho^{k-1}\).  The ratio of each tangential
eigenvalue to the radial eigenvalue is
\begin{equation}\label{eq:6.99}
 \eta_0=\frac{n-k}{k(n-1)}>0.
\end{equation}
Thus the zero root of
\(t\mapsto\sigma_k(D^2\psi_\kappa-t{\Id})\) is simple.  The implicit
equation \(\sigma_k(A-\mathscr F(A)\Id)=0\)
shows that \(\mathscr F\) is smooth near $D^2\psi_\kappa$ and
\begin{equation}\label{eq:6.100}
\tr(D\mathscr F(A)H)
 =\frac{\tr(T_{k-1}(A-\mathscr F(A)\Id)H)}
 {\tr T_{k-1}(A-\mathscr F(A)\Id)}.
\end{equation}
By \eqref{eq:6.99} and
\eqref{eq:6.100}, there exist a fixed neighborhood
\(\mathcal N\) of $\{D^2\psi_\kappa(y)\mid 1/4\leq|y|\leq4\}$ and constants \(0<\lambda\leq\Lambda\) such that
\begin{equation}\label{eq:6.101}
 \lambda \Id\leq D\mathscr F(A)\leq\Lambda \Id,
 \qquad |D^2\mathscr F(A)|\leq\Lambda
 \quad\hbox{for every }A\in\mathcal N.
\end{equation}
Put \(e=q-\psi_\kappa\) and
\(\widetilde e_r(y)=r^be(ry)\).
Homogeneity and \eqref{eq:6.98} give
\begin{equation}\label{eq:6.102}
 \mathscr F(D^2\psi_\kappa+D^2\widetilde e_r)=0,
 \qquad
 \|\widetilde e_r\|_{L^\infty(\mathcal{A})}\leq C_\mathcal{A} r^b,
\end{equation}
where $\mathcal{A}\subset\R^{n}$ is a compact annulus. Choose finitely many balls \(B_\varrho(y_\ell)\) covering
\(\{1/2\leq|y|\leq2\}\), with
\(B_{2\varrho}(y_\ell)\Subset\{1/4<|y|<4\}\).
The radius may be chosen independent of \(\ell\).  On \(B_1\), put $\xi=\frac{y-y_\ell}{2\varrho}$ and $u_{r,\ell}(\xi)=\frac{\widetilde e_r
 (y_\ell+2\varrho\xi)}{(2\varrho)^2}$. Then $\mathcal F_\ell(D^2u_{r,\ell},\nabla u_{r,\ell},
 u_{r,\ell},\xi)=0$, where
\begin{equation}
 \mathcal F_\ell(N,p,s,\xi)
 =\mathscr F\left(D^2\psi_\kappa
 (y_\ell+2\varrho\xi)+N\right).
\end{equation}

We verify the hypotheses of \cite[Theorem~1.3]{Savin}.  Its
hypothesis H1 is the global degenerate ellipticity
\begin{equation}\label{eq:6.103}
 P\geq0
 \quad\Longrightarrow\quad
 \mathcal F_\ell(N+P,p,s,\xi)
 \geq\mathcal F_\ell(N,p,s,\xi).
\end{equation}
Equation~\eqref{eq:6.103} follows from \eqref{eq:6.96}.
{Since
\(\{D^2\psi_\kappa(y_\ell+2\varrho\xi):\ell\geq1,\
\xi\in\overline B_1\}\Subset\mathcal N\),
\(D^2\psi_\kappa(y_\ell+2\varrho\xi)+N+\theta P\in\mathcal N\)
for sufficiently small \(|N|+|P|\), uniformly in \(\ell\), when
\(P\succeq0\) and \(0\leq\theta\leq1\). Integrating \eqref{eq:6.101}
in \(\theta\) yields
\(\lambda\tr P\leq\mathcal F_\ell(N+P,p,s,\xi)
-\mathcal F_\ell(N,p,s,\xi)\leq\Lambda\tr P\), i.e., H2 uniformly in
\(\ell\).}
Hypothesis H3 is
\(\mathcal F_\ell(0,0,0,\xi)
=\mathscr F(D^2\psi_\kappa(y_\ell+2\varrho\xi))=0\).
The operator is independent of \(p\) and \(s\).
Formula \eqref{eq:6.101} gives uniform bounds for \(D^2\mathscr F\) on
\(\mathcal N\).  The derivatives up to order two of the map
\(\xi\mapsto D^2\psi_\kappa(y_\ell+2\varrho\xi)\) are uniformly
bounded on any fixed compact annulus.  The chain rule
therefore provides the joint \(C^2\) bound in H4.

Finally \eqref{eq:6.102} implies
\begin{equation}
 \|u_{r,\ell}\|_{L^\infty(B_1)}
 \leq C(2\varrho)^{-2}r^b<\varepsilon_{\mathrm S}
\end{equation}
for all sufficiently small \(r<r_{1}\), where \(\varepsilon_{\mathrm S}\)
is the uniform smallness constant in \cite[Theorem~1.3]{Savin}.  Savin's theorem yields some
\(\alpha\in(0,1)\) such that
\begin{equation}
 \sup_{0<r<r_1}
 \|\widetilde e_r\|_{C^{2,\alpha}
 (\{1/2<|y|<2\})}\leq C.
\end{equation}
Together with the \(C^0\) convergence in
\eqref{eq:6.102}, compactness and a finite covering of
each fixed compact annulus give
\begin{equation}\label{eq:6.104}
 \widetilde e_r\longrightarrow0
 \quad\hbox{in }C^{2,\beta}_{\mathrm{loc}}
 (\R^n\setminus\{0\})
 \qquad(0<\beta<\alpha).
\end{equation}
The equation $\mathscr F(D^2q)=0$ is consequently a classical uniformly elliptic equation
on every sufficiently small annulus $\mathcal{A}=\{0<s<|x|<t\}$.  Standard Schauder estimates yield smoothness of \(q\) in $\mathcal{A}$.

Since both \(\sigma_k(D^2 q)=0\) and \(\sigma_k(D^2\psi_\kappa)=0\), we have
\begin{equation}\label{eq:6.105}
 B^{ij}e_{ij}=0,
 \qquad
 B=\int_0^1T_{k-1}
 (D^2\psi_\kappa+tD^2e)\dd t.
\end{equation}
Let
\begin{equation}\label{eq:6.106}
 s_0=\frac{n(k-1)}k,
 \qquad
 A_r(y)=r^{s_0}B(ry),
 \qquad
 E_r(y)=e(ry).
\end{equation}
Equations \eqref{eq:6.105} and
\eqref{eq:6.106} give
\begin{equation}
 A_r^{ij}(E_r)_{ij}=0,
 \qquad
 A_r(y)=\int_0^1T_{k-1}
 \left(D^2\psi_\kappa(y)+tD^2\widetilde e_r(y)\right)\dd t
 \longrightarrow T_{k-1}(D^2\psi_\kappa(y))
\end{equation}
in \(C^\beta\) on compact annuli.  The matrices \(A_r\) are
uniformly elliptic.  Indeed, for every compact annulus
\(K\Subset\R^n\setminus\{0\}\), equations
\eqref{eq:6.92} and
\eqref{eq:6.99} give constants
\(0<\lambda_K\leq\Lambda_K\) such that
\(\lambda_K\Id\leq T_{k-1}(D^2\psi_\kappa)\leq\Lambda_K\Id\) on \(K\).
Equation \eqref{eq:6.104} and continuity of
\(T_{k-1}\) imply, for all sufficiently small \(r\),
\begin{equation}
 \frac{\lambda_K}{2}\Id\leq
 T_{k-1}\left(D^2\psi_\kappa(y)+tD^2\widetilde e_r(y)\right)
 \leq2\Lambda_K\Id,
 \qquad
 \frac{\lambda_K}{2}\Id\leq A_r(y)\leq2\Lambda_K\Id
\end{equation}
for \(y\in K\) and \(0\leq t\leq1\).  The constants do not depend on
\(r\) or \(t\).  The linear Schauder estimate and
\eqref{eq:6.93} make \(E_r\) precompact in \(C^2\)
on every compact annulus.

Consequently, for every sequence \(r_j\downarrow0\), there exist a
subsequence, still denoted by \(r_j\), and a bounded function
\(E\) on \(\R^n\setminus\{0\}\) such that $E_{r_j}\longrightarrow E
 \quad\text{in }C^2_{\mathrm{loc}}
 (\R^n\setminus\{0\})$.
The limit \(E\) satisfies
\begin{equation}
 E_{rr}+\frac{b+1}{r}E_r
 +\frac{\eta_0}{r^2}\Delta_{\mathbb S^{n-1}}E=0.
\end{equation}
For a spherical harmonic of degree \(\ell\), the two radial powers
are the roots of
\begin{equation}\label{eq:6.107}
 \mu(\mu+b)-\eta_0\ell(\ell+n-2)=0.
\end{equation}
Indeed, if \(Y_{\ell,m}\) is a spherical harmonic and
\(c_{\ell,m}(r)=\int_{\mathbb S^{n-1}}E(r,\theta)
Y_{\ell,m}(\theta)\dd\theta\), then
\begin{equation}\label{eq:6.108}
 c_{\ell,m}''+\frac{b+1}{r}c_{\ell,m}'
 -\frac{\eta_0\ell(\ell+n-2)}{r^2}c_{\ell,m}=0.
\end{equation}
For \(\ell=0\), equations \eqref{eq:6.107} and
\eqref{eq:6.108} give $c_{0,m}(r)=A_{0,m}+B_{0,m}r^{-b}$.
For \(\ell\geq1\), one has $c_{\ell,m}(r)=A_{\ell,m}r^{\mu_\ell^+}
 +B_{\ell,m}r^{\mu_\ell^-}\quad(\ell\geq1)$, where $\mu_\ell^+$ and $\mu_\ell^-$ are roots for \eqref{eq:6.107}.  Since \(E\) is bounded both at
zero and at infinity, we have $E\equiv C$.

It remains to show that all subsequences give the same constant.
Concavity of \(\mathscr F\) and differentiability on $D^2\psi_\kappa$ imply
\begin{equation}\label{eq:6.109}
 \tr(T_{k-1}(D^2\psi_\kappa)D^2e)\geq0.
\end{equation}
Indeed,
\[
 0=\mathscr F(D^2q)
 \leq\mathscr F(D^2\psi_\kappa)
 +\tr(D\mathscr F(D^2\psi_\kappa)D^2e),
\]
and \eqref{eq:6.100} turns this inequality into
\eqref{eq:6.109}.
Let \(\overline e(r)\) be the spherical average of \(e\), put
\(t=-\log r\), and set \(m(t)=\overline e(e^{-t})\).  Averaging
\eqref{eq:6.109} gives
\begin{equation}
 m''-bm'\geq0.
\end{equation}
Thus \(e^{-bt}m'\) is nondecreasing.  It cannot be positive at any
time because otherwise \(m'(t)\geq c_1e^{bt}\) thereafter, contradicting the boundedness of \(m\).  Hence \(m'\leq0\), and the bounded
function \(m\) has a finite limit \(c\).  If
\(E_{r_j}\to C\) on compact annuli, then
\[
 C=\frac1{|\mathbb S^{n-1}|}
 \int_{\mathbb S^{n-1}}E_{r_j}(1,\theta)\dd\theta
 =\overline e(r_j)\longrightarrow c.
\]
Thus \(E_{r}=e(r\cdot)\rightarrow c\) in $C^2(\mathcal{A})$ for every compact annulus $\mathcal{A}\subset\R^{n}$. This proves
\eqref{eq:6.94} and \eqref{eq:6.95}.
\end{proof}

We approximate the pole by smooth radial solutions.

For \(0<\tau<1\), put
\begin{equation}
 f_\tau(x)=
 \frac{M e^{-|x|^2/\tau^2}}
 {\displaystyle\int_{B_8}e^{-|y|^2/\tau^2}\dd y},
 \qquad
 m_\tau(r)=\int_{B_r}f_\tau\dd x.
\end{equation}
Thus \(f_\tau\dd x\rightharpoonup M\delta_0\).  Define
\begin{equation}\label{eq:6.110}
 q_\tau'(r)=
 \left(\frac{k m_\tau(r)}
 {|\mathbb S^{n-1}|\binom{n-1}{k-1}}\right)^{1/k}
 r^{-(n-k)/k},
 \qquad q_\tau(r)=-\int_r^8q_\tau'(t)\dd t.
\end{equation}
Since \(m_\tau(r)=r^nh_\tau(r^2)\) for a positive smooth function
\(h_\tau\), the function \(q_\tau\) is smooth at zero.  The radial
Hessian formula implies
\begin{equation}\label{eq:6.111}
 \sigma_k(D^2q_\tau)=f_\tau,
 \qquad
 q_\tau''+\frac{n-k}{k}\frac{q_\tau'}r
 =\frac{q_\tau'}k\frac{m_\tau'}{m_\tau}>0.
\end{equation}
For \(1\leq j<k\), we have
\begin{equation}\label{eq:6.112}
 \sigma_j(D^2q_\tau)=\binom{n-1}{j-1}
 \left(\frac{q_\tau'}r\right)^{j-1}
 \left(q_\tau''+\frac{n-j}{j}\frac{q_\tau'}r\right)>0.
\end{equation}
Equations \eqref{eq:6.111} and
\eqref{eq:6.112} yield
\(D^2q_\tau\in\Gamma_k\) in \(B_8\).

For fixed \(R>0\) and \(d\in\R\), let
\begin{equation}\label{eq:6.113}
 V_{R,\tau}(r)=\kappa8^{-b}+R^bd-q_\tau(r).
\end{equation}
For all sufficiently small \(R\), the function \(V_{R,\tau}\) is
positive and \(-D^2V_{R,\tau}\in\Gamma_k\).  Moreover,
\begin{equation}\label{eq:6.114}
 V_{R,\tau}\longrightarrow\Phi_\kappa+R^bd
 \quad\hbox{in }C^2_{\mathrm{loc}}(B_8\setminus\{0\}).
\end{equation}
Indeed, \(m_\tau\to M\) and \(m_\tau'\to0\) locally uniformly in $B_8\setminus\{0\}$.  Formula \eqref{eq:6.110} and its derivative then
give \eqref{eq:6.114}.

\begin{lemma}\label{Lem:6.8}
For \(0<r<8\), let
\begin{equation}\label{eq:6.115}
 \mathscr P_{V_{R,\tau}}(r)
 =\frac{k}{k+1}m_\tau(r)
 \left[bV_{R,\tau}(r)-rq_\tau'(r)\right].
\end{equation}
Consequently, uniformly for \(1\leq r\leq2\),
\begin{equation}\label{eq:6.116}
 \mathscr P_{V_{R,\tau}}(r)\longrightarrow \frac{n-2k}{k+1}MR^bd
 \qquad\hbox{as }\tau\downarrow0.
\end{equation}
\end{lemma}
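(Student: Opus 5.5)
The plan is to evaluate the Pohozaev quantity \eqref{eq:2.34} directly for the radial function $U=V_{R,\tau}$, and then pass to the limit in the explicit formula.

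\emph{Step 1: the radial form of \eqref{eq:2.34}.} For $U$ radial with $A=-D^2U=D^2q_\tau$, the unit normal $\nu=x/r$ is an eigenvector of $A$ with eigenvalue $q_\tau''$. The tangential eigenvalue is $q_\tau'/r$, with multiplicity $n-1$. By \eqref{eq:2.6} this gives
\[
T_k(A)[\nu,\nu]=\binom{n-1}{k}\Bigl(\frac{q_\tau'}{r}\Bigr)^k,\qquad T_{k-1}(A)[\nabla U,\nu]=-q_\tau'\binom{n-1}{k-1}\Bigl(\frac{q_\tau'}{r}\Bigr)^{k-1}.
\]
\eqref{eq:6.110} gives $|\mathbb S^{n-1}|r^{n-1}\binom{n-1}{k-1}(q_\tau'/r)^{k-1}q_\tau'=k\,m_\tau(r)$. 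Using this and $U_\nu=-q_\tau'$, we compute $\int_{\partial B_r}rU\,T_k(A)[\nu,\nu]\dd S=\frac{n-k}{k}\cdot k\,m_\tau V_{R,\tau}$, since $\binom{n-1}{k}=\binom{n-1}{k-1}\frac{n-k}{k}$. The second term of \eqref{eq:2.34} equals $-(V_{R,\tau}+rq_\tau')\,k\,m_\tau$. Summing, the $V$-coefficient is $(n-k)-k=n-2k=kb$, and the $rq_\tau'$ coefficient is $-k$. Dividing by $k+1$ yields \eqref{eq:6.115}, up to the normalization of signs: if the sign conventions give $bV_{R,\tau}-rq_\tau'$ exactly, we are done, and otherwise I will recheck the sign of $\nabla U\cdot\nu$.

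\emph{Step 2: the limit.} On $1\le r\le2$ we have $m_\tau(r)\to M$ uniformly. By \eqref{eq:6.110} and \eqref{eq:6.114}, $q_\tau'(r)\to\kappa b r^{-b-1}$ and $V_{R,\tau}(r)\to\kappa r^{-b}+R^bd$ uniformly. Hence
\[
bV_{R,\tau}-rq_\tau'\to b\kappa r^{-b}+bR^bd-\kappa b r^{-b}=bR^bd,
\]
so $\mathscr P\to\frac{k}{k+1}MbR^bd=\frac{n-2k}{k+1}MR^bd$, which is \eqref{eq:6.116}.

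\emph{Main obstacle.} The only delicate point is verifying uniform convergence of $q_\tau'$ and $q_\tau$ on $[1,2]$, in particular that $q_\tau\to-\kappa r^{-b}+\kappa8^{-b}$. This requires the identity $\kappa b=(kM/(|\mathbb S^{n-1}|\binom{n-1}{k-1}))^{1/k}$, which follows from the formula for $M$ after \eqref{eq:6.92}. It also requires dominated convergence of $\int_r^8 q_\tau'$, which holds because $m_\tau\le M$.
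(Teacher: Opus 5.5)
Your proposal is correct and follows essentially the paper's route: substitute the radial values $T_k(D^2q_\tau)[\nu,\nu]=\binom{n-1}{k}(q_\tau'/r)^k$ and $T_{k-1}(D^2q_\tau)[\nu,\nu]=\binom{n-1}{k-1}(q_\tau'/r)^{k-1}$ into \eqref{eq:2.34}, use $m_\tau=\frac{|\mathbb S^{n-1}|}{k}\binom{n-1}{k-1}r^{n-k}(q_\tau')^k$ to obtain \eqref{eq:6.115}, and then pass to the limit via \eqref{eq:6.110} and \eqref{eq:6.114}. Your hedge about the sign is unnecessary: with $U_\nu=-q_\tau'$ and $U-rU_\nu=V_{R,\tau}+rq_\tau'$, your sum $(n-k)m_\tau V_{R,\tau}-k\,m_\tau(V_{R,\tau}+rq_\tau')=k\,m_\tau(bV_{R,\tau}-rq_\tau')$ already gives \eqref{eq:6.115} exactly.
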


\begin{proof}
For a radial Hessian \(D^2q_\tau\),
\((T_k)_{rr}=\binom{n-1}{k}(q_\tau'/r)^k\) and
\((T_{k-1})_{rr}=\binom{n-1}{k-1}(q_\tau'/r)^{k-1}\).
Substituting the formulae for \((T_k)_{rr}\),
\((T_{k-1})_{rr}\), and \((V_{R,\tau})_r=-q_\tau'\) in
\eqref{eq:2.34}, and using
\(m_\tau(r)=\frac{|\mathbb S^{n-1}|}{k}
\binom{n-1}{k-1}r^{n-k}(q_\tau')^k\),
we get \eqref{eq:6.115}.  Equations
\eqref{eq:6.114} and
\eqref{eq:6.110} imply
\(bV_{R,\tau}(r)-rq_\tau'(r)\to bR^bd\) uniformly on \([1,2]\).
The identity
\(kb/(k+1)=(n-2k)/(k+1)\) proves \eqref{eq:6.116}.
\end{proof}

\begin{lemma}\label{Lem:6.9}
Let \(U_0,U_1>0\) be smooth in \(B_8\), with
\(-D^2U_i\in\Gamma_k\). Set
\(f_i=\sigma_k(-D^2U_i)\), \(H=U_1-U_0\), and
\(U_t=U_0+tH\). {Also set
\(f_t=\sigma_k(-D^2U_t)\) for \(0\leq t\leq1\)}.
Let
\begin{equation}
 \{1\leq|x|\leq2\}\Subset A_0=\{3/4<|x|<5/2\}
 \Subset A_1=\{1/2<|x|<3\}
 \Subset A_2=\{1/4<|x|<4\}.
\end{equation}
Assume
\(\sup\limits_{A_2}(|U_0|+|U_1|+|\nabla U_0|+|\nabla U_1|)\leq L\) and
\begin{equation}
 \int_0^1\int_{A_2}
 \left[\tr T_{k-1}(-D^2U_t)
 +\sigma_k(-D^2U_t)\right]\dd x\dd t\leq K_0.
\end{equation}
Set \(\delta=\|H\|_{L^\infty(A_2)}\) and
\(S=\int_{A_2}(f_0+f_1)\dd x\).
If \(\zeta\in C_c^\infty((1,2))\) and \(\int_1^2\zeta=1\), then
\begin{equation}\label{eq:6.117}
 \left|\int_1^2\zeta(r)
 \bigl(\mathscr P_{U_1}(r)-\mathscr P_{U_0}(r)\bigr)\dd r\right|
 \leq C\left(\delta+\sqrt{\delta S}\right),
\end{equation}
where \(C\) depends only on \(L,K_0,n,k\), and \(\zeta\).
\end{lemma}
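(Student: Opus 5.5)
The plan is to write the Pohozaev integral as a divergence-form quantity that is linear in the top-order data, interpolate between $U_0$ and $U_1$, and control the difference by integrating by parts onto $H$ and its gradient. By \eqref{eq:2.34} and \eqref{eq:2.32},
\[
\mathscr P_U(r)=\int_{\partial B_r}\mathcal Q_U\cdot\nu\,\dd S .
\]
After multiplying by $\zeta(r)$ and integrating in $r$, the coarea formula gives
\[
\int_1^2\zeta\,\mathscr P_U\,\dd r=\int_{A_0}\zeta(|x|)\,\mathcal Q_U\cdot\frac{x}{|x|}\,\dd x,
\]
which is a solid integral over an annulus. We then write
\[
\int_1^2\zeta\bigl(\mathscr P_{U_1}-\mathscr P_{U_0}\bigr)\dd r=\int_0^1\frac{\dd}{\dd t}\int\zeta(|x|)\,\mathcal Q_{U_t}\cdot\frac{x}{|x|}\,\dd x\,\dd t .
\]
The family $U_t$ is admissible for every $t$, since $\Gamma_k$ is convex and $-D^2U_t=(1-t)(-D^2U_0)+t(-D^2U_1)$.

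Next we differentiate $\mathcal Q_{U_t}$ in $t$. The terms without derivatives on $A_t=-D^2U_t$ are
\[
HT_k(A_t)x+(H-x\cdot\nabla H)T_{k-1}(A_t)\nabla U_t+(U_t-x\cdot\nabla U_t)T_{k-1}(A_t)\nabla H .
\]
The terms with a derivative on $A_t$ are $U_t\,\partial_tT_k(A_t)x$ and $(U_t-x\cdot\nabla U_t)\,\partial_tT_{k-1}(A_t)\nabla U_t$, where
\[
\partial_tT_j(A_t)=\frac{\partial T_j}{\partial A}(A_t)[-D^2H].
\]
The Newton tensors are divergence free by \eqref{eq:2.5}, and so is their linearization in the direction $D^2H$. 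This lets us move the second derivatives of $H$ onto the smooth weights $\zeta(|x|)x/|x|$, $U_t$ and $\nabla U_t$, by the standard identity
\[
\frac{\partial\sigma_{j+1}}{\partial A}[D^2H]=\partial_i\bigl(T_j^{i\ell}\partial_\ell H\bigr)\text{-type expressions}
\]
(mixed discriminant symmetry). After two integrations by parts, every term involves either $H$ or $\nabla H$, multiplied by tensors of the form $T_{k-1}(A_t)$ or $T_k(A_t)$, together with bounded factors from $L$ and $\zeta$. The tensor $T_k(A_t)$ is not sign-definite, but $T_k=\sigma_k\Id-T_{k-1}A$, so its contraction with smooth vectors is handled by one more integration by parts on $T_{k-1}A=-T_{k-1}D^2U_t$.

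Terms with $H$ alone are bounded by $C\delta\int\bigl(\tr T_{k-1}(A_t)+\sigma_k(A_t)\bigr)$, hence by $C\delta K_0$ after integrating in $t$. The main obstacle is the terms with $\nabla H$, which we have not controlled in $L^\infty$. For these we use Cauchy--Schwarz against the positive definite $T_{k-1}(A_t)$ (Corollary~\ref{Cor:2.3}):
\[
|T_{k-1}\nabla H\cdot Y|\le\bigl(\nabla H^TT_{k-1}\nabla H\bigr)^{1/2}\bigl(Y^TT_{k-1}Y\bigr)^{1/2}.
\]
To bound $\int\eta\,\nabla H^TT_{k-1}(A_t)\nabla H$, with $\eta$ a cutoff equal to $1$ on $A_0$ and supported in $A_1$, we integrate by parts, using that $T_{k-1}$ is divergence free:
\[
\int\eta\,\nabla H^TT_{k-1}\nabla H=-\int H\,T_{k-1}^{ij}H_{ij}\eta-\int H\,T_{k-1}\nabla H\cdot\nabla\eta .
\]
Here
\[
T_{k-1}(A_t)^{ij}H_{ij}=\frac{\dd}{\dd t}\sigma_k(A_t)\cdot(-1)=-\frac{\dd}{\dd t}f_t .
\]
The cross term is absorbed by Young's inequality. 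This gives
\[
\int_0^1\!\!\int\eta\,\nabla H^TT_{k-1}\nabla H\,\dd t\le C\delta\Bigl(\int_0^1\!\!\int_{A_2}\Bigl|\frac{\dd f_t}{\dd t}\Bigr|\,\dd t+\delta K_0\Bigr).
\]
Since $f_t=\sigma_k$ of a convex combination, the concavity of $\sigma_k^{1/k}$ on $\Gamma_k$ makes $t\mapsto f_t$ unimodal-type and controlled by its endpoints: $\int_0^1|\partial_tf_t|\,\dd t\lesssim\max f_t$. Alternatively, we integrate $\partial_tf_t$ in $t$ before taking absolute values, ordering the integrals so that only $\int(f_1-f_0)\eta$ appears, which is bounded by $S$. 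Either way the weighted Dirichlet energy of $H$ is $\le C\delta(S+\delta)$. Combining this through Cauchy--Schwarz with $\int Y^TT_{k-1}Y\le CK_0$ bounds the $\nabla H$ terms by
\[
C\sqrt{\delta(S+\delta)}\le C\bigl(\delta+\sqrt{\delta S}\bigr),
\]
which yields \eqref{eq:6.117}. I expect the delicate points to be the exact bookkeeping of the $t$-derivative of $T_k$, and making the energy bound use only $S$ and not a pointwise bound on $f_t$.
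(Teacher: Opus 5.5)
Your energy bound is right in its second version: integrating $\partial_tf_t$ in $t$ before taking absolute values gives exactly the paper's $E\le C(K_0\delta^2+\delta S)$. The concavity version does not work, because $\max_tf_t$ is not controlled by $f_0+f_1$; it yields at best a bound through $K_0$ rather than $S$.

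\textbf{The gap.} It lies in the step before the energy bound. You claim that, after integrating by parts, $\int X\cdot\partial_t\mathcal Q_{U_t}\,\dd x$ reduces to two kinds of terms: $H$ times $O(\tr T_{k-1}(A_t)+\sigma_k(A_t))$, and $T_{k-1}(A_t)[\nabla H,Y]$ with $Y$ bounded by $L$. This fails term by term.

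Your own list contains $-(x\cdot\nabla H)\,T_{k-1}(A_t)[\nabla U_t,X]$, in which $\nabla H$ is not fed into $T_{k-1}(A_t)$. Cauchy--Schwarz against $T_{k-1}$ is therefore unavailable, and the only bound is $\int|\nabla H|\tr T_{k-1}(A_t)$. The weighted energy does not control this, because $T_{k-1}(A_t)$ can be arbitrarily anisotropic on $\Gamma_k$. For eigenvalues $(N,1,\dots,1)$, its eigenvalue in the first direction is $\binom{n-1}{k-1}$, while the others are of order $N$. Integrating the term by parts instead puts $x\cdot\nabla$ on $T_{k-1}(A_t)$, which is not a divergence.

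The same kind of term comes out of $U_t\,\partial_tT_k(A_t)x$. Indeed $\partial_tT_k=(\partial_tf_t)\Id-(\partial_tT_{k-1})A_t+T_{k-1}D^2H$, and $D^2H\,x=\nabla(x\cdot\nabla H)-\nabla H$. The linearized tensor $\partial_tT_{k-1}(A_t)=DT_{k-1}(A_t)[-D^2H]$ causes a further problem. Once its derivative is moved onto weights containing $\nabla U_t$ and $x\cdot\nabla U_t$, it produces $D^2U_t$ and third derivatives of $U_t$. These are contracted with mixed tensors that are neither $T_{k-1}(A_t)$ nor sign-definite, so they are not bounded factors from $L$. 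All these pieces cancel only modulo divergence-free fields, and your proposal has no mechanism that exhibits the cancellation.

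\textbf{The missing idea.} $\mathscr P_U(r)$ is a flux, so $\partial_t\mathcal Q_{U_t}$ may be replaced by any current with the same divergence. Since $U_0,U_1$ are smooth on all of $B_8$, two such currents have equal flux through every $\partial B_r$. The paper sets $T_t=T_{k-1}(-D^2U_t)$, $Z_t=x\cdot\nabla U_t+\frac{n-2k}{k+1}U_t$, and uses
\[
 \mathcal C_t=HT_t\nabla Z_t-Z_tT_t\nabla H+xHf_t .
\]
Its divergence is $\partial_t(f_tZ_t)$, by $\partial_tf_t=-\operatorname{div}(T_t\nabla H)$ and $\operatorname{div}(T_t\nabla Z_t)=-x\cdot\nabla f_t-\frac{k(n+2)}{k+1}f_t$. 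By \eqref{eq:2.33}, the flux of $\int_0^1\mathcal C_t\,\dd t$ through $\partial B_r$ is therefore $\mathscr P_{U_1}(r)-\mathscr P_{U_0}(r)$.

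This current contains no $t$-derivative of a Newton tensor, and $\nabla H$ enters only as $T_t\nabla H$. Take $X=\zeta(|x|)x/|x|$ and integrate the first term by parts once, using $\operatorname{div}T_t=0$. What remains is $Z_tT_t[\nabla H,X]$, $HZ_t\tr(T_t\nabla X)$ and $(x\cdot X)Hf_t$. The first is bounded by $C\sqrt E$ and the other two by $CK_0\delta$. Combined with your energy bound, this gives \eqref{eq:6.117}.
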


\begin{proof}
Write \(T_t=T_{k-1}(-D^2U_t)\) and
\(Z_t=x\cdot \nabla U_t+\frac{n-2k}{k+1}U_t\).
The divergence-free identity for Newton tensors implies
\begin{equation}\label{eq:6.118}
 \operatorname{div}\left(\int_0^1T_t\dd t\,\nabla H\right)=f_0-f_1.
\end{equation}
Test \eqref{eq:6.118} by \(\chi^2H\), where
\(\chi\in C_c^\infty(A_2)\) equals one on \(A_1\).  The local
Hessian mass estimate \cite[Theorem~3.1]{TW1999} at orders \(k\)
and \(k-1\) gives
\begin{equation}\label{eq:6.119}
\begin{aligned}
 E:=\int\chi^2\int_0^1T_t[\nabla H,\nabla H]\dd t\dd x
 ={}&-2\int\chi H\int_0^1T_t[\nabla H,\nabla\chi]\dd t\dd x
 -\int\chi^2H(f_0-f_1)\dd x\\
 &\leq\frac12E+CK_0\delta^2+\delta S.
\end{aligned}
\end{equation}
Hence \(E\leq C(K_0\delta^2+\delta S)\).
Define
\begin{equation}\label{eq:6.120}
 \mathcal C=\int_0^1\mathcal C_t\dd t=\int_0^1
 \left[HT_t\nabla Z_t-Z_tT_t\nabla H+xH\sigma_k(-D^2U_t)\right]\dd t.
\end{equation}
The identities
\begin{equation}
 \partial_tf_t=-\operatorname{div}(T_t\nabla H),
 \quad
 \partial_tZ_t=x\cdot \nabla H+\frac{n-2k}{k+1}H,
 \quad
 \operatorname{div}(T_t\nabla Z_t)=-x\cdot \nabla f_t-\frac{k(n+2)}{k+1}f_t
\end{equation}
imply, for the integrand \(\mathcal C_t\) in
\eqref{eq:6.120},
\begin{equation}\label{eq:6.121}
 \operatorname{div}\mathcal C_t
 =f_t\left(x\cdot \nabla H+\frac{n-2k}{k+1}H\right)+Z_t\partial_tf_t
 =\partial_t(f_tZ_t).
\end{equation}
Integration of \eqref{eq:6.121} in \(t\)
yields $\operatorname{div}\mathcal C=f_1Z_1-f_0Z_0$.
By integration over \(B_r\)
and Lemma~\ref{Lem:2.9}, we obtain
\begin{equation}\label{eq:6.122}
 \int_{\partial B_r}\mathcal C\cdot\nu\dd S
 =\mathscr P_{U_1}(r)-\mathscr P_{U_0}(r).
\end{equation}

For a compactly supported \(C^1\) vector field \(X\) on \(A_1\),
integration by parts in the first term of
\eqref{eq:6.120} implies
\begin{equation}\label{eq:6.123}
 \int X\cdot\mathcal C\dd x
 =\int_0^1\!\int\left[-2Z_tT_t[\nabla H,X]-\tr(HZ_tT_t\nabla X)
 +(x\cdot X)H\sigma_k(-D^2U_t)\right]\dd x\dd t.
\end{equation}
Applying the Cauchy--Schwarz inequality to the positive-definite quadratic form induced by \(T_t\) and
\eqref{eq:6.119}, we derive
\[
 \left|\int_0^1\int Z_tT_t[\nabla H,X]\dd x\dd t\right|
 \leq C E^{1/2}
 \left(\int_0^1\int_{A_1}\tr T_t\dd x\dd t\right)^{1/2}
 \leq C\sqrt E.
\]
The second and third terms in \eqref{eq:6.123} are
bounded by \(CK_0\delta\).  Therefore, we obtain
\begin{equation}\label{eq:6.124}
 \left|\int X\cdot\mathcal C\dd x\right|
 \leq C\left(\delta+\sqrt{\delta S}\right).
\end{equation}
Take \(X(x)=\zeta(|x|)x/|x|\).  Equation
\eqref{eq:6.122} implies
\[
 \int X\cdot\mathcal C\,\dd x
 =\int_1^2\zeta(r)
 \bigl(\mathscr P_{U_1}(r)-\mathscr P_{U_0}(r)\bigr)\,\dd r.
\]
Equation \eqref{eq:6.124} now proves
\eqref{eq:6.117}.
\end{proof}

\begin{proposition}\label{Prop:6.10}
Suppose that \(W_j\) are positive and smooth in a fixed
ball \(B_{r_0}\), that \(-D^2W_j\in\Gamma_k\), and that
\begin{equation}
 \sigma_k(-D^2W_j)=\varepsilon_jW_j^{p_*},
 \qquad \varepsilon_j\longrightarrow0.
\end{equation}
Suppose also that \(W_j\to W\) locally uniformly on every compact subset of
\(B_{r_0}\setminus\{0\}\), and that
\begin{equation}
 W=\Phi_\kappa+d+o(1)\qquad(x\to0).
\end{equation}
Then \(d=0\).
\end{proposition}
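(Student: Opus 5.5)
The plan is to compare the Pohozaev quantity of \(W_j\) on a fixed annulus with that of the radial model \(V_{R,\tau}\) from \eqref{eq:6.113}, using Lemma~\ref{Lem:2.9}, Lemma~\ref{Lem:6.8} and Lemma~\ref{Lem:6.9}. First, for \(W_j\) itself, \eqref{eq:2.37} with \(\varepsilon=\varepsilon_j\) gives
\[
\mathscr P_{W_j}(r)=\frac{\varepsilon_j r}{p_*+1}\int_{\partial B_r}W_j^{p_*+1}\dd S .
\]
After the rescaling \((W_j)_R(x)=R^{b}W_j(Rx)\) with a small fixed \(R\), \eqref{eq:2.38} gives \(\mathscr P_{(W_j)_R}(r)=R^{b}\mathscr P_{W_j}(Rr)\). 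Since \(W_j\to W\) locally uniformly away from \(0\), the right-hand side tends to \(0\) uniformly for \(r\in[1,2]\), for each fixed \(R\). Here \(W_j\) is bounded on \(\{R\le |x|\le 2R\}\), and \(\varepsilon_j\to0\).

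Second, I would compare \(U_1=(W_j)_R\) with \(U_0=V_{R,\tau}\). The hypothesis \(W=\Phi_\kappa+d+o(1)\) gives \(W_R=R^bW(R\cdot)=\Phi_\kappa+R^bd+R^b o(1)\) on \(A_2\). Lemma~\ref{Lem:6.8} and \eqref{eq:6.114} give \(V_{R,\tau}\to\Phi_\kappa+R^bd\) in \(C^2(A_2)\), up to the constant \(\kappa8^{-b}\), which one must check is absorbed in the normalization of \(q_\tau\) so that the limit is exactly \(\Phi_\kappa+R^bd\). Hence \(\delta=\|U_1-U_0\|_{L^\infty(A_2)}\le R^b o_R(1)+o_j(1)+o_\tau(1)\).

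The constants in Lemma~\ref{Lem:6.9} need to be controlled. The total Hessian mass \(S\) is bounded: \(\sigma_k(-D^2U_1)=\varepsilon_j R^{n+2}\cdots\to0\) in \(L^1(A_2)\), and \(f_\tau\) vanishes on \(A_2\) as \(\tau\to0\). The gradient bound \(L\) needs \(C^1\) control of \(W_j\) on annuli; I would get this from \eqref{eq:6.6} (the local gradient estimate, valid because \(\varepsilon_jW_j^{p_*-k}\) is locally bounded) or from \cite[Theorem~3.2]{ChouWang2001}. The integral bound \(K_0\) on \(\int_0^1\int_{A_2}[\tr T_{k-1}+\sigma_k](-D^2U_t)\) comes from the local Hessian mass estimate \cite[Theorem~3.1]{TW1999}, since each \(-U_t\) is \(k\)-convex: it is a convex combination of \(k\)-convex functions, and the cone is convex. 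That estimate bounds the mass by the oscillation, which is controlled by \(L\). Applying \eqref{eq:6.117} then gives
\[
\Bigl|\int_1^2\zeta\bigl(\mathscr P_{(W_j)_R}-\mathscr P_{V_{R,\tau}}\bigr)\dd r\Bigr|\le C(\delta+\sqrt{\delta S}),
\]
with \(C\) independent of \(j\), \(\tau\) and \(R\) small.

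Letting \(j\to\infty\) and \(\tau\downarrow0\), and using \eqref{eq:6.116}, I obtain
\[
\Bigl|\tfrac{n-2k}{k+1}MR^bd\Bigr|\le C\bigl(R^b\omega(R)+\sqrt{R^b\omega(R)\,S}\bigr),\qquad \omega(R)\to0 .
\]
This is the main obstacle: the \(\sqrt{\delta S}\) term is only of order \(R^{b/2}\), so \(S\) must tend to \(0\) in the limits. It does here, since \(S\to0\) as \(j\to\infty,\tau\to0\) by the previous step, leaving \(|d|\le C\omega(R)\). Letting \(R\to0\) then gives \(d=0\). The care needed is to verify that the constants in Lemma~\ref{Lem:6.9} stay uniform under the rescaling, and that \(\mathscr P_{V_{R,\tau}}\) is computed exactly with the rescaled constant \(R^bd\).
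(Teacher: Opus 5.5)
Your proposal is correct and follows the paper's proof essentially step for step. You rescale to $U_{j,R}=R^bW_j(R\,\cdot)$, whose equation is $\sigma_k(-D^2U_{j,R})=\varepsilon_jR^{-2}U_{j,R}^{p_*}$ rather than $\varepsilon_jR^{n+2}\cdots$ as you wrote. You compare it with $V_{R,\tau}$ through Lemma~\ref{Lem:6.9}, kill $\mathscr P_{U_{j,R}}$ with \eqref{eq:2.37}--\eqref{eq:2.38}, use Lemma~\ref{Lem:6.8}, and let $j\to\infty$, then $\tau\downarrow0$, then $R\downarrow0$. The one minor difference is the bound for $K_0$: you apply the local Hessian mass estimate directly to each $k$-convex $-U_t$, whereas the paper goes through the polarized inequality \eqref{eq:6.130}. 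Your route also works, provided you use the uniform bounds on a slightly larger annulus such as $\{1/8<|x|<6\}$.
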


\begin{proof}
Fix \(0<R<r_0/8\) and let $U_{j,R}(x)=R^bW_j(Rx)$.
Since \(bp_*=n+2\), we have
\begin{equation}\label{eq:6.125}
 \sigma_k(-D^2U_{j,R})
 =\varepsilon_jR^{-2}U_{j,R}^{p_*}.
\end{equation}
Let $\omega(\rho)=\sup\limits_{0<|x|\leq\rho}|W(x)-\Phi_\kappa(x)-d|.$
Then \(\omega(\rho)\to0\), and on fixed annuli in \(B_8\),
\begin{equation}\label{eq:6.126}
 R^bW(Rx)=\Phi_\kappa(x)+R^bd
 +O(R^b\omega(8R)).
\end{equation}
Compare \(U_{j,R}\) with \(V_{R,\tau}\) given by
\eqref{eq:6.113}.  For fixed \(R\), first let
\(j\to\infty\) and \(\tau\downarrow0\).  Equations
\eqref{eq:6.114} and
\eqref{eq:6.126} yield
\begin{equation}\label{eq:6.127}
 \limsup_{\tau\downarrow0}\limsup_{j\to\infty}
 \|U_{j,R}-V_{R,\tau}\|_{L^\infty(A_2)}
 \leq CR^b\omega(8R).
\end{equation}

Equations \eqref{eq:6.126} and
\eqref{eq:6.114} give
\begin{equation}\label{eq:6.128}
 C^{-1}\leq U_{j,R},V_{R,\tau}\leq C
 \quad\hbox{on }\{1/8<|x|<6\}
\end{equation}
for all sufficiently small \(R\), all sufficiently large \(j\)
depending on \(R\), and all sufficiently small \(\tau\).  The
constant is independent of \(R,j,\tau\).
For \(0<R<r_0/8\), let \(j\) large enough such that
\(\varepsilon_jR^{-2}\leq1\).  The local gradient estimate
\eqref{eq:6.6}, applied on a fixed finite cover of \(A_2\) by
balls compactly contained in \(\{1/8<|x|<6\}\), yields the bound for
\(U_{j,R}\). Equation \eqref{eq:6.110} gives the bound for
\(V_{R,\tau}\). Hence
\begin{equation}\label{eq:6.129}
 \|\nabla U_{j,R}\|_{L^\infty(A_2)}
 +\|\nabla V_{R,\tau}\|_{L^\infty(A_2)}\leq C.
\end{equation}

Set
\(U_t=(1-t)U_{j,R}+tV_{R,\tau}\). Fix
\(\zeta\in C_c^\infty((1,2))\) with \(\int_1^2\zeta=1\). For \(1\leq\ell\leq k\), positivity of the mixed Hessians on
\(\Gamma_k\) implies that
\begin{equation}\label{eq:6.130}
\begin{aligned}
 \int_0^1\sigma_\ell(-D^2U_t)\dd t
 &=\frac1{\ell+1}\sum_{m=0}^{\ell}
 \sigma_\ell((-D^2U_{j,R})[\ell-m],
 (-D^2V_{R,\tau})[m])\\
 &\leq\frac1{\ell+1}
 \sigma_\ell(-D^2U_{j,R}-D^2V_{R,\tau}).
\end{aligned}
\end{equation}
Here
\(\sigma_\ell((-D^2U_{j,R})[\ell-m],
(-D^2V_{R,\tau})[m])\) denotes the polarized \(\ell\)-Hessian
with \(\ell-m\) entries \(-D^2U_{j,R}\) and \(m\) entries
\(-D^2V_{R,\tau}\).
The local Hessian mass estimate on
\(\{1/8<|x|<6\}\), applied to
\(-U_{j,R}-V_{R,\tau}\), together with
\eqref{eq:6.128}, bounds the integral of the right-hand side of
\eqref{eq:6.130} over \(A_2\), uniformly for all
sufficiently small \(R\), large \(j\) depending on \(R\), and small
\(\tau\).  Since
\(\tr T_{k-1}=(n-k+1)\sigma_{k-1}\), equation
\eqref{eq:6.130} with
\(\ell=k-1,k\) implies
\begin{equation}\label{eq:6.131}
 \int_0^1\int_{A_2}
 \left[\tr T_{k-1}(-D^2U_t)
 +\sigma_k(-D^2U_t)\right]\dd x\dd t\leq C.
\end{equation}
The integral $S=\int_{A_2}(f_0+f_1)\dd x$ in
Lemma~\ref{Lem:6.9} satisfies
\begin{equation}\label{eq:6.132}
 S=\int_{A_2}(f_0+f_1)\dd x\leq C\varepsilon_jR^{-2}+\int_{A_2}f_\tau\dd x
 \quad\Longrightarrow\quad
 \lim_{\tau\downarrow0}\limsup_{j\to\infty}S=0.
\end{equation}
Equations \eqref{eq:6.128},
\eqref{eq:6.129},
\eqref{eq:6.131}, and
\eqref{eq:6.132} imply
\begin{align}\label{eq:6.133}
 &\quad \left|\int_1^2\zeta(r)
 \bigl(\mathscr P_{U_{j,R}}(r)-
 \mathscr P_{V_{R,\tau}}(r)\bigr)\dd r\right|\notag\\
 &\quad\leq C\Bigg\{
 \|U_{j,R}-V_{R,\tau}\|_{L^\infty(A_2)}
 +\left[\|U_{j,R}-V_{R,\tau}\|_{L^\infty(A_2)}
 \left(C\varepsilon_jR^{-2}+\int_{A_2}f_\tau\dd x\right)
 \right]^{1/2}\Bigg\}.
\end{align}
Formula \eqref{eq:2.36}, the identity
\(\frac{n-2k}{k+1}(p_*+1)=n\), and
\eqref{eq:6.125} imply
\begin{equation}\label{eq:6.134}
 \sup_{1\leq r\leq2}|\mathscr P_{U_{j,R}}(r)|
 =\sup_{1\leq r\leq2}
 \frac{\varepsilon_jR^{-2}r}{p_*+1}
 \int_{\partial B_r}U_{j,R}^{p_*+1}\dd S
 \leq C\varepsilon_jR^{-2}\longrightarrow0.
\end{equation}
Lemma~\ref{Lem:6.8} yields
\begin{equation}\label{eq:6.135}
 \mathscr P_{V_{R,\tau}}(r)\longrightarrow \frac{n-2k}{k+1}MR^bd
 \qquad(1\leq r\leq2)\quad\hbox{as }\tau\downarrow0.
\end{equation}
Equations \eqref{eq:6.134},
\eqref{eq:6.135},
\eqref{eq:6.127}, \eqref{eq:6.132}, and
\eqref{eq:6.133}, first with \(R\) fixed, then with
\(j\to\infty\), and then with \(\tau\downarrow0\), yield that
\begin{equation}\label{eq:6.136}
 \frac{n-2k}{k+1}MR^b|d|\leq CR^b\omega(8R).
\end{equation}
The constant in \eqref{eq:6.136} is independent of \(R\).
Dividing by \(R^b\) and letting \(R\downarrow0\) gives \(d=0\).
\end{proof}

\begin{lemma}\label{Lem:6.11}
Let \(M>0\) and let \(q_0,q_1\in\Phi^k(B_r)\). Assume their boundary
values are continuous.  If
\[
 \mu_k[q_0]=\mu_k[q_1]=M\delta_0,
 \qquad q_0\leq q_1\quad\hbox{on }\partial B_r,
\]
then \(q_0\leq q_1\) in \(B_r\).
\end{lemma}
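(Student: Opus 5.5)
The plan is to reduce to a comparison principle between a strict subsolution and $q_1$, using the homogeneity $\mu_k[sq]=s^k\mu_k[q]$ to make the atomic mass of the competitor strictly larger than that of $q_1$. I then let the scaling parameter tend to one.

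\emph{Step 1 (competitor).} Set $m=\max_{\partial B_r}q_0$, which is finite because the boundary values are continuous. For $s>1$ put $v_s=s\,q_0-(s-1)m$. This function lies in $\Phi^k(B_r)$ and satisfies $\mu_k[v_s]=s^kM\delta_0$. On $\partial B_r$ we have $v_s=q_0+(s-1)(q_0-m)\leq q_0\leq q_1$. As $s\downarrow1$, $v_s\to q_0$ pointwise wherever $q_0>-\infty$. It therefore suffices to prove $v_s\leq q_1$ in $B_r$ for each $s>1$.

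\emph{Step 2 (comparison with strictly larger data).} Now $\mu_k[v_s]=s^kM\delta_0\geq M\delta_0=\mu_k[q_1]$ as Borel measures, and $v_s\leq q_1$ on $\partial B_r$. I want to use the Trudinger--Wang comparison principle, Lemma~\ref{Lem:4.1}. That lemma is only stated for functions continuous on $\overline B_r$, whereas our functions have a pole at $0$. I intend to remove this restriction by approximation.

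First, replace $q_1$ by the solutions $q_{1,\tau}$ of the Dirichlet problem $\sigma_k(D^2q_{1,\tau})=f_\tau$ in $B_r$ with boundary data $q_1|_{\partial B_r}$, where $f_\tau$ is the Gaussian approximation of $M\delta_0$ used before Lemma~\ref{Lem:6.8}. These are continuous on $\overline B_r$, and by the Hessian-measure stability results of \cite{TW1999} (uniqueness for measure data with continuous boundary values) they converge to $q_1$ locally in measure.

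Second, approximate $v_s$ from below by continuous $k$-convex functions $v_{s,\tau}$ with $\mu_k[v_{s,\tau}]=s^kf_\tau$ and boundary values $\leq v_s$; for instance, solve the Dirichlet problem with data $s^kf_\tau$ and the boundary values of $v_s$.

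Lemma~\ref{Lem:4.1} then gives $v_{s,\tau}\leq q_{1,\tau}$, because $s^kf_\tau\geq f_\tau$. Passing $\tau\downarrow0$ via weak continuity of Hessian measures and uniqueness for the Dirichlet problem with measure data should yield $v_s\leq q_1$ almost everywhere. Upper semicontinuous regularization then upgrades this to everywhere in $B_r$.

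\emph{Main obstacle.} The delicate step is the identification of the limits in Step~2, namely that the regularized Dirichlet solutions with data $f_\tau\rightharpoonup M\delta_0$ converge to $q_1$ itself and not to some other $k$-convex function with the same measure. Near the pole these functions are unbounded, since $b>0$, so continuity-based uniqueness does not apply directly.

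I would first try to handle this with the explicit radial barriers $\psi_\kappa$ and $q_\tau$ from \eqref{eq:6.110}. These show that any such function differs from $-\kappa|x|^{-b}$ by a bounded amount near $0$, with $\kappa$ determined by $M$ as in the statement of Theorem~\ref{Thm:6.7}. Given that bound, the comparison can be carried out on $B_r\setminus\overline{B_\rho}$. There the pole terms dominate at $|x|=\rho$ because $s>1$: $v_s\approx -s\kappa\rho^{-b}$, which lies below $q_1\approx-\kappa\rho^{-b}+O(1)$ for small $\rho$. Lemma~\ref{Lem:4.1} then applies on the annulus, where both functions are continuous. If continuity of $q_0,q_1$ in the punctured ball is not available, I would fall back on the general TW comparison for $\Phi^k$ functions, obtained through sup-convolution.
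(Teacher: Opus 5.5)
Your Step~1 is fine, but Step~2 has a genuine gap, and it sits exactly where the content of the lemma lies: nowhere do you obtain control of $q_0,q_1$ at their common pole.

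In the approximation route, the claim that the Dirichlet solutions $q_{1,\tau}$ (and $v_{s,\tau}$) converge to $q_1$ (and $v_s$) is a \emph{uniqueness} theorem for $\mu_k[q]=M\delta_0$ with continuous boundary data. Weak continuity \cite{TW1999} only tells you that a limit has Hessian measure $M\delta_0$, not which function it is. Lemma~\ref{Lem:4.1} cannot decide this, because the competitors are unbounded at $0$. That uniqueness is precisely what the paper imports. After normalizing $M=1$, it takes the Perron solution $\widetilde q\geq q_0$ with boundary values $q_1|_{\partial B_r}$ from \cite[Lemma~4.3]{TW2002}, and identifies $\widetilde q=q_1$ by the uniqueness result \cite[Theorem~4.5]{TW2002}.

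Your barrier route does not supply a substitute. The claim that every $q\in\Phi^k(B_r)$ with $\mu_k[q]=M\delta_0$ satisfies $q=\psi_\kappa+O(1)$ near $0$ cannot come from radial barriers. Comparing $q$ with $\psi_\kappa+c$ on $B_r\setminus\overline{B_\rho}$ already requires the unknown values of $q$ on $\partial B_\rho$. In the paper the logic runs the other way: the bound \eqref{eq:6.145} is \emph{derived from} Lemma~\ref{Lem:6.11} in the proof of Proposition~\ref{Prop:6.13}, and Theorem~\ref{Thm:6.7} takes \eqref{eq:6.93} as a hypothesis. Using it here is therefore circular.

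The potential estimates of \cite{Labutin2002} do not rescue this. They give only $c_1\kappa|x|^{-b}-C\leq -q\leq c_2\kappa|x|^{-b}+C$ with non-sharp constants $c_1\leq 1\leq c_2$. That yields $v_s<q_1$ on $\partial B_\rho$ only for $s>c_2/c_1$, not as $s\downarrow1$. Your fallback to a ``general'' comparison via sup-convolution does not help either. Regularization does not touch the point where both functions equal $-\infty$ and both measures carry the same atom, and extending comparison to that configuration is the statement itself.

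To close the gap you need one genuinely new input at the pole. One option is the uniqueness theorem for Dirac data, as in the paper. The other is a sharp asymptotic $|x|^{b}q_i(x)\to-\kappa_i'$ as $x\to0$ for each $q_i$. Then $\kappa_i'=\kappa$ is forced: $\mu_k$-mass is conserved under the rescaling $\rho^{b}q_i(\rho\,\cdot)$, and weak continuity passes it to the limit. Granting such asymptotics, your annulus argument does work:
\begin{itemize}
\item for fixed $s>1$, it gives $v_s<q_1$ on $\partial B_\rho$ for small $\rho$;
\item Lemma~\ref{Lem:4.1} then applies on $B_r\setminus\overline{B_\rho}$, since both Hessian measures vanish there and both functions are continuous on the closed annulus (interior regularity from \cite{Wang2009}, plus the assumed boundary continuity);
\item letting $\rho\downarrow0$ and then $s\downarrow1$ finishes the proof.
\end{itemize}
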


\begin{proof}
The homogeneity \(\mu_k[cq]=c^k\mu_k[q]\) reduces the proof to
\(M=1\).  The boundary inequality and the measure identity in the
statement are the subsolution hypotheses of Lemma~4.3 in
\cite{TW2002} for boundary value \(q_1|_{\partial B_r}\).  That lemma
gives a solution
\(\widetilde q\geq q_0\) with this boundary value.  Since
\(\mu_k[\widetilde q]=\mu_k[q_1]=\delta_0\), Theorem~4.5 of
\cite{TW2002} implies \(\widetilde q=q_1\).  Hence \(q_0\leq q_1\)
in \(B_r\).
\end{proof}

\begin{corollary}\label{Cor:6.12}
Under the hypotheses of Proposition~\ref{Prop:6.10},
assume in addition that \(W\geq\Phi_\kappa\) in
{\(B_{r_0}\setminus\{0\}\)} and that
\(\mu_k[-W]=\mu_k[-\Phi_\kappa]=M\delta_0\) in
{\(B_{r_0}\)}.
Then \(W\equiv\Phi_\kappa\) in
{\(B_{r_0}\setminus\{0\}\)}.
\end{corollary}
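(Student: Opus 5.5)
The plan is to combine Proposition~\ref{Prop:6.10} with the comparison principle of Lemma~\ref{Lem:6.11} on small balls, using the one-sided information \(W\geq\Phi_\kappa\) to turn an asymptotic statement into an exact one. Set \(q_1=-\Phi_\kappa=\psi_\kappa\) and \(q_0=-W\). By hypothesis both are \(k\)-convex in \(B_{r_0}\) with \(\mu_k[q_0]=\mu_k[q_1]=M\delta_0\). Moreover \(q_0\leq q_1\) in the punctured ball, and \(q_0\) is continuous away from the origin as a locally uniform limit.

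\textbf{Step 1: the expansion at the pole.} Since \(W-\Phi_\kappa\geq0\) and, by the hypotheses of Proposition~\ref{Prop:6.10}, \(W=\Phi_\kappa+d+o(1)\) as \(x\to0\), that proposition gives \(d=0\). Hence \(W-\Phi_\kappa\to0\) at the origin.

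\textbf{Step 2: the reverse inequality.} Here I use the comparison principle with a shifted barrier. Fix any \(0<r<r_0\) and any \(\varepsilon>0\). The function \(W-\Phi_\kappa\) is continuous on the compact punctured region and tends to \(0\) at the origin. On \(\partial B_r\) it equals some continuous nonnegative function \(g\). Consider \(q_0=-W\) and \(q_1^{c}=\psi_\kappa-c\) with the constant \(c=\max_{\partial B_r}g\), so that \(q_1^{c}\leq q_0\) on \(\partial B_r\). Adding a constant does not change the Hessian measure, so Lemma~\ref{Lem:6.11}, with the roles exchanged, gives \(\psi_\kappa-c\leq -W\) in \(B_r\), i.e. \(W\leq\Phi_\kappa+c\) in \(B_r\).

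This alone is not sharp. The actual rigidity comes from applying the argument on larger balls to control the maximum of \(g\) on spheres in terms of its behavior near the origin. I would apply Lemma~\ref{Lem:6.11} on \(B_\rho\) for every \(\rho<r_0\) and combine it with the fact that the function \(m(\rho)=\max_{\partial B_\rho}(W-\Phi_\kappa)\) controls \(W-\Phi_\kappa\) inside \(B_\rho\). Then \(\sup_{B_\rho\setminus\{0\}}(W-\Phi_\kappa)=m(\rho)\), so \(m\) is nondecreasing.

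\textbf{Step 3: the main obstacle.} The expected difficulty is forcing \(m\equiv0\), i.e. excluding a nonnegative difference that vanishes only at the pole. For this I would use the strong maximum principle for the equation \(\mu_k=0\) off the origin. By Theorem~\ref{Thm:6.7}, \(-W\) is smooth near the pole, and \(\psi_\kappa\) is smooth there as well; in a punctured neighborhood both are classical solutions of \(\sigma_k=0\) with Hessians near \(D^2\psi_\kappa\), where \(\mathscr F\) is uniformly elliptic by \eqref{eq:6.101}. Their difference \(e=-W-\psi_\kappa\leq0\) then satisfies a linear uniformly elliptic equation \(B^{ij}e_{ij}=0\) as in \eqref{eq:6.105}, with the uniform ellipticity of \(B\) coming from \eqref{eq:6.104}. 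Now suppose \(e\not\equiv0\). The function \(m\) is nondecreasing with \(m(0^+)=0\). Its spherical average behaves as in the proof of Theorem~\ref{Thm:6.7}, satisfying \(m''-bm'\geq0\) in \(t=-\log\rho\). This forces the average to be monotone with limit \(0\), and I would then use the Hopf/strong maximum principle together with the harmonic-type decomposition of Theorem~\ref{Thm:6.7} to force \(e\equiv0\) near the pole, and then extend this to all of \(B_{r_0}\setminus\{0\}\) via the strong comparison principle on the connected punctured ball. The delicate point is passing from \(e\leq0\), \(e(x)\to0\) at the singular point to \(e\equiv0\). Since the linearized operator is not defined at the origin, I expect to need the explicit decay rates \(r^{\mu_\ell^{\pm}}\) of Theorem~\ref{Thm:6.7}, where the nonconstant modes must vanish, rather than a direct Hopf lemma.
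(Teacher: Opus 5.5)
There is a genuine gap: the decisive step is never carried out.

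\textbf{Step 2 yields no rigidity.} You shift by $c=\max_{\partial B_r}(W-\Phi_\kappa)$, so Lemma~\ref{Lem:6.11} only gives the upper bound $W\le\Phi_\kappa+c$ in $B_r$. That bound is consistent with $W-\Phi_\kappa\to0$ at the pole, as you acknowledge.

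\textbf{Step 3 leaves the crux open.} You say explicitly that passing from $e\le0$, $e\to0$ at the origin, to $e\equiv0$ is ``the delicate point'', and you do not resolve it. Knowing that the spherical average is ``monotone with limit $0$'' is not enough, because a nonpositive function can increase to $0$. You also conflate $m(\rho)=\max_{\partial B_\rho}(W-\Phi_\kappa)$ with the spherical average of $e$. The inequality $m''-bm'\ge0$ holds only for the latter; the former is merely nondecreasing with $m(0^+)=0$, which is compatible with $e\not\equiv0$. No mode decay rates $r^{\mu_\ell^\pm}$ are needed.

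\textbf{The paper's missing idea: shift by the minimum.} By \eqref{eq:6.109} and the strong maximum principle on the connected set $B_{r_0}\setminus\{0\}$, either $e\equiv0$ or $e<0$ throughout. In the second case $c_0=\min_{\partial B_r}(W-\Phi_\kappa)>0$, so $-W\le-\Phi_\kappa-c_0$ on $\partial B_r$. Both $-W$ and $-\Phi_\kappa-c_0$ have Hessian measure $M\delta_0$ in $B_r$. Lemma~\ref{Lem:6.11}, which is exactly the comparison tool that works across the singular point, then gives $W\ge\Phi_\kappa+c_0$ in $B_r\setminus\{0\}$. This contradicts $W-\Phi_\kappa\to0$ from Proposition~\ref{Prop:6.10}.

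\textbf{Your averaging route can be completed, but only with the right sign.} On a small punctured ball, Theorem~\ref{Thm:6.7} makes $e$ smooth, since $W=\Phi_\kappa+O(1)$. There the argument in the proof of Theorem~\ref{Thm:6.7} shows that the average $\bar m(t)$ of $e$ over $\partial B_{e^{-t}}$ satisfies $\bar m'\le0$. Hence $\bar m(t)\ge\lim_{s\to\infty}\bar m(s)=0$, using $d=0$. Combined with $\bar m\le0$, this forces $\bar m\equiv0$, so the continuous function $e\le0$ vanishes near the pole. The strong maximum principle then propagates $e\equiv0$ to all of $B_{r_0}\setminus\{0\}$.
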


\begin{proof}
Put \(q=-W\), \(\psi=-\Phi_\kappa\), and
\(e=q-\psi\leq0\).  Inequality
\eqref{eq:6.109} and the strong maximum principle
give \(e\equiv0\) or \(e<0\).  In the second case choose a small
{sphere \(\partial B_r\), \(0<r<r_0\),} and let
\(c_0=\min\limits_{\partial B_r}(W-\Phi_\kappa)>0\).  Then
\(-W\leq-\Phi_\kappa-c_0\) on \(\partial B_r\), and
\(\mu_k[-W]=\mu_k[-\Phi_\kappa-c_0]\) in \(B_r\).
Lemma~\ref{Lem:6.11} gives
\(W\geq\Phi_\kappa+c_0\) in \(B_r\).  This contradicts the limit $W(x)-\Phi_\kappa(x)\longrightarrow0$ as $x\to0$, proved  in Proposition~\ref{Prop:6.10}.
Hence \(e\equiv0\) in {\(B_{r_0}\setminus\{0\}\)}.
\end{proof}

\begin{proposition}\label{Prop:6.13}
Let \(\Gamma_j\to\infty\), and suppose that \(v_j\) solve
\eqref{eq:6.78}, satisfy \eqref{eq:6.79}, and converge locally
uniformly to the normalized bubble \(U\) in \eqref{eq:6.81}.
Then, for every fixed \(\vartheta>1\) and every
\(S_j=o(\Gamma_j)\),
\begin{equation}\label{eq:6.137}
 v_j<\vartheta U\quad\hbox{in }B_{S_j}
\end{equation}
for all sufficiently large \(j\).
\end{proposition}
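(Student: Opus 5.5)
We argue by contradiction and use the isolated-pole machinery just developed (Theorem~\ref{Thm:6.7}, Proposition~\ref{Prop:6.10}, Corollary~\ref{Cor:6.12}) together with a first-contact radius in the spirit of \cite{Zhang}. Suppose that for some $\vartheta>1$ and some $S_j=o(\Gamma_j)$ there are points $x_j\in B_{S_j}$ with $v_j(x_j)\geq\vartheta U(x_j)$. Local uniform convergence $v_j\to U$ forces $|x_j|\to\infty$.

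Following \cite{Zhang}, we choose the first-contact radius
\[
r_j=\inf\{r>0:\ \exists\,x\in \overline{B_r},\ v_j(x)\geq\vartheta U(x)\}.
\]
Then $r_j\to\infty$ and $r_j\leq S_j=o(\Gamma_j)$. By construction $v_j<\vartheta U$ in $B_{r_j}$, and equality holds at some $\bar x_j$ with $|\bar x_j|=r_j$. Next we rescale at the scale $r_j$ in the Pohozaev-invariant way of \eqref{eq:2.38}:
\[
W_j(y)=r_j^{b}\,v_j(r_jy).
\]
Since $bp_*=n+2$, this gives $\sigma_k(-D^2W_j)=r_j^{-2}W_j^{p_*}$ in $B_{4\Gamma_j/r_j}$, so $\varepsilon_j=r_j^{-2}\to0$, and the domains exhaust $\R^n$. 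Because $U(x)\sim h_0^{-1/\gamma}|x|^{-b}$, the contact condition yields
\[
W_j\leq\vartheta(1+o(1))\,h_0^{-1/\gamma}|y|^{-b}\quad\text{on }B_1\setminus\{0\},
\qquad W_j(\bar y_j)\approx\vartheta h_0^{-1/\gamma}\ \text{ at } |\bar y_j|=1.
\]
Moreover $W_j(y)\to h_0^{-1/\gamma}|y|^{-b}$ on shrinking-scale regions near the origin, because $v_j\to U$ there.

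We then use the Harnack and gradient estimates \eqref{eq:6.6} and the Hessian-measure lower bound \eqref{eq:6.5} to extract a locally uniform subsequential limit $W$ of $W_j$ on $\R^n\setminus\{0\}$, at least on $B_2\setminus\{0\}$. The limit satisfies $\mu_k[-W]=0$ off the origin, carries an atom at $0$, and obeys $|W-\Phi_\kappa|\leq C$ near $0$ with $\kappa=h_0^{-1/\gamma}$. The atom should have mass exactly $M=\frac{|\mathbb S^{n-1}|}{k}\binom{n-1}{k-1}(\kappa b)^k$, namely the total mass of the bubble $\int U^{p_*}$, concentrated at the origin. The bound near $0$ comes from the bubble profile on intermediate scales together with the upper barrier.

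Theorem~\ref{Thm:6.7} then gives the expansion $W=\Phi_\kappa+d+o(1)$, and Proposition~\ref{Prop:6.10} forces $d=0$. On the contact side we need $W\geq\Phi_\kappa$. We plan to get this from a comparison on $B_{r}\setminus\{0\}$: $v_j$ dominates the radial solutions built from its own mass (as in Lemma~\ref{Lem:3.4}), and Lemma~\ref{Lem:6.11} carries this to the limit. Corollary~\ref{Cor:6.12} then gives $W\equiv\Phi_\kappa$ on $B_{r_0}\setminus\{0\}$, and unique continuation for the uniformly elliptic linearization extends this to $B_2\setminus\{0\}$. But $W(\bar y)=\vartheta\kappa>\kappa=\Phi_\kappa(\bar y)$ at the limit point $|\bar y|=1$, which is a contradiction.

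The main obstacle is justifying the limit profile near the pole. There are three pieces to this. First, the mass of $\mu_k[-W]$ at $0$ must be exactly $M$, with no extra bubble mass accumulating at intermediate scales between $O(1)$ and $r_j$. Second, we need the two-sided bound $|W-\Phi_\kappa|\leq C$. Third, we need the lower bound $W\geq\Phi_\kappa$. We would attack these with the upper barrier $v_j<\vartheta U$ in $B_{r_j}$, which prevents concentration since $\vartheta U$ has integrable $p_*$-mass tails, combined with the mass estimate \eqref{eq:6.5} and Lemma~\ref{Lem:6.11} applied on annuli at intermediate scales.
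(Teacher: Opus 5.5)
Your treatment of the pole matches the paper's. Dominated convergence under the barrier \(v_j<\vartheta U\) gives \(\mu_k[-W]=M\delta_0\). Lemma~\ref{Lem:6.11} gives \(W=\Phi_\kappa+O(1)\). Theorem~\ref{Thm:6.7}, Proposition~\ref{Prop:6.10} and Corollary~\ref{Cor:6.12} then give \(W\equiv\Phi_\kappa\) on \(B_1\setminus\{0\}\); the strong maximum principle inside Corollary~\ref{Cor:6.12} already covers the whole punctured ball, so no unique continuation is needed.

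The gap is the final step. The first-contact construction bounds \(W_j\) only inside \(B_1\). Beyond \(\partial B_1\) you only have \(W_j\le C_0r_j^{b}\to\infty\), and nothing available at this stage excludes further bubbles of \(v_j\) at distance comparable to \(r_j\) from the origin. So \eqref{eq:6.5}--\eqref{eq:6.6} do not give a locally uniform limit on \(B_2\setminus\{0\}\). Indeed, \eqref{eq:6.6} cannot even be applied on a ball around \(\bar y_j\) without a bound on \(\varepsilon_jW_j^{p_*-k}\) there. With the limit known only on \(B_1\setminus\{0\}\), the facts \(W_j\to\Phi_\kappa\) there and \(W_j(\bar y_j)\to\vartheta\kappa\) at \(|\bar y_j|=1\) are compatible. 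Interior estimates in \(B_1\) do not reach the boundary point, and superharmonicity of \(W_j\) bounds averages by \(W_j(\bar y_j)\), which is the wrong direction.

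What is missing is a local no-concentration argument at the contact point; this is where the paper spends most of its proof.
\begin{itemize}
\item Since \(W_j(\bar y_j)\) is bounded and \(-W_j\) is \(k\)-convex near \(\bar y_j\), \eqref{eq:6.5} gives \(\mu_j(B_s(\bar y_j))\le Cs^{n-2k}\) for \(\mu_j=\varepsilon_jW_j^{p_*}\dd x\).
\item Suppose \(\Theta_j=\varepsilon_j^{1/(2k)}W_j^{(k+1)/(n-2k)}\) were unbounded on \(B_{s_*/2}(\bar y_j)\). A point selection and rescaling at scale \(\Theta_j(z_j)^{-1}\) then produces solutions \(V_j\) of \(\sigma_k(-D^2V_j)=V_j^{p_*}\) with \(V_j(0)=1\) and \(V_j\le 2^{(n-2k)/(k+1)}\) on balls of radius tending to infinity. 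By \eqref{eq:6.6}, \(\int_{B_{\varrho_0}}V_j^{p_*}\dd y\ge c_*\).
\item Undoing the scaling, and using \(v_j\le C_0\) from \eqref{eq:6.79} (a hypothesis your argument never uses), gives \(\mu_j(B_{s_*}(\bar y_j))\ge c_*/C_0\). This contradicts \(Cs_*^{n-2k}<c_*/(2C_0)\).
\item Hence \(\Theta_j\) is bounded near \(\bar y_j\), and \eqref{eq:6.6} yields \(|\nabla\log W_j|\le C\) on a fixed ball \(B_{s_0}(\bar y_j)\).
\item Comparing \(W_j(\bar y_j)\) with \(W_j((1-s)\bar y_j)\to\kappa(1-s)^{-b}\) gives \(|\log\vartheta+b\log(1-s)|\le Cs\), i.e.\ \(\vartheta=1\), the required contradiction.
\end{itemize}

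A secondary point: \(W\ge\Phi_\kappa\) cannot come from Lemma~\ref{Lem:6.11} on \(B_r\). That would require \(\min_{\partial B_r}W\ge\Phi_\kappa(r)\), which is what you are trying to prove. It comes instead from an exterior comparison of Lemma~\ref{Lem:3.4} type on \(B_{2\Gamma_j/r_j}\setminus B_{r_\delta}\), with the \(k\)-harmonic barrier \((1-\delta)\kappa\bigl(|y|^{-b}-(2\Gamma_j/r_j)^{-b}\bigr)\). This is exactly where \(S_j=o(\Gamma_j)\) is used.
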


\begin{proof}
Assume on the contrary that \eqref{eq:6.137} does not hold, and let
\begin{equation}
 \rho_j=\inf\left\{r\leq S_j:
 \max_{{\overline B_r}}\frac{v_j}{U}\geq\vartheta\right\}.
\end{equation}
Local convergence gives \(\rho_j\to\infty\), while
\(\rho_j=o(\Gamma_j)\).  For some
\(e_j\in\mathbb S^{n-1}\),
\begin{equation}\label{eq:6.138}
 v_j<\vartheta U\quad\hbox{in }B_{\rho_j},
 \qquad
 v_j(\rho_je_j)=\vartheta U(\rho_j).
\end{equation}
Set $W_j(x)=\rho_j^bv_j(\rho_jx)$, $\varepsilon_j=\rho_j^{-2}$ and $\kappa=h_0^{-1/\gamma}$. Then
\begin{equation}\label{eq:6.139}
 \sigma_k(-D^2W_j)=\varepsilon_jW_j^{p_*},
 \qquad
 W_j(e_j)\longrightarrow\vartheta\kappa.
\end{equation}
Moreover, one has
\begin{equation}\label{eq:6.140}
 U(r)=\kappa r^{-b}+o(r^{-b}).
\end{equation}
{Direct differentiation of the
explicit formula \eqref{eq:6.27} also gives
\(-U'(r)=\kappa b r^{-b-1}+O(r^{-b-3})\).}
Equations \eqref{eq:6.138} and
\eqref{eq:6.140} imply
\(W_j(x)\leq C|x|^{-b}\) for \(0<|x|<1\).
For every \(K\Subset B_1\setminus\{0\}\), equations
\eqref{eq:6.138},
\eqref{eq:6.140}, and \eqref{eq:6.6} imply that $\{W_j\}$ is precompact in $C^{0}(K)$, while
\begin{equation}\label{eq:6.141}
 \|\varepsilon_jW_j^{p_*}\|_{L^\infty(K)}\longrightarrow0.
\end{equation}
Equation \eqref{eq:6.139},
\eqref{eq:6.141}, and weak continuity of
Hessian measures \cite[Theorem~1.1]{TW1999} imply that, by selecting a diagonal
subsequence,
\begin{equation}\label{eq:6.142}
 W_j\longrightarrow W
 \quad\hbox{locally uniformly in }B_1\setminus\{0\},
 \qquad
 \mu_k[-W]=0\quad\hbox{in} \,\, B_1\setminus\{0\}.
\end{equation}
Let \(d\mu_j=\varepsilon_jW_j^{p_*}\dd x\).  For
\(\varphi\in C_c(B_1)\), scaling gives
\begin{equation}\label{eq:6.143}
 \int\varphi\dd\mu_j
 =\int_{B_{\rho_j}}\varphi(y/\rho_j)v_j(y)^{p_*}\dd y.
\end{equation}
Equations \eqref{eq:6.138} and
\eqref{eq:6.143} {provide} the integrable dominating function \(\|\varphi\|_\infty\vartheta^{p_*}U^{p_*}\); hence applying
the dominated convergence theorem to \eqref{eq:6.143} yields
\begin{equation}\label{eq:6.144}
 \mu_j\rightharpoonup M\delta_0,
 \qquad
 M=\int_{\R^n}U^{p_*}\dd x
 =\frac{|\mathbb S^{n-1}|}{k}
 \binom{n-1}{k-1}(\kappa b)^k.
\end{equation}
{The last equality in
\eqref{eq:6.144} follows from the radial Hessian formula and
\(-U'(r)=\kappa b r^{-b-1}+O(r^{-b-3})\), since}
\[
 \int_{\R^n}U^{p_*}\dd x
 =\lim_{r\to\infty}\frac{|\mathbb S^{n-1}|}{k}
 \binom{n-1}{k-1}r^{n-k}(-U'(r))^k
 =\frac{|\mathbb S^{n-1}|}{k}
 \binom{n-1}{k-1}(\kappa b)^k.
\]
Since \(b<n\), the estimate \(W_j(x)\leq C|x|^{-b}\) for \(0<|x|<1\) and local uniform
convergence in $B_1\setminus \{0\}$ imply \(-W_j\to-W\) in
\(L^1_{\mathrm{loc}}(B_1)\). Let the same symbol \(-W\) denote the
upper-semicontinuous \(k\)-convex representative of $-W$ on
\(B_1\). Lemma~2.2 and Theorem~1.1 in \cite{TW1999}, together with
\eqref{eq:6.144}, imply $\mu_k[-W]=M\delta_0$.
Fix \(r<1\), and denote the minimum and maximum of \(W\) on
\(\partial B_r\) by \(m_r\) and \(M_r\).  Applying Lemma~\ref{Lem:6.11} to \(-W\), $-\Phi_\kappa+\Phi_\kappa(r)-m_r$ and $-\Phi_\kappa+\Phi_\kappa(r)-M_r$, we get
\begin{equation}
 \Phi_\kappa(x)-\Phi_\kappa(r)+m_r
 \leq W(x)\leq
 \Phi_\kappa(x)-\Phi_\kappa(r)+M_r
 \qquad(0<|x|<r).
\end{equation}
Thus
\begin{equation}\label{eq:6.145}
 W=\Phi_\kappa+O(1)\qquad(x\to0).
\end{equation}
Apply Theorem~\ref{Thm:6.7} to \(q=-W\), there is
\(d\in\R\) such that
\begin{equation}\label{eq:6.146}
 W=\Phi_\kappa+d+o(1).
\end{equation}

Fix
\(x\in B_1\setminus\{0\}\) and \(\delta\in(0,1/2)\).  By
\eqref{eq:6.145}, choose
\(0<r_\delta<|x|\)
so that \(W\geq(1-\delta/2)\Phi_\kappa\) on
\(\partial B_{r_\delta}\).
Local convergence yields
\(W_j\geq(1-\delta)\Phi_\kappa\) on
\(\partial B_{r_\delta}\).  Set
\begin{equation}\label{eq:6.147}
 R_j=\frac{2\Gamma_j}{\rho_j}\longrightarrow\infty,
 \qquad
 H_{j,\delta}(x)=(1-\delta)\kappa
 (|x|^{-b}-R_j^{-b}).
\end{equation}
The function \(-H_{j,\delta}\) is \(k\)-convex and \(k\)-harmonic
in \(B_{R_j}\setminus B_{r_\delta}\).  Equation
\eqref{eq:6.142} and the choice of
\(r_\delta\) give
\(H_{j,\delta}\leq(1-\delta)\Phi_\kappa\leq W_j\) on
\(\partial B_{r_\delta}\).  Equation
\eqref{eq:6.147} {yields}
\(H_{j,\delta}=0<W_j\) on \(\partial B_{R_j}\).
Hessian comparison implies
\(W_j\geq H_{j,\delta}\).  Evaluating this inequality at \(x\),
letting \(j\to\infty\), and then letting \(\delta\downarrow0\) yields \(W(x)\geq\Phi_\kappa(x)\).  Since \(x\) is arbitrary, we have
\begin{equation}\label{eq:6.148}
 W\geq\Phi_\kappa\quad\hbox{in }B_1\setminus\{0\}.
\end{equation}
Equations \eqref{eq:6.139},
\eqref{eq:6.142}, and
\eqref{eq:6.146} verify the hypotheses
of Proposition~\ref{Prop:6.10}.  Hence \(d=0\).
Corollary~\ref{Cor:6.12}
and \eqref{eq:6.148} imply
\begin{equation}\label{eq:6.149}
 W\equiv\Phi_\kappa.
\end{equation}
Fix
\(0<R_0<1/12\).  For all large \(j\), the ball
\(B_{3R_0}(e_j)\) lies in $B_{4\Gamma_j/\rho_j}$ and does not meet the
origin.  Applying \eqref{eq:6.5} with \(x_0=e_j\) and \(R=R_0\), \eqref{eq:6.139} implies
\begin{equation}\label{eq:6.150}
 \mu_j(B_r(e_j))\leq CW_j(e_j)^kr^{n-2k}\leq Cr^{n-2k},
 \qquad0<r<R_0/16.
\end{equation}
Put
\(\Theta_j=\varepsilon_j^{1/(2k)}W_j^{(k+1)/(n-2k)}\).
If \(\sigma_k(-D^2V)=V^{p_*}\) in \(B_2\), \(V(0)=1\), and
\(0<V\leq2^{(n-2k)/(k+1)}\), then \eqref{eq:6.6} with \(c=r=1\) implies
\(|\nabla\log V|\leq C\) in \(B_1\). Choose a fixed
\(\varrho_0>0\) so that \(e^{-C\varrho_0}\geq1/2\). Then
\begin{equation}\label{eq:6.151}
 V(y)\geq e^{-C|y|}\geq\frac12
 \quad\hbox{in }B_{\varrho_0},
 \qquad
 \int_{B_{\varrho_0}}V^{p_*}\dd y
 \geq2^{-p_*}|B_{\varrho_0}|=:c_*.
\end{equation}
Choose \(0<r_*<R_0/32\) so that
\begin{equation}\label{eq:6.152}
 Cr_*^{n-2k}<\frac{c_*}{2C_0}.
\end{equation}
Suppose that \(\Theta_j\) is unbounded in \(B_{r_*/2}(e_j)\).
Choose \(y_j\in B_{r_*/2}(e_j)\) with
\(\Theta_j(y_j)\to\infty\), and choose \(z_j\in B_{r_*}(e_j)\)
maximizing \((r_*-|z-e_j|)\Theta_j(z)\).  Put
\(d_j=r_*-|z_j-e_j|\).  Then
\begin{equation}\label{eq:6.153}
 d_j\Theta_j(z_j)
 \geq\frac{r_*}{2}\Theta_j(y_j)\longrightarrow\infty,
 \qquad
 \Theta_j(x)\leq2\Theta_j(z_j)
 \quad\hbox{if }|x-z_j|\leq d_j/2.
\end{equation}
Put
\begin{equation}\label{eq:6.154}
 H_j=W_j(z_j),\qquad
 \widehat{r_j}=\varepsilon_j^{-1/(2k)}H_j^{-(k+1)/(n-2k)},
 \qquad
 V_j(y)=H_j^{-1}W_j(z_j+\widehat{r_j}y).
\end{equation}
Equations \eqref{eq:6.153} and
\eqref{eq:6.154} imply
\begin{equation}\label{eq:6.155}
 \frac{\widehat{r_j}}{d_j}
 =\frac1{d_j\Theta_j(z_j)}\longrightarrow0,\qquad
 V_j(0)=1,\qquad
 0<V_j\leq2^{(n-2k)/(k+1)}
 \quad\hbox{in }B_{d_j/(2\widehat{r_j})}.
\end{equation}
The identities \(\frac{n-2k}{k+1}(p_*-k)=2k\) and
\eqref{eq:6.154} yield that
\begin{equation}\label{eq:6.156}
 \sigma_k(-D^2V_j)=V_j^{p_*}.
\end{equation}
Equations \eqref{eq:6.6},
\eqref{eq:6.155}, and
\eqref{eq:6.156}--\eqref{eq:6.151}
give
\begin{equation}\label{eq:6.157}
 \int_{B_{\varrho_0}}V_j^{p_*}\dd y\geq c_*.
\end{equation}
Changing variables by $x=z_j+\widehat{r_j}y$ and using $W_j(x)=H_jV_j(y)$, we have
\begin{equation}\label{eq:6.158}
 \mu_j(B_{\varrho_0\widehat{r_j}}(z_j))
 =\varepsilon_jH_j^{p_*}(\widehat{r_j})^n
 \int_{B_{\varrho_0}}V_j^{p_*}\dd y
 =\frac1{\varepsilon_j^{(n-2k)/(2k)}H_j}
 \int_{B_{\varrho_0}}V_j^{p_*}\dd y.
\end{equation}
Equations \eqref{eq:6.157} and
\eqref{eq:6.158} yield
\begin{equation}\label{eq:6.159}
 \mu_j(B_{\varrho_0\widehat{r_j}}(z_j))
 \geq
 \frac{c_*}
 {\varepsilon_j^{(n-2k)/(2k)}H_j}.
\end{equation}
The denominator equals
\(\varepsilon_j^{(n-2k)/(2k)}H_j
=v_j(\rho_jz_j)\leq C_0\).
Here \(|z_j|\leq1+r_*\) and \(\rho_j=o(\Gamma_j)\), so
\eqref{eq:6.79} applies to \(\rho_jz_j\).  Equation
\eqref{eq:6.155} gives
\(B_{\varrho_0\widehat{r_j}}(z_j)\subset B_{r_*}(e_j)\) for all
large \(j\).
Equations \eqref{eq:6.150},
\eqref{eq:6.159}, and
\eqref{eq:6.152} imply
\begin{equation}
 \frac{c_*}{C_0}
 \leq\mu_j(B_{\varrho_0\widehat{r_j}}(z_j))
 \leq\mu_j(B_{r_*}(e_j))
 \leq Cr_*^{n-2k}<\frac{c_*}{2C_0},
\end{equation}
which is a contradiction.
Hence
\begin{equation}\label{eq:6.160}
 \Theta_j\leq C\quad\hbox{in }B_{r_*/2}(e_j).
\end{equation}

The normalized function
\(\widehat W_j=\varepsilon_j^{(n-2k)/(2k(k+1))}W_j
=\Theta_j^{(n-2k)/(k+1)}\)
solves \(\sigma_k(-D^2\widehat W_j)=\widehat W_j^{p_*}\).
For a fixed \(0<s_0<r_*/8\), equation
\eqref{eq:6.160} and
\(\frac{n-2k}{k+1}(p_*-k)=2k\) imply that
\begin{equation}\label{eq:6.161}
 s_0^{2k}\widehat W_j^{p_*-k}
 =s_0^{2k}\Theta_j^{2k}\leq C
 \quad\hbox{in }B_{2s_0}(e_j).
\end{equation}
Here \(B_{2s_0}(e_j)\subset B_{r_*/2}(e_j)\), and
\([(1-s)e_j,e_j]\subset B_{s_0}(e_j)\) for \(0<s<s_0\).
Applying \eqref{eq:6.6} on \(B_{2s_0}(e_j)\), with the uniform
coefficient bound \eqref{eq:6.161}, yields
\begin{equation}\label{eq:6.162}
 |\nabla\log W_j|\leq C
 \quad\hbox{in }B_{s_0}(e_j)
\end{equation}
for some fixed \(s_0>0\).
By passing to a subsequence, let \(e_j\to e\) in
\(\mathbb S^{n-1}\).  Fix \(0<s<s_0\) and put
\(y_{j,s}=(1-s)e_j\).  Equations
\eqref{eq:6.142} and
\eqref{eq:6.149} give
\begin{equation}\label{eq:6.163}
 W_j(y_{j,s})\longrightarrow
 \Phi_\kappa((1-s)e)=\kappa(1-s)^{-b}.
\end{equation}
Equation
\eqref{eq:6.162} implies
\begin{equation}\label{eq:6.164}
 |\log W_j(e_j)-\log W_j(y_{j,s})|\leq Cs.
\end{equation}
We deduce from \eqref{eq:6.139},
\eqref{eq:6.163}, and
\eqref{eq:6.164} that
\begin{equation}\label{eq:6.165}
 |\log\vartheta+b\log(1-s)|\leq Cs.
\end{equation}
Letting \(s\downarrow0\) in \eqref{eq:6.165} gives
\(\vartheta=1\).  This contradicts \(\vartheta>1\) and
proves \eqref{eq:6.137}.
\end{proof}

\begin{proposition}\label{Prop:6.14}
Every entire solution $u\not\equiv0$ of \eqref{eq:1.8} is bounded and satisfies
\begin{equation}\label{eq:6.166}
 \liminf_{R\to\infty}
 R^{\frac{n-2k}{k}}\inf_{B_{2R}}u<\infty.
\end{equation}
\end{proposition}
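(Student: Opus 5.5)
Boundedness is proved by a blow-up argument with point selection, and the decay bound \eqref{eq:6.166} by combining the lower barrier of Lemma~\ref{Lem:3.4} with Propositions~\ref{Prop:6.6} and~\ref{Prop:6.13} at a second, intermediate scale.

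\emph{Step 1: boundedness.} Suppose \(u\not\equiv0\) is unbounded. Following the point-selection idea of \cite{Zhang}, choose \(x_j\) with \(u(x_j)\to\infty\) and apply the doubling lemma to \(\Theta=u^{(k+1)/(n-2k)}\). This gives \(x_j\) and radii \(d_j\) with \(d_j\Theta(x_j)\to\infty\) and \(\Theta\leq2\Theta(x_j)\) on \(B_{d_j}(x_j)\). Rescale:
\[
v_j(y)=u(x_j)^{-1}u\bigl(x_j+u(x_j)^{-(k+1)/(n-2k)}y\bigr).
\]
Since \(\frac{n-2k}{k+1}(p_*-k)=2k\), each \(v_j\) solves \eqref{eq:6.78} on \(B_{4\Gamma_j}\) with \(\Gamma_j\to\infty\), \(v_j(0)=1\) and \(v_j\leq C_0\).

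We next need \eqref{eq:6.80}, i.e.\ that \(0\) is essentially a maximum point on \(o(\Gamma_j)\)-balls. For this we refine the selection: among points where \(v_j\geq1\), pick near-maximizers of a weight such as \((\Gamma_j-|y|)^{b'}v_j(y)\) and renormalize. Then Proposition~\ref{Prop:6.6} gives \(v_j\to U\) locally uniformly, and Proposition~\ref{Prop:6.13} gives \(v_j<\vartheta U\) on \(B_{S_j}\) for any \(S_j=o(\Gamma_j)\).

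Take \(S_j\) comparable to \(\Gamma_j/\log\Gamma_j\). Then \(v_j\) is small on \(\partial B_{S_j}\) compared with its size at \(0\). Now use Lemma~\ref{Lem:3.4} in rescaled form around the original point. It gives \(u\geq c|x|^{-b}\) globally with a fixed constant \(c\), whereas after rescaling, \(v_j<\vartheta U\) forces \(u\) on a sphere of radius \(S_ju(x_j)^{-(k+1)/(n-2k)}\) to be at most \(\vartheta\kappa u(x_j)\bigl(S_j\,u(x_j)^{(k+1)/(n-2k)}\bigr)^{-b}\). We then compare this with the fixed lower barrier, or alternatively with the total mass \(\int u^{p_*}\leq\vartheta^{p_*}M\) captured in that ball. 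The aim is to show that the bubble at \(x_j\) carries essentially the full energy \(M\). Performing this at two distinct blow-up points (or at two far-apart scales) would double the mass of \(\mu=u^{p_*}\,dx\) inside a region where \eqref{eq:6.5} bounds it. I expect the contradiction to come from the upper density estimate \eqref{eq:6.5} applied at a point where \(u\) is moderate.

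\emph{Step 2: the bound \eqref{eq:6.166}.} With \(u\) bounded, normalize at a near-maximum point \(\bar x\) of \(u\), set \(v(y)=u(\bar x+\lambda y)/u(\bar x)\) with \(\lambda=u(\bar x)^{-(k+1)/(n-2k)}\), and note that \(v\leq1+o(1)\) holds automatically. If \(\sup u\) is attained, Proposition~\ref{Prop:6.6} with \(\Gamma_j=\infty\) directly classifies \(v\) as \(U\), which decays like \(|x|^{-b}\), and \eqref{eq:6.166} follows. Otherwise take \(x_j\) with \(u(x_j)\to\sup u\). Propositions~\ref{Prop:6.6} and~\ref{Prop:6.13} give \(u\leq\vartheta\,\sup u\;U\bigl(\lambda^{-1}(x-x_j)\bigr)\) on balls of radius \(o(\Gamma_j)\) with \(\Gamma_j\to\infty\). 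At the point of \(\partial B_{2R}\) nearest \(x_j\) this yields \(R^b\inf_{B_{2R}}u\leq C\) along a sequence \(R\to\infty\), which is \eqref{eq:6.166}.

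\emph{Main obstacle.} The hardest step is arranging the hypothesis \eqref{eq:6.80} uniformly in the unbounded case, that is, ensuring the blow-up centre is an approximate maximum on every \(o(\Gamma_j)\) scale. I would first try the \cite{Zhang}-type first-contact radius selection, choosing \(\Gamma_j\) as the first radius at which \(\sup_{B_r}v_j\) exceeds \(1+\epsilon_j\) for a slowly vanishing \(\epsilon_j\). The same difficulty enters Step~2 through the choice of \(\Gamma_j\) when the supremum is not attained.
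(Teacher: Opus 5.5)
Your Step~1 never produces a contradiction, so boundedness is not proved. You list three candidate mechanisms but carry none of them out, and as set up none of them closes.

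The mass route through \eqref{eq:6.5} needs a point $x_0$ where $u(x_0)^kr^{n-2k}$ is small, i.e.\ $u(x_0)\lesssim r^{-b}$. That is essentially \eqref{eq:6.166} itself; the paper uses \eqref{eq:6.5} in this way only afterwards, in Proposition~\ref{Prop:6.15}.

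The comparison with the lower barrier is the right ingredient. However, it needs a quantitative link between the blow-up scale and the radius at which the barrier is applied, and a doubling-lemma selection around arbitrary points with $u(x_j)\to\infty$ provides none. In original variables, $v_j<\vartheta U$ on $\partial B_{S_j}$ only says $u\leq\vartheta\kappa\,u(x_j)S_j^{-b}=\vartheta\kappa\,u(x_j)^{-1/k}\rho^{-b}$ at distance $\rho=u(x_j)^{-(k+1)/(n-2k)}S_j$ from $x_j$. (Your displayed bound has the wrong power of $u(x_j)$.) This is perfectly compatible with $u\geq c|x|^{-b}$ as soon as $|x_j|\gg u(x_j)^{1/(n-2k)}\rho$, and the choice $S_j\sim\Gamma_j/\log\Gamma_j$ is not tied to anything that excludes this.

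The missing idea is the single quantity the paper controls. Take $R_j\to\infty$ with $M_j=\sup_{B_{R_j}}u\to\infty$, and let $x_j$ maximize $\operatorname{dist}(x,\partial B_{2R_j})^{n-2k}u(x)$ over $B_{2R_j}$. Set $A_j=u(x_j)$, $\sigma_j=\frac18\operatorname{dist}(x_j,\partial B_{2R_j})$, $\Gamma_j=\sigma_jA_j^{(k+1)/(n-2k)}$, $m_j=\inf_{B_{2R_j}}u$, and $\Psi_j=A_j\sigma_j^{n-2k}m_j^k$.
\begin{itemize}
\item Maximality gives \eqref{eq:6.80} at once, which settles what you call the main obstacle.
\item Maximality also gives $(8\sigma_j)^{n-2k}A_j\geq R_j^{n-2k}M_j$. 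Combined with \eqref{eq:4.18}, i.e.\ $m_j\geq cR_j^{-b}$ and $bk=n-2k$, this yields $\Psi_j\geq cM_j\to\infty$.
\item Since $\Gamma_j^{n-2k}=\Psi_j(A_j/m_j)^k\geq\Psi_j$, the intermediate scale $\widehat\Gamma_j=\Psi_j^{-1/(2(n-2k))}\Gamma_j$ tends to infinity and is $o(\Gamma_j)$.
\item Propositions~\ref{Prop:6.6} and~\ref{Prop:6.13} then give $u\leq CA_j\widehat\Gamma_j^{-b}$ on $x_j+A_j^{-(k+1)/(n-2k)}\partial B_{\widehat\Gamma_j}$, and this sphere lies inside $B_{2R_j}$.
\item Hence $m_j\leq CA_j\widehat\Gamma_j^{-b}$ and $\Psi_j\leq C\Gamma_j^{n-2k}\widehat\Gamma_j^{-(n-2k)}=C\Psi_j^{1/2}$, a contradiction.
\end{itemize}
If instead you choose $R_j$ with $R_j^bm_j\to\infty$, the same reasoning forces $\Psi_j\geq8^{-(n-2k)}u(0)(R_j^bm_j)^k\to\infty$. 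So the paper obtains \eqref{eq:6.166} from the same estimate, without using boundedness.

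Your Step~2 is a legitimate alternative route to \eqref{eq:6.166} once boundedness is available. In that case \eqref{eq:6.80} holds at every scale, so in Proposition~\ref{Prop:6.13} you may take $S_j\geq2|x_j|/\lambda_j$ with $\lambda_j=u(x_j)^{-(k+1)/(n-2k)}$, and evaluate at a point of $\overline{B_{2|x_j|}}$ at distance $|x_j|$ from $x_j$. But Step~2 inherits the gap in Step~1.
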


\begin{proof}
Lemma~\ref{Lem:2.2} gives \(u>0\). Let \(R_j\to\infty\) be arbitrary, put
\(m_j=\inf\limits_{B_{2R_j}}u\), and choose \(x_j\in B_{2R_j}\) maximizing
\(\operatorname{dist}(x,\partial B_{2R_j})^{n-2k}u(x)\).  Define
\begin{equation}\label{eq:6.167}
 \begin{gathered}
 A_j=u(x_j),\qquad
 \sigma_j=\frac18\operatorname{dist}(x_j,\partial B_{2R_j}),
 \qquad
 \Gamma_j=\sigma_jA_j^{(k+1)/(n-2k)},\\
 v_j(y)=A_j^{-1}u(x_j+A_j^{-(k+1)/(n-2k)}y),
 \qquad
 \Psi_j=A_j\sigma_j^{n-2k}m_j^k.
 \end{gathered}
\end{equation}
We will show that
\begin{equation}\label{eq:6.168}
 \liminf_{j\to\infty}\Psi_j<\infty.
\end{equation}
Suppose on the contrary that \(\Psi_j\to\infty\).  Maximality gives
\begin{equation}
 v_j(y)\leq
 \left(1-\frac{|y|}{8\Gamma_j}\right)^{-(n-2k)}
 \qquad(|y|<8\Gamma_j).
\end{equation}
Thus \(v_j\) satisfies \eqref{eq:6.78},
\(v_j(0)=1\), \(v_j\leq2^{n-2k}\) in \(B_{4\Gamma_j}\), and
\eqref{eq:6.80} holds.  Since \(A_j\geq m_j\), we have
\begin{equation}\label{eq:6.169}
 \Gamma_j^{n-2k}
 =\Psi_j\left(\frac{A_j}{m_j}\right)^k
 \geq\Psi_j.
\end{equation}
In particular, \(\Gamma_j\to\infty\).  Put
\begin{equation}\label{eq:6.170}
 \widehat\Gamma_j
 =\Psi_j^{-\frac1{2(n-2k)}}\Gamma_j.
\end{equation}
Equations \eqref{eq:6.169} and
\eqref{eq:6.170} imply that
\(\widehat\Gamma_j\to\infty\) and
\(\widehat\Gamma_j=o(\Gamma_j)\).

By \eqref{eq:6.6}, Arzel\`a--Ascoli, and Proposition~\ref{Prop:6.6},
\(v_j\to U\) locally uniformly along a subsequence. Proposition~\ref{Prop:6.13}
with \(S_j=2\widehat\Gamma_j\) yields
\begin{equation}\label{eq:6.171}
 v_j\leq C\widehat\Gamma_j^{-b}
 \quad\hbox{on }\partial B_{\widehat\Gamma_j}.
\end{equation}
Moreover, we have
\begin{equation}\label{eq:6.172}
 A_j^{-(k+1)/(n-2k)}\widehat\Gamma_j
 =\sigma_j\frac{\widehat\Gamma_j}{\Gamma_j}
 =o(\sigma_j),
 \qquad
 x_j+A_j^{-(k+1)/(n-2k)}\partial B_{\widehat\Gamma_j}
 \subset B_{2R_j}.
\end{equation}
Equations \eqref{eq:6.171} and
\eqref{eq:6.172} give
\(m_j\leq CA_j\widehat\Gamma_j^{-b}\).  Since \(bk=n-2k\), one has
\begin{equation}\label{eq:6.173}
 \Psi_j
 \leq C\sigma_j^{n-2k}A_j^{k+1}
 \widehat\Gamma_j^{-(n-2k)}
 =C\Gamma_j^{n-2k}\widehat\Gamma_j^{-(n-2k)}
 =C\Psi_j^{1/2}.
\end{equation}
Equation~\eqref{eq:6.173} contradicts
\(\Psi_j\to\infty\) and proves
\eqref{eq:6.168}.  Since the sequence \(R_j\to\infty\) was arbitrary, for every such
sequence and for the quantities defined by
\eqref{eq:6.167}, one has $\liminf\limits_{j\to\infty}\Psi_j<\infty$.

Suppose now that \(u\) is unbounded.  Choose \(R_j\to\infty\) so that
\(M_j=\sup\limits_{B_{R_j}}u\to\infty\), put
\(m_j=\inf\limits_{B_{2R_j}}u\), choose \(x_j\in B_{2R_j}\) maximizing
\(\operatorname{dist}(x,\partial B_{2R_j})^{n-2k}u(x)\), and define
\(A_j,\sigma_j,\Gamma_j,v_j\), and \(\Psi_j\) by
\eqref{eq:6.167}.
Equation~\eqref{eq:4.18}, used
with \(R_0=1\), and the choice of \(x_j\) yield
\begin{equation}\label{eq:6.174}
 \Psi_j
 \geq8^{-(n-2k)}R_j^{n-2k}M_jm_j^k
 \geq cM_j\longrightarrow\infty.
\end{equation}
Equation~\eqref{eq:6.174} contradicts
\eqref{eq:6.168}.  Hence \(u\) is bounded.

Finally, suppose on the contrary that \eqref{eq:6.166} fails. By passing
to a subsequence, choose \(R_j\uparrow\infty\)
such that $R_j^{\frac{n-2k}{k}}m_j\longrightarrow\infty$ with \(m_j=\inf\limits_{B_{2R_j}}u\).
Choose \(x_j\in B_{2R_j}\) maximizing
\(\operatorname{dist}(x,\partial B_{2R_j})^{n-2k}u(x)\), and define
\(A_j,\sigma_j,\Gamma_j,v_j\), and \(\Psi_j\) by
\eqref{eq:6.167}.
The choice of \(x_j\) and \(\sup_{B_{R_j}}u\geq u(0)\) imply that
\begin{equation}\label{eq:6.175}
 \Psi_j
 \geq8^{-(n-2k)}u(0)
 \left(R_j^{\frac{n-2k}{k}}m_j\right)^k
 \longrightarrow\infty.
\end{equation}
Equation~\eqref{eq:6.175} contradicts
\eqref{eq:6.168}.  This proves
\eqref{eq:6.166}.
\end{proof}

\begin{proposition}\label{Prop:6.15}
Every entire solution $u\not\equiv0$ of \eqref{eq:1.8} satisfies
\begin{equation}\label{eq:6.176}
 \int_{\R^n}
 u^{\frac{n(k+1)-2k}{n-2k}}\dd x<\infty.
\end{equation}
\end{proposition}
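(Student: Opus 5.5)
The plan is to obtain the sharp decay \(u(x)\leq C(1+|x|)^{-b}\), \(b=\frac{n-2k}{k}\), and then integrate. Since \(n>4k\), the exponent \(P_1:=\frac{n(k+1)-2k}{n-2k}\) satisfies \(bP_1=\frac{n(k+1)-2k}{k}>n\) (equivalently \(n>2k\)). So \(u\leq C(1+|x|)^{-b}\) together with boundedness of \(u\) from Proposition~\ref{Prop:6.14} gives \eqref{eq:6.176} at once.

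To get the decay I would argue by contradiction with the blow-up/point-selection scheme of Proposition~\ref{Prop:6.14}. Suppose there are points \(z_j\) with \(|z_j|^b u(z_j)\to\infty\); necessarily \(|z_j|\to\infty\), since \(u\) is bounded. Set \(R_j=|z_j|\) and select \(x_j\in B_{2R_j}\) maximizing \(\operatorname{dist}(x,\partial B_{2R_j})^{n-2k}u(x)\), with \(A_j,\sigma_j,\Gamma_j,v_j,\Psi_j\) defined as in \eqref{eq:6.167}. Then \(A_j\sigma_j^{n-2k}\geq c R_j^{n-2k}u(z_j)\to\infty\), so \(\Gamma_j\to\infty\). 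By Proposition~\ref{Prop:6.6}, \(v_j\to U\) locally uniformly. By Proposition~\ref{Prop:6.13}, for every \(S_j=o(\Gamma_j)\) we have \(v_j\leq\vartheta U\) on \(B_{S_j}\), which gives a scale \(\widehat\Gamma_j\to\infty\), \(\widehat\Gamma_j=o(\Gamma_j)\), on which \(v_j\) decays like the bubble. Next compare \(u\) outside the small ball \(B_{A_j^{-(k+1)/(n-2k)}\widehat\Gamma_j}(x_j)\) with the \(k\)-harmonic barrier \(-\kappa'|x-x_j|^{-b}\) (Lemma~\ref{Lem:4.1}), using superharmonicity and Lemma~\ref{Lem:3.4}. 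The aim is to conclude that on \(B_{2R_j}\) the solution carries essentially one bubble, of height \(A_j\), centred at \(x_j\).

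The key quantitative step is the mass budget. Each concentration point carries Hessian mass \(\int U^{p_*}=M\) in the limit, by \eqref{eq:6.144}. The estimate \eqref{eq:6.5} bounds \(\mu(B_r)\leq Cu(x_0)^kr^{n-2k}\), and Proposition~\ref{Prop:6.14} gives \(\liminf R^b\inf_{B_{2R}}u<\infty\). Combining these, I would show that \(R_j^b u(z_j)\) must stay bounded. If \(x_j\) stays bounded while \(A_j\) stays bounded, the rescaling shows \(u\) itself is close to a bubble at large radii, hence \(u\leq C|x|^{-b}\). If instead the maximizer drifts to infinity, the Pohozaev identity \eqref{eq:2.37} on large spheres forces a contradiction with the interior bubble: the Pohozaev integral \(\mathscr P_u(r)\) is controlled by \(r\int_{\partial B_r}u^{p_*+1}\), and it must match the contribution of each isolated pole via Lemma~\ref{Lem:6.8} and Proposition~\ref{Prop:6.10}, which forces \(d=0\). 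In either case the quantity \(\Psi_j\) of \eqref{eq:6.167} cannot diverge, and that contradicts \(|z_j|^bu(z_j)\to\infty\).

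I expect the main obstacle to be ruling out the second bubble or the drifting-maximizer scenario, that is, upgrading the local bubble picture near \(x_j\) to the global bound \(u\leq C|x|^{-b}\). My first attempt would rescale \(u\) by \(R_j\), \(W_j(x)=R_j^bu(R_jx)\), exactly as in the proof of Proposition~\ref{Prop:6.13}. The limiting Hessian measure is then atomic, its singular points are isolated poles with expansions \(\Phi_\kappa+d\) (Theorem~\ref{Thm:6.7}), and Proposition~\ref{Prop:6.10} with Corollary~\ref{Cor:6.12} forces \(W\equiv\Phi_\kappa\) about a single pole at the origin. This bounds \(W_j\) uniformly on the unit sphere, i.e. \(u\leq CR_j^{-b}\) on \(\partial B_{R_j}\), which is the required decay.
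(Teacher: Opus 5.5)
Your proposal has a genuine gap: the pointwise decay $u\leq C(1+|x|)^{-b}$, on which everything rests, is never actually proved.

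The blow-up at $x_j$ (Propositions~\ref{Prop:6.6} and~\ref{Prop:6.13}) only describes $u$ on a ball of radius $o(\sigma_j)$ around $x_j$. The case you flag as the main obstacle, a maximizer drifting to infinity or further bubbles elsewhere, is exactly what has to be excluded. Your appeal to \eqref{eq:2.37}, Lemma~\ref{Lem:6.8} and Proposition~\ref{Prop:6.10} does not exclude it. Those results concern an isolated pole arising from a rescaling with $\varepsilon_j\to0$ that already converges locally uniformly off the pole, and you never produce such a configuration.

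Your fallback in the last paragraph is circular. For $W_j(x)=R_j^bu(R_jx)$ one has $\mu_k[-W_j](E)=\int_{R_jE}u^{p_*}\dd y$ (since $bp_*=n+2$). These measures converge to a single atom at the origin only if $\int_{\R^n}u^{p_*}\dd x<\infty$. Moreover, a uniform bound for $W_j$ on compact subsets of $\R^n\setminus\{0\}$ is precisely the decay you are trying to prove. In Proposition~\ref{Prop:6.13} both facts came from the a priori bound $v_j<\vartheta U$ on $B_{\rho_j}$, which you do not have globally for $u$. Also, since $u$ is bounded and $\frac{n(k+1)-2k}{n-2k}=p_*+\frac{n-4k}{n-2k}>p_*$, finiteness of $\int u^{p_*}$ by itself already gives \eqref{eq:6.176}.

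The missing observation is that the two ingredients in your third paragraph yield the integral bound directly, with no pointwise decay needed. By Proposition~\ref{Prop:6.14} there are $R_j\uparrow\infty$ with $\inf_{B_{2R_j}}u\leq CR_j^{-(n-2k)/k}$. Let $x_j$ be a minimum point of $u$ on $\overline{B_{2R_j}}$, and apply \eqref{eq:6.5} with $Z=u$, $x_0=x_j$, $r=4R_j$ and $R=65R_j$. Since $B_{R_j}\subset B_{4R_j}(x_j)$,
\[
 \int_{B_{R_j}}u^{p_*}\dd x\leq\mu_k[-u]\bigl(B_{4R_j}(x_j)\bigr)\leq Cu(x_j)^kR_j^{n-2k}\leq C .
\]
Monotone convergence then gives $u\in L^{p_*}(\R^n)$, and
\[
 \int_{\R^n}u^{\frac{n(k+1)-2k}{n-2k}}\dd x\leq\|u\|_{L^\infty(\R^n)}^{\frac{n-4k}{n-2k}}\int_{\R^n}u^{p_*}\dd x<\infty .
\]
This is the paper's proof. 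Trying instead to turn \eqref{eq:6.5} and the $\liminf$ bound into a bound on $R_j^bu(z_j)$ at arbitrary points $z_j$ cannot work directly. Both ingredients control Hessian masses and infima, not the values of $u$ at prescribed points.
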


\begin{proof}
By Proposition~\ref{Prop:6.14}, there are
\(R_j\uparrow\infty\) and a constant \(C\) such that
\begin{equation}\label{eq:6.177}
 \inf_{B_{2R_j}}u
 \leq CR_j^{-\frac{n-2k}{k}}.
\end{equation}
Choose \(x_j\in\overline B_{2R_j}\) so that
\(u(x_j)=\min\limits_{\overline B_{2R_j}}u
=\inf\limits_{B_{2R_j}}u\).  Apply \eqref{eq:6.5} with
\(Z=u\), \(r=4R_j\), and \(R=65R_j\).  The inequality
\(4R_j<65R_j/16\) verifies the radius condition in
\eqref{eq:6.5}.  Since
\(B_{R_j}\subset B_{4R_j}(x_j)\),
\eqref{eq:6.177} implies
\begin{equation}\label{eq:6.178}
 \int_{B_{R_j}}u^{p_*}\dd x
 \leq\mu_k[-u](B_{4R_j}(x_j))
 \leq Cu(x_j)^kR_j^{n-2k}\leq C.
\end{equation}
Letting \(R_j\uparrow\infty\), \eqref{eq:6.178} and the monotone convergence theorem prove
$\int_{\R^n}u^{p_*}\dd x<+\infty$.

Combining this with Proposition~\ref{Prop:6.14} implies that
\begin{equation}
 \int_{\R^n}
 u^{\frac{n(k+1)-2k}{n-2k}}\dd x
 \leq
 \|u\|_{L^\infty(\R^n)}^{\frac{n-4k}{n-2k}}
 \int_{\R^n}u^{p_*}\dd x<\infty.
\end{equation}
This proves the integral growth condition \eqref{eq:6.176}.
\end{proof}

\begin{proof}[Completion of the proof of
Theorem~\ref{Thm:1.6}]
Let \(u\not\equiv0\). Then Lemma \ref{Lem:2.2} implies that $u>0$.
By \eqref{eq:6.176}, we have
\(\int_{B_R}u^{\frac{n(k+1)-2k}{n-2k}}\dd x
\leq C\leq CR^2\log R\)
for every sufficiently large \(R\).  Thus \eqref{eq:6.3} holds. By Theorem~\ref{Thm:6.2}, we derived the unconditional classification results in Theorem \ref{Thm:1.6} for $n>4k$. This, combining with the proof of Theorem \ref{Thm:1.6} for $2k<n\leq 4k$ in Section 5, concludes our proof of Theorem \ref{Thm:1.6}.
\end{proof}

\section{Classification results for
  \texorpdfstring{\(n=2k\)}{n=2k}: Theorem~\ref{Thm:1.7}}
\label{Sec:7}

At \(n=2k\), the Hessian--Sobolev power is replaced by the
Moser--Trudinger scale of Tian and Wang~\cite{TianWang2010}.  In {this} section, we prove
Theorem~\ref{Thm:1.7}.  The power transformation \(w=u^{-\frac{2k}{n-2k}}\) used in
Section~\ref{Sec:5} degenerates at this dimension, so our
proof uses a different transformation adapted directly to the exponential
equation.  We first construct a nonnegative divergence and identify
its equality case.  Then, we estimate its boundary term by a finite
iteration through the Newton tensors.  Finally, the integral growth condition \eqref{eq:1.11} supplies disjoint annuli on which a cutoff with vanishing
energy can be built.

Throughout this section, let \(k\geq2\), let
\(u\in C^2(\R^{n})\) solve \eqref{eq:1.10} and satisfy \eqref{eq:1.11}, and set
\begin{equation}
    v=-\frac{u}{k+1},
    \qquad
    A=D^2v.
\end{equation}
Thus
\begin{equation}\label{eq:7.1}
    A\in\Gamma_k,
    \qquad
    \sigma_k(A)=\frac1{(k+1)^k}e^{-(k+1)v}.
\end{equation}

We do not need the finite mass assumption  \(\int_{\R^{n}}e^u\,\dd x<\infty\) in this section.

\subsection{\texorpdfstring{The divergence of the key vector field
\(\widetilde{\mathcal J}\) and its equality case}
{The divergence of the key vector field and its equality case}}

We now turn to the proof of Theorem~\ref{Thm:1.7}. As in \eqref{eq:3.1} and \eqref{eq:3.36}, we construct
the following vector field
\begin{equation}\label{eq:7.2}
    \widetilde{\mathcal{J}}
    =e^v\left(
       L_k(A)\nabla v
       +\frac{|\nabla v|^2}{2}T_{k-1}(A)\nabla v
    \right),
\end{equation}
whose divergence is nonnegative and boundary flux is controlled by the
same nonnegative density.  Once a sequence of cutoffs makes that flux
vanish, the divergence must vanish identically; the equality case will
then determine the solution.

\begin{proposition}\label{Prop:7.1}
The vector field \eqref{eq:7.2} satisfies
\begin{equation}
    \operatorname{div}\widetilde{\mathcal{J}}
    =e^v\biggl[
       \tr\bigl(L_k(A)A\bigr)
       +2\nabla v^TL_k(A)\nabla v
       +\frac{|\nabla v|^2}{2}
          \nabla v^TT_{k-1}(A)\nabla v
    \biggr]\geq0.
\label{eq:7.3}
\end{equation}
For every \(Y\in\R^{n}\),
\begin{equation}\label{eq:7.4}
    |\widetilde{\mathcal{J}}\cdot Y|^2
    \leq
    \frac12e^v|\nabla v|^2
    \operatorname{div}\big(\widetilde{\mathcal{J}})
    Y^TT_{k-1}(A)Y.
\end{equation}
Consequently, if \(0\leq\eta\leq1\) is compactly supported and
Lipschitz, then
\begin{align}
    \int_{\R^{n}}\eta^2\operatorname{div}\widetilde{\mathcal{J}}\dd x
    \leq (n+2)\int_{\R^{n}} e^v\sigma_{k-1}(A)|\nabla v|^2
    |\nabla\eta|^2\dd x.
\label{eq:7.5}
\end{align}
\end{proposition}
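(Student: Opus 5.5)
The plan is to mirror the critical computation of Section~\ref{Sec:5}, with $e^v$ in place of the power weight. I will first compute $\operatorname{div}\widetilde{\mathcal J}$ directly, using \eqref{eq:7.1} and the identities of Section~\ref{Sec:2}.

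Since $T_{k-1}(A)$ is divergence free and $\tr(T_{k-1}(A)A)=k\sigma_k(A)$, we get $\operatorname{div}(T_{k-1}(A)\nabla v)=k\sigma_k(A)$. Also $\nabla|\nabla v|^2=2A\nabla v$, and \eqref{eq:2.11} gives $T_{k-1}(A)A=L_k(A)+\frac kn\sigma_k(A)\Id$. Hence
\[
\operatorname{div}\Bigl(\tfrac{|\nabla v|^2}{2}T_{k-1}(A)\nabla v\Bigr)
=\nabla v^TL_k(A)\nabla v+\Bigl(\tfrac kn+\tfrac k2\Bigr)\sigma_k(A)|\nabla v|^2
=\nabla v^TL_k(A)\nabla v+k\sigma_k(A)|\nabla v|^2,
\]
where the last step uses $n=2k$. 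Next, $\operatorname{div}L_k(A)=\frac{n-k}{n}\nabla\sigma_k(A)=\frac12\nabla\sigma_k(A)$, and by \eqref{eq:7.1} this equals $-\frac{k+1}{2}\sigma_k(A)\nabla v$. Therefore
\[
\operatorname{div}(L_k(A)\nabla v)=\tr(L_k(A)A)-\tfrac{k+1}{2}\sigma_k(A)|\nabla v|^2 .
\]
The weight contributes $e^v\nabla v\cdot(\cdots)$, namely
\[
e^v\Bigl[\nabla v^TL_k\nabla v+\tfrac{|\nabla v|^2}{2}\nabla v^TT_{k-1}\nabla v\Bigr].
\]
Adding the three pieces, the $\sigma_k|\nabla v|^2$ terms have total coefficient $k-\frac{k+1}{2}$. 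I expect this coefficient to be absorbed or to vanish for the exact exponents. If it does not, I will recheck the normalization: the $\frac{k+1}{2}$ should balance against the $\frac kn$ contribution plus the weight. The target is exactly the bracket in \eqref{eq:7.3}, so the verification of this cancellation is the main bookkeeping step I expect to be delicate.

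For nonnegativity, I apply \eqref{eq:2.20} with $X=Y=\nabla v$ and $\frac{n-k}{n}=\frac12$, which gives
\[
|\nabla v^TL_k\nabla v|\le\Bigl(\tfrac12\tr(L_kA)\,|\nabla v|^2\,\nabla v^TT_{k-1}\nabla v\Bigr)^{1/2}.
\]
The bracket is then at least $(a-b)^2$, with $a=\sqrt{\tr(L_kA)}$ and $b=\sqrt{\tfrac12|\nabla v|^2\nabla v^TT_{k-1}\nabla v}$, since $2\sqrt2\cdot\frac{1}{\sqrt2}=2$. This mirrors \eqref{eq:5.4}.

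For \eqref{eq:7.4}, I copy Proposition~\ref{Prop:5.1}. By Cauchy--Schwarz in the $T_{k-1}$ metric,
\[
|\widetilde{\mathcal J}\cdot Y|^2\le(\widetilde{\mathcal J}^TT_{k-1}^{-1}\widetilde{\mathcal J})(Y^TT_{k-1}Y).
\]
I expand $\widetilde{\mathcal J}^TT_{k-1}^{-1}\widetilde{\mathcal J}$ and use $L_kT_{k-1}^{-1}L_k\preceq\frac12\tr(L_kA)\Id$ from \eqref{eq:2.12}. This yields
\[
e^{2v}\Bigl[\tfrac12|\nabla v|^2\tr(L_kA)+|\nabla v|^2\nabla v^TL_k\nabla v+\tfrac{|\nabla v|^4}{4}\nabla v^TT_{k-1}\nabla v\Bigr]
=\tfrac12e^{v}|\nabla v|^2\operatorname{div}\widetilde{\mathcal J}.
\]

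Finally, for \eqref{eq:7.5}, take $\eta$ smooth first. Integrating by parts, $\int\eta^2\operatorname{div}\widetilde{\mathcal J}=-2\int\eta\,\widetilde{\mathcal J}\cdot\nabla\eta$. Bounding the right-hand side with \eqref{eq:7.4} and Young's inequality, and absorbing half of the left-hand side, gives
\[
\int\eta^2\operatorname{div}\widetilde{\mathcal J}\le 2\int e^v|\nabla v|^2\,\nabla\eta^TT_{k-1}\nabla\eta .
\]
Then $\nabla\eta^TT_{k-1}\nabla\eta\le(n-k+1)\sigma_{k-1}|\nabla\eta|^2=(k+1)\sigma_{k-1}|\nabla\eta|^2$, and $2(k+1)=n+2$, which is the stated constant. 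The Lipschitz case follows by approximation as in Proposition~\ref{Prop:5.2}.
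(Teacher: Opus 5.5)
Your argument follows the paper's proof step for step: the product rule for the weight $e^v$, $\operatorname{div}L_k(A)=\tfrac12\nabla\sigma_k(A)=-\tfrac{k+1}{2}\sigma_k(A)\nabla v$, completing the square via \eqref{eq:2.20}, Cauchy--Schwarz in the $T_{k-1}(A)^{-1}$ metric together with \eqref{eq:2.12}, absorption of half of the left-hand side, and $2(k+1)=n+2$. As written, however, it does not establish the identity \eqref{eq:7.3}, because of an arithmetic slip that you flag but leave unresolved. In
\[
\operatorname{div}\Bigl(\tfrac{|\nabla v|^2}{2}T_{k-1}(A)\nabla v\Bigr)=\nabla v^TL_k(A)\nabla v+\Bigl(\tfrac kn+\tfrac k2\Bigr)\sigma_k(A)|\nabla v|^2,
\]
the coefficient at $n=2k$ is $\tfrac{k}{2k}+\tfrac k2=\tfrac{k+1}{2}$, not $k$. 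With your value, the computed divergence would differ from \eqref{eq:7.3} by the nonzero term $\tfrac{k-1}{2}e^v\sigma_k(A)|\nabla v|^2$. With the correct value, this term cancels exactly against the $-\tfrac{k+1}{2}\sigma_k(A)|\nabla v|^2$ coming from $\operatorname{div}(L_k(A)\nabla v)$. The weight term, $\operatorname{div}(L_k(A)\nabla v)$, and $\operatorname{div}\bigl(\tfrac{|\nabla v|^2}{2}T_{k-1}(A)\nabla v\bigr)$ then sum precisely to the bracket in \eqref{eq:7.3}, so no ``rechecking of the normalization'' is needed. Once this is fixed, your nonnegativity argument, the derivation of \eqref{eq:7.4}, and the cutoff estimate \eqref{eq:7.5} are correct and coincide with the paper's.
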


\begin{proof}
{By \eqref{eq:2.5}, the Newton tensors of \(D^2v\)
are divergence-free.}  Moreover,  \eqref{eq:2.11} and \eqref{eq:7.1}
give
\[
    \nabla\sigma_k(A)=-(k+1)\sigma_k(A)\nabla v,
    \qquad
    T_{k-1}(A)A=L_k(A)+\frac12\sigma_k(A)\Id.
\]
Therefore,
\begin{align*}
\operatorname{div}\bigl(L_k(A)\nabla v\bigr)
&=
\tr\bigl(L_k(A)A\bigr)
-\frac{k+1}{2}\sigma_k(A)|\nabla v|^2,
\\
\operatorname{div}\bigl(
 |\nabla v|^2T_{k-1}(A)\nabla v
\bigr)
&=
2\nabla v^TL_k(A)\nabla v
+(k+1)\sigma_k(A)|\nabla v|^2.
\end{align*}
Thus we have
\begin{align*}
    \operatorname{div}\widetilde{\mathcal{J}}
    ={}&e^v\left[\nabla v^T\left(
       L_k(A)\nabla v
       +\frac{|\nabla v|^2}{2}T_{k-1}(A)\nabla v
    \right)\right.\\
    &+\left.\left(\tr\bigl(L_k(A)A\bigr)
-\frac{k+1}{2}\sigma_k(A)|\nabla v|^2 +\nabla v^TL_k(A)\nabla v
+\frac{k+1}{2}\sigma_k(A)|\nabla v|^2\right)\right]\\
={}&e^v\biggl[
       \tr\bigl(L_k(A)A\bigr)
       +2\nabla v^TL_k(A)\nabla v
       +\frac{|\nabla v|^2}{2}
          \nabla v^TT_{k-1}(A)\nabla v
    \biggr].
\end{align*}
Since \(n=2k\), Lemma~\ref{Lem:2.4} gives
\begin{equation}\label{eq:7.6}
    L_k(A)^2
    \preceq
    \frac12\tr\bigl(L_k(A)A\bigr)T_{k-1}(A).
\end{equation}
Hence
\[
\begin{split}
\nabla v^TL_k(A)\nabla v
\geq-&\left(
 \frac12|\nabla v|^2\tr\bigl(L_k(A)A\bigr)
 \nabla v^TT_{k-1}(A)\nabla v
\right)^{1/2}.
\end{split}
\]
Consequently,
\[
\operatorname{div}\widetilde{\mathcal{J}}\geq e^v\left(
 \sqrt{\tr\bigl(L_k(A)A\bigr)}
 -\sqrt{
   \frac{|\nabla v|^2}{2}
   \nabla v^TT_{k-1}(A)\nabla v
 }
\right)^2\geq0.
\]
This proves \eqref{eq:7.3}.

By Corollary~\ref{Cor:2.3}, \(T_{k-1}(A)>0\).  It follows from
\eqref{eq:7.6} that
\[
L_k(A)T_{k-1}(A)^{-1}L_k(A)
\preceq
\frac12\tr\bigl(L_k(A)A\bigr)\Id.
\]
Therefore, we obtain
\begin{align*}
&\quad\left(
       L_k(A)\nabla v
       +\frac{|\nabla v|^2}{2}T_{k-1}(A)\nabla v
    \right)^TT_{k-1}(A)^{-1}\left(
       L_k(A)\nabla v
       +\frac{|\nabla v|^2}{2}T_{k-1}(A)\nabla v
    \right)\\
&\leq
\frac12|\nabla v|^2\tr\bigl(L_k(A)A\bigr)
+|\nabla v|^2\nabla v^TL_k(A)\nabla v
+\frac{|\nabla v|^4}{4}
  \nabla v^TT_{k-1}(A)\nabla v\\
&=\frac{|\nabla v|^2}{2e^v}
  \operatorname{div}\widetilde{\mathcal{J}}.
\end{align*}
Then
\eqref{eq:7.4} follows immediately.

Finally, we prove \eqref{eq:7.5}.
Taking \(Y=\nabla\eta\) in
\eqref{eq:7.4}, we obtain
\[
 \bigl|\widetilde{\mathcal J}\cdot\nabla\eta\bigr|^2
 \leq
 \frac12 e^v|\nabla v|^2
 \operatorname{div}\widetilde{\mathcal J}\,
 \nabla\eta^TT_{k-1}(A)\nabla\eta.
\]
Consequently,
\begin{align*}
 \int_{\R^n}\eta^2
 \operatorname{div}\widetilde{\mathcal J}\dd x
 &\leq
 \sqrt{2}\int_{\R^n}
 \Bigl(
   \eta^2\operatorname{div}\widetilde{\mathcal J}
 \Bigr)^{1/2}
 \Bigl(
   e^v|\nabla v|^2
   \nabla\eta^TT_{k-1}(A)\nabla\eta
 \Bigr)^{1/2}
 \dd x\\
 &\leq\frac12 \int_{\R^n}\eta^2
 \operatorname{div}\widetilde{\mathcal J}\dd x
 +\int_{\R^n}
 e^v|\nabla v|^2
 \nabla\eta^TT_{k-1}(A)\nabla\eta
 \dd x.
\end{align*}
Thus
\begin{equation}
 \label{eq:7.7}
 \int_{\R^n}\eta^2
 \operatorname{div}\widetilde{\mathcal J}\dd x
 \leq
 2\int_{\R^n}
 e^v|\nabla v|^2
 \nabla\eta^TT_{k-1}(A)\nabla\eta
 \dd x.
\end{equation}
Since \(T_{k-1}(A)>0\), one has
\(\nabla\eta^TT_{k-1}(A)\nabla\eta \leq \tr\bigl(T_{k-1}(A)\bigr) |\nabla\eta|^2\).
By
\[
 \tr\bigl(T_{k-1}(A)\bigr)
 =
 (n-k+1)\sigma_{k-1}(A)
 =
 (k+1)\sigma_{k-1}(A)
\]
and \eqref{eq:7.7}, we obtain
\[
 \int_{\R^n}\eta^2
 \operatorname{div}\widetilde{\mathcal J}\dd x
 \leq
 2(k+1)\int_{\R^n}
 e^v\sigma_{k-1}(A)|\nabla v|^2
 |\nabla\eta|^2\dd x.
\]
This proves \eqref{eq:7.5}.  The calculation is first justified for smooth
compactly supported \(\eta\); the Lipschitz case follows by the
standard compactly supported smooth approximation.  {For
\(u\in C^2\), the regularity discussion at the end of the Introduction
makes \(u\) locally smooth, and \eqref{eq:2.5} then justifies the
Newton-tensor integrations by parts.}
\end{proof}

The following Proposition \ref{Prop:7.2} indicates that the {classification} result in
Theorem~\ref{Thm:1.7} can be deduced from
\(\operatorname{div}\widetilde{\mathcal{J}}\equiv0\).

\begin{proposition}\label{Prop:7.2}
If \(\operatorname{div}\widetilde{\mathcal{J}}\equiv0\), then \(u\) has the form
\eqref{eq:1.12}.
\end{proposition}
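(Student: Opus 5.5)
We mirror the argument of Lemma~\ref{Lem:5.9} and Proposition~\ref{Prop:5.10}, now for $n=2k$ and with $v=-u/(k+1)$ in place of $u$.

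\textbf{Step 1: pointwise equality case.} If $\operatorname{div}\widetilde{\mathcal J}\equiv0$, then \eqref{eq:7.3} vanishes at every point. By the proof of \eqref{eq:7.4}, this forces
\[
L_k(A)\nabla v+\tfrac{|\nabla v|^2}{2}T_{k-1}(A)\nabla v=0,
\]
since $T_{k-1}(A)>0$ by Corollary~\ref{Cor:2.3}.

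\emph{Case $\nabla v=0$.} Then $\tr(L_k(A)A)=0$, and \eqref{eq:2.18} shows that $A$ is scalar.

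\emph{Case $\nabla v\neq0$.} Work in an eigenbasis of $A$. Exactly as in Lemma~\ref{Lem:5.9}, equality holds in \eqref{eq:2.12} in every eigendirection $i$ with $\partial_iv\neq0$. Lemma~\ref{Lem:2.5} then says all other eigenvalues coincide. Since $n=2k\geq4$, we can rule out two nonzero components of $\nabla v$, so $\nabla v$ is an eigenvector of $A$. Call its eigenvalue $a$ and the common eigenvalue on $(\nabla v)^\perp$ $b>0$. The computation of $(L_k)_{ii}/(T_{k-1})_{ii}=\frac{n-k}{n}(a-b)=\frac12(a-b)$ gives
\[
\tfrac12(a-b)=-\tfrac12|\nabla v|^2,
\qquad\text{i.e.}\qquad
b=a+|\nabla v|^2 .
\]

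\textbf{Step 2: scalar Hessian for a transformed function.} Set $w=e^{v}$, the exponential analogue of $u^{-2k/(n-2k)}$. Then
\[
D^2w=e^v\bigl(A+\nabla v\otimes\nabla v\bigr).
\]
By Step 1, in both cases $A+\nabla v\otimes\nabla v=b\,\Id$. Hence $D^2w=\lambda(x)\Id$ with $\lambda=e^vb>0$, and $\lambda$ is continuous. As in the proof of Theorem~\ref{Thm:1.6}, $\partial_j\lambda=\partial_i^2\partial_jw=0$ for $i\neq j$, so $\lambda$ is constant. Therefore
\[
e^{v}=a_0+b_0|x-x_0|^2,\qquad a_0,b_0>0,
\]
where positivity of $w$ gives $a_0>0$. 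Equivalently,
\[
u=-(k+1)\log\bigl(a_0+b_0|x-x_0|^2\bigr).
\]

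\textbf{Step 3: fix the constants.} Plug this form into \eqref{eq:7.1}, for instance at $x=x_0$, where $A=D^2v=2(b_0/a_0)\Id$ and $\nabla v=0$. This gives
\[
\binom{2k}{k}\Bigl(\frac{2b_0}{a_0}\Bigr)^k=\frac{a_0^{-(k+1)}}{(k+1)^k}.
\]
Writing $\lambda=b_0/a_0$, this means $a_0=\bigl(2^k(k+1)^k\binom{2k}{k}\lambda^k\bigr)^{-1}$, which is exactly \eqref{eq:1.12}. One should also check that the formula gives a solution at every $x$, not just at $x_0$; this is a direct radial Hessian computation and is routine.

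The main obstacle is Step 1 in the case $\nabla v\neq0$. There we must redo the equality-case bookkeeping of Lemma~\ref{Lem:5.9} with the new coefficients $2$ and $\tfrac12$, and check that the sum-over-components argument still forces equality in \eqref{eq:2.12} componentwise. For this we need the relation $\tr(L_k(A)A)=\tfrac12|\nabla v|^2\,\nabla v^TT_{k-1}(A)\nabla v$, obtained by pairing the vanishing vector identity with $\nabla v$.
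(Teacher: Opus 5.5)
Your proposal is correct and follows the paper's proof essentially step for step. You get $\widetilde{\mathcal J}=0$ from \eqref{eq:7.4}, obtain componentwise equality in \eqref{eq:7.6} and apply Lemma~\ref{Lem:2.5} to reach $|\nabla v|^2=b-a$, show that $D^2(e^v)$ is a scalar matrix with constant coefficient, and evaluate \eqref{eq:7.1} at $x_0$.

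There is one small algebra slip in Step 3. Your displayed identity gives $a_0^{-(k+1)}=2^k(k+1)^k\binom{2k}{k}\lambda^k$, so it is $a_0^{k+1}$, not $a_0$, that equals the reciprocal. This corrected relation is what yields \eqref{eq:1.12}.
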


\begin{proof}
Fix a point \(x\in\R^{n}\) arbitrarily.  If \(\nabla v(x)=0\), then
\eqref{eq:7.3} gives
\(\tr\bigl(L_k(A)A\bigr)=0\),
so \eqref{eq:2.18} shows that \(A=\lambda \Id\) for some \(\lambda>0\).

If \(\nabla v(x)\ne0\), then \(\operatorname{div}\widetilde{\mathcal{J}}=0\) and
\eqref{eq:7.4} {imply} \(\widetilde{\mathcal{J}}=0\), hence
\eqref{eq:7.2} {yields}
\begin{equation}\label{eq:7.8}
    L_k(A)\nabla v
    +\frac{|\nabla v|^2}{2}T_{k-1}(A)\nabla v=0.
\end{equation}
Together with \eqref{eq:7.3}, this yields
\begin{equation}\label{eq:7.9}
\nabla v^TL_k(A)\nabla v
 =-\frac{|\nabla v|^2}{2}\nabla v^TT_{k-1}(A)\nabla v,
 \qquad
\tr\bigl(L_k(A)A\bigr)
 =\frac{|\nabla v|^2}{2}
  \nabla v^TT_{k-1}(A)\nabla v.
\end{equation}

{At the point under consideration, choose orthonormal
coordinates in which \(A\) is diagonal,
and denote the corresponding eigenvalues by
\(\lambda_1,\ldots,\lambda_{2k}\).}  Since \(L_k(A)\) and
\(T_{k-1}(A)\) are polynomials in \(A\), they are diagonal in these
coordinates.  The \(i\)-th component of
\eqref{eq:7.8} is
\[
 \left(
   \bigl(L_k(A)\bigr)_{ii}
   +\frac{|\nabla v|^2}{2}
    \bigl(T_{k-1}(A)\bigr)_{ii}
 \right){\partial_i v}=0.
\]
Hence, whenever \({\partial_i v}\ne0\),
\begin{equation}\label{eq:7.10}
 \bigl(L_k(A)\bigr)_{ii}
 =
 -\frac{|\nabla v|^2}{2}
  \bigl(T_{k-1}(A)\bigr)_{ii}.
\end{equation}

Taking the \(i\)-th diagonal component of
\eqref{eq:7.6}, we obtain
\(\bigl(L_k(A)\bigr)_{ii}^2 \leq \frac12\tr\bigl(L_k(A)A\bigr) \bigl(T_{k-1}(A)\bigr)_{ii}\).
Combining the second identity in
\eqref{eq:7.9} with
\eqref{eq:7.10} gives, for \({\partial_i v}\ne0\),
\[
 \frac{|\nabla v|^4}{4}
 \bigl(T_{k-1}(A)\bigr)_{ii}^2
 \leq
 \frac{|\nabla v|^2}{4}
 \nabla v^TT_{k-1}(A)\nabla v\,
 \bigl(T_{k-1}(A)\bigr)_{ii}.
\]
Since \(\nabla v\ne0\) and \(T_{k-1}(A)>0\), we have
\[
 \bigl(T_{k-1}(A)\bigr)_{ii}
 \leq
 \frac{\nabla v^TT_{k-1}(A)\nabla v}
      {|\nabla v|^2}
 \qquad\text{for } i \text{ such that }
 {\partial_i v}\ne0.
\]
Multiplying this inequality by \({(\partial_i v)^2}\)
and summing over all indices for which
\({\partial_i v}\ne0\), we get
\begin{align}\label{eq:7.11}
 \sum_{\{i:\,{\partial_i v}\ne0\}}
 \bigl(T_{k-1}(A)\bigr)_{ii}{(\partial_i v)^2}
 &\leq
 \frac{\nabla v^TT_{k-1}(A)\nabla v}
      {|\nabla v|^2}
 \sum_{\{i:\,{\partial_i v}\ne0\}}
 {(\partial_i v)^2}
 =\nabla v^TT_{k-1}(A)\nabla v.
\end{align}
On the other hand, because \(T_{k-1}(A)\) is diagonal in the chosen
basis, one has
\[
 \sum_{\{i:\,{\partial_i v}\ne0\}}
 \bigl(T_{k-1}(A)\bigr)_{ii}{(\partial_i v)^2}
 =
 \nabla v^TT_{k-1}(A)\nabla v.
\]
Thus equality holds in \eqref{eq:7.11}.  Since every
individual inequality is multiplied by the strictly positive number
\({(\partial_i v)^2}\), equality must hold for every
index with \({\partial_i v}\ne0\).
Therefore,
\[
 \bigl(T_{k-1}(A)\bigr)_{ii}
 =
 \frac{\nabla v^TT_{k-1}(A)\nabla v}
      {|\nabla v|^2}
 \qquad\text{for } i \text{ such that }
 {\partial_i v}\ne0.
\]
For every \(i\) such that \({\partial_i v}\ne0\),
\eqref{eq:7.10} and the equality proved above
give
\begin{equation*}
 \bigl(L_k(A)\bigr)_{ii}^2
 =
 \frac{|\nabla v|^4}{4}
 \bigl(T_{k-1}(A)\bigr)_{ii}^2
 =
 \frac{|\nabla v|^2}{4}
 \nabla v^TT_{k-1}(A)\nabla v\,
 \bigl(T_{k-1}(A)\bigr)_{ii}
 =
 \frac12\tr\bigl(L_k(A)A\bigr)
 \bigl(T_{k-1}(A)\bigr)_{ii},
\end{equation*}
where the last equality follows from
\eqref{eq:7.9}.  Thus equality holds in
\eqref{eq:7.6} in the \(i\)-th eigendirection.
Hence equality holds in
\eqref{eq:7.6} in every eigendirection for which
\({\partial_i v}\ne0\).  Lemma~\ref{Lem:2.5} then implies
\begin{equation}\label{eq:7.12}
 \lambda_j=\lambda_\ell>0
 \qquad(j,\ell\ne i).
\end{equation}
There cannot be two nonzero components
\({\partial_i v},{\partial_j v}\).
Indeed,
\eqref{eq:7.12} applied to \(i\) gives
\(\lambda_j=\lambda_\ell\) for every \(\ell\ne i,j\), while its
application to \(j\) {yields} \(\lambda_i=\lambda_\ell\).  Since
\(2k\geq4\), such an index \(\ell\) exists, and hence all eigenvalues
of \(A\) are equal.  Thus \(A\) is scalar and \(L_k(A)=0\). Then
\eqref{eq:7.8} contradicts
\(T_{k-1}(A)>0\) and \(\nabla v\ne0\).  Thus \(\nabla v\) has one
nonzero component, say \({\partial_i v}\).  For any
\(j\ne i\), by \eqref{eq:2.6} and
\(\sigma_k(A)=
\lambda_i\sigma_{k-1}(\lambda(A)|i)+
\sigma_k(\lambda(A)|i)\), we have
\[
\bigl(T_{k-1}(A)\bigr)_{ii}
=\binom{2k-1}{k-1}\lambda_j^{k-1},\qquad\sigma_k(A)=\bigl(T_{k-1}(A)\bigr)_{ii}(\lambda_i+\lambda_j),
\]
and
\(\bigl(T_k(A)\bigr)_{ii} =\bigl(T_{k-1}(A)\bigr)_{ii}\lambda_j\).
Therefore,
\(\frac{\bigl(L_k(A)\bigr)_{ii}} {\bigl(T_{k-1}(A)\bigr)_{ii}} =\frac12(\lambda_i-\lambda_j)\),
and \eqref{eq:7.10} becomes
\begin{equation}\label{eq:7.13}
    |\nabla v|^2=\lambda_j-\lambda_i,
\end{equation}
so we have \(\lambda_j=\lambda_{\ell}\) for any \(j,\ell\neq i\).

{We have}
\begin{equation}
    D^2(e^v)=e^v\bigl(A+\nabla v\otimes\nabla v\bigr), \qquad\forall\,\,x\in\R^n,
\end{equation}
{Therefore,} \eqref{eq:7.13} implies that
\begin{equation}\label{eq:7.14}
    D^2(e^v)=\frac{\Delta(e^v)}{2k}\Id,
    \qquad
    \Delta(e^v)>0, \qquad\forall\,\,x\in\R^n.
\end{equation}
For any \(\ell\) and \(j\ne \ell\), it follows from \eqref{eq:7.14} that
\(\frac1{2k}\partial_\ell\Delta(e^v) ={\partial_\ell\partial_j^2(e^v)} ={\partial_j^2\partial_\ell(e^v)}=0\)
in the distributional sense.
Thus \(\Delta(e^v)\) is a positive constant.
Since \(e^v>0\) on
\(\R^{n}\), we deduce from \eqref{eq:7.14} that
\(e^{v(x)}=a_0\bigl(1+\lambda|x-x_0|^2\bigr), \qquad a_0,\lambda>0\).
At \(x=x_0\), equation \eqref{eq:7.1} gives
\(a_0^{-(k+1)} =2^k(k+1)^k\binom{2k}{k}\lambda^k\).
Using \(u=-(k+1)v\) proves \eqref{eq:1.12}.
\end{proof}

\subsection{The local estimate}

The following lemma allows us to control the right-hand side of
\eqref{eq:7.5} by using the integral growth condition \eqref{eq:1.11}.

\begin{lemma}\label{Lem:7.3}
Let \(0\leq\chi\leq1\) be smooth and compactly supported, and suppose
that
\begin{equation}\label{eq:7.15}
    |\nabla\chi|\leq\frac{C_0}{R}
\end{equation}
for some \(R>0\).  If \(\theta>2k\), then
\begin{equation}\label{eq:7.16}
    \int_{\R^{n}}
    \sigma_{k-1}(A)|\nabla v|^2e^v\chi^\theta\dd x
    \leq
    CR^{-2k}\int_{\supp\chi}e^v\dd x,
\end{equation}
where \(C\) depends only on \(k,\theta,C_0\).
\end{lemma}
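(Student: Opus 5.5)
This is the exponential analogue of Lemma~\ref{Lem:3.1}(i), and the plan is to run the same Newton-tensor recursion with the weight \(u^{-\delta}\) replaced by \(e^{v}\). Here \(v\) is convex-like: \(A=D^2v\in\Gamma_k\) and \(\sigma_k(A)=(k+1)^{-k}e^{-(k+1)v}\). For \(1\leq s\leq k\) set
\[
\Lambda_s=\int\sigma_{k-s}(A)|\nabla v|^{2s}e^v\chi^\theta\dd x,
\]
so the quantity on the left of \eqref{eq:7.16} is \(\Lambda_1\). Set also
\[
M_s=\int T_{k-s}(A)[\nabla v,\nabla v]\,|\nabla v|^{2(s-1)}e^v\chi^\theta\dd x,
\qquad
E_s=\int T_{k-s}(A)[\nabla v,\nabla\chi]\,|\nabla v|^{2(s-1)}e^v\chi^{\theta-1}\dd x .
\]
Exactly as in \eqref{eq:3.7}--\eqref{eq:3.11}, write \(T_{k-s}=\sigma_{k-s}\Id-T_{k-s-1}A\) and integrate by parts using the divergence-free property \eqref{eq:2.5} and \(\tr(T_{k-s-1}A)=(k-s)\sigma_{k-s}\). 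The sign of \(A\) is now \(+D^2v\) and the weight derivative is \(\nabla e^v=e^v\nabla v\), so the recursion becomes
\[
M_s=\frac{s-k}{2s}\Lambda_s\cdot(-1)+\dots ;
\]
I will simply redo the computation carefully. The point is that the weight \(e^v\) behaves like \(u^{-\delta}\) with an ``infinite'' positive \(\delta\): every coefficient generated by differentiating the weight has a definite good sign. Combining the steps with positive multipliers should give
\[
k\int\sigma_k(A)e^v\chi^\theta+\sum_s b_s\Lambda_s=\sum_s c_sE_s,\qquad b_s>0 ,
\]
the analogue of \eqref{eq:3.17} in the case \(\delta>0\).

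Next I estimate the cutoff terms \(E_s\) as in \eqref{eq:3.18}--\eqref{eq:3.26}. The trace bound \(\|T_{k-s}\|\leq(n-k+s)\sigma_{k-s}\), Young's inequality, and \eqref{eq:7.15} give
\[
|E_s|\leq\varepsilon\Lambda_s+C_\varepsilon J_s,\qquad J_s=R^{-2s}\int\sigma_{k-s}(A)e^v\chi^{\theta-2s}\dd x .
\]
To reduce \(J_s\) to \(J_{s+1}\), I write \((k-s)\sigma_{k-s}=\tr(T_{k-s-1}A)=\partial_i(T_{k-s-1}\nabla v)_i\) and integrate by parts against \(e^v\chi^{\theta-2s}\). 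This produces
\[
-R^{-2s}\int T_{k-s-1}[\nabla v,\nabla v]\,e^v\chi^{\theta-2s}\leq0,
\]
which I simply drop, and this is exactly where the good sign of the exponential weight is used. It also produces a cutoff term, which is bounded by \(\varepsilon\Lambda_{s+1}+C_\varepsilon J_{s+1}\) after Young with the factorizations of \eqref{eq:3.22}. Iterating gives \(J_s\leq\varepsilon\sum_{j>s}\Lambda_j+C_\varepsilon J_k\), where
\[
J_k=R^{-2k}\int e^v\chi^{\theta-2k}\leq R^{-2k}\int_{\supp\chi}e^v .
\]
Here \(\theta>2k\) guarantees that the cutoff power stays nonnegative.

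Finally I absorb \(\varepsilon\sum\Lambda_s\) into \(\min_s b_s\sum\Lambda_s\) and drop the nonnegative term \(k\int\sigma_k e^v\chi^\theta\). This yields \(\Lambda_1\leq CR^{-2k}\int_{\supp\chi}e^v\), which is \eqref{eq:7.16}. The constant depends only on \(k\), \(\theta\), \(C_0\) and \(n=2k\).

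The main obstacle is the first step. I must verify that, with the exponential weight, every coefficient \(b_s\) in the summed identity is strictly positive, and that the Hessian term in each recursion step enters with the correct sign. This requires tracking the signs carefully, since \(A=+D^2v\) here rather than \(-D^2u\). If a coefficient turns out negative, I will instead use the one-step recursion with \(M_{s+1}\geq0\) (by positivity of \(T_{k-s-1}\)) to discard the troublesome terms directly.
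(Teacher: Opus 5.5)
Your plan is correct and is essentially the paper's proof: it uses the same $\Lambda_s$, $M_s$, $E_s$, the same Newton-tensor recursion, the same reduction $J_s\le\varepsilon\Lambda_{s+1}+C_\varepsilon J_{s+1}$ obtained by dropping the nonpositive term $-R^{-2s}\int T_{k-s-1}(A)[\nabla v,\nabla v]\,e^v\chi^{\theta-2s}\dd x$, and the same final absorption. The sign check you flag as the main obstacle works out with no fallback needed: the recursion is $M_s=\frac{k+s}{2s}\Lambda_s+\frac{1}{2s}M_{s+1}+\frac{\theta}{2s}E_{s+1}$, together with $k\int\sigma_k(A)e^v\chi^\theta\dd x=-M_1-\theta E_1$, and multiplying by $1/(2^{s-1}(s-1)!)$ and summing gives $b_s=\frac{k+s}{2^s s!}>0$ for every $s$.
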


\begin{proof}
 For \(1\leq s\leq k\), define
\begin{align*}
\Lambda_s&:=\int_{\R^{n}}\sigma_{k-s}(A)|\nabla v|^{2s}e^v\chi^\theta\dd x,\\
M_s&:=\int_{\R^{n}}
 (T_{k-s}(A))_{ij}{\partial_i v}\,
 {\partial_j v}|\nabla v|^{2(s-1)}e^v\chi^\theta\dd x,\\
E_s&:=\int_{\R^{n}}
 (T_{k-s}(A))_{ij}{\partial_i v}\,
 {\partial_j\chi}|\nabla v|^{2(s-1)}
 e^v\chi^{\theta-1}\dd x.
\end{align*}
Arguing as in the calculation in
\eqref{eq:3.7}--\eqref{eq:3.10} and using
\(A=D^2v\) and
\(\partial_i(e^v)=e^v{\partial_i v}\), we obtain
\begin{equation}\label{eq:7.17}
    M_s
    =\frac{k+s}{2s}\Lambda_s
     +\frac1{2s}M_{s+1}
     +\frac{\theta}{2s}E_{s+1},
    \qquad1\leq s\leq k-1.
\end{equation}
Then, as in \eqref{eq:3.13} and \eqref{eq:3.14}, we can derive
\[
    k\int_{\R^{n}}\sigma_k(A)e^v\chi^\theta\dd x
    =-M_1-\theta E_1,
    \qquad M_k=\Lambda_k.
\]
Multiplying \eqref{eq:7.17} by
\(1/(2^{s-1}(s-1)!)\), summing over \(s\) and using
\(
    \frac{2k}{2^kk!}=\frac1{2^{k-1}(k-1)!},
\)
we can obtain
\begin{equation}\label{eq:7.18}
\begin{split}
k\int_{\R^{n}}\sigma_k(A)e^v\chi^\theta\dd x
+\sum_{s=1}^k\frac{k+s}{2^ss!}\Lambda_s
=-\sum_{s=1}^k
  \frac{\theta}{2^{s-1}(s-1)!}E_s.
\end{split}
\end{equation}
Next, we estimate \(E_s\).
Since \(T_{k-s}(A)>0\), we have
\begin{align*}
 \left|
 (T_{k-s}(A))_{ij}{\partial_i v}\,
 {\partial_j\chi}
 \right|
 &\leq
 \left(
   (T_{k-s}(A))_{ij}{\partial_i v}\,
   {\partial_j v}
 \right)^{1/2}
 \left(
   (T_{k-s}(A))_{ij}{\partial_i\chi}\,
   {\partial_j\chi}
 \right)^{1/2}.
\end{align*}
Moreover, one has
\begin{align*}
 (T_{k-s}(A))_{ij}{\partial_i v}\,
 {\partial_j v}
 \leq
 \tr\bigl(T_{k-s}(A)\bigr)|\nabla v|^2,\qquad
 (T_{k-s}(A))_{ij}{\partial_i\chi}\,
 {\partial_j\chi}
 \leq
 \tr\bigl(T_{k-s}(A)\bigr)|\nabla\chi|^2.
\end{align*}
Consequently, by \eqref{eq:2.3} and \eqref{eq:7.15}, we have
\(\left| (T_{k-s}(A))_{ij}{\partial_i v}\, {\partial_j\chi} \right| \leq \frac{C}{R}\, \sigma_{k-s}(A)|\nabla v|\).
Therefore, \(|E_s|\leq \frac{C}{R}\int_{\R^n}\sigma_{k-s}(A)|\nabla v|^{2s-1}e^v\chi^{\theta-1}\dd x\).
By using Young's inequality and integrating the above inequality, we obtain
\begin{equation}\label{eq:7.19}
 |E_s|
 \leq
 \varepsilon\Lambda_s+C_\varepsilon J_s
 \qquad\text{with}\qquad
 J_s:=
 R^{-2s}\int_{\R^n}
 \sigma_{k-s}(A)e^v\chi^{\theta-2s}\dd x.
\end{equation}

Then, we will estimate \(J_s\) for \(1\leq s\leq k-1\).
By \eqref{eq:2.4}, we get
\[
 (k-s)J_s
 =
 R^{-2s}\int_{\R^n}
 (T_{k-s-1}(A))_{ij}{\partial_i\partial_j v}
 e^v\chi^{\theta-2s}\dd x.
\]
Using
\(
 \partial_j(T_{k-s-1}(A))_{ij}=0
\)
and integrating by parts, we obtain
\begin{align*}
 (k-s)J_s
 ={}&
 -R^{-2s}\int_{\R^n}
 (T_{k-s-1}(A))_{ij}{\partial_i v}\,
 {\partial_j v}
 e^v\chi^{\theta-2s}\dd x\\
 &-(\theta-2s)R^{-2s}
 \int_{\R^n}
 (T_{k-s-1}(A))_{ij}{\partial_i v}\,
 {\partial_j\chi}
 e^v\chi^{\theta-2s-1}\dd x.
\end{align*}
Because \(T_{k-s-1}(A)>0\), the first integral on the right-hand
side is nonpositive.  Hence
\begin{align*}
 (k-s)J_s
 \leq{}&
 C R^{-2s}\int_{\R^n}
 \left|
 (T_{k-s-1}(A))_{ij}{\partial_i v}\,
 {\partial_j\chi}
 \right|
 e^v\chi^{\theta-2s-1}\dd x.
\end{align*}
Noting that
\begin{equation*}
 \left|
 (T_{k-s-1}(A))_{ij}{\partial_i v}\,
 {\partial_j\chi}
 \right|
 \leq
 \left(
 (T_{k-s-1}(A))_{ij}{\partial_i v}\,
 {\partial_j v}
 \right)^{1/2}
 \left(
 (T_{k-s-1}(A))_{ij}{\partial_i\chi}\,
 {\partial_j\chi}
 \right)^{1/2}
 \leq
 \tr\bigl(T_{k-s-1}(A)\bigr)
 |\nabla v|\,|\nabla\chi|,
\end{equation*}
by using \eqref{eq:2.3} and
\eqref{eq:7.15}, we obtain
\[
 J_s
 \leq
 C R^{-2s-1}\int_{\R^n}
 \sigma_{k-s-1}(A)|\nabla v|
 e^v\chi^{\theta-2s-1}\dd x.
\]
Using Young's inequality and integrating, we conclude that
\begin{equation}\label{eq:7.20}
 J_s\leq
 \varepsilon\Lambda_{s+1}+C_\varepsilon J_{s+1}.
\end{equation}
Since
\(J_k=R^{-2k}\int_{\R^{n}} e^v\chi^{\theta-2k}\dd x\),
by \eqref{eq:7.19} and \eqref{eq:7.20}, we deduce from iteration and the fact \(0\leq\chi\leq1\) that
\[
    \sum_{s=1}^k|E_s|
    \leq
    \varepsilon\sum_{s=1}^k\Lambda_s
    +C_\varepsilon R^{-2k}\int_{\supp\chi}e^v\dd x.
\]
Inserting the above inequality into \eqref{eq:7.18} and choosing \(\varepsilon\)
small, we arrive at
\eqref{eq:7.16}.
{By the regularity stated at the end of the Introduction and
\eqref{eq:2.5}, every integration by parts above is valid in the
classical sense.}
\end{proof}

\subsection{Completion of our proof for Theorem~\ref{Thm:1.7}}

\begin{proof}[Proof of Theorem~\ref{Thm:1.7}]
Let \(\{R_j\}\) be the sequence of radii given by \eqref{eq:1.11}.
Pass to a subsequence, still denoted by \(R_j\), such that
\begin{equation}
    R_{j+1}\geq8R_j.
\end{equation}
For each \(j\), choose \(\chi_j\in C_c^\infty(B_{R_j})\) such that
\[
    \chi_j=1
    \quad\text{on }
    \left\{\frac{R_j}{4}<|x|<\frac{R_j}{2}\right\},
    \qquad
    |\nabla\chi_j|\leq\frac{C}{R_j}.
\]
Lemma~\ref{Lem:7.3} and
\eqref{eq:1.11} imply that
\begin{equation}\label{eq:7.21}
\int_{\{R_j/4<|x|<R_j/2\}}
 e^v\sigma_{k-1}(A)|\nabla v|^2\dd x\leq
CR_j^{-2k}\int_{B_{R_j}}e^v\dd x
\leq CR_j^2.
\end{equation}
For each integer \(m\geq1\), define
\[
 \eta_m(x)
 :=
 1-\frac1m\sum_{j=m}^{2m-1}
 \min\left\{
  1,\,
  \max\left\{
   0,\,
   \frac{4|x|-R_j}{R_j}
  \right\}
 \right\}.
\]
Then \(0\leq\eta_m\leq1\), \(\eta_m=1\) on \(B_{R_m/4}\), and
\(\eta_m=0\) outside \(B_{R_{2m-1}/2}\).  Moreover,
\[
 |\nabla\eta_m|
 =
 \frac{4}{mR_j}
 \quad\text{a.e. on }
 \left\{\frac{R_j}{4}<|x|<\frac{R_j}{2}\right\},
 \qquad m\leq j\leq2m-1,
\]
and \(\nabla\eta_m=0\) almost everywhere outside these annuli.
From \eqref{eq:7.21}, we infer that
\begin{equation}
\int_{\R^{n}} e^v\sigma_{k-1}(A)|\nabla v|^2
 |\nabla\eta_m|^2\dd x
\leq
\frac{C}{m^2}\sum_{j=m}^{2m-1}R_j^{-2}
\int_{\{R_j/4<|x|<R_j/2\}}
e^v\sigma_{k-1}(A)|\nabla v|^2\dd x
\leq\frac{C}{m}.
\end{equation}
By Proposition~\ref{Prop:7.1}, one has
\(\int_{\R^{n}}\eta_m^2\operatorname{div}\widetilde{\mathcal{J}}\dd x \longrightarrow0\).
Since \(R_m\to\infty\) and \(\eta_m=1\) on \(B_{R_m/4}\), for every
fixed \(R>0\), we have
\[
    0\leq\int_{B_R}\operatorname{div}\widetilde{\mathcal{J}}\dd x
    \leq\int_{\R^{n}}\eta_m^2\operatorname{div}\widetilde{\mathcal{J}}\dd x
    \longrightarrow0.
\]
Thus \(\operatorname{div}\widetilde{\mathcal{J}}\equiv0\), and
Proposition~\ref{Prop:7.2} proves
\eqref{eq:1.12}.

It remains to verify admissibility and the mass.  For
\eqref{eq:1.12}, direct differentiation gives, for
\(1\leq j\leq k\),
\begin{align}
\sigma_j(-D^2u)
={}&\binom{2k}{j}
\left(
 \frac{2(k+1)\lambda}{1+\lambda|x-x_0|^2}
\right)^j
\left(
 1-\frac{j}{k}
 \frac{\lambda|x-x_0|^2}{1+\lambda|x-x_0|^2}
\right)>0.
\end{align}
For \(j=k\), this is exactly
\(\sigma_k(-D^2u)=e^u\).  Finally, with
\(y=\sqrt\lambda(x-x_0)\),
\begin{align*}
\int_{\R^{n}}e^u\dd x
=2^k(k+1)^k\binom{2k}{k}
  \int_{\R^{n}}(1+|y|^2)^{-(k+1)}\dd y=\frac{2^k(k+1)^k\binom{2k}{k}\pi^k}{k!}.
\end{align*}
This proves \eqref{eq:1.13}.
\end{proof}

\section{Classification results for \texorpdfstring{\(n=2k\)}{n=2k}: Theorem~\ref{Thm:1.8}}
\label{Sec:8}

\subsection{\texorpdfstring{Global and asymptotic estimates for the
\(n/2\)-Hessian}{Global and asymptotic estimates for the $\frac{n}{2}$-Hessian}}

Let \(k\geq2\), \(n=2k\), and let \(v\in C^3(\R^n)\) satisfy
\begin{equation}\label{eq:8.1}
 D^2v\in\Gamma_k,
 \qquad
 \lim_{|x|\to\infty}v(x)=+\infty,
 \qquad
 \int_{\R^n}\sigma_k(D^2v)\dd x<\infty.
\end{equation}
For a regular value \(t\), set \(\Sigma_t=\{v=t\}\) and
\begin{equation}\label{eq:8.2}
 P_j(t)=\int_{\Sigma_t}|\nabla v|^{n-2j+1}
 \sigma_{j-1}\bigl(D^2v|_{T\Sigma_t}\bigr)\dd S,
 \qquad 1\leq j\leq k.
\end{equation}

Neither of the next two theorems assumes an equation for \(v\).

\begin{theorem}\label{Thm:8.1}
Under \eqref{eq:8.1},
\begin{equation}\label{eq:8.3}
 \sup_{t\ \mathrm{regular}}P_j(t)
 \leq C_{k,j}
 \left(\int_{\R^n}\sigma_k(D^2v)\dd x\right)^{\frac{n-j}{k}},
 \qquad 1\leq j\leq k,
\end{equation}
and
\begin{equation}\label{eq:8.4}
 \sup_{t\ \mathrm{regular}}P_k(t)
 =k\int_{\R^n}\sigma_k(D^2v)\dd x.
\end{equation}
Moreover,
\begin{equation}\label{eq:8.5}
 \Delta_n v={\operatorname{div}}
 \bigl(|\nabla v|^{n-2}\nabla v\bigr)\geq0,
 \qquad
 \int_{\R^n}\Delta_n v\dd x
 \leq C_k\left(\int_{\R^n}\sigma_k(D^2v)\dd x
 \right)^{2-\frac1k}.
\end{equation}
\end{theorem}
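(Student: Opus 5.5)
We will work entirely with Proposition~\ref{Prop:2.7}, specialized to \(n=2k\), so that \(m_j=\frac{n(k-j)}{n-k}=2(k-j)\) and \(m_j+1=n-2j+1\). With this choice the quantity \(P_j\) in \eqref{eq:8.2} is exactly the \(P_j\) of \eqref{eq:2.25}. Since \(v\to+\infty\), every sublevel set \(\{v<t\}\) is compact, so Proposition~\ref{Prop:2.7} applies on all of \(\R^n\). Each \(P_j\) is therefore nondecreasing, and \(\sup_{t\ \mathrm{regular}}P_j=\lim_{t\to\infty}P_j(t)\).

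\textbf{The case \(j=k\).} Identity \eqref{eq:2.27} gives \(P_k(t)=k\int_{\{v<t\}}\sigma_k(D^2v)\dd x\). Letting \(t\to\infty\) and using monotone convergence yields \eqref{eq:8.4}.

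\textbf{Downward induction on \(j\).} For \(j<k\) we will bound \(P_j\) by \(P_{j+1}\) and \(P_j'\), using the trace-level inequality \eqref{eq:2.24} together with \eqref{eq:2.26}. Write \(\sigma_i^\Sigma=\sigma_i(D^2v|_{T\Sigma_t})\). The tangential matrix lies in \(\Gamma_{k-1}\) by the deletion property, Corollary~\ref{Cor:2.3}, applied with \(\nu=\nabla v/|\nabla v|\). The Newton--Maclaurin inequality on \(\Gamma_{k-1}\) in dimension \(n-1\) then gives
\[
\sigma_{j-1}^\Sigma\le C\,(\sigma_j^\Sigma)^{\frac{j-1}{j}}
\quad\text{and}\quad
\sigma_j^\Sigma\le C\,(\sigma_{j}^\Sigma)^{\frac{j-1}{j}}(\sigma_{j+1}^\Sigma)^{\frac1{j+1}}\cdot(\ldots).
\]
The simplest route is to apply Hölder on \(\Sigma_t\) directly:
\[
P_j=\int|\nabla v|^{n-2j+1}\sigma_{j-1}^\Sigma
\le C\Big(\int|\nabla v|^{n-2j-1}\sigma_j^\Sigma\Big)^{\alpha}\Big(\int|\nabla v|^{\beta}\Big)^{1-\alpha}.
\]
We will then close the estimate with the isoperimetric-type level-set inequality of Wu--Yi \cite[Corollary~1.3]{WuYi}. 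That inequality compares \(\int_{\Sigma_t}|\nabla v|^{n-1}\)-type weighted curvature integrals of consecutive orders; applied on the convex-like levels it should give
\[
P_j(t)\le C\,P_{j+1}(t)^{\frac{n-j}{n-j-1}}.
\]
The exponents are dictated by scaling. Under \(v\mapsto\lambda v\) we have \(P_j\mapsto\lambda^{n-j}P_j\), while under \(x\mapsto rx\) \(P_j\) is invariant, since \(n-2j+1+(j-1)\cdot 2-(\text{gradient scaling})\) cancels when \(n=2k\). So the only admissible power is \((n-j)/(n-j-1)\). Iterating from \(j=k\), where \(P_k\) is given by \eqref{eq:8.4}, down to \(j\) then produces the power \(\frac{n-j}{n-k}=\frac{n-j}{k}\), which is exactly \eqref{eq:8.3}.

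\textbf{The case \(j=1\) and \eqref{eq:8.5}.} Here \(m_1=n-2\). Equation \eqref{eq:2.23} gives \(\operatorname{div}(|\nabla v|^{n-2}\nabla v)\ge0\), i.e.\ \(\Delta_nv\ge0\). Moreover \(P_1(t)=\int_{\{v<t\}}\Delta_nv\dd x\) by the divergence theorem. Taking \(t\to\infty\) and using \eqref{eq:8.3} with exponent \((n-1)/k=2-\frac1k\) yields \eqref{eq:8.5}.

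\textbf{Main obstacle.} The hard step is the one-step comparison \(P_j\lesssim P_{j+1}^{(n-j)/(n-j-1)}\). We expect that the pointwise bound \eqref{eq:2.24} controls the tangential gradient of \(|\nabla v|\), and that an integration by parts on \(\Sigma_t\), using the divergence-free tangential Newton tensors (Reilly), converts \(P_j\) into \(P_{j+1}\) plus a term controlled by \(P_j'\). If Wu--Yi does not apply verbatim, we would instead derive a differential inequality
\[
P_j\le C\,P_{j+1}^{a}\,(P_j')^{b}
\]
and integrate it in \(t\). Since \(P_j\) is monotone and \(P_{j+1}\) is bounded, this forces \(P_j\) to be bounded with the scaling-correct power.
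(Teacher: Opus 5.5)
Your routine steps are correct and match the paper. With $m_j=n-2j$, Proposition~\ref{Prop:2.7} makes each $P_j$ nondecreasing, and \eqref{eq:2.27} with monotone convergence gives \eqref{eq:8.4}. The case $j=1$ of \eqref{eq:2.23} gives $\Delta_n v\geq0$ and $\int_{\R^n}\Delta_n v\dd x=\lim_{t\to\infty}P_1(t)$, so \eqref{eq:8.5} follows from the $j=1$ case of \eqref{eq:8.3}.

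\textbf{The gap is \eqref{eq:8.3} itself for $j<k$.} The one-step bound $P_j(t)\leq C\,P_{j+1}(t)^{(n-j)/(n-j-1)}$, which you expect Wu--Yi to supply, is never derived, and at a fixed level it is false. On a single regular level nothing controls the tangential oscillation of $f=|\nabla v|$. The cone condition only ties $\nabla_{\Sigma_t}|\nabla v|$ to $D^2v[\nu,\nu]$, i.e.\ to the $t$-derivative, through \eqref{eq:8.22}.

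For $j=1$ your bound reads $\int_{\Sigma}f^{n-1}\dd S\leq C\bigl(\int_\Sigma f^{n-2}H\dd S\bigr)^{(n-1)/(n-2)}$. This fails on a round sphere when $f$ concentrates on a small cap. Such an $f$ is realizable near one level by taking $D^2v[\nu,\nu]$ large, which keeps $D^2v\in\Gamma_k$. Scaling pins down the exponent only once some inequality exists. Your H\"older route also fails independently: it rests on $\sigma_{j-1}^\Sigma\leq C(\sigma_j^\Sigma)^{(j-1)/j}$, which is the reverse of Maclaurin's inequality.

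\textbf{What is missing is the derivative term and a nonlinear ODE argument.} The paper applies the tensor Michael--Simon inequality \eqref{eq:8.28} to $|\nabla v|^{n-2j}T_{j-1}(|\nabla v|h)+\eta|\nabla v|^{n-2j+2}T_{j-2}(|\nabla v|h)$.
\begin{itemize}
\item By Codazzi, the tangential divergence of the first tensor is $(n-1-j)|\nabla v|^{n-2j-1}T_{j-1}(|\nabla v|h)\nabla_{\Sigma_t}|\nabla v|$. Using \eqref{eq:8.22} and Cauchy--Schwarz, its $L^1(\Sigma_t)$ norm is at most $C\sqrt{P_{j+1}P_j'}$.
\item The trace term integrates to $jP_{j+1}$.
\item The $\eta$-perturbation is needed because $(\det T_{j-1})^{1/(n-2)}$ cannot be bounded below by the integrand of $P_j$. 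Lemma~\ref{Lem:8.6} instead bounds the coefficient of $\eta^{j-1}$ by $c\,\sigma_{j-1}^{n-2}$.
\end{itemize}
The resulting inequality \eqref{eq:8.32} couples $P_j$ to $P_{j+1}$, $P_j'$ \emph{and} $P_{j-1}'$, so no clean downward induction closes.

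Lemma~\ref{Lem:8.8} therefore treats all $P_j$ at once. After normalizing $\sup P_k=1$, it shows that $Y=\sum_j(1+P_j)^{1/d_j}$ satisfies $Y'\geq cY^{2k/(2k-1)}$ once $Y$ is large. This is incompatible with $Y$ being finite for all $t$. The homogeneity $P_j\mapsto\lambda^{n-j}P_j$ under $v\mapsto\lambda v$ then gives the power $(n-j)/k$.

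Your fallback $P_j\leq C P_{j+1}^a(P_j')^b$ has the right flavor only for $j=1$. There \eqref{eq:8.33} yields $P_1'\gtrsim P_1^{2(n-2)/(n-1)}/P_2$, which is superlinear because $n\geq4$. Even in that case you neither derive the inequality nor check superlinearity, and for $j\geq2$ it is not the form one can actually obtain.
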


Thus finite \(k\)-Hessian mass controls all the level-set integrals in
\eqref{eq:8.2} and, in particular, the total \(n\)-Laplacian mass,
without assuming any equation for \(v\).

\begin{theorem}\label{Thm:8.2}
Under \eqref{eq:8.1}, there exists
\(\beta\in(0,\infty)\) such that
\begin{equation}\label{eq:8.6}
 \frac{\sup_{B_R}v}{\log R}\longrightarrow\beta,
 \qquad
 \int_{\R^n}\Delta_n v\dd x
 =|\mathbb S^{n-1}|\beta^{n-1},
\end{equation}
and, for every sufficiently large \(R\),
\begin{equation}\label{eq:8.7}
 \int_{B_R}e^v\dd x\leq CR^{n+\beta}.
\end{equation}
There are \(R_i\to\infty\) and \(c_i\to\infty\) such that
\begin{equation}
 v(R_i x)-c_i\longrightarrow\beta\log|x|
 \quad\hbox{strongly in }
 W^{1,n}_{\mathrm{loc}}(\R^n\setminus\{0\}).
\end{equation}
\end{theorem}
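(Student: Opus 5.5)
The statement to prove is Theorem~\ref{Thm:8.2}. My plan is to treat $v$ as an $n$-subharmonic function with finite total $n$-Laplacian mass, supplied by Theorem~\ref{Thm:8.1}, and import the potential theory of the $n$-Laplacian. This means the logarithmic growth estimates of Kilpel\"ainen--Zhong and the blow-down/tangent-cone analysis for $n$-superharmonic functions.

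\textbf{Step 1: finite mass and proper growth.} Set $\mu=\Delta_n v\geq0$. By \eqref{eq:8.5}, $\mu(\R^n)<\infty$. Since $v\to+\infty$, the sublevel sets $\{v<t\}$ are bounded. For a regular value $t$, the divergence theorem gives
\[
\mu(\{v<t\})=\int_{\Sigma_t}|\nabla v|^{n-1}\dd S=P_1(t).
\]
This is nondecreasing in $t$ and tends to $m:=\mu(\R^n)$, and $m>0$ because $\sigma_1(D^2v)>0$ forces $\nabla v\not\equiv0$.

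\textbf{Step 2: growth rate of the supremum.} I define $\beta$ by $m=|\mathbb S^{n-1}|\beta^{n-1}$. The $n$-Laplacian comparison principle applies on annuli to the fundamental solution $\beta'\log|x|$, which satisfies $\Delta_n(\beta'\log|x|)=|\mathbb S^{n-1}|\beta'^{n-1}\delta_0$. I would compare $v$ with $\beta_\pm\log|x|+c$ on $B_R\setminus\overline{\{v<t\}}$, using the level sets as inner boundary.
\begin{itemize}
\item For the upper bound, choose $\beta_+>\beta$. Since the flux of $v$ through $\Sigma_t$ is at most $m<|\mathbb S^{n-1}|\beta_+^{n-1}$, the comparison gives $\sup_{B_R}v\leq\beta_+\log R+C$. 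This is the Kilpel\"ainen--Zhong logarithmic estimate \cite[Theorem~1.2]{KilpelainenZhong}, applied to $v$, which is $n$-subharmonic with finite mass.
\item For the lower bound, choose $\beta_-<\beta$. Take $t$ so large that $P_1(t)>|\mathbb S^{n-1}|\beta_-^{n-1}$. Then $\min_{\partial B_R}v\geq\beta_-\log R-C$, because otherwise the flux through large spheres would be too small.
\end{itemize}
Together these give $\sup_{B_R}v/\log R\to\beta$, the first part of \eqref{eq:8.6}. Since $\mu(\R^n)=m$ by definition of $\beta$, the mass identity in \eqref{eq:8.6} follows. The bound \eqref{eq:8.7} then follows from $e^v\leq CR^{\beta+o(1)}$ on $B_R$. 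To get the exact power $CR^{n+\beta}$ rather than $R^{n+\beta+o(1)}$, the upper comparison must be made sharp, i.e.\ $v\leq\beta\log|x|+C$. I expect to obtain this from the monotonicity of $P_1(t)\uparrow m$: the flux deficit at level $t$ is integrable in $t$ after a $\log$ change of variables.

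\textbf{Step 3: blow-down.} Put $v_R(x)=v(Rx)-\sup_{B_R}v$. Step 2 gives local uniform bounds on $v_R$ in $\R^n\setminus\{0\}$, and $\Delta_n v_R=\mu(R\,\cdot)$ concentrates its mass $m$ at the origin. By Caccioppoli-type estimates for $n$-subsolutions, $v_R$ is bounded in $W^{1,n}_{\mathrm{loc}}(\R^n\setminus\{0\})$. Along a subsequence $v_R\to V$, where $V$ is $n$-harmonic in $\R^n\setminus\{0\}$, satisfies $V\leq\beta\log|x|$ with equality of growth, and has flux $m$. An $n$-harmonic function with a point mass and logarithmic growth is $\beta\log|x|+c$; this is the $n$-Laplacian Liouville/B\^ocher theorem (Kichenassamy--V\'eron).

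\textbf{Main obstacle.} The hard step is upgrading weak convergence to strong $W^{1,n}_{\mathrm{loc}}$ convergence. I would test the equation with $(v_R-V)\zeta$ and use the monotonicity of the vector field $|\xi|^{n-2}\xi$, together with the fact that $\mu(R\,\cdot)\to0$ as a measure on compact annuli, to get $\int\zeta\,\langle|\nabla v_R|^{n-2}\nabla v_R-|\nabla V|^{n-2}\nabla V,\nabla v_R-\nabla V\rangle\to0$.
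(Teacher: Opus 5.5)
There is a genuine gap at the core of Step~2. Defining $\beta$ by $|\mathbb S^{n-1}|\beta^{n-1}=\int_{\R^n}\Delta_n v\dd x$ makes the mass identity in \eqref{eq:8.6} a tautology. The price is that all the difficulty moves into the sharp growth claims, and neither of them is proved.

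\emph{The upper bound.} Your comparison on $B_R\setminus\overline{\{v<t\}}$ is circular. The comparison principle needs $v\le\beta_+\log|x|+c$ on $\partial B_R$, which is exactly what you want to show, and a flux bound on $\Sigma_t$ is not boundary data. Kilpel\"ainen--Zhong only gives \eqref{eq:8.37}, whose constant is not sharp. So it yields some finite slope, but never $\beta_+$ arbitrarily close to $\beta$.

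\emph{The lower bound on $\min_{\partial B_R}v$.} This is actually false under \eqref{eq:8.1}. Since $\log|x-x_j|$ is $k$-harmonic when $n=2k$, you can add to $\log(1+|x|^2)$ small multiples $\epsilon_j$ of $\frac12\log(|x-x_j|^2+\rho_j^2)$, with $|x_j|\to\infty$ and $\rho_j$ tiny. This keeps $D^2v\in\Gamma_k$, the Hessian mass finite and $v$ proper, while $v(x_j)\le(\beta-1+o(1))\log|x_j|$. The flux through $\partial B_R$ is $\mu(B_R)$ whatever the minimum is. In fact the flux/capacity reasoning you have in mind naturally gives the \emph{opposite} inequalities: $\min_{\partial B_R}v\le\beta\log R+O(1)$ and $\sup_{B_R}v\ge(\beta-o(1))\log R$.

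\emph{What is missing.} You discard the $k$-Hessian structure by treating $v$ as merely $n$-subharmonic. In the paper, $n=2k$ removes the first-order term in \eqref{eq:2.31}, giving the \emph{linear} cylinder inequality \eqref{eq:8.38}. From this:
\begin{itemize}
\item the spherical mean $m(\tau)$ is convex, so $\beta:=\lim m'(\tau)$ exists;
\item the blow-downs $(v(R\,\cdot)-v(0))/\log R$ converge, by compactness of subharmonic functions, to a nonnegative solution of $\partial_s^2W+\frac1{n-1}\Delta_{\mathbb S^{n-1}}W=0$ with spherical mean $\beta$, hence $W\equiv\beta$;
\item upper semicontinuity along moving points then gives $\sup_{B_R}v/\log R\to\beta$.
\end{itemize}
Only after this is the mass identity derived. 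It comes from the H\"older sandwich \eqref{eq:8.42}--\eqref{eq:8.44}, which combines the flux identity, the mean slope, and $\int_{B_R}|\nabla v|^n\dd x\le\int_{v(0)}^{\sup_{B_R}v}P_1(t)\dd t$. Likewise, \eqref{eq:8.7} comes from log-convexity of $\int_{\mathbb S^{n-1}}e^{V}\dd\theta$ (multiply \eqref{eq:8.38} by $e^V$) together with Jensen. It does not come from a pointwise bound $v\le\beta\log|x|+C$, and your flux-deficit heuristic gives no argument for such a bound.

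Step~3 has a second gap. Along an arbitrary sequence $R\to\infty$, the integrals $\int_A|\nabla v_R|^n\dd x$ over a fixed annulus $A$ need not be bounded. So neither weak $W^{1,n}$ compactness nor your monotonicity upgrade is available.
\begin{itemize}
\item Caccioppoli for subsolutions controls only $\nabla(v-k)_+$.
\item In the monotonicity argument, the term $\int\zeta(v_R-V)\dd\mu_R$ need not vanish, because the small rescaled mass can sit exactly in deep dips of $v_R$.
\item In the example above, the $j$-th bump carries gradient energy of order $\epsilon_j^{n-1}\log|x_j|$, which is unbounded if $|x_j|$ grows fast enough.
\end{itemize}
This is why one must select the scales $R_i$. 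The paper first proves the averaged estimate \eqref{eq:8.41}, $\frac1{\log R}\int_{B_R\setminus B_{R_0}}|\nabla v-\beta\nabla\log|x||^n\dd x\to0$. This follows from the monotonicity inequality for $\xi\mapsto|\xi|^{n-2}\xi$ and the three limits \eqref{eq:8.42}--\eqref{eq:8.44}. It then pigeonholes over logarithmic scales to find $t_i$ whose $W^{1,n}$ deficit on $\{e^{t_i-i}\le|x|\le e^{t_i+i}\}$ is $O(i^{-2})$. Taking $c_i$ to be the average over a fixed annulus, the Poincar\'e inequality gives strong $W^{1,n}_{\mathrm{loc}}$ convergence, with no Kichenassamy--V\'eron classification needed.
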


Thus properness and finite \(k\)-Hessian mass alone force a logarithmic
blow-down whose slope is determined exactly by the total
\(n\)-Laplacian mass.

For Theorem~\ref{Thm:8.3}, assume, for some \(a>0\), that
\begin{equation}\label{eq:8.8}
 \sigma_k(D^2v)=a e^{-(k+1)v}\quad\hbox{in }\R^n.
\end{equation}
For the sequences in Theorem~\ref{Thm:8.2}, write
\(v_i(x)=v(R_i x)-c_i\) and
\(\varepsilon_i=aR_i^n e^{-(k+1)c_i}\).

\begin{theorem}\label{Thm:8.3}
Under \eqref{eq:8.1} and
\eqref{eq:8.8}, the sequences in
Theorem~\ref{Thm:8.2} may be chosen so that
\begin{equation}\label{eq:8.9}
 v_i\longrightarrow\beta\log|x|
 \quad\hbox{in }C^1_{\mathrm{loc}}(\R^n\setminus\{0\}),
 \qquad
 \varepsilon_i\longrightarrow0.
\end{equation}
Moreover,
\begin{equation}\label{eq:8.10}
 \int_{\R^n}\sigma_k(D^2v)\dd x
 =\frac{|\mathbb S^{n-1}|}{k}
 \binom{n-1}{k-1}\beta^k,
\end{equation}
and the total \(n\)-Laplacian mass is given explicitly by
\begin{equation}\label{eq:8.11}
 \int_{\R^n}\Delta_n v\dd x
 =|\mathbb S^{n-1}|\beta^{n-1}
 =|\mathbb S^{n-1}|
 \left(
 \frac{k\displaystyle\int_{\R^n}\sigma_k(D^2v)\dd x}
 {|\mathbb S^{n-1}|\binom{n-1}{k-1}}
 \right)^{2-\frac1k}.
\end{equation}
For every \(1\leq j\leq k\),
\begin{align}\label{eq:8.12}
 \sup_{t\ \mathrm{regular}}P_j(t)
 =\lim_{t\to\infty}P_j(t)
 & =|\mathbb S^{n-1}|\binom{n-1}{j-1}\beta^{n-j}=|\mathbb S^{n-1}|\binom{n-1}{j-1}
 \left(
 \frac{k\displaystyle\int_{\R^n}\sigma_k(D^2v)\dd x}
 {|\mathbb S^{n-1}|\binom{n-1}{k-1}}
 \right)^{\frac{n-j}{k}}.
\end{align}
\end{theorem}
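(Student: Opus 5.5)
Throughout, $v$ solves \eqref{eq:8.8}, so $\sigma_k(D^2v)=g(v)$ with $g(s)=ae^{-(k+1)s}$. The plan is to upgrade the $W^{1,n}_{\mathrm{loc}}$ blow-down of Theorem~\ref{Thm:8.2} to $C^1_{\mathrm{loc}}$ convergence using the equation. Then we identify the total Hessian mass and the limits of the $P_j$ from the explicit $k$-harmonic limit $\beta\log|x|$.

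\textbf{Step 1: the rescalings.} Take $R_i,c_i$ from Theorem~\ref{Thm:8.2} and set $v_i(x)=v(R_ix)-c_i$. Since $n=2k$, these satisfy
\[
\sigma_k(D^2v_i)=\varepsilon_ie^{-(k+1)v_i},\qquad \varepsilon_i=aR_i^ne^{-(k+1)c_i}.
\]
To get $\varepsilon_i\to0$, fix an annulus $\mathcal A=\{1<|x|<2\}$. Finite mass gives $\int_{R_i\mathcal A}\sigma_k(D^2v)\to0$, i.e. $\int_{\mathcal A}\varepsilon_ie^{-(k+1)v_i}\to0$. The $W^{1,n}$ convergence to $\beta\log|x|$ keeps $v_i$ bounded in $L^1(\mathcal A)$, so by Jensen $\int_{\mathcal A}e^{-(k+1)v_i}\geq c>0$, which forces $\varepsilon_i\to0$.

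\textbf{Step 2: upgrade to $C^1_{\mathrm{loc}}$.} Subharmonicity of $v_i$ together with $L^1$ bounds gives local upper bounds on annuli. For lower bounds I would use properness plus the comparison principle (Lemma~\ref{Lem:4.1}) against $k$-harmonic barriers of the form $\beta'\log|x|$. Once $v_i$ is locally bounded, the right-hand side $\varepsilon_ie^{-(k+1)v_i}$ tends to $0$ uniformly on compact sets of $\R^n\setminus\{0\}$. Interior gradient estimates for $k$-Hessian equations (e.g. \cite[Theorem~3.2]{ChouWang2001}) then give equicontinuity of $\nabla v_i$. For $C^{1,\alpha}$ I would use the Trudinger--Wang Hölder/gradient theory, or Savin's perturbation as in Theorem~\ref{Thm:6.7}, near the nondegenerate limit $\beta\log|x|$: its Hessian has eigenvalues $-\beta/r^2$ (once) and $\beta/r^2$ ($n-1$ times), which lie in $\Gamma_{k-1}$ with $\sigma_k=0$, exactly the situation treated in Theorem~\ref{Thm:6.7}. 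This gives \eqref{eq:8.9}. I expect this step to be the main obstacle, especially the uniform two-sided bounds and the passage through the degenerate limit equation $\sigma_k=0$.

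\textbf{Step 3: masses and level sets.} For large $t$, take a regular level $\{v_i=t'\}$ near the sphere $\{\beta\log|x|=t'\}$. By \eqref{eq:2.27}, $P_k$ on this level equals $k\int_{\{v<t\}}\sigma_k$, while by $C^1$ convergence (together with the divergence form, which allows flux to be tested weakly) it converges to the flux of $T_{k-1}(D^2(\beta\log|x|))\nabla(\beta\log|x|)$ through the sphere. That flux equals $|\mathbb S^{n-1}|\binom{n-1}{k-1}\beta^k$, and combined with \eqref{eq:8.4} this gives \eqref{eq:8.10}. In the same way, $P_j(t)$ is the flux of $|\nabla v|^{m_j}T_{j-1}\nabla v$ with $m_j=n-2j$ (here $m_j=n(k-j)/(n-k)$). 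By Proposition~\ref{Prop:2.7} it is nondecreasing, so the supremum equals the limit, and it converges to $|\mathbb S^{n-1}|\binom{n-1}{j-1}\beta^{n-j}$; this is \eqref{eq:8.12}. The case $j=1$ together with \eqref{eq:8.6} gives \eqref{eq:8.11}, after eliminating $\beta$ via \eqref{eq:8.10}. The flux limits require only $C^1$ convergence, because each flux equals the integral of a divergence over the sublevel set, which can be tested against a radial cutoff supported in an annulus.
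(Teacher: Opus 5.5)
Your Step~1 is correct; the Jensen argument for $\varepsilon_i\to0$ is a clean shortcut compared with the Moser--Trudinger/Vitali argument in Lemma~\ref{Lem:8.11}. Step~3 follows the paper's route. The genuine gap is the lower bound in Step~2, on which uniform convergence, the $C^1$ upgrade, and hence \eqref{eq:8.9} all rest.

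Comparison with $k$-harmonic barriers $\beta'\log|x|+c$ via Lemma~\ref{Lem:4.1} goes in the wrong direction. Since $\mu_k[v_i]=\varepsilon_ie^{-(k+1)v_i}\dd x\geq0=\mu_k[\beta'\log|x|+c]$ away from the origin, comparison only places $v_i$ \emph{below} a $k$-harmonic function with the same boundary values. To place $v_i$ above a barrier $w$ you would need $\mu_k[w]\geq\mu_k[v_i]$ as measures, i.e.\ an upper bound on the unknown density. Properness of $v$ carries no rate, so it gives nothing uniform for $v(R_ix)-c_i$.

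What is actually required is a non-concentration estimate for $\mu_k[v_i]$ on small balls. In the critical dimension $n=2k$, the quantity controlling $-v_i(x)$ is the Wolff potential $\int_0^r\mu_k[v_i](B_t(x))^{1/k}\,\frac{\dd t}{t}$, and it is not controlled by total mass. So the $L^1$ information you have ($\int_{\Omega_1}\varepsilon_ie^{-(k+1)v_i}\dd x\to0$ and $\varepsilon_i\to0$) is not enough: a small amount of concentrated mass produces a logarithmic downward spike.

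The paper obtains the needed bound from the strong $W^{1,n}$ convergence through the Moser--Trudinger inequality \cite{Moser}. This gives $\sup_i\int_{\Omega_1}e^{s|v_i|}\dd x<\infty$ for every $s$, hence $\varepsilon_ie^{-(k+1)v_i}\to0$ in $L^q_{\mathrm{loc}}$ for some $q>1$. Consequently $\mu_k[v_i](B_r(x))\leq Cr^{\gamma}$ with $\gamma=n(1-1/q)>0$, and \cite[Theorem~9.1]{Wang2009} then gives $v_i\geq-C$. Equicontinuity, and so uniform convergence to $\beta\log|x|$, comes from comparing $v_i$ with its $k$-harmonic replacement $w_i$ on $B_{4R}(x)$ (\cite[Theorem~9.2]{Wang2009} gives $0\leq w_i-v_i\leq CR^{\gamma/k}$), together with a gradient bound for $w_i$.

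Two further points. First, a gradient estimate bounds $\nabla v_i$; it gives equicontinuity of $v_i$, not of $\nabla v_i$. Your Savin route is only sketched, and Theorem~\ref{Thm:6.7} treats $\mu_k[q]=0$ exactly, whereas here the right-hand side is positive. The paper avoids any $C^{1,\alpha}$ theory. It applies the gradient estimate of Lemma~\ref{Lem:8.12}, whose constant depends only on $\|z\|_{L^\infty}$ and the Lipschitz norm of the right-hand side, to $z_i=v_i-\ell_{x_0}$, where $\ell_{x_0}$ is the tangent affine function of $\beta\log|x|$ at $x_0$. Since $\|z_i\|_{L^\infty(B_r(x_0))}\leq o(1)+Cr^2$ and the right-hand side has Lipschitz norm $O(\varepsilon_i)$, this yields $\limsup_i\sup_{A_0}|\nabla v_i-\beta x/|x|^2|\leq Cr$ for every small $r$.

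Second, in Step~3, $C^1$ convergence alone does not let you pass to the limit in $\int|\nabla v_i|^{n-2j}T_{j-1}(D^2v_i)\nabla v_i\cdot\nabla\chi\dd x$, because $T_{j-1}(D^2v_i)$ contains second derivatives. You also need weak convergence of the positive semidefinite matrix measures $T_{j-1}(D^2v_i)\dd x$ (\cite[Theorem~2.4]{TW2002}). This gives uniformly bounded total variation of the entries on the annulus, which combined with uniform convergence of the remaining multiplier lets you pass to the limit.
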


Thus \eqref{eq:8.8} improves the equation-free blow-down
to \(C^1\) convergence, yields the exact power law
\eqref{eq:8.11}, and makes every estimate in
\eqref{eq:8.3} an equality with the explicit constant in
\eqref{eq:8.12}.

For Theorem~\ref{Thm:8.1}, we combine the Newton-tensor formulas in
Reilly~\cite[Section~2]{Reilly1973}, the algebraic inequalities in
Wang~\cite[\S2.5]{Wang2009}, and the Michael--Simon inequality of Wu
and Yi~\cite[Corollary~1.3]{WuYi}, and iterate the resulting estimates
from \(P_k\) to \(P_1\).  The logarithmic upper estimate of
Kilpel\"ainen and Zhong~\cite[Theorem~1.2]{KilpelainenZhong} is combined
with these level-set estimates to prove Theorem~\ref{Thm:8.2}.
Finally, the weak continuity of Hessian measures of Trudinger and
Wang~\cite{TW1999} and the interior gradient calculation of Chou and
Wang~\cite{ChouWang2001} give the \(C^1\) convergence and the exact
formulas in Theorem~\ref{Thm:8.3}.

We will prove Theorems \ref{Thm:8.1}--\ref{Thm:8.3} in Subsection \ref{Subsec:8.3}.

The following identity is used on a bounded connected component of a sublevel set.

\begin{lemma}\label{Lem:8.4}
Let \(D\) be a bounded connected component of \(\{v<t\}\), where \(t\)
is a regular value, and suppose in a neighborhood of \(\overline D\) that
\(D^2v\in\Gamma_k\) and \(v\) solves \eqref{eq:8.8}.
On \(\partial D\), define
\begin{equation}\label{eq:8.13}
 \dd\nu=\frac1k|\nabla v|
 \sigma_{k-1}(D^2v|_{T\partial D})\dd S.
\end{equation}
Then
\begin{equation}\label{eq:8.14}
 \nu(\partial D)=\int_Da e^{-(k+1)v}\dd x
\end{equation}
and
\begin{equation}\label{eq:8.15}
 \int_{\partial D}(x\cdot\nabla v)\dd\nu
 =2a\left(\int_D e^{-(k+1)v}\dd x
              -e^{-(k+1)t}|D|\right).
\end{equation}
\end{lemma}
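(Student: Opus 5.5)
This lemma is a direct specialization of Corollary~\ref{Cor:2.10}, so I will apply it with the nonlinearity $g(v)=ae^{-(k+1)v}$ and antiderivative $G(v)=-\frac{a}{k+1}e^{-(k+1)v}$, so that $G'=g$. The hypotheses of Corollary~\ref{Cor:2.10} are all in place. $D$ is a bounded component of $\{v<t\}$, $t$ is regular, so $\nabla v\neq0$ on $\partial D$, and $D^2v\in\Gamma_k$ with $\sigma_k(D^2v)=g(v)$ near $\overline D$ by \eqref{eq:8.8}. The measure $\dd\nu$ in \eqref{eq:8.13} is exactly the measure $\dd\omega$ there.

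\textbf{Step 1: the mass identity.} Identity \eqref{eq:8.14} is \eqref{eq:2.40} verbatim. It comes from \eqref{eq:2.27} applied on $D$: the divergence of $T_{k-1}(D^2v)\nabla v$ is $k\sigma_k(D^2v)$, and on the boundary $T_{k-1}[\nabla v,\nu]=|\nabla v|\sigma_{k-1}(D^2v|_{T\partial D})$.

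\textbf{Step 2: the Pohozaev identity.} Substitute into \eqref{eq:2.41} with $n=2k$. The coefficient $\frac{n-2k}{k}$ vanishes, which kills the term $\int_D(v-t)g(v)$. The other coefficient is $\frac{(k+1)n}{k}=2(k+1)$, so
\[
\int_{\partial D}(x\cdot\nabla v)\dd\nu=2(k+1)\Bigl[G(t)|D|-\int_D G(v)\dd x\Bigr].
\]
Inserting $G$ gives $2(k+1)\cdot\frac{a}{k+1}\bigl[\int_De^{-(k+1)v}\dd x-e^{-(k+1)t}|D|\bigr]$, which is \eqref{eq:8.15}.

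I expect no real obstacle here. The only point to check is that Corollary~\ref{Cor:2.10} is valid on a single component $D$ rather than on the whole sublevel set. That holds because its proof uses the divergence theorem on $D$ with outward normal $\nabla v/|\nabla v|$ on $\partial D$, and this normal is correct since $v<t$ inside $D$ and $v=t$ on $\partial D$. I will state this and also record the sign conventions for $G$.
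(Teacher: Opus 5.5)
Your proposal is correct and is exactly the paper's argument: apply Corollary~\ref{Cor:2.10} with $g(s)=ae^{-(k+1)s}$ and $G(s)=-\frac{a}{k+1}e^{-(k+1)s}$, read off \eqref{eq:8.14} from \eqref{eq:2.40}, and use $n=2k$ in \eqref{eq:2.41}, which kills the $(v-t)g(v)$ term and leaves the factor $2(k+1)\cdot\frac{a}{k+1}=2a$. The single-component issue you plan to check needs no separate argument, because Corollary~\ref{Cor:2.10} is already stated for a bounded connected component $D$.
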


\begin{proof}
Apply Corollary~\ref{Cor:2.10} with
\(g(s)=ae^{-(k+1)s}\) and
\(G(s)=-a e^{-(k+1)s}/(k+1)\).
Equation~\eqref{eq:2.40} gives \eqref{eq:8.14}.  Since \(n=2k\),
substitution of \(g\) and \(G\) in \eqref{eq:2.41} {yields}
\eqref{eq:8.15}.
\end{proof}

\subsection{Proof of Theorem~\ref{Thm:1.8}}

Let \(u\) satisfy \eqref{eq:1.10} and
\eqref{eq:1.14}, and set
\(v=-u/(k+1)\).  Then
\begin{equation}\label{eq:8.16}
 D^2v\in\Gamma_k,
 \qquad
 \sigma_k(D^2v)=\frac1{(k+1)^k}e^{-(k+1)v},
 \qquad
 \int_{\R^n}\sigma_k(D^2v)\dd x
 =\frac1{(k+1)^k}\int_{\R^n}e^u\dd x<\infty,
\end{equation}
and \(v(x)\to+\infty\) as \(|x|\to\infty\).
{Thus
Theorems~\ref{Thm:8.1}--\ref{Thm:8.3} and Lemma \ref{Lem:8.4} can be applied.}

\begin{proof}[Proof of Theorem~\ref{Thm:1.8}]
{
Let \(R_i,c_i,v_i\), and \(\varepsilon_i\) be given by
Theorems~\ref{Thm:8.2} and~\ref{Thm:8.3}.  By \eqref{eq:8.16},
\begin{equation}\label{eq:8.17}
 \sigma_k(D^2v_i)=\varepsilon_i e^{-(k+1)v_i},
 \qquad
 \int_E\varepsilon_i e^{-(k+1)v_i}\dd x
 =\int_{R_iE}\sigma_k(D^2v)\dd x
\end{equation}
for every measurable set \(E\subset\R^n\).
}
Since the functions \(v_i\) are smooth, Sard's theorem shows that the
critical values of each \(v_i\) form a null set.  Their union over the
countable sequence is still null, so we can pick a real number \(s\) such that it is a regular
value for every \(v_i\).  Choose
\(0<r_-<e^{s/\beta}<r_+\).
{Equation~\eqref{eq:8.9} gives, for all sufficiently large \(i\),}
\(v_i<s\quad\hbox{on }\partial B_{r_-}, \qquad v_i>s\quad\hbox{on }\partial B_{r_+}\).
The inequalities are strict on small fixed annular neighborhoods of $\partial B_{r_-}$ and $\partial B_{r_+}$.
Since \(\Delta v_i>0\), the maximum principle yields
\(B_{r_-}\subset\{v_i<s\}\).  Let \(C_i\subseteq\{v_i<s\}\) be the connected component which contains $B_{r_-}$.  A path from \(B_{r_-}\) to the complement of
\(B_{r_+}\) would cross \(\partial B_{r_+}\), thus \(\overline C_i\Subset B_{r_+}\).  Therefore,
\begin{equation}\label{eq:8.18}
 B_{r_-}\subset C_i\subset B_{r_+},
 \qquad
 \partial C_i\subset\{r_-<|x|<r_+\}.
\end{equation}

By \eqref{eq:8.17}, we have
\[
 \int_{\R^n}\sigma_k(D^2v)\dd x
 \geq\int_{C_i}\varepsilon_i e^{-(k+1)v_i}\dd x
 \geq\int_{B_{r_-}}\varepsilon_i e^{-(k+1)v_i}\dd x
 =\int_{B_{R_ir_-}}\sigma_k(D^2v)\dd x
 \longrightarrow\int_{\R^n}\sigma_k(D^2v)\dd x.
\]
Let \(\nu_i\) be the measure in \eqref{eq:8.13} on \(\partial C_i\).
Equation \eqref{eq:8.14} gives
\[
    \nu_i(\partial C_i)
    =\int_{C_i}\varepsilon_i e^{-(k+1)v_i}\dd x
    \longrightarrow\int_{\R^n}\sigma_k(D^2v)\dd x.
\]
By \eqref{eq:8.9} and \eqref{eq:8.18}, one has $\sup_{\partial C_i}|x\cdot\nabla v_i-\beta|\longrightarrow0$. It follows that
\begin{equation}\label{eq:8.19}
 \int_{\partial C_i}(x\cdot\nabla v_i)\dd\nu_i
 \longrightarrow\beta\int_{\R^n}\sigma_k(D^2v)\dd x.
\end{equation}
On the other hand, \eqref{eq:8.15} gives
\[
 \int_{\partial C_i}(x\cdot\nabla v_i)\dd\nu_i
 =2\left(\int_{C_i}\varepsilon_i e^{-(k+1)v_i}\dd x
 -\varepsilon_i e^{-(k+1)s}|C_i|\right).
\]
{By \eqref{eq:8.9} and \eqref{eq:8.18},} the right-hand side tends to
\(2\int_{\R^n}\sigma_k(D^2v)\dd x\).  Since this integral is positive,
comparison with \eqref{eq:8.19} implies $\beta=2$.

Equation~\eqref{eq:8.7} implies that, for every sufficiently large \(R\),
\(\int_{B_R}e^{-u/(k+1)}\dd x =\int_{B_R}e^v\dd x \leq CR^{2k+2}\).
Thus condition \eqref{eq:1.11} holds, so
Theorem~\ref{Thm:1.7} yields \eqref{eq:1.12}.  Moreover,
\eqref{eq:8.10}, \eqref{eq:8.16}, and \(\beta=2\) give
\begin{align*}
 \int_{\R^n}e^u\dd x=(k+1)^k\frac{|\mathbb S^{2k-1}|}{k}
   \binom{2k-1}{k-1}2^k=\frac{2^k(k+1)^k\binom{2k}{k}\pi^k}{k!},
\end{align*}
which is \eqref{eq:1.13}. This concludes our proof of Theorem \ref{Thm:1.8}.
\end{proof}

\subsection{Proofs of
Theorems~\ref{Thm:8.1}--\ref{Thm:8.3}}
\label{Subsec:8.3}

\subsubsection{Proof of Theorem~\ref{Thm:8.1}}

Let \(v\) satisfy \eqref{eq:8.1}.  For a regular value
\(t\), let \(\Sigma_t=\{v=t\}\) and
\(B=D^2v|_{T\Sigma_t}\).  The outward unit normal is
\(\nabla v/|\nabla v|\).  The identity
\(\sigma_j(B) =T_j(D^2v)\left[ \frac{\nabla v}{|\nabla v|}, \frac{\nabla v}{|\nabla v|} \right]\)
and Corollary~\ref{Cor:2.3} give \(B\in\Gamma_{k-1}\).
Since \(n=2k\), equation \eqref{eq:2.21} {yields}
\(m_j=n-2j\).  Thus \eqref{eq:8.2} agrees with
\eqref{eq:2.25}.  Equation \eqref{eq:2.27} gives
\begin{equation}\label{eq:8.20}
 P_k(t)
 =\int_{\Sigma_t}T_{k-1}(D^2v)\nabla v\cdot
       \frac{\nabla v}{|\nabla v|}\dd S
 =k\int_{\{v<t\}}\sigma_k(D^2v)\dd x
 \leq k\int_{\R^n}\sigma_k(D^2v)\dd x.
\end{equation}
We next descend from \(P_k\) to \(P_1\).

\begin{lemma}\label{Lem:8.5}
For \(1\leq j\leq k-1\), on \(\Sigma_t\),
\begin{equation}\label{eq:8.21}
 (n-j)\sigma_j(D^2v)-(n-2j)\sigma_j(B)\geq0.
\end{equation}
Moreover, \(P_j\) has a nondecreasing locally absolutely continuous
extension such that, for almost every \(t\),
\begin{equation}
 P_j'(t)=\int_{\Sigma_t}|\nabla v|^{n-2j-1}
 \bigl((n-j)\sigma_j(D^2v)-(n-2j)\sigma_j(B)\bigr)\dd S.
\end{equation}
On \(\Sigma_t\),
\begin{equation}\label{eq:8.22}
 \left|T_{j-1}(B)\nabla_{\Sigma_t}|\nabla v|\right|^2
 \leq\frac{\sigma_j(B)}{n-2j}
 \bigl((n-j)\sigma_j(D^2v)-(n-2j)\sigma_j(B)\bigr),
\end{equation}
where \(\nabla_{\Sigma_t}=\nabla|_{T\Sigma_t}\).
\end{lemma}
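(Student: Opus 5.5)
This is a specialization of Proposition~\ref{Prop:2.7} to the case \(n=2k\), so the proof should be a direct substitution rather than a new argument. The key observation, already noted just before the statement, is that \eqref{eq:2.21} with \(n=2k\) gives
\[
m_j=\frac{2k(k-j)}{k}=n-2j,\qquad j+m_j=n-j .
\]
With these values every displayed quantity in the lemma coincides with the corresponding one in Proposition~\ref{Prop:2.7}. I would verify the three assertions in turn.

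\emph{Inequality \eqref{eq:8.21}.} Apply \eqref{eq:2.22} at a point of \(\Sigma_t\) with \(A=D^2v\in\Gamma_k\) and unit vector \(\nu=\nabla v/|\nabla v|\). Then \(\nu^\perp=T\Sigma_t\), so the restriction in \eqref{eq:2.22} is exactly \(B=D^2v|_{T\Sigma_t}\). Substituting \(j+m_j=n-j\) and \(m_j=n-2j\) yields \eqref{eq:8.21}.

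\emph{Monotonicity of \(P_j\) and the formula for \(P_j'\).} Proposition~\ref{Prop:2.7} needs \(v\) smooth with compact sublevel sets. Compactness follows from \(v\to+\infty\) in \eqref{eq:8.1}, and regularity is supplied by the hypothesis \(v\in C^3\). If the coarea step in the proof of Proposition~\ref{Prop:2.7} is felt to need more, I would redo it under \(C^3\), which suffices for the divergence identity \eqref{eq:2.23}. The nullity of the critical set was proved there using \(\sigma_1(D^2v)>0\), which holds here. Hence \(P_j\), which agrees with \eqref{eq:2.25}, has the nondecreasing locally absolutely continuous extension
\[
P_j(t)=\int_{\{v<t\}}\operatorname{div}\bigl(|\nabla v|^{n-2j}T_{j-1}(D^2v)\nabla v\bigr)\dd x,
\]
and \eqref{eq:2.26} with \(m_j-1=n-2j-1\) gives the stated formula for \(P_j'\).

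\emph{Gradient inequality \eqref{eq:8.22}.} For \(j<k\) this is \eqref{eq:2.24} with \(m_j=n-2j>0\), since \(j\le k-1\) and \(n=2k\).

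The only step that is more than bookkeeping is the regularity check, namely that Proposition~\ref{Prop:2.7}'s standing assumptions (smoothness and compact sublevel sets) are met under \eqref{eq:8.1}. Once that is confirmed, the proof is complete.
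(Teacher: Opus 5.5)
Your proposal is correct and is exactly the paper's argument: with $n=2k$ one has $m_j=n-2j$ and $j+m_j=n-j$. The inequalities \eqref{eq:8.21}, \eqref{eq:8.22} and the formula for $P_j'$ are then read off directly from \eqref{eq:2.22}, \eqref{eq:2.24}, \eqref{eq:2.25} and \eqref{eq:2.26} in Proposition~\ref{Prop:2.7}. Your regularity check is also fine. Indeed $m_j=2(k-j)\geq 2$ here, so $|\nabla v|^{m_j}=(|\nabla v|^2)^{k-j}$ is $C^2$ for $v\in C^3$, and the divergence identity holds classically even across critical points.
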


\begin{proof}
Since \(n=2k\), equation~\eqref{eq:2.21} gives
\(m_j=n-2j\) and \(j+m_j=n-j\).  Equations
\eqref{eq:2.22}, \eqref{eq:2.24},
\eqref{eq:2.25}, and \eqref{eq:2.26} yield
\eqref{eq:8.21}--\eqref{eq:8.22}.

For later use, equation \eqref{eq:2.23} reads
\begin{equation}\label{eq:8.23}
 \operatorname{div}\left(
 |\nabla v|^{n-2j}T_{j-1}(D^2v)\nabla v\right)
 =|\nabla v|^{n-2j}
 \bigl((n-j)\sigma_j(D^2v)-(n-2j)\sigma_j(B)\bigr).
\end{equation}
On \(\{|\nabla v|>0\}\), its right-hand side is
\begin{equation}\label{eq:8.24}
 (n-j)|\nabla v|^{n-2j}\sigma_j(D^2v)
 -(n-2j)|\nabla v|^{n-2j-2}
 T_j(D^2v)[\nabla v,\nabla v].
\end{equation}
Both terms extend continuously with value zero on
\(\{|\nabla v|=0\}\).  The divergence theorem therefore gives
\begin{equation}\label{eq:8.25}
 P_j(t)=\int_{\{v<t\}}|\nabla v|^{n-2j}
 \bigl((n-j)\sigma_j(D^2v)-(n-2j)\sigma_j(B)\bigr)\dd x.
\end{equation}
At a critical point, the integrand in
\eqref{eq:8.25} is understood through the continuous
expression \eqref{eq:8.24}. This is the extension in
\eqref{eq:2.25}.
\end{proof}

The next algebraic estimate will be used in the Michael--Simon inequality.

{
\begin{lemma}\label{Lem:8.6}
Let \(1\leq j\leq n-3\), and let \(B\in\Gamma_{j+1}\)
be an \((n-1)\times(n-1)\) symmetric matrix.  Then
\begin{equation}\label{eq:8.26}
 \frac{1}{j!}\frac{d^{j}}{dz^{j}}\left[\det\bigl(T_j(B)+zT_{j-1}(B)\bigr)\right]\Big|_{z=0}
 \geq c_{n,j}\sigma_j(B)^{n-2}.
\end{equation}
\end{lemma}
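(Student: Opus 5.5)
The plan is to work in an eigenbasis of \(B\), where \(T_j(B)\) and \(T_{j-1}(B)\) are simultaneously diagonal, with entries \(\sigma_j(\lambda|i)\) and \(\sigma_{j-1}(\lambda|i)\) by \eqref{eq:2.6}. There
\[
\det\bigl(T_j(B)+zT_{j-1}(B)\bigr)=\prod_{i=1}^{n-1}\bigl(\sigma_j(\lambda|i)+z\sigma_{j-1}(\lambda|i)\bigr).
\]
The \(z^j\)-coefficient on the left of \eqref{eq:8.26} is therefore
\[
\sum_{|I|=j}\ \prod_{i\in I}\sigma_{j-1}(\lambda|i)\prod_{i\notin I}\sigma_j(\lambda|i).
\]
Since \(B\in\Gamma_{j+1}\), Corollary~\ref{Cor:2.3} (applied with \(k=j+1\) in dimension \(n-1\)) gives \(\sigma_j(\lambda|i)>0\) and \(\sigma_{j-1}(\lambda|i)>0\) for every \(i\). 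Hence every term of the sum is positive, and it suffices to bound one well-chosen term, or an average of terms, from below by \(c\,\sigma_j(\lambda)^{n-2}\).

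Order the eigenvalues \(\lambda_1\ge\cdots\ge\lambda_{n-1}\) and take \(I=\{1,\dots,j\}\), the \(j\) largest eigenvalues. For \(i\notin I\), deleting a small eigenvalue should not destroy much of \(\sigma_j\). The standard G\aa rding-cone fact (Wang~\cite[\S2.5]{Wang2009}) states that for \(\lambda\in\Gamma_{j+1}\) and \(i\ge j+1\) one has \(\sigma_j(\lambda|i)\ge c\,\sigma_j(\lambda)\). One route is the identity \(\sigma_j(\lambda)=\lambda_i\sigma_{j-1}(\lambda|i)+\sigma_j(\lambda|i)\). If \(\lambda_i\le0\) the bound is immediate. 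If \(\lambda_i>0\), then \(\lambda_i\) is at most each of \(\lambda_1,\dots,\lambda_j\), and a Newton--Maclaurin comparison bounds \(\lambda_i\sigma_{j-1}(\lambda|i)\) by a fixed fraction of \(\sigma_j\).

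For \(i\in I\) I need a lower bound on \(\sigma_{j-1}(\lambda|i)\). The known estimate \(\sigma_{j-1}(\lambda|i)\ge c\,\sigma_{j-1}(\lambda)\), for \(i\ge j\), only applies to the smaller indices. To get around this, I would instead use the arithmetic--geometric mean over all \(i\), or pick \(I\) to avoid the top index where possible. The cleaner route is the bound
\[
\prod_{i\in I}\sigma_{j-1}(\lambda|i)\ \ge\ c\,\sigma_{j-1}(\lambda)^{j-1}\,\sigma_{j-1}(\lambda|1).
\]
Together with \(\sigma_{j-1}(\lambda|1)\,\lambda_1\gtrsim\sigma_j(\lambda)\) and Newton--Maclaurin in the form \(\sigma_{j-1}^{j}\gtrsim\sigma_j^{\,j-1}\) on \(\Gamma_{j+1}\), the degrees should work out to \(\sigma_j^{\,n-2}\). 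The homogeneity count is: \(j\) factors of degree \(j-1\) plus \(n-1-j\) factors of degree \(j\) give total degree \(j(n-2)\), matching \(\sigma_j^{\,n-2}\).

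The main obstacle is this degree balance when \(\lambda_1\) is very large. In that regime \(\sigma_{j-1}(\lambda|1)\) may be small relative to \(\sigma_{j-1}(\lambda)\), and I need a factor of \(\lambda_1\) elsewhere to compensate. The plan there is to write \(\sigma_j(\lambda)\le C\lambda_1\sigma_{j-1}(\lambda|1)\) for \(\lambda\in\Gamma_j\), which is the standard \(\lambda_1\sigma_{j-1}(\lambda|1)\ge c\,\sigma_j\). I would then choose \(I\) so that the terms with \(i\ne1\) in \(I\) supply \(\sigma_{j-1}(\lambda|i)\ge c\lambda_1\sigma_{j-2}(\lambda|1i)\). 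This would reduce the estimate to an inequality in dimension \(n-2\) for \(\lambda|1\in\Gamma_j\), which I would handle by induction on \(j\). The base case \(j=1\) is \(\prod_{i\ge2}\sigma_1(\lambda|i)\ge c\,\sigma_1^{\,n-2}\), which holds in \(\Gamma_2\) by the same deletion estimate.
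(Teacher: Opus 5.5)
Your opening steps match the paper's. In an eigenbasis the left-hand side equals $\sum_{|I|=j}\prod_{i\in I}\sigma_{j-1}(\lambda|i)\prod_{i\notin I}\sigma_j(\lambda|i)$, every term is positive by the deletion property, and you keep only $I=\{1,\dots,j\}$, bounding the factors with $i>j$ below by $c\,\sigma_j(\lambda)$.

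\textbf{Where the gap is.} It lies in the step that carries the content of the lemma, namely $\prod_{i=1}^{j}\sigma_{j-1}(\lambda|i)\ge c\,\sigma_j(\lambda)^{j-1}$.

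\emph{The ``cleaner route'' bound is false.} You propose $\prod_{i\in I}\sigma_{j-1}(\lambda|i)\ge c\,\sigma_{j-1}(\lambda)^{j-1}\sigma_{j-1}(\lambda|1)$. Take $j=3$ and $\lambda=(N,N,1,\dots,1)$. The factors for $i=1,2$ are of order $N$ and the factor for $i=3$ is of order $N^2$, so the left side is of order $N^4$. The right side is of order $N^5$. The underlying comparison $\sigma_{j-1}(\lambda|i)\ge c\,\sigma_{j-1}(\lambda)$ fails for every $i<j$, not just for $i=1$.

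\emph{The fallback paragraph does not close the gap either.} First, it only treats a ``$\lambda_1$ very large'' regime, so the complementary regime still rests on the false bound. Second, judging from your base case, your inductive hypothesis is the lemma itself. That is a lower bound for a sum over all $(j-1)$-subsets in dimension $n-2$. It does not give a lower bound for the specific product $\prod_{i=2}^{j}\sigma_{j-2}(\lambda|1i)$ that you need after peeling off $\lambda_1$.

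\textbf{The missing idea: iterate the peeling all the way down.} Let $\alpha\in\Gamma_{m+1}$ be ordered decreasingly. Since $\alpha|1\in\Gamma_m$, you have $\sigma_m(\alpha)=\alpha_1\sigma_{m-1}(\alpha|1)+\sigma_m(\alpha|1)\ge\alpha_1\sigma_{m-1}(\alpha|1)$. Induction then gives $\sigma_m(\alpha)\ge\alpha_1\cdots\alpha_m$.

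Take $\alpha=\lambda|i\in\Gamma_j$ with $i\le j$ and $m=j-1$. This gives the paper's estimate $\sigma_{j-1}(\lambda|i)\ge\prod_{r\le j,\,r\ne i}\lambda_r$. Multiplying over $i\le j$, you get $\prod_{i=1}^{j}\sigma_{j-1}(\lambda|i)\ge(\lambda_1\cdots\lambda_j)^{j-1}$. Wang's bound $\lambda_1\cdots\lambda_j\ge c\,\sigma_j(\lambda)$ then yields $c\,\sigma_j(\lambda)^{j-1}$, and no regime split is needed.

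\textbf{If you want to keep an induction.} The correct statement to induct on is the single-product inequality $\prod_{i=1}^{j}\sigma_{j-1}(\lambda|i)\ge c\,\sigma_j(\lambda)^{j-1}$ for $\lambda\in\Gamma_{j+1}$ in any dimension $m$. Apply it at level $j-1$ to $\lambda|1\in\Gamma_j$ and combine with your peeling. This gives the lower bound $c\,\bigl(\lambda_1\sigma_{j-1}(\lambda|1)\bigr)^{j-1}$. The inequality $\lambda_1\sigma_{j-1}(\lambda|1)\ge\frac jm\,\sigma_j(\lambda)$, valid on $\Gamma_{j+1}$, then finishes the step.
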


\begin{proof}
Let \(\lambda_1\geq\cdots\geq\lambda_{n-1}\) be the eigenvalues of
\(B\).
The estimates in \cite[\S2.5(ii)--(iv), (viii)]{Wang2009} give
\(\lambda_1\cdots\lambda_j\geq c_{n,j}\sigma_j(\lambda)\) and
\(\sigma_j(\lambda|i)\geq c_{n,j}\sigma_j(\lambda)\) for \(i>j\).
For the remaining estimate, \(j=1\) is immediate; for \(j\geq2\),
induction in
\(\sigma_{j-1}(\alpha)=\alpha_1\sigma_{j-2}(\alpha|1)
+\sigma_{j-1}(\alpha|1)\) for the ordered
\(\alpha=\lambda|i\in\Gamma_j\), using the deletion property, gives
\[
 \sigma_{j-1}(\lambda|i)
 \geq
 \prod_{\substack{1\leq r\leq j\\r\ne i}}\lambda_r
 \quad\hbox{for }i\leq j,
\]

Therefore, we have
\begin{equation}\label{eq:8.27}
\begin{aligned}
 &\quad \frac{1}{j!}\frac{d^{j}}{dz^{j}}\left[\det\bigl(T_j(B)+zT_{j-1}(B)\bigr)\right]\Big|_{z=0}\\
 &=\frac{1}{j!}\frac{d^{j}}{dz^{j}}\left[\prod_{i=1}^{n-1}
 \bigl(\sigma_j(\lambda|i)
       +z\sigma_{j-1}(\lambda|i)\bigr)\right]\Big|_{z=0}\\
 &=
 \sum_{\substack{I\subset\{1,\ldots,n-1\}\\ |I|=j}}
 \prod_{i\in I}\sigma_{j-1}(\lambda|i)
 \prod_{i\notin I}\sigma_j(\lambda|i) \\
 &\geq \prod_{i=1}^j\sigma_{j-1}(\lambda|i)
 \prod_{i=j+1}^{n-1}\sigma_j(\lambda|i)\\
 &\geq
 c_{n,j}(\lambda_1\cdots\lambda_j)^{j-1}
 \sigma_j(\lambda)^{n-1-j}
 \geq c_{n,j}\sigma_j(\lambda)^{n-2},
\end{aligned}
\end{equation}
This proves \eqref{eq:8.26}.
\end{proof}
}

We use the positive-tensor Michael--Simon inequality of Wu and Yi
\cite[Corollary~1.3]{WuYi}.  If \(\Sigma\) is a closed smooth
\((n-1)\)-dimensional hypersurface and \(T\) is a smooth positive definite
symmetric tensor on \(\Sigma\), then
\begin{equation}\label{eq:8.28}
 c_n\left(\int_\Sigma(\det T)^{\frac{1}{n-2}}\dd S\right)^{\frac{n-2}{n-1}}
 \leq\int_\Sigma
 \left(|\operatorname{div}_\Sigma T|^2
       +|\tr(Th)|^2\right)^{1/2}\dd S,
\end{equation}
where \(h\) is the second fundamental form. A compact regular level has
finitely many connected components. Apply \eqref{eq:8.28} on each
component. The inequality
\((\sum_\alpha a_\alpha)^{(n-2)/(n-1)}
\leq\sum_\alpha a_\alpha^{(n-2)/(n-1)}\)
then gives \eqref{eq:8.28} on the whole level. On a regular level
\(\Sigma_t\), we use
\(h(X,Y)=\left\langle D_X\frac{\nabla v}{|\nabla v|},Y\right\rangle\).

\begin{proposition}\label{Prop:8.7}
For every \(1\leq j\leq k-1\),
\begin{equation}\label{eq:8.29}
    \sup_tP_j(t)<\infty.
\end{equation}
\end{proposition}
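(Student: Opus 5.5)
We argue by downward induction on \(j\), starting from \(j=k\), where \eqref{eq:8.20} already gives \(\sup_tP_k(t)\leq k\int_{\R^n}\sigma_k(D^2v)\dd x<\infty\). Assume \(\sup_tP_{j+1}<\infty\) for some \(1\leq j\leq k-1\). We bound \(P_j(t)\) on a regular level \(\Sigma_t\) by applying the Michael--Simon inequality \eqref{eq:8.28} with a tensor built from Newton tensors of \(B=D^2v|_{T\Sigma_t}\). Note that \(B\in\Gamma_{k-1}\subset\Gamma_{j+1}\) when \(j\leq k-2\). At the top step \(j=k-1\) we only know \(B\in\Gamma_{k-1}=\Gamma_j\), so there we perturb with \(z\), or use \(T_{j-1}(B)\) alone. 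The natural choice is
\[
T=|\nabla v|^{\alpha}\bigl(T_j(B)+zT_{j-1}(B)\bigr).
\]
Here \(\alpha\) is chosen so that the homogeneities match, and we extract the \(z^j\)-coefficient through Lemma~\ref{Lem:8.6}. By that lemma and the AM--GM/mixed-determinant inequality, \((\det T)^{1/(n-2)}\) controls \(|\nabla v|^{\alpha(n-1)/(n-2)}\sigma_j(B)\). With \(\alpha\) tuned, the left side of \eqref{eq:8.28} then dominates a power of \(\int_{\Sigma_t}|\nabla v|^{n-2j-1}\sigma_j(B)\dd S\). This integral is comparable to \(P_{j+1}(t)\) after inserting \(\sigma_j(B)=T_j(D^2v)[\nu,\nu]\), so it is of the form \(P_{j+1}\) when \(m_{j+1}+1=n-2j-1\).

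The right side of \eqref{eq:8.28} involves \(\operatorname{div}_\Sigma T\) and \(\tr(Th)\), where \(h=B/|\nabla v|\). Using the Codazzi-type divergence identities for Newton tensors of a level-set Hessian (Reilly, Section~2), \(\operatorname{div}_\Sigma T_{j-1}(B)\) reduces to terms containing \(\nabla_{\Sigma_t}|\nabla v|\) contracted with lower Newton tensors. Differentiating the weight \(|\nabla v|^\alpha\) produces \(T_{j-1}(B)\nabla_{\Sigma_t}|\nabla v|\) once more. Both kinds of terms are controlled by \eqref{eq:8.22} and Cauchy--Schwarz:
\[
\int_{\Sigma_t}|\nabla v|^{\ast}\bigl|T_{j-1}(B)\nabla_\Sigma|\nabla v|\bigr|
\leq\Bigl(\int|\nabla v|^{\ast}\sigma_j(B)\Bigr)^{1/2}\bigl(P_j'(t)\bigr)^{1/2}.
\]
The term \(\tr(Th)\) equals \(|\nabla v|^{\alpha-1}\bigl((j+1)\sigma_{j+1}(B)+zj\sigma_j(B)\bigr)\), which is bounded by \(P_{j+1}\)-type and \(P_j\)-type integrals. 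Combining these gives a differential inequality of the form
\[
P_j(t)^{\frac{n-2}{n-1}}\lesssim P_{j+1}(t)+\bigl(P_j(t)\,P_j'(t)\bigr)^{1/2}+\cdots .
\]
Since \(P_j\) is nondecreasing by Lemma~\ref{Lem:8.5}, if \(P_j\) were unbounded then for large \(t\) this inequality would force \(P_j'\gtrsim P_j^{(n-3)/(n-1)}\). We must rule out such growth. To do this we use the total integral \(\int_{t_0}^\infty P_j'(t)\dd t\), which by the coarea formula and \eqref{eq:8.25} equals a bulk integral. We compare that bulk integral with \(P_{j+1}\) through the divergence identity \eqref{eq:8.23}.

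The main obstacle is closing this ODE argument. The honest route is to compare exponents: by the induction hypothesis the \(P_{j+1}\) terms are bounded, and the remaining inequality \(P_j^{2(n-2)/(n-1)}\lesssim C+P_jP_j'\) integrates to give at most polynomial growth. We then need a separate upper bound showing that \(P_j\) cannot grow at all. For this we would try to exploit properness: for large \(t\), the level sets enclose the region where all the \(\sigma_k\)-mass already lies, so \(P_{j+1}'\to0\) in an averaged sense. Feeding this back through \eqref{eq:8.28} with exponents chosen exactly at the scale-invariant balance, which is where \(n=2k\) is used, should make the inequality homogeneous and yield a uniform bound \(P_j\leq C\,P_{j+1}^{(n-j)/(n-j-1)}\). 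That bound would also give the quantitative form \eqref{eq:8.3}.
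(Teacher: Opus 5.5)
Your argument has a genuine gap, and it starts with an index shift in the test tensor. For \(T=|\nabla v|^{\alpha}\bigl(T_j(B)+zT_{j-1}(B)\bigr)\), Lemma~\ref{Lem:8.6} bounds \(\det T\) from below by a multiple of \(\sigma_j(B)^{n-2}\). So the left side of \eqref{eq:8.28} controls \(\int_{\Sigma_t}|\nabla v|^{n-2j-1}\sigma_j(B)\dd S=P_{j+1}(t)\), as you note yourself, and your induction hypothesis already bounds that; no bound on \(P_j\) comes out. Likewise \(\tr(Th)\) produces \(\sigma_{j+1}(B)\) and \(\sigma_j(B)\), i.e.\ \(P_{j+2}\)- and \(P_{j+1}\)-type terms. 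At \(j=k-1\), Lemma~\ref{Lem:8.6} at that index would also need \(B\in\Gamma_k\), which is not available.

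To reach \(P_j=\int_{\Sigma_t}|\nabla v|^{n-2j+1}\sigma_{j-1}(B)\dd S\) you must lower the index by one. The paper uses \(|\nabla v|^{n-2j}T_{j-1}(B)+\eta|\nabla v|^{n-2j+2}T_{j-2}(B)\) with Lemma~\ref{Lem:8.6} at index \(j-1\); the weights are forced by \((n-1)(n-2j)+2(j-1)=(n-2)(n-2j+1)\). For \(j=1\) it uses \(|\nabla v|^{n-2}\Id\). Together with the Codazzi identities, \eqref{eq:8.22} and Cauchy--Schwarz, this gives \eqref{eq:8.32}: \(c\,\eta^{\frac{j-1}{n-1}}P_j^{\frac{n-2}{n-1}}\le P_{j+1}+\eta P_j+C\sqrt{P_{j+1}P_j'}+C\eta\sqrt{P_jP_{j-1}'}\). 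Two features matter here. First, \(P_j'\) is paired with \(P_{j+1}\), not with \(P_j\); your own Cauchy--Schwarz display has \(\sigma_j(B)\), a \(P_{j+1}\)-integrand, in its first factor. Second, the \(\eta T_{j-2}\) part couples \(P_j\) to the derivative of the \emph{lower} quantity \(P_{j-1}\).

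Because you wrote \((P_jP_j')^{1/2}\), you end with the sublinear inequality \(P_j'\gtrsim P_j^{(n-3)/(n-1)}\), which is compatible with unbounded growth. The remedies you sketch do not close this. The identity \(\int_{t_0}^\infty P_j'\dd t=\lim_{t\to\infty}P_j(t)-P_j(t_0)\) is finite exactly when \(P_j\) is bounded, so that comparison is circular. The ``averaged \(P_{j+1}'\to0\)'' and homogeneity step is not an argument.

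The missing idea is finite-time blow-up in the level variable \(t\). For example, when \(k=2\), \eqref{eq:8.33} and \(P_2\le K\) give \(P_1'\ge cK^{-1}P_1^{2(n-2)/(n-1)}\) once \(P_1\) is large. Since \(n\ge4\), the exponent \(2(n-2)/(n-1)\) exceeds \(1\), so \(P_1\) would become infinite at a finite level, which is impossible on \((t_0,\infty)\).

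For \(k\ge3\) the term \(\eta\sqrt{P_jP_{j-1}'}\) prevents running this one index at a time. Already the first step \(j=k-1\) of your downward induction involves \(P_{k-2}'\). This is why Lemma~\ref{Lem:8.8} treats \(P_1,\dots,P_{k-1}\) simultaneously. It sets \(Y_j=(1+P_j)^{1/d_j}\) with \(d_j=3(k-j)+1\), takes \(Z=\max_jY_j=Y_\ell\), and chooses \(\eta=Z^{-3}\). Then \eqref{eq:8.32} (or \eqref{eq:8.33} if \(\ell=1\)) forces \(Y_\ell'\) or \(Y_{\ell-1}'\) to be \(\gtrsim Z^{2k/(2k-1)}\). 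Hence \(Y=\sum_jY_j\) satisfies \(Y'\ge c_kY^{2k/(2k-1)}\) whenever it is large, so it stays bounded. The scaling \(\lambda=K^{-1/k}\) then gives \(\sup_tP_j\le C_kK^{(n-j)/k}\).
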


\begin{proof}
Fix a regular value \(t\).  For \(2\leq j\leq k-1\) and \(\eta>0\),
the tangential Hessian
\(|\nabla v|h=D^2v|_{T\Sigma_t}\) belongs to
\(\Gamma_{k-1}\subset\Gamma_j\). Hence
\(T_{j-1}(|\nabla v|h)\) and
\(T_{j-2}(|\nabla v|h)\) are positive definite. The tensor below is
therefore smooth and uniformly positive definite on each component of
\(\Sigma_t\), as required in \eqref{eq:8.28}. We
apply \eqref{eq:8.28} to
\[
 |\nabla v|^{n-2j}T_{j-1}(|\nabla v|h)
 +\eta |\nabla v|^{n-2j+2}T_{j-2}(|\nabla v|h).
\]
The Codazzi identities
{\cite[Section~2]{ReillyMeanCurvature1973}} give
\begin{align*}
 \operatorname{div}_{\Sigma_t}
 \bigl(|\nabla v|^{n-2j}T_{j-1}(|\nabla v|h)\bigr)
 &=(n-1-j)|\nabla v|^{n-2j-1}
   T_{j-1}(|\nabla v|h)\nabla_{\Sigma_t}|\nabla v|,\\
 \tr\left(|\nabla v|^{n-2j}T_{j-1}(|\nabla v|h)h\right)
 &=j |\nabla v|^{n-2j-1}\sigma_j(|\nabla v|h).
\end{align*}
Lemma~\ref{Lem:8.5}, \eqref{eq:8.2} and Cauchy's inequality imply
\begin{equation}
 \int_{\Sigma_t}
 \left|\operatorname{div}_{\Sigma_t}
  \bigl(|\nabla v|^{n-2j}T_{j-1}(|\nabla v|h)\bigr)\right|\dd S
 \leq C\sqrt{P_{j+1}P_j'}. \label{eq:8.30}
\end{equation}
By \eqref{eq:8.2} and integrating, we get
\begin{equation}\label{eq:8.31}
  \int_{\Sigma_t}\tr\left(|\nabla v|^{n-2j}T_{j-1}(|\nabla v|h)h\right)\dd S=jP_{j+1}(t).
\end{equation}

{Apply Lemma~\ref{Lem:8.6} with its \(j\) replaced by \(j-1\).}  The coefficient
containing \(j-1\) factors of \(T_{j-2}(|\nabla v|h)\) gives
\[
 \det\bigl(|\nabla v|^{n-2j}T_{j-1}(|\nabla v|h)
 +\eta |\nabla v|^{n-2j+2}T_{j-2}(|\nabla v|h)\bigr)
 \geq c\eta^{j-1}
 |\nabla v|^{(n-1)(n-2j)+2(j-1)}
 \sigma_{j-1}(|\nabla v|h)^{n-2}.
\]
Since \((n-1)(n-2j)+2(j-1)=(n-2)(n-2j+1)\),
inequalities \eqref{eq:8.28}, \eqref{eq:8.30}, \eqref{eq:8.31} yield
\begin{align}
 c\eta^{\frac{j-1}{n-1}}P_j^{\frac{n-2}{n-1}}
 \leq{}&P_{j+1}+\eta P_j
 +C\sqrt{P_{j+1}P_j'}
 +C\eta\sqrt{P_jP_{j-1}'}.
\label{eq:8.32}
\end{align}
For \(j=1\), apply \eqref{eq:8.28} to
\(|\nabla v|^{n-2}\Id\).  The same
calculation gives
\begin{equation}\label{eq:8.33}
cP_1^{\frac{n-2}{n-1}}
 \leq P_2+C\sqrt{P_2P_1'}.
\end{equation}

Taking the intersection of the full measure sets for positive
rational \(\eta\), and then using continuity in \(\eta\), shows that
\eqref{eq:8.32} and \eqref{eq:8.33} hold on one common full measure
set for every \(\eta>0\). Every limit \(\eta\downarrow0\) below is
taken at a point of this common set, where all derivatives in
\eqref{eq:8.32} are finite.

Lemma~\ref{Lem:8.8} below, applied to
\eqref{eq:8.20}, \eqref{eq:8.32}, and \eqref{eq:8.33}, proves
\eqref{eq:8.29}.
\end{proof}

\begin{lemma}\label{Lem:8.8}
Let \(I=(t_0,\infty)\), and suppose that
\(P_1,\ldots,P_k:I\to[0,\infty)\).  Assume that
\(P_1,\ldots,P_{k-1}\) are nondecreasing and locally absolutely
continuous, \(P_k\) is bounded, and there is a set of full measure
in \(I\) on which \eqref{eq:8.32} and \eqref{eq:8.33} hold for every
\(\eta>0\).  Put \(K=\sup_{t\in I}P_k(t)\).  Then
\[
 \sup_{t\in I}P_j(t)\leq C_kK^{(n-j)/k},
 \qquad 1\leq j\leq k-1.
\]
\end{lemma}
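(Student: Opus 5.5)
We argue by downward induction on \(j\), from \(j=k-1\) to \(j=1\), and prove the homogeneous bound \(\sup_I P_j\leq C_kK^{(n-j)/k}\) using only \eqref{eq:8.32} and \eqref{eq:8.33}. Scaling fixes the exponents. If we replace \(P_j\) by \(\lambda^{n-j}P_j\) and keep \(t\) and \(\eta\) fixed, the inequalities are not invariant. If in addition we rescale \(\eta\mapsto\lambda^{2}\eta\) and \(t\mapsto\lambda^{-1}t\), then every term in \eqref{eq:8.32} acquires the factor \(\lambda^{n-j-1}\). Indeed, \(P_{j+1}\sim\lambda^{n-j-1}\), \(\eta P_j\sim\lambda^{n-j+1}\cdot\lambda^{-2}\), the left side scales like \(\lambda^{2(j-1)/(n-1)}\lambda^{(n-j)(n-2)/(n-1)}\), and the derivative terms scale like \(\sqrt{\lambda^{n-j-1}\lambda^{n-j+1}}\). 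So the claimed powers of \(K\) are the natural ones. We may therefore normalize \(K=1\), provided all constants are chosen independently of \(t\).

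The core step is a one-dimensional ODE lemma. We first treat \(j=k-1\), where \(P_{j+1}=P_k\leq 1\) is bounded, and then proceed inductively, assuming \(P_{j+1}\leq C\). In \eqref{eq:8.32} we choose \(\eta=\eta(t)\) depending on \(P_j(t)\), say \(\eta=\epsilon P_j^{-1/(n-j)\cdot 2}\) (the scaling-correct choice \(\eta\sim P_j^{-2/(n-j)}\)). With this choice the term \(\eta P_j\) is comparable to the left-hand side, up to a small factor \(\epsilon^{(n-j)/(n-1)}\) coming from the power count, and it can be absorbed. The term \(\eta\sqrt{P_jP_{j-1}'}\) is a problem because it involves \(P_{j-1}'\), a lower index. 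To remove it, we integrate in \(t\) against a weight. We argue by contradiction: suppose \(P_j(t_1)\geq \Lambda\) for some large \(\Lambda\). Since \(P_j\) is nondecreasing, \(P_j\geq\Lambda\) on \((t_1,\infty)\). On each interval of length \(L\), the left side of \eqref{eq:8.32} is bounded below by \(c\eta^{(j-1)/(n-1)}\Lambda^{(n-2)/(n-1)}\), which is large. On the other hand, Cauchy--Schwarz gives \(\int\sqrt{P_{j+1}P_j'}\leq (L\,\sup P_{j+1}\,\operatorname{osc}P_j)^{1/2}\), and similarly \(\int\eta\sqrt{P_jP_{j-1}'}\leq \eta(L\sup P_j\,\operatorname{osc}P_{j-1})^{1/2}\). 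Because \(j-1\) terms reappear, it is cleaner to run a simultaneous argument: we prove \(\sup P_i<\infty\) for all \(i\), and only then derive the quantitative bound. Finiteness of \(\sup P_i\) would first come from \eqref{eq:8.33} for \(P_1\). There \(P_2\) appears, so the two-sided coupling has to be handled.

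The main obstacle is exactly this coupling: \eqref{eq:8.32} for \(P_j\) involves both \(P_{j+1}\) (higher) and \(P_{j-1}'\) (lower). My plan is to sum the inequalities over \(j\) with geometric weights and integrate over long \(t\)-intervals \([T,T+L]\). The derivative terms are controlled by increments \(P_i(T+L)-P_i(T)\). If some \(P_i\) were unbounded, then along a sequence of intervals where the increments are at most \(1\) (such intervals exist, otherwise the growth would be forced to be linear), the integrated left sides grow like \(L\,P_j^{(n-2)/(n-1)}\eta^{(j-1)/(n-1)}\). The right sides grow only like \(L\,P_{j+1}+\sqrt{L}\). Taking \(L\) large and \(\eta\) tuned then forces \(P_j\lesssim P_{j+1}^{(n-j)/(n-j-1)}\) at generic times. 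Monotonicity upgrades this to all \(t\). This comparison is expected to yield the inductive bound \(\sup P_j\leq C\sup P_{j+1}^{(n-j)/(n-j-1)}\), which iterates to \(C_kK^{(n-j)/k}\).

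I expect the delicate point to be verifying that the intervals with small increments of the lower-index \(P_{j-1}\) can be chosen simultaneously with those for \(P_j\). I would handle this by a pigeonhole argument over a dyadic family of intervals, using the monotonicity of all \(P_i\).
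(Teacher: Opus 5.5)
The step that is supposed to give qualitative boundedness does not work, and the rest of your plan depends on it. You assert that if some $P_i$ is unbounded, then there are intervals $[T,T+L]$ with $T\to\infty$ on which all increments are at most $1$, ``otherwise the growth would be forced to be linear''. But linear (or exponential) growth of a nondecreasing function contradicts nothing you have used. A dyadic pigeonhole only produces intervals with small increments if an a priori bound on the $P_i$ is already available, so that step is circular.

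In fact no argument that ignores the sizes of the exponents can succeed. If the left side of \eqref{eq:8.33} were $cP_1^{1/2}$ instead of $cP_1^{(n-2)/(n-1)}$, then $P_2\equiv1$ and $P_1=e^{at}$ with $C\sqrt a\geq c$ would satisfy it, and no such good intervals would exist. What actually excludes unbounded solutions is superlinearity. When the $P$'s are large, the non-derivative terms on the right are of lower order than the left side, so a derivative term must dominate. For example, \eqref{eq:8.33} with $P_2$ bounded gives $P_1'\gtrsim P_1^{2(n-2)/(n-1)}$, whose exponent exceeds $1$. This forces blow-up in finite time, which is impossible on $(t_0,\infty)$.

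There is a second problem. Because of the term $C\eta\sqrt{P_jP_{j-1}'}$, largeness of $P_j$ only forces $P_j'$ \emph{or} $P_{j-1}'$ to be large. So neither a downward induction nor an unspecified geometric weighting closes the argument. The weights must be tuned so that both alternatives feed one Lyapunov quantity at the same superlinear rate.

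This is what the paper does. After normalizing $K=1$, it sets $Y=\sum_{j<k}(1+P_j)^{1/d_j}$ with $d_j=3(k-j)+1$. At a.e.\ $t$ it picks an index $\ell$ with $Y_\ell=Z=\max_jY_j$ and takes $\eta=Z^{-3}$ in \eqref{eq:8.32}. Maximality gives $P_{\ell+1}+\eta P_\ell\leq CZ^{d_\ell-3}$. This loses to the left side $cZ^{L_\ell}$, because $L_\ell-(d_\ell-3)=\frac{3k-1}{2k-1}>0$. Whichever derivative term then dominates gives $Y_\ell'\geq cZ^{2k/(2k-1)}$ or $Y_{\ell-1}'\geq cZ^{2k/(2k-1)}$; the choice $\eta=Z^{-3}$ together with $d_{\ell-1}=d_\ell+3$ is what equalizes the two rates. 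Hence $Y'\geq c_kY^{2k/(2k-1)}$ above a fixed threshold, and finite-time blow-up bounds $Y$ by a constant depending only on $k$.

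Your averaging idea is sound \emph{once} boundedness is known. Then the increments over $[T,T+1]$ tend to $0$. The averaged form of \eqref{eq:8.32} with $\eta\sim(\sup P_j)^{-1/(n-j)}$ gives $\sup P_j\leq C(\sup P_{j+1})^{(n-j)/(n-j-1)}$, and \eqref{eq:8.33} gives the case $j=1$. So your proof can be completed by inserting such a blow-up step.

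Finally, fix the scaling. The invariant one is $\widehat P_j(s)=\lambda^{n-j}P_j(s/\lambda)$ with $\eta=\lambda\widehat\eta$. With your $\eta\mapsto\lambda^2\eta$ and $t\mapsto\lambda^{-1}t$, the derivative terms scale like $\lambda^{n-j}$, as your own computation shows, not like $\lambda^{n-j-1}$. The scaling-correct choice is $\eta\sim P_j^{-1/(n-j)}$, not $P_j^{-2/(n-j)}$.
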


\begin{proof}
If \(K=0\), then \(P_k=0\).  For
\(j=k-1,\ldots,2\), assume inductively that \(P_{j+1}=0\), divide
\eqref{eq:8.32} by \(\eta^{(j-1)/(n-1)}\), and let
\(\eta\downarrow0\).  Since
\(1-(j-1)/(n-1)>0\), the limit gives \(P_j=0\) on the full measure
set, and continuity {extends this equality to} \(I\).  This index set is
empty when \(k=2\).  Thus \(P_2=\cdots=P_k=0\), and
\eqref{eq:8.33} and continuity give
\(P_1=0\).  We may assume that \(K>0\).

Set
\(\lambda=K^{-1/k}, \qquad \widehat P_j(s)=\lambda^{n-j}P_j(s/\lambda)\).
Choosing \(\eta=\lambda\widehat\eta\) in \eqref{eq:8.32} shows that
the functions \(\widehat P_j\) satisfy \eqref{eq:8.32} and
\eqref{eq:8.33} with the same constants.  Moreover,
\(\sup_{s\in\lambda I}\widehat P_k(s)=1\).  Relabel
\(\lambda I\) as \(I\) and \(\widehat P_j\) as \(P_j\).  It is therefore enough to obtain a
bound depending only on \(k\) under the normalization \(K=1\).

For \(1\leq j\leq k\), put \(d_j=3(k-j)+1\).  Define
\begin{equation}
 Y_j=(1+P_j)^{1/d_j},
 \qquad
 Y=\sum_{j=1}^{k-1}Y_j.
\end{equation}
The functions \(Y_j\) are nondecreasing and locally absolutely
continuous.  At almost every point where their derivatives exist,
choose \(\ell\) such that
\(Z=Y_\ell=\max_{1\leq j\leq k-1}Y_j\).
We consider points where \(Z\) is larger than a fixed constant
depending only on \(k\).

Suppose first that \(\ell\geq2\).  Take \(\eta=Z^{-3}\) in
\eqref{eq:8.32}.  Since \(d_{\ell+1}=d_\ell-3\), maximality gives
\(P_{\ell+1}+\eta P_\ell\leq CZ^{d_\ell-3}\).
For \(\ell=k-1\), the same estimate follows from \(P_k\leq1\).
The left-hand side of \eqref{eq:8.32} is bounded below by
\(cZ^{L_\ell}\), where
\begin{equation}\label{eq:8.34}
 L_\ell=\frac{d_\ell(n-2)-3(\ell-1)}{n-1},
 \qquad
 L_\ell-(d_\ell-3)=\frac{3k-1}{2k-1},
 \qquad
 2L_\ell-2d_\ell+4=\frac{2k}{2k-1}.
\end{equation}
The positive difference in \eqref{eq:8.34} allows the nonderivative
terms to be absorbed when \(Z\) is large.  The two derivative terms satisfy
\(\sqrt{P_{\ell+1}P_\ell'} \leq CZ^{(d_\ell-3)/2}(P_\ell')^{1/2}\),
\(\eta\sqrt{P_\ell P_{\ell-1}'} \leq CZ^{(d_\ell-6)/2}(P_{\ell-1}')^{1/2}\).
It follows that at least one of
\(P_\ell'\geq cZ^{2L_\ell-(d_\ell-3)}, \qquad P_{\ell-1}'\geq cZ^{2L_\ell+6-d_\ell}\)
holds.  In the first case,
\(Y_\ell' =\frac1{d_\ell}Y_\ell^{1-d_\ell}P_\ell' \geq cZ^{2k/(2k-1)}\).
In the second case, \(d_{\ell-1}=d_\ell+3\) and
\(Y_{\ell-1}\leq Z\).  Hence
\(Y_{\ell-1}' =\frac1{d_{\ell-1}}Y_{\ell-1}^{1-d_{\ell-1}}P_{\ell-1}' \geq cZ^{2k/(2k-1)}\).

Suppose next that \(\ell=1\).  Set
\(L_1=\frac{d_1(n-2)}{n-1}\).
Maximality and \(d_2=d_1-3\) give
\(P_2\leq CZ^{d_1-3}\).  This also holds when \(k=2\), because then
\(P_2=P_k\leq1\).  Since
\(L_1-(d_1-3)=\frac{3k-1}{2k-1}\),
equation \eqref{eq:8.33} gives
\(P_1'\geq cZ^{2L_1-(d_1-3)}\).
Consequently,
\(Y_1'\geq cZ^{2L_1-2d_1+4} =cZ^{2k/(2k-1)}\).

In every case, since \(Y\leq(k-1)Z\),
\(Y'\geq c_kY^\rho, \qquad \rho=\frac{2k}{2k-1}>1\),
whenever \(Y\) is sufficiently large.  Since \(Y\) is nondecreasing,
integration gives
\(\bigl(Y^{1-\rho}\bigr)' \leq-c_k(\rho-1)\).
If \(Y(t_1)\) exceeds this fixed threshold, then for every
\(t\geq t_1\) in \(I\),
\(Y(t)^{1-\rho} \leq Y(t_1)^{1-\rho}-c_k(\rho-1)(t-t_1)\).
Choosing
\(t-t_1>Y(t_1)^{1-\rho}/[c_k(\rho-1)]\) makes this upper bound
negative, which is impossible.  Hence
\(\sup_{t\in I}P_j(t)\leq C_k\) when \(K=1\).
Returning to the original functions gives
\(P_j(t)\leq C_k\lambda^{-(n-j)} =C_kK^{(n-j)/k}\).
\end{proof}

\begin{proof}[Completion of the proof of Theorem~\ref{Thm:8.1}]
Put
\(M=\int_{\R^n}\sigma_k(D^2v)\dd x\).
Equation \eqref{eq:8.20} gives \(\sup_tP_k(t)\leq kM\).
Lemma~\ref{Lem:8.8} and the estimates
\eqref{eq:8.32} and \eqref{eq:8.33} yield
\(\sup_tP_j(t)\leq C_{k,j}M^{(n-j)/k}\) for \(1\leq j\leq k-1\).
Together with \eqref{eq:8.20}, this proves \eqref{eq:8.3}.
Since the proper sublevel sets exhaust \(\R^n\), letting
\(t\) tend to infinity in \eqref{eq:8.20} proves
\eqref{eq:8.4}.

Since \(n=2k\), the exponent in Corollary~\ref{Cor:2.8} is \(p_0=n\).
Apply Corollary~\ref{Cor:2.8} to \(u=-v\).  We obtain
\begin{align}\label{eq:8.35}
 \Delta_n v
 =|\nabla v|^{n-2}\left(\Delta v+(n-2)
 D^2v\left[
 \frac{\nabla v}{|\nabla v|},
 \frac{\nabla v}{|\nabla v|}
 \right]\right)
 \geq0.
\end{align}
The identity holds in the distributional sense across
\(\{\nabla v=0\}\).  The \(j=1\) cases of
\eqref{eq:2.23} and \eqref{eq:2.25}, followed by exhaustion, give
\begin{equation}\label{eq:8.36}
 \int_{\R^n}\Delta_n v\dd x
 =\lim_{t\to\infty}P_1(t)<\infty.
\end{equation}
Since \(P_1\) is nondecreasing,
\(\int_{\R^n}\Delta_n v\dd x=\sup_tP_1(t)\).
The \(j=1\) case of \eqref{eq:8.3} now proves
\eqref{eq:8.5}.
\end{proof}

\subsubsection{Proof of Theorem~\ref{Thm:8.2}}

We first study the growth at infinity.

After a translation, assume that \(v(0)=\min_{\R^n}v\).
Equations \eqref{eq:8.35} and \eqref{eq:8.36} give
\(v-v(0)\geq0\),
\(\operatorname{div}(|\nabla v|^{n-2}\nabla v)=\Delta_n v\geq0\), and
\(\int_{\R^n}\Delta_n v\dd x<\infty\). The first two statements match
the sign convention in \cite[Theorem~1.2]{KilpelainenZhong} with
\(p=n\). That theorem gives
\[
 \sup_{B_R}(v-v(0))
 \leq C+C\int_0^{cR}
 \left(\int_{B_t}\Delta_n v\dd x\right)^{1/(n-1)}
 \frac{\dd t}{t}.
\]
For large \(R\), the integral over \(0<t<1\) is finite because
\(v\in C^3\). Equation
\eqref{eq:8.36} bounds the integral over \(1<t<cR\) by
\((\int_{\R^n}\Delta_n v\dd x)^{1/(n-1)}\log(cR)\). Hence
\begin{equation}\label{eq:8.37}
 \sup_{B_R}v
 \leq C+C\left(\int_{\R^n}\Delta_n v\dd x\right)^{1/(n-1)}
 \log(2+R)\leq C+C\log(2+R).
\end{equation}

In logarithmic polar coordinates, write
\begin{equation}
 V(\tau,\theta)=v(e^\tau\theta),
 \qquad
 m(\tau)=\frac1{|\mathbb S^{n-1}|}
       \int_{\mathbb S^{n-1}}V(\tau,\theta)\dd\theta.
\end{equation}
Since \(n=2k\), equation \eqref{eq:2.31} has no first order term.
It gives
\begin{equation}\label{eq:8.38}
 \partial_\tau^2V
 +\frac1{n-1}\Delta_{\mathbb S^{n-1}}V
 =\frac{e^{2\tau}}{n-1}
 \bigl(\Delta v+(n-2)D^2v[\theta,\theta]\bigr)\geq0.
\end{equation}

Thus \(m\) is convex. The proper asymptotic assumption $\lim\limits_{|x|\rightarrow+\infty}v(x)=+\infty$ gives \(m(\tau)\to+\infty\), and
\eqref{eq:8.37} provides an upper linear bound.  Hence
\begin{equation}\label{eq:8.39}
    \beta:=\lim_{\tau\to\infty}m'_+(\tau)\in(0,\infty),
    \qquad m(\tau)=\beta \tau+o(\tau).
\end{equation}

\begin{proposition}\label{Prop:8.9}
As \(R\to\infty\),
\begin{equation}\label{eq:8.40}
 \frac{\sup_{B_R}v}{\log R}\longrightarrow\beta,
 \qquad
 \int_{\R^n}\Delta_n v\dd x
 =|\mathbb S^{n-1}|\beta^{n-1}.
\end{equation}
For every fixed \(R_0>1\),
\begin{equation}\label{eq:8.41}
 \frac1{\log R}
 \int_{B_R\setminus B_{R_0}}
 \left|\nabla v-\beta\nabla\log|x|\right|^n\dd x
 \longrightarrow0.
\end{equation}
\end{proposition}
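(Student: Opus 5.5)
The plan is to compare two quantities on large annuli: the $n$-Dirichlet energy of $v$, and the $n$-Laplacian flux of $v$ through spheres. Throughout, normalize $v(0)=\min v=0$ as in the proof preceding \eqref{eq:8.37}, and set
\[
\mu:=\int_{\R^n}\Delta_n v\dd x<\infty,\qquad F(R):=\int_{\partial B_R}|\nabla v|^{n-2}\partial_r v\dd S=\int_{B_R}\Delta_n v\dd x .
\]
Since $\Delta_n v\geq0$, $F(R)$ increases to $\mu$. In logarithmic polar coordinates, the conformal invariance of the $n$-energy gives
\[
F(e^\tau)=\int_{\mathbb S^{n-1}}|\nabla_{\mathrm{cyl}}V|^{n-2}V_\tau\dd\theta,\qquad \int_{B_R\setminus B_{R_0}}|\nabla v|^n\dd x=\int_{\log R_0}^{\log R}\!\!\int_{\mathbb S^{n-1}}|\nabla_{\mathrm{cyl}}V|^n\dd\theta\dd\tau=:E(R).
\]

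\emph{Lower bounds for $E$.} There are two.
\begin{itemize}
\item Hölder in $\theta$ and the definition \eqref{eq:8.39} give $\int_{\mathbb S^{n-1}}|V_\tau|^n\geq|\mathbb S^{n-1}|\,m'(\tau)^n$. Hence $E(R)\geq(1-o(1))\,|\mathbb S^{n-1}|\beta^n\log R$.
\item Hölder applied to the flux gives $F\leq\bigl(\int_{\mathbb S^{n-1}}|\nabla_{\mathrm{cyl}}V|^n\bigr)^{(n-1)/n}|\mathbb S^{n-1}|^{1/n}$. Integrating in $\tau$ and using Hölder once more yields $E(R)\geq(1-o(1))\,\mu^{n/(n-1)}|\mathbb S^{n-1}|^{-1/(n-1)}\log R$.
\end{itemize}

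\emph{Upper bound for $E$.} Test $\Delta_n v\geq0$ against $v$ on $B_R\setminus B_{R_0}$:
\[
E(R)=\int_{\partial B_R}v|\nabla v|^{n-2}v_r\dd S-\int_{\partial B_{R_0}}(\cdots)-\int_{B_R\setminus B_{R_0}}v\,\Delta_n v\dd x .
\]
The last term is nonpositive because $v\geq0$, and the $R_0$-boundary term is $O(1)$. For the $R$-boundary term, write $v=m(\log R)+(V-m)$. The mean part contributes $m(\log R)F(R)=(\beta\mu+o(1))\log R$.

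The oscillation part is $\int_{\mathbb S^{n-1}}(V-m)|\nabla_{\mathrm{cyl}}V|^{n-2}V_\tau\dd\theta$. I expect this to be the main obstacle. I would not control it pointwise in $\tau$. Instead, I would average $R$ over a window $\tau\in[T,2T]$ and choose a good radius there. On that slice I would bound the term by Poincaré on $\mathbb S^{n-1}$ and Hölder, by $C\int_{\mathbb S^{n-1}}|\nabla_{\mathrm{cyl}}V|^n\dd\theta$. Averaging this slice quantity over the window gives $O(E(e^{2T})/T)$, which is small compared with $E$ up to a constant. If that constant is not good enough, I would fall back on a Caccioppoli-type estimate for the subsolution \eqref{eq:8.38} on unit cylinder slabs. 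Because $m$ is convex and $E$ is monotone in $R$, the bound at good radii transfers to all large $R$. This should give $E(R)\leq(1+o(1))\,\beta\mu\log R$.

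\emph{Conclusions.}
\begin{itemize}
\item Comparing the upper bound with the second lower bound gives $\mu\leq|\mathbb S^{n-1}|\beta^{n-1}$. Comparing it with the first gives $\mu\geq|\mathbb S^{n-1}|\beta^{n-1}$. Together these prove the mass identity in \eqref{eq:8.40}.
\item With equality, all the Hölder steps are asymptotically sharp in averaged form. The uniform convexity of $\xi\mapsto|\xi|^n$ on $\R^n$ (a Clarkson-type inequality) then converts this near-equality into
\[
\int\!\!\int|\nabla_{\mathrm{cyl}}V-\beta e_\tau|^n\dd\theta\dd\tau=o(\log R),
\]
which is \eqref{eq:8.41}.
\item For the growth rate, $\sup_{B_R}v\geq\sup_{\partial B_R}v\geq m(\log R)$ gives $\liminf\sup_{B_R}v/\log R\geq\beta$. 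For the upper limit, I would apply the local maximum principle to the subsolution $V-\beta\tau$ of the uniformly elliptic operator in \eqref{eq:8.38} on slabs of unit width. Its supremum on a slab is bounded by its $L^n$ average over a doubled slab, which is controlled by $|m-\beta\tau|$ plus Poincaré and the $o(\log R)$ deviation energy. Since $\tau\mapsto\sup_{\partial B_{e^\tau}}v$ is nondecreasing ($v$ is subharmonic), this yields $\limsup\sup_{B_R}v/\log R\leq\beta$, completing \eqref{eq:8.40}.
\end{itemize}
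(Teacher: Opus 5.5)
Your lower bounds, the mass identity you derive from them, the monotonicity-inequality derivation of \eqref{eq:8.41}, and the closing local-maximum-principle argument for $\sup_{B_R}v$ are all sound. The last of these is a legitimate substitute for the paper's use of H\"ormander's compactness theorem for subharmonic functions. The gap is in the upper bound $E(R)\leq(1+o(1))\beta\mu\log R$.

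Write $e(\tau)=E(e^\tau)$. Your Poincar\'e--H\"older estimate bounds the oscillation term on the slice $\tau$ only by $C\,e'(\tau)$, so what you actually have is
\[
 e(\tau)\leq(\beta\mu+o(1))\,\tau+C\,e'(\tau)+O(1).
\]
This inequality contains no growth information: $e(\tau)=e^{\tau/C}$ satisfies it. Averaging over $[T,2T]$ bounds the oscillation at a good slice by $C\,e(2T)/T$. That is $o(T)$ only if you already know $e(2T)=o(T^2)$, so the argument is circular as written. There is also a smaller problem: with windows $[T,2T]$, transferring the bound to all radii by monotonicity loses up to a factor $2$, so you need windows $[\tau,(1+\epsilon)\tau]$. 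The Caccioppoli fallback does not repair this. Inequality \eqref{eq:8.38} is a one-sided linear inequality and gives no control of $\int|\nabla_{\mathrm{cyl}}V|^n$. The $n$-subharmonic Caccioppoli inequality on a slab would need an $O(1)$ bound on the oscillation of $V$ there, which is essentially what is being proved.

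The missing ingredient is an a priori bound $E(R)\leq C\log R$. The paper gets it, with the sharp constant, from the level-set quantity $P_1(t)=\int_{\Sigma_t}|\nabla v|^{n-1}\dd S=\int_{\{v<t\}}\Delta_n v\dd x\leq\mu$ and the coarea formula:
\[
 \int_{B_R}|\nabla v|^n\dd x\leq\int_{\{v<\sup_{B_R}v\}}|\nabla v|^n\dd x=\int_{v(0)}^{\sup_{B_R}v}P_1(t)\dd t\leq\mu\sup_{B_R}v .
\]
Because the paper proves $\sup_{B_R}v\sim\beta\log R$ first, this gives $E\leq(\beta\mu+o(1))\log R$ directly, with no oscillation term. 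In your ordering you have two ways to close the gap.
\begin{itemize}
\item Combine the same coarea inequality with the crude bound \eqref{eq:8.37} to get $e(\tau)\leq C\tau$.
\item Alternatively, bound the boundary term using $0\leq V\leq C\tau$ from \eqref{eq:8.37}. This gives $e\leq C\tau\,(e')^{(n-1)/n}+C$, and an ODE blow-up argument then forces $e\leq C\tau$.
\end{itemize}
With $e(\tau)\leq C\tau$ in hand, choose a good slice in $[\tau,(1+\epsilon)\tau]$. There the oscillation is at most $C/\epsilon$, so $\limsup e(\tau)/\tau\leq(1+\epsilon)\beta\mu$. The rest of your proof then goes through.
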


\begin{proof}
We first prove the first limit in \eqref{eq:8.40}. Given any sequence
\(R_i\to\infty\) and
\(K\Subset\{a\leq|x|\leq b\}\), where \(0<a<b<\infty\), equation
\eqref{eq:8.37} gives
\[
 0\leq\frac{v(R_ix)-v(0)}{\log R_i}
 \leq\frac{C+C\log(2+bR_i)}{\log R_i}\leq C_K
 \quad\hbox{on }K.
\]
{Since
\(D^2v\in\Gamma_k\), the functions
\((v(R_i x)-v(0))/\log R_i\) are continuous, subharmonic, and locally
uniformly bounded on \(\R^n\setminus\{0\}\). Thus
\cite[Theorem~4.1.9(a)]{H} applies locally there.} Its alternative of
convergence to \(-\infty\) is excluded by
the first inequality. After a subsequence,
\((v(R_i x)-v(0))/\log R_i\) converges in
\(L^1_{\mathrm{loc}}(\R^{n}\setminus\{0\})\). In cylinder
coordinates, write
\(W_i(s,\theta)= \frac{V(\log R_i+s,\theta)-v(0)}{\log R_i}\).
By \eqref{eq:8.39}, spherical averages $\frac1{|\mathbb S^{n-1}|}\int_{\mathbb S^{n-1}}W_{i}(s,\theta)\dd\theta$ converge locally uniformly
in \(s\) to \(\beta\).  It follows from \eqref{eq:8.38} that the
distributional limit of
\(\left(\partial_{ss}+\frac1{n-1}\Delta_{\mathbb S^{n-1}}\right)W_i\)
is a nonnegative measure \(\mu\). For every nonnegative
\(\phi\in C_c^\infty(\R)\), convergence of the spherical
averages and \eqref{eq:8.38} give
\[
 \int_{\R\times\mathbb S^{n-1}}\phi(s)\dd\mu
 =\lim_{i\to\infty}\int_{\R\times\mathbb S^{n-1}}
 W_i(s,\theta)\phi''(s)\dd s\dd\theta
 =|\mathbb S^{n-1}|\beta\int_{\R}\phi''(s)\dd s=0.
\]
Hence \(\mu=0\). The limit \(W\geq0\) is smooth and satisfies
\(\partial_s^2W +\frac1{n-1}\Delta_{\mathbb S^{n-1}}W=0 \quad\hbox{in }\R\times\mathbb S^{n-1}\),
and \(\frac1{|\mathbb S^{n-1}|}\int_{\mathbb S^{n-1}}W(s,\theta)\dd\theta=\beta\).
Since \(W\geq0\) and its spherical mean is \(\beta\), every
spherical-harmonic coefficient is bounded on \(\R\). Each nonconstant
coefficient \(c\) solves
\(c''-\frac{\lambda}{n-1}c=0 \qquad(\lambda>0)\),
so it vanishes.  Hence \(W\equiv\beta\).

Suppose that, after
passing to a sequence, for some \(\varepsilon>0\),
\(\sup_{B_{R_i}}v\geq(\beta+2\varepsilon)\log R_i\).
Choose \(x_i\in \overline{B_{R_i}}\) where this supremum is attained.  Then
\(|x_i|\to\infty\); otherwise the left-hand side would stay bounded.  Since
\(|x_i|\leq R_i\),
\[
 \frac{v(x_i)-v(0)}{\log|x_i|}
 \geq(\beta+2\varepsilon)\frac{\log R_i}{\log|x_i|}
       -\frac{v(0)}{\log|x_i|}
 \geq\beta+\varepsilon
\]
for all sufficiently large \(i\). {
Repeating the compactness argument with the scales \(|x_i|\), after a
subsequence the functions \((v(|x_i|x)-v(0))/\log|x_i|\) converge in
\(L^1_{\mathrm{loc}}(\R^n\setminus\{0\})\) to the constant \(\beta\).
After a further subsequence, \(x_i/|x_i|\) converges in
\(\mathbb S^{n-1}\). The moving-point conclusion
\cite[Theorem~4.1.9(b)]{H}, applied at \(x_i/|x_i|\), gives}
$\limsup\limits_{i\rightarrow\infty}\frac{v(x_i)-v(0)}{\log |x_i|}\leq\beta$, which is a contradiction. Moreover, it follows from \eqref{eq:8.39} that $\liminf\limits_{R\rightarrow+\infty}\frac{\sup_{B_R}v}{\log R}\geq\beta$.  This proves the first limit in \eqref{eq:8.40}.

Noting that \(\int_{\partial B_r}|\nabla v|^{n-2}\nabla v\cdot
\frac{x}{|x|}\dd S=\int_{B_r}\Delta_n v\dd x
\to\int_{\R^n}\Delta_n v\dd x\) as \(r\to\infty\),
{logarithmic averaging gives
\(\int_{R_0}^R(\int_{B_r}\Delta_n v\dd x)\frac{\dd r}{r}
=(\int_{\R^n}\Delta_n v\dd x)\log(R/R_0)+o(\log R)\). This asymptotic and
\(m(\log R)-m(\log R_0)=\beta\log(R/R_0)+o(\log R)\) from
\eqref{eq:8.39} imply}
\begin{align}
 \int_{B_R\setminus B_{R_0}}
 |\nabla v|^{n-2}\nabla v\cdot\nabla\log|x|\dd x
 &=\left(\int_{\R^n}\Delta_n v\dd x\right)\log(R/R_0)
   +o(\log R),\label{eq:8.42}\\
 \int_{B_R\setminus B_{R_0}}
 |\nabla\log|x||^{n-2}\nabla\log|x|\cdot\nabla v\dd x
 &=|\mathbb S^{n-1}|\beta\log(R/R_0)+o(\log R),\label{eq:8.43}\\
 \int_{B_R\setminus B_{R_0}}|\nabla\log|x||^n\dd x
 &=|\mathbb S^{n-1}|\log(R/R_0).\notag
\end{align}
Since \(P_1(t)\to\int_{\R^n}\Delta_n v\dd x\) as $t\rightarrow+\infty$, the coarea formula gives
\[
 \int_{B_R}|\nabla v|^n\dd x
 \leq\int_{\{v<\sup_{B_R}v\}}|\nabla v|^n\dd x
 =\int_{v(0)}^{\sup_{B_R}v}P_1(t)\dd t.
\]
Together with the first limit in \eqref{eq:8.40}, this implies
\begin{align}
 \limsup_{R\to\infty}\frac1{\log R}
 \int_{B_R\setminus B_{R_0}}|\nabla v|^n\dd x
 \leq\beta\int_{\R^n}\Delta_n v\dd x.
\end{align}
H\"older's inequality in \eqref{eq:8.42} gives
\(\int_{\R^n}\Delta_n v\dd x
\leq|\mathbb S^{n-1}|\beta^{n-1}\). By H\"{o}lder's inequality, \eqref{eq:8.43} {implies}
\begin{align*}
 \int_{B_R\setminus B_{R_0}}|\nabla v|^n\dd x
 \geq
 \frac{
 \left|\displaystyle\int_{B_R\setminus B_{R_0}}
 |\nabla\log|x||^{n-2}\nabla\log|x|\cdot\nabla v\dd x\right|^n}
 {\left(\displaystyle\int_{B_R\setminus B_{R_0}}
 |\nabla\log|x||^n\dd x\right)^{n-1}},
\end{align*}
which yields the reverse inequalities. Therefore,
\begin{equation}\label{eq:8.44}
 \int_{\R^n}\Delta_n v\dd x
 =|\mathbb S^{n-1}|\beta^{n-1},
 \qquad
 \frac1{\log R}\int_{B_R\setminus B_{R_0}}|\nabla v|^n\dd x
 \longrightarrow|\mathbb S^{n-1}|\beta^n.
\end{equation}
Finally, apply
\[
 c_n|\xi-\eta|^n
 \leq\bigl(|\xi|^{n-2}\xi-|\eta|^{n-2}\eta\bigr)
       \cdot(\xi-\eta)=|\xi|^{n}-|\xi|^{n-2}\xi\cdot\eta-|\eta|^{n-2}\eta\cdot\xi+|\eta|^{n}
\]
with \(\xi=\nabla v\) and
\(\eta=\beta\nabla\log|x|\). Integrating this inequality, dividing by
$\log R$, and using that the four limits on the right-hand side sum to
zero by \eqref{eq:8.42}--\eqref{eq:8.44}, we obtain \eqref{eq:8.41}.
\end{proof}

\begin{proposition}\label{Prop:8.10}
There are \(R_i\to\infty\) and \(c_i=\beta\log R_{i}+o(\log R_{i})\to\infty\) such that
\begin{equation}\label{eq:8.45}
 v_i(x):=v(R_i x)-c_i
 \longrightarrow\beta\log|x|
 \quad\hbox{strongly in }
 W^{1,n}_{\mathrm{loc}}(\R^n\setminus\{0\}).
\end{equation}
Moreover, for every sufficiently large \(R\),
\begin{equation}\label{eq:8.46}
    \int_{B_R}e^v\dd x\leq CR^{n+\beta}.
\end{equation}
\end{proposition}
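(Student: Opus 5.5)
The plan is to deduce \eqref{eq:8.45} from the averaged gradient convergence \eqref{eq:8.41} by a pigeonhole argument over dyadic annuli. Then I would normalize by subtracting spherical means and apply a Poincar\'e inequality on annuli. The growth bound \eqref{eq:8.46} should follow from the first limit in \eqref{eq:8.40}.

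\emph{Choice of scales.} Fix \(R_0>1\) and set
\[
\varepsilon(r)=\int_{\{r<|x|<2r\}}\bigl|\nabla v-\beta\nabla\log|x|\bigr|^n\dd x .
\]
Split \(B_{2^N}\setminus B_{R_0}\) into about \(N\) dyadic annuli. By \eqref{eq:8.41}, the sum of \(\varepsilon(2^m)\) over \(m\le N\) is \(o(N)\). More generally, for a fixed \(L\), the sum over \(m\le N\) of \(\sum_{|\ell|\le L}\varepsilon(2^{m+\ell})\) is \(o(N)\), since each term is counted at most \(2L+1\) times. So for every \(L\) there are infinitely many \(m\) with \(\sum_{|\ell|\le L}\varepsilon(2^{m+\ell})\le 1/L\). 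A diagonal choice gives \(R_i=2^{m_i}\to\infty\) such that
\[
\int_{\{2^{-L}<|x|<2^{L}\}}\bigl|\nabla v(R_ix)R_i-\beta\nabla\log|x|\bigr|^n\dd x\longrightarrow 0
\]
for every fixed \(L\). This works because the \(n\)-th power of the gradient is scale invariant in dimension \(n\).

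\emph{Normalization.} Put \(c_i=m(\log R_i)\), the spherical mean of \(v\) on \(\partial B_{R_i}\). By \eqref{eq:8.39}, \(c_i=\beta\log R_i+o(\log R_i)\to\infty\). Set \(v_i=v(R_i\cdot)-c_i\) and \(f_i=v_i-\beta\log|x|\). Then \(\nabla f_i\to0\) in \(L^n\) on every annulus \(A_L=\{2^{-L}<|x|<2^L\}\), and the mean of \(f_i\) on \(\partial B_1\) is zero. The Poincar\'e inequality on the connected annulus \(A_L\), with normalization by the spherical mean on the unit sphere, gives \(f_i\to0\) in \(L^n(A_L)\). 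Here I use the trace inequality \(\|f-\bar f_{\partial B_1}\|_{L^n(A_L)}\le C_L\|\nabla f\|_{L^n(A_L)}\), which holds on such annuli. This proves \eqref{eq:8.45}. I expect the main delicacy to be the diagonal selection of dyadic scales; the rest is routine.

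\emph{Growth bound.} By the first limit in \eqref{eq:8.40}, for any \(\varepsilon>0\) we have \(v\le(\beta+\varepsilon)\log R\) on \(B_R\) for large \(R\). This gives \(\int_{B_R}e^v\le CR^{n+\beta+\varepsilon}\), which loses an \(\varepsilon\). To get the sharp exponent \(n+\beta\) I would integrate with the level-set structure, using
\[
\int_{B_R}e^v\le\int_{\{v<\sup_{B_R}v\}}e^v=\int^{\sup_{B_R}v}e^t\,|\{v=t\}\text{-measure}|\,\dd t,
\]
together with a volume bound \(|\{v<t\}|\le Ce^{nt/\beta}\). That volume bound should follow from the Kilpel\"ainen--Zhong estimate \eqref{eq:8.37} in a sharp form, \(v(x)\le\beta\log|x|+C\), which comes from the \(n\)-superharmonic comparison with \(\beta\log|x|\) (the total \(n\)-Laplacian mass equals \(|\mathbb S^{n-1}|\beta^{n-1}\) by \eqref{eq:8.40}).

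The main obstacle is exactly this sharp upper bound \(v\le\beta\log|x|+C\) rather than \(v\le(\beta+o(1))\log|x|\). I would try first the Kilpel\"ainen--Zhong Wolff-potential estimate with \(\int_{B_t}\Delta_nv\le|\mathbb S^{n-1}|\beta^{n-1}\). If the constant in that estimate is not sharp, I would instead note that applications such as Theorem~\ref{Thm:1.8} only need the weaker bound \(CR^{n+2}\), which is recovered once \(\beta\) is known, possibly after slight adjustments.
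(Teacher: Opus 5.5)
Your proof of \eqref{eq:8.45} is correct and is essentially the paper's argument. The paper also turns \eqref{eq:8.41}, in the variable $\tau=\log|x|$ where $|\nabla v|^n\dd x$ is scale invariant, into a pigeonhole choice of a logarithmic window of length $2i$ with vanishing energy, and then applies a Poincar\'e inequality. Normalizing by the spherical mean on $\partial B_1$, i.e.\ $c_i=m(\log R_i)$, rather than by the mean over $\{1<|x|<2\}$, is harmless. It even gives $c_i=\beta\log R_i+o(\log R_i)$ directly from \eqref{eq:8.39}.

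The growth bound \eqref{eq:8.46} is not proved. Your plan depends on the sharp pointwise bound $v(x)\le\beta\log|x|+C$, which you do not establish.
\begin{itemize}
\item The estimate of \cite{KilpelainenZhong} comes with an unspecified constant $C=C(n)$. It therefore only gives $\sup_{B_R}v\le C\log R$, with $C$ possibly larger than $\beta$.
\item The first limit in \eqref{eq:8.40} only gives $(\beta+o(1))\log R$, which is exactly the $R^{\varepsilon}$ loss you noticed.
\item The level-set step is reversed. A pointwise \emph{upper} bound on $v$ gives $\{v<t\}\supset B_{e^{(t-C)/\beta}}$, which is a \emph{lower} bound on $|\{v<t\}|$. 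In any case, with the pointwise upper bound you would simply integrate $e^C|x|^\beta$ over $B_R$.
\item The fallback does not help. \eqref{eq:8.46} is part of the statement, and Theorem~\ref{Thm:1.8} needs precisely $CR^{n+2}$. In the proof of Theorem~\ref{Thm:1.7}, the bound $R_j^{-2k}\int_{B_{R_j}}e^v\dd x\le CR_j^2$ from \eqref{eq:7.21} is used at the borderline, in $\frac{C}{m^2}\sum_{j=m}^{2m-1}R_j^{-2}\cdot CR_j^2=C/m$. An extra factor $R_j^{\varepsilon}$, with $R_{j+1}\ge 8R_j$, makes this sum blow up.
\end{itemize}

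The missing idea is the log-convexity of spherical means of $e^V$. Multiply \eqref{eq:8.38} by $e^V$, integrate over $\mathbb S^{n-1}$ (moving the $\Delta_{\mathbb S^{n-1}}V$ term by parts), and apply Cauchy--Schwarz:
\[
\frac{\dd^2}{\dd\tau^2}\int_{\mathbb S^{n-1}}e^{V}\dd\theta
\geq\int_{\mathbb S^{n-1}}e^V\Bigl(V_\tau^2+\frac{|\nabla_\theta V|^2}{n-1}\Bigr)\dd\theta
\geq\frac{\bigl(\frac{\dd}{\dd\tau}\int_{\mathbb S^{n-1}}e^{V}\dd\theta\bigr)^2}{\int_{\mathbb S^{n-1}}e^{V}\dd\theta}.
\]
Hence
\[
g(\tau)=\log\Bigl(\frac1{|\mathbb S^{n-1}|}\int_{\mathbb S^{n-1}}e^{V(\tau,\theta)}\dd\theta\Bigr)
\]
is convex. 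Jensen's inequality gives $m(\tau)\le g(\tau)\le\sup_\theta V(\tau,\theta)$. By \eqref{eq:8.39} and the first limit in \eqref{eq:8.40}, it follows that $g(\tau)/\tau\to\beta$.

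A convex function with asymptotic slope $\beta$ has right derivative at most $\beta$ everywhere. Hence $g(\tau)\le\beta\tau+C$, i.e.\ $\int_{\partial B_r}e^v\dd S\le Cr^{n-1+\beta}$ for large $r$. Integrating in $r$ gives \eqref{eq:8.46} with no loss.

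Your pointwise bound is in fact true, but only as a consequence of this step. The function $e^{V-\beta\tau}$ is a nonnegative subsolution of $\partial_\tau^2+\frac1{n-1}\Delta_{\mathbb S^{n-1}}$ with bounded spherical means, so the local maximum principle bounds it.
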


\begin{proof}
Equation \eqref{eq:8.41} is equivalent in cylinder coordinates to
\[
 \int_0^{t}\int_{\mathbb S^{n-1}}
 \left|\nabla_{\tau,\theta}
       (V(\tau,\theta)-\beta\tau)\right|^n
 \dd\theta\dd\tau=o(t).
\]
Choose \(T_i\geq16i\) so large that
\[
 \int_0^{T_i}\int_{\mathbb S^{n-1}}
 \left|\nabla_{\tau,\theta}(V(\tau,\theta)-\beta\tau)\right|^n
 \dd\theta\dd\tau\leq i^{-4}T_i.
\]
Partition \([T_i/2,T_i]\) into disjoint intervals of length \(2i\).
One of their centers \(t_i\geq8i\) satisfies
\[
 \int_{t_i-i}^{t_i+i}\int_{\mathbb S^{n-1}}
 \left|\nabla_{\tau,\theta}(V(\tau,\theta)-\beta\tau)\right|^n
 \dd\theta\dd\tau\leq8i^{-3}\leq4i^{-2}.
\]
Set \(R_i=e^{t_i}\). Then
\begin{equation}\label{eq:8.47}
  \int_{e^{-i}\leq |x|\leq e^i}|\nabla\bigl(v(R_i x)-\beta\log|x|\bigr)|^{n}\dd x\leq 4i^{-2}.
\end{equation}
Let \(A_*=\{1<|x|<2\}\) and
\(f_i(x)=v(R_i x)-\beta\log|x|\).
Define
\begin{equation}\label{eq:8.48}
 c_i=\frac1{|A_*|}\int_{A_*}f_i(x)\dd x.
\end{equation}
Let \(A\Subset\R^n\setminus\{0\}\) be a compact annulus.  Choose a
bounded connected Lipschitz domain
\(\Omega\Subset\R^n\setminus\{0\}\) which contains \(A_*\cup A\).
For all sufficiently large \(i\), equation \eqref{eq:8.47} and the
Poincar\'e inequality with average over \(A_*\) give
\(\|f_i-c_i\|_{L^n(\Omega)} \leq C_\Omega\|\nabla f_i\|_{L^n(\Omega)} \longrightarrow0\).
Equation \eqref{eq:8.47} also {implies}
\(\|\nabla f_i\|_{L^n(\Omega)}\to0\).  Thus
\(f_i-c_i\longrightarrow0 \quad\hbox{strongly in }W^{1,n}(A)\).
Since \(A\) is arbitrary, this proves \eqref{eq:8.45} with the same
constants \(c_i\) on every compact annulus.  Polar coordinates,
\eqref{eq:8.39}, and the definition \eqref{eq:8.48} give
\(\frac{c_i}{t_i}\longrightarrow\beta\).
Hence
\(c_i=\beta\log R_i+o(\log R_i)\longrightarrow\infty\).

To prove \eqref{eq:8.46}, {multiply \eqref{eq:8.38} by
\(e^V\), integrate by parts on \(\mathbb S^{n-1}\), and apply
Cauchy's inequality to obtain}
\begin{align*}
 \frac{\dd^2}{\dd {\tau}^2}\left(
 \frac1{|\mathbb S^{n-1}|}
 \int_{\mathbb S^{n-1}}e^{V({\tau},\theta)}\dd\theta\right)
 &\geq\frac1{|\mathbb S^{n-1}|}
 \int_{\mathbb S^{n-1}}e^V
 \left((\partial_{{\tau}}V)^2
 +\frac{|\nabla_\theta V|^2}{n-1}\right)\dd\theta
 \\[1mm]
 &\geq
 \frac{\left(\dfrac{\dd}{\dd {\tau}}
   \dfrac1{|\mathbb S^{n-1}|}
   \int_{\mathbb S^{n-1}}e^{V({\tau},\theta)}\dd\theta\right)^2}
 {\dfrac1{|\mathbb S^{n-1}|}
   \int_{\mathbb S^{n-1}}e^{V({\tau},\theta)}\dd\theta},
\end{align*}
which indicates that $\log\left(\frac1{|\mathbb S^{n-1}|}
       \int_{\mathbb S^{n-1}}e^{V({\tau},\theta)}\dd\theta\right)$ is convex. By Jensen's inequality, then using \eqref{eq:8.39} and the first limit in \eqref{eq:8.40}, we derive
\[
 m({\tau})\leq
 \log\left(\frac1{|\mathbb S^{n-1}|}
       \int_{\mathbb S^{n-1}}e^{V({\tau},\theta)}\dd\theta\right)
 \leq\sup_\theta V({\tau},\theta),
 \qquad
 \frac1{{\tau}}\log\left(\frac1{|\mathbb S^{n-1}|}
       \int_{\mathbb S^{n-1}}e^{V({\tau},\theta)}\dd\theta\right)
 \longrightarrow\beta.
\]
The convexity and the last limit show that the right derivative of
\(\log(\frac1{|\mathbb S^{n-1}|}
\int_{\mathbb S^{n-1}}e^{V({\tau},\theta)}\dd\theta)\) is at most \(\beta\).
Hence its difference with \(\beta {\tau}\) is nonincreasing. Consequently,
for large \({\tau}\),
\(\log\left(\frac1{|\mathbb S^{n-1}|} \int_{\mathbb S^{n-1}}e^{V({\tau},\theta)}\dd\theta\right) \leq\beta {\tau}+C\).
Since
\(\dd x=e^{n{\tau}}\dd {\tau}\dd\theta\),
integration in \({\tau}\) proves
\eqref{eq:8.46}.
\end{proof}

{Propositions~\ref{Prop:8.9} and~\ref{Prop:8.10} prove
Theorem~\ref{Thm:8.2}.}

\subsubsection{Proof of Theorem~\ref{Thm:8.3}}

We now assume that $v$ solves equation \eqref{eq:8.8}.  The functions in
\eqref{eq:8.45} satisfy
\begin{equation}
 \sigma_k(D^2v_i)=\varepsilon_i e^{-(k+1)v_i},
 \qquad
 \varepsilon_i=aR_i^n e^{-(k+1)c_i}.
\end{equation}
Since \(n=2k\), the scaling is critical and
\begin{equation}\label{eq:8.49}
 \int_E\varepsilon_i e^{-(k+1)v_i(x)}\dd x
 =\int_{R_iE}\sigma_k(D^2v(y))\dd y,
 \qquad
 \int_{\R^n}\varepsilon_i e^{-(k+1)v_i}\dd x
 =\int_{\R^n}\sigma_k(D^2v)\dd x.
\end{equation}

\begin{lemma}\label{Lem:8.11}
On every compact subset of \(\R^n\setminus\{0\}\),
\begin{equation}\label{eq:8.50}
    v_i\longrightarrow\beta\log|x|
    \quad\hbox{uniformly}.
\end{equation}
Moreover,
\begin{equation}\label{eq:8.51}
 \varepsilon_i\longrightarrow0,
 \qquad
 \varepsilon_i e^{-(k+1)v_i}\longrightarrow0
 \quad\hbox{in }L^q_{\mathrm{loc}}(\R^n\setminus\{0\})
 \quad\hbox{for every }1\leq q<\infty.
\end{equation}
\end{lemma}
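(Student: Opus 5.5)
The plan is to upgrade the $W^{1,n}_{\mathrm{loc}}$ convergence in \eqref{eq:8.45} to uniform convergence by combining subharmonicity with the convexity of spherical means. After that, the smallness of $\varepsilon_i$ will follow from the finite total mass \eqref{eq:8.49} together with the lower bound on $v_i$ that uniform convergence supplies.

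\emph{Uniform convergence.} Since $D^2v_i\in\Gamma_k$, each $v_i$ is subharmonic. Fix a compact annulus $K\subset\{a\le|x|\le b\}$ and a slightly larger annulus $\Omega$. The upper bound comes from the sub-mean value property on balls $B_\rho(x)\subset\Omega$:
\[
v_i(x)\le \frac{1}{|B_\rho|}\int_{B_\rho(x)}v_i\,\dd y\longrightarrow\frac{1}{|B_\rho|}\int_{B_\rho(x)}\beta\log|y|\,\dd y
\]
uniformly in $x\in K$, because $v_i\to\beta\log|x|$ in $L^1(\Omega)$. Letting $\rho\downarrow0$ gives $\limsup_i\sup_K(v_i-\beta\log|x|)\le0$.

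For the lower bound, I would use the Lipschitz-type control available for $k$-convex functions, with $k\ge2$ and $n=2k$. Corollary~\ref{Cor:2.8} shows $v_i$ is $n$-subharmonic, so $W^{1,n}$ convergence together with the uniform upper bound gives equicontinuity away from the origin. One route is the local gradient estimate for $k$-convex functions: $|\nabla v|\le C\,\operatorname{osc}/\rho$ for $k$-convex functions with a bounded $\sigma_k$ density, in the spirit of \cite[Theorem~3.2]{ChouWang2001}. An alternative is the Hölder continuity of $k$-convex functions when $k>n/2$; but we have $k=n/2$, the borderline case. I would therefore rely on the following argument. Because $W^{1,n}$ controls oscillation on spheres via Sobolev embedding on $n-1$ dimensional spheres (where $W^{1,n}\subset C^{0,1/n}$), the coarea/Fubini argument shows that for most radii $r$ the restriction of $v_i-c$ to $\partial B_r$ is uniformly close to $\beta\log r$. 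I would choose radii $r_-<a$ and $r_+>b$ with $v_i\ge\beta\log r_\pm-o(1)$ on $\partial B_{r_\pm}$. For the interior, I would compare on the annulus $B_{r_+}\setminus B_{r_-}$ with the $k$-harmonic radial function solving $\sigma_k=0$; this is the function $\beta\log|x|$ itself, since $n=2k$ makes $\log|x|$ $k$-harmonic. The comparison runs via Lemma~\ref{Lem:4.1} with $\mu_k[v_i]\ge0=\mu_k[\beta\log|x|+\text{const}]$, which yields $v_i\le\beta\log|x|+o(1)$ — the wrong direction. The lower bound therefore has to come from $L^1$ convergence plus the upper bound: $v_i-\beta\log|x|$ tends to $0$ in $L^1$ and is $\le o(1)$, so it is small in measure. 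Then the convergence of the spherical means $m$ (convexity, \eqref{eq:8.38}) combined with equicontinuity gives the uniform lower bound. I expect this lower bound to be the main obstacle. If the Fubini selection of good spheres does not suffice, I would fall back on the harmonic-measure argument: on small balls, a lower deviation at a point spreads to a set of definite measure through the local gradient bound.

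\emph{Smallness of $\varepsilon_i$.} With \eqref{eq:8.50}, $v_i\ge\beta\log a-1$ on $\{a\le|x|\le b\}$, so $e^{-(k+1)v_i}\ge c>0$ there. By \eqref{eq:8.49},
\[
\varepsilon_i\,c\,|\{a<|x|<b\}|\le\int_{R_i\{a<|x|<b\}}\sigma_k(D^2v)\,\dd y\longrightarrow0,
\]
since $R_ia\to\infty$ and the total mass is finite. Hence $\varepsilon_i\to0$. The same uniform upper bound on $e^{-(k+1)v_i}$ over compact annuli then gives $\varepsilon_ie^{-(k+1)v_i}\to0$ uniformly, and so in every $L^q_{\mathrm{loc}}(\R^n\setminus\{0\})$.
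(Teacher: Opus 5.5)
Your upper bound via the sub-mean value property is correct. Your proof of $\varepsilon_i\to0$ also works, once a sign slip is fixed. The bound $v_i\ge\beta\log a-1$ controls $e^{-(k+1)v_i}$ from \emph{above}. The bound $e^{-(k+1)v_i}\ge c$ that you need comes from $v_i\le\beta\log b+1$, which your first step already gives, so this part does not depend on the missing step.

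\textbf{The gap: the uniform lower bound for $v_i$.} Both \eqref{eq:8.50} and your proof of the $L^q$ claim in \eqref{eq:8.51} rest on it, and none of the tools you list can produce it, because none uses the equation quantitatively. $W^{1,n}$ convergence, $k$-convexity (hence $n$-subharmonicity) and a uniform upper bound do not give equicontinuity. Since $n=2k$, a direct computation shows that $\frac a2\log(|y|^2+\epsilon^2)$ has Hessian in $\Gamma_k$. Hence
\[
 \beta\log|x|+\frac{a_i}{2}\log\bigl(|x-x_0|^2+\epsilon_i^2\bigr),\qquad a_i=\delta_i\bigl(\log(1/\epsilon_i)\bigr)^{-1/n},\quad\delta_i\to0,
\]
is smooth and $k$-admissible near a point $x_0\ne0$. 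The perturbation has $L^n$-gradient norm at most $C\delta_i$ on $B_1(x_0)$, so these functions converge to $\beta\log|x|$ in $W^{1,n}_{\mathrm{loc}}$. Even their $k$-Hessian mass in $B_1(x_0)$ tends to zero. Yet their value at $x_0$ is $\beta\log|x_0|-\delta_i(\log(1/\epsilon_i))^{1-1/n}\to-\infty$ when $\epsilon_i\to0$ fast enough. The other routes fail as well:
\begin{itemize}
\item Good spheres plus comparison give only the upper bound, as you noticed yourself.
\item The convexity \eqref{eq:8.38} controls spherical means, not pointwise values, and ``small in measure and bounded above'' does not exclude such spikes.
\item A Chou--Wang type gradient bound such as Lemma~\ref{Lem:8.12} needs an a priori bound on $\|v_i\|_{L^\infty}$ and on the Lipschitz norm of $\varepsilon_ie^{-(k+1)s}$ over that range. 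That is exactly the lower bound you are trying to prove, so the argument is circular.
\end{itemize}

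\textbf{The missing idea: a Morrey-type density bound for the Hessian measures.} This is where the equation must enter, and the paper obtains it from the Moser--Trudinger inequality. The uniform $W^{1,n}(\Omega_1)$ bound coming from \eqref{eq:8.45} gives $\sup_i\int_{\Omega_1}e^{s|v_i|}\dd x<\infty$ for every finite $s$. Hence $\|\varepsilon_ie^{-(k+1)v_i}\|_{L^q(\Omega_1)}\le C\varepsilon_i$, which proves \eqref{eq:8.51} directly, before any uniform convergence is known. By H\"older this gives $\mu_k[v_i](B_r(x))\le Cr^{\gamma}$ with $\gamma=n(1-1/q)>0$, using $n-2k=0$. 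With this bound the argument closes:
\begin{itemize}
\item The Wolff potential estimate \cite[Theorem~9.1]{Wang2009} yields $v_i\ge-C$ on a smaller annulus.
\item Compare $v_i$ with its $k$-harmonic replacement $w_i$; \cite[Theorem~9.2]{Wang2009} gives $0\le w_i-v_i\le CR^{\gamma/k}$.
\item Lemma~\ref{Lem:8.12}, applied to smooth approximants of $w_i$, then yields a uniform H\"older modulus for $v_i$.
\item Arzel\`a--Ascoli and the $L^1$ convergence identify the uniform limit as $\beta\log|x|$.
\end{itemize}
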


\begin{proof}
Fix two punctured annuli
\(\Omega_0\Subset\Omega_1\Subset\R^n\setminus\{0\}\).  The strong
\(W^{1,n}(\Omega_1)\) convergence in \eqref{eq:8.45} and the
Moser--Trudinger inequality~\cite{Moser} imply, for every finite \(s\),
\begin{equation}\label{eq:8.52}
    \sup_i\int_{\Omega_1}e^{s|v_i|}\dd x<\infty.
\end{equation}
Indeed, it follows from \eqref{eq:8.45} that the integral average $\overline{v_{i}}:=\frac{1}{|\Omega_{1}|}\int_{\Omega_1}v_i \dd x$ and $\|\nabla v_i\|_{L^n(\Omega_1)}$ are uniformly bounded. Young's inequality $s|v_i-\overline{v_{i}}|\leq \alpha|v_i-\overline{v_{i}}|^{\frac{n}{n-1}}+C_{s,\alpha}$, together with the Moser--Trudinger inequality, implies \eqref{eq:8.52}.

The sets \(R_i\Omega_1\) escape from every compact subset of \(\R^n\), as $i\rightarrow+\infty$.
Thus the finite mass assumption in \eqref{eq:8.1} and
\eqref{eq:8.49} give
\(\int_{\Omega_1}\varepsilon_i e^{-(k+1)v_i}\dd x \longrightarrow0\).
If \(\eta\geq0\) is a nonzero smooth function compactly supported in
\(\Omega_1\), then $\varepsilon_i\int_{\Omega_1}\eta e^{-(k+1)v_i}\dd x
 \longrightarrow0$. By \eqref{eq:8.52} and Vitali's theorem, $\int_{\Omega_1}\eta e^{-(k+1)v_i}\dd x$ converges to
\(\int\eta|x|^{-(k+1)\beta}\dd x>0\).  Hence
\(\varepsilon_i\to0\), and \eqref{eq:8.52} proves the second assertion
in \eqref{eq:8.51}.

Next, we prove \eqref{eq:8.50}. Choose \(q>1=\frac{n}{2k}\), put
\(\gamma=n(1-1/q)>0\), and choose annuli
\(\Omega_0\Subset\Omega'\Subset\Omega''\Subset\Omega_1\).
For every ball \(B_{2r}(x)\Subset\Omega_1\), H\"older's inequality and
the already proved \(L^q(\Omega_1)\) bound give
\begin{equation}\label{eq:8.53}
 \mu_k[v_i](B_r(x))
 =\int_{B_r(x)}\varepsilon_i e^{-(k+1)v_i}\dd x
 \leq C r^{n-2k+\gamma}=C r^\gamma,
\end{equation}
where \(C\) is independent of \(i,x,r\).  We also have
\(\sup\limits_i\|v_i\|_{L^1(\Omega_1)}<\infty\) by \eqref{eq:8.45}.

Since every
\(k\)-admissible function is subharmonic, the mean-value inequality and
the preceding \(L^1\) bound give
\begin{equation}
  \sup_{\Omega''}v_i\leq C
\end{equation}
with \(C\) depending only on
\(n,k,\Omega'',\Omega_1\) and the uniform \(L^1\) bound.  Thus
\(v_i-C\leq0\) on \(\Omega''\).

Choose \(d>0\), depending only on
\(\operatorname{dist}(\Omega',\partial\Omega'')\), so that
\(B_{8d}(x)\Subset\Omega''\) for every \(x\in\Omega'\). The uniform \(L^1\) bound yields, for every such
\(x\), a radius \(\rho_i\in[d,2d]\) satisfying
\(\frac1{|\partial B_{\rho_i}|} \int_{\partial B_{\rho_i}(x)}|v_i-C|\dd S\leq C\).
Since \(v_i-C\leq0\) on \(\Omega''\), one has
\[
\begin{aligned}
0
&\leq
\Big|
 \sup_{\partial B_{\rho_i}(x)}(v_i-C)
\Big|
=
-\sup_{\partial B_{\rho_i}(x)}(v_i-C)
\leq
\frac{1}{|\partial B_{\rho_i}(x)|}\int_{\partial B_{\rho_i}(x)}|v_i-C|\dd S
\leq C.
\end{aligned}
\]
Moreover, \eqref{eq:8.53} and \(n=2k\) give
\begin{align}\label{eq:8.54}
&\int_0^r
 \left(
  \frac{\mu_k[v_i](B_t(x))}{t^{n-2k}}
 \right)^{1/k}
 \frac{\dd t}{t}\leq
 C^{1/k}\int_0^r
 t^{\gamma/k}\frac{\dd t}{t}
 =
 \frac{kC^{1/k}}{\gamma}r^{\gamma/k}
 \leq Cr^{\gamma/k}.
\end{align}
Since \(2\rho_i\leq4d\) and
\(B_{8d}(x)\Subset\Omega''\), \eqref{eq:8.54} and
\cite[Theorem~9.1]{Wang2009} yield
\[
\begin{aligned}
0\leq C-v_i(x)
&\leq
C\left\{
 \int_0^{2\rho_i}
 \left(
  \frac{\mu_k[v_i](B_t(x))}{t^{n-2k}}
 \right)^{1/k}
 \frac{\dd t}{t}
 +
 \left|
  \sup_{\partial B_{\rho_i}(x)}(v_i-C)
 \right|
\right\}\\
&\leq
C\left((2\rho_i)^{\gamma/k}+1\right)
\leq C,
\qquad x\in\Omega'.
\end{aligned}
\]
Consequently, \(-C\leq v_i\leq C\) in \(\Omega'\), and
\(\sup_i\|v_i\|_{L^\infty(\Omega')}<\infty\).

Let \(x,y\in\Omega_0\), \(0<|x-y|<R\leq1\), and
\(B_{4R}(x)\Subset\Omega'\).
{By
\cite[Theorem~1]{TrudingerWeak1997}, the Dirichlet problem on
\(B_{4R}(x)\) has a unique
\(w_i\in\Phi^k(B_{4R}(x))\cap C(\overline{B_{4R}(x)})\) satisfying}
\[
 \mu_k[w_i]=0\quad\hbox{in }B_{4R}(x),
 \qquad w_i=v_i\quad\hbox{on }\partial B_{4R}(x).
\]
The comparison principle, \eqref{eq:8.54}, and
\cite[Theorem~9.2]{Wang2009} give
\begin{equation}\label{eq:8.55}
 0\leq w_i-v_i\leq CR^{\gamma/k}
 \qquad\hbox{in }B_R(x).
\end{equation}
Lemma~\ref{Lem:4.1}, applied to the two constant functions with the
boundary extrema of \(v_i\), {yields}
\(\inf_{\partial B_{4R}(x)}v_i\leq w_i\leq
\sup_{\partial B_{4R}(x)}v_i\).  Hence
\(\|w_i\|_{L^\infty(B_{4R}(x))}\leq C\).

Choose \(\varphi_{i,\ell}\in C^\infty(\partial B_{4R}(x))\) such that
\(\|\varphi_{i,\ell}-v_i\|_{L^\infty(\partial B_{4R}(x))}\leq1/\ell\), and
choose \(0<\delta_\ell\leq1\) with \(\delta_\ell\downarrow0\).
A smooth extension of \(\varphi_{i,\ell}\), plus a sufficiently large
multiple of \(|y-x|^2-16R^2\), is a strict admissible subsolution.
Thus \cite{CNS1985} gives a smooth admissible solution of
\[
 \sigma_k(D^2w_{i,\ell})=\delta_\ell
 \quad\hbox{in }B_{4R}(x),
 \qquad w_{i,\ell}=\varphi_{i,\ell}
 \quad\hbox{on }\partial B_{4R}(x).
\]
Set
\[
 e_{i,\ell}=\|\varphi_{i,\ell}-v_i\|_{L^\infty(\partial B_{4R}(x))},
 \qquad
 a_\ell=\frac{\delta_\ell^{1/k}}{2\binom nk^{1/k}}.
\]
The mixed Hessian expansion and its positivity
\cite[Section~2]{TW2002} yield
\[
 \mu_k\!\left[w_i+a_\ell(|y-x|^2-16R^2)\right]
 \geq\binom nk(2a_\ell)^k\dd y=\delta_\ell\dd y.
\]
Lemma~\ref{Lem:4.1} {gives}
\begin{equation}\label{eq:8.56}
 w_i-e_{i,\ell}+a_\ell(|y-x|^2-16R^2)
 \leq w_{i,\ell}\leq w_i+e_{i,\ell}
 \quad\hbox{in }B_{4R}(x).
\end{equation}
Since \(\|w_i\|_{L^\infty(B_{4R}(x))}\leq C\), equation
\eqref{eq:8.56} {implies} \(\|w_{i,\ell}\|_{L^\infty(B_{4R}(x))}\leq C\).
For \(p\in B_R(x)\), Lemma~\ref{Lem:8.12} on \(B_{2R}(p)\) {yields}
\(|\nabla w_{i,\ell}(p)|\leq C/R\).  Moreover,
\eqref{eq:8.56} {implies}
\(\|w_{i,\ell}-w_i\|_{L^\infty(B_{4R}(x))}
\leq e_{i,\ell}+16a_\ell R^2\to0\).
Consequently,
\begin{equation}\label{eq:8.57}
 |w_i(x)-w_i(y)|\leq C\frac{|x-y|}{R}.
\end{equation}
Equations \eqref{eq:8.55} and \eqref{eq:8.57} give
\begin{equation}\label{eq:8.58}
 |v_i(x)-v_i(y)|
 \leq C\left(\frac{|x-y|}{R}+R^{\gamma/k}\right).
\end{equation}
If
\(0<|x-y|<\min\{1,\operatorname{dist}(\Omega_0,\partial\Omega')^2/16\}\),
take \(R=|x-y|^{1/2}\) in \eqref{eq:8.58}.  Together with the
uniform \(L^\infty(\Omega')\) bound, this gives
\begin{equation}\label{eq:8.59}
 \sup_i\|v_i\|_{C^{\min\{1/2,\gamma/(2k)\}}(\overline{\Omega_0})}
 <\infty.
\end{equation}

Let \((v_{i_j})\) be any subsequence.  Equation \eqref{eq:8.59} {implies
that a further subsequence converges uniformly on \(\overline{\Omega_0}\)}
to some \(w\).  Equation \eqref{eq:8.45} gives
\[
 \|w-\beta\log|x|\|_{L^1(\Omega_0)}
 \leq |\Omega_0|\|w-v_{i_{j_\ell}}\|_{L^\infty(\Omega_0)}
 +\|v_{i_{j_\ell}}-\beta\log|x|\|_{L^1(\Omega_0)}\longrightarrow0.
\]
Thus every subsequence has a further subsequence converging uniformly
to \(\beta\log|x|\).  If \eqref{eq:8.50} failed, a subsequence would
satisfy
\(\|v_{i_j}-\beta\log|\mathord\cdot|\|_{L^\infty(\Omega_0)}\geq\varepsilon\)
for some \(\varepsilon>0\), contradicting the preceding conclusion.
Since \(\Omega_0\Subset\R^n\setminus\{0\}\) is arbitrary,
\eqref{eq:8.50} follows.
\end{proof}

We shall use the following form of the interior gradient estimate.
\begin{lemma}\label{Lem:8.12}
Let \(z\in C^3(B_r)\) be \(k\)-admissible and satisfy
\begin{equation}\label{eq:8.60}
    \sigma_k(D^2z)=F(x,z)\geq0.
\end{equation}
Then
\begin{equation}\label{eq:8.61}
 |\nabla z(0)|
 \leq C\frac{\|z\|_{L^\infty(B_r)}}r
 +C\left(\|z\|_{L^\infty(B_r)}^{k+1}\delta\right)^{1/(2k+1)}
 +C\left(\|z\|_{L^\infty(B_r)}^{k+1}\delta\right)^{1/(2k)},
\end{equation}
where
\(\delta=\operatorname{Lip}_xF+\operatorname{Lip}_sF\) on
\(B_r\times[-4\|z\|_{L^\infty(B_r)},4\|z\|_{L^\infty(B_r)}]\),
and \(C\) depends only on \(n\) and \(k\).
\end{lemma}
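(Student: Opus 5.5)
We follow the Chou--Wang interior gradient argument, keeping track of the $x$- and $z$-dependence of $F$ through its Lipschitz constants. First normalize: put $K=\|z\|_{L^\infty(B_r)}$, and assume $K>0$, since otherwise $z\equiv0$ and there is nothing to prove. Rescale by $\widetilde z(y)=z(ry)/K$ on $B_1$. Then $\sigma_k(D^2\widetilde z)=r^{2k}K^{-k}F(ry,K\widetilde z)=:\widetilde F(y,\widetilde z)$. Here $\operatorname{Lip}_y\widetilde F\leq r^{2k+1}K^{-k}\operatorname{Lip}_xF$ and $\operatorname{Lip}_s\widetilde F\leq r^{2k}K^{1-k}\operatorname{Lip}_sF$. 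It therefore suffices to prove, for $|\widetilde z|\leq1$ on $B_1$,
\[
|\nabla\widetilde z(0)|\leq C+C\widetilde\delta^{1/(2k+1)}+C\widetilde\delta^{1/(2k)},
\]
where $\widetilde\delta$ is a suitable combination of the rescaled Lipschitz constants. When undoing the scaling, the powers of $r$ and $K$ should recombine into the two homogeneous terms $(K^{k+1}\delta)^{1/(2k+1)}$ and $(K^{k+1}\delta)^{1/(2k)}$. We would check this exponent bookkeeping first, and introduce an auxiliary radius if needed to balance the $x$- and $s$-terms.

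For the normalized problem we use the test function $G=\rho(y)\,\varphi(\widetilde z)\,\widetilde z_1$, following \cite[Theorem~3.2]{ChouWang2001}. Here $\rho=1-|y|^2$, $\varphi(s)=(4-s)^{-1/2}$ (or the Chou--Wang choice), and $e_1$ is the direction of $\nabla\widetilde z$ at the maximum point. At an interior maximum of $G$, apply the linearized operator $F^{ij}=\partial\sigma_k^{1/k}/\partial a_{ij}$. The terms involving $\sum F^{ii}$, $F^{11}\widetilde z_{11}^2$ and the gradient of $\rho$ are handled exactly as in Chou--Wang, using concavity of $\sigma_k^{1/k}$ and $\sum F^{ii}\geq c(n,k)$ from Lemma~\ref{Lem:2.1}. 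The only new term comes from differentiating the equation once: $F^{ij}\widetilde z_{ij1}=\partial_1(\widetilde F^{1/k})$. Using $\sigma_k^{1/k}$ is what makes this a first-derivative identity, exactly as in \eqref{eq:6.9}.

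The main obstacle is this new term. Since $F\geq0$ may vanish, $\widetilde F^{1/k}$ is not Lipschitz. We control it as follows:
\[
|\partial_1\widetilde F^{1/k}|\leq \tfrac1k\widetilde F^{1/k-1}\bigl(\operatorname{Lip}_y\widetilde F+\operatorname{Lip}_s\widetilde F\,|\nabla\widetilde z|\bigr).
\]
We then compare $\widetilde F^{1/k-1}$ with $\sum F^{ii}$. Because $F^{ij}$ has total mass at least $c\,\sigma_k^{(1-k)/k}\sigma_{k-1}$, we get $\widetilde F^{1/k-1}\lesssim \sum F^{ii}/\sigma_{k-1}$, and $\sigma_{k-1}\gtrsim \widetilde z_{11}$-type quantities at the maximum point. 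Combined with the good term $\sim\varphi'\rho\,\widetilde z_1^3\sum F^{ii}$, the error becomes $\widetilde\delta\,|\nabla\widetilde z|^{a}$ against $|\nabla\widetilde z|^{b}$ with $b-a$ equal to $2k+1$ for the $y$-Lipschitz part and $2k$ for the $s$-Lipschitz part. This produces exactly the two exponents in \eqref{eq:8.61}.

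If this comparison fails near points where $F$ vanishes, we would first prove the estimate for $F+\epsilon$ by solving a nearby Dirichlet problem as in Lemma~\ref{Lem:4.2}, and then pass to the limit $\epsilon\to0$. The constants depend only on $n,k$ because every step uses only the Newton--Maclaurin inequalities and Corollary~\ref{Cor:2.3}.
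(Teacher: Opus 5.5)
Your outline has the same architecture as the paper's proof: the Chou--Wang quantity \((1-|y|^2)(4M-z)^{-1/2}\partial_\eta z\), the scaling reduction, and the exponent count \(2k+1\), \(2k\). The scaling does work provided the \(x\)- and \(s\)-seminorms are kept separate, as you suspect. The gap is that the inequality carrying the whole estimate is only asserted.

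Set \(\rho=1-|x|^2\), \(H=4M-z(x)\), \(P=|\nabla z(x)|\), \(S=T_{k-1}(D^2z(x))\) at the maximum point. If \(\rho P\leq C_0M\) you are done, so assume \(\rho P>C_0M\). The first-order condition gives \(z_{11}/P=2x_1/\rho-P/(2H)\), hence \(z_{11}\leq-P^2/(20M)\) for \(C_0\) large. For \(B=D^2z(x)|_{e_1^\perp}\), the block identities give \(\sigma_{k-1}(D^2z)\leq\sigma_{k-1}(B)\) and \(0\leq\sigma_k(D^2z)\leq\sigma_k(B)+z_{11}\sigma_{k-1}(B)\). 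Consequently \(B\in\Gamma_k\) and \(\sigma_k(B)\geq\frac{P^2}{20M}\sigma_{k-1}(B)\). Combined with \(\tr S=(n-k+1)\sigma_{k-1}(D^2z)\) and Newton--Maclaurin in dimension \(n-1\), this yields
\[
 S^{11}=\sigma_{k-1}(B)\geq\frac{\tr S}{n-k+1},
 \qquad
 S^{11}\geq c\left(\frac{P^2}{M}\right)^{k-1}.
\]
Both inequalities are needed. The good term from \((4M-z)^{-1/2}\) is \(P^2S^{11}/(8H^2)\). It carries \(S^{11}\) (your \(F^{11}\)), not \(\tr S\) (your \(\sum F^{ii}\)), and only the first inequality lets it absorb the cutoff terms \(18\tr S/\rho^2\). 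The power \(k-1\) in the second is exactly what turns \(\tr S\,P^3\leq CM^2|\nabla(F(\cdot,z(\cdot)))(x)|\) into \(P^{2k+1}\leq CM^{k+1}\delta(1+P)\). Your phrase ``\(\sigma_{k-1}\gtrsim\widetilde z_{11}\)-type quantities'', together with the deferral to Chou--Wang, leaves precisely this step open.

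The detour through \(\sigma_k^{1/k}\) is unnecessary, and the fallback attached to it would not work. The maximum-principle inequality can be contracted with any positive semidefinite matrix. Contracting with \(S\) gives \(S^{ij}z_{ij1}=\partial_1(F(\cdot,z(\cdot)))\), bounded by \(\delta(1+P)\) with no negative power of \(F\). No concavity is used, since only one derivative of the equation enters. Your identity \(\sigma_k^{1/k-1}=\frac{k}{n-k+1}\sum F^{ii}/\sigma_{k-1}\) merely reproduces this after multiplying through by \(k\sigma_k^{1-1/k}\).

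Drop the Dirichlet regularization. With the frozen right-hand side \(F(x,z(x))+\epsilon\), the Lipschitz constant involves \(\sup|\nabla z|\), the very quantity being estimated. With \(F(x,w)+\epsilon\), you lose the comparison principle needed for existence and for \(z_\epsilon\to z\), because \(F\) need not be nondecreasing in \(s\); in Proposition~\ref{Prop:8.13} it is decreasing.
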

\begin{proof}
Put \(M=\|z\|_{L^\infty(B_r)}\).  The result is immediate if
\(M=0\).  We first take \(r=1\).  Let \((x,\xi)\) maximize
\begin{equation}\label{eq:8.62}
 (1-|y|^2)(4M-z(y))^{-1/2}\partial_\eta z(y)
 \quad\hbox{on }\overline B_1\times\mathbb S^{n-1}.
\end{equation}
If the maximum is nonpositive, then \(\nabla z(0)=0\).  Otherwise
\(x\in B_1\), and a rotation gives
\(\xi=e_1=\nabla z(x)/|\nabla z(x)|\).  Set
\(\rho=1-|x|^2\), \(H=4M-z(x)\), \(P=|\nabla z(x)|\), and
\(S=T_{k-1}(D^2z(x))\).  Equation \eqref{eq:8.62} gives
\begin{equation}\label{eq:8.63}
 |\nabla z(0)|\leq C\rho P.
\end{equation}
If \(\rho P\leq C_0M\), equation \eqref{eq:8.61} follows from
\eqref{eq:8.63}.  We henceforth assume \(\rho P>C_0M\).

At \(x\), \(z_i=P\delta_{1i}\).  Differentiating the logarithm of
\eqref{eq:8.62} gives
\begin{equation}\label{eq:8.64}
 \frac{z_{1i}}P=\frac{2x_i}{\rho}-\frac{z_i}{2H}.
\end{equation}
Since \(3M\leq H\leq5M\), equation \eqref{eq:8.64} and
\(C_0\geq40\) {yield}
\begin{equation}\label{eq:8.65}
 z_{11}=P\left(\frac{2x_1}{\rho}-\frac{P}{2H}\right)
 \leq\frac{2P}{\rho}-\frac{P^2}{10M}
 \leq-\frac{P^2}{20M}.
\end{equation}

Write \(B=D^2z(x)|_{e_1^\perp}\).  Then \(B\in\Gamma_{k-1}\).  If
\(k=2\), then \(\sigma_1(D^2z)=\sigma_1(B)+z_{11}\leq\sigma_1(B)\).
If \(k\geq3\), then \(T_{k-3}(B)\succeq0\) and the block identity gives
\[
 \sigma_{k-1}(D^2z)=\sigma_{k-1}(B)+z_{11}\sigma_{k-2}(B)
 -z_{1\alpha}(T_{k-3}(B))_{\alpha\beta}z_{1\beta}
 \leq\sigma_{k-1}(B),
\]
where Greek indices range from \(2\) to \(n\).  Thus
\begin{equation}\label{eq:8.66}
 \sigma_{k-1}(D^2z)\leq\sigma_{k-1}(B).
\end{equation}
The second block identity gives
\begin{equation}\label{eq:8.67}
 0\leq\sigma_k(D^2z)=\sigma_k(B)+z_{11}\sigma_{k-1}(B)
 -z_{1\alpha}(T_{k-2}(B))_{\alpha\beta}z_{1\beta}.
\end{equation}
Since \(T_{k-2}(B)\succeq0\), equations \eqref{eq:8.65} and
\eqref{eq:8.67} {yield}
\begin{equation}\label{eq:8.68}
 \sigma_k(B)\geq-z_{11}\sigma_{k-1}(B)
 \geq\frac{P^2}{20M}\sigma_{k-1}(B)>0.
\end{equation}
Thus \(B\in\Gamma_{k-1}\) and \eqref{eq:8.68} imply
\(B\in\Gamma_k\).
The Newton--Maclaurin inequality in dimension \(n-1\) gives
\(\frac{P^2}{M}\leq C\frac{\sigma_k(B)}{\sigma_{k-1}(B)} \leq C\sigma_{k-1}(B)^{1/(k-1)}\).
Using
\(S^{11}=\sigma_{k-1}(B)\) and
\(\tr S=(n-k+1)\sigma_{k-1}(D^2z)\) in the preceding
inequality and \eqref{eq:8.66}, we obtain
\begin{equation}\label{eq:8.69}
 S^{11}\geq\frac1{n-k+1}\tr S,
 \qquad
 \tr S\geq c\left(\frac{P^2}{M}\right)^{k-1}.
\end{equation}

Equation \eqref{eq:8.60} and \(z\in C^3\) give
\(F(\mathord\cdot,z(\mathord\cdot))=\sigma_k(D^2z)\in C^1(B_1)\).
Twice differentiating the logarithm of \eqref{eq:8.62} {yields}
\begin{equation}\label{eq:8.70}
 0\succeq-\frac{2\delta_{ij}}\rho-\frac{4x_ix_j}{\rho^2}
 +\frac{z_{ij}}{2H}+\frac{z_iz_j}{2H^2}+\frac{z_{1ij}}P
 +\frac1{P^2}\sum_{\alpha=2}^nz_{\alpha i}z_{\alpha j}
 -\frac{z_{1i}z_{1j}}{P^2}.
\end{equation}
Contracting \eqref{eq:8.70} with \(S^{ij}\), and using
\(S^{ij}z_{ij}=kF(x,z(x))\) and
\(S^{ij}z_{ij1}=\partial_1(F(\mathord\cdot,z(\mathord\cdot)))(x)\),
gives
\begin{align}
0\geq{}&-\frac{2\tr S}{\rho}
-\frac{4S^{ij}x_ix_j}{\rho^2}+\frac{kF(x,z(x))}{2H}
+\frac{P^2S^{11}}{2H^2}
+\frac{\partial_1(F(\mathord\cdot,z(\mathord\cdot)))(x)}P \notag\\
&+\frac{S^{ij}}{P^2}\sum_{\alpha=2}^nz_{\alpha i}z_{\alpha j}
-\frac{S^{ij}z_{1i}z_{1j}}{P^2}.                 \label{eq:8.71}
\end{align}
Equation \eqref{eq:8.64} and \(S\succeq0\) {imply}
\(\frac{S^{ij}z_{1i}z_{1j}}{P^2} \leq\frac{12S^{ij}x_ix_j}{\rho^2}+\frac{3P^2S^{11}}{8H^2}\).
Since \(F\geq0\), one has \(kF(x,z(x))/(2H)\geq0\).
Since \(S\succeq0\), one also has
\(S^{ij}\sum_{\alpha=2}^nz_{\alpha i}z_{\alpha j}/P^2\geq0\).
These two inequalities and
\(S^{ij}x_ix_j\leq\tr S\) turn
\eqref{eq:8.71} into
\begin{equation}\label{eq:8.72}
 \frac{P^2S^{11}}{8H^2}
 \leq\frac{18\tr S}{\rho^2}
 +\frac{|\nabla(F(\mathord\cdot,z(\mathord\cdot)))(x)|}{P}.
\end{equation}
Equations \eqref{eq:8.69} and \eqref{eq:8.72}, together with
\(H\leq5M\), give
\[
 c\tr S\,P^2
 \leq\frac{CM^2\tr S}{\rho^2}
 +\frac{CM^2|\nabla(F(\mathord\cdot,z(\mathord\cdot)))(x)|}{P}.
\]
Since \(\rho P>C_0M\),
\(CM^2\tr S/\rho^2
\leq(C/C_0^2)\tr S\,P^2\).
Increasing \(C_0\) absorbs this term.  Equation \eqref{eq:8.69}
then {implies}
\begin{equation}\label{eq:8.73}
 \tr S\,P^3
 \leq CM^2|\nabla(F(\mathord\cdot,z(\mathord\cdot)))(x)|,
 \qquad
 P^{2k+1}\leq CM^{k+1}|\nabla(F(\mathord\cdot,z(\mathord\cdot)))(x)|.
\end{equation}

For every \(\eta\in\mathbb S^{n-1}\), the difference quotient gives
\[
 \left|\partial_\eta(F(\mathord\cdot,z(\mathord\cdot)))(x)\right|
 \leq\limsup_{h\to0}
 \frac{|F(x+h\eta,z(x+h\eta))-F(x,z(x))|}{|h|}
 \leq\operatorname{Lip}_xF+\operatorname{Lip}_sF\,P
 \leq\delta(1+P).
\]
Equation \eqref{eq:8.73}, Young's inequality, and \eqref{eq:8.63}
{give}
\[
 |\nabla z(0)|\leq CM
 +C(M^{k+1}\delta)^{1/(2k+1)}
 +C(M^{k+1}\delta)^{1/(2k)}.
\]
This proves \eqref{eq:8.61} for \(r=1\).

For general \(r\), set \(\widetilde z(y)=z(ry)\) and
\(\widetilde F(y,s)=r^{2k}F(ry,s)\).  The two Lipschitz seminorms are
\[
 \operatorname{Lip}_y\widetilde F=r^{2k+1}\operatorname{Lip}_xF,
 \qquad
 \operatorname{Lip}_s\widetilde F=r^{2k}\operatorname{Lip}_sF.
\]
Applying the estimate for \(r=1\) before replacing the two seminorms
by \(\delta\) gives
\[
 |\nabla_y\widetilde z(0)|\leq CM
 +C(M^{k+1}r^{2k+1}\operatorname{Lip}_xF)^{1/(2k+1)}
 +C(M^{k+1}r^{2k}\operatorname{Lip}_sF)^{1/(2k)}.
\]
Since \(|\nabla_y\widetilde z(0)|=r|\nabla z(0)|\), division by \(r\)
proves \eqref{eq:8.61}.
\end{proof}

\begin{proposition}\label{Prop:8.13}
The convergence in \eqref{eq:8.45} improves to
\begin{equation}\label{eq:8.74}
 v_i\longrightarrow\beta\log|x|
 \quad\hbox{in }C^1_{\mathrm{loc}}(\R^n\setminus\{0\}).
\end{equation}
\end{proposition}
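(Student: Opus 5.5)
The plan is to upgrade the uniform convergence of Lemma~\ref{Lem:8.11} to $C^1$ convergence on compact annuli. I will use the interior gradient estimate of Lemma~\ref{Lem:8.12}, applied to differences between $v_i$ and a fixed smooth $k$-admissible comparison function, together with a compactness argument. By Lemma~\ref{Lem:8.11}, on every compact annulus $\overline{\Omega_0}\Subset\R^n\setminus\{0\}$ we already have $v_i\to\beta\log|x|$ uniformly and $\varepsilon_i\to0$. The right-hand side $F_i(x,s)=\varepsilon_i e^{-(k+1)s}$ of $\sigma_k(D^2v_i)=F_i$ has Lipschitz seminorm $\operatorname{Lip}_sF_i\le C\varepsilon_i$ on any bounded $s$-range, and $\operatorname{Lip}_xF_i=0$. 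The $v_i$ are uniformly bounded on a slightly larger annulus $\Omega'$, so Lemma~\ref{Lem:8.12} applied on balls $B_r(p)\Subset\Omega'$ yields a uniform gradient bound $\sup_i\|\nabla v_i\|_{L^\infty(\Omega_0)}\le C$.

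\textbf{Step 1 (uniform Lipschitz and small gradient oscillation).} The crude bound above only gives equicontinuity in $C^0$. To get convergence of gradients, I exploit that Lemma~\ref{Lem:8.12} is sharp in the sup norm. Fix $p\in\Omega_0$, a small radius $r$, and an affine function $\ell_p(x)=\beta\log|p|+\beta\,\frac{p}{|p|^2}\cdot(x-p)$. The obstacle is that $v_i-\ell_p$ is still $k$-admissible (affine perturbations do not change $D^2$) and solves $\sigma_k(D^2(v_i-\ell_p))=\varepsilon_ie^{-(k+1)(\ell_p+(v_i-\ell_p))}$. The $x$-Lipschitz constant of this right-hand side is $O(\varepsilon_i)$, as is its $s$-Lipschitz constant. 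Hence Lemma~\ref{Lem:8.12} on $B_r(p)$ gives
\[
|\nabla v_i(p)-\nabla\ell_p(p)|\le C\frac{\|v_i-\ell_p\|_{L^\infty(B_r(p))}}{r}+C(\varepsilon_i)^{1/(2k+1)}+C(\varepsilon_i)^{1/(2k)},
\]
with constants uniform in $p\in\Omega_0$ because the sup norm is uniformly bounded. The key point is that Lemma~\ref{Lem:8.12} applies to $z=v_i-\ell_p$ without changing admissibility. The $L^\infty$ bound scales with $M=\|z\|_\infty$ exactly as needed.

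\textbf{Step 2 (conclusion).} By Lemma~\ref{Lem:8.11},
\[
\|v_i-\ell_p\|_{L^\infty(B_r(p))}\le\|v_i-\beta\log|x|\|_{L^\infty}+\|\beta\log|x|-\ell_p\|_{L^\infty(B_r(p))}\le o_i(1)+Cr^2,
\]
by Taylor expansion of the smooth function $\beta\log|x|$. Hence
\[
\sup_{p\in\Omega_0}|\nabla v_i(p)-\beta p/|p|^2|\le o_i(1)/r+Cr+o_i(1).
\]
Letting $i\to\infty$ and then $r\downarrow0$ proves $\nabla v_i\to\beta\nabla\log|x|$ uniformly on $\Omega_0$. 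Together with Lemma~\ref{Lem:8.11}, this gives \eqref{eq:8.74}.

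The main obstacle I anticipate is verifying the hypotheses of Lemma~\ref{Lem:8.12} uniformly. The Lipschitz range is $[-4M,4M]$ with $M=\|v_i-\ell_p\|_\infty$, while $F_i$ depends on $s+\ell_p(x)$. I must check that the shifted nonlinearity $\widetilde F_i(x,s)=\varepsilon_ie^{-(k+1)(\ell_p(x)+s)}$ is nonnegative and has seminorms $O(\varepsilon_i)$ on $B_r(p)\times[-4M,4M]$. This holds since $\ell_p$ and $M$ are uniformly bounded there. If the smallness of $M$ causes the $\delta$-terms to dominate, that is harmless, because they vanish as $\varepsilon_i\to0$ independently of $M$.
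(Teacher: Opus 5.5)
Your proposal is correct and follows essentially the same route as the paper. Both subtract the affine tangent function $\ell_p$ of $\beta\log|x|$, apply Lemma~\ref{Lem:8.12} to $v_i-\ell_p$ on $B_r(p)$ using the $O(\varepsilon_i)$ Lipschitz bounds of the shifted nonlinearity and the Taylor bound $o_i(1)+Cr^2$, and then let $i\to\infty$ followed by $r\downarrow0$; the preliminary crude uniform gradient bound you mention is not needed.
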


\begin{proof}
Fix compact annuli $A_0\Subset\widetilde A\Subset\R^n\setminus\{0\}$. Lemma~\ref{Lem:8.11} gives
\(\|v_i-\beta\log|\mathord\cdot|\|_{L^\infty(\widetilde A)} \longrightarrow0\).
Fix
\(0<r<\frac12\operatorname{dist}(A_0,\partial\widetilde A)\).
All constants below are independent of \(r\).
For \(x_0\in A_0\), subtract the affine tangent function
\(\ell_{x_0}(x)=\beta\log|x_0| +\beta\frac{x_0}{|x_0|^2}\cdot(x-x_0)\)
and set \(z_i=v_i-\ell_{x_0}\).  Then
\(D^2z_i=D^2v_i,\qquad D^2z_i\in\Gamma_k\).
Taylor's formula gives, uniformly for \(x_0\in A_0\),
\begin{equation}\label{eq:8.75}
 \|z_i\|_{L^\infty(B_r(x_0))}
 \leq
 \|v_i-\beta\log|\mathord\cdot|\|_{L^\infty(\widetilde A)}+
 \sup_{x\in B_r(x_0)}
 \left|\beta\log|x|-\ell_{x_0}(x)\right|
 \leq
 \|v_i-\beta\log|\mathord\cdot|\|_{L^\infty(\widetilde A)}
 +Cr^2.
\end{equation}

On \(B_r(x_0)\), \(z_i\) solves
\(\sigma_k(D^2z_i) =\varepsilon_i e^{-(k+1)(z_i+\ell_{x_0}(x))}\).
For all sufficiently large \(i\), \eqref{eq:8.75} gives
\[
 \sup_{x_0\in A_0}
 \left(
 \|\ell_{x_0}\|_{C^1(B_r(x_0))}
 +4\|z_i\|_{L^\infty(B_r(x_0))}
 \right)\leq C.
\]
Hence, whenever
\(|s|\leq4\|z_i\|_{L^\infty(B_r(x_0))}\),
\[
\begin{aligned}
&\quad \varepsilon_i e^{-(k+1)(s+\ell_{x_0}(x))}
 +\left|
 \nabla_x\left(\varepsilon_i
 e^{-(k+1)(s+\ell_{x_0}(x))}\right)
 \right|+
 \left|
 \partial_s\left(\varepsilon_i
 e^{-(k+1)(s+\ell_{x_0}(x))}\right)
 \right|\\
&\leq
 \varepsilon_i e^{(k+1)(|s|+|\ell_{x_0}(x)|)}
 \left(1+(k+1)|\nabla\ell_{x_0}(x)|+(k+1)\right)
 \leq C\varepsilon_i.
\end{aligned}
\]
Therefore
\begin{equation}\label{eq:8.76}
 \sup_{x_0\in A_0}
 \left\|\varepsilon_i e^{-(k+1)(s+\ell_{x_0}(x))}\right\|_{C^{0,1}
 (B_r(x_0)\times
 [-4\|z_i\|_{L^\infty(B_r(x_0))},
   4\|z_i\|_{L^\infty(B_r(x_0))}])}
 \leq C\varepsilon_i\longrightarrow0.
\end{equation}
Lemma~\ref{Lem:8.12}, applied after translating \(x_0\) to the origin,
and \eqref{eq:8.75}--\eqref{eq:8.76} give
\begin{align*}
 &\quad \sup_{x_0\in A_0}
 \left|\nabla v_i(x_0)-\beta\frac{x_0}{|x_0|^2}\right|\\
 &\leq
 \frac Cr\left(
 Cr^2+
 \|v_i-\beta\log|\mathord\cdot|\|_{L^\infty(\widetilde A)}
 \right)+
 C\left[
 \left(
 Cr^2+
 \|v_i-\beta\log|\mathord\cdot|\|_{L^\infty(\widetilde A)}
 \right)^{k+1}
 C\varepsilon_i
 \right]^{1/(2k+1)}\\
 &\quad
 +C\left[
 \left(
 Cr^2+
 \|v_i-\beta\log|\mathord\cdot|\|_{L^\infty(\widetilde A)}
 \right)^{k+1}
 C\varepsilon_i
 \right]^{1/(2k)}.
\end{align*}
For every fixed
\(0<r<\frac12\operatorname{dist}(A_0,\partial\widetilde A)\), one has
\(\limsup_{i\to\infty}\sup_{x_0\in A_0} \left|\nabla v_i(x_0)-\beta\frac{x_0}{|x_0|^2}\right| \leq Cr\).
Consequently,
\[
 0\leq
 \limsup_{i\to\infty}
 \left\|
 \nabla v_i-\nabla(\beta\log|\mathord\cdot|)
 \right\|_{L^\infty(A_0)}
 \leq
 \inf_{0<r<\frac12\operatorname{dist}(A_0,\partial\widetilde A)}
 Cr=0.
\]
Together with Lemma~\ref{Lem:8.11}, this gives $\|v_i-\beta\log|\mathord\cdot|\|_{C^1(A_0)}
 \longrightarrow0$. Since \(A_0\Subset\R^n\setminus\{0\}\) is arbitrary,
\eqref{eq:8.74} follows.
\end{proof}

Lemma~\ref{Lem:8.11} and Proposition~\ref{Prop:8.13} prove \eqref{eq:8.9} in Theorem~\ref{Thm:8.3}. It remains to prove \eqref{eq:8.10}--\eqref{eq:8.12} in Theorem~\ref{Thm:8.3}.

\begin{proof}[Completion of the proof of Theorem~\ref{Thm:8.3}]
Equations \eqref{eq:8.20}, \eqref{eq:8.23} and \eqref{eq:8.25} imply that, for
\(1\leq j\leq k\),
\begin{equation}
 \lim_{t\to\infty}P_j(t)
 =\int_{\R^n}\operatorname{div}\left(
 |\nabla v|^{n-2j}T_{j-1}(D^2v)\nabla v\right)\dd x<\infty,
\end{equation}
where for \(j=k\) the right-hand side equals
\(k\int_{\R^n}\sigma_k(D^2v)\dd x\).

Let
\(v_i\) be the functions in Proposition~\ref{Prop:8.13}, and choose
\(\chi\in C_c^\infty(B_2)\) such that \(0\leq\chi\leq1\) and
\(\chi=1\) on \(B_1\).  Since \(n=2k\), we have
\[
\begin{aligned}
&|\nabla v_i(x)|^{n-2j}T_{j-1}(D^2v_i(x))\nabla v_i(x)=
 R_i^{n-1}
 \left(
 |\nabla v|^{n-2j}T_{j-1}(D^2v)\nabla v
 \right)(R_ix).
\end{aligned}
\]
Consequently, with \(y=R_ix\),
\begin{align}\label{eq:8.77}
&\int_{\R^n}\chi(x)
 \operatorname{div}\left(
 |\nabla v_i|^{n-2j}T_{j-1}(D^2v_i)\nabla v_i\right)\dd x=
 \int_{\R^n}\chi(y/R_i)
 \operatorname{div}\left(
 |\nabla v|^{n-2j}T_{j-1}(D^2v)\nabla v\right)\dd y\notag\\
&\longrightarrow
 \int_{\R^n}
 \operatorname{div}\left(
 |\nabla v|^{n-2j}T_{j-1}(D^2v)\nabla v\right)\dd y
 =\lim_{t\to\infty}P_j(t).
\end{align}

We next pass to the limit in the left-hand side of \eqref{eq:8.77}.  On the annulus \(B_{2}\setminus B_1\supseteq\operatorname{supp}\nabla\chi\),
Proposition~\ref{Prop:8.13} gives
\(v_i\to\beta\log|x|\) in \(C^1\).  The matrix-valued weak continuity in
\cite[Theorem~2.4 and the statement following (2.23)]{TW2002}, applied
at order \(j\), implies
\begin{equation}\label{eq:8.78}
 T_{j-1}(D^2v_i)\dd x
 \rightharpoonup T_{j-1}\bigl(D^2(\beta\log|x|)\bigr)\dd x
 \quad\hbox{on} \,\, B_{2}\setminus B_1.
\end{equation}
The tensors \(T_{j-1}(D^2v_i)\) are positive semidefinite.
For \(K=\operatorname{supp}\nabla\chi\), equation \eqref{eq:8.78}
and a nonnegative cutoff equal to one on \(K\) give
\(\sup_i\int_K\tr T_{j-1}(D^2v_i)\dd x<\infty\).
Positive semidefiniteness {implies}
\(|(T_{j-1})_{ab}|
\leq\frac12((T_{j-1})_{aa}+(T_{j-1})_{bb})\).
Thus every matrix component has uniformly bounded total variation on
\(K\). Proposition~\ref{Prop:8.13} gives uniform convergence of the
remaining multiplier.

The \(C^1\) convergence and \eqref{eq:8.78} therefore {yield}
\begin{align}\label{eq:8.79}
 &\lim_{i\to\infty}\int_{\R^n}\chi
 \operatorname{div}\left(
 |\nabla v_i|^{n-2j}T_{j-1}(D^2v_i)\nabla v_i\right)\dd x\notag\\
 &\quad=
 -\lim_{i\to\infty}
 \int_{\R^n}|\nabla v_i|^{n-2j}
 T_{j-1}(D^2v_i)\nabla v_i\cdot\nabla\chi\dd x\notag\\
 &\quad=
 -\int_{\R^n}|\nabla(\beta\log|x|)|^{n-2j}
 T_{j-1}\bigl(D^2(\beta\log|x|)\bigr)
 \nabla(\beta\log|x|)\cdot\nabla\chi\dd x.
\end{align}

For \(r=|x|\) and \(e_r=x/r\), one has
\[
\begin{aligned}
 D^2(\beta\log|x|)=\frac{\beta}{r^2}(\Id-2e_r\otimes e_r),\qquad
 T_{j-1}\bigl(D^2(\beta\log|x|)\bigr)[e_r,e_r]
 &=\binom{n-1}{j-1}\left(\frac{\beta}{r^2}\right)^{j-1}.
\end{aligned}
\]
Therefore,
\begin{equation}\label{eq:8.80}
 |\nabla(\beta\log|x|)|^{n-2j}
 T_{j-1}\bigl(D^2(\beta\log|x|)\bigr)
 \nabla(\beta\log|x|)\cdot e_r
 =\binom{n-1}{j-1}\beta^{n-j}r^{1-n}.
\end{equation}
Equations \eqref{eq:8.77}, \eqref{eq:8.79}, and \eqref{eq:8.80}
give
\begin{align}\label{eq:8.81}
 \lim_{t\to\infty}P_j(t)
 &=-\binom{n-1}{j-1}\beta^{n-j}
 \int_{\mathbb S^{n-1}}\int_0^\infty
 \partial_r\chi(r\theta)\dd r\dd\theta=|\mathbb S^{n-1}|\binom{n-1}{j-1}\beta^{n-j}.
\end{align}
For \(j=k\), \eqref{eq:8.81} and \(\lim\limits_{t\to\infty}P_k(t)=k\int_{\R^n}\sigma_k(D^2v)\dd x\) prove \eqref{eq:8.10}.
Solving \eqref{eq:8.10} for \(\beta\) gives
\(\beta=\left( \frac{k\displaystyle\int_{\R^n}\sigma_k(D^2v)\dd x} {|\mathbb S^{n-1}|\binom{n-1}{k-1}} \right)^{1/k}\).
By \eqref{eq:8.6} and \(n=2k\), we have
\[
\begin{aligned}
 \int_{\R^n}\Delta_n v\dd x
 =|\mathbb S^{n-1}|\beta^{n-1}=|\mathbb S^{n-1}|
 \left(
 \frac{k\displaystyle\int_{\R^n}\sigma_k(D^2v)\dd x}
 {|\mathbb S^{n-1}|\binom{n-1}{k-1}}
 \right)^{2-1/k},
\end{aligned}
\]
which is \eqref{eq:8.11}. From \eqref{eq:8.81}, we get
\begin{equation}\label{eq:8.82}
\begin{aligned}
 \lim_{t\to\infty}P_j(t)
 =|\mathbb S^{n-1}|\binom{n-1}{j-1}
 \left(
 \frac{k\displaystyle\int_{\R^n}\sigma_k(D^2v)\dd x}
 {|\mathbb S^{n-1}|\binom{n-1}{k-1}}
 \right)^{(n-j)/k}.
\end{aligned}\end{equation}
Finally, Lemma~\ref{Lem:8.5} implies $\sup\limits_{t\ \mathrm{regular}}P_j(t)
 =\lim\limits_{t\to\infty}P_j(t)$, combining this with \eqref{eq:8.82} proves \eqref{eq:8.12} and completes our proof.
\end{proof}

\end{document}